\documentclass[11pt]{article}
\usepackage[margin=1in]{geometry}
\usepackage{amsmath,amssymb,amsthm,mathtools}
\usepackage{graphicx,booktabs,longtable,array,multirow}
\usepackage{placeins}
\usepackage[dvipsnames]{xcolor}
\usepackage{float,algpseudocode}
\floatstyle{ruled}
\newfloat{algorithm}{tbp}{loa}
\floatname{algorithm}{Algorithm}
\usepackage[numbers,sort&compress]{natbib}
\usepackage[colorlinks=true,linkcolor=blue!50!black,citecolor=blue!50!black,urlcolor=blue!50!black]{hyperref}
\usepackage{doi}
\makeatletter
\g@addto@macro\UrlBreaks{\do\-}
\makeatother
\usepackage{microtype}
\microtypesetup{expansion=false}
\usepackage{enumitem}
\graphicspath{{./}}
\usepackage[T1]{fontenc}
\usepackage{lmodern}
\hypersetup{hidelinks,hypertexnames=false}
\usepackage[bottom]{footmisc}
\AtBeginDocument{%
  \setlength{\abovedisplayskip}{7pt plus 2pt minus 3pt}%
  \setlength{\belowdisplayskip}{7pt plus 2pt minus 3pt}%
  \setlength{\abovedisplayshortskip}{4pt plus 2pt minus 2pt}%
  \setlength{\belowdisplayshortskip}{4pt plus 2pt minus 2pt}%
}

\usepackage[compact]{titlesec}

\titlespacing*{\section}
  {0pt}{1.55ex plus 0.35ex minus 0.25ex}{0.75ex plus 0.15ex}

\titlespacing*{\subsection}
  {0pt}{1.25ex plus 0.30ex minus 0.20ex}{0.55ex plus 0.12ex}

\titlespacing*{\subsubsection}
  {0pt}{1.00ex plus 0.25ex minus 0.15ex}{0.45ex plus 0.10ex}

\titleformat{\section}
  {\normalfont\large\bfseries}{\thesection.}{0.5em}{}

\titleformat{\subsection}
  {\normalfont\normalsize\bfseries}{\thesubsection.}{0.5em}{}

\titleformat{\subsubsection}
  {\normalfont\normalsize\itshape}{\thesubsubsection.}{0.5em}{}

\setlist{
  topsep=2pt plus 1pt minus 1pt,
  itemsep=1pt plus 0.5pt,
  parsep=0pt,
  partopsep=0pt,
  leftmargin=*
}

\makeatletter
\def\thm@space@setup{%
  \thm@preskip=6pt plus 2pt minus 2pt
  \thm@postskip=6pt plus 2pt minus 2pt
}
\makeatother

\makeatletter
\renewcommand{\maketitle}{%
  \begin{center}
    {\Large\bfseries \@title\par}
    \vskip 0.45em
    {\normalsize
      \lineskip .5em
      \begin{tabular}[t]{c}
        \@author
      \end{tabular}\par}
    \vskip 0.35em
    {\small \@date\par}
  \end{center}
  \par\vskip 0.35em
  \@thanks
  \vskip 0.7em
}
\makeatother

\renewenvironment{abstract}
  {\small\begin{center}\bfseries Abstract\end{center}\vspace{-0.6em}\small}
  {\par\vspace{0.6em}}

\newtheorem{theorem}{Theorem}[section]
\newtheorem{lemma}[theorem]{Lemma}
\newtheorem{proposition}[theorem]{Proposition}
\newtheorem{corollary}[theorem]{Corollary}
\theoremstyle{definition}
\newtheorem{definition}[theorem]{Definition}

\theoremstyle{remark}

\newcommand{\R}{\mathbb R}
\newcommand{\C}{\mathbb C}
\newcommand{\Z}{\mathbb Z}
\newcommand{\Q}{\mathbb Q}
\newcommand{\cF}{\mathcal F_{\mu,L}}
\newcommand{\cS}{\mathcal S}
\newcommand{\cQ}{\mathcal Q}
\newcommand{\cL}{\mathcal L}
\newcommand{\ip}[2]{\langle #1,#2\rangle}
\newcommand{\norm}[1]{\lVert #1\rVert}
\newcommand{\rh}{\rho_{\rm HB}}
\newcommand{\rs}{\rho_{\rm sp}}
\newcommand{\rhoq}{\rho_{\rm q}}
\newcommand{\sh}{\sinh}
\newcommand{\ch}{\cosh}
\newcommand{\arsinh}{\operatorname{arsinh}}
\newcommand{\arcosh}{\operatorname{arcosh}}
\newcommand{\Rea}{\operatorname{Re}}
\newcommand{\Ima}{\operatorname{Im}}
\newcommand{\supp}{\operatorname{supp}}
\newcommand{\sgn}{\operatorname{sgn}}
\newcommand{\FP}{\mathrm{FP}}
\newcommand{\PP}{\mathrm{P}}
\newcommand{\T}{\mathbb T}
\newcommand{\cl}{\operatorname{cl}}
\newcommand{\ov}[1]{\overline{#1}}

\title{Two Answers on Heavy-Ball Dynamics}
\author{Dmitry Pasechnyuk-Vilensky\thanks{Mohamed bin Zayed University of Artificial Intelligence, Abu Dhabi, UAE. \texttt{dmivilensky1@gmail.com}}, Martin Tak\'{a}\v{c}\thanks{Mohamed bin Zayed University of Artificial Intelligence, Abu Dhabi, UAE. \texttt{Takac.MT@gmail.com}}}
\date{}
\begin{document}
\maketitle

\begin{abstract}
We determine the worst-case behavior of the heavy-ball method on smooth
strongly convex functions, first for Polyak's quadratic-optimal parameters
and then for arbitrary constant parameters. For Polyak's tuning, the exact
dimension-uniform exponential rate is the largest growth factor of an
interpolable mixture of rotating geometric sequences. Four algebraic
families determine this rate at every condition number. Two frequencies
suffice, and one function in dimension five attains the rate at every
horizon up to constant factors. We recover the sharp convergence threshold
$9+4\sqrt5$ of Badithela and Seiler. For general tunings, an exact linear feasibility criterion characterizes
dimension-uniform linear convergence through the angle between the
characteristic polynomials of the two extreme quadratics. This gives a continuous condition-number
threshold and resolves the second heavy-ball riddle of Goujaud, Taylor and
Dieuleveut. The regions between consecutive cycle tongues are Farey gaps,
each containing convergent and divergent tunings. Regular portions of the
threshold surface are analytic plateaus with fixed periodic frequency
support. Every rational rotation number in a gap anchors a single-harmonic
plateau; mixed-harmonic plateaus and an exact two-lag threshold point also
occur. Along tongue edges, explicit multiplier inequalities and
second-order expansions of competing Farey fans bound the transition from
single-harmonic to mixed-harmonic extremals. The rate bounds and the
extremal constructions follow from a common interpolation and separation
argument that preserves initialization with zero momentum.
\end{abstract}

\section{Introduction}\label{sec:intro}

Polyak's heavy-ball method accelerates strongly convex quadratic
minimization by retaining the preceding displacement. On general smooth
strongly convex functions, the same tuning can produce periodic or
exponentially growing trajectories. We determine its exact worst-case rate
under Polyak's tuning and characterize convergence for arbitrary constant
parameters through spectral interpolation.

\paragraph{Problem setting}
Let $0<\mu<L$, $\kappa=L/\mu$, and let $\cF(\R^d)$ denote the class of
differentiable $\mu$-strongly convex functions on $\R^d$ with
$L$-Lipschitz gradients. For a step size $\alpha>0$ and a momentum
$\beta\in[0,1)$, the heavy-ball iteration is
\begin{equation}\label{eq:hb}
 x_{k+1}=x_k-\alpha\nabla f(x_k)+\beta(x_k-x_{k-1}),
 \qquad x_{-1}=x_0.
\end{equation}
Throughout, the initial momentum is zero. Writing $x^\star$ for the unique
minimizer of $f$, define
\begin{equation}\label{eq:WN}
 W_N^{(d)}=
 \sup_{\substack{f\in\cF(\R^d)\\x_0\ne x^\star}}
 \frac{\norm{x_N-x^\star}}{\norm{x_0-x^\star}},
 \qquad W_N=\sup_{d\ge1}W_N^{(d)},
 \qquad \rho(\alpha,\beta)=\limsup_{N\to\infty}W_N^{1/N}.
\end{equation}
Thus $\rho$ is the dimension-uniform worst-case exponential rate;
convergence on the class means $\rho<1$, with a linear bound uniform in
the function and dimension. The quadratic subclass gives a lower bound:
its rate $\rhoq(\alpha,\beta)$ is the largest root modulus of
$z^2-(1+\beta-\alpha\lambda)z+\beta$, $\lambda\in[\mu,L]$.
Convergence on all these quadratics is equivalent to
\begin{equation}\label{eq:quad}
 0<\alpha L<2(1+\beta).
\end{equation}
Polyak's parameters~\cite{Polyak}
\begin{equation}\label{eq:polyak}
 \alpha=\frac4{(\sqrt L+\sqrt\mu)^2},
 \qquad \beta=q^2,
 \qquad q=\frac{\sqrt\kappa-1}{\sqrt\kappa+1}
\end{equation}
balance the extreme quadratic roots and give $\rhoq=q$. Part~I determines
the nonlinear rate and its extremals for every $\kappa>1$, including
$\rho\ge1$; Part~II treats all tunings satisfying \eqref{eq:quad}.

\paragraph{Previous results and the two heavy-ball riddles}
Ghadimi, Feyzmahdavian and Johansson~\cite{GFJ} established global
convergence, with linear bounds in the strongly convex case, for suitable
constant parameters. Lessard, Recht and Packard~\cite{LRP} introduced
integral quadratic constraint (IQC) bounds and exhibited nonconvergence under
Polyak's tuning. Badithela and Seiler~\cite{BS} identified the sharp
threshold $9+4\sqrt5$: the weighted off-by-one bound reaches one and a
scalar three-cycle appears. Interpolation also supports automated searches
for cyclic counterexamples~\cite{GDT23}.

Goujaud, Taylor and Dieuleveut~\cite{GTD} proved non-acceleration in
dimensions at least two using planar root-of-unity cycles, and posed two
further questions~\cite{GTD25}. Fotopoulos, Helcig, Deck and
Alsbach~\cite{FHDA} resolved the first, concerning dimension one, by
rotating and projecting these cycles onto a line. They also treated
coprime rotations, making $j$ turns in $k$ steps, whose binding
interpolation lag $j^{-1}\bmod k$ is the time separation of the
corresponding iterates.

The second riddle concerns gaps between a computed Lyapunov convergence
region and the integer-cycle regions. Evaluating the cycle condition at
fractional periods fills these gaps~\cite{GTD25}, leaving their actual
convergence behavior unresolved. Wang~\cite{Wang26}, also discussed
in~\cite[Appendix~B.1]{WLY26}, certified a convergent parameter box
outside both tests. We determine the gap geometry and give an exact
criterion for its convergent and divergent portions.

Liu et~al.~\cite{Liu26} quantify local attraction under Hessian regularity
and give a gradient-based initialization rule. Hernanz~\cite{Hernanz26}
obtains the optimal asymptotic distance factor
$1-8/\kappa+o(\kappa^{-1})$ after optimizing constant tunings, for
arbitrary initial pairs in dimensions at least two. His extension to starts
from rest in dimension three uses our additional-coordinate construction.
Pokutta and Stellato~\cite{PS26} determine the exact asymptotic
squared-distance factor of Nesterov's method through mixtures of planar
spirals and linear optimization over measures.

\paragraph{Main results}
For Polyak's tuning, Theorems~\ref{thm:spectral}
and~\ref{thm:classification} characterize the exact rate by interpolable
mixtures of rotating geometric sequences and determine it through four
algebraic families: a special spiral, a single-frequency family, and two
two-frequency families. At most two frequencies suffice.
Theorem~\ref{thm:rate} gives matching bounds on $W_N$ up to constants
independent of $N$, attained from below at every horizon by one function
in $\cF(\R^5)$ and one initial point. At the critical condition number a
trajectory from rest approaches a three-cycle; above it an explicit
trajectory grows exponentially (Theorem~\ref{thm:threshold}).

For general tunings, let $\Theta(\omega)$ be the angle between the
characteristic polynomials of the extreme quadratics on the unit circle.
Theorem~\ref{thm:A} gives convergence if $\Theta(\omega)>\omega/2$ for
all $\omega\in(0,\pi]$, and a trajectory approaching a $k$-periodic
orbit if $\Theta(2\pi j/k)\le\pi/k$. The rotation number
$\omega/(2\pi)$ measures turns per step; fractional periods correspond
to contacts with $\omega/2$. The gaps lie between resonances
$1/(K+1)$ and $1/K$.
One strict spectral certificate at one exponent bounds the rate
(Theorem~\ref{thm:oneexp}). At exponent zero, the resulting exact
criterion expresses the divergence set as
$\kappa\ge\kappa_\partial(\alpha L,\beta)$, with continuous
$\kappa_\partial\in(1,\infty]$
(Theorems~\ref{thm:exactthr} and~\ref{thm:global}). Every gap contains
convergent tunings near its lag-one boundary and divergent tunings meeting
each rational tongue; for $K\ge3$, divergence also reaches an arc of
the junction of the bounding tongues
(Theorem~\ref{thm:BK} and Proposition~\ref{prop:corner}).

Regular boundary points lie on analytic plateaus with fixed finite
harmonic support, where finitely many lag constraints decide convergence
(Theorems~\ref{thm:modelock} and~\ref{thm:tightclass}). Every Farey
fraction $j/k$ in every gap anchors a single-harmonic plateau with
$\kappa_\partial=\kappa_{j/k}$
(Theorem~\ref{thm:plateaujk} and Corollary~\ref{cor:everyfarey}).
Two-lag certificates and competing two-frequency Farey fans give explicit
bounds on its edge away from a discrete exceptional set, the latter
through a second-order inverse-period expansion
(Theorems~\ref{thm:edgejk} and~\ref{thm:fanjk}). Exact
examples exhibit convergence in a gap, mixed-harmonic extremals and a
two-lag boundary point (Propositions~\ref{prop:13}
and~\ref{prop:conv-gap}, Theorems~\ref{thm:plateaus}
and~\ref{thm:exact313}).

\paragraph{Methods and relation to control theory}
Smooth strongly convex interpolation tests whether prescribed points,
gradients and values belong to one function~\cite{THG,AM}. A frequency
describes a rotating geometric sequence; a positive measure combines
these sequences. The interpolation inequalities are linear in this
measure and indexed by time lags. For periodic trajectories, this
harmonic linear program is established in~\cite[Theorem~6.13]{GTD} and
rederived in~\cite[Proposition~5.1]{FHDA}, which also gives a mixed
cycle beyond the single-harmonic regions. We extend the formulation to
arbitrary spectra and growth exponents. Separation gives finite dual
certificates whose lag inequalities yield energy bounds for all
trajectories; an additional coordinate realizes extremal spectra from rest.

These lag certificates are discrete-time O'Shea--Zames--Falb
multipliers~\cite{ZF,WB68}, with a Toeplitz frequency
representation~\cite{GS} and energy estimates in the IQC
framework~\cite{MR,LRP}. Related results include quadratic Lyapunov
searches~\cite{TVSL}, periodic counterexamples~\cite{SC}, harmonic
duality bounds~\cite{ZCH} and converse multiplier theorems~\cite{SSCK}.
For slope-restricted feedback systems, Kharitenko and Scherer~\cite{KS24}
prove exactness of the multiplier test for input-output stability uniform
over the lifted plants $G\otimes I_d$ for all finite $d$;
Kharitenko~\cite{Kharitenko26} shows conservatism for a scalar
interconnection. Our criterion concerns convergence uniform over all
finite dimensions, with zero initial momentum and explicit autonomous
realizations.

For Polyak's tuning, the spectral constraints reduce to polynomials in
$\cos\omega$. One family has a rational parametrization; another uses
rescaled variables at the critical three-cycle and a quintic number field.
Polynomial contact systems and rational interval continuation determine
the four families; polynomial positivity and
analytic lag tails complete the classification~\cite{Moore,Arb,Rump2010}.
For general tunings, the harmonic minimum principle extends one-sided
certificates from circle boundaries to disks or annuli, and angle convexity and Farey
relations~\cite{HW} organize the gaps. Strict complementarity and
analytic continuation produce the plateaus; contact curvature and
quadratic minimization over lag residues control their edges.

\paragraph{Formal verification.}
The mathematical results are accompanied by formal proofs in Lean. The package is available at
\href{https://github.com/dmivilensky/PolynomialRealizationsProgram/tree/main/polyak}{\texttt{PolynomialRealizationsProgram/polyak}}\footnote{\url{https://github.com/dmivilensky/PolynomialRealizationsProgram/tree/main/polyak}}.

\paragraph{Source code.}
The code for the numerical experiments, figures, exact certificate generation
and proof-data archives is available at
\href{https://github.com/dmivilensky/PolynomialRealizationsProgram/tree/main/polyak-code}{\texttt{PolynomialRealizationsProgram/polyak-code}}\footnote{\url{https://github.com/dmivilensky/PolynomialRealizationsProgram/tree/main/polyak-code}}.

\section{Polyak's tuning}\label{sec:part1}
We first fix Polyak's parameters~\eqref{eq:polyak} and determine the
worst-case rate as a function of the condition number. A spectral
characterization reduces this problem to four algebraic families with
at most two frequencies. Their realization from rest gives matching
bounds at every horizon and the sharp convergence threshold.

\subsection{Exact worst-case rate under Polyak's tuning}\label{sec:polyak}
For the parameters \eqref{eq:polyak}, write $\rh(\kappa)=\rho(\alpha,\beta)$,
$\varphi=(1+\sqrt5)/2$ and $\kappa_c=9+4\sqrt5=\varphi^6$.
The worst-case rate is determined by four algebraic families of spectra.
There is an optimal spectrum with at most two frequencies; a
single-frequency construction gives the sharp threshold for convergence.
Let $\rs(\kappa)$ denote the unique positive root of
\begin{equation}\label{eq:spiralpoly}
 F(\rho,q)=\rho^2(\rho^2+1)-2q\rho(\rho^2-\rho+1)-q^2(\rho^2+1)=0 .
\end{equation}
\begin{theorem}[Exact rate and realization]\label{thm:rate}
For every $\kappa>1$ the limits
\[
 \rh(\kappa)=\lim_{N\to\infty}W_N(\kappa)^{1/N}=\lim_{N\to\infty}W_N^{(5)}(\kappa)^{1/N}
\]
exist, and $\rh(\kappa)$ is given by the finite algebraic prescription of
Theorem~\ref{thm:classification}. There are constants
$0<c_\kappa\le C_\kappa<\infty$ such that
\begin{equation}\label{eq:bounds}
 c_\kappa\,\rh(\kappa)^N\le W_N^{(5)}(\kappa)\le W_N(\kappa)\le C_\kappa\,\rh(\kappa)^N\quad(N\ge0).
\end{equation}
The lower bound is attained by one function in $\cF(\R^5)$ and one initial
point, independent of $N$. Moreover, one function in $\cF(\R^3)$ and one
initial point satisfy
\begin{equation}\label{eq:spirallower}
 \norm{x_N-x^\star}\ge\frac{\rs(\kappa)^N}{\sqrt5}\,\norm{x_0-x^\star}\quad(N\ge0).
\end{equation}
\end{theorem}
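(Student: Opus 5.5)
The proof will combine a spectral upper bound, a classification of optimal spectra, and an explicit realization from rest. We take $\rh$ to be the value $\rho^\star(\kappa)$ given by Theorem~\ref{thm:spectral}: the largest growth factor $\rho$ for which some positive measure on frequencies $\omega\in[0,\pi]$ defines a mixture of rotating geometric sequences $x_k=\Rea(\rho^k e^{ik\omega}v)$ satisfying all smooth strongly convex interpolation inequalities. By Theorem~\ref{thm:classification}, $\rho^\star$ is attained within one of four algebraic families, by a measure supported on at most two frequencies. The task is then to show that $W_N$ is squeezed between constant multiples of $(\rho^\star)^N$.

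\emph{Upper bound.} For any $\rho>\rho^\star$, no interpolable spectrum grows at rate $\rho$. Separation in the linear program over measures then gives a finite dual certificate: nonnegative multipliers on finitely many lag interpolation inequalities whose Toeplitz symbol is strictly positive on the circle $|z|=\rho$. Summing these lag inequalities along an arbitrary trajectory, with the initial terms handled by $x_{-1}=x_0$, gives an energy inequality $\sum_k \rho^{-2k}\norm{x_k-x^\star}^2\le C\norm{x_0-x^\star}^2$. This yields $W_N\le C_\rho\,\rho^N$.

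To reach $C_\kappa\,(\rho^\star)^N$ without losing an $\epsilon$ in the exponent, I would use the certificate exactly at $\rho=\rho^\star$. That certificate is only nonnegative, and it vanishes on the optimal support. I would then need the finite-multiplicity structure at the contact frequencies, together with the analytic tails of the lag series, to bound the boundary contribution uniformly in $N$. \textbf{This is the step I expect to be the main obstacle.} What I would try first is to show that the contacts are simple zeros of the symbol, so that at most a bounded defect remains. Failing that, I would accept a polynomial factor and remove it by the algebraic rigidity of the four families.

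\emph{Lower bound and realization.} Take the optimal measure from Theorem~\ref{thm:classification}, with at most two frequencies. It gives a planar component for each frequency, so four coordinates carry a trajectory $x_k=\rho^{\star k}(\cdots)$ satisfying the interpolation inequalities. Interpolation~\cite{THG} then produces $f\in\cF(\R^4)$ on which this sequence is a heavy-ball trajectory. That trajectory generally has nonzero initial momentum. The additional-coordinate construction fixes this: add a fifth coordinate on which $f$ is quadratic, and choose the data so that, started from rest, the trajectory converges to the rotating mixture with a bounded transient. Along the way, the interpolation inequalities linking the transient and the stationary part must be kept strict, using the slack in the fifth direction. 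The resulting function and initial point give $\norm{x_N-x^\star}\ge c_\kappa\,\rho^{\star N}\norm{x_0-x^\star}$ for every $N$. Combined with the upper bound, this shows that both limits exist and equal $\rh$.

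\emph{The spiral bound~\eqref{eq:spirallower}.} Here I would take the special single-frequency spiral at rate $\rs$. Substituting $x_k=\rho^k(\cos k\omega,\sin k\omega)$ into the lag-one interpolation constraint and the heavy-ball recursion reduces feasibility to $F(\rho,q)=0$. Only one planar pair plus one extra coordinate is needed, which puts the function in $\cF(\R^3)$. An explicit computation of the start-from-rest transient in that coordinate gives the constant $1/\sqrt5$: it is the ratio of the spiral amplitude to the initial distance.
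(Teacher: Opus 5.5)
Your reduction to Theorems~\ref{thm:spectral} and~\ref{thm:classification} and your start-up realization match the paper, and together they already give both limits. Lemma~\ref{lem:startup}, applied to an optimal spectrum with at most two atoms (the three-cycle if $T=0$), gives $W_N^{(5)}\ge c\,\rh^N$. Strict certificates above $T=\log\rh$ give $W_N\le C_\delta\,(e^{\delta}\rh)^N$ for every $\delta>0$.

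The genuine gap is the $\delta$-free upper bound $W_N\le C_\kappa\rh^N$ in \eqref{eq:bounds}. You leave it open, and neither fallback closes it. At exactly $t=T$ the certificate has no margin: its symbol is nonpositive and vanishes on the optimal support. Summing the lag inequalities as in Lemma~\ref{lem:energy} therefore only bounds $\int|\Phi_T|\,\norm{\widehat F}^2$ by endpoint window energies. That controls how far the scaled trajectory is from a stationary rotation at the contact frequencies, not its amplitude, which could still grow subexponentially. The contacts cannot be simple zeros, because an interior zero of a nonpositive analytic symbol has even order. Indeed $\Phi=-(G-c)Q$ with $Q$ having double roots at the support points, and at $\kappa_c$ the three-cycle has infinitely many active lags. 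Nondegenerate double zeros do not leave a ``bounded defect'' either. The algebraic rigidity of the four families gives no mechanism for removing a polynomial factor in $N$.

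The missing idea is to pass to the limit in Lyapunov-type functionals built from certificates slightly above the rate, rather than in the symbols.
\begin{itemize}
\item Lemma~\ref{lem:lineargap} gives certificates at $T+\delta$ with lags at most five, bounded coefficients and margin $a\delta$.
\item Fej\'er--Riesz factorization and telescoping (Lemma~\ref{lem:telescoping}) turn each certificate into a functional $\mathcal V_\delta$ on Gram windows of bounded-memory segments. It satisfies $\mathcal V_\delta(Y_k)-e^{-2(T+\delta)}\mathcal V_\delta(Y_{k+1})\ge\delta\norm{e_{k-M_0}}^2$ and $\sup_\delta|\mathcal V_\delta|<\infty$ (Lemmas~\ref{lem:relaxedrate} and~\ref{lem:prices}).
\item A limit $\mathcal V_*$ is nonnegative on admissible windows and grows by at most $e^{2T}$ per step. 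It is at least $\frac12e^{2(k-M_0)T}$ on every stationary optimal window, since $\delta/(1-e^{-2\delta})\ge\frac12$ (Lemma~\ref{lem:exactprice}).
\item A second separation shows that segments whose windows lie in $\{\mathcal V_*=0\}$ grow at a strictly smaller rate (Lemma~\ref{lem:kernel}). Compactness extends this contraction to a neighborhood of that set (Lemma~\ref{lem:nearkernel}).
\item Elsewhere $\mathcal V_*$ dominates the window trace, which gives \eqref{eq:constantwindow}.
\end{itemize}

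There are two smaller points.
\begin{itemize}
\item For $t>0$, the slack of the extra coordinate is bounded, whereas changing the first gradient creates a defect of order $e^{kt}$ in each pair $(0,k)$. What absorbs this defect is the strict minimizer slack $|C|<\bar g$ of the wave part (Lemma~\ref{lem:finiteatoms}), whose margin is of order $(\bar g-|C|)e^{2kt}$.
\item For \eqref{eq:spirallower}, the spiral is fixed by the three contacts $\ell_1=\upsilon_2=\upsilon_3=0$, not by lag one alone, and its feasibility at every lag is Lemma~\ref{lem:spiralkernel}. The constant $1/\sqrt5$ records the choice $Y=2$. The substance is proving that $Y=2$ works for every $\kappa$ and every pair $(0,k)$ (Lemma~\ref{lem:startupslack} and Table~\ref{tab:startup}), which your sketch does not address.
\end{itemize}
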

The proof is given in Appendix~\ref{app:finite-uniform}; the explicit three-dimensional construction is in Appendix~\ref{app:finite-lower}.

\begin{theorem}[Sharp convergence threshold]\label{thm:threshold}
The heavy-ball method with Polyak's tuning converges on every
$f\in\cF$ from every initial point if and only if $\kappa<\kappa_c$. More
precisely,
\[
 \rh(\kappa)<1\quad(1<\kappa<\kappa_c),\quad \rh(\kappa_c)=1,\quad
 \rh(\kappa)>1\quad(\kappa>\kappa_c),
\]
and $\rh(\kappa)\le\sqrt{\varphi q}$ for $\kappa<\kappa_c$. At $\kappa_c$ a
function in $\cF(\R^3)$ has a trajectory with
$\norm{x_N-x^\star}\ge\norm{x_0-x^\star}/\sqrt5$ for all $N$, which converges
to a three-cycle; above $\kappa_c$ the same construction diverges
exponentially.
\end{theorem}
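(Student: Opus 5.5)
We derive Theorem~\ref{thm:threshold} from Theorem~\ref{thm:rate} and the classification in Theorem~\ref{thm:classification}, combined with an explicit three-cycle computation at $\kappa_c$. The starting point is $\rh(\kappa)=\lim_N W_N^{1/N}$ together with the two-sided bounds~\eqref{eq:bounds}. These give the equivalence that convergence on every $f\in\cF$ from every start, with a uniform linear bound, holds exactly when $\rh<1$. If $\rh\ge1$, the lower bound $W_N^{(5)}\ge c_\kappa\rh^N$ rules out convergence uniform over the class. For the ``only if'' direction, however, we want a single nonconvergent trajectory, and the $\cF(\R^3)$ construction below supplies it. The statement therefore reduces to locating the sign change of $\rh(\kappa)-1$.

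\emph{Step 1 (the spiral family and the critical point).} We use the single-frequency special spiral $\rs(\kappa)$ from~\eqref{eq:spiralpoly}. Substituting $\rho=1$ gives $F(1,q)=2-2q-2q^2$, which vanishes exactly when $q^2+q-1=0$, that is, when $q=1/\varphi$. Since $q=(\sqrt\kappa-1)/(\sqrt\kappa+1)$, this happens precisely at $\sqrt\kappa=(\varphi+1)/(\varphi-1)=\varphi^3$, so $\kappa=\varphi^6=\kappa_c$. Next we check monotonicity. We have $\partial_qF<0$ at $\rho=1$, and $\partial_\rho F>0$ at the positive root, so $\rs$ is increasing in $q$. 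Combined with the lower bound~\eqref{eq:spirallower}, this yields $\rh(\kappa)\ge\rs(\kappa)>1$ for $\kappa>\kappa_c$, and the same $\R^3$ function gives explicit exponential divergence. At $\kappa=\kappa_c$, where $\rs=1$, the bound~\eqref{eq:spirallower} gives $\norm{x_N-x^\star}\ge\norm{x_0-x^\star}/\sqrt5$. The frequency is $2\pi/3$, which corresponds to the three-cycle of Badithela and Seiler. We show convergence to the cycle by writing the realized trajectory as the spiral component plus a transient coming from the extra coordinate used to start from rest, and checking that the transient is governed by a quadratic-type mode of modulus at most $q<1$.

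\emph{Step 2 (upper bound below $\kappa_c$).} For $\kappa<\kappa_c$ we must show $\rh(\kappa)\le\sqrt{\varphi q}$. Since $\varphi q<1$ in this range, this bound also gives $\rh<1$. We intend to use the dual side of Theorem~\ref{thm:classification}, namely the one-exponent certificate of Theorem~\ref{thm:oneexp} specialized to Polyak's tuning. Concretely, we exhibit a lag-one (off-by-one) multiplier certificate that is feasible at exponent $r=\sqrt{\varphi q}$ for all $q<1/\varphi$. In frequency form, this amounts to positivity of a trigonometric polynomial in $\cos\omega$ on $[-1,1]$ after the substitution $z=re^{i\omega}$. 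The choice $r^2=\varphi q$ should make the resulting quadratic in $\cos\omega$ factor, with a double root where the constraint is tight. It then remains to check that the discriminant has the correct sign for $q<1/\varphi$. At $q=1/\varphi$ the certificate degenerates to $r=1$, which matches Step~1.

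\emph{Step 3 (continuity at the endpoint).} We obtain $\rh(\kappa_c)=1$ from the lower bound $\rh\ge\rs=1$ together with the upper bound $\lim_{q\uparrow1/\varphi}\sqrt{\varphi q}=1$. Since the certificate is strict only for $q<1/\varphi$, this limit alone does not bound $\rh(\kappa_c)$. We therefore also need the certificate at $r=1$ itself, which holds non-strictly, and we combine it with Theorem~\ref{thm:classification}, whose algebraic families are continuous in $\kappa$.

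The main obstacle is Step~2. We must verify that the explicit multiplier polynomial stays nonnegative on the whole circle and for every $\kappa<\kappa_c$, rather than only near the contact frequency $2\pi/3$. If the lag-one certificate fails for some $\kappa$, we fall back on bounding $\rh$ from the four-family prescription of Theorem~\ref{thm:classification} and showing that each family stays below $\sqrt{\varphi q}$.
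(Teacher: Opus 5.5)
Your overall route is the paper's, but Step~2, which carries all the content, is only announced, and the way you expect it to go is wrong. The route itself is sound:
\begin{itemize}
\item Your lower bounds come from the realized spiral \eqref{eq:spirallower}. The paper phrases this as feasibility of the spiral and the three-cycle plus Theorem~\ref{thm:spectral}, which is equivalent.
\item Your lag-one certificate with its minimizer weight is, up to a positive factor, the combination $\ell_1+|\sh t|(\bar g\pm C)$ behind Proposition~\ref{prop:strip}. For Polyak's tuning one checks $H_r=-4q^2r^3h_\varrho$ with $\varrho=1/r$.
\item At $r=\sqrt{\varphi q}$ the actual twist $v=t-\log q$ equals $|t|+\log\varphi$, which is why this radius appears.
\item Using Theorem~\ref{thm:oneexp} at this single exponent is legitimate, since $r_0=q<\sqrt{\varphi q}$. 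It saves you the detour through $\rh=e^{T(q)}$.
\end{itemize}

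\textbf{Step 2.} The polynomial is the cubic
\[
 H_r(c)=(1-rc)\bigl[(r^2-q^2)^2-4q^2r^2(1-c^2)\bigr]+4qr^2(r^2-q^2)(1-c^2),
\]
not a quadratic. For $q<\varphi^{-1}$ and $r=\sqrt{\varphi q}$ it has no zero on $[-1,1]$: the certificate is strict, not tight. The contacts are the simple zero at $c=1$ and the double zero at $c=-\tfrac12$ of $H_1=q^2(1-c)(1+2c)^2$, and they appear only in the limit $q\uparrow\varphi^{-1}$.

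So there is no factorization or discriminant to check. What you need is a positivity proof uniform in $(c,q)$ that degenerates at exactly those two points. The paper does this as follows:
\begin{itemize}
\item $h_\varrho(c)=(1-c^2-u^2)(\varrho-c)-2u(1-c^2)$ decreases in $u\ge0$, so it suffices to take $u=\sh(|t|+\log\varphi)$.
\item Substitute $c=1-2X$ and $\varrho=1+Y$, then complete the square in $X$.
\item The constant term $A_0$ and the remainder $4A_1A_3-A_2^2$ are polynomials in $Y$ with zero constant term and positive coefficients, which gives positivity for $Y>0$, $X\in[0,1]$.
\end{itemize}
Your fallback through the four families would be much harder and is unnecessary.

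\textbf{Step 3.} This step uses the wrong tool. A non-strict certificate at exponent $0$ cannot be fed into Theorem~\ref{thm:oneexp}, which requires strictness. Continuity of the algebraic families is not what closes the argument either.

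What you need is a strict certificate at every $t>0$. At $q=\varphi^{-1}$ the actual twist is $v=t+\log\varphi$, so the same strip inequality, which is strict for $t\ne0$, gives $\rho<e^t$ for every $t>0$. Equivalently, apply the weak case of Theorem~\ref{thm:mult}(a) to $H_1=q^2(1-c)(1+2c)^2\ge0$; there the minimum principle produces exactly those certificates.

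Continuity of $\rh$ from Theorem~\ref{thm:regularity} would also give $\rh(\kappa_c)\le1$, once Step~2 is proved for all $q<\varphi^{-1}$. That, however, is a detour through the full classification.
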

The proof is given in Appendix~\ref{app:classification-spiral}.

Badithela and Seiler~\cite{BS} identified $\kappa_c$ and a three-cycle of a
scalar piecewise quadratic function at this condition number. Their
explicit solution of the weighted off-by-one linear matrix inequality
also gives $\rho\le1$ at $\kappa_c$. Theorem~\ref{thm:threshold} recovers
their threshold and combines it with the exact rate and a realization
from rest valid at every horizon, including exponential growth above
the threshold.

\begin{theorem}[Regularity and critical behavior]\label{thm:regularity}
The function $\rh$ is continuous and strictly increasing on $(1,\infty)$ and
real analytic outside the five switching points
$\kappa_W<\kappa_1<\kappa_a<\kappa_b<\kappa_2$ of
Theorem~\ref{thm:classification}. It is analytic at $\kappa_c$, and
\begin{equation}\label{eq:critical}
 \rh(\kappa)=1+\frac{\kappa-\kappa_c}{4\kappa_c}+O\bigl((\kappa-\kappa_c)^2\bigr).
\end{equation}
As $q\downarrow0$,
\[
 \rh=(1+\sqrt2)\,q-\Bigl(2+\frac{3\sqrt2}2\Bigr)q^2+O(q^3),\quad
 \rh-\rs=\frac{10+7\sqrt2}{4}\,q^3+O(q^4),
\]
and as $\kappa\to\infty$, $\rh(\kappa)\to\rho_*=1.71298\ldots$, the positive
root of $\rho^4-2\rho^3+2\rho^2-2\rho-1$.
\end{theorem}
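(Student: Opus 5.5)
Since the statement concerns properties of $\rh$ as a function of $\kappa$, I will work piecewise through the classification of Theorem~\ref{thm:classification}. On each interval between consecutive switching points $\kappa_W<\kappa_1<\kappa_a<\kappa_b<\kappa_2$, $\rh$ is given by one of the four algebraic families: the special spiral $\rs$, the single-frequency family, or one of the two two-frequency families. Each family is defined by a polynomial contact system in $(\rho,q,\cos\omega_1,\cos\omega_2)$.

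\textbf{Analyticity.} Away from the switching points, I will show that the active family's Jacobian, with respect to $(\rho,\cos\omega_i)$, of its contact system is nonsingular along the branch. The implicit function theorem then gives real-analyticity in $q$, and $q=(\sqrt\kappa-1)/(\sqrt\kappa+1)$ is analytic in $\kappa$. For the spiral this is just $\partial_\rho F\neq0$ at the positive root of \eqref{eq:spiralpoly}.

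\textbf{Continuity and monotonicity.} Continuity on all of $(1,\infty)$ follows because $\rh$ is the value of a spectral maximization (Theorem~\ref{thm:spectral}) whose constraints depend continuously on $q$, and because at each switching point the two adjacent branches agree; the classification already records these coincidences. For strict monotonicity, I will avoid differentiating branches and use a monotone-coupling argument instead. An interpolable spectrum at $\kappa$ with growth factor $\rho$ should remain interpolable, after rescaling, at larger $\kappa$ with growth at least $\rho$, since $\cF$ grows with $\kappa$ for fixed $\mu$. This is not immediate, however, because the tuning changes with $\kappa$. The fallback is branchwise: check $d\rho/dq>0$ on each family via implicit differentiation, certified by the interval computations that underlie Theorem~\ref{thm:classification}. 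I expect monotonicity to be the main obstacle, since it must hold uniformly on unbounded intervals. For the tail branch I would take the limit $q\to1$ and use the sign of an explicit polynomial.

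\textbf{Expansions.} The critical expansion \eqref{eq:critical} follows from the branch active near $\kappa_c$ (the special spiral, by Theorem~\ref{thm:threshold}). Setting $\rho=1$ in \eqref{eq:spiralpoly} gives $2-2q-2q^2=0$, so $q=1/\varphi$. Implicit differentiation gives $d\rho/dq=-F_q/F_\rho$ at $(1,1/\varphi)$. Combining with $dq/d\kappa=1/(\sqrt\kappa(\sqrt\kappa+1)^2)$ at $\kappa_c=\varphi^6$, I will check that the product equals $1/(4\kappa_c)$. Analyticity at $\kappa_c$ then comes from $F_\rho\ne0$ there, together with $\kappa_c$ not being a switching point.

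For $q\downarrow0$, the active branch is the one on $(1,\kappa_W)$. I will expand its contact system in powers of $q$ with the ansatz $\rho=a_1q+a_2q^2+\dots$ and the frequencies expanded likewise. This yields $a_1=1+\sqrt2$ and $a_2=-(2+3\sqrt2/2)$. Doing the same for $F$ gives the expansion of $\rs$, and subtracting gives the difference $\tfrac{10+7\sqrt2}{4}q^3$.

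For $\kappa\to\infty$, i.e. $q\to1$, I will take the tail branch on $(\kappa_2,\infty)$, substitute $q=1$ into its contact system, and eliminate the frequency variables by resultants. I expect this to leave $\rho^4-2\rho^3+2\rho^2-2\rho-1$ as the relevant factor. The correct root is then selected by continuity and numerics, giving $1.71298\ldots$.
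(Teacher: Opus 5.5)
Your argument at $\kappa_c$ has a genuine gap: the spiral is not the active branch there. By Table~\ref{tab:regimes}, $\rh$ is given by the central two-frequency family $P_5$ on $[\kappa_a,\kappa_b]\setminus\{\kappa_c\}$, and it coincides with $\rs$ only at the isolated point $\kappa_c$. Proposition~\ref{prop:spiralsub} even gives $\rh>\rs$ on a punctured neighborhood; Theorem~\ref{thm:threshold} uses the spiral only as a lower-bound construction. So $F_\rho\ne0$ at $(1,\varphi^{-1})$ proves nothing about analyticity of $\rh$ at $\kappa_c$, and $d\rs/d\kappa$ need not equal $d\rh/d\kappa$. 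Your value $1/(4\kappa_c)$ is numerically correct, but only because the spiral boundary $\arsinh(1-1/(2\cosh t))$ is, like $V$, even in $t$ with value $\log\varphi$ at $t=0$. Knowing $\rh\ge\rs$ and $\rh(\kappa_c)=\rs(\kappa_c)$ does not exclude a corner. For instance, $V(t)=\log\varphi+c|t|+O(t^2)$ with $0<c<1$ is compatible with both the spiral lower bound and the strict strip $|v|<|t|+\log\varphi$.

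The missing ingredient is differentiability, indeed analyticity, of the true boundary $V$ at $t=0$, and this is the delicate point. There the optimal spectrum is the three-cycle, with infinitely many active constraints and a second atom of zero mass, so the $P_5$ contact system degenerates at $h=\cosh t-1=0$. The paper removes this by the blow-up \eqref{eq:blowup}, whose system is regular at $h=0$ (Theorem~\ref{thm:p5h0}). Theorem~\ref{thm:p5local} then gives $V(t)=\arsinh\bigl(\tfrac12+(\cosh t-1)\mathcal A(\cosh t-1)\bigr)$, analytic and even at $0$. Hence $V'(0)=0$, and the analytic inverse function theorem applies to $t-V(t)=\log q$ at $q_c=\varphi^{-1}$. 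This gives $T=\log(q/q_c)+O(\log^2(q/q_c))$, and $d\log q/d\kappa=1/(\sqrt\kappa(\kappa-1))=1/(4\kappa_c)$ yields \eqref{eq:critical}.

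The other parts are fixable but more roundabout than necessary.

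\emph{Monotonicity.} This is not an obstacle. Theorem~\ref{thm:classification} already states that $t\mapsto t-V(t)$ is continuous and strictly increasing; this is where $|V'|<1$ on each arc enters. Thus $\rh=\exp\bigl((\mathrm{id}-V)^{-1}(\log q)\bigr)$ is continuous and strictly increasing in $q$, hence in $\kappa$, and analytic wherever $V$ is. Your branchwise check $d\rho/dq>0$ is exactly $|V'|<1$. Your appeal to continuity of a maximum value gives only upper semicontinuity by itself.

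\emph{Large $\kappa$.} The tail branch on $[\kappa_2,\infty)$ is the spiral, so no resultant elimination is needed: $F(\rho,1)=\rho^4-2\rho^3+2\rho^2-2\rho-1$ directly. Its positive root is unique by $F=(\rho-q)^2(\rho^2+1)-2q\{q-(1-q)\rho^2\}$.

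\emph{Small $q$.} The $P_1$ system degenerates at $G=\infty$. Working in $z=1/G$ with \eqref{eq:p1blowup} gives $u_1=1-z/2+z^2/8-z^3/4+O(z^4)$. Inverting $\rho/q=u+\sqrt{1+u^2}$ with $z=2\rho/(1+\rho^2)$ then yields both coefficients. The cubic gap comes from the $z^2/8$ term, since the spiral has $u=1-z/2$ exactly.
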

Regularity is proved in Appendix~\ref{app:classification-regularity}, and the asymptotic expansions in Appendix~\ref{app:finite-expansions}.

Two-frequency spectra attain a strictly larger rate than the spiral on two
intervals of condition numbers. The rate equals
$\rs$ exactly on
$[\kappa_W,\kappa_1]\cup\{\kappa_c\}\cup[\kappa_2,\infty)$ and exceeds $q$
everywhere. Figure~\ref{fig:rates} displays the rate and its regimes.
\begin{figure}[tbp]
\centering
\includegraphics[width=\linewidth]{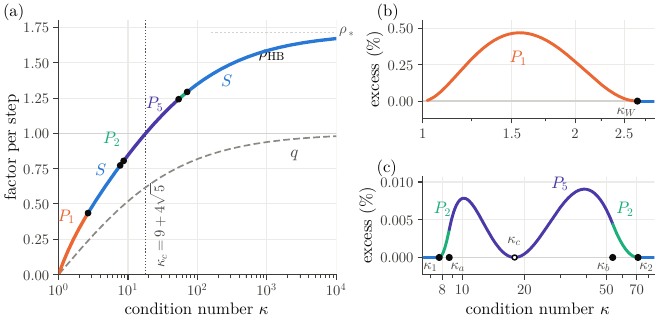}
\caption{Exact Polyak rate and optimal regimes. (a) $\rh$, the quadratic
factor $q$, the threshold $\kappa_c$ and the limit $\rho_*$; dots mark the
five switches. (b), (c) $100(\rh/\rs-1)$ on its two positive windows,
each at its own scale. At $\kappa_c$ the optimal spectrum is the three-cycle.}
\label{fig:rates}
\end{figure}

\subsection{Spectral characterization}\label{sec:spectral}
The reduction applies to every quadratically stable tuning
\eqref{eq:quad}. Translate the minimizer to zero and normalize the function by
\begin{equation}\label{eq:psi}
 \psi(e)=\frac2{L-\mu}\Bigl(\frac{L+\mu}4\norm{e}^2-f(x^\star+e)+f(x^\star)\Bigr).
\end{equation}
Then $\psi(0)=0$, $\nabla\psi(0)=0$ and $\nabla\psi$ is $1$-Lipschitz;
these properties define $\cS$. For centered functions with $f(0)=0$ this is a bijection, and with
$e_k=x_k-x^\star$ and $\sigma_k=\nabla\psi(e_k)$ the iteration becomes
\begin{equation}\label{eq:normrec}
 e_{k+1}=Ae_k+B\sigma_k-\beta e_{k-1},\quad
 A=1+\beta-\frac{\alpha(L+\mu)}2,\quad B=\frac{\alpha(L-\mu)}2 .
\end{equation}
Identify the plane with $\C$. A wave $e_k=z^k$, $z=e^{t+i\omega}$,
rotates by $\omega$ and scales by $e^t$ per step; $t$ is its growth exponent.
The recurrence requires $\sigma_k=m(t,\omega)z^k$, where
\begin{equation}\label{eq:m}
 m=m(t,\omega)=\frac{z+\beta/z-A}{B} .
\end{equation}
For an orthogonal sum of waves with common exponent $t$, a probability
measure $\nu$ on $[0,\pi]$ records their squared initial amplitudes, normalized
to sum to one. Each atom is a mass at one frequency and represents one
wave. Write bars for averages with respect to $\nu$ and put
$g=1-|m|^2$, $s=\Ima m$ and
\begin{equation}\label{eq:abm}
 a_m=g\,(\ch mt-\cos m\omega),\quad b_m=2s\sin m\omega\quad(m\ge1).
\end{equation}
Here the index $m\ge1$ denotes a time lag. The interpolation conditions
of Taylor, Hendrickx and Glineur~\cite{THG}, extended to arbitrary subsets of a Hilbert space by
Azagra and Mudarra~\cite{AM}, give a linear feasibility problem for this
spectrum. The scalar $C$ prescribes its function values; the precise
interpolation statement is Theorem~\ref{thm:interpolation}.
\begin{definition}[Feasible spectrum]\label{def:feasible}
A pair $(\nu,C)$ is feasible at the exponent $t$ if
\begin{equation}\label{eq:feasible}
 \begin{gathered}
 |C|\le\bar g,\\
 \ell_m:=\bar a_m-\bar b_m+C\sh mt\ge0,\quad
 \upsilon_m:=\bar a_m+\bar b_m-C\sh mt\ge0\quad(m\ge1).
 \end{gathered}
\end{equation}
The numbers $\ell_m$ and $\upsilon_m$ are the lower and upper lag-$m$ slacks.
\end{definition}
The first inequality enforces interpolation with the minimizer; a lag
constraint is active when its slack is zero. A finite nonnegative combination
of these inequalities gives a strict dual certificate of infeasibility
when its $C$ coefficient cancels and its remaining function of frequency,
or symbol, is uniformly negative:
\begin{equation}\label{eq:strictcert}
 \begin{gathered}
 \Phi_t(\omega)=\sum_m\bigl\{\alpha_m(a_m-b_m)+\beta_m(a_m+b_m)\bigr\}+(\gamma_++\gamma_-)g\le-\varepsilon
 \quad(0\le\omega\le\pi),\\
 \sum_m(\alpha_m-\beta_m)\sh mt+\gamma_+-\gamma_-=0 .
 \end{gathered}
\end{equation}
Here $\alpha_m,\beta_m,\gamma_\pm\ge0$ have finite support and
$\varepsilon>0$. Such a certificate gives an upper bound valid for every
trajectory. Under the hypotheses below, a finitely supported feasible
spectrum gives a lower bound realized by one trajectory at every horizon.
\begin{theorem}[Spectral characterization]\label{thm:spectralgeneral}
Assume $0<\alpha L<2(1+\beta)$.
\begin{enumerate}[label=\textup{(\alph*)}]
\item The conditions \eqref{eq:feasible} are infeasible at $t$ if and only if a
strict certificate at $t$ exists.
\item If strict certificates exist at every $t\ge t_0$, then $\rho<e^{t_0}$.
\item If $(\nu,C)$ is feasible at $t$, $\nu$ has $r$ atoms, and $|C|<\bar g$ in
case $t>0$, then there are $f\in\cF(\R^{2r+1})$ and $x_0\ne x^\star$ with
$\norm{x_N-x^\star}\ge c_0\,e^{Nt}\norm{x_0-x^\star}$ for all $N\ge0$, where
$c_0>0$. In particular $\rho\ge e^t$.
\end{enumerate}
\end{theorem}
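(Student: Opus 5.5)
Part (a) is a separation statement for a semi-infinite linear system, part (b) turns a certificate into an energy estimate, and part (c) is a realization through the interpolation theorem.

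\textbf{(a) Duality.} The unknowns are a probability measure $\nu$ on $[0,\pi]$ and $C\in\R$. Each constraint in \eqref{eq:feasible} is affine in $(\nu,C)$, since $\bar a_m$, $\bar b_m$ and $\bar g$ are integrals of continuous functions of $\omega$ against $\nu$. The sufficiency of a strict certificate is direct. Integrate the combination \eqref{eq:strictcert} against $\nu$. The $C$ terms cancel by the second condition in \eqref{eq:strictcert}, so the combination equals $\int\Phi_t\,d\nu\le-\varepsilon<0$. This contradicts the nonnegativity of every slack and of $\bar g\mp C$.

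For the converse I would first reduce to finitely many lags. The values of $a_m\pm b_m$ for large $m$ are controlled as follows. When $t>0$, $\ch mt$ dominates. When $t=0$, I would truncate and use compactness of the set of probability measures in the weak-$*$ topology. The feasible set for lags $m\le M$ is convex, and it is weak-$*$ compact once $C$ is bounded by $|C|\le\bar g\le\sup g$. If every finite truncation were feasible, a limit point would be feasible for all lags. Hence infeasibility is already witnessed by some finite $M$.

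For that finite system, apply a theorem of the alternative. Consider the convex cone generated by the vectors
\[
\bigl(a_m(\omega)\mp b_m(\omega)\bigr)_{m\le M},\qquad g(\omega)\ \text{(in both minimizer rows)},\qquad \omega\in[0,\pi],
\]
together with the $C$-direction $\bigl(\pm\sh mt,\ \pm1\bigr)$. This cone is finitely generated over a compact family of vectors. Suppose no point of it has all coordinates nonnegative. Then a strictly separating hyperplane exists, and its coefficients are the multipliers $\alpha_m,\beta_m,\gamma_\pm$. The requirement that the hyperplane be annihilated by the free variable $C$ gives the cancellation condition in \eqref{eq:strictcert}. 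Strict negativity, with a uniform $\varepsilon$, comes from continuity of $\Phi_t$ on the compact interval.

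\textbf{(b) Upper bound.} Fix $t'\in(t_0,t)$ with a strict certificate available at $t'$. Apply the corresponding finite combination of interpolation inequalities to the actual trajectory $(e_k,\sigma_k,\psi(e_k))$. This is a Zames--Falb type argument. I would weight the inequality linking iterates $k$ and $k+m$ by $e^{-2t'(k+m/2)}$ and sum over $k\le N$. The weight $e^{-mt'}$ against $e^{mt'}$ produces the $\ch mt'$ and $\sh mt'$ structure. The function-value terms telescope into the $C$ coefficient, which vanishes by the second condition in \eqref{eq:strictcert}.

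Parseval for the exponentially weighted sequences (a Toeplitz/Fourier representation) turns the summed quadratic form into $\int\Phi_{t'}\,|\hat e|^2$. Because $\Phi_{t'}\le-\varepsilon$, this is $\le-\varepsilon\sum e^{-2t'k}\norm{e_k}^2$ up to boundary terms. The boundary terms are bounded by a constant times $\norm{e_0}^2$, because zero initial momentum fixes $e_{-1}=e_0$. The conclusion is
\[
\sum_k e^{-2t'k}\norm{e_k}^2\le C\norm{e_0}^2 ,
\]
which gives $W_N\le C'e^{Nt'}$. Hence $\rho\le e^{t'}<e^{t_0}$.

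I expect this step to be the main obstacle. The difficulty is handling the finite-horizon truncation of lags that cross $N$ and the transient at $k=0$. The plan is to use the recurrence to express $e_k$ through $\sigma$ and to absorb the boundary terms with the margin $\varepsilon$.

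\textbf{(c) Lower bound.} I would place the $r$ atoms as orthogonal planar waves in $\R^{2r}$ with amplitudes $\sqrt{\nu_j}$. Define function values $\psi(e_k)$ from $C$ and the exponent. The lag inequalities \eqref{eq:feasible}, together with the minimizer condition, are then exactly the smooth strongly convex interpolation conditions of Theorem~\ref{thm:interpolation}. That theorem yields $\psi\in\cS$, and hence $f$.

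Zero momentum is not automatic, since a wave has $e_{-1}\ne e_0$. I would fix this with one extra coordinate. Add a transient in the $(2r+1)$-st direction, chosen as a correction that decays or tracks the wave, so that $x_{-1}=x_0$ holds. The strict slack $|C|<\bar g$ when $t>0$ leaves room to interpolate the additional points. This gives $\norm{e_N}\ge c_0e^{Nt}\norm{e_0}$.
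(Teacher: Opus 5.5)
Your part (a) follows the paper's argument, and (c) has the right overall structure. Part (b), however, has a genuine gap, at exactly the step you flagged.

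\textbf{Part (b).} Summing the certificate along a trajectory and applying Parseval to the truncated sequence does not give $\sum_k e^{-2t'k}\norm{e_k}^2\le C\norm{e_0}^2$. Truncation errors arise at both ends of the horizon. Those at the terminal end are indefinite quadratic forms in the last $M'+2$ rescaled iterates. What you actually get is the windowed estimate $\sum_{k=i}^{j}\mathsf b_k\le K(E_i+E_j)$ of Lemma~\ref{lem:energy}, with $\mathsf b_k=e^{-2kt'}\norm{e_k}^2$ and $E_j$ the energy of a fixed-width window around $j$. The terminal window contains the largest terms of a growing sequence, so this inequality is compatible with geometric growth of $\mathsf b_k$. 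The margin $\varepsilon$ cannot absorb these terms, and rewriting through the recurrence does not change that.

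What is missing is an a priori control of growth. The paper obtains it from the envelope $\Gamma$ of arbitrary segments. It has $W_N\le\sqrt2\,\Gamma(N)$, $\Gamma$ is submultiplicative, and at $t=\log\rho_\Gamma$ the relative growth $\Gamma(p)\rho_\Gamma^{-p}$ is subexponential. Combined with the windowed estimate, a strict certificate at that exponent would force block sums to contract by a fixed factor, giving $\rho_\Gamma<\rho_\Gamma$ (Lemma~\ref{lem:plateau}). This is why the hypothesis asks for certificates at \emph{every} $t\ge t_0$: the one actually used sits at the a priori unknown exponent $\log\rho_\Gamma$, which is $\ge t_0$ under the contradiction hypothesis.

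The single-exponent alternative, used later in Theorem~\ref{thm:oneexp}, is the IQC homotopy in $\tau\psi$ of Lemma~\ref{lem:homotopy}. It needs the stability condition $e^{t}>r_0$ and is followed by the restart bound of Lemma~\ref{lem:shift} for arbitrary initial pairs. Neither step appears in your sketch.

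Separately, your final inequality is backwards: $t'>t_0$ gives $e^{t'}>e^{t_0}$. Even a valid energy bound at $t_0$ yields only $\rho\le e^{t_0}$. Strictness needs either the restart argument or the observation that a finite strict certificate survives at slightly smaller exponents after readjusting $\gamma_\pm$.

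\textbf{Part (c).} A coordinate orthogonal to the waves cannot change the initial momentum of the wave coordinates, so it cannot by itself enforce $x_{-1}=x_0$. Lemma~\ref{lem:startup} instead replaces the wave gradient at index $0$ by $(z_j-A+\beta)/B$, so the wave part itself starts from rest. This damages only the interpolation inequalities involving index $0$.

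The extra coordinate is a scalar quadratic $\tau u^2/2$ with its own from-rest trajectory $p_kY\to0$. It adds $(1-\tau^2)Y^2(p_i-p_j)^2/4$ to every $R_{ij}$. Choose $\tau$ so that $\inf_{k\ge1}(1-p_k)^2>0$; then one large $Y$ repairs all pairs $(0,k)$ and $(\star,0)$ at once.

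The hypothesis $|C|<\bar g$ for $t>0$ enters because the altered gradient creates an $O(e^{kt})$ defect in the pair $(0,k)$. The bounded gain from the extra coordinate cannot cover this defect, but the stationary margin $(\bar g-|C|)e^{2kt}/4$ can.

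\textbf{Part (a).} The cone you separate contains the origin, so the requirement that none of its points be nonnegative can never hold. Separate $0$ instead from the closed set $\operatorname{conv}\{(\phi_i(\omega))_i\}+\R\varsigma-\R_+^{\mathcal I}$. Here $\phi_i$ are the finitely many constraint integrands and $\varsigma$ their coefficients of $C$. This gives $\lambda\ge0$, $\lambda\cdot\varsigma=0$ and the uniform margin in one step.
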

\begin{proof}
For \textup{(a)}, integrating the symbol of a strict certificate against a
feasible measure gives $0\le\int\Phi_t\,d\nu\le-\varepsilon$, a
contradiction. Conversely, the minimizer inequalities restrict $C$ to
$[-1,1]$, and the probability measures on $[0,\pi]$ form a weakly compact
set. Thus infeasibility implies that a finite subsystem, including both
minimizer inequalities,
$\int\phi_i\,d\nu+\varsigma_iC\ge0$, $i\in\mathcal I$, is infeasible
(Lemma~\ref{lem:compact}). The closed convex set
\[
 \operatorname{conv}\{(\phi_i(\omega))_i:0\le\omega\le\pi\}
 +\R\varsigma-\R_+^{\mathcal I}
\]
excludes zero; it is closed because it is the sum of a compact set and a
polyhedral cone. Strict separation gives nonnegative multipliers
$\lambda_i$, with $\lambda\cdot\varsigma=0$ and
$\sum_i\lambda_i\phi_i(\omega)\le-\varepsilon$ uniformly in $\omega$.
These are the coefficients of \eqref{eq:strictcert}.

For \textup{(b)}, let $\rho_\Gamma$ be the asymptotic growth factor of
arbitrary finite trajectory segments, defined in \eqref{eq:Gamma}.
Lemma~\ref{lem:envelope} gives $\rho\le\rho_\Gamma$.
Lemma~\ref{lem:energy} converts a strict certificate into an energy
estimate with only endpoint terms; Lemma~\ref{lem:plateau} consequently
excludes a strict certificate at $\log\rho_\Gamma$. If
$\rho_\Gamma\ge e^{t_0}$, the hypothesis supplies precisely such a
certificate. Hence $\rho\le\rho_\Gamma<e^{t_0}$.

For \textup{(c)}, Lemma~\ref{lem:spectralinterp} gives interpolable wave
data in $2r$ dimensions. Lemma~\ref{lem:startup} appends one decaying
quadratic coordinate and changes the first gradient so that
$x_{-1}=x_0$. Its hypotheses are exactly feasibility and, when $t>0$,
$|C|<\bar g$. The resulting single interpolant satisfies
$\norm{x_N-x^\star}\ge c_0 e^{Nt}\norm{x_0-x^\star}$ for every $N$.
Taking $N$th roots gives $\rho\ge e^t$.
\end{proof}

For Polyak's tuning, $A=0$, $B=2q$, $\beta=q^2$ and
$m=\ch(v+i\omega)$, with
\begin{equation}\label{eq:polyakwave}
 v=t-\log q,\quad u=\sh v,\quad c=\cos\omega,\quad G=\ch t.
\end{equation}
Consequently $g=1-c^2-u^2$, $s=u\sin\omega$, and
\begin{equation}\label{eq:abg}
 a_m(c)=g(c)\bigl(T_m(G)-T_m(c)\bigr),\quad b_m(c)=2u(1-c^2)U_{m-1}(c),
\end{equation}
where $T_m,U_m$ are the Chebyshev polynomials. On changing variables from
$\omega$ to $c=\cos\omega$, we use $\nu$ for the resulting measure on $[-1,1]$.
The relative exponent $v=t-\log q$ measures growth
against the quadratic rate $q$. In the auxiliary program $v$ and $t$ vary
independently, with $u=\sh v$; the actual tuning follows $v=t-\log q$. Set
\begin{equation}\label{eq:VT}
 V(t)=\max\{v\ge0:\ \text{\eqref{eq:feasible} is feasible at }(v,t)\},\quad
 T(q)=\max\{t:\ |t-\log q|\le V(t)\}.
\end{equation}
\begin{theorem}[Spectral rate for Polyak's tuning]\label{thm:spectral}
For every $q\in(0,1)$, $\rh(\kappa)=e^{T(q)}$. The function $V$ is even,
bounded by $\arsinh1$, and the set of $v$ feasible at a given $t$ is
$[-V(t),V(t)]$. At every optimal point with $T(q)\ne0$, every optimal measure
is finitely supported in $(-1,1)$ and satisfies $|C|<\bar g$, and only finitely
many lag constraints are active.
\end{theorem}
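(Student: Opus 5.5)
The plan is to read Theorem~\ref{thm:spectral} off Theorem~\ref{thm:spectralgeneral} once the Polyak parametrization \eqref{eq:polyakwave} has decoupled the exponent. First come the structural facts about $V$. Under $v\mapsto -v$ we have $u\mapsto -u$. This leaves $g=1-c^2-u^2$ and $a_m$ unchanged and flips the sign of $b_m$. Replacing $C$ by $-C$ then exchanges $\ell_m$ and $\upsilon_m$, so feasibility at $(v,t)$ is equivalent to feasibility at $(-v,t)$. The same substitution with $t\mapsto -t$ uses that $G=\ch t$ is even and $\sh mt$ is odd, which gives evenness of $V$ in $t$.

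The bound $V\le\arsinh 1$ follows from the minimizer constraint. Since $|C|\le\bar g$ forces $\bar g\ge0$, and $g\le1-u^2$, we need $u^2\le1$.

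Next I would show that the feasible set in $v$ is an interval. The data are linear in the measure and in $C$, but $v$ enters nonlinearly, so this is not automatic. My candidate route uses two facts: the problem is symmetric in $v$, and the cone $\{(\nu,C)\}$ feasible at $u$ can be mixed. Given feasibility at $u$ and at $-u$ (the latter by symmetry), I would show feasibility at any $u'\in[-u,u]$. Each constraint is $\bar a_m\pm\bar b_m\pm C\sh mt$. Here $a_m$ is decreasing in $u^2$ through $g$, while $b_m$ is linear in $u$. So I take the same $\nu$, scale $C$ appropriately, and compare: $\bar g$ and $\bar a_m$ increase as $|u|$ decreases, and the $\bar b_m$ terms interpolate linearly. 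The catch is that $\bar a_m$ increases only where $T_m(G)-T_m(c)\ge0$, which holds since $G\ge1\ge c$. Writing $u'=\theta u+(1-\theta)(-u)$ and averaging the two constraint systems with weights $\theta,1-\theta$ handles the $b$ and $C$ terms exactly and leaves the $a$ terms with a favorable gap. Compactness of probability measures and continuity then show that the maximum in \eqref{eq:VT} is attained, so the feasible set is exactly $[-V(t),V(t)]$.

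With this in hand, $T(q)$ is the largest $t$ such that $v=t-\log q$ is feasible at $t$. Theorem~\ref{thm:spectralgeneral}(c) applied at $t=T(q)$ gives $\rh\ge e^{T(q)}$, provided an optimal measure can be taken finitely supported with $|C|<\bar g$ when $T\ne0$. I postpone that and argue continuity in $t$ if needed. For the reverse inequality, take any $t>T(q)$. Then $|t-\log q|>V(t)$, so the system is infeasible, and part (a) yields a strict certificate. Part (b) then gives $\rh<e^{t}$ for every such $t$, hence $\rh\le e^{T(q)}$. If $T$ is not attained from the right at some exponent, this step needs upper semicontinuity of the feasible set, which again follows from weak compactness.

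The main obstacle is the final regularity claim: finite support in $(-1,1)$, $|C|<\bar g$, and finitely many active lags. My plan starts from the dual at the optimum, which must be non-strict. Taking a limit of normalized strict certificates as $t\downarrow T$ gives a nonzero dual symbol $\Phi\le0$ on $[-1,1]$, and every optimal $\nu$ is supported on its zero set. If $\Phi$ is a finite lag combination, it is a polynomial in $c$ and has finitely many zeros, which gives finite support. To reach this, I would show that the multipliers $\alpha_m,\beta_m$ decay. The lag terms carry growth $T_m(G)\sim e^{m|t|}/2$ against bounded $b_m$, so a certificate with $t\ne0$ cannot put mass on large lags without the $a_m$ terms dominating. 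This yields a uniform bound on the lag support and hence finitely many active constraints; it is the analytic lag-tail argument mentioned in the introduction. To exclude the endpoints $c=\pm1$, note that there $b_m=0$ and $a_m=g(T_m(G)-1)>0$, so the slack is strictly positive when $t\ne0$ and $\Phi<0$ at $\pm1$. Finally, $|C|<\bar g$ follows because an active minimizer constraint would let one perturb $v$ upward, contradicting maximality in \eqref{eq:VT}. The most delicate case is the near-degenerate one where $\bar g$ is small, and I expect it to require explicit case analysis.
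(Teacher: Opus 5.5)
Your first half is sound. The symmetries and the bound $V\le\arsinh1$ are correct. Averaging the systems at $u$ and $-u$ is the paper's scaling $(\nu,\theta C)$ of Lemma~\ref{lem:vmono}(iii) in another form. The upper bound $\rh\le e^{T(q)}$ via Theorem~\ref{thm:spectralgeneral}(a),(b) also works.

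The gap is in your last paragraph, and it affects the rate identity itself. For $T>0$, Theorem~\ref{thm:spectralgeneral}(c) needs a finitely supported optimal measure with $|C|<\bar g$, and your arguments for these facts do not work.

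\textbf{Endpoints.} Your claim has the wrong sign. At $c=\pm1$ one has $g=-u^2$, $b_m=0$ and $a_m=-u^2\bigl(T_m(G)-T_m(\pm1)\bigr)\le0$. The lag and minimizer functions are therefore \emph{negative} there, and that is what makes a limiting symbol negative at $\pm1$. This needs $u\ne0$, i.e.\ $T>\log q$, which you never establish; the paper gets it from the feasible spiral, $\rs>q$.

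\textbf{Finite support.} Nothing forces the lag support of certificates to stay bounded as $t\downarrow T$. After division by $U_{m-1}(G)$ the lag functions converge uniformly to $g\sh|t|$, which is the minimizer constraint. Near-degenerate certificates can therefore spread weight over ever larger lags, and the limit symbol is in general an infinite series, not a polynomial. With your unspecified normalization, weight can also escape to infinity and destroy the cancellation of $C$. The paper handles this in three steps:
\begin{itemize}
\item it normalizes by $U_{m-1}(G)$, so that $C$ enters every constraint with coefficient $\pm\sh t$;
\item it adjoins the minimizer as the index $m=\infty$, giving a compact index set on which weak limits keep total mass and the $C$-cancellation;
\item it obtains finitely many zeros from analyticity of the limit symbol in the strip $|\Ima\omega|<|T|$, where the trigonometric parts decay geometrically in $m$.
\end{itemize}

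\textbf{The condition $|C|<\bar g$.} Tightness of $|C|\le\bar g$ gives no room to increase $v$. A tight constraint obstructs perturbation, so maximality yields no contradiction. The paper argues differently, and only after finite support is known. Suppose $C=\bar g$ with $T>0$. Then the upper constraints say $f(m)\ge-\bar g e^{-mT}$, where $f(m)=\sum_j\nu_j\{2u\sin\omega_j\sin m\omega_j-g_j\cos m\omega_j\}$. For finitely many distinct atoms in $(0,\pi)$, the Ces\`aro means of $f$ tend to $0$. The Ces\`aro means of $f^2$ tend to $\frac12\sum_j\nu_j^2\bigl(g_j^2+4u^2\sin^2\omega_j\bigr)>0$. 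Together with the lower bound on $f$, this is impossible.

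\textbf{Active lags.} Finitely many active lags does not follow from bounded dual support, since complementary slackness runs the other way. It follows on the primal side from $|C|<\bar g$ and the tail bound of Lemma~\ref{lem:tailsharp}: $\ell_m,\upsilon_m\ge(\bar g-|C|)\ch mT-(\bar g+2u^2)$.

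\textbf{The case $T(q)=0$.} You leave it open, and there your $e^{m|t|}$ growth heuristic is void. The paper uses Proposition~\ref{prop:strip} to force $q=\varphi^{-1}$ and then realizes the three-cycle. Alternatively, the general-measure realization behind Theorem~\ref{thm:exactthr}(b) works at $t=0$.
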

The proof is given in Appendix~\ref{app:spectral-polyak}.

The feasible region is $|v|\le V(t)$. The parameters of the method satisfy
$v=t-\log q$; the largest feasible intersection gives
$T=\log\rh$ (Figure~\ref{fig:boundary}). A limiting dual symbol is analytic
for $t\ne0$. Its finitely many interior zeros carry every optimal measure,
and strict slack at the minimizer makes every sufficiently large lag
inactive. These properties reduce the infinitely many interpolation
inequalities to a finite algebraic classification.

\subsection{Complete spectral classification}\label{sec:boundary}
\subsubsection{Single-frequency construction and convergence threshold}\label{sec:spiral}
The three contacts $\ell_1=\upsilon_2=\upsilon_3=0$ determine an explicit
single-frequency spectrum.
\begin{proposition}[The special spiral]\label{prop:spiral}
For every $t\ne0$, the data
\begin{equation}\label{eq:spiraldata}
 u=1-\frac1{2G},\quad c=-\frac1{2G},\quad \nu=\delta_c,\quad C=g\tanh t
\end{equation}
are feasible at $(\arsinh u,t)$, and $\ell_1=\upsilon_2=\upsilon_3=0$. On this
spectrum
\begin{equation}\label{eq:En}
 \frac{\ell_m}g=\frac{E_{m+1}(t)}G,\quad \frac{\upsilon_m}g=\frac{E_{m-1}(t)}G,\quad
 E_n(t)=\ch nt+2G^2\cos n\omega,
\end{equation}
and for every $n\ge3$, $G^{n-2}E_n(t)$ is a polynomial in $G^2-1$ with
nonnegative coefficients, positive for $t\ne0$. The relation
$u=\sh(t-\log q)$ on this curve is \eqref{eq:spiralpoly} with $\rho=e^t$.
\end{proposition}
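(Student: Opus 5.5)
The plan is a direct verification on the single atom. With $\nu=\delta_c$ all bars disappear, so I would first record the closed forms of $g$ and $s$, then identify the slacks with $E_{m\pm1}$ using angle-addition formulas, read off the three contacts, and finally treat positivity of $E_n$ for $n\ge3$, which is the only genuinely non-mechanical step.

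\emph{Closed forms.} From \eqref{eq:spiraldata} we get $u^2=1-1/G+1/(4G^2)$ and $c^2=1/(4G^2)$. Hence
\[
g=1-c^2-u^2=\frac{2G-1}{2G^2}>0 \quad (G\ge1),
\qquad |C|=g\,|\tanh t|<g ,
\]
so the minimizer inequality holds strictly for $t\ne0$. Using $\sin^2\omega=1-c^2=(4G^2-1)/(4G^2)$, a one-line computation gives
\[
\frac{b_m}{g}=\Bigl(2G-\frac1{2G}\Bigr)\frac{\sin m\omega}{\sin\omega}=2G\sin\omega\,\sin m\omega .
\]
This uses the identity $2G\sin^2\omega=2G-1/(2G)$. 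Consequently
\[
\frac{\ell_m}{g}=\ch mt+\tanh t\,\sh mt-\cos m\omega-2G\sin\omega\sin m\omega .
\]

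\emph{Identification with $E_{m\pm1}$ and the contacts.} Divide $E_{m+1}$ by $G$. Expand $\ch(m+1)t=G\ch mt+\sh t\,\sh mt$ and $\cos(m+1)\omega=c\cos m\omega-\sin\omega\sin m\omega$, with $2Gc=-1$. The result is exactly the expression above. The upper slack is handled in the same way with $m-1$ in place of $m+1$, since the sign of the $C$ and $b_m$ terms flips. The contacts then follow directly:
\[
E_1=G+2G^2c=0,\qquad E_2=2G^2-1+2G^2(2c^2-1)=4G^2c^2-1=0 .
\]
These give $\ell_1=\upsilon_2=\upsilon_3=0$, while $\upsilon_1/g=E_0/G=(1+2G^2)/G>0$. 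Feasibility at $(\arsinh u,t)$ therefore reduces to the claim $E_n\ge0$ for $n\ge3$.

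\emph{Main step: positivity of $E_n$.} Write
\[
G^{n-2}E_n=G^{n-2}T_n(G)+2G^n\cos n\omega .
\]
$T_n$ has parity $n$ and lowest degree $n\bmod 2$, so for $n\ge2$ the first term is a polynomial in $G^2$. Since $2\cos\omega=-1/G$, the sequence $Y_n=G^n\cos n\omega$ satisfies
\[
Y_{n+1}=-Y_n-G^2Y_{n-1},\qquad Y_0=1,\quad Y_1=-\tfrac12 ,
\]
so it too is a polynomial in $G^2$. Put $x=G^2-1$. As a check, $n=3$ gives $4G^4-1=3+8x+4x^2$.

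To prove nonnegativity of the coefficients for all $n$, I would first try to derive a linear recurrence in $n$ for $P_n(x)=G^{n-2}E_n$ whose coefficients are nonnegative polynomials in $x$. The natural candidate is the product of the characteristic polynomials of the $\ch$-part, $r^2-2G^2r+G^2$ after rescaling, and of the $Y$-part, $r^2+r+G^2$. I would then combine this recurrence with nonnegative initial data $P_3,P_4,\dots$.

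If the signs in the recurrence do not cooperate, the fallback is an explicit expansion. Write $G^{n-2}T_n(G)$ via the standard binomial formula in $G^2$ and $G^2-1$. Write $2Y_n$ as an explicit alternating binomial sum, obtained from the roots of $r^2+r+G^2$. Then show that after substituting $G^2=1+x$ the negative contributions of $2Y_n$ are dominated termwise.

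Strict positivity for $t\ne0$, i.e.\ $x>0$, then needs only one positive coefficient, for instance the leading one $2^{n-1}$ coming from $T_n$.

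\emph{Spiral equation.} Set $\rho=e^t$ and $v=t-\log q$. Then
\[
u=\sh v=\frac{\rho^2-q^2}{2q\rho},\qquad \frac1{2G}=\frac{\rho}{\rho^2+1},
\]
so the condition $u=1-1/(2G)$ reads
\[
(\rho^2-q^2)(\rho^2+1)=2q\rho(\rho^2-\rho+1),
\]
which is exactly $F(\rho,q)=0$ in \eqref{eq:spiralpoly}.
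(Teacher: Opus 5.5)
\textbf{There is a gap at the one step the statement actually requires.} Your computations of $g$ and $b_m/g$, the identities $\ell_m/g=E_{m+1}/G$ and $\upsilon_m/g=E_{m-1}/G$, the contacts $E_1=E_2=0$, and the spiral relation are correct and coincide with Lemma~\ref{lem:spiralids} and the paper's proof. What is missing is a proof that $P_n=G^{n-2}E_n$ has nonnegative coefficients in $x=G^2-1$. Every non-contact slack is a positive multiple of some $E_n$ with $n\ge3$, so feasibility itself rests on this claim.

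\emph{Your first plan cannot succeed.} With $X=G^2$ the product recurrence is
\[
 P_{n+4}=(2X-1)P_{n+3}+X(2X-1)P_{n+1}-X^2P_n ,
\]
and its last coefficient is negative. In fact no scalar recurrence $P_n=\sum_ic_i(x)P_{n-i}$ exists with each $c_i$ a polynomial with nonnegative coefficients. The generating function has reduced denominator $D=(1+r+Xr^2)(1-2Xr+Xr^2)$, since the numerator's roots $r=0$ and $r=(2X-1)/X$ are not roots of $D$ for $X\ne1$. So $1-\sum_ic_ir^i$ would have to be divisible by $D$, with quotient in $\Q[x][r]$ because $D|_{r=0}=1$. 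Setting $x=0$, it would be divisible by $(1-r)^2(1+r+r^2)$. But $1-\sum_ic_i(0)r^i$ with $c_i(0)\ge0$ has strictly negative derivative at every positive root, so it cannot vanish doubly at $r=1$.

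\emph{The fallback is only announced.} Termwise domination of an alternating binomial sum would have to be sharp: the constant coefficient is $1+2\cos(2\pi n/3)\in\{0,3\}$, so for $3\nmid n$ the cancellation is exact and no crude bound will do.

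\emph{The missing idea} is to perturb around $x=0$ on the generating function rather than through a recurrence. Apply $\sum_nT_n(y)r^n=(1-yr)/(1-2yr+r^2)$ at $y=G$ and $y=-1/(2G)$, with variable $Gr$. This gives
\[
 \sum_{n\ge3}P_nr^n=\frac{(2x+3)\,r^3\,\{(1-r)+x(2-r)\}}{D},\qquad D=(1-r)(1-r^3)-xD_1-x^2D_2,
\]
where $D_1=r(2+3r^2-2r^3)$ and $D_2=r^3(2-r)$. The factor $1-r$ in the numerator cancels one of the two factors $1-r$ of $D$ at $x=0$. Then
\[
 \frac{(1-r)+x(2-r)}{D}=\sum_{k\ge0}\Bigl(x\frac{D_1}{1-r}+x^2\frac{D_2}{1-r}\Bigr)^k\frac1{(1-r^3)^{k+1}}\Bigl(1+x\frac{2-r}{1-r}\Bigr),
\]
which converges coefficientwise because $D_1$ and $D_2$ vanish at $r=0$. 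It has nonnegative coefficients, since each of $D_1/(1-r)$, $D_2/(1-r)$, $(2-r)/(1-r)$ and $1/(1-r^3)$ does. This is Lemma~\ref{lem:spiralkernel}.

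With that in place, your strict-positivity argument works. The leading coefficient is $2^{n-1}$, coming from the $T_n$ part, because $2Y_n$ has degree $\lfloor n/2\rfloor<n-1$ in $X$.
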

The proof is given in Appendix~\ref{app:classification-spiral}.

At $t=0$ this spectrum is the three-cycle $\omega=2\pi/3$, $u=1/2$.
Combining the lag-one and minimizer inequalities gives the sharp strip
$|v|\le |t|+\log\varphi$, strictly for $t\ne0$
(Proposition~\ref{prop:strip}). Intersecting it with $v=t-\log q$ proves
$\rh\le\sqrt{\varphi q}<1$ for $q<\varphi^{-1}$; the spiral gives the
matching transition to a nondecaying cycle and then exponential divergence.

\subsubsection{Algebraic description of the boundary}
Each family pairs a feasible measure with a nonpositive dual symbol that
vanishes on its support, as required by complementary slackness. The families $P_1$ and $S$ have one frequency;
$P_2$ and $P_5$ have two selected frequency points, with zero masses omitted at endpoints. On the open
noncritical regimes their active upper lags are respectively $\{3\}$,
$\{2,3\}$, $\{2,3\}$ and $\{2,3,5\}$, together with the lower lag one.
The corresponding polynomial contact systems are
\eqref{eq:p1system}, \eqref{eq:p2system} and \eqref{eq:p5system}; all their
coefficients and branch selections are specified in
Appendix~\ref{app:classification}.
For the two-frequency families let $x<y$ be the cosine coordinates
of the support points, with masses $1-p$ and $p$, and put $d=C\sh t$.
\begin{theorem}[Complete spectral classification]\label{thm:classification}
Let $G_W=(1+\sqrt3)/2$, let $G_1\in(1.03329,1.03330)$ be the root of
$32G^3-36G^2+4G-1$, and let $G_a\in(1.02358,1.02359)$ be the root of
\eqref{eq:Ha}.
Then $V(t)=\arsinh u(\ch t)$, where
\begin{equation}\label{eq:classification}
 u(G)=\begin{cases}
 u_5(G),&1\le G\le G_a,\\
 u_2(G),&G_a\le G\le G_1,\\
 1-(2G)^{-1},&G_1\le G\le G_W,\\
 u_1(G),&G_W\le G<\infty,
 \end{cases}
\end{equation}
and $u_1$, $u_2$, $u_5$ are the values of $u$ on the families $P_1$, $P_2$,
$P_5$. On $P_2$, $G_1=G(\zeta_1)$ and $G_a=G(\zeta_a)$, where $\zeta_1$ is the
real root of $\zeta^3-\zeta^2-2$ and $\zeta_a\in(1.6989,1.699)$ is a root of
the polynomial $N_{12}$ of degree twelve in Lemma~\ref{lem:p2ends}; at $G=1$ the central family reduces
to the three-cycle, $u=\frac12$, $x=-\frac12$, $p=d=0$. The branches are selected by the parametrization of $P_2$ and the unique
analytic continuations of $P_1$ and $P_5$ from the endpoint data of
Appendix~\ref{app:classification}; their values agree at the endpoints. The function
$t\mapsto t-V(t)$ is continuous and strictly increasing, so
\begin{equation}\label{eq:scalarprescription}
 t-\arsinh u(\ch t)=\log q
\end{equation}
has exactly one solution $t=T(q)$, and $\rh=e^{T(q)}$. At most two atoms are
needed.
\end{theorem}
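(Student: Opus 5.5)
The plan is a primal--dual verification on each of the four regimes, run as a function of $G=\ch t\ge1$, with $V$ even by Theorem~\ref{thm:spectral}. For fixed $G$ the quantity $u(G)=\sh V(t)$ is the optimal value of a semi-infinite linear program in $(\nu,C)$: maximize $u$ subject to \eqref{eq:feasible} with the data \eqref{eq:abg}. A lower bound $V(t)\ge\arsinh u_\ast(G)$ follows from exhibiting a feasible pair. An upper bound follows from Theorem~\ref{thm:spectralgeneral}(a): for every $u>u_\ast(G)$ we produce a strict certificate. In practice it is cleaner to produce, at $u=u_\ast(G)$, a nonpositive dual symbol $\Phi$ whose zeros carry the primal support and whose $C$-coefficient cancels, and then perturb it to get a strict certificate for larger $u$.

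On each regime the primal side reads as follows.
\begin{itemize}
\item On $[G_1,G_W]$, Proposition~\ref{prop:spiral} already gives feasibility of the spiral.
\item For $P_1$, $P_2$, $P_5$, solve the contact systems \eqref{eq:p1system}, \eqref{eq:p2system}, \eqref{eq:p5system} with the active lags listed before the theorem. For $P_2$ use the rational parametrization in $\zeta$. For $P_1$ and $P_5$ use analytic continuation from the endpoint data, by the implicit function theorem, with nondegenerate Jacobian checked by interval arithmetic.
\item Verify all remaining slacks nonnegative. Finitely many small lags are handled by polynomial positivity in $(G,x,y,p)$ on compact boxes. The tail $m\ge m_0$ is handled by the analytic-tail argument: $|C|<\bar g$ gives strict minimizer slack, and $T_m(G)$ grows like $e^{mt}$, dominating $\bar b_m$ and $C\sh mt$. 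This needs separate treatment as $G\to1$, where the regime approaches the three-cycle and growth degenerates.
\end{itemize}

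On the dual side, for each family solve for multipliers $\alpha_1$, $\beta_2,\beta_3$ (and $\beta_5$), and $\gamma_\pm$. Impose cancellation of the $C$-coefficient and require that $\Phi$, a polynomial in $c$, vanishes to second order at interior support points, with first order allowed at endpoints. Then certify $\Phi\le0$ on $[-1,1]$ and nonnegativity of all multipliers throughout the regime. Both are polynomial inequalities in the family parameter, proved by Sturm/Bernstein or interval methods as in \cite{Moore,Rump2010}. Sign changes of a multiplier or loss of a slack are exactly what define the switching points:
\begin{itemize}
\item $G_W$: the spiral's $\upsilon_2$ multiplier vanishes and $P_1$ takes over;
\item $G_1$: the second mass $p$ of $P_2$ hits $0$, i.e.\ the $\zeta^3-\zeta^2-2$ condition;
\item $G_a$: an extra lag-$5$ slack of $P_2$ reaches $0$, i.e.\ the root of $N_{12}$ / \eqref{eq:Ha}.
\end{itemize}
At each switch we check that the adjacent formulas agree, so $u$ is continuous.

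Finally, monotonicity of $t\mapsto t-V(t)$ follows by bounding $V'(t)<1$. Since $V$ is the value of a parametric linear program, its one-sided derivative is given by the envelope theorem through the optimal multipliers. On each branch we verify $\frac{d}{dt}\arsinh u(\ch t)<1$, a one-variable inequality; near $t=0$ this is consistent with the strip bound of Proposition~\ref{prop:strip}, whose slope is $1$ and is strict for $t\ne0$. Then $t-V(t)$ increases from $-\arsinh$-bounded values to $+\infty$, which gives a unique root of \eqref{eq:scalarprescription}, and Theorem~\ref{thm:spectral} yields $\rh=e^{T(q)}$. The two-atom claim is read off from the families.

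The main obstacle is the global dual-feasibility step near $G=1$ and near the switching points. There the symbols have degenerate double zeros, the three-cycle limit makes several lags simultaneously nearly active, and positivity must be proved with rescaled variables ($u-\tfrac12$ and $x+\tfrac12$ scaled by $G-1$) in the quintic number field. I would first attempt this by blowing up at $G=1$ and certifying leading-order positivity analytically, leaving interval arithmetic to cover the compact remainder.
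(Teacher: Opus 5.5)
Your plan is essentially the paper's proof: for each $G\ge1$ it pairs a feasible spectrum from $S$, $P_1$, $P_2$, $P_5$ with a nonpositive dual symbol $\Phi=-(G-c)Q$ whose zeros carry the support. It locates the switches exactly as you describe ($W(G_W)=0$, $p=0$ at $\zeta_1$, $\upsilon_5=0$ at $\zeta_a$), and it obtains strict monotonicity of $t-V(t)$ from $|V'|<1$ checked arc by arc together with continuity at the junctions.

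The step you leave implicit is the passage from the tight symbol to a strict certificate at every larger $u$. The paper supplies it through Lemma~\ref{lem:dualfactor}: the symbol decreases by at least $(G-c)(u'^2-u^2)$, and at $G=1$ equal minimizer weights are added to restore strictness at $c=1$.
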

\begin{proof}
For each $G\ge1$ we construct a feasible spectrum and a nonpositive
dual symbol $\Phi(c)=-(G-c)Q(c)$ whose zeros contain its support.
The normalized upper lags and the lower lag one cancel $C\sh t$;
Lemma~\ref{lem:dualfactor} shows that increasing $u\ge0$ decreases the
symbol by at least $(G-c)$ times the increment of $u^2$. Thus such a
primal--dual pair determines the maximal feasible $u$. For $G>1$ the
decrease is uniformly strict. At $G=1$, adding equal positive
minimizer weights makes it strict also at $c=1$, as shown after
Lemma~\ref{lem:dualfactor}.

The factorization \eqref{eq:spiralcertificate} proves this pairing for
$G_1\le G\le G_W$. Proposition~\ref{prop:p1} gives the unique
continued branch for $G\ge G_W$; Theorem~\ref{thm:p2arc} gives the
rationally parametrized branch on $[G_a,G_1]$; and
Proposition~\ref{prop:p5arc} gives the central branch on $[1,G_a]$.
Each result proves feasibility for every lag, dual nonnegativity, and
the stated endpoint identities. In particular, the branch values agree
at $G_W,G_1,G_a$, and Theorem~\ref{thm:p5local} identifies the critical
spectrum at $G=1$. This proves \eqref{eq:classification}, including its
branch selections.

The same results give $|V'|<1$ on every open arc; on the spiral this
follows from $du/dG=1/(2G^2)$. Continuity at the junctions implies that
$t-V(t)$ is strictly increasing. It tends to $-\infty$ and $+\infty$
at the two ends of the real line because $V$ is bounded. Its unique
level $\log q$ is therefore the largest feasible exponent on
$v=t-\log q$, hence equals $T(q)$ by Theorem~\ref{thm:spectral}.
Every constructed measure has at most two atoms.
\end{proof}

For conversion to condition numbers put
$q_\pm(G)=(G\pm\sqrt{G^2-1})(\sqrt{1+u(G)^2}-u(G))$ and
$\kappa(q)=((1+q)/(1-q))^2$. Then
\[
 (\kappa_W,\kappa_1,\kappa_a)
 =\bigl(\kappa(q_-(G_W)),\kappa(q_-(G_1)),\kappa(q_-(G_a))\bigr),
 \quad
 (\kappa_b,\kappa_2)
 =\bigl(\kappa(q_+(G_a)),\kappa(q_+(G_1))\bigr).
\]
The symmetry $(t,C)\mapsto(-t,-C)$ produces the two copies of the central
families in Table~\ref{tab:regimes}.
\begin{table}[t]
\caption{Optimal regimes for Polyak's tuning. At switching points the adjacent
families agree; zero spectral weights are omitted. The critical three-cycle
has the additional active constraints described below.}
\label{tab:regimes}
\centering
\begin{tabular}{lll}
\toprule
Condition number & Family & Zero raw lag slacks\\
\midrule
$(1,\kappa_W)$ & $P_1$ & $\ell_1,\upsilon_3$\\
$[\kappa_W,\kappa_1]$ & $S$ & $\ell_1,\upsilon_2,\upsilon_3$\\
$(\kappa_1,\kappa_a)$ & $P_2$ & $\ell_1,\upsilon_2,\upsilon_3$\\
$[\kappa_a,\kappa_b]\setminus\{\kappa_c\}$ & $P_5$ & $\ell_1,\upsilon_2,\upsilon_3,\upsilon_5$\\
$\{\kappa_c\}$ & Three-cycle & $\ell_m\ (m\equiv0,1),\ \upsilon_m\ (m\equiv0,2)$\\
$(\kappa_b,\kappa_2)$ & $P_2$ & $\ell_1,\upsilon_2,\upsilon_3$\\
$[\kappa_2,\infty)$ & $S$ & $\ell_1,\upsilon_2,\upsilon_3$\\
\bottomrule
\end{tabular}
\par\smallskip
\small
In the critical row $m\ge1$ and congruences are modulo $3$; these are all
zero raw slacks, so the active set is infinite. Switching values, rounded
to five decimal places, are
\[
\kappa_W\approx2.65646,\quad \kappa_1\approx7.73195,\quad \kappa_a\approx8.64845,\quad
\kappa_b\approx53.64303,\quad \kappa_2\approx71.14732.
\]
\end{table}

\begin{figure}[tbp]
\centering
\includegraphics[width=\linewidth]{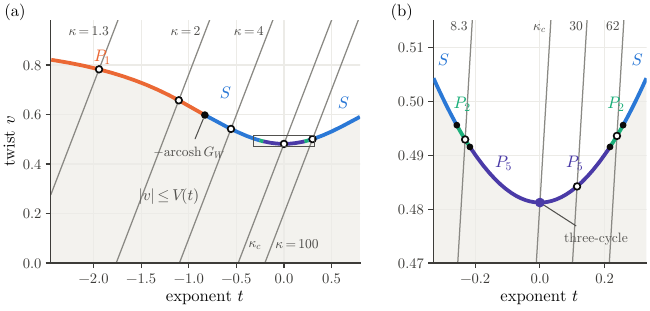}
\caption{The upper spectral boundary $v=V(t)$; the shaded region is
$0\le v\le V(t)$. A line $v=t-\log q$ meets the boundary at the method's
exponent $T=\log\rh$. Panel (b) magnifies the three-cycle and the
switches $\pm\arcosh G_a$, $\pm\arcosh G_1$.}
\label{fig:boundary}
\end{figure}

\subsection{Finite-horizon realization}\label{sec:realize}
\subsubsection{Initialization from rest}\label{sec:startup}
An atom of positive mass supplies a plane rotating by $\omega$ and scaling
by $e^t$ at each step. To initialize these waves with $x_{-1}=x_0$, append
a scalar quadratic coordinate $Yp_k$ and adjust the first gradient. For
Polyak's tuning one may take
$p_k=(-q^2)^{\lceil k/2\rceil}$. The extra coordinate adds
$\tfrac14Y^2(p_i-p_j)^2$ to the interpolation margin. A sufficiently large
$Y$ absorbs the first-step defect uniformly over all later times; the
strict minimizer slack controls the growing case. Hence $r$ frequencies
give one example in dimension $2r+1$, with one start valid for all horizons.
For the special spiral the choice $Y=2$ gives the uniform factor $1/\sqrt5$.

The zeros of the dual polynomial $Q$ select the stationary frequencies.
At $G=G_1$ or
$G=1$ one selected point has zero mass. At $\kappa_c$ the stationary
component is periodic with period three, and the initialized trajectory
approaches this orbit as the extra coordinate decays.
Figure~\ref{fig:spectra} illustrates the selected stationary spectra.

\subsubsection{Uniform bound at the exact rate}
The spectral characterization first yields
$W_N\le C_\delta e^{N(T+\delta)}$. The boundary certificate has a strict
negativity margin of order $\delta$ at exponent $T+\delta$. Real linear
combinations of finitely many consecutive positions turn it into a local
window functional; these functionals remain bounded as $\delta\downarrow0$.
Their limit is nonnegative on every admissible window, grows by at most
$e^{2T}$ per step, and is strictly positive on every stationary optimal
spectrum (Lemma~\ref{lem:exactprice}). Separation and the energy estimate
give a strictly smaller growth factor where this functional vanishes
(Lemma~\ref{lem:kernel}). Compactness extends this contraction to a
neighborhood of that set (Lemma~\ref{lem:nearkernel}). Outside that
neighborhood the limiting functional controls the window norm, which
gives $W_N\le C_\kappa e^{NT}$.
Together with the two-frequency realization this proves
Theorem~\ref{thm:rate}. Appendix~\ref{app:finite} gives this argument and
the asymptotic expansions of Theorem~\ref{thm:regularity}.

\FloatBarrier
\section{General tunings}\label{sec:part2}
We now allow the step size and momentum to vary within the region of
quadratic stability. The spectral characterization gives an exact
criterion for uniform convergence and describes the boundary of its
complement. Angle inequalities, Farey geometry and continuation of
finite spectra determine the global organization and local structure
of this boundary. We first give explicit phase tests, then the exact
measure criterion of Theorem~\ref{thm:exactthr}, and finally the global
threshold and its local plateau structure.

\subsection{Spectral characterization for general tunings}\label{sec:general}
In this part $q$ denotes the parameter triple
$(\kappa,y,\beta)$.
Here $\kappa>1$, $0\le\beta<1$ and $0<y<2(1+\beta)$;
$x=y/\kappa=\alpha\mu$ and $y=\alpha L$.
This is precisely the range \eqref{eq:quad} of stability on quadratics.
Spectra in this part are measures in the frequency variable
$\omega\in[0,\pi]$.
Write $\rho(q)$ for \eqref{eq:WN} and call
$\mathcal D=\{q:\rho(q)\ge1\}$ the divergence set, the complement of
uniform exponential convergence in \eqref{eq:WN}. Its boundary can
support nondecaying periodic behavior; exponential growth requires $\rho>1$.

The endpoint characteristic polynomials are
\begin{equation}\label{eq:ab}
 a(z)=z^2-(1+\beta-x)z+\beta,\qquad
 b(z)=z^2-(1+\beta-y)z+\beta.
\end{equation}
Their largest root moduli $\rho_a,\rho_b$ give
$\rhoq=\max(\rho_a,\rho_b)\le\rho$.
Between the damping curves
\begin{equation}\label{eq:damping}
 x=(1-\sqrt\beta)^2,\qquad y=(1+\sqrt\beta)^2,
\end{equation}
every curvature is underdamped or critically damped and $\rhoq=\sqrt\beta$.
Outside this region an endpoint has a dominant real root.
Polyak's tuning is the intersection of the curves.

The link with the spectral program is the identity
\begin{equation}\label{eq:factor}
 g-2is=(1-m)(1+\ov m)
       =-\frac{a(z)\ov{b(z)}}{B^2|z|^2},\qquad z=e^{t+i\omega}.
\end{equation}
At $t=0$, $s=(1-\beta)\sin\omega/B>0$ on $(0,\pi)$.
The argument of $g-2is$ compares the phases of the endpoint
characteristic polynomials and determines the threshold geometry.
\begin{definition}[Angle function]\label{def:angle}
For $\omega\in(0,\pi]$ let $\Theta(\omega)\in(0,\pi]$ be the angle between
$b(e^{i\omega})$ and $-a(e^{i\omega})$, so that
$g-2is=|g-2is|\,e^{-i\Theta}$ at $t=0$.
\end{definition}
The curve is determined by a concave polynomial $\Xi$ of degree at most two:
$\cot\Theta(\omega)=\Xi(\cos\omega)/\sin\omega$.
Its two endpoint values are negative; the coefficients and endpoint
formulas are in Lemma~\ref{lem:angle}.

Extend $\Theta$ continuously by $\Theta(0)=\pi$.
The spectral upper bound becomes particularly useful because one exponent
already determines a strict rate bound. Let $r_0<1$ be the largest root modulus
of $z^2-Az+\beta$.
\begin{theorem}[One exponent suffices]\label{thm:oneexp}
Assume \eqref{eq:quad}. If a strict certificate exists at some $t$ with
$e^t>r_0$, then $\rho<e^t$. In particular, infeasibility of
\eqref{eq:feasible} at $t=0$ implies $\rho<1$.
\end{theorem}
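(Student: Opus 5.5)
The reduction is to Theorem~\ref{thm:spectralgeneral}(b): it suffices to show that a strict certificate at one exponent $t_1$ with $e^{t_1}>r_0$ produces strict certificates at every $t\ge t_1$. Then part (b) with $t_0=t_1$ gives $\rho<e^{t_1}$. By part (a) this is equivalent to a monotonicity statement: if \eqref{eq:feasible} is feasible at some $t>t_1$, then it is feasible at $t_1$. The second claim follows by taking $t_1=0$, since $r_0<1$.

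I would first try a direct deformation of the certificate in the growth exponent, using the factorization \eqref{eq:factor}. At the level of the symbol, $m(t,\omega)=(z+\beta/z-A)/B$ with $z=e^{t+i\omega}$, and $B m = (z^2-Az+\beta)/z$. The roots of $z^2-Az+\beta$ have modulus at most $r_0$. Hence on the annulus $|z|>r_0$ the function $z\mapsto Bm$ has no zeros, and $g$, $s$, $a_m$, $b_m$ are real parts of expressions built from $m$, $z^m$ and $\bar z^{-m}$. I would write $\Phi_t(\omega)$ as the boundary value of a function of $z=e^{t+i\omega}$ on the circle $|z|=e^t$. The coefficient condition $\sum_m(\alpha_m-\beta_m)\sh mt+\gamma_+-\gamma_-=0$ depends on $t$, so I would not keep the coefficients fixed. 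Instead I would repair the balance by adjusting $\gamma_\pm$, which costs at most a multiple of $g$, and control that correction.

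The cleaner route is the primal one, via the harmonic minimum principle that the paper mentions for extending one-sided certificates from circle boundaries to annuli. Suppose $(\nu,C)$ is feasible at some $t>t_1$ while a strict certificate exists at $t_1$. I would treat the certificate combination $\int\Phi\,d\nu+(\text{balance})\cdot C$ as a function on the annulus $r_0<|z|\le e^{t}$ and show it is superharmonic in $\log|z|$ after averaging over $\nu$ in angle. Concretely, I would express each lag inequality as a Toeplitz quadratic form in the wave amplitudes. Feasibility at exponent $t$ then becomes nonnegativity of an energy functional on trajectories growing like $e^{tk}$. Multiplying such a trajectory by $e^{-(t-t_1)k}$ yields a trajectory of a modified recurrence whose characteristic polynomial has roots in $|z|\le r_0e^{-(t-t_1)}$, and this is still stable. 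Lemma~\ref{lem:energy} applied at $t_1$ then gives an energy estimate contradicting growth at rate $e^t$. This is the argument I expect to actually run: Lemma~\ref{lem:energy} turns the certificate at $t_1$ into a bound with only endpoint terms, i.e.\ a sum over $k$ of $e^{-2t_1k}\times$(lag residuals)$\ \le -\varepsilon\sum e^{-2t_1k}\|e_k\|^2+$ endpoint terms. For a trajectory segment with growth factor $\rho_\Gamma>e^{t_1}$, the weighted sum diverges. The endpoint terms grow at the same rate, but the condition $e^{t_1}>r_0$ ensures that the nonhomogeneous part driven by $\sigma$ controls positions.

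The main obstacle is showing that the endpoint terms do not dominate when $\rho_\Gamma>e^{t_1}$, rather than only when $\rho_\Gamma=e^{t_1}$, which is the case Lemma~\ref{lem:plateau} handles. My plan is to reduce to that case by rescaling. Given a segment family realizing $\rho_\Gamma$, I would interpolate between the certificate at $t_1$ and the envelope exponent using the Lemma~\ref{lem:envelope} structure, and show that the set of exponents admitting strict certificates is an up-set in $(\log r_0,\infty)$. For this I would use the minimum principle on the annulus $r_0<|z|$. The nonpositive symbol, as a harmonic function of $\log z$ (the terms $\ch mt\cdot g$ and $\sh mt$ combine into real parts of $z^m\,\overline{(1-m)(1+\ov m)}$-type analytic expressions, which are holomorphic and zero-free there by the root bound), attains its maximum on the boundary circles. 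Negativity on $|z|=e^{t_1}$ together with the behavior as $|z|\to\infty$, controlled by the leading lag after normalizing by $|z|^{2M}$, then yields negativity on every larger circle. Verifying this harmonicity and the sign at infinity is the step I would check first.
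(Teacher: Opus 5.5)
Your reduction to Theorem~\ref{thm:spectralgeneral}(b) is valid as a sufficient condition. The difficulty is that the statement you reduce to is no easier than the theorem. That statement says a strict certificate at $t_1$ forces strict certificates at every $t\ge t_1$, i.e.\ feasibility of \eqref{eq:feasible} at some $t>t_1$ forces feasibility at $t_1$. In the paper it is obtained only \emph{as a consequence} of the theorem, combined with Lemma~\ref{lem:envelope}. Neither mechanism you propose proves it.

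First, Lemma~\ref{lem:energy} is two-sided: it bounds $\sum_{k=i}^j\mathsf b_k$ by windows at both ends, so it is compatible with fast growth. Any profile $\mathsf b_k=\lambda^k$ with $\lambda\ge1+1/K$ satisfies \eqref{eq:energy}, since $\sum_{k=i}^j\lambda^k<\lambda^{j+1}/(\lambda-1)\le K\lambda^{j+w}\le KE_j$. So no contradiction arises when $\rho_\Gamma$ is well above $e^{t_1}$. Lemma~\ref{lem:plateau} extracts one only at the exponent $\log\rho_\Gamma$ itself, where the rescaled growth is subexponential. Lemma~\ref{lem:energy} also never uses stability of the rescaled linear part, so it cannot be where $e^{t_1}>r_0$ enters.

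Second, for a general certificate the symbol is not the trace of a harmonic function of $z=e^{t+i\omega}$. The lag factor $\ch mt$ has Laplacian $m^2\ch mt$ in $(t,\omega)$, and $g=1-|m(t,\omega)|^2$ is not harmonic either. Moreover, the balance condition in \eqref{eq:strictcert} moves with $t$, so fixed coefficients do not even define certificates on other circles. A harmonic structure exists only for one-sided multipliers with the $t$-dependent weights $\lambda_n\varrho^{\mp n}$ of Lemma~\ref{lem:symbols}. There $\Phi_t$ is a negative multiple of $|a^\sharp|^2\Rea[Mb^\sharp/a^\sharp]$. Mixing lower and upper lags, as in the two-lag certificates with lags $1$ and $-K$, gives instead a negative multiple of $\Rea[\ov{a^\sharp}b^\sharp M_1+a^\sharp\ov{b^\sharp}M_2]$. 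Its sign is not that of a harmonic function, so no minimum principle applies. This is why the paper uses the minimum principle only in Theorem~\ref{thm:mult}.

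The missing idea is a causal stability argument at the single exponent $t$. Rescale by $\varrho=e^t$. Lemma~\ref{lem:iqc} turns the certificate into an integral quadratic constraint satisfied by the rescaled gradient of every $\tau\psi$, $0\le\tau\le1$. The rescaled linear part $G_t(z)=B\varrho z/(\varrho^2z^2-A\varrho z+\beta)$ is stable and strictly proper precisely because $\varrho>r_0$. On the unit circle, $[G_t;1]^*\Pi[G_t;1]=|G_t|^2\Phi_t<0$.

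The continuation in $\tau$ from the stable linear loop at $\tau=0$ (Lemma~\ref{lem:homotopy}) supplies a priori square-summability. This removes exactly the terminal endpoint terms you identified as the obstacle. It yields $\sum_{k\ge0}\varrho^{-2k}\norm{e_k}^2\le C_0(\norm{e_0}^2+\norm{e_{-1}}^2)$, uniformly in the function, dimension and initial pair. Lemma~\ref{lem:shift} then gives $\rho\le\varrho\theta^{1/4}<\varrho$. No certificates at other exponents are needed, and the monotonicity you wanted follows afterwards as a corollary.
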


\begin{proof}
Put $\varrho=e^t$ and rescale a trajectory of \eqref{eq:normrec} by
$\tilde e_k=\varrho^{-k}e_k$. Lemma~\ref{lem:iqc} converts the strict
certificate into a quadratic constraint satisfied by the scaled gradient
of every $\tau\psi$, $0\le\tau\le1$. The corresponding linear filter has
transfer function
\[
 G_t(z)=\frac{B\varrho z}{\varrho^2z^2-A\varrho z+\beta}.
\]
Its poles lie inside the unit circle because $\varrho>r_0$, and it
vanishes at infinity. Moreover,
$G_t(e^{i\omega})=1/m(t,\omega)$ is nonzero on the unit circle, and
\[
 [G_t;1]^*\Pi[G_t;1]=|G_t|^2\Phi_t
 \le-\varepsilon\min_{|z|=1}|G_t(z)|^2<0.
\]
The scaled gradient acts pointwise in time, is $1$-Lipschitz and fixes zero;
scaling $\psi$ by $\tau\in[0,1]$ preserves $\cS$. Thus every hypothesis of
Lemma~\ref{lem:homotopy} is satisfied. Its energy bound is uniform in
the function, dimension and initial pair:
\[
 \sum_{k\ge0}\varrho^{-2k}\|e_k\|^2
 \le C_0(\|e_0\|^2+\|e_{-1}\|^2).
\]
Lemma~\ref{lem:shift} therefore yields
$\rho\le\varrho\theta^{1/4}<\varrho$ for one $\theta<1$.
At $t=0$ the filter is stable by \eqref{eq:quad}, and the spectral
alternative supplies a strict certificate whenever
\eqref{eq:feasible} is infeasible.
\end{proof}

\subsection{Convergence and periodic orbits}\label{sec:regions}
Two elementary sufficient conditions determine large subsets of the
parameter region: a lag-one multiplier proves convergence, while a
single-frequency feasible measure gives a periodic obstruction.
\begin{theorem}[Phase locking]\label{thm:A}
Assume \eqref{eq:quad}.
\begin{enumerate}[label=\textup{(\roman*)}]
\item If $\Theta(\omega)>\omega/2$ for every $\omega\in(0,\pi]$, then the
method converges.
\item If $\Theta(2\pi j/k)\le\pi/k$ for coprime integers $1\le j<k/2$, then
there are $f\in\cF(\R^3)$ and $x_0$ with
$\norm{x_N-x^\star}\ge c\norm{x_0-x^\star}>0$ for all $N$; the trajectory
converges to a $k$-periodic orbit that rotates by $2\pi j/k$ in a plane. If
the inequality is strict, then $\rho>1$.
\item Let $\Omega_1$ be the set of tunings in \textup{(i)}. On its closure
relative to \eqref{eq:quad}, $\rho\le1$. At every boundary point of
$\Omega_1$ at which $\min_\omega(\Theta(\omega)-\omega/2)=0$ is attained at
$\omega=2\pi/K$, the hypothesis of \textup{(ii)} holds with $j=1$ and $k=K$;
there $\rho=1$, and the point lies on the boundary of the convergence region.
\end{enumerate}
\end{theorem}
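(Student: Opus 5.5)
The plan is to derive all three parts from the $t=0$ specialization of Definition~\ref{def:feasible}, using the angle $\Theta$ to rewrite the lag symbols. At $t=0$ we have $\sh mt=0$, so $C$ appears only in the minimizer inequality $|C|\le\bar g$. Moreover $a_m=g(1-\cos m\omega)$ and $b_m=2s\sin m\omega$. Write $g-2is=r e^{-i\Theta}$ with $r=|g-2is|$, so that $g=r\cos\Theta$ and $2s=r\sin\Theta$. Half-angle identities then give
\[
 a_m-b_m=2r\sin\tfrac{m\omega}2\,\sin\bigl(\tfrac{m\omega}2-\Theta\bigr),\qquad
 a_m+b_m=2r\sin\tfrac{m\omega}2\,\sin\bigl(\tfrac{m\omega}2+\Theta\bigr).
\]
These two factorizations carry the whole proof.

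For (i), take the lag-one lower constraint, $\alpha_1=1$. If $\Theta>\omega/2$ on $(0,\pi]$, then $\omega/2-\Theta\in(-\pi,0)$ there, so $a_1-b_1<0$ on $(0,\pi]$. The only degeneracy is at $\omega=0$. By \eqref{eq:factor}, $g(0)=-a(1)b(1)/B^2=-xy/B^2<0$. Add equal minimizer weights $\gamma_+=\gamma_-=\gamma$; since $\sh t=0$, the $C$ coefficient cancels. The result is $\Phi_0=(a_1-b_1)+2\gamma g$. Near $\omega=0$ the term $2\gamma g$ is strictly negative. On the complementary compact arc, $a_1-b_1$ is bounded away from zero, so a small $\gamma$ keeps $\Phi_0$ negative there. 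This gives a strict certificate at $t=0$, and Theorem~\ref{thm:oneexp} yields $\rho<1$.

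For (ii), take the single atom $\nu=\delta_{\omega}$ with $\omega=2\pi j/k$, $t=0$ and $C=0$. The condition $|C|\le\bar g$ holds because $\Theta\le\pi/k\le\pi/3$ gives $g\ge0$. Write $m\omega/2\equiv\pi n/k \pmod\pi$ with $n\equiv jm\pmod k$. If $n=0$, both slacks vanish. If $n\ne0$, then $\pi n/k\in[\pi/k,\pi-\pi/k]$, and $\Theta\le\pi/k$ makes both sine products nonnegative, since the sign is unchanged modulo $\pi$. Hence the spectrum is feasible at $t=0$, and Theorem~\ref{thm:spectralgeneral}(c) with $r=1$ gives $f\in\cF(\R^3)$ and a start from rest with $\norm{x_N-x^\star}\ge c\norm{x_0-x^\star}$. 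The stationary part is the planar rotation by $2\pi j/k$, while the appended startup coordinate of Lemma~\ref{lem:startup} decays; so the trajectory converges to the $k$-periodic orbit.

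If the inequality is strict, perturb to a small $t>0$ while keeping $C=0$. Since $\ch mt\ge1$ and $g$ stays positive, we get $\ell_m\ge 2r_t\sin\frac{m\omega}2\sin(\frac{m\omega}2-\Theta_t)$, and similarly for $\upsilon_m$. Here $\Theta_t$ is the argument of $g-2is$ at exponent $t$, which is continuous in $t$. Only finitely many residues $n$ occur, so the strict margin survives for small $t$. The case $n=0$ gives $\ell_m,\upsilon_m=g(\ch mt-1)\ge0$. Also $|C|=0<\bar g$, so part (c) applies and gives $\rho\ge e^t>1$.

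For (iii), I expect the bound $\rho\le1$ on the closure to be the main obstacle, because at $t=0$ the lag-one certificate is only non-strict there. My first attempt is to show that for every $t>0$ the lag-one lower constraint, with the $C$-coefficient $\sh t$ cancelled by $\gamma_-=\sh t$ and a small $\gamma_+=\gamma_-+\eta$ adjustment, has a strictly negative symbol whenever $\Theta\ge\omega/2$. This uses that the extra factor $\ch t-\cos\omega$ together with the minimizer term strictly improves the phase inequality. Theorem~\ref{thm:oneexp} then gives $\rho<e^t$ for all small $t>0$, hence $\rho\le1$. As a fallback, I would use that for fixed $N$ the quantity $W_N$ depends continuously on the tuning, combined with the energy bound of Lemma~\ref{lem:homotopy} being locally uniform on compact subsets of \eqref{eq:quad}.

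At a boundary point where the minimum $0$ of $\Theta(\omega)-\omega/2$ is attained at $\omega=2\pi/K$, we have $\Theta(2\pi/K)=\pi/K$. This is exactly the hypothesis of (ii) with $j=1$, $k=K$, so $\rho\ge1$ and therefore $\rho=1$. Points of $\Omega_1$ arbitrarily close to it satisfy $\rho<1$ by (i), so the point lies on the boundary of the convergence region.
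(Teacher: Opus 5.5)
Your treatment of (i), (ii), and the reduction of phase-locked points to (ii) with $j=1$, $k=K$ is correct and follows the paper. This covers the lag-one symbol plus a small minimizer weight at $t=0$, the single-atom residue computation, and the perturbation to small $t>0$, which is uniform because only finitely many residues occur.

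The gap is the bound $\rho\le1$ on the closure of $\Omega_1$, and the mechanism you propose for it points the wrong way. With $\alpha_1=1$ and $\gamma_-=\sh t$, the symbol at exponent $t$ is $g_t(e^t-\cos\omega)-2s_t\sin\omega$.
\begin{itemize}
\item If you freeze $g,s$ at their $t=0$ values, the change from the $t=0$ symbol is $g_0(e^t-1)$. This is \emph{positive} wherever $\Theta<\pi/2$.
\item In particular it is positive at the tangency frequency $\omega_*$. There the $t=0$ symbol vanishes and $g_0=|g-2is|\cos(\omega_*/2)>0$.
\item Equivalently, the phase floor $\omega/2$ rises to $\pi/2-\arctan\bigl(\sin\omega/(e^t-\cos\omega)\bigr)$.
\item Extra equal minimizer weights $\eta$ add $2\eta g_t$, which has the same bad sign there. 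As written, your choice $\gamma_+=\gamma_-+\eta$ also fails to cancel the $C$ coefficient.
\end{itemize}
So strict negativity can only come from the $t$-dependence of $m(t,\omega)$, which your argument does not control. The fallback does not repair this. Passing $\rho<1$ to a limit tuning needs constants uniform in the tuning. The constants obtained from $t=0$ certificates through Lemma~\ref{lem:homotopy} degenerate as the tuning approaches a phase-locked point, where no strict $t=0$ certificate exists.

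The missing idea is the harmonic minimum principle of Theorem~\ref{thm:mult} and Lemma~\ref{lem:symbols}. With $\alpha_1=e^{-t}$ and $\gamma_-=\alpha_1\sh t$, the symbol is exactly
\[
 \Phi_t(\omega)=-\frac{e^{2t}}{B^2}\,|a^\sharp(u)|^2\,\Rea\frac{(1-u)\,b^\sharp(u)}{a^\sharp(u)},\qquad u=e^{i\omega-t},
\]
where $a^\sharp(u)=u^2a(1/u)$ and $b^\sharp(u)=u^2b(1/u)$. The argument then runs as follows.
\begin{itemize}
\item Since $a^\sharp$ has no zeros in the closed unit disk, the real part is harmonic there.
\item On $|u|=1$ it has the sign of $(1-c)\cL(c)\ge0$, $c=\cos\omega$. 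This is exactly where the closure condition $\Theta\ge\omega/2$ enters.
\item At $u=0$ it equals $1$.
\item The strong minimum principle therefore makes it strictly positive on the circle $|u|=e^{-t}$.
\end{itemize}
This gives a strict certificate at every $t>0$. Theorem~\ref{thm:oneexp} then yields $\rho<e^t$ for all $t>0$, hence $\rho\le1$. Your conclusion $\rho=1$ at phase-locked points depends on this step.
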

The proof is given in Appendix~\ref{app:general-multipliers}.

For fixed $j/k$, the tunings satisfying the periodic criterion form
the $j/k$-tongue; the $K$-tongue corresponds to $j/k=1/K$.
The two conditions compare the angle curve with the line $\omega/2$ and
with rational phase values (Figure~\ref{fig:thomae}). At a phase-locked
boundary point, the tangency frequency equals $2\pi/K$.
The cycle criterion agrees with the roots-of-unity regions
of~\cite{GTD,FHDA}; its algebraic form is given in
Corollary~\ref{cor:C}.

\begin{figure}[tbp]
\centering
\includegraphics[width=\linewidth]{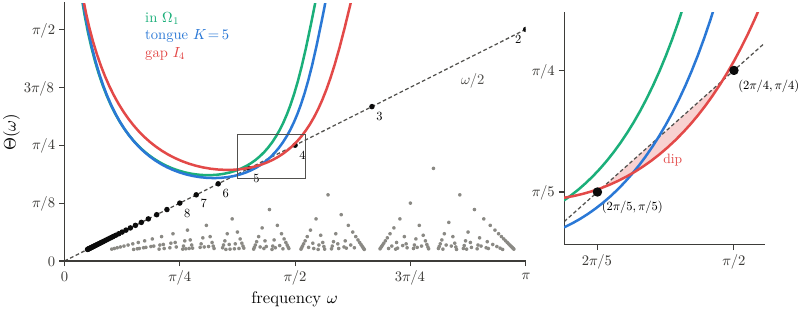}
\caption{The rational points $(2\pi j/k,\pi/k)$ and three angle curves at
$\kappa=100$. The points $j=1$ lie on $\omega/2$. A curve above the line
gives convergence; meeting a rational point gives a periodic obstruction.
The magnified dip between $2\pi/5$ and $2\pi/4$ contains the two-harmonic
example of Proposition~\ref{prop:13}.}
\label{fig:thomae}
\end{figure}

For $c=\cos\omega$, let
$H_r(c)=\Rea[(1-z)a(z)\ov{b(z)}]$, $z=re^{i\omega}$.
It has degree at most three in $c$ (Lemma~\ref{lem:cubic}).
Inside the lag-one region $\Omega_1$, it bounds the rate and
identifies a region $\cQ$ of exact agreement with the quadratic worst case.
\begin{theorem}[Rates]\label{thm:B}
Assume \eqref{eq:quad}.
\begin{enumerate}[label=\textup{(\alph*)}]
\item If $\Theta(\omega)>\omega/2$ on $(0,\pi]$, then the set
$\{r\in(\rho_b,1):H_r(c)\ge0\ \text{for all }c\in[-1,1]\}$ is a nonempty interval
with right endpoint $1$. With $r_1$ its infimum, $\rhoq\le\rho\le r_1<1$.
\item Suppose $\rhoq>\sqrt\beta$ and $\Theta(\omega)>\omega/2$ on $(0,\pi]$. Then
$r_1=\rhoq$ if and only if $H_{\rhoq}(c)\ge0$ for all $c\in[-1,1]$. On the set
$\cQ$ of such tunings, $\rho=\rhoq$.
\item If $\rhoq=\sqrt\beta$, then $\rho>\rhoq$.
\end{enumerate}
\end{theorem}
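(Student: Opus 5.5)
Part (a) comes from a lag-one certificate at exponent $t=\log r$, combined with Theorem~\ref{thm:oneexp}. For $r\in(\rho_b,1)$ and $z=re^{i\omega}$, I use \eqref{eq:factor} to write $g-2is$ in terms of $a(z)\ov{b(z)}$. The lag-one symbols are $a_1\mp b_1$ with $a_1=g(\ch t-\cos\omega)$ and $b_1=2s\sin\omega$. The plan is to find a combination of one upper or lower lag-one inequality and a minimizer weight $\gamma_\pm$ that cancels $C\sh t$ and whose symbol equals, up to a positive factor, $-\Rea[(1-z)a(z)\ov{b(z)}]/(B^2|z|^2)$.

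The key observation is that $\Rea[(1-z)(g-2is)]$ produces exactly $g(1-r\cos\omega)-2sr\sin\omega$, up to the scaling $|z|^2$. After adding the right multiple of the minimizer inequality, this is a lag-one-plus-minimizer combination. So $H_r(c)\ge0$ on $[-1,1]$ gives a symbol $\le0$. To make it strict, I perturb $r$ slightly, or add a small $\gamma$ weight using $g<0$ near $t\ne0$. Theorem~\ref{thm:oneexp} then gives $\rho<e^t$ at every admissible $r$ with strict positivity, and $\rho\le r$ in the limit, because $r>\rho_b\ge r_0$-type poles keep the filter stable. I still need to check that $\rho_b$ dominates $r_0$; if not, I replace the lower limit by $\max(\rho_b,r_0)$.

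For the interval structure, $H_r$ is a polynomial in $r$ for fixed $c$ (Lemma~\ref{lem:cubic}). I would show that the set of $r$ with $\min_c H_r\ge0$ is upward closed on $(\rho_b,1)$ by a monotonicity or convexity argument in $r$: for instance, show that $\partial_r$ of $r^{-k}H_r$ is nonnegative wherever $H_r=0$. Nonemptiness near $r=1$ follows because at $r=1$ the sign of $H_1$ matches the phase condition. Indeed, $\arg[(1-e^{i\omega})a\ov b]$ equals $-(\pi-\omega)/2+\pi-\Theta$, and $\Theta>\omega/2$ makes this cosine positive, with endpoint care at $\omega=0,\pi$ via the concave $\Xi$ of Lemma~\ref{lem:angle}. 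Continuity in $r$ then gives $H_r>0$ for $r$ slightly below $1$. The lower bound $\rhoq\le\rho$ holds already on quadratics.

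For part (b), if $H_{\rhoq}\ge0$, the interval contains $\rhoq$, so $r_1=\rhoq$ and hence $\rho=\rhoq$. Conversely, $r_1=\rhoq$ together with closedness of the interval gives $H_{\rhoq}\ge0$. I need $\rhoq>\rho_b$ for membership; this holds because $\rhoq>\sqrt\beta$ forces a dominant real root. If instead $\rho_a$ is the dominant one, I must check that it exceeds $\rho_b$, which is the delicate endpoint case.

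For part (c), when $\rhoq=\sqrt\beta$ both extreme roots lie on $|z|=\sqrt\beta$. I would exhibit a feasible single-atom spectrum at some $t>\log\sqrt\beta$ and apply Theorem~\ref{thm:spectralgeneral}(c). At $t=\log\sqrt\beta$, $g$ vanishes at the root frequency, since $a(z)=0$ there. Perturbing $t$ upward and choosing $\omega$ near the root of $a$ or $b$ should leave $g\ge0$ small and all lag slacks positive, since $a_m\approx0$ and $b_m$ can take either sign with $C$ chosen suitably. I expect this construction to be the main obstacle: controlling all lags $m$ simultaneously near a degenerate point, where $\bar g\approx0$ also constrains $|C|$.
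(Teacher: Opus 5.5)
Your identification of the lag-one symbol with $-H_r/(B^2r^2)$ (Lemma~\ref{lem:symbols}(b) with $M=1-z$) is correct, and so is your continuity argument for the necessity half of (b). Three steps, however, are missing or would fail.

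\textbf{Part (a).} The interval property is not a pointwise-in-$c$ monotonicity in $r$, and your derivative test has no reason to hold without global input. The missing idea is harmonicity. The function $\mathsf G(z)=(1-z)a(z)/b(z)$ is analytic on $|z|>\rho_b$, and $H_r(\cos\omega)=|b(re^{i\omega})|^2\Rea\mathsf G(re^{i\omega})$. Nonnegativity on $|z|=r$ and on $|z|=1$ therefore gives, by the strong minimum principle on the annulus, strict positivity on every circle $|z|=r'$ with $r<r'<1$. This yields both upward closedness and the strict certificates that your ``perturb $r$'' step presupposes.

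Nonemptiness cannot follow from continuity in $r$, because $H_1(1)=0$ (the factor $1-z$). One needs the expansion
$\Rea\mathsf G((1-\delta)e^{i\theta})=\frac xy\delta+\frac{\cL(1)}{2y^2}\theta^2+O(\delta^2+\delta\theta^2+|\theta|^3)$
near $z=1$, together with compactness elsewhere.

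Finally, $\rho_b\ge r_0$ can fail: for $\beta=0$, $\alpha L=1$ one has $\rho_b=0<r_0$. Shrinking the set to $r>\max(\rho_b,r_0)$ alters the statement. The paper avoids $r_0$ by producing strict certificates at every $t>\log r$ and invoking Theorem~\ref{thm:spectralgeneral}(b); the positive exponents come from Lemma~\ref{lem:symbols}(a) and the minimum principle on the disk.

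\textbf{Part (b).} The hard case is $\rho_b\ge\rho_a$, which you dismiss. By Lemma~\ref{lem:dominant} the dominant root is then $-\rho_b$, so $\rhoq=\rho_b\notin(\rho_b,1)$, contrary to your claim. Moreover $\mathsf G$ has a pole at $z_b=-\rho_b$ on the inner circle, so the annulus argument does not apply. You must derive $H_r\ge0$ for all $r\in(\rho_b,1)$ from $H_{\rho_b}\ge0$, and this hinges on the sign of the residue $v=(1-z_b)a(z_b)/b'(z_b)$.
\begin{itemize}
\item Since $a(-\rho_b)=(y-x)\rho_b>0$ and $b'(-\rho_b)=\beta/\rho_b-\rho_b<0$, the residue satisfies $v<0$.
\item For $|z|>\rho_b$, $w=z-z_b$ satisfies $\Rea w<|w|^2/(2\rho_b)$, so $\Rea(v/w)>v/(2\rho_b)$ stays bounded below.
\item The minimum principle applied to $\Rea\mathsf G-\epsilon\log(|z-z_b|/d)$, followed by $\epsilon\downarrow0$, then closes the argument.
\end{itemize}

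\textbf{Part (c).} You picked the most degenerate mode. At a root of $a$ or $b$ one has $m=\pm1$, so $g=s=0$ and every slack vanishes. Your proposed mechanism also fails: $C$ enters $\ell_m$ and $\upsilon_m$ with opposite signs and with the fixed sign of $\sh mt$, so it cannot absorb the sign changes of $\sin m\omega$ in $b_m$. What is actually needed is $|s|$ small relative to $g$, because for fixed $t<0$ one has $\ch mt-\cos m\omega\ge\ch t-1>0$ uniformly in $m$.

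The paper uses the interior mode instead.
\begin{itemize}
\item $\rhoq=\sqrt\beta$ forces $\beta>0$ and $|A|<2\sqrt\beta$, since $A$ is the mean of two distinct coefficients of modulus at most $2\sqrt\beta$.
\item Define $\omega_0$ by $2\sqrt\beta\cos\omega_0=A$. At $t_0=\log\sqrt\beta$ this gives $m=0$, $g=1$, $s=0$.
\item With $C=0$ every signed slack is at least $\ch t_0-1>0$.
\item The bound $\ell_n,\upsilon_n\ge g(\ch nt-1)-2|s|$ keeps them positive, uniformly in $n$, for $t$ slightly above $t_0$.
\item Theorem~\ref{thm:spectralgeneral}(c) then gives $\rho\ge e^t>\sqrt\beta$.
\end{itemize}
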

The proof is given in Appendix~\ref{app:general-multipliers}.

An interior underdamped quadratic mode has interpolation slack and can be
perturbed to grow faster; this explains the strict gap in part~(c).
An overdamped endpoint mode is tight, and the sign of $H_{\rhoq}$ decides
whether the lag-one bound is sharp. The signs reduce to quadratics
(Corollary~\ref{cor:C}); Figure~\ref{fig:phase} shows the resulting regions.
\begin{corollary}[Polyak's tuning and gradient descent]\label{cor:D}
\begin{enumerate}[label=\textup{(\alph*)}]
\item For Polyak's tuning, the method lies in $\Omega_1$ if and only if
$\kappa<\kappa_c$; at $\kappa_c$ the tuning is the phase-locked point $K=3$
of $\partial\Omega_1$, and for $\kappa>\kappa_c$ it lies in the three-tongue.
\item For $\beta=0$ and $0<\alpha L<2$, both $\Omega_1$ and $\cQ$ are the whole
range, and $\rho=\max(|1-\alpha\mu|,|1-\alpha L|)$.
\end{enumerate}
\end{corollary}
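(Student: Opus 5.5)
Both parts reduce to Theorem~\ref{thm:A} and Theorem~\ref{thm:B} once the angle function $\Theta$ and the cubic $H_r$ are evaluated explicitly in the two special tunings.

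\textbf{Part (a).} For Polyak's tuning, $x=(1-q)^2$, $y=(1+q)^2$ and $\beta=q^2$. Hence
\[
a(z)=z^2-2qz+q^2+\ldots
\]
More usefully, both endpoints are critically damped, so
\[
a(z)=(z-q)^2,\qquad b(z)=(z+q)^2 .
\]
Therefore $b(e^{i\omega})\overline{(-a(e^{i\omega}))}=-(e^{i\omega}+q)^2(e^{-i\omega}-q)^2$, and
\[
\Theta(\omega)=\pi-2\bigl(\arg(e^{i\omega}+q)-\arg(e^{i\omega}-q)\bigr)\pmod{2\pi}.
\]
The difference $\arg(e^{i\omega}+q)-\arg(e^{i\omega}-q)$ is the angle subtended at $e^{i\omega}$ by the segment $[-q,q]$. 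Writing $\psi(\omega)=\arctan\bigl(2q\sin\omega/(1-q^2)\bigr)$, we get
\[
\Theta(\omega)=\pi-2\psi(\omega).
\]
The condition $\Theta(\omega)>\omega/2$ on $(0,\pi]$ then reduces to an explicit trigonometric inequality in $q$ and $\omega$. I will first compute the critical contact: tangency of $\Theta$ with $\omega/2$ at $\omega=2\pi/3$ should occur exactly at $q=\varphi^{-3}$, which is the $q$ for which $\kappa=\varphi^6=\kappa_c$.

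It remains to show that the minimum of $\Theta(\omega)-\omega/2$ is attained at $\omega=2\pi/3$ and changes sign exactly there. The minimum is strictly decreasing in $q$, since $\psi$ increases with $q$ for each fixed $\omega$. So it suffices to find the single $q$ at which the minimum equals zero, together with its location. The stationarity condition $\Theta'(\omega)=1/2$, combined with $\Theta=\omega/2$, is a polynomial system in $\cos\omega$ and $q$. Solving it is the \textbf{main obstacle}. I expect the three-cycle point $\omega=2\pi/3$, $q=\varphi^{-3}$ to be its only admissible solution; I will confirm this by elimination and a sign check on $[0,\pi]$.

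Theorem~\ref{thm:A}(iii) then places $\kappa_c$ on $\partial\Omega_1$ at $K=3$. For $\kappa>\kappa_c$, monotonicity in $q$ gives
\[
\Theta(2\pi/3)<\pi/3,
\]
so the tuning lies in the three-tongue. This is consistent with Theorem~\ref{thm:threshold}.

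\textbf{Part (b).} For $\beta=0$, $a(z)=z(z-1+x)$ and $b(z)=z(z-1+y)$. Then
\[
-a\bar b=-(z-1+x)\overline{(z-1+y)},
\]
using $|z|=1$. Write $\theta_1=\arg(e^{i\omega}-(1-x))$ and $\theta_2=\arg(e^{i\omega}-(1-y))$. Since $1-y\in(-1,1)$ and $1-x\in(1-y,1)$, the point $1-x$ lies to the right of $1-y$, so $\theta_1>\theta_2$ and
\[
\Theta(\omega)=\pi-(\theta_1-\theta_2).
\]
The quantity $\theta_1-\theta_2$ is the angle subtended at $e^{i\omega}$ by a chord segment inside the unit disk. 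I claim this angle is less than $\pi-\omega/2$. The natural comparison is the inscribed angle subtended by the full chord $[-1,1]$, which equals $\pi/2$; I would refine this using the point $1$ to obtain the bound $\pi-\omega/2$, and check the endpoint $\omega=\pi$ separately. This places every $\beta=0$ tuning with $0<\alpha L<2$ in $\Omega_1$.

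Next, $\rho_q=\max(|1-x|,|1-y|)$. Since $\beta=0$, the tuning is not underdamped unless $\rho_q=0$, so $\rho_q>\sqrt\beta=0$ and Theorem~\ref{thm:B}(b) applies. It suffices to verify $H_{\rho_q}(c)\ge0$ for all $c\in[-1,1]$, using Lemma~\ref{lem:cubic} with $a,b$ linear after removing the factor $z$. Expanding $\Rea[(1-z)(z-1+x)\overline{(z-1+y)}]$ at $|z|=r=\rho_q$ gives a cubic in $c$. I will check its nonnegativity by factoring out a root at the endpoint corresponding to the dominant real mode, which leaves a quadratic that I expect to be sign-definite. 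Then $\rho=\rho_q$ by Theorem~\ref{thm:B}(b).
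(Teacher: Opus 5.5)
\textbf{Part (a) has a gap.} Your skeleton is fine: the explicit angle, strict decrease of $\Theta$ in $q$, a single tangency computation, then Theorem~\ref{thm:A}(iii). But the only nontrivial step is left as an expectation, and the expectation is wrong. Since $\kappa=((1+q)/(1-q))^2$, the condition $\kappa=\kappa_c=\varphi^6$ means $\sqrt\kappa=\varphi^3$, hence $q=(\varphi^3-1)/(\varphi^3+1)=\varphi^{-1}$. You seem to have used $q=\kappa^{-1/2}$. The value $q=\varphi^{-3}$ corresponds to $\kappa=\varphi^2$, where (see below) $u_0=2$ and the curve stays strictly above $\omega/2$. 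The elimination you plan would therefore find no contact there. As written, nothing in the proposal shows that the contact at $2\pi/3$ occurs at $\kappa_c$.

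The missing idea is that no elimination is needed. By \eqref{eq:E} (Lemma~\ref{lem:cubic}), $\Theta(\omega)>\omega/2$ holds if and only if $1+c-\Xi(c)>0$, a quadratic condition in $c=\cos\omega$. Put $u_0=(q^{-1}-q)/2$. Your own formula gives $\tan\psi=\sin\omega/u_0$, so $\cot\Theta=-\cot2\psi$ and $\Xi(c)=(1-c^2-u_0^2)/(2u_0)$. Therefore
\[
 2u_0\bigl(1+c-\Xi(c)\bigr)=(c+u_0)^2+2u_0-1 .
\]
\begin{itemize}
\item Its minimum on $[-1,1]$ is positive if and only if $u_0>1/2$, that is, $q^2+q<1$, $q<\varphi^{-1}$, $\kappa<\kappa_c$.
\item At $u_0=1/2$ its only zero is the double contact $c=-1/2$, i.e.\ $\omega=2\pi/3$.
\item For $u_0<1/2$ its value at $c=-1/2$ is $(u_0-\tfrac12)(u_0+\tfrac32)<0$. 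By Corollary~\ref{cor:C}(b) this places the tuning in the three-tongue.
\end{itemize}
This is exactly the paper's proof. Equivalently, in your variables, $\Theta>\omega/2$ reads $\sin\omega<u_0\cot(\omega/4)$, i.e.\ $2h(1-h)<u_0$ with $h=\cos(\omega/2)$, and the left side is maximized at $h=1/2$.

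\textbf{Part (b) is correct but follows a different route from the paper.}

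For $\Omega_1$, your Thales comparison needs no refinement. The segment $[1-y,1-x]\subset(-1,1)$ subtends strictly less than $\pi/2$ at $e^{i\omega}$, so $\Theta>\pi/2\ge\omega/2$ on $(0,\pi)$, and $\Theta(\pi)=\pi$. The paper instead notes that $\cL$ is linear with positive endpoint values.

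For $\cQ$, note that $\beta=0$ kills the $T_3$ term of $H_r$. After removing the root at the dominant endpoint, the quotient is therefore affine rather than quadratic.
\begin{itemize}
\item If $\rho_a\ge\rho_b$, take $r=1-x$. The values at $c=-1,1$ are $r^3(1+r)(r+1-y)\ge0$ and $r^3\bigl(r+r^2+(1-y)(1-3r)\bigr)>0$.
\item If $\rho_b>\rho_a$, take $r=y-1$. The values at $c=-1,1$ are $r^3\bigl(r+3r^2-(1-x)(1-r)\bigr)>0$ and $r^3(1-r)(r-1+x)>0$.
\end{itemize}
So Theorem~\ref{thm:B}(b) does give $\rho=\rhoq$.

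The paper obtains the rate more cheaply. After normalization, $\operatorname{Lip}(I-\alpha\nabla f)\le|1-(x+y)/2|+(y-x)/2=\max(|1-x|,|1-y|)$, and it uses the quotient tests only for membership in $\cQ$. Your route derives the rate from the multiplier theory, which costs the explicit computation of $H_{\rhoq}$ but avoids a separate contraction argument.
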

The proof is given in Appendix~\ref{app:general-multipliers}.

\begin{figure}[tbp]
\centering
\includegraphics[width=\linewidth]{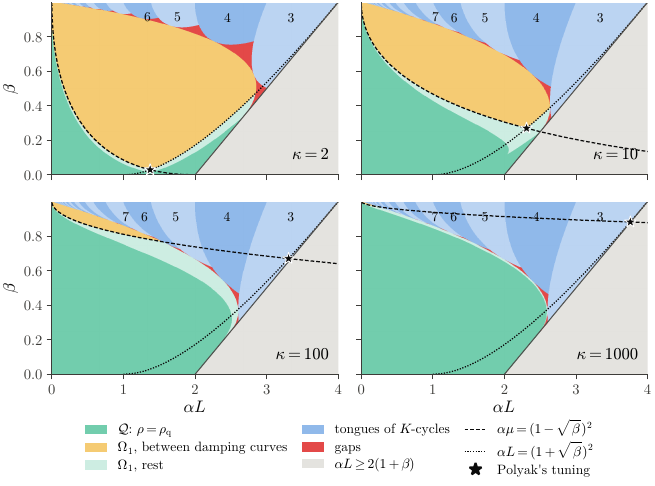}
\caption{Phase diagrams for $\kappa=2,10,100,1000$.
Green: $\cQ$, where $\rho=\rhoq$. Yellow and pale green: the remaining
lag-one convergence region $\Omega_1$. Blue: integer-cycle tongues.
Violet: additional rational-cycle tongues. Red: the remaining displayed
gap regions. Grey: quadratic instability.
The damping curves are dashed and dotted; the star is Polyak's tuning.}
\label{fig:phase}
\end{figure}

The complement of $\Omega_1$ is the union of the fractional-cycle regions
$\Theta(\omega)\le\omega/2$, the continuation of the integer tongues to
real cycle lengths~\cite{GTD25}. The gaps between consecutive integer
tongues contain both convergent tunings and multi-frequency periodic
obstructions. The full spectral program distinguishes these behaviors.

\subsection{The threshold function and the divergence set}\label{sec:levels}
At exponent zero all lag inequalities can be written using the angle alone:
\begin{equation}\label{eq:Lambda}
 \Lambda_n(\omega)=\cos\Theta-\cos(n\omega-\Theta)
 =2\sin\frac{n\omega}2\sin\Bigl(\frac{n\omega}2-\Theta\Bigr),
 \qquad n\in\Z\setminus\{0\}.
\end{equation}
The magnitude $|n|$ is the temporal separation in an interpolation
constraint; positive and negative indices denote the lower and upper
inequalities, respectively.
A curve is a continuous map $\Theta:[0,\pi]\to(0,\pi]$ with
$\Theta(0)=\Theta(\pi)=\pi$.
\begin{definition}[Full program and certificate symbols]\label{def:FP}
Let $\Theta$ be a curve. The full program $\FP(\Theta)$ asks for a nonzero
positive Borel measure $w$ of finite total mass on $[0,\pi]$ with
\[
 \int\Lambda_n\,dw\ge0\quad(n\in\Z\setminus\{0\}),\qquad \int\cos\Theta\,dw\ge0 .
\]
The last condition is the minimizer constraint.
For $\mathcal N\subset\Z\setminus\{0\}$, the program $\FP[\mathcal N](\Theta)$ keeps
the lag constraints with $n\in\mathcal N$. A certificate symbol with lags in
$\mathcal N$ is a function $S=\sum_{n\in N}\lambda_n\Lambda_n+\gamma\cos\Theta$ with
$N\subset\mathcal N$ finite, $\lambda_n\ge0$ and $\gamma\ge0$; it is strict
if $S\le-\varepsilon$ on $[0,\pi]$ for some $\varepsilon>0$.
\end{definition}
This is a linear feasibility problem in $w$. The symbol combines its
constraint functions with nonnegative dual weights: its integral
against a feasible measure is nonnegative, so strict negativity
certifies infeasibility.
\begin{theorem}[Exact threshold criterion]\label{thm:exactthr}
Let $q$ be a tuning.
\begin{enumerate}[label=\textup{(\alph*)}]
\item The conditions \eqref{eq:feasible} are feasible at $t=0$ if and only if
$\FP(\Theta_q)$ is feasible; up to normalization, $\nu\mapsto|g-2is|\,\nu$ is a
bijection between the spectra feasible at $t=0$ with $C=0$ and the solutions of
$\FP(\Theta_q)$. Feasibility at $t=0$ depends on the tuning only through the
curve $\Theta_q$, that is, through the three coefficients of $\Xi$.
\item If $\FP(\Theta_q)$ is feasible, then $W_N\ge c$ for all $N$, with $c>0$
independent of $N$; in particular $\rho(q)\ge1$.
\item $\rho(q)<1$ if and only if $\FP(\Theta_q)$ is infeasible, if and only if
a strict certificate symbol exists. Consequently
$\mathcal D=\{q:\ \FP(\Theta_q)\ \text{is feasible}\}$.
\end{enumerate}
\end{theorem}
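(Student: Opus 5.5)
Part~(a) is a change of variables in Definition~\ref{def:feasible} at $t=0$. There $\sh mt=0$, so the $C$ terms drop out of every lag constraint. The only remaining $C$ constraint is $|C|\le\bar g$, which is feasible (with $C=0$) exactly when $\bar g\ge0$. So feasibility at $t=0$ reduces to
\[
 \bar a_m\mp\bar b_m\ge0,\qquad \bar g\ge0,
\]
with $a_m=g(1-\cos m\omega)$ and $b_m=2s\sin m\omega$. Write $g-2is=R e^{-i\Theta}$ with $R=|g-2is|$, using Definition~\ref{def:angle}. Then $g=R\cos\Theta$ and $2s=R\sin\Theta$, so
\[
 a_m\mp b_m=R\bigl(\cos\Theta-\cos\Theta\cos m\omega\mp\sin\Theta\sin m\omega\bigr)=R\bigl(\cos\Theta-\cos(m\omega\pm\Theta)\bigr).
\]
Since $\cos(m\omega+\Theta)=\cos(-m\omega-\Theta)$, this is $R\,\Lambda_{\mp m}(\omega)$ by \eqref{eq:Lambda}, with the sign convention matching lower/upper lags. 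The minimizer constraint becomes $\int R\cos\Theta\,d\nu\ge0$. Setting $w=R\nu$ therefore carries feasible spectra with $C=0$ to solutions of $\FP(\Theta_q)$, and conversely $\nu=w/R$ after normalization. This needs $R>0$ on $[0,\pi]$, which holds by \eqref{eq:factor}, since $a,b$ have no roots on the unit circle under \eqref{eq:quad}; the endpoint convention $\Theta(0)=\pi$ has to be checked against $s=0$ at $\omega=0$. Finally, $\Theta_q$ is determined by $\Xi$ via Lemma~\ref{lem:angle}, so feasibility depends on the tuning only through $\Xi$.

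Part~(b) follows from Theorem~\ref{thm:spectralgeneral}(c) at $t=0$, where the condition $|C|<\bar g$ is not required. The difficulty is that (c) asks for a finitely supported spectrum, while $\FP$ may only have general measures as solutions. I would first try to reduce to a finite support. Because $\Lambda_n$ for $|n|$ large oscillates, a Carathéodory/extreme-point argument on finitely many constraints does not immediately apply. Instead I would use the following dichotomy.

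\begin{itemize}
\item If a finitely supported solution exists, invoke (c) directly.
\item Otherwise, take a general feasible $\nu$ and build the trajectory directly from the interpolation construction (Lemma~\ref{lem:spectralinterp}) with a continuous superposition of waves in a separable Hilbert space, then project.
\end{itemize}

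The paper's bound concerns $W_N$ with finite $d$, so the cleaner route is to work horizon by horizon. For each $N$, only lags $m\le N$ matter for the first $N$ iterates. The finite subsystem with lags $\le N$ has a feasible solution (the same $\nu$), and by Carathéodory it has one with at most $2N+2$ atoms. Applying Lemma~\ref{lem:startup} to this truncated data yields a dimension-$(2r+1)$ example with $\norm{x_N}\ge c_0\norm{x_0}$. The main obstacle is ensuring that $c_0$ does not depend on $N$. I would try to get this from the explicit startup bound: at $t=0$, the constant $Y$ in Lemma~\ref{lem:startup} depends only on the tuning and on the first-step defect, not on the support. If that fails, fall back to weak compactness to extract a limit finitely supported measure.

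Part~(c) assembles the pieces. If $\FP$ is infeasible, then by (a) the conditions \eqref{eq:feasible} are infeasible at $t=0$, and Theorem~\ref{thm:oneexp} gives $\rho<1$ (note $r_0<1=e^0$). If $\FP$ is feasible, then (b) gives $\rho\ge1$. For the equivalence with strict symbols, Theorem~\ref{thm:spectralgeneral}(a) gives a strict certificate \eqref{eq:strictcert} at $t=0$. At $t=0$ its $C$-balance forces $\gamma_+=\gamma_-$. Dividing its symbol by $R>0$ and using the identity from (a) produces a strict certificate symbol in the sense of Definition~\ref{def:FP}, with the margin preserved since $R$ is bounded on the compact interval. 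The converse direction is immediate by integrating the symbol against $w$. This identifies $\mathcal D$ with the set of tunings for which $\FP(\Theta_q)$ is feasible.
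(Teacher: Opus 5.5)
Your proposal is correct. Parts (a) and (c) follow the paper's argument. In (c) you get the strict symbol from Theorem~\ref{thm:spectralgeneral}(a) by dividing by $R$, where the paper cites Lemma~\ref{lem:alt}; the two are equivalent.

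There is a small sign slip in (a). Since $\cos\Theta\cos m\omega\pm\sin\Theta\sin m\omega=\cos(m\omega\mp\Theta)$, one has $a_m-b_m=R\Lambda_m$ and $a_m+b_m=R\Lambda_{-m}$, so lower lags carry positive indices. Your signs are reversed, which is harmless because $\FP$ imposes every signed lag.

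The genuine difference is in (b).

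\emph{The paper's route.} It never reduces to finite support. It realizes an arbitrary feasible $\nu$ in $L^2(\nu;\C)\oplus\R$ (Appendix~\ref{app:levels-realization}), with one decaying coordinate and a single constant $Y$. For each horizon it \emph{restricts} the data $e_0,\dots,e_N,\sigma_0,\dots,\sigma_N$ to their finite span. This is legitimate because \eqref{eq:interpolation} depends only on Gram data. So your second bullet's ``then project'' should read ``restrict to the span''; an orthogonal projection would not preserve those conditions.

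\emph{Your route.} You truncate to lags $\le N$, apply Carath\'eodory to get a probability measure with at most $2N+2$ atoms, and run Lemma~\ref{lem:startup} at finite horizon. A finite-horizon form is needed because the reduced measure is not feasible for all lags. Its proof goes through verbatim: pairs among indices $1,\dots,N-1$ involve only lags below $N$, and pairs involving index $0$ are absorbed by $Y$.

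\emph{Uniformity of $c_0$.} Your primary idea works, and it is exactly the paper's key point. At $t=0$ the first-summand deficits of the pairs $(0,k)$ and $(\star,0)$ are inner products of $1$, $z^k$, $mz^k$ and $r=(z-A+\beta)/B$. All of these are bounded in sup norm on $[0,\pi]$ by constants depending only on the tuning. Hence, for every probability measure, the deficits are bounded by one $M$. Then $\tau$, $\delta_0$, $Y$, and so $c_0=(1+Y^2)^{-1/2}$, are independent of both $\nu$ and $N$. The weak-compactness fallback is unnecessary, and it would fail anyway, since weak limits of finitely supported measures need not be finitely supported.

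The same bound applies directly to the original $\nu$, which makes the Carath\'eodory step optional. Your route stays with finitely supported spectra and the stated startup lemma, at the cost of a reduction step and a truncated lemma. The paper's route handles general measures in one construction: a single Hilbert-space trajectory, truncated at each horizon.
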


\begin{proof}
At exponent zero, feasibility with any admissible $C$ implies
$\bar g\ge|C|\ge0$, so $C=0$ may be used. With
$d(\omega)=|g-2is|=|a(e^{i\omega})b(e^{i\omega})|/B^2$, the lag and
minimizer functions satisfy
\[
 g(1-\cos n\omega)-2s\sin n\omega=d\Lambda_n,
 \qquad g=d\cos\Theta_q.
\]
The function $d$ is continuous and strictly positive on $[0,\pi]$ by
quadratic stability. Consequently $w=d\nu$ and $\nu=d^{-1}w$ preserve
finiteness and positivity and give exactly the correspondence in (a).
Lemma~\ref{lem:alt} gives the equivalence between infeasibility and a
strict finite symbol. Such a symbol is a strict spectral certificate
at $t=0$, hence implies $\rho<1$ by Theorem~\ref{thm:oneexp}.
Conversely, a feasible measure admits the start-up realization of
Appendix~\ref{app:levels-realization}: multiplication by $e^{i\omega}$ in
$L^2(\nu)$ gives a stationary trajectory, and one decaying coordinate
enforces the initial condition with a fixed constant $Y$.
Restricting its interpolation data to the finite span used before a
horizon $N$ gives a finite-dimensional objective with
$\|e_N\|\ge1$ and $\|e_0\|\le\sqrt{1+Y^2}$.
Thus $W_N\ge(1+Y^2)^{-1/2}$ for every $N$, proving (b) and the
remaining implication in (c).
\end{proof}

\subsubsection{Tangencies and gaps between periodicity regions}
The cotangent $h_q=\Xi(\cos\omega)/\sin\omega$ is strictly concave.
Consequently the closed dip, the set of frequencies where the curve
meets or falls below the lag-one convergence line,
$\bar\Delta=\{\omega\in(0,\pi]:\Theta_q(\omega)\le\omega/2\}$
is a compact interval in $(0,\pi)$, possibly empty or a point
(Theorem~\ref{thm:convex} and Corollary~\ref{cor:dip}).
Every feasible measure charges this interval (Lemma~\ref{lem:dip}).
\begin{theorem}[The boundary of $\Omega_1$]\label{thm:tangency}
Let $q$ satisfy $\Theta_q\ge\omega/2$ on $(0,\pi]$ with equality at some $\omega_*$. If
$\omega_*\ne2\pi/K'$ for every $K'\ge3$, then a strict certificate symbol with the lags
$1$ and $-K$ exists, where $2\pi/(K+1)<\omega_*<2\pi/K$, and $\rho(q)<1$. Consequently
$\partial\Omega_1\cap\mathcal D$ consists exactly of the phase-locked points of
Theorem~\ref{thm:A}\textup{(iii)}, where $\rho=1$.
\end{theorem}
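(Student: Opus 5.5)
We build a strict certificate symbol $S=\lambda\Lambda_1+\mu\Lambda_{-K}$, possibly adding a small multiple of $\cos\Theta$, and then invoke Theorem~\ref{thm:exactthr}(c) to get $\rho(q)<1$. The boundary statement will follow from this together with Theorem~\ref{thm:A}(iii).

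\textbf{The two symbols.} By \eqref{eq:Lambda},
\[
\Lambda_1(\omega)=2\sin\tfrac\omega2\,\sin\bigl(\tfrac\omega2-\Theta\bigr).
\]
Since $\Theta\ge\omega/2$, we have $\Lambda_1\le0$ on $[0,\pi]$. It vanishes exactly at $\omega=0$ and on the contact set $\{\Theta=\omega/2\}$. By strict concavity of $h_q$ (Theorem~\ref{thm:convex}, Corollary~\ref{cor:dip}) the closed dip is the single point $\omega_*$, so the contact set is $\{\omega_*\}$. Next,
\[
\Lambda_{-K}(\omega)=-2\sin\tfrac{K\omega}2\,\sin\bigl(\tfrac{K\omega}2+\Theta\bigr).
\]
At $\omega_*$ we have $\Theta=\omega_*/2$, so the second factor is $\sin\bigl((K+1)\omega_*/2\bigr)$. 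The hypothesis $2\pi/(K+1)<\omega_*<2\pi/K$ gives $K\omega_*/2\in(0,\pi)$ and $(K+1)\omega_*/2\in(\pi,\pi+\omega_*/2)\subset(\pi,2\pi)$. Hence $\Lambda_{-K}(\omega_*)>0\cdot$ sign bookkeeping yields $\Lambda_{-K}(\omega_*)=-2(\text{pos})(\text{neg})>0$. This is the wrong sign, so the pairing must be rechecked.

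\textbf{Correcting the sign convention.} The orientation of the contact matters here. The upper inequalities are those with negative index, and an upper lag $-K$ should be negative near the contact. I will therefore recompute $\Lambda_{-K}$ directly from the definition:
\[
\Lambda_{-K}=\cos\Theta-\cos(K\omega+\Theta).
\]
At $\omega_*$ this equals $\cos(\omega_*/2)-\cos\bigl((2K+1)\omega_*/2\bigr)$.

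Whichever of the lags $-K$ or $-(K+1)$ turns out to be strictly negative at $\omega_*$ will be used; the statement says $-K$. The condition $\omega_*\ne2\pi/K'$ is exactly what rules out $\Lambda_{-K}(\omega_*)=0$. Indeed, the phase-locked case $\omega_*=2\pi/K$ makes the relevant sine factor vanish, and that is the only way the value can be zero.

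\textbf{Gluing the two pieces.} With $\Lambda_{-K}(\omega_*)<0$ fixed, continuity gives a neighbourhood $U$ of $\omega_*$ on which $\Lambda_{-K}\le-\eta$. Off $U$, and away from $\omega=0$, we have $\Lambda_1\le-\eta'$. Taking $\lambda$ large relative to $\mu=1$ makes $S\le-\varepsilon$ on $[\delta,\pi]\setminus U$. On $U$, $S\le\mu\Lambda_{-K}+\lambda\cdot 0\le-\eta$.

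\textbf{The main obstacle: $\omega=0$.} There $\Lambda_n(0)=\cos\pi-\cos\pi=0$ for every $n$, since $\Theta(0)=\pi$. The lag symbols alone therefore cannot be strictly negative at $0$. The minimizer term fixes this: $\cos\Theta(0)=-1$, and $\cos\Theta<0$ near $0$ because the endpoint values of $\Xi$ are negative. So I add $\gamma\cos\Theta$ with small $\gamma>0$. Choosing $\gamma$ small enough keeps the negativity margin elsewhere (note $\cos\Theta\le1$), and it gives strict negativity on $[0,\delta]$. Making these choices quantitatively uniform near $0$ is where care is needed; I expect the expansion $\Lambda_1\approx\omega\bigl(\omega/2-\Theta\bigr)\to-\pi\omega$ to suffice.

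\textbf{Concluding the boundary statement.} Theorem~\ref{thm:exactthr}(c) now gives $\rho(q)<1$. At points of $\partial\Omega_1$, continuity gives $\Theta\ge\omega/2$ with a contact. If the contact is not at some $2\pi/K'$, then $\rho<1$ and $q\notin\mathcal D$. Otherwise the point is phase-locked, and Theorem~\ref{thm:A}(iii) gives $\rho=1$. Since $\rho\ge1$ defines $\mathcal D$, this proves the stated equality $\partial\Omega_1\cap\mathcal D=\{\text{phase-locked points}\}$.
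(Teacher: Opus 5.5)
\textbf{Verdict.} Your strategy is the paper's: put weight on lag $1$, add a relatively small weight on lag $-K$ to remove the contact zero at $\omega_*$, add a still smaller $\gamma\cos\Theta$ for strictness at $\omega=0$, and conclude with the exact criterion. Your final paragraph on $\partial\Omega_1\cap\mathcal D$ is fine. However, both steps the construction rests on are left unproved, and in each your computation is wrong.

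\textbf{The sign at the contact.} Your factorization of $\Lambda_{-K}$ carries a spurious minus sign. From \eqref{eq:Lambda},
$\Lambda_{-K}=2\sin(-K\omega/2)\sin(-K\omega/2-\Theta)=2\sin\frac{K\omega}{2}\sin\bigl(\frac{K\omega}{2}+\Theta\bigr)$.
Hence $\Lambda_{-K}(\omega_*)=2\sin\frac{K\omega_*}{2}\sin\frac{(K+1)\omega_*}{2}<0$, using exactly the factor signs you computed. Instead you conclude $\Lambda_{-K}(\omega_*)>0$, recompute without evaluating, and leave the choice between $-K$ and $-(K+1)$ open, deferring to the statement.

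That alternative is actually wrong. $\Lambda_{-(K+1)}(\omega_*)=2\sin\frac{(K+1)\omega_*}{2}\sin\frac{(K+2)\omega_*}{2}>0$, because both arguments lie in $(\pi,2\pi)$. Everything else depends on this sign, so as written your phrase ``with $\Lambda_{-K}(\omega_*)<0$ fixed'' is an assumption, not a derived fact.

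\textbf{The endpoint $\omega=0$.} Your expansion $\Lambda_1\approx\omega(\omega/2-\Theta)\to-\pi\omega$ is invalid, because $\omega/2-\Theta\to-\pi$ is not small. In fact $\sin(\omega/2-\Theta)=-\sin\bigl((\pi-\Theta)+\omega/2\bigr)\approx-(\tau_q+\frac12)\omega$, so $\Lambda_1=-(\frac12+\tau_q)\omega^2+O(\omega^3)$. Every lag function vanishes to second order at $0$, with $\Lambda_n''(0)=-n(n+2\tau_q)$ by Lemma~\ref{lem:ends}. In particular $\Lambda_{-K}''(0)=K(2\tau_q-K)$, which is positive whenever $\tau_q>K/2$; then $\Lambda_{-K}>0$ on a punctured neighbourhood of $0$. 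So lag one does not win by being of lower order, and $\gamma\cos\Theta$ alone cannot rescue $[0,\delta]$ unless the lag part is already nonpositive there.

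\textbf{What is needed at $0$.} The missing step is a comparison of quadratic coefficients, carried out in this order:
\begin{enumerate}
\item Put $\varepsilon=\mu/\lambda$. Below a fixed threshold, $\Lambda_1+\varepsilon\Lambda_{-K}$ has value and slope $0$ at $0$ and second derivative at most $-(1+2\tau_q)/2$.
\item The third derivatives are uniformly bounded, so $\Lambda_1+\varepsilon\Lambda_{-K}\le-c\,\omega^2$ on a fixed $[0,\delta]$, uniformly in such $\varepsilon$.
\item Only then shrink $\varepsilon$ against the margin of $\Lambda_1$ on $[\delta,\pi]\setminus U$.
\item Finally take $\gamma$ smaller still.
\end{enumerate}
This is the paper's step ``the expansion in Lemma~\ref{lem:ends} retains a negative quadratic coefficient for small $\epsilon$''. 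With these two corrections your argument coincides with the paper's proof.
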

The proof is given in Appendix~\ref{sec:convexdip}.
\begin{definition}[Gaps]\label{def:levels}
A tuning lies in the gap $I_K$
of level one, $K\ge2$, if the dip $\{\Theta<\omega/2\}$ is nonempty with rotation numbers
in $(1/(K+1),1/K)$, while $\Theta(2\pi/K)>\pi/K$ and $\Theta(2\pi/(K+1))>\pi/(K+1)$.
\end{definition}
\begin{theorem}[Level zero]\label{thm:levelzero}
Every tuning $q$ falls into exactly one of the following cases.
\begin{enumerate}[label=\textup{(\Roman*)}]
\item $\bar\Delta=\emptyset$: $q\in\Omega_1$, and $\rho(q)<1$.
\item $2\pi/K'\in\bar\Delta$ for some $K'\ge3$: $\Theta(2\pi/K')\le\pi/K'$, the tuning lies in the
closed $K'$-tongue, $\delta_{2\pi/K'}$ solves $\FP(\Theta_q)$, and $\rho(q)\ge1$.
\item $\emptyset\ne\bar\Delta\subset(2\pi/(K+1),2\pi/K)$ for a unique $K\ge2$. Either
$\bar\Delta$ is a point, $q$ is a point of $\partial\Omega_1$ that is not phase-locked, and
$\rho(q)<1$; or $\bar\Delta$ has interior, and $q\in I_K$.
\end{enumerate}
\end{theorem}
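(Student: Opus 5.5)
The proof is a case split on the closed dip $\bar\Delta$. By Theorem~\ref{thm:convex} and Corollary~\ref{cor:dip}, $\bar\Delta$ is a compact interval in $(0,\pi)$, possibly empty or a single point. Since $\Theta(\pi)=\pi>\pi/2$, we have $\pi\notin\bar\Delta$. Near $\omega=0$ the extension $\Theta(0)=\pi$ keeps $\Theta>\omega/2$. Hence a nonempty $\bar\Delta$ sits strictly inside $(0,\pi)$.

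The three cases are mutually exclusive and exhaustive by elementary interval bookkeeping. The points $2\pi/K'$, $K'\ge3$, together with the accumulation point $0$, partition $(0,2\pi/3)$ into the open intervals $(2\pi/(K+1),2\pi/K)$ with $K\ge3$. The interval $(2\pi/3,\pi)$ is the case $K=2$, since $2\pi/2=\pi$ is excluded. A nonempty compact interval in $(0,\pi)$ therefore either contains some $2\pi/K'$, which is case (II), or contains none. In the latter case, connectedness places it inside exactly one open interval $(2\pi/(K+1),2\pi/K)$, and $K$ is unique because these intervals are disjoint; this is case (III). Cases (I) and (III) are separated from (II) by whether $\bar\Delta$ is empty and whether it meets the grid $\{2\pi/K'\}$.

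In case (I), $\Theta>\omega/2$ on $(0,\pi]$. This is exactly the definition of $\Omega_1$, and Theorem~\ref{thm:A}(i) gives $\rho<1$.

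In case (II), $2\pi/K'\in\bar\Delta$ means $\Theta(2\pi/K')\le\pi/K'$. This is the criterion of Theorem~\ref{thm:A}(ii) with $j=1$, $k=K'$ (here $1<K'/2$ because $K'\ge3$), so the tuning lies in the closed $K'$-tongue. To see that $\delta_{\omega_0}$ with $\omega_0=2\pi/K'$ solves $\FP(\Theta_q)$, I check $\Lambda_n(\omega_0)=2\sin(n\pi/K')\sin(n\pi/K'-\Theta)\ge0$ for all $n\ne0$. Write $n\equiv r \pmod{K'}$ and reduce to $n\pi/K'\in(-\pi,\pi]$ modulo $2\pi$ after pairing the two sine factors; this uses that the product depends only on $n\bmod K'$ up to sign coherence.
\begin{itemize}
\item For $0<n\pi/K'\le\pi$ we have $\sin(n\pi/K')\ge0$. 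Since $\Theta\ge\pi/K'\cdot$ is not available, I use instead $\Theta\le\pi/K'\le n\pi/K'$ together with $n\pi/K'-\Theta\in[0,\pi)$, so the second factor is also $\ge0$.
\item Negative $n$ are handled symmetrically: both factors are $\le0$, provided $\Theta>0$ and the argument stays in $(-\pi,0]$.
\end{itemize}
This periodicity bookkeeping is the only slightly delicate step. Alternatively one can simply cite the single-harmonic feasibility underlying Theorem~\ref{thm:A}(ii) via Theorem~\ref{thm:exactthr}(a). The minimizer constraint needs $\cos\Theta(\omega_0)\ge0$, which holds because $\Theta\le\pi/K'\le\pi/3$. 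Theorem~\ref{thm:exactthr}(b) then gives $\rho\ge1$.

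In case (III), if $\bar\Delta=\{\omega_*\}$ then $\Theta\ge\omega/2$ with equality only at $\omega_*$. Since $\omega_*$ is not of the form $2\pi/K'$, Theorem~\ref{thm:tangency} gives $\rho<1$, and $q\in\partial\Omega_1$ (by perturbing along a direction that lowers the curve) is not phase-locked. If instead $\bar\Delta$ has interior, then the open dip $\{\Theta<\omega/2\}$ is nonempty and contained in $\bar\Delta\subset(2\pi/(K+1),2\pi/K)$. The rotation-number condition of Definition~\ref{def:levels} therefore holds. Moreover $2\pi/K,2\pi/(K+1)\notin\bar\Delta$, so $\Theta>\omega/2$ at both points, which gives $\Theta(2\pi/K)>\pi/K$ and $\Theta(2\pi/(K+1))>\pi/(K+1)$, i.e.\ $q\in I_K$. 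For $K=2$ the condition at $2\pi/2=\pi$ is automatic from $\Theta(\pi)=\pi$.

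I expect the main obstacle to be confirming that a single-point $\bar\Delta$ genuinely lies on $\partial\Omega_1$ rather than in the interior of the complement. This is exactly what Theorem~\ref{thm:tangency} and the strict concavity of $h_q$ provide, so I will invoke them directly.
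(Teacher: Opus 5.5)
Your case split on $\bar\Delta$ is the same as the paper's, and cases (I), (II) and the second alternative of (III) go through as you describe. Two small remarks on those cases:
\begin{itemize}
\item In (II) the sign bookkeeping is cleaner if you note that $\Lambda_n(2\pi/K')=\cos\Theta-\cos(2\pi n/K'-\Theta)$ depends only on $n\bmod K'$. For $1\le r\le K'-1$ both factors of $2\sin(r\pi/K')\sin(r\pi/K'-\Theta)$ are then nonnegative because $0<\Theta\le\pi/K'$. Alternatively, cite Proposition~\ref{prop:single}.
\item In the interior alternative of (III), nonemptiness of the strict dip is not automatic. It uses the strict convexity of $\Theta-\omega/2$ on $\bar\Delta$ (Theorem~\ref{thm:convex}), which forces this function to be negative strictly inside a nondegenerate $\bar\Delta$.
\end{itemize}

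The one step that fails as written is the claim $q\in\partial\Omega_1$ in the singleton case. Since $\Theta_q(\omega_*)=\omega_*/2$, $q$ is already outside $\Omega_1$. What must be shown is that $q$ is a limit of tunings \emph{in} $\Omega_1$, so you need perturbations that \emph{raise} the angle curve above $\omega/2$, not lower it. Lowering the curve only produces further points of the complement, which proves nothing new.

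Neither of the results you invoke supplies this. Theorem~\ref{thm:tangency} takes the tangency as a hypothesis and returns a strict certificate. The concavity of $h_q$ gives only the interval structure of $\bar\Delta$. The paper closes the step with Lemma~\ref{lem:kappa}: at fixed $(\alpha L,\beta)$, decreasing $\kappa$ lowers $h_q$ by a uniform positive amount, hence strictly raises $\Theta_q$ on $(0,\pi)$. Therefore $(\kappa-\epsilon,\alpha L,\beta)\in\Omega_1$ for every small $\epsilon>0$, and these tunings converge to $q$. That $q$ is not phase-locked then follows from $\omega_*\ne2\pi/K'$.
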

The proof is given in Appendix~\ref{sec:convexdip}.

Thus only the open gaps remain after lag-one convergence and integer
cycles have been classified. For $c_K=\cos(2\pi/K)$, each gap has the
coordinates
\[
 1+c-\Xi_q(c)=a(c-r_1)(c-r_2),\qquad
 c_K<r_1<r_2<c_{K+1},\qquad
 a>a_0(r_1,r_2):=\frac{2}{(1-r_1)(1-r_2)}.
\]
Proposition~\ref{prop:chart} proves that this parametrization is a
homeomorphism and, in particular, that $I_K$ is connected.

\subsubsection{Continuity and periodic approximation}
At exponent $t=0$, introduce a separate parameter $s$ by shifting
the cotangent vertically:
\begin{equation}\label{eq:shifted}
 \Theta_{q,s}=\operatorname{arccot}(h_q+s)\quad(0<\omega<\pi),
 \qquad \Theta_{q,s}(0)=\Theta_{q,s}(\pi)=\pi.
\end{equation}
Increasing $s$ lowers the curve and preserves feasibility.
In the monotone form of Proposition~\ref{prop:monoform}, this is the
identity
\[
 \ell_n[h+s]=\ell_n[h]+s(1-\cos n\omega),\qquad
 \ell_n[h]=h(1-\cos n\omega)-\sin n\omega.
\]
There is a finite attained threshold $\sigma(q)$:
$\FP(\Theta_{q,s})$ is feasible exactly for $s\ge\sigma(q)$,
and $\mathcal D=\{\sigma\le0\}$ (Proposition~\ref{prop:threshold}).

Feasible measures at this smallest shift are called threshold
extremals; extremality here refers to the attained shift.
For $L\ge3$, let
$\Omega_L=\{2\pi j/L:0<2\pi j/L\le\pi\}$.
The $L$-periodic program $\PP_L$ restricts the measure to
$\Omega_L$, whose waves all repeat after $L$ iterations; lag
constraints then depend only on $n\bmod L$.
The harmonic reduction for cycles with free initial states is given
in~\cite[Theorem~6.13]{GTD}; Proposition~\ref{prop:periodic} enforces
the initialization in~\eqref{eq:hb}.
Write $\mathcal T_L$ for its feasible tunings, $\sigma_L$ for its
shift threshold, and $\sigma_{\mathcal N}$ for the threshold using
lags in $\mathcal N$. Then
$\sigma_{\mathcal N}\le\sigma\le\sigma_L$.
\begin{theorem}[Periodic sampling]\label{thm:sampling}
For every tuning $q$, $\sigma(q)=\inf_{L\ge3}\sigma_L(q)$. If $q\in\mathcal D$, then either
$q\in\mathcal T_n$ for some $n$, and then $(\kappa',\alpha L,\beta)\in\mathcal T_n$ for all $\kappa'\ge\kappa$, or for
every $\kappa'>\kappa$ there is $L_0$ with $(\kappa',\alpha L,\beta)\in\mathcal T_L$ for all $L\ge L_0$. In
particular $\mathcal D=\cl\bigcup_L\mathcal T_L$.
\end{theorem}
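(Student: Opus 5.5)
The plan is to prove the three assertions in order. The identity $\sigma=\inf_L\sigma_L$ is the core; the statement about increasing $\kappa$ and the closure formula will follow from it together with a monotonicity property of the cotangent $h_q$ in $\kappa$.

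\emph{Step 1: $\sigma=\inf_L\sigma_L$.} One inequality is already recorded, $\sigma\le\sigma_L$: a measure on $\Omega_L$ feasible for $\PP_L$ is in particular feasible for $\FP$, because lag constraints on $\Omega_L$ depend only on $n\bmod L$. For the reverse inequality I fix $\varepsilon>0$ and a threshold extremal $w$ at shift $\sigma(q)$. I work in the monotone form of Proposition~\ref{prop:monoform}, where the identity $\ell_n[h+s+\varepsilon]=\ell_n[h+s]+\varepsilon(1-\cos n\omega)$ turns the extra shift into a slack proportional to $\int(1-\cos n\omega)\,dw$.
\begin{enumerate}
\item First I replace $w$ by a smoothed measure $w_\eta$, obtained by convolving with a narrow Fejér-type kernel on the circle. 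This changes every $\int\ell_n\,dw$ by an amount that tends to $0$ as $\eta\to0$, uniformly in $n$ in the range that matters below. It also makes $\int(1-\cos n\omega)\,dw_\eta$ bounded below uniformly in $n\ne0$: an absolutely continuous measure has no mass on the null sets where $\cos n\omega$ is close to $1$ for infinitely many $n$.
\item Then I sample $w_\eta$ on the grid $\Omega_L$, with weights given by the density at the grid points.
\item Finally I check the lags $n=1,\dots,L-1$ (with both signs), which are all the lags $\PP_L$ involves.
\end{enumerate}

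\emph{Main obstacle.} The key difficulty is the aliasing estimate in the last item. For $n$ small compared with $L$, Riemann-sum errors for the smooth density are $o(1)$. For $n$ close to $L$, however, $\cos n\omega$ and $\sin n\omega$ on the grid coincide with $\cos((L-n)\omega)$ and $-\sin((L-n)\omega)$. So lag $n$ becomes lag $-(L-n)$ of the continuous program: lower and upper constraints are exchanged. Since $\FP$ contains both signs of every lag, the sampled constraint for $n$ is the sampled constraint for the complementary lag $\mp(L-n)$, and that one has small aliasing error. What must be checked is that the $h$-term $h(1-\cos n\omega)$ transforms consistently under this reflection; it does, because $1-\cos n\omega$ is invariant on the grid. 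With this, the $\varepsilon$-slack absorbs all errors for $L$ large, giving $\sigma_L\le\sigma+\varepsilon$.

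\emph{Step 2: monotonicity in $\kappa$.} Fix $y$ and $\beta$ and increase $\kappa$, so that $x=y/\kappa$ decreases. I expect to verify from Lemma~\ref{lem:angle} that $\Xi$, and hence $h_q$, increases pointwise on $(0,\pi)$, strictly on compact subsets. Since each $\ell_n[h]$ is nondecreasing in $h$ (because $1-\cos n\omega\ge0$), every measure feasible for $q$, whether for $\FP$ or for $\PP_n$, stays feasible for $q'$. This gives the first alternative: $q\in\mathcal T_n$ implies $(\kappa',\alpha L,\beta)\in\mathcal T_n$.

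\emph{Step 3: the second alternative and the closure formula.} If $q\in\mathcal D$, then $\sigma(q)\le0$. For $\kappa'>\kappa$, the strict increase of $h$ on the compact dip carrying the extremal (Lemma~\ref{lem:dip}) gives $\sigma(q')<0$. I would use the same smoothing argument as in Step 1, with the increment of $h$ playing the role of $\varepsilon$. By Step 1, $\sigma_L(q')\le0$ for all $L\ge L_0$, that is, $q'\in\mathcal T_L$. Hence every point of $\mathcal D$ is a limit of points of $\bigcup_L\mathcal T_L$, giving $\mathcal D\subset\cl\bigcup_L\mathcal T_L$. Conversely, $\mathcal T_L\subset\mathcal D$ because $\sigma\le\sigma_L$. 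Also $\mathcal D$ is closed, since $\sigma$ is lower semicontinuous by weak compactness of normalized feasible measures, as in Lemma~\ref{lem:compact}. This gives the equality $\mathcal D=\cl\bigcup_L\mathcal T_L$.
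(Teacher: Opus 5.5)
Your overall architecture is the paper's: an extra shift to create slack, then smoothing, then sampling on $\Omega_L$, plus the monotone increase of $h_q$ in $\kappa$ (Lemma~\ref{lem:kappa}). Step~2 and the closure argument are fine. The gap is in the smoothing step of Step~1, on which everything else rests.

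Your claim that convolution changes $\int\ell_n\,dw$ by $o(1)$ uniformly over the relevant lags is false. You fix $\eta$ and then let $L\to\infty$, so every $|n|\le L/2$ matters. For $|n|\gg1/\eta$ the Riemann--Lebesgue lemma gives $\int\ell_n[h+s]\,dw_\eta\to\int(h+s)\,dw_\eta$, whereas $\int\ell_n\,dw$ keeps oscillating (it is periodic in $n$ for a rational atom). What is true is Lemma~\ref{lem:A-smooth}: the smoothed lag-$n$ slack equals the convex combination $\widehat K_\eta(n)\cdot(\text{lag-}n\text{ slack})+(1-\widehat K_\eta(n))\cdot(\text{minimizer slack})$, up to an error of order $\eta\min\{1,\eta(|n|+1)\}$ caused by the variation of the phase across the kernel.

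Your absorption step does not close either. The quantity $\inf_{n\ne0}\int(1-\cos n\omega)\,dw_\eta$ is positive for each fixed $\eta$. The reason is Riemann--Lebesgue; your null-set justification is wrong, since by Dirichlet every $\omega$ has $\cos n\omega$ near $1$ for infinitely many $n$. But this infimum can be of order $\eta^2$, for instance when $w$ sits on a periodic grid. So $\varepsilon$ times it need not dominate an error that is only $O(\eta)$ uniformly in $n$, and letting $\eta\to0$ does not help because both sides shrink.

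The missing idea is a lag-by-lag balance. Your shift gives a strictly positive minimizer slack $c_0\ge\varepsilon\,w((0,\pi))$. The kernel gain $(1-\widehat K_\eta(n))c_0\ge c_\varphi c_0\min\{\eta^2n^2,1\}$ (Lemma~\ref{lem:A-gain}) then dominates the smoothing error for all $|n|\ge n_0$ once $\eta$ is small. Here $n_0$ depends on $c_0$, $\sup|\Theta'|$ and $\sup|\Theta''|$, but not on $\eta$. For the finitely many lags $0<|n|<n_0$ the argument needs strict slack of the \emph{unsmoothed} measure, that is, $\int(1-\cos n\omega)\,dw>0$.

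This fails exactly when $w$ is carried by a periodic grid, so that case must be split off; there $\sigma_n=\sigma$ is immediate. This is the source of the dichotomy in the statement. It is also why Step~3 can assert ``all $L\ge L_0$'' only in the nonperiodic alternative. The identity $\sigma=\inf_L\sigma_L$ alone yields a single $L$; ``all large $L$'' comes from Lemma~\ref{lem:A-sampling} applied to a measure nondegenerate of order $n_0$.

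Two smaller points. First, the aliasing near $n\approx L$ that you call the main obstacle is settled by balanced representatives $|n|\le L/2$ and the $O(L^{-2})$ bound of Lemma~\ref{lem:A-alias}; your reflection is exactly that choice. Second, the smoothing should be done in the phase form on $\T$ with $\Theta$ extended oddly, where Lemma~\ref{lem:ends} bounds $\Theta'$ and $\Theta''$. In the monotone form $h_q+s\sim-1/(\tau_q\omega)$ near $0$, and convolving $\nu$ can make $\int|h_q+s|\,d\nu_\eta$ infinite.
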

The proof is given in Appendix~\ref{app:levels-sampling}.
\begin{theorem}[Global structure]\label{thm:global}
\begin{enumerate}[label=\textup{(\alph*)}]
\item For every $L\ge3$ the periodic threshold $\sigma_L$ is finite and continuous on
the set of tunings, and the threshold $\sigma$ is continuous.
\item $\sigma$ is strictly decreasing in $\kappa$ at fixed $(\alpha L,\beta)$, and
$\sigma\to+\infty$ as $\kappa\downarrow1$.
\item Put
\[
 \kappa_\partial(\alpha L,\beta)=\inf\{\kappa>1:\ \sigma(\kappa,\alpha L,\beta)\le0\}\in(1,\infty]
\]
on the strip $\{\beta\in[0,1),\ 0<\alpha L<2(1+\beta)\}$. Then
\[
 \mathcal D=\{(\kappa,\alpha L,\beta):\ \kappa\ge\kappa_\partial(\alpha L,\beta)\},
\]
and $\kappa_\partial$ is continuous as a map into $(1,\infty]$.
\end{enumerate}
\end{theorem}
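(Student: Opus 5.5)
The proof will run through the shifted-cotangent formulation \eqref{eq:shifted}, in which feasibility is monotone in $s$ and the tuning enters only through the concave quadratic $\Xi_q$, whose coefficients depend continuously (indeed polynomially/rationally) on $(\kappa,y,\beta)$ via Lemma~\ref{lem:angle}. In the monotone form of Proposition~\ref{prop:monoform} the lag functionals are $\ell_n[h_q+s]=\ell_n[h_q]+s(1-\cos n\omega)$. For part (a) I will fix $L\ge3$ and observe that $\PP_L$ is a finite linear program: the measure lives on the finite set $\Omega_L$ and the lag constraints depend only on $n\bmod L$. Its constraint matrix has entries $\ell_n[h_q](2\pi j/L)+s(1-\cos(2\pi jn/L))$, continuous in $q$ and affine in $s$, and the feasible set in $s$ is a closed half-line $[\sigma_L(q),\infty)$. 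Finiteness comes from two facts. For large $s$, a single atom at a frequency where $\Theta_{q,s}$ is small (for example $2\pi/L$ after shifting far enough, since $\operatorname{arccot}\to0$) satisfies all lags modulo $L$. For very negative $s$, the curve lies above $\omega/2$ and the lag-one certificate of Theorem~\ref{thm:A}(i) applies. For continuity, upper semicontinuity follows from closedness of the feasible graph, and lower semicontinuity from the dual: a strict finite certificate at $(q,s)$ with $s<\sigma_L(q)$ stays strict under small perturbations, since the symbols are uniformly continuous on the finite sample set. Continuity of $\sigma$ then uses the same two-sided argument for $\FP$. Upper semicontinuity comes from weak compactness of normalized feasible measures (Lemma~\ref{lem:compact}) together with continuity of $\Lambda_n$ in $q$, which is uniform on $[0,\pi]$ thanks to the endpoint convention $\Theta=\pi$ and strict positivity of $d$. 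Lower semicontinuity comes from Lemma~\ref{lem:alt}: below $\sigma(q)$ there is a strict finite symbol, and it remains strict nearby. Alternatively, $\sigma=\inf_L\sigma_L$ from Theorem~\ref{thm:sampling} gives upper semicontinuity directly.

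For part (b), I will compute the $\kappa$-dependence of $h_q$ at fixed $y=\alpha L$ and $\beta$. Only $x=y/\kappa$ varies, and I expect $\partial h_q/\partial x$ to have a fixed sign uniformly on $(0,\pi)$: decreasing $x$ moves $a(e^{i\omega})$ and decreases $\Theta$, i.e.\ it increases the cotangent by a strictly positive amount bounded below on compact frequency sets. Then $h_{q'}\ge h_q+\delta$ on the relevant region for $\kappa'>\kappa$. Since raising $h$ is equivalent to raising $s$, this gives $\sigma(q')\le\sigma(q)-\delta$. To obtain strictness I need the increment bounded below on the support of an extremal measure, or at least where it charges. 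Here I will use Lemma~\ref{lem:dip}: feasible measures charge the dip, which lies in a compact subinterval of $(0,\pi)$, so a uniform positive increment is available there. If the increment vanishes at $\omega=0,\pi$, I will argue via the strict certificate restricted away from the endpoints. For $\kappa\downarrow1$, $a\to b$, so $\Theta\to\pi$ and $h_q\to-\infty$ uniformly on compacts. This forces $\sigma\to+\infty$, because feasibility requires the shifted curve to meet $\omega/2$ somewhere on the dip, which needs $s$ to compensate $\min h_q$.

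Part (c) is then formal. Since $\mathcal D=\{\sigma\le0\}$ (Proposition~\ref{prop:threshold}) and $\sigma$ is strictly decreasing in $\kappa$, each fiber is an up-ray $[\kappa_\partial,\infty)$. It is closed because $\sigma$ is continuous and $\sigma(\kappa_\partial)=0$ whenever $\kappa_\partial<\infty$; the value $\kappa_\partial>1$ follows from the blow-up in (b). Continuity of $\kappa_\partial$ into $(1,\infty]$ comes from strict monotonicity plus joint continuity of $\sigma$, via a standard implicit-function-free argument. At a finite value, for $\epsilon>0$ we have $\sigma(\kappa_\partial-\epsilon)>0>\sigma(\kappa_\partial+\epsilon)$, and both inequalities persist for nearby $(y,\beta)$. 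When the value is infinite, $\sigma(K,\cdot)>0$ persists locally for every $K$. The main obstacle I expect is the uniform positivity in (b), specifically controlling the sign of $\partial_x\Xi$ across the whole strip, including the overdamped regions and the endpoint frequencies where $\sin\omega\to0$.
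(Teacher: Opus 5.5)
Your plan for (b) and (c) follows the paper: Lemma~\ref{lem:kappa} with Proposition~\ref{prop:threshold}(b), then sign persistence for the zero-crossing graph. Part (a), however, has a genuine gap: both of your semicontinuity arguments prove the \emph{same} direction.

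\textbf{The gap.} Closedness of $\{(q,s):s\ge\sigma_L(q)\}$ is exactly lower semicontinuity of $\sigma_L$: if $q_m\to q$ and $\sigma_L(q_m)\to s$, closedness gives $\sigma_L(q)\le s$. Persistence of a strict certificate at $(q,s)$ with $s<\sigma_L(q)$ says $\sigma_L(q')>s$ for $q'$ near $q$, which is again lower semicontinuity. The same happens for $\sigma$. Weak compactness of normalized feasible measures yields the closed epigraph of Proposition~\ref{prop:threshold}(c). Lemma~\ref{lem:alt} with persistence of strict symbols shows that $\{(q,s):s<\sigma(q)\}$ is open. Neither gives upper semicontinuity.

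What you never show is that feasibility persists, i.e.\ $\sigma_L(q')\le\sigma_L(q)+\epsilon$ for $q'$ near $q$. This matters in (c): it is exactly what makes $\sigma(\kappa_\partial+\epsilon,\cdot)<0$ persist for nearby $(\alpha L,\beta)$, and so what keeps $\kappa_\partial$ from jumping up. Your fallback ``$\sigma=\inf_L\sigma_L$ gives upper semicontinuity directly'' presupposes upper semicontinuity of every $\sigma_L$.

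\textbf{How to close it.} The missing step is short with the monotone form you already quote.
\begin{enumerate}
\item On $\Omega_L\cap(0,\pi)$ one has $\sin\omega\ge\sin(\pi/L)$. Hence $|h_{q'}-h_q|\le\max_{[-1,1]}|\Xi_{q'}-\Xi_q|/\sin(\pi/L)\le\epsilon$ at every grid point once $q'$ is close to $q$.
\item Take a grid measure $w$ feasible at $(q,\sigma_L(q))$, delete a possible atom at $\pi$, and pass to $\nu=\sin\Theta_{q,\sigma_L(q)}\,w$.
\item Apply the comparison statement of Proposition~\ref{prop:monoform} with $h=h_q+\sigma_L(q)$ and $h'=h_{q'}+\sigma_L(q)+\epsilon$, which satisfies $h'\ge h$ on the support. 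This gives $\sigma_L(q')\le\sigma_L(q)+\epsilon$.
\end{enumerate}
The paper argues equivalently: a positive shift makes every residue constraint strict, except those vanishing identically on the support for all tunings, and finitely many strict inequalities persist. Then $\sigma=\inf_L\sigma_L$ (Theorem~\ref{thm:sampling}) is upper semicontinuous, and your compactness argument supplies the other half.

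The same comparison is not available for $\FP$ itself, because $(\Xi_{q'}-\Xi_q)/\sin\omega$ is in general unbounded near $\omega=0,\pi$. This is why the periodic reduction is needed.

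\textbf{Two smaller points in (b).}
\begin{itemize}
\item Lemma~\ref{lem:kappa} gives a uniform increment of $\Xi$ on all of $[-1,1]$, and dividing by $\sin\omega\le1$ only enlarges it. Your endpoint worry and the appeal to the dip are therefore unnecessary.
\item For $\kappa\downarrow1$ you need $\sup_{(0,\pi)}h_q\to-\infty$, not $\min h_q$ or convergence on compacts. This follows from $\max_{[-1,1]}\Xi_q\to-\infty$ (Lemma~\ref{lem:tongue}), since $h_q=\Xi_q/\sin\omega\le\max\Xi_q$ once that maximum is negative. Combine it with $\sigma(q)\ge-\sup h_q$, which comes from the minimizer constraint.
\end{itemize}
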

The proof is given in Appendix~\ref{app:levels-sampling}.
Writing $\kappa_L$ for the condition-number threshold of
$\PP_L$, we also have $\kappa_\partial=\inf_{L\ge3}\kappa_L$
(Corollary~\ref{cor:kappaL}).

Here $(1,\infty]$ carries its order topology; an infinite boundary means
convergence for every finite condition number. This occurs throughout
the zero-momentum face.

The essential
approximation step is to preserve all signed lag constraints
simultaneously. A strict cotangent shift gives positive slack unless
the measure is already supported on a periodic frequency grid. Lemmas~\ref{lem:A-smooth}
and~\ref{lem:A-gain} turn that slack into a uniform margin after
smoothing; Lemma~\ref{lem:A-alias} then preserves it on every
sufficiently fine periodic grid. Compactness and continuity of the
finite-grid thresholds give continuity of $\sigma$. Strict decrease
in $\kappa$ follows from Lemma~\ref{lem:kappa} and yields the boundary
graph, including its possible value $+\infty$.

On $\partial\mathcal D$, continuity gives $\sigma=0$, so threshold
extremals solve the original program. Their supports are constrained
simultaneously by one limiting dual symbol.
By complementary slackness, the symbol vanishes on every
threshold-extremal support, and each positive dual coefficient forces
its corresponding constraint to hold with equality
(Theorem~\ref{thm:limitsymbol}). Its zeros will determine the periodic
structure of the boundary.

\subsection{Farey geometry and finite spectral measures}\label{sec:farey}
The gap $I_4$ already exhibits the distinction between one harmonic and
the full program.
\begin{proposition}[A two-harmonic thirteen-cycle]\label{prop:13}
Let $\kappa=100$, $\alpha L=70471/31800$ and $\beta=6169/9900$. Then the tuning
lies in the gap $I_4$, $\Theta>\pi/6$ on $(0,\pi]$, and no one-atom measure solves
$\FP(\Theta)$. The spectrum $\nu$ with the weights $149/797$ at
$2\cdot2\pi/13$ and $648/797$ at $3\cdot2\pi/13$, with $C=0$, is feasible at $t=0$;
equivalently, $w=|g-2i\Ima m|\,\nu$ satisfies the hypothesis of
Theorem~\ref{thm:fan-div}(a) with $p=3$, $k=13$.
Hence a function in $\cF(\R^5)$ has a trajectory that converges to a
thirteen-periodic orbit with the two harmonics $2/13$ and $3/13$, and $\rho\ge1$.
\end{proposition}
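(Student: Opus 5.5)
The proof is an exact rational verification organized around the angle function, with one step that needs care: excluding every one-atom measure. First I would compute the endpoint polynomials $a,b$ from \eqref{eq:ab} with $x=y/\kappa$, $y=70471/31800$, $\beta=6169/9900$. From these I would get the concave quadratic $\Xi$ of Lemma~\ref{lem:angle}, so that $\cot\Theta(\omega)=\Xi(\cos\omega)/\sin\omega$. All later statements about $\Theta$ then become polynomial sign conditions in $c=\cos\omega$ with rational coefficients.

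\emph{Gap membership and the bound $\Theta>\pi/6$.} For $I_4$ (Definition~\ref{def:levels}) I need three things: $\Theta(\pi/2)>\pi/4$, $\Theta(2\pi/5)>\pi/5$, and a nonempty dip $\{\Theta<\omega/2\}$ inside $(2\pi/5,\pi/2)$. Since $\cot$ is decreasing, each of these becomes an inequality between $\Xi(c)/\sin\omega$ and $\cot(\omega/2)=(1+c)/\sin\omega$. That is, it reduces to the sign of $1+c-\Xi(c)$ at $c_4=0$ and at $c_5=\cos(2\pi/5)=(\sqrt5-1)/4$, plus one interior sample point. Both endpoint values can be checked exactly in $\Q(\sqrt5)$. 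The factorized chart of Proposition~\ref{prop:chart} then gives roots $c_4<r_1<r_2<c_5$ directly. The bound $\Theta>\pi/6$ on $(0,\pi]$ is equivalent to $\Xi(c)<\sqrt3\sqrt{1-c^2}$ wherever $\Xi\ge0$. I would square this to get the quartic $3(1-c^2)-\Xi(c)^2>0$ on the set where $\Xi\ge0$. I would prove it by a Sturm sequence or exact interval subdivision.

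\emph{No single atom, which I expect to be the main obstacle.} A one-atom $w=\delta_\omega$ solves $\FP(\Theta)$ exactly when $\Lambda_n(\omega)\ge0$ for all $n\ne0$ and $\cos\Theta(\omega)\ge0$. By Lemma~\ref{lem:dip}, $\omega$ must lie in the closed dip $\bar\Delta\subset(2\pi/5,\pi/2)$, where the lag $n=1$ is already violated unless equality holds. So I would show that some lag is strictly negative at every $\omega\in\bar\Delta$.
\begin{itemize}
\item For irrational rotation, the values $n\omega$ are dense. Choosing $n$ with $n\omega/2$ close to $\Theta(\omega)+$ a small positive amount modulo $2\pi$ makes $\Lambda_n<0$, using the lower bound $\Theta>\pi/6$.
\item For rational $\omega=2\pi j/k$, the constraints reduce to $n$ modulo $k$. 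The atom is feasible exactly when $\Theta(2\pi j/k)\le\pi/k$, the tongue condition in Theorem~\ref{thm:A}(ii).
\item Since $\Theta>\pi/6$, only $k<6$ could qualify. But no reduced fraction with $k\le5$ has $2\pi j/k$ inside $(2\pi/5,\pi/2)$.
\end{itemize}
Making the irrational case rigorous is the delicate part. The cleanest route is probably a uniform argument: if all lags are nonnegative, the set $\{n\omega\bmod 2\pi\}$ avoids an open arc of length about $2\Theta$, which forces $\omega=2\pi j/k$ with $\pi/k\ge\Theta>\pi/6$. This is a three-gap or Dirichlet-type step, and I would state it as a short lemma.

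\emph{Feasibility of the two-atom spectrum and realization.} For $\omega_p=2\pi p/13$, the lags depend only on $n\bmod13$. So $\int\Lambda_n\,dw\ge0$ becomes twelve exact inequalities with $w=d\nu$, $d=|g-2is|$, in $\Q(\cos(2\pi/13),\sin(2\pi/13))$. I would instead check $\bar a_n\pm\bar b_n\ge0$ and $\bar g\ge0$ directly from \eqref{eq:abm} at $t=0$, $C=0$, using the weights $149/797$ and $648/797$. I would certify each inequality with rigorous interval arithmetic, or exactly in the cyclotomic field; several of them are expected to be active. Equivalently, this verifies the hypothesis of Theorem~\ref{thm:fan-div}(a) with $p=3$, $k=13$. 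Finally, Theorem~\ref{thm:spectralgeneral}(c) with $r=2$, or the start-up construction used in Theorem~\ref{thm:exactthr}(b), yields $f\in\cF(\R^5)$ and a start from rest. Because the appended coordinate decays, the trajectory converges to the stationary thirteen-periodic orbit carrying the harmonics $2/13$ and $3/13$, which gives $\rho\ge1$.
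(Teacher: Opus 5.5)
Your proposal is correct and follows essentially the same route as the paper's proof: $I_4$ membership from the signs of the convex quadratic $1+c-\Xi$ (the paper uses the proportional polynomial $\cL$), the bound $\Theta>\pi/6$ from positivity of $3(1-c^2)-\Xi(c)^2$ where $\Xi\ge0$, exclusion of single atoms by the $\pi/k$ criterion plus density for irrational rotations, and the thirteen strict residue inequalities followed by the start-up realization. Two small sign slips should be fixed, though neither affects the argument: on $\bar\Delta$ the lag $1$ is satisfied rather than violated, and $\Lambda_n(\omega)<0$ holds exactly when $n\omega\bmod2\pi\in(0,2\Theta(\omega))$, that is, when $n\omega/2$ lies slightly \emph{below} $\Theta$ modulo $\pi$, so the irrational case is plain density and needs neither $\Theta>\pi/6$ nor a three-gap lemma (also, the paper finds all twelve nonzero residues and the minimizer slack strictly positive, so none is active and interval arithmetic suffices).
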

The proof is given in Appendix~\ref{app:farey-examples}.
\begin{proposition}[Convergence in a gap]\label{prop:conv-gap}
Let $\kappa=100$, $\alpha L=111/50$ and $\beta=31/50$. Then the tuning lies in the
gap $I_4$, and the two-lag inequality \eqref{eq:twolag} holds with $K=4$ and the
rational multipliers listed in Appendix~\ref{app:farey}. Hence $\rho<1$.
\end{proposition}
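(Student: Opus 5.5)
The proof has two parts: first show that the tuning lies in $I_4$, then verify a strict two-lag certificate and apply Theorem~\ref{thm:oneexp}. Everything is exact rational arithmetic.

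\emph{Membership in $I_4$.} With $\kappa=100$, $y=\alpha L=111/50$ and $\beta=31/50$, we have $x=y/\kappa=111/5000$. First check quadratic stability \eqref{eq:quad}: $111/50<2\cdot 81/50$. Next compute the rational coefficients of the concave quadratic $\Xi$ from Lemma~\ref{lem:angle}, using $\cot\Theta=\Xi(\cos\omega)/\sin\omega$. In the chart coordinates of the gaps, write
\[
1+c-\Xi(c)=a(c-r_1)(c-r_2).
\]
Three things must then be verified:
\begin{itemize}
\item The roots $r_1<r_2$ are real and lie in $(c_4,c_5)=(0,\cos(2\pi/5))$. Since $\cos(2\pi/5)=(\sqrt5-1)/4$, this is a check of signs of the quadratic at the rational point $0$ and at the quadratic irrational endpoint, together with the location of the vertex.
\item The inequality $a>a_0(r_1,r_2)$ holds. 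Because $a_0$ is symmetric in the roots, it can be evaluated through $r_1+r_2$ and $r_1r_2$ without solving for the roots.
\item The endpoint conditions $\Theta(2\pi/4)>\pi/4$ and $\Theta(2\pi/5)>\pi/5$ hold. In cotangent form these read $h(0)<\cot(\pi/4)=1$ and $h(c_5)<\cot(\pi/5)$, since $\operatorname{arccot}$ is decreasing.
\end{itemize}
By Proposition~\ref{prop:chart} and Definition~\ref{def:levels}, these checks place the tuning in $I_4$. Equivalently, the dip $\{\Theta<\omega/2\}$ is nonempty and its rotation numbers lie in $(1/5,1/4)$, which can be seen by evaluating $h$ against $\cot(\omega/2)$ at one interior frequency.

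\emph{Convergence.} Take the lags $1$ and $-4$ suggested by Theorem~\ref{thm:tangency}, and form the symbol
\[
S=\lambda_1\Lambda_1+\lambda_{-4}\Lambda_{-4}+\gamma\cos\Theta
\]
with the rational multipliers listed in Appendix~\ref{app:farey}; this is presumably what \eqref{eq:twolag} encodes. Multiplying by the positive factor $\sin\Theta$, or passing to the monotone form $\ell_n[h]=h(1-\cos n\omega)-\sin n\omega$ of Proposition~\ref{prop:monoform}, turns the condition $S<0$ into the following. For $c=\cos\omega$, a polynomial in $c$ plus $\sin\omega$ times another polynomial must be negative on $[-1,1]$. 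Here $\cos4\omega=T_4(c)$ and $\sin4\omega=\sin\omega\,U_3(c)$.

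Strict negativity on the compact interval then gives $\varepsilon>0$. Two routes are available to prove it:
\begin{itemize}
\item Separate the $\sin\omega$ part. Show $P(c)<0$ wherever $P(c)<0$ and $R(c)\le 0$, and otherwise compare $P(c)^2$ with $(1-c^2)R(c)^2$. This reduces negativity to a univariate polynomial inequality, which a Sturm sequence or exact interval subdivision certifies.
\item Alternatively, if \eqref{eq:twolag} is stated directly as an inequality on $\Theta$, verify that inequality by the same subdivision.
\end{itemize}
Theorem~\ref{thm:exactthr}(c), or Theorem~\ref{thm:oneexp} at $t=0$, then yields $\rho<1$.

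The main obstacle is the certified negativity of the two-lag symbol. Inside the dip $\Lambda_1<0$ fails: $\Theta<\omega/2$ there, so the lag-one term has the wrong sign. The $-4$ term has to compensate with little margin, because the tuning sits near the $4$- and $5$-tongues. The resulting high-degree polynomial will have near-double roots close to the dip, so the margin must be certified carefully, by exact rational arithmetic on a fine subdivision near the dip and crude bounds elsewhere.
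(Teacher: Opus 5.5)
Your plan is essentially the paper's proof. The paper computes $\Xi$ and $\cL$ explicitly, places the tuning in $I_4$ by the same vertex-and-endpoint test (both roots of $\cL$ lie in $(0,(\sqrt5-1)/4)$), verifies \eqref{eq:twolag} with $r=13/100$, $\gamma=1/1000$, and concludes by Theorem~\ref{thm:fan-conv}, i.e.\ Theorem~\ref{thm:oneexp} at $t=0$. One simplification you missed: multiplying the symbol by $\sin\omega/\sin\Theta$, not by $\sin\Theta$, turns it via \eqref{eq:Lambdapoly} exactly into the degree-six polynomial inequality \eqref{eq:twolag} with no leftover $\sin\omega$ term, so no squaring is needed and positive Bernstein coefficients on sixteen subintervals of width $1/8$ suffice.
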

The proof is given in Appendix~\ref{app:farey-examples}.

Thus $I_4$ contains both a two-frequency periodic obstruction and a
tuning certified to converge by the lags $1$ and $-4$.

\subsubsection{Farey parents and binding constraints}
Introduce
\begin{equation}\label{eq:lattice}
 \xi=\frac{\omega}{2\Theta},\qquad \eta=\frac\pi\Theta.
\end{equation}
The image of the angle curve is $\mathcal C$, and its radial direction
$\xi/\eta=\omega/(2\pi)$ is the rotation number, the fraction of a
full turn per iteration. A lag function is negative exactly in the open
strips $0<n\xi-\ell\eta<1$, $\ell\in\Z$;
the rational contact points become the lattice points $(j,k)$
(Theorem~\ref{thm:lattice}).

For $0<j/k<1/2$ in lowest terms, write
$a/b<j/k<c/d$ for its Farey parents: $j=a+c$, $k=b+d$ and
$bc-ad=1$~\cite[Chapter~III]{HW}.
A single atom at $2\pi j/k$ is feasible exactly when its phase
is at most $\pi/k$. As the phase crosses $\pi/k$, its first violated
lags are $b$ and $-d$, the denominators of the two Farey parents;
they form one residue class modulo $k$ (Theorem~\ref{thm:farey}).

The parent denominators identify the binding lags. Each periodic
obstruction in $I_K$ has an anchor, an atom in the closed dip,
at a reduced fraction $j/k$ of its window; necessarily $k\ge2K+1$
(Proposition~\ref{prop:farey}).
For later use put
\[
 \omega_0=2\pi j/k,\quad c_0=\cos\omega_0,\quad s_0=\sin\omega_0,
 \qquad c'=\pi/k,\quad h_0=s_0\cot c',\quad b\equiv j^{-1}\pmod k.
\]
The single-harmonic tongue is the explicit graph region
$\kappa\ge\kappa_{j/k}(y,\beta)$ of Lemma~\ref{lem:tongue}.
Hence $\kappa_\partial\le\inf_{j/k}\kappa_{j/k}$:
the single-harmonic envelope bounds the full threshold from above,
and periodic mixtures may lower it (Proposition~\ref{prop:13}).
By Theorem~\ref{thm:plateaujk}, each Farey fraction carries an open
plateau of equality. Figure~\ref{fig:tongues} shows the competing tongue
graphs across $I_4$ and their lower envelope. The fraction attaining this
envelope changes in steps; the undamped quadratic rotation number of
Proposition~\ref{prop:location} gives the limiting locations of the
single-harmonic plateaus.

\begin{figure}[tbp]
\centering
\includegraphics[width=\linewidth]{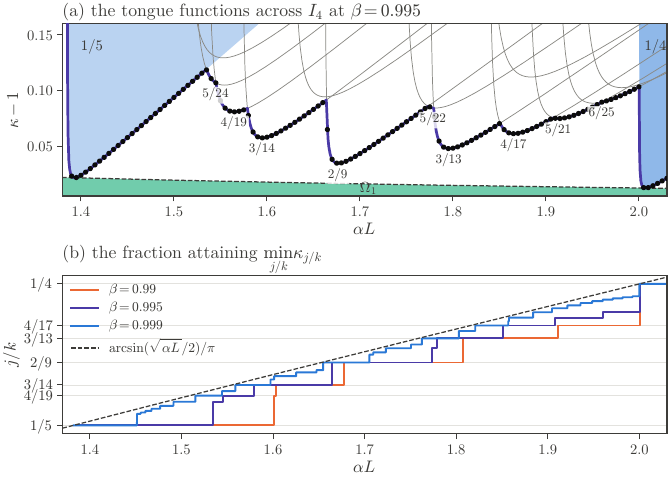}
\caption{Tongue graphs and their lower envelope across $I_4$.
(a) At $\beta=0.995$: individual graphs $\kappa_{j/k}-1$ with $k\le30$
(grey), their envelope over $k\le60$ (violet), and periodic upper bounds
on the full threshold, using periods at most $60$ (dots). The dashed curve
bounds $\Omega_1$; blue regions are the tongues of $1/5$ and $1/4$.
(b) The fraction attaining the envelope over $k<80$, at three values of
$\beta$. The dashed curve is the undamped quadratic rotation number
$\arcsin(\sqrt{\alpha L}/2)/\pi$ (Proposition~\ref{prop:location}).}
\label{fig:tongues}
\end{figure}

\subsubsection{Compensation and pinning}
The sequences of neighboring fractions
$ (a+mj)/(b+mk)$ and $(c+mj)/(d+mk)$, called Farey fans,
approach $j/k$ from its two sides.
For the endpoint $1/K$ the fan has
\begin{equation}\label{eq:fanpoints}
 k_p=pK+1,\qquad p/k_p,\quad p\ge1.
\end{equation}
Its common upper parent lag is $-K$; the associated lag family is
\begin{equation}\label{eq:fanlags}
 \mathcal N_{\rm fan}(K)=\{mK+1:m\ge0\}\cup\{-mK:m\ge1\}.
\end{equation}
Under the phase bounds of Theorem~\ref{thm:fan-div}, the lag functions
at an anchor $p/k_p$ and a second frequency, its companion $(p-1)/k_p$,
have opposite signs for the lags $1$ and $-K$; positive masses can
therefore compensate the violated constraints.
Theorem~\ref{thm:fan-div} gives the exact determinant criterion;
Theorem~\ref{thm:twoatom} gives the complete ratio interval for every
anchor and companion, selected by Farey parents one level below the anchor.
The underlying condition excludes lattice points from a specified
box: every lag function negative at the anchor must be positive
at its companion.

The dual construction uses the same lags. With Chebyshev polynomials
$T_n,U_n$, the simplest certificate is
\begin{equation}\label{eq:twolag}
 \Xi(c)\bigl[(1-c)+r(1-T_K(c))+\gamma\bigr]
 <(1-c^2)\bigl(1-rU_{K-1}(c)\bigr),\qquad -1\le c\le1,
\end{equation}
where $r\ge0$ and $\gamma>0$.
It proves $\rho<1$ by Theorem~\ref{thm:fan-conv}.
Allowing the other fan lags gives the general polynomial criterion
\eqref{eq:fanlag}.

Finally, lags in one residue class agree on $\Omega_k$ but have different
derivatives there. Their exact difference is the pinning identity
of Lemma~\ref{lem:pinid}; for the fan, $-pK\equiv1$ and
$(p-1)K+1\equiv-K\pmod{k_p}$ (Lemma~\ref{lem:pin}).
Redistributing dual weight within these classes preserves contact values
and controls contact slopes. This permits continuation of a threshold measure with fixed periodic
support, as established in Subsection~\ref{sec:boundaryK}.

\subsection{Regularity of the divergence boundary}\label{sec:boundaryK}
In a gap, convergence and divergence meet on the surface
\[
 \mathcal B_K=\partial\mathcal D\cap I_K=\{q\in I_K:\sigma(q)=0\}.
\]
The surface is the graph of $\kappa_\partial$ over its projection to the
$(\alpha L,\beta)$ plane. Off this surface, either a finite-lag certificate or a periodic
measure determines the behavior. On the surface, the support and
regularity of a threshold measure determine the local geometry.
Call a solution of $\FP(\Theta_q)$ at $q\in\mathcal B_K$ an extremal.
Write $\mathcal P=\{q\in\mathcal B_K:\sigma_L(q)=0\text{ for some }L\}$,
and let $\mathcal R$ be its regular points, in the sense of
Definition~\ref{def:regular}.

\begin{theorem}[Classification]\label{thm:class}
Let $q\in I_K$. Exactly one of the following holds.
\begin{enumerate}[label=\textup{(\roman*)}]
\item $\kappa<\kappa_\partial(\alpha L,\beta)$: $\sigma(q)>0$, a strict certificate symbol with finitely many lags
exists, and $\rho(q)<1$.
\item $\kappa>\kappa_\partial(\alpha L,\beta)$: $\sigma(q)<0$, there is $L$ with $q\in \mathcal T_{mL}$ for all $m\ge1$,
and $\rho(q)\ge1$.
\item $\kappa=\kappa_\partial(\alpha L,\beta)$: $q\in\mathcal B_K$, $\rho(q)\ge1$, and $q$ lies in exactly one of
$\mathcal R$, $\mathcal P\setminus\mathcal R$ and $\mathcal B_K\setminus\mathcal P$.
\end{enumerate}
In (ii) and (iii), every solution of every feasible $\PP_L(\Theta_q)$ has an anchor at a
Farey fraction $j/k\in(1/(K+1),1/K)$ with $k\mid L$ and $k\ge2K+1$.
\end{theorem}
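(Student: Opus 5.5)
The plan is to split according to the sign of $\sigma(q)$, using Theorem~\ref{thm:global}(b),(c). At fixed $(\alpha L,\beta)$ the threshold $\sigma$ is continuous and strictly decreasing in $\kappa$. Combined with $\mathcal D=\{\sigma\le0\}=\{\kappa\ge\kappa_\partial\}$, this gives three facts: $\kappa<\kappa_\partial$ iff $\sigma(q)>0$, $\kappa=\kappa_\partial$ iff $\sigma(q)=0$, and $\kappa>\kappa_\partial$ iff $\sigma(q)<0$. The equality case needs continuity: for $\kappa_\partial<\infty$, continuity of $\sigma$ at $\kappa_\partial$ and the definition as an infimum force $\sigma=0$ there, and strict monotonicity makes this the unique zero. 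So the three cases (i)--(iii) are mutually exclusive and exhaustive.

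Case (i). Since $\sigma(q)>0$, the shift $s=0$ is infeasible for $\FP(\Theta_q)$. Lemma~\ref{lem:alt} and Theorem~\ref{thm:exactthr}(c) then give a strict certificate symbol with finite lag support, and $\rho(q)<1$ follows through Theorem~\ref{thm:oneexp}.

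Case (iii). Here $q\in\mathcal B_K$ by definition, and $\rho\ge1$ because $q\in\mathcal D$. The trichotomy $\mathcal R$, $\mathcal P\setminus\mathcal R$, $\mathcal B_K\setminus\mathcal P$ is a disjoint partition of $\mathcal B_K$, since $\mathcal R\subset\mathcal P\subset\mathcal B_K$.

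Case (ii). Since $\sigma(q)<0$ and $\sigma=\inf_L\sigma_L$ by Theorem~\ref{thm:sampling}, there is an $L$ with $\sigma_L(q)<0$. Because the shifted family is monotone in $s$, a periodic measure feasible at shift $\sigma_L(q)$ remains feasible at $s=0$, so $q\in\mathcal T_L$. To pass to multiples, note that $\Omega_L\subset\Omega_{mL}$. The lag constraints of $\PP_{mL}$ evaluated on a measure supported in $\Omega_L$ are the full-program constraints, which reduce modulo $L$, so the same measure solves $\PP_{mL}$. Hence $q\in\mathcal T_{mL}$ for all $m\ge1$. The initialization from Proposition~\ref{prop:periodic} carries over unchanged, and $\rho\ge1$ follows from Theorem~\ref{thm:exactthr}(b).

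Anchor claim. Let $w$ solve a feasible $\PP_L(\Theta_q)$. By Lemma~\ref{lem:dip}, $w$ charges the closed dip $\bar\Delta$. Because $q\in I_K$, Theorem~\ref{thm:levelzero}(III) places $\bar\Delta$ inside $(2\pi/(K+1),2\pi/K)$. The support lies in $\Omega_L$, so the charged atom sits at $2\pi j'/L$; reducing the fraction gives $j/k$ with $k\mid L$ and rotation number $j/k\in(1/(K+1),1/K)$. The bound $k\ge2K+1$ is elementary, as in Proposition~\ref{prop:farey}: every fraction strictly between the consecutive Farey neighbours $1/(K+1)$ and $1/K$ has denominator at least $(K+1)+K$. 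Such an atom is an anchor by definition.

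I expect the main obstacle to be the step from $\sigma_L<0$ to feasibility at every multiple $mL$. It requires $\sigma_L$ to be attained and the shifted program to be monotone at fixed grid $\Omega_L$, and it requires the start-up realization to persist on the refined grid. The second point holds because the support is unchanged; for the first, I would cite Proposition~\ref{prop:threshold} applied to $\PP_L$.
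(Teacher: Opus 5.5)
Your proposal is correct and follows the paper's proof essentially step by step. It uses the same ingredients: strict decrease and continuity of $\sigma$ in $\kappa$ for the three signs, the exact alternative and threshold criterion for (i), sampling $\sigma=\inf_L\sigma_L$ together with the grid inclusion $\Omega_L\subset\Omega_{mL}$ for (ii), the definitional inclusions $\mathcal R\subset\mathcal P\subset\mathcal B_K$ for (iii), and the dip lemma with Farey localization for the anchors.

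One small citation point. For $q\in I_K$, the containment $\bar\Delta\subset(2\pi/(K+1),2\pi/K)$ is the first assertion of Proposition~\ref{prop:farey}, which follows from the gap definition and Corollary~\ref{cor:dip}. Theorem~\ref{thm:levelzero}(III) is stated in the converse direction, so it is not the right reference for this step.
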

The proof is given in Appendix~\ref{app:boundary-heavy}.

Thus convergence off the boundary admits a finite-lag certificate,
and divergence off the boundary admits a periodic spectral measure. Boundary points divide into regular periodic
points, singular periodic points, and points with no threshold extremal
supported on a finite periodic grid. The last class is approached by the thresholds of every
sufficiently large period; plateau endpoints belong to the second class
(Theorem~\ref{thm:bdry}).

\subsubsection{Analytic continuation of regular spectra}
At an $L$-regular point, $S\subset\Omega_L\cap(0,\pi)$ is the finite
frequency support, $N$ is a finite set of certificate lags, and
$w^*,\lambda^*$ are their positive normalized weights. Write
$\mathcal A=(N\setminus L\Z)\bmod L$ for the active nonzero residue
classes, whose lag constraints hold with equality at $w^*$, and
$\Lambda_n^{q,s}=\Lambda_n[\Theta_{q,s}]$.
Definition~\ref{def:regular} requires a one-dimensional kernel for
the active constraint matrix, strict inactive and minimizer inequalities, negative
second derivatives of the dual symbol at its zeros, a positive endpoint
moment and positive pinning factors, and full rank of the pinning equations.
These conditions permit the masses and the certificate to
vary while the frequencies stay fixed.

\begin{theorem}[Mode locking]\label{thm:modelock}
Let $(q_*,s_*)$ be $L$-regular with data $S,N,w^*,\lambda^*$. Then $s_*=\sigma(q_*)=\sigma_L(q_*)$,
and there are a neighborhood $U$ of $q_*$ and real analytic maps $s:U\to\R$,
$w:U\to\R^S_{>0}$ with $s(q_*)=s_*$, $w(q_*)=w^*$, such that for every $q\in U$:
\begin{enumerate}[label=\textup{(\alph*)}]
\item $\sigma(q)=\sigma_L(q)=s(q)$, and $\det\bigl(\Lambda^{q,s(q)}_\varrho(\omega)\bigr)_{\varrho\in\mathcal A,\omega\in S}=0$;
\item $w(q)$ solves $\FP(\Theta_{q,s(q)})$;
\item the solutions of $\FP(\Theta_{q,\sigma(q)})$ are exactly the measures $c\,w(q)+a\,\delta_0$ with $c>0$ and
$0\le a\le c\int\cos\Theta_{q,\sigma(q)}\,dw(q)$; the threshold extremal on $(0,\pi]$ is unique up to
scaling and carried by $S$;
\item $\nabla s(q)=-\sum_{\omega\in S}\varpi_\omega(q)\,\nabla_qh_q(\omega)$ with weights $\varpi_\omega(q)>0$,
$\sum_\omega\varpi_\omega=1$; in particular $\partial_\kappa s<0$;
\item $(q,s(q))$ is $L$-regular with the data $S,N$.
\end{enumerate}
Consequently, if $s_*=0$, then near $q_*$ the boundary $\partial\mathcal D$ is the analytic graph
$\kappa=\kappa_\partial(\alpha L,\beta)$ of the function defined by $s=0$, and along it the
threshold extremal keeps the support $S$.
\end{theorem}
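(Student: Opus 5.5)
The plan is an implicit function theorem applied to a primal--dual system with the frequencies held fixed, followed by a local optimality argument that identifies the resulting branch with $\sigma$ and $\sigma_L$.

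\textbf{Step 1: the square system.} Fix the support $S$, the lag set $N$ and the active residue classes $\mathcal A$. The unknowns are $(s,w,\lambda)$. The primal equations ask that the active constraints vanish,
\[
\sum_{\omega\in S}w_\omega\Lambda^{q,s}_\varrho(\omega)=0\quad(\varrho\in\mathcal A),
\]
together with the normalization $\sum_\omega w_\omega=1$. The dual equations ask that the symbol
\[
S_{q,s}=\sum_{n\in N}\lambda_n\Lambda_n^{q,s}
\]
vanish on $S$, with its derivative also vanishing at interior points of $S$. Since the frequencies are fixed and lags in one residue class agree on $\Omega_L$, the zero-value conditions are linear in the class sums of $\lambda$. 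The zero-slope conditions involve the individual lags, and the pinning identity of Lemma~\ref{lem:pinid} expresses their differences. The regularity hypotheses are used here: the one-dimensional kernel of the active matrix, the nondegenerate pinning equations, and the positive pinning factors. Together they should make the Jacobian of this system in $(s,w,\lambda)$ invertible at $(q_*,s_*)$.

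The key computation is $\partial_s$. By the monotone form, $\ell_n[h+s]=\ell_n[h]+s(1-\cos n\omega)$, so differentiating the primal determinant in $s$ yields a positive multiple, namely the endpoint moment, which is assumed positive. Hence the determinant condition in (a) defines $s(q)$ analytically. The masses $w(q)$ then form the kernel vector, normalized, and they stay positive by continuity. Analyticity of every ingredient in $q$ follows from the analyticity of $h_q$ in the coefficients of $\Xi$.

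\textbf{Step 2: optimality and local persistence.} Next I would verify that the continued pair is still optimal. The strict inactive inequalities and the strict minimizer inequality persist on a small neighborhood $U$. For the dual symbol, negative second derivatives at its zeros together with strict negativity away from $S$ keep $S_{q,s(q)}\le0$, with zeros only on $S$. This needs compactness of $[0,\pi]$ and uniform $C^2$ control, because the lag set is finite.

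Complementary slackness then gives $\sigma_L(q)\le s(q)$ from the primal side. For the reverse inequality on the full program, I would show that for $s<s(q)$ a small perturbation of the symbol, adding a small multiple of the minimizer term $\cos\Theta$, becomes strict. Lemma~\ref{lem:alt} would then rule out feasibility. Combined with $\sigma\le\sigma_L$, this gives (a) and (b).

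For (c), any solution of $\FP$ at the threshold must be carried by the zeros of the symbol, that is, by $S\cup\{0\}$. The one-dimensional kernel then fixes the part on $S$ up to scaling. The atom $\delta_0$ contributes only to the minimizer constraint, which gives the stated bound on $a$. Property (e) is the persistence of all these open conditions.

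\textbf{Step 3: the gradient formula and the boundary graph.} For (d), I would differentiate the identity $\int S_{q,s(q)}\,dw(q)=0$ in the style of an envelope theorem. Because the symbol vanishes on $S$ and $w$ is stationary there, only the explicit dependence through $h_q+s$ survives. Each lag enters via $\partial_h\ell_n=1-\cos n\omega$, so
\[
\nabla s=-\sum_{\omega\in S}\varpi_\omega\,\nabla_qh_q(\omega),
\]
where the weights $\varpi_\omega$ are proportional to $w_\omega$ times the $\lambda$-weighted $\sum(1-\cos n\omega)$. These weights are positive and normalize to one. Strict monotonicity, as in Lemma~\ref{lem:kappa}, gives $\partial_\kappa h_q>0$ pointwise, hence $\partial_\kappa s<0$. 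The boundary statement then follows from the implicit function theorem applied to $s=0$ in $\kappa$, together with $\mathcal D=\{\sigma\le0\}$ from Proposition~\ref{prop:threshold}.

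\textbf{Main obstacle.} I expect Step 1 to be the hardest part: proving that the Jacobian is invertible exactly under Definition~\ref{def:regular}. In particular, the slope conditions must be solvable by redistributing dual weight within residue classes, through the pinning identities of Lemma~\ref{lem:pinid}, without disturbing the contact values.
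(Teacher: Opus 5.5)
Your architecture is the paper's: a fixed-support implicit function theorem, continuation of a dual symbol with the same contacts via pinning, a strict symbol at lower shifts, and complementary slackness for (c). However, the step you call the crux is left open, and the one concrete claim you make about it is wrong, so the proposal has a genuine gap at its core.

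\textbf{The $s$-derivative of the determinant.} $\partial_s\det M$ is not a multiple of the endpoint moment $\sum_n\lambda^*_nn(n+2\tau_{q_*})$. That quantity equals $-S^{*\prime\prime}(0)$ and never enters the $s$-derivative at $S$. The missing idea is the rank-one adjugate.
\begin{itemize}
\item Let $\mu^*_\varrho=\sum_{n\equiv\varrho}\lambda^*_n$ be the class masses. Since lags in $L\Z$ vanish on $\Omega_L$, $\mu^{*\top}M_*=(S^*(\omega))_{\omega\in S}$.
\item These entries are $\le0$ by (R2), and they pair to zero with $w^*>0$ because $M_*w^*=0$. Hence $S^*=0$ on $S$, and $\mu^*$ is a positive left null vector.
\item Corank one gives $\operatorname{adj}M_*=c\,w^*\mu^{*\top}$ with $c\ne0$. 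Jacobi's formula with \eqref{eq:ds} then gives
\[
\partial_s\det M=c\,\mu^{*\top}(\partial_sM)w^*=c\sum_{\omega\in S}w^*_\omega\sin\theta_\omega P^*(\omega).
\]
The $\partial_s\log\sin\Theta$ term is killed by $S^*=0$ on $S$, and nonvanishing comes from $P^*>0$ on $S$.
\end{itemize}
The same adjugate supplies analytic positive $w(q)$ and $\mu(q)$. Differentiating $\det M(q,s(q))=0$ then yields (d) at once. Your envelope computation is an equivalent route, but its weights must carry the factor $\sin\theta_\omega$.

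\textbf{The dual side.} A ``square system in $(s,w,\lambda)$ with invertible Jacobian'' does not exist in general. The individual weights are underdetermined whenever a class contains more than two lags or $N_0\neq\emptyset$. The paper decouples instead.
\begin{itemize}
\item Take $\mu(q)$ from the adjugate and freeze the zero-class weights.
\item By Lemma~\ref{lem:pinid}, the slope conditions on $S$ are then linear in the class moments $m_\varrho=\sum_{n\equiv\varrho}\lambda_n(n-\varrho)/L$, with matrix $\bigl(L\sin(\varrho\omega-\Theta(\omega))\bigr)$. This matrix is invertible by (R3).
\item Each class has two distinct lags, so $\lambda\mapsto(\mu_\varrho,m_\varrho)$ is onto. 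A fixed affine right inverse through $\lambda^*$ gives positive analytic $\lambda(q)$.
\end{itemize}

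\textbf{Where the endpoint moment belongs.} It belongs in your Step~2. Every $\Lambda_n$ has a double zero at $\omega=0$, so the continued symbol always vanishes there, not ``only on $S$''. The condition $S^{*\prime\prime}(0)<0$ is exactly what lets $C^2$ persistence keep $S_q\le0$ near $0$. In the strict lower-shift symbol, \eqref{eq:shiftid} gives strictness near $S$ from $-(s(q)-s)P_q$ with $P_q>0$. The small $\epsilon\cos\Theta$ term handles the neighborhood of $0$, where $h_q+s<0$.
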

The proof is given in Appendix~\ref{app:boundary-regularity}.

The determinant
equation determines the shift and the positive primal masses by the
analytic implicit function theorem. The independent slope equations
then determine positive dual weights whose symbol has the same
contacts. The strict curvature and complementary slackness conditions
ensure that these contacts remain its only nonzero zeros. They give
both optimality of the continued shift and the stated classification
of every threshold measure.

The regular boundary is relatively open. On each connected component its
support, least period, and tight residue classes are constant
(Corollary~\ref{cor:plateau}); these components are the plateaus.
Thus mode locking fixes the frequencies, while their masses and the
boundary height $\kappa_\partial$ may vary analytically.
The finite lag set of a regular certificate gives an exact local decision
procedure, and competing grids incur a quadratic loss
(Theorems~\ref{thm:tightclass} and~\ref{thm:qlock}).

The least period determines all feasible threshold periods. The endpoint
fan realizes a plateau precisely when its least period is coprime to the
gap index (Corollary~\ref{cor:periods}).

\subsubsection{Mixed-spectrum plateaus}
Regular threshold measures may require more than one nonzero
frequency. The following two boundary points in $I_4$ have supports
of sizes two and three, respectively.

\begin{theorem}[Two mixed-spectrum plateaus]\label{thm:plateaus}
For each row of Table~\ref{tab:plateaus}, take the displayed decimal values
of $(\alpha L,\beta)$ exactly. There is a unique boundary value
$\tilde\kappa$ in the stated open interval. The tuning
$(\tilde\kappa,\alpha L,\beta)$ lies in $I_4$ and is $L$-regular with
support $S$. It lies on an open analytic
plateau whose extremals are exactly the positive multiples of a measure
with positive mass at every point of $S$, together with the admissible
atom at $0$ of Theorem~\ref{thm:modelock}(c). Throughout its intersection
with $I_4$, its unique anchor is the displayed fraction.
The least period of the first plateau is $18$, and none of its threshold
periods has the form $4p+1$. Every extremal on the second plateau has
three nonzero frequencies.
\end{theorem}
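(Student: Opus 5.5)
The argument is a computer-assisted verification built on Theorem~\ref{thm:modelock}. Everything reduces to exhibiting, for each row of Table~\ref{tab:plateaus}, one $L$-regular point in the sense of Definition~\ref{def:regular}, and then propagating its properties along the plateau. For each row we fix the exact rational $(\alpha L,\beta)$ and treat $\kappa$ as the unknown. The support $S\subset\Omega_L\cap(0,\pi)$ (with $L=18$ for the first row), the certificate lag set $N$ and the active residue classes $\mathcal A$ are taken from the table.

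\textbf{Existence and uniqueness of $\tilde\kappa$.} The determinant equation of Theorem~\ref{thm:modelock}(a) with $s=0$, together with the normalization of $w$, is a finite analytic system in $(\kappa,w)$. We solve it by interval Newton (Krawczyk) iteration on the stated $\kappa$-interval; this yields a unique root $\tilde\kappa$ with strictly positive masses $w^*$. Next we solve the slope (pinning) equations for the dual weights $\lambda^*$ and verify $\lambda^*>0$ in interval arithmetic.

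\textbf{Regularity.} The remaining conditions of Definition~\ref{def:regular} are all strict inequalities, so each is checked by interval enclosure at $\tilde\kappa$:
\begin{enumerate}[label=\textup{(\arabic*)}]
\item the active matrix has a one-dimensional kernel, certified by a nonzero minor of size $|S|-1$;
\item the inactive residue classes modulo $L$ and the minimizer constraint have positive slack;
\item the symbol has negative second derivatives at the points of $S$;
\item the endpoint moment and the pinning factors are positive, and the pinning equations have full rank.
\end{enumerate}

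The main obstacle is \emph{global nonpositivity of the dual symbol} $S=\sum_{n\in N}\lambda_n\Lambda_n+\gamma\cos\Theta$ on $[0,\pi]$, with zeros only at $S\cup\{0\}$. This is what makes the point a genuine threshold rather than merely a stationary one. We certify it in two pieces:
\begin{itemize}
\item near each contact, a Taylor bound uses the certified negative curvature;
\item away from the contacts, a subdivision of $[0,\pi]$ with interval evaluation shows $S\le-\varepsilon$.
\end{itemize}
The endpoint $\omega=0$ needs separate treatment. There we expand $\Lambda_n$ in $\omega$ and use $\Theta(0)=\pi$, which leaves only an admissible atom, as described in Theorem~\ref{thm:modelock}(c).

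\textbf{Consequences from Theorem~\ref{thm:modelock}.} Once regularity is established, the theorem gives the following:
\begin{itemize}
\item $\sigma(\tilde q)=0$, so $\tilde q\in\partial\mathcal D$;
\item the analytic plateau and its graph description near $\tilde q$;
\item the extremals are exactly $c\,w+a\delta_0$ with full support on $S$, by part (c).
\end{itemize}
Corollary~\ref{cor:plateau} extends these properties to the whole connected regular component. Membership of $\tilde q$ in $I_4$ is checked directly from Definition~\ref{def:levels}, by enclosing the dip $\{\Theta<\omega/2\}$ between $2\pi/5$ and $2\pi/4$ and checking the two strict phase inequalities at $2\pi/4$ and $2\pi/5$. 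Anchor uniqueness follows from Theorem~\ref{thm:class}: every feasible periodic solution has an anchor at a Farey fraction in the dip. Since $S$ is constant on the plateau, the anchor is the unique point of $S$ in the closed dip, and we verify that only the displayed fraction lies there.

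\textbf{Period statements.} For the first plateau, the least period $18$ is the lcm of the denominators of $S$. By Corollary~\ref{cor:periods}, the feasible threshold periods are exactly the multiples of $18$. All of these are even, while $4p+1$ is odd, so none has that form. For the second plateau, the claim that every extremal has three nonzero frequencies is exactly the uniqueness in Theorem~\ref{thm:modelock}(c), given that $w$ has positive mass on all three points of $S$.
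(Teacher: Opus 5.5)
Your plan matches the paper's proof in outline. You certify (R1)--(R3) of Definition~\ref{def:regular} rigorously at the root, then read off the plateau, the extremals, the anchor and the period claims from Theorem~\ref{thm:modelock} and Corollaries~\ref{cor:plateau} and~\ref{cor:periods}. The paper differs only in technique: it brackets the root by a sign change of the determinant and checks every inequality uniformly on the bracket. It proves the global dual sign by Bernstein bounds on the polynomial form of Proposition~\ref{prop:polycert} in $c=\cos\omega$ after factoring out $1-c$, which handles $\omega=0$ automatically.

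Three small repairs are needed.
\begin{itemize}
\item The lag sets are not in Table~\ref{tab:plateaus}, so you must supply them. The paper uses the pairs $(1,19),(-4,14)$ for $L=18$ and $(1,-20),(-12,9),(-4,-25)$ for $L=21$.
\item Uniqueness of the boundary value in the interval does not follow from uniqueness of the determinant root. It follows from strict decrease of $\sigma$ in $\kappa$ (Theorem~\ref{thm:global}(b)).
\item The anchor argument should not rest on checking the dip at $\tilde q$ alone. Use that exactly one point of $S$ lies in the window $(2\pi/5,2\pi/4)$, and that this window contains the closed dip throughout $I_4$.
\end{itemize}
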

The proof is given in Appendix~\ref{app:boundary-examples}.

\begin{table}[tbp]
\centering\small
\caption{Mixed-spectrum boundary points and rational enclosures.}\label{tab:plateaus}
\begin{tabular}{cccc}
\toprule
anchor & $(\alpha L,\beta)$ & interval for $\tilde\kappa$ & $S\cdot L/(2\pi)$\\
\midrule
$2/9$ & $(2.201327,0.6268)$ & $(100.5508210,100.5508214)$ & $\{3,4\}$, $L=18$\\
$5/21$ & $(2.240775,0.6152)$ & $(106.8055118,106.8055121)$ & $\{3,4,5\}$, $L=21$\\
\bottomrule
\end{tabular}
\end{table}

At some points a two-lag relaxation already has the threshold of the full
program. Such a point occurs when its two contact frequencies lie on a
common grid and the remaining residue constraints are strict
(Theorem~\ref{thm:exact}).

Theorem~\ref{thm:exact313} gives such an exact point with anchor $3/13$
at $\kappa\approx83.5343$. Its threshold spectrum is
$\{2,3\}\cdot2\pi/13$, up to scaling and an admissible atom at $0$;
the lags $1,-4$ certify the threshold at this point, and regularity
places it on an analytic plateau.

The geometry behind this example is a double tangency between the
quadratic $\Xi_q$ and a rational Chebyshev profile
(Proposition~\ref{prop:twolagtangency}). It reduces the contact equations
to one scalar equation and the global symbol inequality to the sign of
a linear polynomial; see Appendix~\ref{app:boundary}.

Figure~\ref{fig:staircase} illustrates a finite-period approximation to
$\mathcal B_4$ at $\kappa=150$, across the gap between the tongues of
$1/5$ and $1/4$. The displayed frequency intervals are indexed by Farey
fractions, with the widest interior interval at $3/13$. These intervals
illustrate the mode locking of Theorem~\ref{thm:modelock}; the mixed
examples above describe possible spectral supports within plateaus.

\begin{figure}[tbp]
\centering
\includegraphics[width=\linewidth]{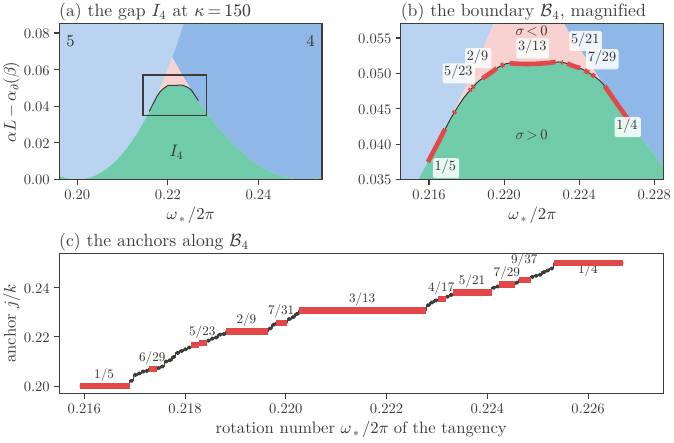}
\caption{A finite-period approximation to the section at $\kappa=150$,
using programs with $9\le L\le120$. The horizontal coordinate is the
rotation number $\omega_*/2\pi$ of the tangency on $\partial\Omega_1$;
the vertical coordinate is $\alpha L-\alpha_\partial(\beta)$, where
$(150,\alpha_\partial(\beta),\beta)\in\partial\Omega_1$.
(a) The gap between the tongues of $1/5$ and $1/4$, with green and pale
red below and above the computed curve. (b) The boxed region magnified;
red segments mark constant selected frequencies. (c) The rotation number
of the largest atom of the minimizing periodic measure in cotangent form,
$\nu=\sin\Theta\,w$ (Proposition~\ref{prop:monoform}). The constant-frequency
intervals illustrate periodic mode locking.}
\label{fig:staircase}
\end{figure}

\subsection{Single-frequency plateaus}\label{sec:plateau}
Every reduced fraction $j/k$ in the window of a gap also supports a
single-harmonic plateau. Its threshold lies on the edge of its tongue,
and its entire regular component admits an exact description.
The edge and fan criteria apply to every reduced $0<j/k<1/2$,
$k\ge3$.
Use the notation $\omega_0=2\pi j/k$, $c_0=\cos\omega_0$,
$s_0=\sin\omega_0$, $c'=\pi/k$, $h_0=s_0\cot c'$ of
Subsection~\ref{sec:farey}. Let $b$ be the inverse of $j$ modulo $k$ and set
$\mathcal N_{j/k}=(b+k\Z)\cup(k\Z\setminus\{0\})$.
For every tuning define
\[
 s_*(q)=\cot c'-h_q(\omega_0),\qquad
 \Theta_*=\Theta_{q,s_*(q)},\qquad \bar n(q)=2\Theta_*'(\omega_0).
\]
The single atom $\delta_{\omega_0}$ is feasible at this shift, so
\begin{equation}\label{eq:sstarjk}
 \sigma(q)\le\sigma_k(q)\le s_*(q).
\end{equation}
On the tongue edge $s_*=0$. For a fraction $j/k\in(1/(K+1),1/K)$,
parametrize that edge by
\[
 \Xi^{a,D}(c)=h_0+(1+D)(c-c_0)-a(c-c_0)^2,\qquad a>0.
\]
Whenever $\Xi^{a,D}(\pm1)<0$ it defines a unique tuning $q(a,D)$.
The slope index records twice the derivative of the angle curve
at the contact:
\begin{equation}\label{eq:nbaredge}
 \bar n(D)=2\sin^2c'\bigl(1+D+\cot c'\cot\omega_0\bigr).
\end{equation}
Let $a_E(D)$ be the explicit piecewise affine threshold for membership
in $I_K$ (Lemma~\ref{lem:edgedomain}). Let $a_F(D)$ be the least curvature
required by a finite single-harmonic certificate, defined by the
one-variable minimax problem in Definition~\ref{def:admissible}.
Its admissible finite nonnegative lag weights cancel the contact slope;
$a_F$ minimizes the global maximum of their curvature profiles.
When the slope index lies strictly between two consecutive lags
in $b+k\Z$, the adjacent-lag symbol is the convex combination
with zero derivative at $\omega_0$.

\begin{definition}[Edge symbol]\label{def:edgesymbol}
Let $q$ be a tuning with $\bar n=\bar n(q)\notin b+k\Z$. Let $n_1<\bar n<n_2=n_1+k$ be the
members of $b+k\Z$ adjacent to $\bar n$, put $\vartheta=(\bar n-n_1)/k\in(0,1)$, and, with the lag functions
of $\Theta_*$,
\[
 T_q=(1-\vartheta)\Lambda_{n_1}+\vartheta\Lambda_{n_2},\qquad F_q=-\tfrac12T_q .
\]
\end{definition}

Write $\lambda^{\rm e}_{n_1}=1-\vartheta$ and
$\lambda^{\rm e}_{n_2}=\vartheta$. The associated profile is
\[
 \Psi_{\rm e}(c)=\frac{\sin\omega\sum_{i=1}^2\lambda^{\rm e}_{n_i}\sin(n_i\omega)}
 {\sum_{i=1}^2\lambda^{\rm e}_{n_i}(1-\cos(n_i\omega))},\qquad c=\cos\omega,
\]
\[
 R_{\lambda^{\rm e}}(c)=\frac{h_0+(1+D)(c-c_0)-\Psi_{\rm e}(c)}{(c-c_0)^2}.
\]
Values at the endpoints and at $c_0$ are understood by continuous
extension. The curvature threshold $a_2(D)=R_{\lambda^{\rm e}}(c_0)$
is where the contact curvature vanishes (Lemma~\ref{lem:ray});
Definition~\ref{def:admissible} gives the full class of profiles defining $a_F$.

\begin{theorem}[The single-harmonic plateau of $j/k$]\label{thm:plateaujk}
Let $K\ge2$ and let $j/k\in(1/(K+1),1/K)$ be a Farey fraction. Put
\[
 \Pi_{j/k}=\bigl\{q(a,D):\ D\in\R,\ a>\max\{a_E(D),a_F(D)\}\bigr\}.
\]
\begin{enumerate}[label=\textup{(\alph*)}]
\item $\Pi_{j/k}$ is a plateau contained in $\mathcal B_K$, and it consists of all regular points of $\partial\mathcal D$ whose
threshold extremal is carried by $\{\omega_0\}$.
\item On $\Pi_{j/k}$ the threshold extremal is $\delta_{\omega_0}$, up to scaling and an atom at $0$, its unique anchor is $j/k$, and $\kappa_\partial=\kappa_{j/k}$ on the projection
of $\Pi_{j/k}$ to the $(\alpha L,\beta)$ plane.
\item If $\bar n(D)\notin b+k\Z$, then $a_2(D)\le a_F(D)\le\max_{[-1,1]}R_{\lambda^{\rm e}}$, and every defined $q(a,D)$ with $a<a_2(D)$ lies in
the interior of $\mathcal D$, where the fan pairs of $j/k$ have a smaller threshold than the single harmonic (Theorem~\ref{thm:fanjk}).
\end{enumerate}
\end{theorem}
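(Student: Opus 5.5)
The plan is to use the mode-locking machinery of Theorem~\ref{thm:modelock} with the single-point support $S=\{\omega_0\}$ and the period $L=k$. The dual side is supplied by single-harmonic certificates with lags in $\mathcal N_{j/k}$. First I would fix the tongue-edge chart $q(a,D)$. On this chart $s_*(q(a,D))=0$, so by \eqref{eq:sstarjk} we have $\sigma\le0$ and the tuning lies in $\mathcal D$. The condition $a>a_E(D)$ places $q(a,D)$ in $I_K$ (Lemma~\ref{lem:edgedomain}). It then remains to show $\sigma=0$ exactly: by Theorem~\ref{thm:global} this makes $q(a,D)$ a boundary point, and because $\sigma$ is strictly decreasing in $\kappa$ and the tongue is the graph region $\kappa\ge\kappa_{j/k}$ (Lemma~\ref{lem:tongue}), it also gives $\kappa_\partial=\kappa_{j/k}$ on the projection.

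For $\sigma(q)\ge0$ I would show that no shift $s<0$ is feasible. For this I would use a certificate symbol with lags in $\mathcal N_{j/k}$, namely an admissible profile from Definition~\ref{def:admissible} combined with the residue classes $k\Z$ and a small minimizer weight. The symbol should be $\le0$ on $[0,\pi]$ and vanish only at $\omega_0$ (and at $0$), with strictly negative second derivative at $\omega_0$. The slope condition is exactly the cancellation built into the admissible weights. The curvature condition compares the quadratic coefficient $a$ of $\Xi^{a,D}$ with the curvature profile of the weights, and this is precisely the inequality $a>a_F(D)$. Away from $\omega_0$, strict negativity follows from writing the symbol in the monotone form of Proposition~\ref{prop:monoform}: $\Xi^{a,D}$ lies below the rational profile, since the profile is at most $h_0+(1+D)(c-c_0)-a_F(c-c_0)^2$ and $a>a_F$. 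Lowering the curve by any $s<0$ then makes the symbol strictly negative, so $\sigma\ge0$.

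With these ingredients, the primal atom $\delta_{\omega_0}$ and the dual certificate verify the clauses of Definition~\ref{def:regular}:
\begin{enumerate}[label=\textup{(\arabic*)}]
\item there is a one-dimensional kernel, since there is one atom and the active classes $b$ and $0$ are tight by Theorem~\ref{thm:farey};
\item the inactive constraints are strict, because the symbol is strict away from the contacts;
\item the minimizer moment is positive, since $\cos\Theta>0$ at the anchor in the dip;
\item the pinning factors are positive, via Lemma~\ref{lem:pinid} within the class $b+k\Z$.
\end{enumerate}
Theorem~\ref{thm:modelock} then gives an analytic plateau on which the extremal is $\delta_{\omega_0}$ up to scaling and an atom at $0$. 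The uniqueness of the anchor $j/k$ follows from Theorem~\ref{thm:class}, which is part (b). For the converse in (a), I would take any regular boundary point whose extremal is carried by $\{\omega_0\}$. Since $\delta_{\omega_0}$ is feasible at shift $0$, we get $s_*\le0$; since $\sigma=0$ is attained by this atom, $s_*=0$. The regular dual symbol must cancel the slope at $\omega_0$ and have negative curvature there, which by the definition of $a_F$ as a minimum forces $a>a_F$. Membership in $I_K$ forces $a>a_E$. I would show that $\Pi_{j/k}$ is connected, and hence a single plateau, using the affine-in-$a$ structure of the chart.

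For (c), the two-term edge symbol of Definition~\ref{def:edgesymbol} is itself admissible, which gives $a_F\le\max R_{\lambda^{\rm e}}$. The lower bound $a_2\le a_F$ comes from Lemma~\ref{lem:ray}: every admissible weight system has contact curvature at least that of the adjacent-lag combination. This is a convexity argument, since the slope constraint pins $\sum\lambda_n n=\bar n$, and the second moment is minimized by the two adjacent lags. For $a<a_2$, the contact curvature has the wrong sign. Theorem~\ref{thm:fanjk} then supplies a fan pair that is feasible at a strictly negative shift, so $\sigma<0$ and the point lies in the interior of $\mathcal D$.

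I expect the main obstacle to be the global nonpositivity of the optimal certificate, and the matching reverse inequality $a>a_F$ needed for regularity. The minimax defining $a_F$ may be attained at an endpoint or at a second interior contact. There the symbol would acquire an extra zero, and regularity would fail exactly at $a=a_F$. I would handle this by showing that the set $a>a_F$ is open, so that the strict inequality leaves a uniform margin away from $\omega_0$.
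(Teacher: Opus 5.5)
Your architecture matches the paper's. The forward inclusion and $k$-regularity come from the certificate criterion of Lemma~\ref{lem:ray}, the converse from admissibility of the regular symbol, and (c) from $R_\lambda(c_0)\ge a_2(D)$, admissibility of $\lambda^{\rm e}$ and Theorem~\ref{thm:fanjk}(d). The connectedness step, however, has a genuine gap.

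\textbf{Connectedness.} The structure in $a$ only shows that each slice $\{a>\max(a_E(D),a_F(D))\}$ at fixed $D$ is a ray. A fixed admissible $\lambda$ satisfies the slope condition of Definition~\ref{def:admissible} for exactly one $D$. So $a_F$ is an infimum over $D$-dependent classes, with no a priori continuity, local boundedness or even finiteness. The edge symbol does not supply a bound everywhere: at resonant slopes $\bar n(D)\in b+k\Z$ it collapses to a single lag and is not admissible. Rays over a function that is not locally bounded need not form a connected set, and nonemptiness is also at stake. The ingredient you are missing is Theorem~\ref{thm:largea}. There the three-lag symbol with weights $(\tfrac13-t,\tfrac13,\tfrac13+t)$ on $n_0-k,n_0,n_0+k$ certifies every $a>\max\{a_1(|D|),a_E(D)\}$, resonant slopes included. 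The region above this continuous graph is therefore a connected subset of $\Pi_{j/k}$ that meets every vertical ray. This gives connectedness without any continuity of $a_F$.

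The obstacle you anticipate is not one. At a given $(a,D)$ with $a>a_F(D)$, a single admissible $\lambda$ with $\max R_\lambda<a$ already carries the strict margins of Lemma~\ref{lem:ray}. Your comparison is also reversed: $\max R_\lambda<a$ means $\Psi_\lambda>\Xi^{a,D}$ off $c_0$. So the profile lies above the quadratic with coefficient $\max R_\lambda$, and the infimum $a_F$ cannot stand in for a fixed $\lambda$.

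\textbf{The converse.} You write that membership in $I_K$ forces $a>a_E(D)$. Membership must be proved, however, for an arbitrary regular point of $\partial\mathcal D$ with extremal at $\{\omega_0\}$. The paper uses uniqueness of the extremal (Theorem~\ref{thm:modelock}(c)). If the closed dip contained some $2\pi/K'$, then $\delta_{2\pi/K'}$ would be a second extremal (Theorem~\ref{thm:levelzero}(II)). Hence case (III) holds, and $\omega_0\in\bar\Delta$ selects $I_K$. Your derivation of $s_*=0$ from $\sigma\le s_*$ is fine.

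To conclude $a>a_F(D)$ you must also check that the regular symbol is admissible:
\begin{itemize}
\item its lags lie in $\mathcal N_{j/k}$, because they are tight for an atom of phase index one;
\item (R3) gives at least two lags in $b+k\Z$;
\item a common divisor $d>1$ of the lags would produce a second zero at $2\pi/d\ne\omega_0$.
\end{itemize}
You then need the full \emph{only if} direction of Lemma~\ref{lem:ray}, including global negativity and the endpoint moment, to get $a>\max R_\lambda\ge a_F(D)$. Negative curvature at $\omega_0$ alone gives only $a>R_\lambda(c_0)\ge a_2(D)$.

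Smaller points in your regularity list:
\begin{itemize}
\item In (R1) the active set is $\mathcal A=\{b\}$. Residue $0$ is excluded, since otherwise $|\mathcal A|\ne|S|$.
\item Strictness of the remaining residues comes from phase index one, $F_1(\varrho)>0$ for $2\le\varrho\le k-1$, not from the dual sign.
\item The endpoint moment $\sum_n\lambda_nn(n+2\tau_q)>0$ in (R2) is a dual condition, distinct from the primal minimizer slack $\cos c'>0$.
\end{itemize}
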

The proof is given in Appendix~\ref{app:plateau-domain}.

Lemma~\ref{lem:ray} identifies strict regular
certificates with $a>\max R_\lambda$, and
Theorem~\ref{thm:largea} supplies such a certificate at every sufficiently
large curvature. A common continuous upper bound for these sufficient
curvatures makes the parameter region connected. The converse uses the
uniqueness of a regular extremal to exclude contact with an integer
tongue. The curvature comparison in (c) then follows from the exact
second-order fan asymptotics below.

\begin{corollary}[A plateau at every Farey fraction]\label{cor:everyfarey}
For every $K\ge2$, every Farey fraction $j/k\in(1/(K+1),1/K)$ is the unique anchor of the plateau $\Pi_{j/k}\ne\emptyset$ of
$\mathcal B_K$, whose threshold extremal is the single harmonic at $2\pi j/k$ and on which $\kappa_\partial$ is the rational
function $\kappa_{j/k}$. In particular $\mathcal B_K$ contains regular points for every $K\ge2$.
\end{corollary}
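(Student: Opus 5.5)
The plan is to read the corollary off Theorem~\ref{thm:plateaujk}, once we know that $\Pi_{j/k}$ is nonempty. Parts (a) and (b) of that theorem already show three things for any point of $\Pi_{j/k}$: it is a regular point of $\mathcal B_K$ lying on a plateau; its threshold extremal is $\delta_{\omega_0}$, up to scaling and an atom at $0$; and its unique anchor is $j/k$. They also give $\kappa_\partial=\kappa_{j/k}$ on the projection, and $\kappa_{j/k}$ is a rational function by the explicit graph description of Lemma~\ref{lem:tongue}. So the statements about anchor, extremal and threshold need no further work, and the last sentence follows from nonemptiness for every $K\ge2$. By Corollary~\ref{cor:plateau} and the definition of plateaus as components of the regular boundary, "regular points" here means points of $\mathcal R$.

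Nonemptiness amounts to exhibiting a pair $(a,D)$ with $\Xi^{a,D}(\pm1)<0$ and $a>\max\{a_E(D),a_F(D)\}$. The approach is to fix one value of $D$ and let $a\to\infty$. Choose $D$ so that $\bar n(D)\notin b+k\Z$; by \eqref{eq:nbaredge}, $\bar n$ is affine in $D$ with nonzero slope $2\sin^2c'$, so only a discrete set of $D$ is excluded. Then three bounds control the requirements. First, $a_E(D)$ is finite, being an explicit piecewise affine function (Lemma~\ref{lem:edgedomain}). Second, $a_F(D)\le\max_{[-1,1]}R_{\lambda^{\rm e}}<\infty$ by Theorem~\ref{thm:plateaujk}(c). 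Alternatively, Theorem~\ref{thm:largea} directly supplies a strict regular certificate at all sufficiently large curvature. Third, $\Xi^{a,D}(\pm1)=h_0+(1+D)(\pm1-c_0)-a(\pm1-c_0)^2$, and since $|c_0|<1$ this tends to $-\infty$ as $a\to\infty$, so the tuning $q(a,D)$ is defined for large $a$. Any sufficiently large $a$ therefore gives $q(a,D)\in\Pi_{j/k}$.

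The main obstacle is the finiteness of $\max R_{\lambda^{\rm e}}$. Its numerator vanishes to second order at $c_0$, because the edge symbol has zero derivative there and contact is made at $\omega_0$, so the continuous extension is finite at $c_0$. At the endpoints $c=\pm1$ the profile $\Psi_{\rm e}$ must also extend continuously. This requires the denominator $\sum\lambda^{\rm e}_{n_i}(1-\cos n_i\omega)$ to vanish only where the numerator does, which can happen at $\omega\in\{0,\pi\}$ or at $n_i\omega\in2\pi\Z$. I would verify this by expanding near these points, or simply cite Theorem~\ref{thm:largea} to avoid the issue. Finally, membership in $I_K$ is exactly the condition $a>a_E(D)$, which the choice of large $a$ guarantees, so the plateau lies in $\mathcal B_K$ as claimed.
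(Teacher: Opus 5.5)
Your argument is essentially the paper's: nonemptiness of $\Pi_{j/k}$ follows from finiteness of $a_F(D)$ at large curvature. The paper cites Theorem~\ref{thm:largea} directly, which works for every $D$. Your edge-symbol route for nonresonant $D$ also works, and your ``main obstacle'' is already settled by Lemma~\ref{lem:ray}: there $R_\lambda$ is continuous on $[-1,1]$ for every admissible $\lambda$, because $\gcd(n_1,n_1+k)=1$ forces $P_{\lambda^{\rm e}}>0$ on $(0,\pi]$. The one thing to add is that the final sentence needs a Farey fraction in every window $(1/(K+1),1/K)$, for instance the mediant $2/(2K+1)$, which the paper's proof states explicitly.
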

The proof is given in Appendix~\ref{app:plateau-domain}.

\begin{proposition}[Location of $\Pi_{j/k}$]\label{prop:location}
For fixed $D$, as $a\to\infty$,
\[
 \beta\to1,\qquad a(1-\beta)^2\to\frac{2\tan(\pi/(2k))}{\sin\omega_0},\qquad \alpha L\to4\sin^2\frac{\pi j}k,\qquad \kappa\to1
\]
along $q(a,D)$. Thus the projection of $\Pi_{j/k}$ to the $(\alpha L,\beta)$ plane accumulates at the point
$(4\sin^2(\pi j/k),1)$, at which the characteristic polynomial $z^2-(2-\alpha L)z+1$ of the undamped quadratic has
the roots $e^{\pm2\pi ij/k}$.
\end{proposition}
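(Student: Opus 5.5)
The plan is to make the map $(a,D)\mapsto q(a,D)$ explicit by computing $\Xi_q$ from $(\beta,x,y)$. Then I match coefficients with $\Xi^{a,D}$ and let $a\to\infty$ while $h_0$ and $D$ stay fixed.

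\textbf{Step 1: $\Xi_q$ in closed form.} On $|z|=1$ write $z=e^{i\omega}$ and $c=\cos\omega$. Then
\[
\frac{a(z)}z=P_x+iQ,\qquad \frac{b(z)}z=P_y+iQ,
\]
with $P_x=(1+\beta)c-(1+\beta-x)$, $P_y=(1+\beta)c-(1+\beta-y)$ and $Q=(1-\beta)\sin\omega$. By \eqref{eq:factor}, $g-2is$ is a positive multiple of
\[
-(P_x+iQ)(P_y-iQ)=-(P_xP_y+Q^2)-iQ(y-x).
\]
Taking $\cot\Theta=g/(2s)$ and recalling $\cot\Theta=\Xi/\sin\omega$, I expect
\[
\Xi_q(c)=-\frac{P_xP_y+(1-\beta)^2(1-c^2)}{(1-\beta)(y-x)}
\]
(I would check this against Lemma~\ref{lem:angle}). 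Set $\varepsilon=1-\beta$ and $\delta=(y-x)/(1+\beta)$. Let $c_x,c_y$ be the roots of $P_x,P_y$, with midpoint $m$, so that $c_x-c_y=\delta$. The leading coefficient then gives
\[
a=\frac{4\beta}{(1+\beta)\,\varepsilon\delta}.
\]
Matching $\Xi_q(c_0)=h_0$ and $\Xi_q'(c_0)=1+D$ gives two more equations:
\[
(1+\beta)^2\Bigl(\tfrac{\delta^2}4-(c_0-m)^2\Bigr)-\varepsilon^2s_0^2=(1+\beta)h_0\,\varepsilon\delta,
\]
\[
(1+\beta)^2\,2(c_0-m)-2\varepsilon^2c_0=-(1+D)(1+\beta)\varepsilon\delta .
\]

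\textbf{Step 2: asymptotics.} The curvature equation shows $\varepsilon\delta\to0$ as $a\to\infty$. The slope equation then gives $c_0-m=O(\varepsilon\delta)+O(\varepsilon^2)$.

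First I show $\varepsilon\to0$; the argument is along subsequences, with $\varepsilon\in(0,1]$. If $\varepsilon$ stayed bounded below, then $\delta\to0$. The value equation would then force
\[
-(1+\beta)^2(c_0-m)^2-\varepsilon^2s_0^2\to0,
\]
which is impossible because $\varepsilon^2 s_0^2$ stays bounded below. Hence $\beta\to1$.

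Next I divide the value equation by $\varepsilon^2$ and put $r=\delta/\varepsilon$. The term $(c_0-m)^2/\varepsilon^2$ is $O(\delta^2)+O(\varepsilon^2)$, hence $o(1+r^2)$. The equation becomes
\[
r^2-2h_0r-s_0^2=o(1+r^2).
\]
This first shows that $r$ is bounded, and then that
\[
r\to h_0+\sqrt{h_0^2+s_0^2}=s_0\Bigl(\cot\tfrac{\pi}k+\csc\tfrac{\pi}k\Bigr)=s_0\cot\tfrac{\pi}{2k}.
\]
The positive root is selected because $\delta>0$ (from $y>x$). Consequently
\[
a\varepsilon^2=\frac{4\beta}{(1+\beta)r}\to\frac2r=\frac{2\tan(\pi/(2k))}{\sin\omega_0}.
\]

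\textbf{Step 3: the limiting tuning.} Since $\delta=r\varepsilon\to0$ and $m\to c_0$, both roots $c_x,c_y$ tend to $c_0$. Therefore
\[
y=(1+\beta)(1-c_y)\to2-2c_0=4\sin^2(\pi j/k),
\]
and likewise $x\to4\sin^2(\pi j/k)>0$. Hence $\kappa=y/x\to1$. At $\beta=1$ and $\alpha L=2-2c_0$ the polynomial $z^2-2c_0z+1$ has roots $e^{\pm i\omega_0}$, which gives the stated interpretation. The accumulation statement for the projection of $\Pi_{j/k}$ then follows. By Theorem~\ref{thm:plateaujk}, points $q(a,D)$ with $a$ large belong to $\Pi_{j/k}$, and for every fixed $D$ these points converge to $(4\sin^2(\pi j/k),1)$.

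\textbf{Main obstacle.} The main difficulty is the bookkeeping in Step 2. One has to rule out the degenerate regime where $\delta\to0$ with $\varepsilon$ fixed, and show that $r$ stays bounded, so that dividing by $\varepsilon^2$ is legitimate. Along the way one must check that $q(a,D)$ remains defined for large $a$, i.e.\ $\Xi^{a,D}(\pm1)<0$; this holds automatically once $a$ is large. One must also check that the correct (positive) root is selected.
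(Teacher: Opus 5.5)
Your proof is correct and follows essentially the paper's route. You match the leading coefficient, the value and the slope of $\Xi_q$ at $c_0$ (your $\varepsilon\delta$ and midpoint $m$ encode the paper's $B=2\beta/(a\varepsilon)$ and $A=(1+\beta)m$), bound the single scaling variable ($r=\delta/\varepsilon$, which tends to $2/t$ for the paper's $t=a(1-\beta)^2$), and select the positive root of the same limiting quadratic. Your explicit argument for $\beta\to1$, ruling out a subsequence with $1-\beta$ bounded below, spells out a step the paper leaves terse.
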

The proof is given in Appendix~\ref{app:plateau-domain}.

\begin{figure}[tbp]
\centering\includegraphics[width=\linewidth]{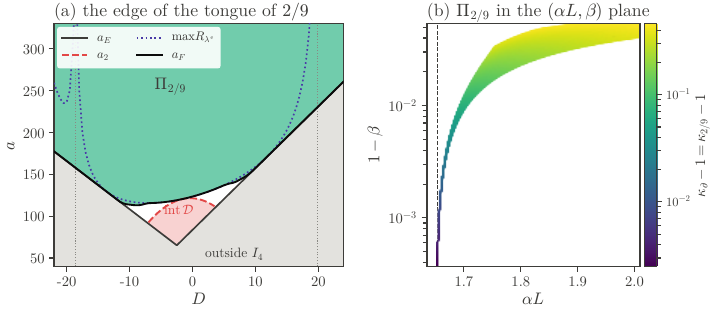}
\caption{The single-harmonic plateau of $2/9$.
(a) In edge coordinates: the gap boundary $a_E$, the curvature
obstruction $a_2$ (dashed), and a numerical evaluation of the adjacent-lag
bound $\max R_{\lambda^{\rm e}}$ (dotted). The solid curve approximates
$a_F$ by finite-lag programs. (b) The image in $(\alpha L,1-\beta)$,
with logarithmic vertical scale; its limiting abscissa is
$4\sin^2(2\pi/9)$. Color indicates $\kappa_\partial-1$.}\label{fig:plateau}
\end{figure}

\subsubsection{Second-order asymptotics of Farey fan pairs}
The same curvature that certifies the single harmonic controls the
nearest rational competitors. For the edge symbol of
Definition~\ref{def:edgesymbol},
\[
 F_q''(\omega_0)=\sin c'\,\Theta_*''(\omega_0)
   -\tfrac12\cos c'\,k^2\vartheta(1-\vartheta).
\]
Positive curvature, together with the global sign and endpoint
conditions, gives regularity. Nonpositive curvature obstructs every
regular single-harmonic certificate at that frequency
(Theorem~\ref{thm:edgejk}).

\begin{definition}[Fan pairs of $j/k$]\label{def:fanpairjk}
Let $\varsigma\in\{\pm1\}$ and let $L>k$ satisfy $jL\equiv\varsigma\pmod k$. Put $J=(jL-\varsigma)/k$,
\[
 \omega_a=\frac{2\pi J}L,\qquad \omega_c=\frac{2\pi(J+\varsigma)}L,\qquad \delta=\omega_a-\omega_0=-\frac{2\pi\varsigma}{kL}.
\]
Then $\gcd(J,L)=1$, $\omega_c=\omega_0-(k-1)\delta$ and $(k-1)\omega_a+\omega_c=2\pi j$. The fan pair of order $L$
is $\{\omega_a,\omega_c\}$; $\omega_a$ is its point nearer to $\omega_0$.
\end{definition}

\begin{theorem}[Fan pairs at a Farey fraction]\label{thm:fanjk}
Let $q$ be a tuning with $\bar n(q)\notin b+k\Z$, let $\varsigma\in\{\pm1\}$, and put
$e_*(q)=-(k-1)F_q''(\omega_0)/(2\sin^3c')$.
\begin{enumerate}[label=\textup{(\alph*)}]
\item For every $e<e_*(q)$ there are $z\in\R$ and $L_0$ such that for every $L\ge L_0$ with
$jL\equiv\varsigma\pmod k$ the measure with mass $k-1+z\delta$ at $\omega_a$ and mass $1$ at $\omega_c$ solves
$\FP(\Theta_{q,s})$ at the shift $s=s_*(q)-e\delta^2$.
\item For every $e>e_*(q)$ there is $L_0$ such that for every $L\ge L_0$ with $jL\equiv\varsigma\pmod k$ no
nonzero measure carried by $\{\omega_a,\omega_c\}$ solves $\FP(\Theta_{q,s})$ at $s=s_*(q)-e\delta^2$.
\item The threshold $s^{\rm pair}_L(q)$ of $\FP$ restricted to measures carried by the fan pair of
order $L$ satisfies
\[
 s^{\rm pair}_L(q)=s_*(q)+\frac{(k-1)F_q''(\omega_0)}{2\sin^3c'}\,\delta^2+o(\delta^2)\qquad\bigl(L\to\infty,\ jL\equiv\varsigma\ (\mathrm{mod}\ k)\bigr).
\]
\item If $F_q''(\omega_0)<0$, then $\sigma(q)<s_*(q)$. In particular a point of the edge with
$F_q''(\omega_0)<0$ lies in the interior of $\mathcal D$.
\item If $F_q>0$ on $(0,\pi]\setminus\{\omega_0\}$, $F_q''(0)>0$ and $F_q''(\omega_0)>0$, then for $L\to\infty$ with $k\nmid L$
\[
 \sigma_L(q)-s_*(q)\ \ge\ \frac{F_q''(\omega_0)}{2\sin^3c'}\Bigl(\frac{2\pi}{kL}\Bigr)^2(1+o(1)),
\]
and for $jL\equiv\pm1\pmod k$ also $\sigma_L(q)-s_*(q)\le s^{\rm pair}_L(q)-s_*(q)$.
\end{enumerate}
The constants are locally uniform: on every compact set of tunings on which $\bar n$ stays in one
interval $(n_1,n_1+k)$, and for every $\eta_0>0$, \textup{(a)} holds with $e=e_*(q)-\eta_0$ and \textup{(b)}
with $e=e_*(q)+\eta_0$ for all $L\ge L_0$, with $L_0$ independent of $q$.
\end{theorem}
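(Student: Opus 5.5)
The plan is a two-scale perturbation of the single atom $\delta_{\omega_0}$ at the shift $s_*(q)$, organized by residues of the lag modulo $k$. Both $\omega_a$ and $\omega_c$ lie on the grid $\Omega_L$. By Proposition~\ref{prop:periodic}, a measure carried by $\{\omega_a,\omega_c\}$ therefore has to satisfy only the lag constraints indexed by residues $n\bmod L$. At $s=s_*$ one has $\Theta_*(\omega_0)=c'$, so $\Lambda_n(\omega_0)=2\sin(\pi jn/k)\sin(\pi(jn-1)/k)$. This is strictly positive unless $n\in\mathcal N_{j/k}=(b+k\Z)\cup(k\Z\setminus\{0\})$, where it vanishes.

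\textbf{Expansion.} I write the trial measure as $(k-1+z\delta)\delta_{\omega_a}+\delta_{\omega_c}$. The relation $(k-1)\omega_a+\omega_c=2\pi j$, equivalently $(k-1)(\omega_a-\omega_0)+(\omega_c-\omega_0)=0$, makes the first-order contribution $\Lambda_n'(\omega_0)\,[(k-1)\delta-(k-1)\delta]$ cancel for every lag. For lags with $n\delta$ small, the constraint then has the form
\[
\delta^2\Bigl[\tfrac{k(k-1)}{2}\Lambda_n''(\omega_0)+z\Lambda_n'(\omega_0)+e\,\partial_s\Lambda_n(\omega_0)\Bigr]+O(\delta^3).
\]
Lags in the classes where $\Lambda_n(\omega_0)>0$ stay positive for small $\delta$. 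The difficulty is that the lags relevant on the grid go up to $n\sim L\sim1/\delta$, so $n\delta$ need not be small.

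\textbf{Main obstacle.} To control all lags uniformly, I would write $n=n_0+mk$ within a class and use the pinning identity (Lemma~\ref{lem:pinid}) to express the exact phases $n\omega_a=n\omega_0+n\delta$ and $n\omega_c=n\omega_0-(k-1)n\delta$. This gives, for the tight classes $b+k\Z$ and $k\Z$, an explicit function of the scaled variable $x=n\delta$. Its expansion in $\delta$ is uniform in $x$ and has leading part of the form $\delta^2\,[\text{quadratic in }(n-\bar n)]$ plus the curvature term $\sin c'\,\Theta_*''(\omega_0)$. The slope index $\bar n=2\Theta_*'(\omega_0)$ enters because $\partial_\omega\Lambda_n(\omega_0)$ is proportional to $n-\bar n$ on $b+k\Z$. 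Minimizing over the lattice $b+k\Z$, the worst lags are the neighbours $n_1<\bar n<n_2$. The free parameter $z$ tilts the linear term and balances them, leaving the residual $-\tfrac12\cos c'\,k^2\vartheta(1-\vartheta)$. Thus the binding quantity is exactly $F_q''(\omega_0)$. Dividing by $\partial_s\Lambda$, which equals $\sin^3c'$ up to a factor after the cotangent change of variables, gives the threshold $e_*$. For $e<e_*$, choosing this $z$ makes every tight-class constraint positive for $L\ge L_0$; the class $k\Z$ is handled by the same expansion with its favourable sign. This proves (a).

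For (b) I would use the dual combination $(1-\vartheta)\Lambda_{n_1}+\vartheta\Lambda_{n_2}$, namely $-2F_q$. It is independent of $z$ because its derivative at $\omega_0$ vanishes, and integrated against any measure on the pair it equals $\delta^2$ times a negative multiple of $e-e_*$ plus $o(\delta^2)$. Part (c) combines (a) and (b). For (d), $F_q''<0$ gives $e_*>0$; taking $e\in(0,e_*)$ in (a) yields $\sigma(q)\le s_*-e\delta^2<s_*$, and on the edge, where $s_*=0$, Theorem~\ref{thm:global} together with $\sigma<0$ puts the point in the interior of $\mathcal D$.

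For (e), the lower bound is a dual argument on the grid $\Omega_L$ with $k\nmid L$. Here $\omega_0\notin\Omega_L$, and every grid point lies at distance at least $2\pi/(kL)$ from $\omega_0$. The symbol $F_q$ is positive off $\omega_0$, grows like $\tfrac12F_q''(\omega_0)(\omega-\omega_0)^2$ near $\omega_0$, and has $F_q''(0)>0$ at the origin. Hence $-F_q$ is strictly negative on the grid with margin $\tfrac12F_q''(\omega_0)(2\pi/(kL))^2(1+o(1))$. Absorbing this margin by raising the shift, at rate $\sin^3c'$, produces a strict certificate below that shift. The matching upper bound follows because the fan pair is itself admissible in $\PP_L$. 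Local uniformity comes from the continuity in $q$ of all Taylor remainders and of the constants in the uniform-in-$x$ estimate, on compacts where $n_1$ is fixed.
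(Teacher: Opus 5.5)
\textbf{Gap in (a): lags with $n\delta$ of order one.} Your residue-by-residue treatment of the ``main obstacle'' does not close it. On $\Omega_L$ only $n\bmod L$ matters, so in every residue class modulo $k$ the variable $x=n\delta$ sweeps all of $|x|\le\pi/k$. For $|x|$ of order one both of your claims fail:
\begin{itemize}
\item a non-tight class is not ``positive for small $\delta$'', because the values at $\omega_a,\omega_c$ differ from $\Lambda_n(\omega_0)$ by $O(|n\delta|)$, not by $o(1)$;
\item a tight-class constraint is not $\delta^2$ times a quadratic in $n-\bar n$. It is an $O(1)$ quantity whose sign you never determine.
\end{itemize}
The missing idea is the paper's reduction. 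Since $\gcd(J,L)=1$ and $(k-1)\omega_a+\omega_c=2\pi j$, the lag-$n$ constraint depends only on $\theta=n\omega_a\bmod 2\pi$. All of them together say $\Phi(\theta)=w_a[\cos\Theta_a-\cos(\theta-\Theta_a)]+\cos\Theta_c-\cos((k-1)\theta+\Theta_c)\ge0$ on the whole grid $(2\pi/L)\Z$. As $\delta\to0$, $\Phi\to\Upsilon_k$ uniformly. Lemma~\ref{lem:Ppoly} ($\Upsilon_k\ge0$, with nondegenerate zeros only at $\theta\equiv0,2c'$) then disposes of every grid point outside two fixed neighbourhoods, and nothing in your expansion replaces it. Inside those neighbourhoods you need more than the bounded-$n$ quadratic. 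Write $\mathsf u=k(k-1)\cos c'/4$ and $\mathsf v=\sin c'/2$.
\begin{itemize}
\item Near $2c'$ the grid is $y\delta$ with $y\in b+k\Z$. You need $Q(y)\ge\mathsf u y^2/4$ for $|y|>Y_0$, and the unscaled growth $\mathsf u v^2$ to dominate the remainder for intermediate $v$.
\item Near $0$ the grid is $k|\delta|\Z$, i.e.\ lags in $k\Z$ with $|n|$ up to $\epsilon/|\delta|$. The linear term $f_1u$, with $f_1=O(\delta)$, is dominated at the first grid points only because $z=z_*$ gives $|\mathsf u\bar n+\mathsf v z_*|=(1-\nu)\mathsf u k$, where $\nu=1-|1-2\vartheta|>0$. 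This is what your ``favourable sign'' really requires.
\end{itemize}

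\textbf{Gap in (b): the mass ratio is not pinned.} The combination $T=(1-\vartheta)\Lambda_{n_1}+\vartheta\Lambda_{n_2}$ has $T'(\omega_0)=0$, so it cannot see the mass ratio at first order. For a general measure $w_a\delta_{\omega_a}+w_c\delta_{\omega_c}$ at the shift $s_*-e\delta^2$, its integral is
\[
 \delta^2\bigl[-F_q''(\omega_0)\bigl(w_a+(k-1)^2w_c\bigr)-2e\sin^3c'\,(w_a+w_c)\bigr]+O(\delta^3)(w_a+w_c).
\]
This is a negative multiple of $e-e_*$ only when $w_a/w_c=k-1+o(1)$. If $F_q''(\omega_0)<0$ and $e>e_*$ is close enough to $e_*$, the integral is positive for every fixed ratio $w_a/w_c<k-1$, including the singleton at $\omega_c$ (recall $k\ge3$). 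So you must first show that any feasible pair measure has both masses positive and $w_a=(k-1+z\delta)w_c$ with $z$ bounded. The paper does this in two steps: the reduced-denominator argument excludes singletons, and the two constraints $\Phi(\pm2\pi/L)\ge0$ (lags $\pm k$) pin the ratio. After that, your $z$-independent computation agrees with the paper's $\max_z\min\{Q(n_1),Q(n_2)\}=k\sin^3c'(e_*-e)$. Alternatively, use $\Lambda_{n_1}$ and $\Lambda_{n_2}$ separately: their first-order slopes have opposite signs and exclude ratios away from $k-1$.

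Once (a) and (b) are repaired, your arguments for (c)--(e) follow the paper's route.
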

The proof is given in Appendix~\ref{app:plateau-fans}.

The proof reduces all lag
constraints to a trigonometric polynomial on the finite grid of
phases attained by the periodic frequencies. The two
zeros of its limiting polynomial require separate local estimates.
At one zero the grid spacing controls the linear perturbation; at the
other the leading constraints form a convex quadratic on $b+k\Z$.
Balancing its two adjacent values gives $e_*$. Uniform Taylor bounds
near the zeros and a strict minimum on their compact complement
control every lag, including those increasing with $L$.

Negative contact curvature therefore gives a two-frequency threshold
strictly smaller than the one-frequency threshold. Under the stated
global sign conditions, positive curvature yields a quantitative
$L^{-2}$ lower bound for grids that omit the threshold frequency.

\subsection{Gap boundaries and tongue junctions}\label{sec:ends}
The edges of $I_K$ are the closing dip on $\partial\Omega_1$ and the
adjacent tongue edges at $2\pi/(K+1)$ and $2\pi/K$; for $K=2$ the
latter endpoint is $\alpha L=2(1+\beta)$. Near a tongue tip the
single-harmonic spectrum persists even at a resonant slope index.
Fix $K'\ge3$, put $\omega_*=2\pi/K'$, $c'=\pi/K'$, and use
$s_*(q)=\cot c'-h_q(\omega_*)$.
A tip satisfies $\Theta_q\ge\omega/2$ with equality only at
$\omega_*$.

\begin{theorem}[Tips of the tongues]\label{thm:tips}
Let $K'\ge3$ and let $q_0$ be a tip of the $K'$-tongue. There is a neighborhood $V$ of
$q_0$ such that for every $q\in V$:
\begin{enumerate}[label=\textup{(\alph*)}]
\item $\sigma(q)=\sigma_{K'}(q)=\cot(\pi/K')-h_q(2\pi/K')$; in particular $\sigma$ is real analytic on $V$;
\item the threshold extremal is $\delta_{2\pi/K'}$, up to scaling and an atom at $0$;
\item $\mathcal D\cap V=\{q\in V:\ \Theta_q(2\pi/K')\le\pi/K'\}$, the closed tongue, and
$(I_{K'-1}\cup I_{K'})\cap V\subset\{\sigma>0\}$;
\item $(q,s_*(q))$ is $K'$-regular with support $\{2\pi/K'\}$ and the lag set $\{1,1+K',1-K'\}$.
\end{enumerate}
\end{theorem}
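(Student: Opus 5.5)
The approach is to construct an explicit regular primal--dual pair at the tip $q_0$ and propagate it to a neighborhood with Theorem~\ref{thm:modelock}. Part~(a) will then follow from Theorem~\ref{thm:modelock}(a),(c) together with the single-atom bound \eqref{eq:sstarjk}. Part~(c) will follow from monotonicity in the shift and from Theorem~\ref{thm:exactthr}.

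At the tip we have $\Theta_{q_0}(\omega)\ge\omega/2$, with equality only at $\omega_*=2\pi/K'$. Since $\Theta(\omega_*)=\pi/K'$, we get $s_*(q_0)=0$ and $\delta_{\omega_*}$ solves $\FP(\Theta_{q_0})$. For the dual side, work with the lags $\{1,1+K',1-K'\}$, all in the residue class $1$ modulo $K'$. At the atom their lag functions coincide: $\Lambda_n(\omega_*)=\Lambda_1(\omega_*)=2\sin(\pi/K')\sin(\pi/K'-\Theta)=0$. Their $\omega$-derivatives at $\omega_*$ are $\sin(\pi/K')\,(n-2\Theta'(\omega_*))$ up to a common factor, and these values differ across the three lags by the pinning identity of Lemma~\ref{lem:pinid}.

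The key point is that tangency of $\Theta$ with $\omega/2$ at $\omega_*$ gives $2\Theta'(\omega_*)=1$. Then $\Lambda_1$ has a double zero at $\omega_*$. A convex combination with small weights on $\Lambda_{1\pm K'}$ makes these derivatives cancel, and by the symmetry $1\pm K'$ this happens with essentially equal weights. Its second derivative is dominated by $\Lambda_1''(\omega_*)=-\sin(\pi/K')\Theta''(\omega_*)$ minus a $K'^2$ term from the outer lags. By strict concavity of $h_q$ (Theorem~\ref{thm:convex}), $\Theta''(\omega_*)>0$ at a tangency of the curve lying above the line. So the symbol $-\Lambda_1$ plus small outer corrections has a strict nondegenerate zero at $\omega_*$.

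Away from $\omega_*$, $\Lambda_1=2\sin(\omega/2)\sin(\omega/2-\Theta)<0$ because $\Theta>\omega/2$. The symbol $S=\lambda_1\Lambda_1+\lambda_+\Lambda_{1+K'}+\lambda_-\Lambda_{1-K'}$ is therefore strictly negative on $(0,\pi]\setminus\{\omega_*\}$ once the outer weights are small enough. This requires a uniform estimate: near $\omega_*$ use the quadratic expansion, and on the compact complement use the strict margin. Near $\omega=0$, $\Lambda_1\sim$ const$\cdot\omega^2$ with a negative constant, because $\Theta(0)=\pi$; the endpoint moment condition therefore holds. Choose the weights as $\lambda_\pm=\epsilon(1\mp\tau)$ with $\tau$ fixed by slope cancellation.

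With this pair in hand, verify the items of Definition~\ref{def:regular}. The active constraint matrix is the single row $\Lambda_1$ on $S=\{\omega_*\}$, so its kernel is one-dimensional. The inactive constraints are strict for a single atom at a non-resonant $K'$, since $\Lambda_n(\omega_*)>0$ for $n\not\equiv1$ and $\Lambda_n\ne 0$ for the upper lags. The minimizer inequality $\cos\Theta(\omega_*)=\cos(\pi/K')>0$ holds, and $K'\ge3$ is used here. The pinning factors are positive from the derivative computation, and full rank holds for the single pinning equation. This gives (d). Theorem~\ref{thm:modelock} then produces analytic $s(q)$ on $V$ with fixed support $\{\omega_*\}$, and the determinant condition reduces to $\Lambda^{q,s}_1(\omega_*)=0$, that is $s=s_*(q)$. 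This gives (a) and (b).

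For (c), combine $\mathcal D=\{\sigma\le0\}$ with $\sigma=s_*$; then $s_*\le0$ is exactly $\Theta_q(\omega_*)\le\pi/K'$. Points of $I_{K'-1}\cup I_{K'}$ satisfy the strict inequality $\Theta(2\pi/K')>\pi/K'$ by Definition~\ref{def:levels}, hence $\sigma>0$.

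The main obstacle I expect is regularity at the tip itself. There the dual slope index $\bar n=2\Theta_*'(\omega_*)$ equals $1$, a resonant value inside $b+k\Z$, so Definition~\ref{def:edgesymbol} does not apply. Regularity must instead come from the three-lag symmetric combination, with a uniform negativity estimate that is stable under small perturbations of $q$. I would handle this by proving that the curvature margin $\Theta''(\omega_*)>0$ strictly dominates the $O(\epsilon K'^2)$ outer-lag contribution. After that, I would rely on Theorem~\ref{thm:modelock}(e) for persistence rather than re-verifying negativity at each $q\in V$.
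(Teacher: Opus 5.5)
Your proposal is correct and follows the paper's route. The paper uses the same symmetric perturbation $(1-2\varepsilon)\Lambda_1+\varepsilon\Lambda_{1+K'}+\varepsilon\Lambda_{1-K'}$ at the tip and then invokes Lemma~\ref{lem:certjk}, which packages the (R1)--(R3) check plus Theorem~\ref{thm:modelock} that you carry out by hand, and reads off (c) from $\sigma=s_*$ and the strict endpoint inequalities of Definition~\ref{def:levels}. One small correction: $\Lambda_1''(\omega_*)=-2\sin(\pi/K')\,\Theta''(\omega_*)$, and with exactly equal outer weights $T''(\omega_*)=-2\sin(\pi/K')\,\Theta''(\omega_*)+2\varepsilon K'^2\cos(\pi/K')$, so the outer lags raise the curvature and $\varepsilon$ must be small, as you anticipated.
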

The proof is given in Appendix~\ref{app:edges-tips}.

At the other points of $\partial\Omega_1$, a strict finite certificate
persists. Consequently $\cl\mathcal B_K$ stays away from
$\partial\Omega_1$ (Corollary~\ref{cor:omega1}). In the dip coordinates,
convergence occupies a neighborhood of the diagonal side, while every
curvature ray eventually becomes divergent.

\begin{theorem}[The boundary in every gap]\label{thm:BK}
Let $K\ge2$.
\begin{enumerate}[label=\textup{(\alph*)}]
\item The sets $I_K\cap\{\sigma>0\}$ and $I_K\cap\{\sigma<0\}$ are nonempty and open, and
$\mathcal B_K=I_K\cap\{\sigma=0\}$ separates them: every path in $I_K$ from one to the other meets
$\mathcal B_K$. In particular $\mathcal B_K\ne\emptyset$.
\item In the coordinates $(a,r_1,r_2)$ of Proposition~\ref{prop:chart}, every point $(a,r,r)$ with
$c_K<r<c_{K+1}$ and $a>a_0(r,r)$ has a neighborhood in which $\sigma>0$; near the corners
$r_1=r_2=c_{K+1}$ and, for $K\ge3$, $r_1=r_2=c_K$, $\sigma>0$ on $I_K$ as well (Theorem~\ref{thm:tips}(c));
and every ray $\{(a,r_1,r_2):a>a_0(r_1,r_2)\}$ over a point of the triangle contains points with
$\sigma<0$.
\item For every $r$ with $c_K<r\le c_{K+1}$, and also for $r=c_K$ if $K\ge3$, there is $\delta>0$ such that for all
$c_K<r_1<r_2<c_{K+1}$ with $|r_1-r|,|r_2-r|<\delta$ the ray over $(r_1,r_2)$ meets $\mathcal B_K$. Thus the projection of
$\mathcal B_K$ to the triangle $c_K<r_1<r_2<c_{K+1}$ contains a relative neighborhood of its diagonal side,
including the corners for $K\ge3$ and the corner $r_1=r_2=c_3$ for $K=2$.
\end{enumerate}
\end{theorem}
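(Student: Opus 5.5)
The plan is to work entirely in the chart $(a,r_1,r_2)$ of Proposition~\ref{prop:chart}, where $I_K$ is homeomorphic to the connected set $\{c_K<r_1<r_2<c_{K+1},\ a>a_0(r_1,r_2)\}$, and to use the continuity of $\sigma$ (Theorem~\ref{thm:global}(a)). Part (a) then follows from (b): $\{\sigma>0\}$ and $\{\sigma<0\}$ are open by continuity, and nonempty once (b) produces points of each sign. Any path in $I_K$ from one set to the other meets $\{\sigma=0\}$ by the intermediate value theorem. By Theorem~\ref{thm:exactthr} and Proposition~\ref{prop:threshold}, $\{\sigma=0\}\cap I_K$ is exactly $\partial\mathcal D\cap I_K=\mathcal B_K$.

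For the positive side of (b), consider a diagonal point $(a,r,r)$ with $a>a_0(r,r)$. There $1+c-\Xi_q(c)=a(c-r)^2\ge0$, so the dip degenerates: $\Theta_q\ge\omega/2$ with equality only at the non-resonant frequency $\omega_*=\arccos r\in(2\pi/(K+1),2\pi/K)$. Theorem~\ref{thm:tangency} then supplies a strict certificate with lags $1$ and $-K$. The certificate inequality \eqref{eq:twolag} is an open condition in the coefficients of $\Xi$, and hence in $(a,r_1,r_2)$. The same multipliers therefore remain strict on a neighborhood, which gives $\sigma>0$ there by Theorem~\ref{thm:exactthr}. The corners are handled by Theorem~\ref{thm:tips}(c): a tip of the $(K+1)$-tongue (or, for $K\ge3$, of the $K$-tongue) has a neighborhood on which $I_K\subset\{\sigma>0\}$.

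For the negative side, fix $(r_1,r_2)$ and let $a\to\infty$. The dip $\{\Theta<\omega/2\}$ is governed by the set where $h_q<\cot(\omega/2)$, i.e.\ where $a(c-r_1)(c-r_2)<0$ after rearranging. For large $a$ the angle inside the dip tends to $0$ uniformly on compact subintervals of $(\arccos r_2,\arccos r_1)$. Choose a reduced fraction $j/k$ with $2\pi j/k$ strictly inside this interval; one exists by density of rationals. Then $\Theta_q(2\pi j/k)<\pi/k$ once $a$ is large, so $\delta_{2\pi j/k}$ is feasible with slack. Lowering the curve strictly shows $\sigma_k<0$, hence $\sigma<0$ by \eqref{eq:sstarjk}. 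To make this rigorous one must check that the tuning $q(a,r_1,r_2)$ stays in $I_K$ as $a\to\infty$, meaning the tongue conditions at $2\pi/K$ and $2\pi/(K+1)$ remain strict. This is part of the chart statement, since $r_1,r_2$ lie strictly inside $(c_K,c_{K+1})$.

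Part (c) combines the two sides. Fix a diagonal point $r$ (or an allowed corner). By (b) there is a small ball in which $\sigma>0$ at some height $a_1$. Compactness of the closed $\delta$-neighborhood, together with the uniformity of the large-$a$ estimate, gives a common height $a_2$ with $\sigma<0$ at $(a_2,r_1,r_2)$ for all nearby $r_1<r_2$. The ray over each such $(r_1,r_2)$ then meets $\mathcal B_K$ by the intermediate value theorem.

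I expect this uniformity to be the main obstacle. As $r_1,r_2\to r$, the dip shrinks and the rational $j/k$ must lie in an interval of width $\sim|r_2-r_1|$. This forces $k$ large and requires $a(r_2-r_1)^2$ large, so no single height works. My first attempt is to avoid a fixed $a_2$ and argue per ray, using continuity and an upper bound on $a$ such as $a>\max\{a_E,a_F\}$ from the plateau theory. The alternative is, for each fixed ray, to use the ray's own divergence point from (b) and the positivity at its lower end near the diagonal. Positivity near the lower end requires showing that the neighborhood of the diagonal from (b) contains $(a,r_1,r_2)$ for $a$ close to $a_0$ uniformly. This follows from continuity of the certificate at the degenerate point, with the $a$-range handled via Corollary~\ref{cor:omega1}, which keeps $\cl\mathcal B_K$ away from $\partial\Omega_1$.
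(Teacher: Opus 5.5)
Your arguments for (a) and (b) are correct and follow the paper. The diagonal face consists of non-locked points of $\partial\Omega_1$. There the strict $\{1,-K\}$ certificate of Theorem~\ref{thm:tangency} persists under perturbation, which is what Corollary~\ref{cor:omega1} records. The corners are Theorem~\ref{thm:tips}(c). On a fixed ray, $\Xi(\cos\omega_0)\to+\infty$ at an interior rational frequency, so $s_*<0$ and hence $\sigma<0$. Continuity of $\sigma$ then gives (a). One minor slip: the dip is where $h_q>\cot(\omega/2)$, not $<$, although the condition $a(c-r_1)(c-r_2)<0$ you derive is right.

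For (c), the obstacle you anticipate is not there, and your ``alternative'' already is the paper's proof once the extra requirement is dropped. You are right that no common divergent height $a_2$ exists: by (b), every fixed height has $\sigma>0$ near the diagonal. But no such height is needed.
\begin{itemize}
\item Fix one height $a_1>a_0(r,r)$; it need not be close to $a_0$.
\item The open neighborhood of $(a_1,r,r)$ from (b), or of the tip at a corner, contains $(a_1,r_1,r_2)$ for all ordered $(r_1,r_2)$ within some $\delta$ of $(r,r)$. Continuity of $a_0$ makes $a_1$ admissible on all these rays.
\item On each such ray, (b) supplies a height with $\sigma<0$; this height may depend on the ray.
\item The intermediate value theorem on the segment between the two heights gives a point of $\mathcal B_K$.
\end{itemize}

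The plateau thresholds $a_E,a_F$ play no role: they live in the edge chart $(a,D)$ of a single Farey tongue, not in the gap chart. Uniform positivity ``for $a$ close to $a_0$'' is also not needed, and you should drop it rather than try to prove it. Corollary~\ref{cor:omega1} attaches neighborhoods to individual finite tunings. Nothing in the paper controls $\sigma$ uniformly down to the face $a=a_0$, where $\kappa\to\infty$ by Proposition~\ref{prop:chart}(e).
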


\begin{proof}
Fix $c_K<r<c_{K+1}$ and $a>a_0(r,r)$. By
Proposition~\ref{prop:chart}, the point $(a,r,r)$ is a nonlocked
point of $\partial\Omega_1$.
Corollary~\ref{cor:omega1} gives a neighborhood with $\sigma>0$.
Theorem~\ref{thm:tips}(c) gives the same conclusion at the stated
corners, with the stability endpoint excluded when $K=2$.
This proves the convergent-neighborhood assertions.

On a ray with $r_1<r_2$, choose a reduced rational frequency
$\omega_0=2\pi j/k$ with $r_1<\cos\omega_0<r_2$. Then
\[
 \Xi^{a,r_1,r_2}(\cos\omega_0)
 =1+\cos\omega_0
   +a(\cos\omega_0-r_1)(r_2-\cos\omega_0)\longrightarrow+\infty.
\]
For all sufficiently large $a$, the single-frequency condition
$\Theta_{q,s}(\omega_0)\le\pi/k$ therefore holds at some $s<0$.
Theorem~\ref{thm:farey} gives $\sigma(q)\le s<0$.
Both signs consequently occur in $I_K$, and both sign sets are
open by Theorem~\ref{thm:global}. Continuity forces a zero on
every path joining them, proving (a) and the remaining part of (b).

Finally fix a point of the diagonal side or one of the allowed
corners, and choose a finite $a$ strictly above its lower boundary.
For all nearby ordered roots $(r_1,r_2)$, the same $a$ is admissible
and belongs to the convergent neighborhood. Increasing $a$ along
each such ray eventually gives the negative sign just proved.
The intermediate value theorem therefore gives a point of
$\mathcal B_K$ on every such ray, which is (c).
\end{proof}

For $K\ge3$, divergence also enters the gap from a nonempty arc where
the two adjacent tongues meet. Proposition~\ref{prop:corner} locates
this arc explicitly by the negative curvature of the edge symbol.
Together, these results describe the boundary in each gap: the
closing-dip side has a convergent neighborhood, some tongue-junction
arcs have divergent neighborhoods, and the separating surface contains
an analytic single-frequency plateau for every Farey fraction in the
gap window.

\FloatBarrier
\section{Conclusion}\label{sec:conclusion}

A common spectral interpolation problem gives both universal rate
bounds and realizable obstructions. For Polyak's tuning it yields the
exact rate and matching finite-horizon bounds, attained by one
five-dimensional function initialized from rest. For general tunings it
gives a continuous condition-number threshold, while Farey geometry and
analytic continuation describe its regular plateaus. Single-harmonic
edge certificates and competing fan spectra bound their extent; the
mixed-harmonic examples exhibit the additional supports that a complete
boundary classification must accommodate.

Several questions remain. Are the regular points dense in each threshold
surface $\mathcal B_K$? Which mixed supports occur, and how do the
plateaus accumulate along their relative boundaries? A more explicit
description of the variational function $a_F$ in
Theorem~\ref{thm:plateaujk} would sharpen the two available edge bounds.
The exact point of $3/13$ also raises a classification problem: which
Farey fractions admit a threshold extremal with a two-lag certificate?
Proposition~\ref{prop:twolagtangency} reduces this question to an
explicit tangency equation and the remaining global sign conditions.

The boundary may also contain regimes outside periodic mode locking.
Does some point of $\mathcal B_K$ have no periodic extremal? By
Proposition~\ref{prop:heavy}, a connected boundary set contains uncountably
many such points if, after removing the zero atom and normalizing, every
extremal has an atom of mass at least one fixed $m_0>1/2$ and the common
location of these atoms is nonconstant.
Further questions concern the gap geometry after imposing additional lag
constraints, the smallest dimension of
mixed-harmonic obstructions, and the number of frequencies needed to
attain the rate inside the lag-one convergence region. In particular,
does the tuning of Proposition~\ref{prop:13} admit a nonconvergent scalar
trajectory, and is the rate in $\Omega_1\setminus\cQ$ always attained by
at most two frequencies?

\FloatBarrier
\appendix
\section{Interpolation and spectral characterization}\label{app:spectral}
\subsection{Normalization and smooth interpolation}
Throughout,
$\alpha>0$, $\beta\in[0,1)$ and \eqref{eq:quad} hold, $A$ and $B$ are as in
\eqref{eq:normrec}, and all Hilbert spaces are real. Since $B>0$ and
$A\pm B=1+\beta-\alpha\mu,\ 1+\beta-\alpha L$, condition \eqref{eq:quad} gives
\begin{equation}\label{eq:Abound}
 |A|+B=\max\bigl(|1+\beta-\alpha\mu|,|1+\beta-\alpha L|\bigr)<1+\beta .
\end{equation}

\begin{lemma}[Normalization]\label{lem:normalization}
The map $f\mapsto\psi$ is a bijection between functions in $\cF$ with
minimizer $0$ and value $0$ there, and $\cS$. With $e_k=x_k-x^\star$ and
$\sigma_k=\nabla\psi(e_k)$, the iteration \eqref{eq:hb} becomes
\eqref{eq:normrec}.
For Polyak's tuning $A=0$, $B=2q$ and $\beta=q^2$.
\end{lemma}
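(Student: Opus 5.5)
The plan is to verify the three assertions of the lemma directly from the definition \eqref{eq:psi}.

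\emph{Bijection.} Let $f\in\cF$ have minimizer $0$ with $f(0)=0$, and set $h(e)=\frac{L+\mu}{4}\norm{e}^2-f(e)$. Its gradient is $\nabla h=\frac{L+\mu}{2}I-\nabla f$. The standard characterization I will use is that $f\in\cF$ if and only if $f-\frac{\mu}{2}\norm{\cdot}^2$ is convex with $(L-\mu)$-Lipschitz gradient. Equivalently, $\nabla f-\frac{L+\mu}{2}I$ is $\frac{L-\mu}{2}$-Lipschitz; this uses cocoercivity and holds on any real Hilbert space. Indeed,
\[
 \norm{\nabla f(x)-\nabla f(y)-\tfrac{L+\mu}2(x-y)}^2\le\bigl(\tfrac{L-\mu}2\bigr)^2\norm{x-y}^2
\]
expands to $\ip{\nabla f(x)-\nabla f(y)-\mu(x-y)}{\nabla f(x)-\nabla f(y)-L(x-y)}\le0$. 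This is exactly the $(L-\mu)$-cocoercivity of $\nabla(f-\frac\mu2\norm{\cdot}^2)$, which for a differentiable function is equivalent to convexity together with $(L-\mu)$-smoothness (Baillon--Haddad in one direction, and monotonicity plus the Lipschitz bound in the other). Hence $\nabla\psi=\frac{2}{L-\mu}\nabla h$ is $1$-Lipschitz. In addition, $\psi(0)=0$ and $\nabla\psi(0)=0$ because $f(0)=0$ and $\nabla f(0)=0$.

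For the converse, the map $\psi\mapsto f(e)=\frac{L+\mu}4\norm e^2-\frac{L-\mu}2\psi(e)$ inverts the construction. The same equivalence, read backwards, gives $f\in\cF$. Moreover $\nabla f(0)=0$, so by strong convexity $0$ is the unique minimizer, and $f(0)=0$. I expect the only delicate point in the whole proof to be this "if" direction, namely that a $1$-Lipschitz $\nabla\psi$ yields convexity of $f-\frac\mu2\norm{\cdot}^2$. It follows from the expanded inner-product inequality, which gives monotonicity of $\nabla f-\mu I$; a differentiable function whose gradient is monotone is convex.

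\emph{Recurrence.} For general $f$, apply the above to $e\mapsto f(x^\star+e)-f(x^\star)$. From $\nabla f(x^\star+e)=\frac{L+\mu}2e-\frac{L-\mu}2\nabla\psi(e)$, we get $\nabla f(x_k)=\frac{L+\mu}2e_k-\frac{L-\mu}2\sigma_k$. Substituting this into \eqref{eq:hb} and subtracting $x^\star$ gives
\[
 e_{k+1}=\bigl(1+\beta-\tfrac{\alpha(L+\mu)}2\bigr)e_k+\tfrac{\alpha(L-\mu)}2\sigma_k-\beta e_{k-1},
\]
which is \eqref{eq:normrec} with the stated $A$ and $B$.

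\emph{Polyak's tuning.} With $\alpha=4/(\sqrt L+\sqrt\mu)^2$, the quantity $\alpha(L+\mu)/2=2(L+\mu)/(\sqrt L+\sqrt\mu)^2$ must be compared with $1+q^2=\bigl((\sqrt L+\sqrt\mu)^2+(\sqrt L-\sqrt\mu)^2\bigr)/(\sqrt L+\sqrt\mu)^2=2(L+\mu)/(\sqrt L+\sqrt\mu)^2$. The two agree, so $A=0$. Finally,
\[
 B=\frac{2(L-\mu)}{(\sqrt L+\sqrt\mu)^2}=2\,\frac{\sqrt L-\sqrt\mu}{\sqrt L+\sqrt\mu}=2q,
\]
and $\beta=q^2$ by definition.
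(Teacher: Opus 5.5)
Your proof is correct and follows essentially the same route as the paper: cocoercivity of $\nabla(f-\frac\mu2\norm{\cdot}^2)$ with constant $1/(L-\mu)$ (which is what your ``$(L-\mu)$-cocoercivity'' should say) is exactly nonexpansiveness of $\nabla\psi=\mathrm{id}-\frac{2}{L-\mu}\nabla(f-\frac\mu2\norm{\cdot}^2)$, and the recurrence and Polyak's values then follow by the same direct substitution. The only cosmetic difference is the converse. The paper gets it from a one-line Cauchy--Schwarz estimate showing that $\nabla f=\frac{L+\mu}2\mathrm{id}-\frac{L-\mu}2\nabla\psi$ is $\mu$-strongly monotone and $L$-Lipschitz, whereas you run your chain of equivalences backwards.
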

\begin{proof}[Proof of Lemma~\ref{lem:normalization}]
Put $M=L-\mu$ and $f_0=f-\mu\norm\cdot^2/2$. The convex $M$-smooth
function $f_0$ has a $1/M$-cocoercive gradient~\cite{THG}; hence
$\nabla\psi=\mathrm{id}-2\nabla f_0/M$ is nonexpansive. Conversely,
$\nabla f=(L+\mu)\mathrm{id}/2-M\nabla\psi/2$ is $\mu$-strongly monotone
and $L$-Lipschitz by Cauchy--Schwarz, so it defines a member of $\cF$.
The two transformations are inverse. Substitution gives
\eqref{eq:normrec}; \eqref{eq:polyak} gives $A=0$, $B=2q$ and $\beta=q^2$.
\end{proof}

\begin{theorem}[Interpolation on arbitrary sets]\label{thm:interpolation}
Let $\{(x_i,\sigma_i,\psi_i):i\in I\}$ be data in a real Hilbert space
containing the datum $(x_\star,\sigma_\star,\psi_\star)=(0,0,0)$ of an index
$\star\in I$. A function $\psi\in\cS$ with $\nabla\psi(x_i)=\sigma_i$
and $\psi(x_i)=\psi_i$ exists if and only if, for all $i,j\in I$,
\begin{equation}\label{eq:interpolation}
 |T_{ij}|\le R_{ij},\quad
 T_{ij}=\psi_j-\psi_i-\tfrac12\ip{\sigma_i+\sigma_j}{x_j-x_i},\quad
 R_{ij}=\tfrac14\bigl(\norm{x_j-x_i}^2-\norm{\sigma_j-\sigma_i}^2\bigr).
\end{equation}
\end{theorem}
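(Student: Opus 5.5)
The plan is to transport the known interpolation theorem for smooth strongly convex functions to the normalized class $\cS$ through the bijection of Lemma~\ref{lem:normalization}, and to check that the resulting inequalities are exactly \eqref{eq:interpolation}. The key input is the theorem of Taylor, Hendrickx and Glineur~\cite{THG} for finite data, and its extension to arbitrary index sets in a Hilbert space by Azagra and Mudarra~\cite{AM}. Together they say that data $(x_i,g_i,f_i)$ are interpolable by a $\mu$-strongly convex function with $L$-Lipschitz gradient if and only if, for all ordered pairs $(i,j)$,
\[
 f_i\ge f_j+\ip{g_j}{x_i-x_j}+\tfrac1{2(L-\mu)}\norm{g_i-g_j}^2+\tfrac{\mu L}{2(L-\mu)}\norm{x_i-x_j}^2-\tfrac{\mu}{L-\mu}\ip{g_i-g_j}{x_i-x_j}.
\]
Equivalently, after subtracting $\mu\norm\cdot^2/2$, the shifted data are interpolable by a convex $M$-smooth function, $M=L-\mu$, if and only if $f_i\ge f_j+\ip{g_j}{x_i-x_j}+\frac1{2M}\norm{g_i-g_j}^2$.

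\emph{Step 1: rewrite the data.} Given data $(x_i,\sigma_i,\psi_i)$ for $\psi\in\cS$, apply the affine change of variables from \eqref{eq:psi} and Lemma~\ref{lem:normalization}. Set
\[
 g_i=\tfrac{L+\mu}2x_i-\tfrac M2\sigma_i,\qquad f_i=\tfrac{L+\mu}4\norm{x_i}^2-\tfrac M2\psi_i .
\]
Since the map $f\leftrightarrow\psi$ is an invertible affine correspondence on functions, and it acts pointwise on values and gradients, $\psi\in\cS$ interpolates the data exactly when the corresponding $f\in\cF$ interpolates $(x_i,g_i,f_i)$. The datum at $\star$ also forces $\nabla f(0)=0$, so $0$ is the minimizer with value $0$.

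\emph{Step 2: algebraic reduction.} Substitute these expressions into the pair inequality for $(i,j)$ and into the one for $(j,i)$. The next step is to verify that the quadratic terms in $x$ collapse, because the class is symmetric about the midpoint $(L+\mu)/2$. The expected outcome is that the $(i,j)$ inequality becomes $T_{ij}\le R_{ij}$ and the swapped one becomes $-T_{ij}\le R_{ij}$, up to the positive factor $M/2$. A symmetry check supports this: replacing $\psi$ by $-\psi$ preserves $\cS$ and flips the sign of $T_{ij}$, so the two one-sided conditions must correspond to $\pm T_{ij}$ with the same $R_{ij}$. A cleaner route is to write $\psi=\tfrac12\norm\cdot^2-\tfrac2M f_0$, apply the convex $M$-smooth interpolation inequality to $f_0$, and add the two ordered-pair versions with the appropriate midpoint averaging.

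\emph{Step 3: the arbitrary index set.} Finite data are covered by~\cite{THG}. For an infinite index set, I would invoke the $C^{1,1}$ convex extension theorem of Azagra and Mudarra~\cite{AM}, which needs exactly the pairwise convex $M$-smooth inequality on an arbitrary subset of a Hilbert space. They construct an explicit extension, $\operatorname{conv}$ of a minimum of quadratics, so no compactness argument is needed. Necessity is immediate, since every pair of points of an interpolating function satisfies the inequality.

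The main obstacle is the bookkeeping in Step 2, namely confirming that the cross terms combine into the symmetric form $\tfrac12\ip{\sigma_i+\sigma_j}{x_j-x_i}$. If direct expansion becomes unwieldy, I would check the identity for the extreme cases $\psi=\pm\tfrac12\norm\cdot^2$, where $T_{ij}=\pm R_{ij}$ up to sign conventions, and argue by affine invariance in $\psi$.
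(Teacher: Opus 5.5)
Your plan is essentially the paper's proof. The paper maps $\psi$ directly to the convex $2$-smooth function $F=\psi+\tfrac12\norm\cdot^2$ with $G_i=\sigma_i+x_i$ and applies Azagra--Mudarra. Your $\tfrac12\norm\cdot^2-\psi$ differs from this only by $\psi\mapsto-\psi$, so the detour through $\cF$ and the strongly convex interpolation theorem is unnecessary. A direct expansion shows that the single ordered-pair inequality $F_i\ge F_j+\ip{G_j}{x_i-x_j}+\tfrac14\norm{G_i-G_j}^2$ is exactly $T_{ij}\le R_{ij}$. The symmetric term $\tfrac12\ip{\sigma_i+\sigma_j}{x_j-x_i}$ comes from the cross term of $\tfrac14\norm{G_i-G_j}^2$, and the pair $(j,i)$ gives $-T_{ij}\le R_{ij}$.

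Three small corrections follow. Do not add the two ordered inequalities, since that only yields $R_{ij}\ge0$. Drop the special-case fallback, because checking two functions cannot prove an identity in the data. Also add the observation, used in the paper, that $|T_{ij}|\le R_{ij}$ forces equal gradients and values at repeated sites. You need this because Azagra--Mudarra is stated for functions on a set, not for indexed data.
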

\begin{proof}[Proof of Theorem~\ref{thm:interpolation}]
Set $F_i=\psi_i+\norm{x_i}^2/2$ and $G_i=\sigma_i+x_i$.
Expanding \eqref{eq:interpolation} gives precisely
\begin{equation}\label{eq:convexinterp}
 F_i\ge F_j+\ip{G_j}{x_i-x_j}+\tfrac14\norm{G_i-G_j}^2\quad(i,j\in I).
\end{equation}
This is the convex $C^{1,1}$ extension criterion of Azagra and
Mudarra~\cite[Theorem~1.3]{AM}, with gradient Lipschitz constant two.
It applies to arbitrary subsets of a real Hilbert space. Repeated sites
give identical values and gradients by the same inequality. Thus there is
a convex $2$-smooth interpolant $F$; cocoercivity makes
$\nabla F-\mathrm{id}$ nonexpansive, and $\psi=F-\norm\cdot^2/2$ is the
required interpolant. Necessity follows by applying the same convex
smoothness inequality to $F=\psi+\norm\cdot^2/2$.
An explicit conjugate construction is
\begin{equation}\label{eq:extension}
 H(y)=\sup_{i\in I}\{\ip{x_i}{y}-F_i+\tfrac14\norm{y-G_i}^2\},
 \qquad \psi=H^*-\tfrac12\norm\cdot^2.
\end{equation}
Indeed \eqref{eq:convexinterp} makes the $j$th quadratic attain the
supremum at $G_j$, with subgradient $x_j$; strong convexity of $H$ gives
the claimed smooth conjugate.
\end{proof}

\begin{lemma}[Exact spectral interpolation]\label{lem:spectralinterp}
For the orthogonal sum of waves with spectrum $\nu$ at the exponent $t$ and
function values $\psi_k=\frac14\bigl(C+2\int\Rea m\,d\nu\bigr)e^{2kt}$,
\begin{equation}\label{eq:waveslacks}
 R_{j,j+m}\pm T_{j,j+m}=\tfrac12e^{(2j+m)t}\bigl(\bar a_m\mp\bar b_m\pm C\sh mt\bigr),\quad
 R_{j\star}\pm T_{\star j}=\tfrac14e^{2jt}(\bar g\pm C).
\end{equation}
Consequently the data are interpolated by a function in $\cS$ if and only if
$(\nu,C)$ is feasible at $t$.
\end{lemma}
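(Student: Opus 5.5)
We prove the identities \eqref{eq:waveslacks} by direct computation on the wave data and then invoke Theorem~\ref{thm:interpolation}. Realize the spectrum concretely. For each frequency $\omega$ in the support of $\nu$ (first atoms, then general $\nu$ via $L^2(\nu;\C)$ viewed as a real Hilbert space), put $x_k=e^{kt}e^{ik\omega}$ and $\sigma_k=m(t,\omega)x_k$. The real inner product is $\ip{u}{w}=\int\Rea(u\ov w)\,d\nu$. The minimizer datum is $(0,0,0)$.

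First compute the pair $(j,j+m)$. The norm $\norm{x_{j+m}-x_j}^2$ equals $e^{(2j+m)t}\int(e^{mt}+e^{-mt}-2\cos m\omega)\,d\nu=2e^{(2j+m)t}\,\overline{(\ch mt-\cos m\omega)}$. Likewise $\norm{\sigma_{j+m}-\sigma_j}^2$ carries the factor $|m|^2$. Hence $R_{j,j+m}=\tfrac12e^{(2j+m)t}\bar a_m$.

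For $T$, the difference $\psi_{j+m}-\psi_j$ contributes $\tfrac14(C+2\overline{\Rea m})e^{2jt}(e^{2mt}-1)$. Next expand $\tfrac12\ip{\sigma_j+\sigma_{j+m}}{x_{j+m}-x_j}$ as
\[
 \tfrac12\Rea\bigl[m\,(z^j+z^{j+m})\,\ov{(z^{j+m}-z^j)}\bigr].
\]
This gives $\tfrac12 e^{2jt}\bigl[\Rea m\,(e^{2mt}-1)+\Rea\bigl(m(e^{mt}e^{-im\omega}-e^{mt}e^{im\omega})\bigr)\bigr]$. The last term is $2e^{mt}\Ima m\,\sin m\omega$ up to sign. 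The $\Rea m$ parts cancel against the value term, leaving
\[
 T=\tfrac14 e^{2jt}C(e^{2mt}-1)\mp\tfrac12 e^{(2j+m)t}\bar b_m=\tfrac12e^{(2j+m)t}\bigl(C\sh mt-\bar b_m\bigr)
\]
with the correct sign. Adding and subtracting this from $R$ gives the first identity. The sign bookkeeping of $\Ima m\sin m\omega$ is the only delicate point, and we check it against the convention $b_m=2s\sin m\omega$.

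For the minimizer pair, $R_{j\star}=\tfrac14 e^{2jt}(1-\overline{|m|^2})=\tfrac14e^{2jt}\bar g$. We also have $T_{\star j}=\psi_j-\tfrac12\ip{\sigma_j}{x_j}=\tfrac14e^{2jt}(C+2\overline{\Rea m})-\tfrac12e^{2jt}\overline{\Rea m}=\tfrac14e^{2jt}C$. This gives the second identity.

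Finally, the conditions \eqref{eq:interpolation} must hold for all pairs. Pairs $(i,j)$ with $i>j$ reduce to the same quantities by antisymmetry of $T$ and symmetry of $R$. Positive prefactors $e^{(\cdot)t}$ do not affect signs, and pairs at equal times are trivial. So the conditions hold exactly when $|C|\le\bar g$ and $\ell_m,\upsilon_m\ge0$ for all $m\ge1$, which is feasibility at $t$. Theorem~\ref{thm:interpolation} then gives the equivalence, with $\psi\in\cS$ interpolating these data.
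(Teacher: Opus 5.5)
Your proof is correct and follows the paper's route. You compute $R$ and $T$ for a single wave (the paper does this at $j=0$ and then shifts), average over $\nu$ in the real Hilbert space $L^2(\nu;\C)$, treat the minimizer pair, and invoke Theorem~\ref{thm:interpolation}, using symmetry of $R$ and antisymmetry of $T$ for the remaining pairs.

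The sign you leave open should be derived, not checked against the convention for $b_m$. For a lag $n$, since $\ov{z}^{\,n}-z^n=-2i\,\Ima z^n$, one has exactly $\Rea\bigl[m(\ov{z}^{\,n}-z^n)\bigr]=2e^{nt}\,\Ima m\,\sin n\omega$. This yields $T_{j,j+n}=\tfrac12e^{(2j+n)t}\bigl(C\sh nt-\bar b_n\bigr)$, as you stated.
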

\begin{proof}[Proof of Lemma~\ref{lem:spectralinterp}]
For one wave and $j=0$, the defining expressions reduce to
$T_{0n}=\tfrac C4(e^{2nt}-1)-s\,\Ima z^n$ and
$R_{0n}=\tfrac g4|z^n-1|^2$. Insert
$e^{2nt}-1=2e^{nt}\sh nt$ and
$|z^n-1|^2=2e^{nt}(\ch nt-\cos n\omega)$.
Shifting by $j$ multiplies both expressions by $e^{2jt}$, and a minimizer
pair gives $T_{\star j}=Ce^{2jt}/4$, $R_{j\star}=ge^{2jt}/4$.
Orthogonal sums replace these expressions by their averages.
For a general measure use the real Hilbert space
$L^2([0,\pi],\nu;\C)$ and $e_k(\omega)=e^{k(t+i\omega)}$.
Theorem~\ref{thm:interpolation} now gives both directions.
\end{proof}

\subsection{Separation and trajectory growth}
The next lemma collects the topological facts used below.

\begin{lemma}[Compactness]\label{lem:compact}
For fixed $t$, the set of pairs $(\nu,C)$ feasible at $t$ is compact in the
product of the weak topology on probability measures on $[0,\pi]$ and the
real line. For Polyak's tuning the set of $(v,t,\nu,C)$ feasible in the sense
of \eqref{eq:VT} with $(v,t)$ in a compact set is compact, and feasibility
implies $|\sh v|\le1$.
\end{lemma}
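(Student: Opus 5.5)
The plan is to prove the three assertions of Lemma~\ref{lem:compact} in order: first a priori bounds, then closedness of the feasible set, and finally the bound on $\sh v$.

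\emph{Bounds.} Fix $t$. The pairs $(\nu,C)$ lie in $\mathcal M_1([0,\pi])\times\R$. The space of probability measures on the compact interval $[0,\pi]$ is weakly compact by Prokhorov's theorem (equivalently, Banach--Alaoglu on $C([0,\pi])^*$), and it is metrizable. The minimizer inequality gives $|C|\le\bar g$. Since $g=1-|m|^2\le1$, we get $\bar g\le1$, so $C\in[-1,1]$. Hence the feasible set lies in the compact product $\mathcal M_1([0,\pi])\times[-1,1]$, and it remains to show that it is closed.

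\emph{Closedness.} Each constraint has the form $\int\phi\,d\nu+\varsigma C\ge0$ (or $\bar g\mp C\ge0$), where $\phi\in\{g,\ a_m\mp b_m\}$ and $\varsigma=\pm\sh mt$. For fixed $t$, the functions $g$, $a_m$ and $b_m$ are continuous in $\omega$ on $[0,\pi]$, since $m(t,\omega)$ is a rational expression in $z=e^{t+i\omega}$ with $z\ne0$. Therefore $(\nu,C)\mapsto\int\phi\,d\nu+\varsigma C$ is continuous for the product topology. The feasible set is an intersection of countably many closed sets, hence closed, and therefore compact.

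\emph{Polyak's tuning.} Here the variables $(v,t)$ are independent, and the wave quantities $g=1-c^2-u^2$ with $u=\sh v$, together with $a_m$ and $b_m$ from \eqref{eq:abg}, are polynomials in $(c,u,G)$ with $G=\ch t$. They are therefore jointly continuous on $K\times[-1,1]$ for any compact $(v,t)$-set $K$, and in particular bounded there. Joint continuity is what gives closedness: if $(v_n,t_n,\nu_n,C_n)\to(v,t,\nu,C)$, then $\int\phi_{v_n,t_n}\,d\nu_n\to\int\phi_{v,t}\,d\nu$. To see this, use uniform convergence $\phi_{v_n,t_n}\to\phi_{v,t}$ on $[-1,1]$ together with weak convergence of $\nu_n$, bounding the difference by $\norm{\phi_n-\phi}_\infty+\bigl|\int\phi\,d\nu_n-\int\phi\,d\nu\bigr|$. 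The factors $\sh mt$ are continuous in $t$. The bound $|C|\le\bar g\le1$ again confines $C$ to a compact interval. The feasible set is thus a closed subset of $K\times\mathcal M_1([-1,1])\times[-1,1]$, which is compact.

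\emph{The bound $|\sh v|\le1$.} Feasibility forces $0\le|C|\le\bar g=\int(1-c^2-u^2)\,d\nu\le1-u^2$, so $u^2\le1$.

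The only delicate point is the joint continuity argument in the Polyak case, where both the integrand and the measure vary. It is handled by the uniform-convergence estimate above. Everything else is routine.
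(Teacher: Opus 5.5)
Your proposal is correct and follows the same route as the paper. Both arguments use weak compactness of probability measures, the bound $|C|\le\bar g\le1$ from the minimizer inequality, closedness of each lag constraint by (joint) continuity of its integrand, and $0\le\bar g\le1-\sh^2v$ for the twist bound; your uniform-convergence estimate for varying $(v,t)$ just writes out the joint-continuity step that the paper states in one line.
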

\begin{proof}
Probability measures on $[0,\pi]$ are weakly compact.
The minimizer constraint gives $|C|\le\bar g\le1$, and every fixed-lag
constraint is closed because its integrand is continuous, jointly with the
parameters on compact sets. In the Polyak case,
$0\le\bar g=1-\int c^2d\nu-\sh^2v\le1-\sh^2v$.
\end{proof}

For the upper bound we allow arbitrary initial states. A segment consists of positions
$e_0,\dots,e_n$, gradients and values at $e_1,\dots,e_{n-1}$, and the
minimizer datum; it satisfies all interpolation inequalities among these data
and the recurrence \eqref{eq:normrec} for $1\le k<n$. Its state is
$S_k=(e_{k-1},e_k)$. Let
\begin{equation}\label{eq:Gamma}
 \Gamma(p)=\sup\frac{\norm{S_{k+p}}}{\norm{S_k}},\quad\Gamma(0)=1,
\end{equation}
over all segments, admissible $k$ and nonzero $S_k$.

\begin{lemma}[Growth envelope]\label{lem:envelope}
$\Gamma$ is finite and submultiplicative, the limit
$\rho_\Gamma=\lim_p\Gamma(p)^{1/p}=\inf_{p\ge1}\Gamma(p)^{1/p}$ exists,
$\rho_\Gamma\ge\rhoq>0$, and $W_N\le\sqrt2\,\Gamma(N)$ for $N\ge0$. If
$(\nu,C)$ is feasible at $t$, then $\Gamma(p)\ge e^{pt}$ for all $p$.
\end{lemma}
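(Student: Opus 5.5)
Finiteness of $\Gamma$ comes from a one-step bound, and submultiplicativity from splitting a segment. For a segment and admissible $k$, the recurrence \eqref{eq:normrec} and $\norm{\sigma_k}\le\norm{e_k}$ (nonexpansiveness with $\sigma_\star=0$, $x_\star=0$, from the interpolation inequalities with the minimizer datum) give
\[
 \norm{e_{k+1}}\le(|A|+B)\norm{e_k}+\beta\norm{e_{k-1}}.
\]
Hence $\norm{S_{k+1}}\le c_1\norm{S_k}$ with $c_1=\sqrt{1+(|A|+B+\beta)^2}$, and by induction $\Gamma(p)\le c_1^p<\infty$. For $\Gamma(p+p')\le\Gamma(p)\Gamma(p')$, I would use that a segment on indices $0,\dots,n$ contains the states $S_k,S_{k+p},S_{k+p+p'}$. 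Then
\[
 \frac{\norm{S_{k+p+p'}}}{\norm{S_k}}=\frac{\norm{S_{k+p+p'}}}{\norm{S_{k+p}}}\cdot\frac{\norm{S_{k+p}}}{\norm{S_k}}\le\Gamma(p')\,\Gamma(p).
\]
The same segment is admissible for both ratios, since the supremum in \eqref{eq:Gamma} is over all segments and admissible $k$. The degenerate case $S_{k+p}=0$ is handled by the one-step bound: it forces all later states to vanish. Fekete's lemma applied to $\log\Gamma$ then gives existence of the limit and $\rho_\Gamma=\inf_p\Gamma(p)^{1/p}$.

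For $W_N\le\sqrt2\,\Gamma(N)$, take any $f\in\cF(\R^d)$ and start $x_{-1}=x_0$. By Lemma~\ref{lem:normalization} the normalized trajectory $e_{-1}=e_0,e_1,\dots,e_N$ with its gradients and the minimizer datum satisfies all interpolation inequalities, since they come from an actual $\psi\in\cS$ (Theorem~\ref{thm:interpolation}, necessity). Reindexing by one makes it a segment with state $S_0=(e_0,e_0)$. This gives
\[
 \norm{e_N}\le\norm{S_N}\le\Gamma(N)\norm{S_0}=\sqrt2\,\Gamma(N)\norm{e_0}.
\]
A little care is needed to make sure the segment convention (gradients at interior indices only) covers the step from $e_0$; I would reindex so that $e_0$ sits at index $1$ and extend the segment by one index to cover this.

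For the lower bounds, a feasible $(\nu,C)$ at $t$ yields, by Lemma~\ref{lem:spectralinterp}, interpolable wave data $e_k(\omega)=e^{k(t+i\omega)}$ in $L^2(\nu)$ for all $k\in\Z$. These data satisfy \eqref{eq:normrec}, because $\sigma_k=m e_k$ with $m$ from \eqref{eq:m}. Their segments have $\norm{S_{k+p}}=e^{pt}\norm{S_k}$, so $\Gamma(p)\ge e^{pt}$. To get $\rho_\Gamma\ge\rhoq$, I would use the same device with the quadratic subclass. For $\lambda\in[\mu,L]$, the scalar gradient $\sigma=\theta e$ with $|\theta|\le1$ is interpolable with $\psi=\theta\norm e^2/2$. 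For a complex root $z$ of the characteristic polynomial, the planar wave $z^k$ gives $\norm{S_{k+p}}=|z|^p\norm{S_k}$. For a real root, the scalar geometric sequence gives the same. Hence $\Gamma(p)\ge\rhoq^p$ and $\rho_\Gamma\ge\rhoq>0$. Positivity of $\rhoq$ holds because the root product is $\beta$, or, if $\beta=0$, because $|1-\alpha\lambda|$ is not identically zero on $[\mu,L]$.

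The main obstacle I expect is bookkeeping rather than analysis: matching the segment convention (which indices carry gradients) both to the actual trajectory started from rest and to the concatenation argument for submultiplicativity.
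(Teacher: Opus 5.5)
Your proposal is correct and follows the paper's proof essentially step by step: the minimizer inequality $\norm{\sigma_k}\le\norm{e_k}$ gives a linear comparison bound, where the paper uses the $2\times2$ comparison matrix and you use a cruder one-step constant. Concatenating ratios, with the zero-state remark, and applying Fekete gives the limit; quadratic and wave trajectories give the two lower bounds; and your reindexing makes the trajectory from rest a segment whose initial state has norm $\sqrt2\norm{e_0}$.
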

\begin{proof}
The minimizer inequality gives $\norm{\sigma_k}\le\norm{e_k}$, hence
\[
 \binom{\norm{e_{k+p-1}}}{\norm{e_{k+p}}}
 \le\begin{pmatrix}0&1\\ \beta&|A|+B\end{pmatrix}^{p}
 \binom{\norm{e_{k-1}}}{\norm{e_k}}.
\]
This proves finiteness. Restricting and concatenating ratios proves
submultiplicativity; a zero state remains zero. The submultiplicative
lemma gives the limit and infimum. Quadratic trajectories give
$\rho_\Gamma\ge\rhoq>0$, and the prescribed initial state has norm
$\sqrt2\norm{e_0}$. A feasible spectrum has
$\norm{S_k}^2=e^{2kt}(1+e^{-2t})$, proving the last assertion.
\end{proof}

\begin{lemma}[Energy estimate]\label{lem:energy}
Suppose \eqref{eq:strictcert} holds at the exponent $t$ with lags at most $M'$.
There are $K>0$ and an integer $w\ge M'+2$ such that for every segment and
$w\le i\le j\le n-w$,
\begin{equation}\label{eq:energy}
 \sum_{k=i}^j\mathsf b_k\le K(E_i+E_j),\quad \mathsf b_k=e^{-2kt}\norm{e_k}^2,\quad
 E_k=\sum_{|l-k|\le w}\mathsf b_l .
\end{equation}
\end{lemma}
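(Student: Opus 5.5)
The plan is to sum the interpolation inequalities of Theorem~\ref{thm:interpolation} over a time window, weighted by the certificate coefficients of \eqref{eq:strictcert}. The function values should then cancel by telescoping, and what remains is a banded Toeplitz quadratic form in the rescaled positions $\tilde e_k=e^{-kt}e_k$, whose symbol is $\Phi_t\le-\varepsilon$. Throughout, I fix a segment and indices $w\le i\le j\le n-w$. Here $w$ is chosen larger than $M'+2$, so that every inequality used below involves only sites at which gradients and values are defined and at which the recurrence \eqref{eq:normrec} holds.

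\emph{Step 1 (weighted sum).} For $k\in[i,j]$ and each lag $m\le M'$, I would take the nonnegative slacks $R_{k,k+m}\mp T_{k,k+m}$ with weights $2\alpha_m e^{-(2k+m)t}$ and $2\beta_m e^{-(2k+m)t}$ respectively. To these I add the minimizer slacks $R_{k\star}\pm T_{\star k}$ with weights $4\gamma_\pm e^{-2kt}$. Lemma~\ref{lem:spectralinterp} suggests this normalization, since on a wave it reproduces exactly the integrand of $\Phi_t$. The total $\Sigma$ is then $\ge0$.

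\emph{Step 2 (cancellation of values).} Write $\tilde\psi_k=e^{-2kt}\psi_k$. The value part of the lag-$m$ terms at base $k$ is $\mp\bigl(e^{mt}\tilde\psi_{k+m}-e^{-mt}\tilde\psi_k\bigr)$, up to the fixed weights. After reindexing, each interior $\tilde\psi_k$ collects the coefficient $\sum_m(\alpha_m-\beta_m)\,2\sh mt+\gamma_+-\gamma_-$, suitably normalized, and this vanishes by the cancellation condition in \eqref{eq:strictcert}. Only values with $k$ within $M'$ of $i$ or of $j$ survive. To bound these, I would use the minimizer inequality $|T_{\star k}|\le R_{k\star}$ together with $\norm{\sigma_k}\le\norm{e_k}$; together they give $|\psi_k|\le\norm{e_k}^2$, so the surviving value terms are $\le K(E_i+E_j)$.

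\emph{Step 3 (Toeplitz form).} The remaining terms are quadratic in the positions and gradients. Using \eqref{eq:normrec}, I substitute $\sigma_k=B^{-1}(e_{k+1}-Ae_k+\beta e_{k-1})$, so that every term becomes a quadratic form in $\tilde e$ with coupling range at most $M'+2$, with coefficients independent of $k$. This gives a banded Toeplitz form $\mathcal Q$ in $\tilde e$. Evaluated on a wave $\tilde e_k=e^{ik\omega}$ per unit time, its symbol is $\Phi_t(\omega)$, by the single-wave computation in the proof of Lemma~\ref{lem:spectralinterp}. Now let $\chi$ denote truncation of $\tilde e$ to $[i,j]$. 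The bi-infinite form of $\chi\tilde e$ is at most $-\varepsilon\sum_{k=i}^j\mathsf b_k$, by Parseval and $\Phi_t\le-\varepsilon$. It differs from the windowed sum $\Sigma$ only in terms straddling the endpoints, i.e.\ terms involving indices within $w$ of $i$ or $j$. These are bounded by $K(E_i+E_j)$, using again $\norm{\sigma_k}\le\norm{e_k}$ and Cauchy--Schwarz.

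Combining the three steps gives $0\le\Sigma\le-\varepsilon\sum_{k=i}^j\mathsf b_k+K'(E_i+E_j)$, which is \eqref{eq:energy} after dividing by $\varepsilon$. I expect the main obstacle to be the bookkeeping in Steps 2 and 3. One has to verify that the weights are exactly those making both the $\tilde\psi$ coefficients telescope and the symbol equal $\Phi_t$, rather than $\Phi_t$ times some positive frequency factor. One also has to check that every boundary remainder involves only indices within distance $w$ of an endpoint. The Parseval step itself is routine.
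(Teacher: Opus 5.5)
Your proposal follows the paper's proof essentially step for step. Both use the same weighted sum of lag and minimizer slacks, the same telescoping of the scaled values via the cancellation condition in \eqref{eq:strictcert}, the recurrence to get a shift-invariant banded form in $e^{-kt}e_k$ with symbol $\Phi_t$, Parseval on a truncation (the paper truncates to $[i-1,j+M'+1]$ rather than $[i,j]$, which changes nothing), and Cauchy--Schwarz for the fixed-width endpoint errors.

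One bookkeeping slip in Step~1: with the conventions of Lemma~\ref{lem:spectralinterp}, $\alpha_m$ must weight $R_{k,k+m}+T_{k,k+m}$ (which reduces to $\tfrac12e^{(2k+m)t}\ell_m$ on waves) and $\beta_m$ must weight $R_{k,k+m}-T_{k,k+m}$, the reverse of what you wrote. With that assignment the interior value coefficient is exactly $4\bigl[\sum_m(\alpha_m-\beta_m)\sh mt+\gamma_+-\gamma_-\bigr]=0$, as you intended in Step~2.
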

\begin{proof}
Let $I^\pm_{k,n}=R_{k,k+n}\pm T_{k,k+n}$,
$I^\pm_{k\star}=R_{k\star}\pm T_{\star k}$ and form
\[
 Z_k=\sum_{n\le M'}2e^{-nt}
 (\alpha_nI^+_{k,n}+\beta_nI^-_{k,n})
 +4\gamma_+I^+_{k\star}+4\gamma_-I^-_{k\star}\ge0.
\]
After $f_k=e^{-kt}e_k$, $h_k=e^{-2kt}\psi_k$ and the recurrence,
$e^{-2kt}Z_k=\mathcal Q_k(f)+\mathcal H_k(h)$.
The quadratic form couples positions at most $M'+2$ steps apart and is
invariant under time shifts; its value on $f_k=e^{ik\omega}$ is
$\Phi_t(\omega)$. The value coefficients sum to zero by the second line of
\eqref{eq:strictcert}, so their sum has only endpoint terms.
The minimizer inequality gives $|h_k|\le\norm{f_k}^2/2$.
For every finitely supported Hilbert-valued sequence $F$, Parseval yields
\[
 \sum_k\mathcal Q_k(F)=\frac1{2\pi}\int_{-\pi}^{\pi}
 \Phi_t(|\theta|)\norm{\widehat F(\theta)}^2\,d\theta
 \le-\varepsilon\sum_k\norm{F_k}^2.
\]
Apply this to $F=f\mathbf1_{[i-1,j+M'+1]}$.
Every truncation error lies in fixed-width endpoint windows;
$|\ip{f_a}{f_b}|\le(\norm{f_a}^2+\norm{f_b}^2)/2$ bounds their sum by
$K_1(E_i+E_j)$. The value terms have the same bound, giving
\eqref{eq:energy}.
\end{proof}

\begin{lemma}[No strict certificate at the growth exponent]\label{lem:plateau}
There is no strict certificate at $t=\log\rho_\Gamma$.
\end{lemma}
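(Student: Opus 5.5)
Argue by contradiction: assume a strict certificate \eqref{eq:strictcert} exists at $t=\log\rho_\Gamma$, and deduce that finite segments cannot grow at rate $\rho_\Gamma$. Put $\varrho=\rho_\Gamma$. By Lemma~\ref{lem:envelope}, $\Gamma(p)\ge\varrho^p$ for every $p$, since $\Gamma(p)^{1/p}$ decreases to its infimum $\varrho$. The energy estimate, however, will force $\Gamma(p)\le C\,\varrho^{p}\theta^{p}$ for some $\theta<1$ along a subsequence of $p$. This is impossible, because it would give $\inf_p\Gamma(p)^{1/p}<\varrho$.

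The key steps are as follows. First, Lemma~\ref{lem:energy} supplies constants $K$ and $w$ such that, for every segment and all $w\le i\le j\le n-w$, the scaled energies $\mathsf b_k=\varrho^{-2k}\|e_k\|^2$ satisfy $\sum_{k=i}^j\mathsf b_k\le K(E_i+E_j)$. Second, the window energies must be compared with states. The recurrence $\|S_{k+1}\|\le c\|S_k\|$ holds with $c=\Gamma(1)$, and it also gives control backwards over a window of width $w$, up to the factor $\Gamma(w)$. So $E_k$ is comparable to $\varrho^{-2k}\|S_{k\pm w}\|^2$ up to fixed constants. Third, apply the estimate to a long segment and split it into blocks of length $P$. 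The sum of $\mathsf b_k$ over the middle block is at least $P\min_k\mathsf b_k$, while it is bounded by the two endpoint window energies. Hence the scaled energy cannot stay near-uniform over a length $P$ once $P>2K\cdot$const. Concretely, I would take $i=w$ and $j$ arbitrary. The bound $\sum_{k\le j}\mathsf b_k\le K(E_w+E_j)$ with $E_j\le C\sum_{|l-j|\le w}\mathsf b_l$ gives a discrete Grönwall-type inequality. Its solution is that the partial sums $\Sigma_j$ satisfy $\Sigma_{j-w-1}\le \frac{KC}{1+KC}\Sigma_{j+w}+\text{const}\cdot E_w$, and therefore $\Sigma_j$ grows at most geometrically with a ratio below one, per block of length $2w+1$. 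Reading this backwards yields the endpoint decay $\mathsf b_j\lesssim E_w$. More usefully, the same argument from the right endpoint shows that the scaled end energy is bounded by a fixed multiple of the start energy, and that some state in between has smaller scaled norm. Iterating gives $\Gamma(p)\le C'\varrho^p\theta^{p}$.

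The main obstacle is this bootstrap. The energy inequality carries endpoint terms on both sides, so a naive reading only bounds the sum by endpoint windows and gives no growth bound. I would try first the approach sketched above: use that the left-hand side includes the endpoint windows themselves. If $\mathsf b$ at the right end were as large as $\varrho^{2p}$ growth permits, i.e. comparable to $\max_k\mathsf b_k$, then submultiplicativity with $\Gamma(p)^{1/p}\to\varrho$ forces $\mathsf b_k$ to be nearly flat over long stretches, namely within factor $(1+o(1))^{p}$ of its values. The sum over $p$ terms then exceeds $K(E_i+E_j)$ once $p$ is large relative to $K(2w+1)$, which is a contradiction. Making the near-flatness precise is the delicate point. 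I would extract it from $\Gamma(p)\le \Gamma(p_1)\Gamma(p-p_1)$ together with $\Gamma(m)\ge\varrho^m$: a segment nearly attaining $\Gamma(p)$ has every intermediate state within a subexponential factor of $\varrho^k\|S_0\|$ from below, which provides the lower bound on the middle sum.
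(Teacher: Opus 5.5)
Your ingredients are the right ones: Lemma~\ref{lem:energy}, the forward bound $\mathsf b_l\le c_r\gamma(l-k)^2(\mathsf b_{k-1}+\mathsf b_k)$ with $\gamma(p)=\Gamma(p)\varrho^{-p}$, and $\gamma(p)^{1/p}\to1$. But there is a genuine gap. The bootstrap you flag as the main obstacle is never carried out, and the near-flatness mechanism you propose for it fails. Take a segment that nearly attains $\Gamma(p)$ and put $\bar\gamma(p)=\max_{l\le p}\gamma(l)$. Pinning its intermediate states only places $\varrho^{-k}\norm{S_k}$ within a factor of order $\bar\gamma(p)^2$ of $\norm{S_0}$. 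That factor is subexponential. However, the facts you use (submultiplicativity, $\gamma\ge1$, $\gamma(p)^{1/p}\to1$) do not prevent it from growing polynomially or faster. A middle sum of $p$ terms gains only a factor $p$ over its smallest term, while the right window $E_{p-w}$ may exceed that term by a power of $\bar\gamma(p)$. The energy inequality then yields at best $p\lesssim\bar\gamma(p)^4$, which is no contradiction: a linear gain cannot beat an arbitrary subexponential loss.

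The missing idea is exponential amplification. It is contained in your own Gr\"onwall inequality, but you read it in the wrong direction: it is a \emph{lower} bound on growth, not ``at most geometric growth with ratio below one''. Let $i\ge w$ and $i+2w\le j\le n-p_0a$, where $a=2w+1$ and $p_0$ is a number of blocks fixed below. Suppose $B_i(j)=\sum_{k=i}^j\mathsf b_k>2KE_i$. Then \eqref{eq:energy} gives $E_l>B_i(l)/(2K)$ for all $j\le l\le n-w$. Since $E_l=B_i(l+w)-B_i(l-w-1)$, this yields $B_i(l+w)\ge\theta B_i(l-w-1)$ with $\theta=1+1/(2K)>1$. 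Iterating over $p_0$ blocks of length $a$ gives $B_i(j+p_0a)\ge\theta^{p_0}B_i(j)$. The forward bound, however, gives $B_i(j+p_0a)\le\{1+p_0ac_r\bar\gamma(p_0a+w)^2\}B_i(j)$. Because $\bar\gamma$ is subexponential, one can fix $p_0$ so that $\theta^{p_0}$ exceeds this factor. This is a contradiction, so $B_i(j)\le2KE_i$ for all such $i,j$: the left window controls all later energy.

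Only then does a decay step work. For the tails $\mathsf T_i=\sum_{k=i}^{n-p_0a}\mathsf b_k$ one gets $\mathsf T_i\le2K(\mathsf T_{i-w}-\mathsf T_{i+w+1})$, hence $\mathsf T_{i+w+1}\le\theta^{-1}\mathsf T_{i-w}$. The forward bound over the last $O(p_0a)$ steps then gives $\rho_\Gamma\le\varrho\,\theta^{-1/(2a)}<\varrho$. Your idea of restarting from a smaller intermediate state could also finish the argument once this left-window bound is known, but not before.

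Finally, the recurrence gives no backward control in general: for $\beta=0$ the iterate $e_{k-1}$ is not determined by later ones. So only forward comparisons such as $E_k\lesssim\varrho^{-2(k-w)}\norm{S_{k-w}}^2$ are available, not the two-sided comparability you invoke.
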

\begin{proof}
Suppose there is one and put $r=\rho_\Gamma$,
$\gamma(p)=\Gamma(p)r^{-p}$, $\bar\gamma(p)=\max_{0\le l\le p}\gamma(l)$. By
Lemma~\ref{lem:envelope}, $\gamma(p)\ge1$ and $\bar\gamma(p)^{1/p}\to1$. For
$l\ge k\ge1$, $\norm{e_l}\le\norm{S_l}\le\Gamma(l-k)\norm{S_k}$, hence
\begin{equation}\label{eq:forward}
 \mathsf b_l\le c_r\gamma(l-k)^2(\mathsf b_{k-1}+\mathsf b_k),\quad c_r=\max(1,r^{-2}).
\end{equation}
Let $K,w$ be as in Lemma~\ref{lem:energy}, $\theta=1+1/(2K)$, $a=2w+1$, and
choose an integer $p_0$ with
\begin{equation}\label{eq:choosep}
 \theta^{p_0}>1+p_0ac_r\bar\gamma(p_0a+w)^2,
\end{equation}
which is possible by subexponential growth of $\bar\gamma$. Write $L'=p_0a$.

First, if $i\ge w$, $j\ge i+2w$ and $j\le n-L'$, then
\begin{equation}\label{eq:shortgrowth}
 B_i(j):=\sum_{k=i}^j\mathsf b_k\le2KE_i .
\end{equation}
Otherwise \eqref{eq:energy} gives $E_l>B_i(l)/(2K)$ for $j\le l\le n-w$. Since
$E_l=B_i(l+w)-B_i(l-w-1)$, this gives $B_i(l+w)\ge\theta B_i(l-w-1)$, and
iterating at $l=j+w+1+pa$, $0\le p<p_0$, yields $B_i(j+L')\ge\theta^{p_0}B_i(j)$.
On the other hand \eqref{eq:forward} with $k=j-w$ gives
$B_i(j+L')\le B_i(j)\{1+L'c_r\bar\gamma(L'+w)^2\}$, which contradicts
\eqref{eq:choosep}.

Second, put $n'=n-L'$ and $\mathsf T_i=\sum_{k=i}^{n'}\mathsf b_k$. For $w\le i\le n'-2w$,
\eqref{eq:shortgrowth} gives $\mathsf T_i\le2KE_i\le2K(\mathsf T_{i-w}-\mathsf T_{i+w+1})$, hence
$\mathsf T_{i+w+1}\le\theta^{-1}\mathsf T_{i-w}$, and $\mathsf T_{pa}\le\theta^{-p}\mathsf T_0$ as long as the
windows fit. Moreover $\mathsf T_0\le(1+2K)E_w\le C_w\norm{S_1}^2$, with $C_w$
depending only on $\Gamma(l)$, $l\le2w$, and $r$. The last such window ends at
distance at most $L'+4w+2$ from $n$; applying \eqref{eq:forward} over these
finitely many steps gives $\norm{S_n}^2\le Cr^{2n}\theta^{-n/a}\norm{S_1}^2$ with
$C$ independent of the segment and of $n$. Hence
$\rho_\Gamma\le r\theta^{-1/(2a)}<r$, a contradiction.
\end{proof}

\subsection{Initialization from rest}
\begin{lemma}[One-coordinate initialization]\label{lem:startup}
Let $(\nu,C)$ be feasible at $t$, with atoms $\omega_1,\dots,\omega_r$ and
weights $\nu_j>0$, and assume $|C|<\bar g$ if $t>0$. Then there are
$f\in\cF(\R^{2r+1})$ and $x_0$ with $x_{-1}=x_0$ such that
$\norm{x_k-x^\star}\ge c\,e^{kt}\norm{x_0-x^\star}$ for all $k\ge0$, with $c>0$
independent of $k$. The trajectory is the orthogonal sum of the $r$ waves in
$2r$ coordinates and a sequence $y_k\to0$ in the last coordinate.
\end{lemma}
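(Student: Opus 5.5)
We would build the trajectory explicitly as an orthogonal sum and verify the interpolation inequalities of Theorem~\ref{thm:interpolation} pair by pair. Identify each atom with a plane and, as in Lemma~\ref{lem:spectralinterp}, take the wave data $e_k=(\sqrt{\nu_j}\,e^{k(t+i\omega_j)})_j\in\R^{2r}$, $\sigma_k=(\sqrt{\nu_j}\,m(t,\omega_j)e^{k(t+i\omega_j)})_j$ and the values $\psi_k$ of that lemma for $k\ge0$. These data satisfy \eqref{eq:normrec} for every $k$ when the predecessor is $e_{-1}$, which is not $e_0$. Append one scalar coordinate carrying a quadratic $\tfrac\lambda2y^2$ with a fixed $\lambda\in(-1,1)$. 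Set $y_k=Yp_k$, where $p_k$ is a decaying solution of $p_{k+1}=(A+B\lambda)p_k-\beta p_{k-1}$ for $k\ge1$. This is possible because $\lambda\in[-1,1]$ gives a quadratic stable by \eqref{eq:quad}; for Polyak's tuning one can take $\lambda=0$ and $p_k=(-q^2)^{\lceil k/2\rceil}$.

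At index $0$ we impose $x_{-1}=x_0$ by prescribing a free gradient $\tau_0=(\tilde\sigma_0,\tilde\pi_0)$ and a free value $\tilde\psi_0$. The choice $\tilde\sigma_0=\sigma_0+\tfrac\beta B(e_0-e_{-1})$ gives $e_1=(A-\beta)e_0+B\tilde\sigma_0$, and a similar choice of $\tilde\pi_0$ produces $p_1$ from $p_{-1}=p_0=1$. For $k\ge1$ the gradients are $(\sigma_k,\lambda Yp_k)$ and the values are $\psi_k+\tfrac\lambda2Y^2p_k^2$.

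We would check \eqref{eq:interpolation} in three classes of pairs.

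\emph{Pairs among indices $\ge1$ and the minimizer.} The quadratic coordinate contributes $T=0$ exactly and adds $\tfrac14(1-\lambda^2)Y^2(p_i-p_j)^2\ge0$ to $R$. Hence feasibility of $(\nu,C)$, via \eqref{eq:waveslacks}, gives the inequalities.

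\emph{The pair $(\star,0)$.} It holds for large $Y$ because the added minimizer slack is $\tfrac14(1-\lambda^2)Y^2$, while the defect from $\tilde\sigma_0,\tilde\psi_0$ is independent of $Y$. Note that $\tilde\pi_0$ is $O(Y)$, so it must be absorbed in the same quadratic coordinate.

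\emph{The pairs $(0,j)$, $j\ge1$.} This is the main step. Here the extra coordinate adds roughly $\tfrac14(1-\lambda^2)Y^2(1-p_j)^2\ge c_1Y^2$, uniformly in $j$, since $p_j\to0$ and $p_j\ne1$. The cross terms of $\tilde\pi_0$ against $\lambda Yp_j$ are also $O(Y^2)$, and they are controlled by the choice of $\lambda$ and $p$. The wave part of $R_{0j}\pm T_{0j}$ differs from its feasible value by terms linear in $\tilde\sigma_0-\sigma_0$, namely $-\tfrac12\ip{\tilde\sigma_0-\sigma_0}{e_j-e_0}$ and $-\tfrac14\bigl(\norm{\sigma_j-\tilde\sigma_0}^2-\norm{\sigma_j-\sigma_0}^2\bigr)$, together with the value shift. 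These are $O(e^{jt}+1)$.

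\emph{Case $t\le0$.} The defects are bounded uniformly in $j$, so one large $Y$ absorbs all of them.

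\emph{Case $t>0$.} The defect grows like $e^{jt}$ while the added slack is only $O(Y^2)$. Here we would instead compare $(0,j)$ with $(\star,j)$. Relative to $e_j$, the datum at $0$ is $O(e^{-jt})$-small, so $R_{0j}\pm T_{0j}=R_{j\star}\pm T_{\star j}+O(e^{jt})=\tfrac14e^{2jt}(\bar g\pm C)+O(e^{jt})$. By the hypothesis $|C|<\bar g$ this is positive for $j\ge j_0$. The finitely many $j<j_0$ are then absorbed by $Y$. This is precisely where strict minimizer slack is needed, and the comparison estimate is the delicate bookkeeping: the constants in the $O(e^{jt})$ depend on $\tilde\sigma_0,\tilde\psi_0$ but not on $Y$ in the wave coordinates.

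\emph{Conclusion.} With the data interpolable, Theorem~\ref{thm:interpolation} and Lemma~\ref{lem:normalization} give $f\in\cF(\R^{2r+1})$ whose trajectory from rest is exactly $x_k-x^\star=(e_k,Yp_k)$. Therefore $\norm{x_k-x^\star}\ge\norm{e_k}=e^{kt}$ and $\norm{x_0-x^\star}=\sqrt{1+Y^2}$, which gives $c=(1+Y^2)^{-1/2}$. The last coordinate tends to zero, as stated.
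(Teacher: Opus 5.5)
Your construction is the paper's. You put the wave data in $2r$ coordinates and add one coordinate carrying the quadratic $\tfrac\lambda2y^2$. You modify the first gradient to enforce $x_{-1}=x_0$ and absorb bounded deficits with a single large $Y$. For $t>0$, your comparison of $(0,j)$ with $(\star,j)$ is the paper's bound $(\bar g-|C|)e^{2kt}/4-O(e^{kt})$. Two steps, however, are not justified as written.

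First, the uniform gain $\tfrac14(1-\lambda^2)Y^2(1-p_j)^2\ge c_1Y^2$ needs $\inf_{j\ge1}(1-p_j)^2>0$, and you only assert $p_j\ne1$. For a fixed $\lambda$ this can fail. With $p_{-1}=p_0=1$ and $A+B\lambda=-1$ one gets $p_1=-1-\beta$ and $p_2=1$. This value of $A+B\lambda$ is reachable with $|\lambda|<1$ whenever $2+\beta<\alpha L<2(1+\beta)$ and $\alpha\mu<2+\beta$. The pair $(0,2)$ then receives nothing from $Y$, while its wave slack, which is zero if that lag is active, has been perturbed by the new first gradient. The paper closes this with a genericity argument. $p_k$ is a polynomial of degree $k$ in $\lambda$ with leading coefficient $B^k$, so one may choose $\lambda$ outside the countably many roots of $p_k=1$. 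Together with $p_k\to0$ this gives $\delta_0=\inf_{k\ge1}(1-p_k)^2>0$.

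Second, you require $p$ to solve the recurrence only for $k\ge1$. That leaves $p_1$, and hence $\tilde\pi_0$, free. Your remark that the resulting cross terms are ``controlled by the choice of $\lambda$ and $p$'' is not an argument, and these terms are of the same order $Y^2$ as the gain. If $\tilde\pi_0=(\lambda+\epsilon)Y$, the scalar part of $R_{0j}$ tends to $\tfrac14Y^2\bigl(1-(\lambda+\epsilon)^2\bigr)$, which is negative when $|\lambda+\epsilon|>1$.

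The fix is to let $p$ be the from-rest solution of the recurrence for all $k\ge0$. This is legitimate because every solution decays, since $|A+B\lambda|<1+\beta$ by \eqref{eq:Abound}. Then $\tilde\pi_0=\lambda Y$, and the scalar part of $\tilde\psi_0$ is $\tfrac\lambda2Y^2$. The extra coordinate is an exact quadratic at every index, including $0$, and contributes exactly $T=0$ and $R=\tfrac14(1-\lambda^2)Y^2(p_i-p_j)^2$ to every pair. Only the wave part of the first gradient is modified, as in the paper. This also justifies your $t>0$ bookkeeping, since the scalar contributions are then nonnegative and the wave defects are independent of $Y$. With these two repairs the rest of your argument goes through.
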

\begin{proof}
Choose $\tau\in(-1,1)$ and let
$p_{-1}=p_0=1$, $p_{k+1}=(A+B\tau)p_k-\beta p_{k-1}$.
By \eqref{eq:Abound}, $p_k\to0$. Each $p_k$ is a nonconstant polynomial
in $\tau$ with leading coefficient $B^k$; avoid its countably many roots
of $p_k=1$. Then $\delta_0=\inf_{k\ge1}(1-p_k)^2>0$.
In $\C^r\oplus\R$ prescribe
\[
 \begin{aligned}
 e_0&=((\sqrt{\nu_j})_j,Y),&
 \sigma_0&=((\sqrt{\nu_j}(z_j-A+\beta)/B)_j,\tau Y),\\
 e_k&=((\sqrt{\nu_j}z_j^k)_j,p_kY),&
 \sigma_k&=((\sqrt{\nu_j}m_jz_j^k)_j,\tau p_kY)\quad(k\ge1),
 \end{aligned}
\]
with $\psi_0=\ip{\sigma_0}{e_0}/2$ and the stationary wave values plus
$\tau p_k^2Y^2/2$ for $k\ge1$. These data satisfy the recurrence from rest.
The scalar quadratic contributes $T=0$ and
$R=(1-\tau^2)(y_i-y_j)^2/4$ to every pair.
All pairs excluding zero already interpolate. The pair $(\star,0)$ gains
$(1-\tau^2)Y^2/4$; every $(0,k)$ gains at least
$(1-\tau^2)\delta_0Y^2/4$.
For $t\le0$ their wave deficits are uniformly bounded. For $t>0$, the
stationary margin is at least
$(\bar g-|C|)e^{2kt}/4-O(e^{kt})$, while altering the initial gradient
changes it by only $O(e^{kt})+O(1)$. Thus their negative parts remain
uniformly bounded in both cases. A single sufficiently large $Y$ makes
every pair interpolate. Theorems~\ref{thm:interpolation} and
\ref{lem:normalization} yield one $f$ and one trajectory, with
$\norm{e_k}\ge e^{kt}$ and $\norm{e_0}=\sqrt{1+Y^2}$.
\end{proof}

\subsection{Feasibility for Polyak's tuning}\label{app:spectral-polyak}
We turn to Polyak's tuning, treating the relative exponent $v$, also called
the twist, and the exponent $t$ as independent parameters. Here $(\nu,C)$
is feasible at $(v,t)$ if \eqref{eq:feasible} holds
with the data \eqref{eq:abg}.

\begin{lemma}[Symmetries and monotonicity in the twist]\label{lem:vmono}
For Polyak's tuning:
\begin{enumerate}[label=\textup{(\roman*)}]
\item $(t,C)\mapsto(-t,-C)$ at fixed $(v,\nu)$ preserves feasibility;
\item $(v,C)\mapsto(-v,-C)$ at fixed $(t,\nu)$ interchanges the lower and upper
constraints and preserves feasibility;
\item if $(\nu,C)$ is feasible at $(v,t)$ and $0\le v'\le v$, then $(\nu,\theta C)$
with $\theta=\sh v'/\sh v$ (and $\theta=1$ if $v=0$) is feasible at $(v',t)$.
\end{enumerate}
Consequently the set of $v$ feasible at $t$ is $[-V(t),V(t)]$, $V$ is even, and
$V\le\arsinh1$.
\end{lemma}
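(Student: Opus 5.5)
The plan is to prove all three items by direct substitution into the Polyak data \eqref{eq:abg}:
\[
a_m=g\bigl(T_m(G)-T_m(c)\bigr),\qquad b_m=2u(1-c^2)U_{m-1}(c),\qquad g=1-c^2-u^2,
\]
with $u=\sh v$ and $G=\ch t$. The consequences then follow from these items together with the compactness of Lemma~\ref{lem:compact}.

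For (i), note that $G=\ch t$ is even in $t$, while $u$ and $c$ do not involve $t$. Hence $g$, $a_m$ and $b_m$ are unchanged under $t\mapsto -t$. The only $t$-odd quantity is $\sh mt$, so $C\sh mt$ is invariant under $(t,C)\mapsto(-t,-C)$. The minimizer condition $|C|\le\bar g$ is also unchanged. Thus every $\ell_m$, $\upsilon_m$ and the minimizer inequality are preserved.

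For (ii), the map $u\mapsto -u$ leaves $g$ and $a_m$ fixed and flips the sign of $b_m$. Combined with $C\mapsto -C$, the new lower slack $\bar a_m+\bar b_m-C\sh mt$ equals the old $\upsilon_m$, and symmetrically the new upper slack equals the old $\ell_m$. Since $|C|$ is unchanged, feasibility is preserved.

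For (iii), set $u'=\sh v'=\theta u$ with $\theta\in[0,1]$, and let $\Delta=u^2-u'^2\ge0$. Then $g'=g+\Delta$ and $b_m'=\theta b_m$. The lag terms become
\[
a_m'=a_m+\Delta\bigl(T_m(G)-T_m(c)\bigr),
\]
and the bracket is nonnegative because $G\ge1$ and $|T_m(c)|\le1$ on $[-1,1]$. This gives
\[
\ell_m'=\theta\ell_m+(1-\theta)\bar a_m+\Delta\,\overline{T_m(G)-T_m(c)}\ge(1-\theta)\bar a_m,
\]
and the same bound holds for $\upsilon_m'$. The one step that needs care is that $g$, and hence $a_m$, can be negative pointwise. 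However, $\bar a_m=(\ell_m+\upsilon_m)/2\ge0$ by the original feasibility, so the bound is nonnegative. For the minimizer condition, $|\theta C|\le\theta\bar g\le\bar g\le\bar g+\Delta=\bar g'$, using $\bar g\ge|C|\ge0$. The case $v=0$ is trivial, since then $v'=0$.

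For the consequences, fix $t$. By (ii) the feasible set of $v$ is symmetric, and by (iii) it is star-shaped about $0$, so it is an interval centred at $0$. By Lemma~\ref{lem:compact} it is closed and bounded. Hence it equals $[-V(t),V(t)]$, with the maximum attained. By (i) the sets at $t$ and $-t$ coincide, so $V$ is even. Finally, feasibility gives
\[
0\le\bar g=1-\int c^2\,d\nu-u^2,
\]
so $u^2\le1$ and $V\le\arsinh1$.
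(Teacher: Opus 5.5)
Your proof is correct and follows the paper's argument essentially line by line, including the decomposition $\ell_m'=\theta\ell_m+(1-\theta)\bar a_m+(\bar a_m'-\bar a_m)$ in (iii) and the observation $\bar a_m=\tfrac12(\ell_m+\upsilon_m)\ge0$ that handles the pointwise sign of $g$. The only point both arguments leave implicit is that the feasible $v$-set is nonempty, which holds because $(\nu,0)$ is feasible at $v=0$ for every $\nu$, since then $b_m=0$ and $a_m=(1-c^2)\bigl(T_m(G)-T_m(c)\bigr)\ge0$.
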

\begin{proof}
(i) $a_m$ and $g$ are even in $t$, $b_m$ does not depend on $t$, and
$\sh mt$ is odd. (ii) $a_m$ and $g$ are even in $u=\sh v$ and $b_m$ is odd.
(iii) The case $v=0$ is immediate. Otherwise, with $u'=\sh v'=\theta u$, $b_m(v')=\theta b_m(v)$ and
\[
 a_m(v')=a_m(v)+(u^2-u'^2)\bigl(T_m(G)-T_m(c)\bigr)\ge a_m(v),
\]
because $T_m(G)\ge1\ge T_m(c)$. Hence
$\ell_m(v')=\theta\ell_m(v)+(1-\theta)\bar a_m(v)+(\bar a_m(v')-\bar a_m(v))$, and
$\bar a_m(v)=\frac12(\ell_m+\upsilon_m)\ge0$; the same holds for $\upsilon_m$. The
minimizer slack satisfies $\bar g(v')\ge\bar g(v)\ge|C|\ge\theta|C|$. The
consequences follow from (ii), (iii), closedness, and
Lemma~\ref{lem:compact}.
\end{proof}

\begin{lemma}[Finite support and inactive tails]\label{lem:finiteatoms}
For Polyak's tuning let $T=T(q)\ne0$ and $v=T-\log q$. Every optimal pair
$(\nu,C)$ at $(v,T)$ has $\nu$ finitely supported in $(-1,1)$ and
$|C|<\bar g$, and only finitely many lag constraints are active.
\end{lemma}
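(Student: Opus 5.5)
The plan is to obtain a limiting dual certificate at the optimal point and read off all three claims from complementary slackness. Since $T=T(q)$ is the largest feasible exponent on the line $v=t-\log q$, for every $\delta>0$ the point $(t,v)=(T+\delta,T+\delta-\log q)$ is infeasible. Theorem~\ref{thm:spectralgeneral}(a) then gives a strict certificate there, with multipliers $\alpha_m,\beta_m,\gamma_\pm\ge0$. I normalize these multipliers, say $\sum_m(\alpha_m+\beta_m)+\gamma_++\gamma_-=1$, and pass to a limit $\delta\downarrow0$. Along the way I must check that mass does not escape to infinitely large lags. For this I would use the lag tails. In the $c$ variable, $a_m\pm b_m$ grows like $g\,T_m(G)\sim g\,\ch(mT)/1$ while $\sh(mT)$ grows at the same rate. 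It is therefore natural to weight lag $m$ by $e^{-m|T|}$ (this is the factor $2e^{-nt}$ in Lemma~\ref{lem:energy}). After that rescaling the normalized lag functions converge, as $m\to\infty$, to $\tfrac12 g\pm\tfrac12 C\,\mathrm{sgn}(T)$-type limits, so escaping mass becomes effectively a minimizer multiplier. I therefore expect a limiting functional $\Phi(c)=\sum_m(\ldots)+\gamma'_\pm g$ that is nonpositive on $[-1,1]$, has cancelled $C$-coefficient, and is nontrivial, with $\int\Phi\,d\nu=0$ for every optimal $(\nu,C)$.

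Given such a $\Phi$, complementary slackness gives the three claims.

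\textbf{Support.} Because $\Phi\le0$ and $\int\Phi\,d\nu=0$, every optimal $\nu$ is carried by the zero set of $\Phi$. For $T\ne0$ we have $G=\ch T>1$, and each lag function is a polynomial in $c$. The minimizer term is $g=1-c^2-u^2$, and the exponentially weighted tail sum $\sum e^{-m|T|}(T_m(G)-T_m(c))$ converges to a function analytic in $c$ on a complex neighbourhood of $[-1,1]$, since $|T_m(c)|$ grows only like $(|c|+\sqrt{|c^2-1|})^m$, which is slower than $e^{m|T|}$ near the real segment. Hence $\Phi$ is real analytic on $[-1,1]$ and not identically zero; this needs checking, for instance by its behaviour or leading growth at $c=G$ after continuation. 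Its zeros are therefore finite, so $\nu$ is finitely supported.

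\textbf{Exclusion of the endpoints $c=\pm1$.} I would show $\Phi(\pm1)<0$. At $c=\pm1$ we have $b_m=0$ and $g=-u^2$, so the constraint values have a definite sign that can be compared directly. Alternatively, an atom at $\pm1$ would be a nonrotating real wave, and perturbing it into a slightly rotating wave increases $u$, contradicting the maximality of $V$.

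\textbf{Strict minimizer slack.} Suppose $|C|=\bar g$. Then by symmetry (Lemma~\ref{lem:vmono}(i)) I would use the extra freedom: the lag constraints involve $C\sh mt$ with unbounded coefficients, so at a tight minimizer the large lags force $\bar a_m\mp\bar b_m\ge|C|\sh mT$, that is $\bar g\,(\bar T_m(G)\ldots)\approx \bar g\,\ch mT\ge\bar g\,\sh mT$ plus lower-order terms. Comparing the next-order terms, $-\overline{T_m(c)}$ and $e^{-mT}$, this fails for large $m$ unless $\bar g=0$. But $\bar g=0$ with $C=0$ must be excluded separately; I would do this via the strip bound of Proposition~\ref{prop:strip} (strict for $t\ne0$).

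\textbf{Finitely many active lags.} With $|C|<\bar g$, the slacks satisfy $\ell_m,\upsilon_m\approx\tfrac12e^{m|T|}(\bar g-|C|)+O(\rho_c^m)$ with $\rho_c<e^{|T|}$ on the finite support, so they are positive for all large $m$.

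The main obstacle is the limiting step: showing that the normalized certificates at $T+\delta$ converge to a nonzero limit symbol whose zero set genuinely controls every optimal measure. Mass may concentrate on high lags or on $\gamma_\pm$, and the $C$-cancellation must persist in the limit. I would handle this with the $e^{-m|T|}$ renormalization and a direct tail estimate, and check nondegeneracy using the fact that a zero limit would contradict feasibility slightly above $T$.
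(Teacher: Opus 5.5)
Your architecture matches the paper's: strict certificates at exponents $t_n\downarrow T$, renormalized so that the lag functions converge to the minimizer functional as $m\to\infty$, a weak limit on the compactified index set, analyticity of the limit symbol near $[-1,1]$, complementary slackness, and a $\ch(m|T|)$ tail bound. Your factor $e^{-m|t|}$ is equivalent, up to bounded factors, to the paper's division by $U_{m-1}(\ch t)=\sh(mt)/\sh t$. Two steps are missing, however.

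\textbf{First gap: you never prove $v>0$, and the argument needs it.} At $u=\sh v=0$ one has $b_m\equiv0$, $g=1-c^2\ge0$ and $a_m\ge0$, so every pair $(\nu,0)$ is feasible. All constraint functions then vanish at $c=\pm1$. The limit symbol is $\le0$ and also a nonnegative combination of nonnegative functions, so it vanishes identically. In that case neither your endpoint exclusion nor the nontriviality of $\Phi$ (which you leave open) holds, and the lemma itself would be false. The paper obtains $v>0$ from the spiral: Proposition~\ref{prop:spiral} is feasible at $t=\log\rs$ and $\rs>q$, so $T\ge\log\rs>\log q$. With $u>0$, every constraint function normalized by $U_{m-1}(\ch T)$ is at most $-u^2\sh|T|\tanh(|T|/2)$ at $c=\pm1$. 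This gives $\Phi(\pm1)<0$ and $\Phi\not\equiv0$ at once. Your alternative of perturbing a real wave is not an argument.

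\textbf{Second gap, the more serious one: the strict minimizer slack is not established.} Reduce to $T>0$ and $C=\bar g$; the case $C=-\bar g$ is the same with the lower constraints and the sine term negated. Only the upper constraints are needed, not both families as you wrote, and they read exactly
\[
 f(m):=\int\bigl(2u\sin\omega\sin m\omega-g\cos m\omega\bigr)\,d\nu\ \ge\ -\bar g\,e^{-mT}\qquad(m\ge1).
\]
The right side tends to zero, so comparing orders of magnitude decides nothing. You must show that this bounded oscillatory sequence cannot be asymptotically nonnegative; your proposal simply asserts that the inequality fails.

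The missing idea is the paper's Ces\`aro argument. Write $f(m)=\Rea\sum_jz_je^{im\omega_j}$ with $z_j=-\nu_j(g_j+2iu\sin\omega_j)$. This is a finite sum with distinct frequencies in $(0,\pi)$; here is where finite support and the exclusion of $c=1$ are used. Hence the Ces\`aro means of $f$ tend to $0$. The Ces\`aro means of $f^2$ tend to $\frac12\sum_j|z_j|^2>0$, since $|z_j|=\nu_j(1-c_j^2+u^2)$ by the identity of Lemma~\ref{lem:tailsharp}. A lower bound tending to zero makes the means of $f^-$, hence of $|f|$ and of $f^2$, tend to zero, a contradiction.

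This argument covers $\bar g=0$ without change, since the constraint is then simply $f(m)\ge0$. That case is therefore not exceptional. In any event, Proposition~\ref{prop:strip} restricts only $(v,t)$ and could not have excluded it.
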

\begin{proof}
The feasible spiral satisfies $\rs>q$ by Proposition~\ref{prop:spiral}
and the root calculation following its proof. Consequently
$T\ge\log\rs>\log q$, so $v=T-\log q>0$.
Divide the lag functions $a_m\mp b_m$ by $U_{m-1}(\ch t)$, so that $C$ enters
every normalized lag constraint with the coefficient $\pm\sh t$, and represent
the minimizer constraints, multiplied by $\sh|t|$, as the limits $m=\infty$:
for $t\ne0$ the normalized lag functions converge uniformly on $[-1,1]$, as
$m\to\infty$, to $g\sh|t|$. For $t>T$ the points $(t-\log q,t)$ are infeasible
by the definition of $T(q)$, and Theorem~\ref{thm:spectralgeneral}(a) gives
strict certificates. Normalized to total weight one, the certificates at a
sequence $t_n\downarrow T$ have a weakly convergent subsequence on the compact
index set $\{\mathsf L_m,\mathsf U_m:m\in\mathbb N\cup\{\infty\}\}$. The limit is a
nonpositive symbol $\Phi_T$, with equal total lower and upper weight, and
complementary slackness places every optimal measure in the zero set of
$\Phi_T$.

In the complex strip $|\Ima\omega|<|T|$ the trigonometric parts of the
normalized lag functions decay geometrically in $m$, so $\Phi_T$ is analytic
there. At $c=\pm1$ every normalized lag functional is at most
$-\sh^2v\,\sh|T|\tanh(|T|/2)<0$, so $\Phi_T$ is not identically zero; it has
finitely many zeros in $[-1,1]$, all interior. Thus $\nu$ is finitely supported
in $(-1,1)$.

Suppose $|C|=\bar g$. Part (i) of Lemma~\ref{lem:vmono} maps $(T,C)$ to
$(-T,-C)$, so we may take $T>0$. Put $u=\sh v$ and
$z_j=-\nu_j(g_j+2iu\sin\omega_j)$, so that
\[
 f(m):=\Rea\sum_jz_je^{im\omega_j}=\sum_j\nu_j\{2u\sin\omega_j\sin m\omega_j-g_j\cos m\omega_j\}.
\]
If $C=\bar g$, the upper constraints give $f(m)\ge-\bar ge^{-mT}$; if
$C=-\bar g$, the lower constraints give the same bound for the function
obtained by negating the sine term, that is, with $z_j$ replaced by its
conjugate, and the argument below applies verbatim. The atoms are distinct
points of $(0,\pi)$, so the Ces\`aro means satisfy
$N^{-1}\sum_{m\le N}f(m)\to0$ and
$N^{-1}\sum_{m\le N}f(m)^2\to\frac12\sum_j|z_j|^2$. Here
$|z_j|^2=\nu_j^2(g_j^2+4u^2\sin^2\omega_j)>0$: for $u\ne0$ the second term is
positive, and for $u=0$ one has $g_j=\sin^2\omega_j>0$. The lower bound makes
the Ces\`aro mean of the negative part $f^-$ tend to zero; together with
$N^{-1}\sum f(m)\to0$, the means of $|f|$ tend to zero, and since $f$ is
bounded the means of $f^2$ tend to zero, a contradiction. Hence
$|C|<\bar g$.

Finally, with $\eta=\bar g-|C|>0$ every raw lag slack is at least
\begin{equation}\label{eq:tail}
 \eta\ch(m|T|)-(1+u)^2,
\end{equation}
since $|g(c)|\le1+u^2$ and $|2u\sin\omega\sin m\omega|\le2u$; so only finitely
many lags are active.
\end{proof}

\begin{proof}[Proof of Theorem~\ref{thm:spectral}]
Lemma~\ref{lem:vmono} gives the description of the feasible $v$ and the
bound on $V$. Lemma~\ref{lem:compact} shows that the maximum defining $T(q)$
is attained; the spiral of Proposition~\ref{prop:spiral} is feasible, so
the set is nonempty. For $t>T(q)$, $|t-\log q|>V(t)$, so the actual point
$(t-\log q,t)$ is infeasible and a strict certificate exists by
Theorem~\ref{thm:spectralgeneral}(a); Theorem~\ref{thm:spectralgeneral}(b)
gives $\rho_\Gamma<e^{t_0}$ and $\rh<e^{t_0}$ for every $t_0>T(q)$, hence
$\rh\le e^{T(q)}$; together with Lemma~\ref{lem:envelope},
\begin{equation}\label{eq:freeidentity}
 \rho_\Gamma=e^{T(q)} .
\end{equation}
At $t=T(q)$
the actual point is feasible. If $T(q)\ne0$, Lemma~\ref{lem:finiteatoms} gives
a finitely supported optimal spectrum with $|C|<\bar g$, and
Lemma~\ref{lem:startup} gives $\rh\ge e^{T(q)}$. If $T(q)=0$, then
$q=\varphi^{-1}$ by Propositions~\ref{prop:strip} and~\ref{prop:spiral}, and the three-cycle of
Proposition~\ref{prop:spiral} is a one-atom feasible spectrum at $t=0$, so
Lemma~\ref{lem:startup} applies again. The remaining assertions are
Lemma~\ref{lem:finiteatoms}.
\end{proof}

\section{The spectral classification for Polyak's tuning}\label{app:classification}
Throughout,
the parameters are those of Polyak's tuning, $T_m$ and $U_m$ denote Chebyshev
polynomials, $\mathsf U_m$ denotes the normalized upper functional of
\eqref{eq:LU}, and $G=\ch t\ge1$, $u=\sh v$, $c=\cos\omega$. The raw slacks of
a spectrum are $\ell_m=\bar a_m-\bar b_m+d\,U_{m-1}(G)$ and
$\upsilon_m=\bar a_m+\bar b_m-d\,U_{m-1}(G)$, where $d=C\sh t$, so that
$d\,U_{m-1}(G)=C\sh mt$. All finite decimals in this appendix denote exact
rationals. The computer-assisted steps are exact polynomial identities,
Sturm counts and signs of rational numbers, and, for two arcs, interval
inclusions in outward-rounded arithmetic~\cite{Moore,Arb}. 

\subsection{Duality and the four contact systems}
Normalize the lag functionals and the constant $C$ by
\begin{equation}\label{eq:LU}
 \mathsf L_m(c)=\frac{a_m(c)-b_m(c)}{U_{m-1}(G)},\quad
 \mathsf U_m(c)=\frac{a_m(c)+b_m(c)}{U_{m-1}(G)},\quad d=C\sh t,
\end{equation}
so that the constraints read $\bar{\mathsf L}_m+d\ge0$ and $\bar{\mathsf U}_m-d\ge0$.
For nonnegative weights $w_m$, $m\in J$, with $\sum_{m\in J}w_m=1$, the
combination
\begin{equation}\label{eq:dualpoly}
 \Phi(c)=\mathsf L_1(c)+\sum_{m\in J}w_m\mathsf U_m(c),\quad Q(c)=-\frac{\Phi(c)}{G-c},
\end{equation}
cancels $d$, and $Q$ is a polynomial in $c$. Its structure is the basis of
the classification.

\begin{lemma}[Dual factorization]\label{lem:dualfactor}
For $G\ge1$ and $-1\le c\le1$,
\begin{equation}\label{eq:alphabeta}
 \Phi(c)=(G-c)\bigl\{(1-c^2-u^2)\,\alpha_J-2u(1-c^2)\,\beta_J\bigr\},
\end{equation}
where $\alpha_J\ge1$ and $\beta_J\ge0$ are divided differences of Chebyshev
polynomials that depend on $G$, $c$ and the weights and are independent of
$u$. In particular, for $u'\ge u\ge0$,
\[
 \Phi_{G,u'}(c)\le\Phi_{G,u}(c)-(G-c)(u'^2-u^2).
\]
\end{lemma}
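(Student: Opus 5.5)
The plan is to expand $\Phi$ directly from the definitions \eqref{eq:abg} and \eqref{eq:LU}, collect the terms by their dependence on $u$, and read off the two divided differences. Since $U_0\equiv1$ and $T_1(x)=x$, the lag-one term is
\[
 \mathsf L_1(c)=g(c)(G-c)-2u(1-c^2).
\]
For $m\in J$,
\[
 \mathsf U_m(c)=\frac{g(c)\bigl(T_m(G)-T_m(c)\bigr)+2u(1-c^2)U_{m-1}(c)}{U_{m-1}(G)}.
\]
Using $\sum_{m\in J}w_m=1$ to rewrite the coefficient of $-2u(1-c^2)$ as $\sum_m w_m\bigl(1-U_{m-1}(c)/U_{m-1}(G)\bigr)$, we obtain
\[
 \Phi(c)=g(c)\Bigl[(G-c)+\sum_{m\in J}w_m\frac{T_m(G)-T_m(c)}{U_{m-1}(G)}\Bigr]
 -2u(1-c^2)\sum_{m\in J}w_m\frac{U_{m-1}(G)-U_{m-1}(c)}{U_{m-1}(G)}.
\]
Here $U_{m-1}(G)\ge m>0$ for $G\ge1$, so all denominators are positive.

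Next I would factor out $G-c$. Since $T_m(G)-T_m(c)$ and $U_{m-1}(G)-U_{m-1}(c)$ are polynomials in $(G,c)$ vanishing on $c=G$, they are divisible by $G-c$. The quotients are the divided differences $T_m[G,c]$ and $U_{m-1}[G,c]$, which are polynomials and so are defined also at $c=G=1$ (where they equal the derivatives). This yields \eqref{eq:alphabeta} with
\[
 \alpha_J=1+\sum_{m\in J}w_m\frac{T_m[G,c]}{U_{m-1}(G)},\qquad
 \beta_J=\sum_{m\in J}w_m\frac{U_{m-1}[G,c]}{U_{m-1}(G)},
\]
neither of which involves $u$. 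It also shows that $Q=-\Phi/(G-c)$ is a polynomial in $c$.

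The sign claims are the only real content. For $G\ge1\ge c$ with $c\in[-1,1]$, we have $T_m(G)\ge1\ge T_m(c)$, so $T_m(G)-T_m(c)\ge0$. When $G>c$ this gives $T_m[G,c]\ge0$. When $G=c=1$ the divided difference is $T_m'(1)=m^2\ge0$. Hence $\alpha_J\ge1$. For $\beta_J$, I would use that $U_{m-1}$ is increasing on $[1,\infty)$ with $U_{m-1}(1)=m$, together with $|U_{m-1}(c)|\le m$ on $[-1,1]$; thus $U_{m-1}(G)-U_{m-1}(c)\ge0$. The boundary case $G=c=1$ gives $U_{m-1}'(1)\ge0$. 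I expect this nonnegativity of the divided differences, uniformly including the degenerate point $c=G=1$, to be the main step to check carefully; continuity of the polynomial divided differences handles it.

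Finally, for $u'\ge u\ge0$, subtracting the two instances of \eqref{eq:alphabeta} gives
\[
 \Phi_{G,u'}(c)-\Phi_{G,u}(c)=(G-c)\bigl\{-(u'^2-u^2)\alpha_J-2(u'-u)(1-c^2)\beta_J\bigr\}.
\]
With $G-c\ge0$, $\alpha_J\ge1$, $\beta_J\ge0$ and $1-c^2\ge0$, the braces are at most $-(u'^2-u^2)$. This proves the stated inequality $\Phi_{G,u'}(c)\le\Phi_{G,u}(c)-(G-c)(u'^2-u^2)$.
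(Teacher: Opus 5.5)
Your proposal is correct and follows the paper's proof essentially verbatim. It uses the same expansion of $\mathsf L_1+\sum_m w_m\mathsf U_m$, the same divided-difference formulas for $\alpha_J$ and $\beta_J$, the same sign facts $T_m(G)\ge1\ge T_m(c)$ and $U_{m-1}(G)\ge m\ge U_{m-1}(c)$, and the same subtraction argument for the monotonicity inequality. Your treatment of the degenerate point $G=c=1$ (quotients $m^2$ and $U_{m-1}'(1)\ge0$) and of the roles of $\sum_m w_m=1$ and $w_m\ge0$ only spells out what the paper leaves implicit.
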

The proof is given in Appendix~\ref{app:classification-spiral}.

Thus the dual symbol decreases strictly with $u=\sh v$. A feasible measure
supported on the zeros of a nonnegative $Q$ at the same point $(G,u)$ is
therefore optimal: increasing $u$ makes the certificate strict. This
primal--dual pairing reduces the boundary $u=\sh V(t)$ to finitely many
polynomial equations once the active lags are known.

\subsection{Polynomial contact systems}
Each family is described by polynomial contact equations: active
constraints are equalities, and interior zeros of the nonnegative
polynomial $Q$ also satisfy $Q'=0$.

\begin{enumerate}[label=\textup{(\roman*)},leftmargin=*]
\item The spiral family $S$, given by \eqref{eq:spiraldata}. Its active
constraints are $\mathsf L_1,\mathsf U_2,\mathsf U_3$ when $t\ne0$.
\item The single-frequency family $P_1$. Take $J=\{3\}$ in \eqref{eq:dualpoly}
and solve
\begin{equation}\label{eq:p1system}
 Q(c)=0,\quad Q'(c)=0,
\end{equation}
on the analytic branch with $u\to1$ and $c\to0$ as $G\to\infty$. Set
$\nu=\delta_c$ and $d=-\mathsf L_1(c)$.
\item The two-frequency family $P_2$. Take $J=\{2,3\}$ with weights $(W,1-W)$
and $\nu=(1-p)\delta_x+p\delta_y$, $x<y$, and solve
\begin{equation}\label{eq:p2system}
 \bar{\mathsf L}_1+d=0,\quad \bar{\mathsf U}_2-d=0,\quad \bar{\mathsf U}_3-d=0,\quad
 Q(y)=Q'(x)=Q'(y)=0 .
\end{equation}
Its values of $(G,u)$ lie on the curve
\begin{equation}\label{eq:p2poly}
 \begin{aligned}
 P_2(G,u)={}&256G^4(u^2-1)-512G^3u^3+768G^3u\\
 &-32G^2u^4-480G^2u^2-16Gu^3-u^4-16u^2=0,
 \end{aligned}
\end{equation}
which is rational:
\begin{equation}\label{eq:p2param}
 G=\frac{\zeta^4+4}{8\zeta(\zeta^2-2)},\quad u=\frac{(\zeta^2-2\zeta+2)^2}{\zeta^4-4}.
\end{equation}
\item The central family $P_5$. Take $J=\{2,3,5\}$ with weights
$(w_2,w_3,1-w_2-w_3)$ and solve
\begin{equation}\label{eq:p5system}
 \bar{\mathsf L}_1+d=0,\quad \bar{\mathsf U}_m-d=0\ (m=2,3,5),\quad Q(y)=Q'(x)=Q'(y)=0 .
\end{equation}
\end{enumerate}

In the two-frequency systems the condition $Q(x)=0$ follows from the primal
equations and $p<1$. Every coefficient is obtained from
\eqref{eq:abg}--\eqref{eq:dualpoly}.

\subsection{The spiral and the sharp threshold}\label{app:classification-spiral}
For one frequency the equations
$\ell_1=\upsilon_2=\upsilon_3=0$ are homogeneous and linear in
$(g,C,2u(1-c^2))$. With $\varrho=e^t$, their determinant factors as
\[
 \varrho^{-4}(\varrho^2-1)(\varrho^2-2c\varrho+1)^2
 \{c(\varrho^2+1)+\varrho\}.
\]
For $t\ne0$ it follows that $c=-1/(2G)$.
The first two equations give $C=g\tanh t$ and $g=u/G$.
Substitution into $g=1-c^2-u^2$ selects the positive root $u=1-1/(2G)$:
\begin{equation}\label{eq:spiralsimple}
 u=1-\frac1{2G},\quad c=-\frac1{2G},\quad g=\frac{2G-1}{2G^2},\quad C=g\tanh t .
\end{equation}

\begin{lemma}[Lag identities on the spiral]\label{lem:spiralids}
On the data \eqref{eq:spiralsimple}, for every $m\ge1$,
$\ell_m=gE_{m+1}(t)/G$ and $\upsilon_m=gE_{m-1}(t)/G$, where
$E_n(t)=\ch nt+2G^2\cos n\omega$. Moreover $E_0=1+2G^2$ and $E_1=E_2=0$.
\end{lemma}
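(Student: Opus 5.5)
The identities are a direct trigonometric computation on the single atom, organised around the three exact facts $c=\cos\omega=-1/(2G)$, $u=1-1/(2G)$ and $g=(2G-1)/(2G^2)=u/G$, together with $C=g\tanh t$, so that $C\sh mt=g\tanh t\,\sh mt$. Since $\nu=\delta_c$, the bars disappear and
\[
 \ell_m=g(\ch mt-\cos m\omega)-2u\sin\omega\sin m\omega+g\tanh t\,\sh mt .
\]
First I would use $u=Gg$ to pull out the factor $g/G$:
\[
 \frac{G\ell_m}{g}=G\ch mt+\sh t\,\sh mt-G\cos m\omega-2G^2\sin\omega\sin m\omega .
\]
The hyperbolic part is $\ch t\ch mt+\sh t\sh mt=\ch(m+1)t$, using $G=\ch t$. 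For the trigonometric part I would substitute $G=-1/(2\cos\omega)$, so $2G^2\cos\omega=-G$, and write $-G\cos m\omega=2G^2\cos\omega\cos m\omega$. Then
\[
 -G\cos m\omega-2G^2\sin\omega\sin m\omega=2G^2\cos(m+1)\omega ,
\]
which gives $G\ell_m/g=E_{m+1}(t)$. The upper slack flips the signs of both the $\sin$ term and the $\sh$ term, so the same two steps produce $\ch(m-1)t+2G^2\cos(m-1)\omega=E_{m-1}$.

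For the special values, $E_0=1+2G^2$ is immediate. Next, $E_1=\ch t+2G^2\cos\omega=G-G=0$. For $E_2$, use $\cos2\omega=2c^2-1=1/(2G^2)-1$ and $\ch2t=2G^2-1$, so that $E_2=2G^2-1+1-2G^2=0$.

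The only point needing care is the consistency of the substitution: it is what identifies $\cos\omega$ through $G$. I must also check that $\sin\omega$ enters only through the product $2u\sin\omega\sin m\omega$, and that no sign convention for $\omega\in(0,\pi)$ changes it. Since $c<0$ lies in $(-1,0)$, we have $\omega\in(\pi/2,\pi)$ and $\sin\omega>0$, so the choice is consistent with $s=u\sin\omega$. I expect no real obstacle beyond this bookkeeping; the convention $d\,U_{m-1}(G)=C\sh mt$ matches the raw slacks, so the formulas hold verbatim for all $m\ge1$.
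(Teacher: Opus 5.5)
Your proposal is correct and follows essentially the paper's argument: pull out $g/G$ via $u=gG$, combine $\ch t\,\ch mt\pm\sh t\,\sh mt$ into $\ch(m\pm1)t$, and use $\cos\omega=-1/(2G)$ with the cosine addition formula to produce $2G^2\cos(m\pm1)\omega$. The only cosmetic difference is that you never write $\sin\omega=\sqrt{4G^2-1}/(2G)$ explicitly, as the paper does. This works because only $\cos\omega$ has to be eliminated, so the sign of $\sin\omega$ plays no role in your version.
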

\begin{proof}
The relations $g=u/G$, $c=-1/(2G)$ give
$b_m/g=\sqrt{4G^2-1}\sin m\omega$. Apply the addition formulas to
$\ch mt\pm\tanh t\sh mt=\ch(m\pm1)t/G$ and
$-\cos m\omega\mp\sqrt{4G^2-1}\sin m\omega
=2G\cos(m\pm1)\omega$. The values at $n=0,1,2$ follow immediately.
\end{proof}

\begin{lemma}[Positivity of the spiral kernel]\label{lem:spiralkernel}
Let $s=G^2-1$ and $F_n=G^{n-2}E_n$. For every $n\ge3$, $F_n$ is a polynomial
in $s$ with nonnegative coefficients, $F_n(0)=3$ if $3\mid n$ and $F_n(0)=0$
otherwise, and $F_n(s)>0$ for $s>0$. In particular $E_n(t)>0$ for $n\ge3$ and
$t\ne0$.
\end{lemma}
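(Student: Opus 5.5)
The plan is to reduce everything to two explicit closed forms in $G$ and then read off the structure in $s=G^2-1$.

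\textbf{Closed form.} On the spiral, $\cos\omega=-1/(2G)$, so $e^{i\omega}$ is a root of $z^2+z/G+1$. Put $w=2Ge^{i\omega}$; then $w^2+2w+4G^2=0$, that is,
\[
 w=-1+i\sqrt{4G^2-1}=-1+i\sqrt{4s+3}.
\]
Hence $2G^2\cos n\omega=2G^2\,\Rea(w^n)/(2G)^n$. Together with $\ch nt=T_n(G)$ this gives
\[
 F_n=G^{n-2}T_n(G)+2^{1-n}\Rea\bigl((-1+i\sqrt{4s+3})^n\bigr).
\]

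\textbf{Polynomiality and the constant term.} Only the terms $G^{n-2j}$ occur in $T_n$, so $G^{n-2}T_n(G)$ is a polynomial in $G^2=s+1$ of degree $n-1$. Its leading coefficient is $2^{n-1}$. The second term is $2^{1-n}\sum_{k\ \text{even}}\binom nk(-1)^{n-k}(-1)^{k/2}(4s+3)^{k/2}$, a polynomial in $s$ of degree $\lfloor n/2\rfloor<n-1$ for $n\ge3$. Thus $F_n$ is a polynomial in $s$ whose top coefficient is $2^{n-1}>0$.

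At $s=0$ we have $G=1$, $t=0$ and $\omega=2\pi/3$. There $F_n(0)=1+2\cos(2\pi n/3)$, which equals $3$ if $3\mid n$ and $0$ otherwise. Once nonnegativity of all coefficients is known, positivity for $s>0$ follows from the positive top coefficient. Positivity of $E_n$ for $t\ne0$ then follows because $G^{n-2}>0$.

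\textbf{Nonnegativity of the coefficients.} This is the main obstacle. The second summand has coefficients of both signs, so the positive Chebyshev part must dominate it coefficient by coefficient. I would work with the four-term linear recurrence satisfied by $F_n$. Its characteristic roots are $Ge^{\pm t}$ and $w/2,\bar w/2$, so the characteristic polynomial is
\[
 (z^2-2(s+1)z+(s+1))(z^2+z+(s+1)).
\]
Since this recurrence has mixed-sign coefficients, I would first try a step-three recurrence, which is natural because $s=0$ is the period-three point. Write $Q_3(z)$ for the polynomial whose roots are the cubes of the four roots above. Its coefficients are symmetric functions of $Ge^{\pm3t}=$ and $(w/2)^3$, and I would aim to show that the relation $F_{n+3}=\dots$ it induces rewrites as $F_{n+3}=P_1(s)F_n+P_2(s)H_n$. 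Here $H_n$ is a companion sequence, for instance $G^{n-2}\sh(nt)\sh t$ or $\Ima$ of the $w$-part times $\sqrt{4s+3}$, and $P_1,P_2$ have nonnegative coefficients. The induction would then start from $F_3,F_4,F_5$, computed directly; for example $F_3=4(s+1)^2-3(s+1)+1/4\cdot\Rea(w^3)\cdot2$ expands in $s$ with visible signs.

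If no such positive transfer relation emerges, the fallback is a direct coefficient comparison. Expand both summands in powers of $s$ via the binomial theorem, using $T_n(G)=\sum_j c_{n,j}G^{n-2j}$ and $(4s+3)^{k/2}$. Then show that for each power $s^r$ the Chebyshev contribution, of order $\binom{n-1}{r}2^{n-1}$-type, bounds the $w$-contribution, which carries the factor $2^{1-n}4^r$. This bound is the step I expect to require the most care, especially for small $r$, where cancellation of the constant term is exact.
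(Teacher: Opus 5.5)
Your closed form $F_n=G^{n-2}T_n(G)+2^{1-n}\Rea(w^n)$, the polynomiality in $s$, the value $F_n(0)=1+2\cos(2\pi n/3)$, and the reduction of strict positivity to coefficient nonnegativity via the top coefficient $2^{n-1}$ are all correct. There is one slip: at $n=3$ the factor is $2^{1-3}=\tfrac14$, so the second summand is $\tfrac14\Rea(w^3)=2+3s$ and $F_3=3+8s+4s^2$. Your extra factor $2$ would give $F_3(0)=5$.

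However, the substance of the lemma is the nonnegativity of every coefficient, and that is not proved. The step-three recurrence is only a hope. The polynomials $P_1,P_2$ and the sequence $H_n$ are not identified, and $H_n$ would need its own positivity argument. The fallback comparison rests on a wrong size estimate. Only the top coefficient of $G^{n-2}T_n(G)$ is $2^{n-1}$. Its low coefficients are polynomial in $n$ because of internal cancellation: the constant term is $1$ and the coefficient of $s$ is $(n^2+n-2)/2$. A termwise binomial bound on the second summand is instead exponential; for the constant term it is $2^{-n}[(1+\sqrt3)^n+(1-\sqrt3)^n]$. So for each fixed $r$ and large $n$, coefficientwise domination by the triangle inequality fails. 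Any comparison must exploit exact cancellation in both summands, and that is the entire difficulty.

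The missing idea is to turn the recurrence you found into its generating function and split the denominator around $s=0$. Your reversed characteristic polynomial is exactly the denominator. With $X=1+s$, remove $F_0=(1+2X)/X$ and $F_1=F_2=0$ to get
\[
 \sum_{n\ge3}F_nr^n=\frac{(2s+3)\,r^3\,\{(1-r)+s(2-r)\}}{(1+r+Xr^2)(1-2Xr+Xr^2)},\qquad (1+r+Xr^2)(1-2Xr+Xr^2)=D_0-sD_1-s^2D_2,
\]
where $D_0=(1-r)(1-r^3)$, $D_1=r(2+3r^2-2r^3)$ and $D_2=r^3(2-r)$. Expand $1/(D_0-sD_1-s^2D_2)$ as a geometric series in $sD_1/D_0+s^2D_2/D_0$. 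The power series $D_1/(1-r)$, $D_2/(1-r)$, $(2-r)/(1-r)$ and $1/(1-r^3)$ all have nonnegative coefficients in $r$, so every $F_n$ has nonnegative coefficients in $s$ at once. The factor $1-r^3$ is the period-three structure you anticipated. The zeroth term of the series also gives $F_n(0)=3\,[3\mid n]$ and a positive coefficient of $s$.
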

\begin{proof}
Put $X=G^2=1+s$. The generating function
$\sum_{n\ge0}T_n(x)r^n=(1-xr)/(1-2xr+r^2)$, applied with $x=G$ and $x=-1/(2G)$
and the variable $Gr$, gives
\[
 \sum_{n\ge0}F_nr^n=\frac{1-Xr}{X(1-2Xr+Xr^2)}+\frac{2+r}{1+r+Xr^2} .
\]
Since $F_1=F_2=0$ and $F_0=(1+2X)/X$, a direct computation gives
\[
 \sum_{n\ge3}F_nr^n=\frac{(2X+1)\,r^3\,(2X-1-Xr)}{(1+r+Xr^2)(1-2Xr+Xr^2)}
 =\frac{(2s+3)\,r^3\,\{(1-r)+s(2-r)\}}{D},
\]
where $D=(1+r+Xr^2)(1-2Xr+Xr^2)=D_0-sD_1-s^2D_2$ with
\[
 D_0=(1-r)(1-r^3),\quad D_1=r(2+3r^2-2r^3),\quad D_2=r^3(2-r).
\]
As formal power series in $r$ with coefficients in $\Q[s]$,
\[
 \frac{(1-r)+s(2-r)}{D}=\sum_{k\ge0}\Bigl(s\frac{D_1}{1-r}+s^2\frac{D_2}{1-r}\Bigr)^k
 \frac1{(1-r^3)^{k+1}}\Bigl(1+s\frac{2-r}{1-r}\Bigr),
\]
and the series converges coefficientwise because $D_1$ and $D_2$ have no
constant term. Each factor has nonnegative coefficients:
$D_1/(1-r)=2r+2r^2+5r^3+3\sum_{j\ge4}r^j$,
$D_2/(1-r)=2r^3+\sum_{j\ge4}r^j$, $(2-r)/(1-r)=2+\sum_{j\ge1}r^j$, and
$1/(1-r^3)=\sum_jr^{3j}$. Hence every $F_n$, $n\ge3$, has nonnegative
coefficients in $s$. The terms with $k\ge1$ are divisible by $s$. The term
$k=0$ contributes to $F_n$ the polynomial $(2s+3)\{[3\mid n]+s\,c_n\}$, where
$c_n$ is the coefficient of $r^{n-3}$ in $(2-r)/((1-r)(1-r^3))$, and $c_n\ge1$.
This gives the value at $s=0$ and strict positivity for $s>0$. Finally
$s=\sh^2t$.
\end{proof}

\begin{proof}[Proof of Proposition~\ref{prop:spiral}]
By Lemmas~\ref{lem:spiralids} and~\ref{lem:spiralkernel},
$\ell_1=\upsilon_2=\upsilon_3=0$, $\upsilon_1=g(1+2G^2)/G>0$, and all other
raw slacks are positive multiples of $E_n$, $n\ge3$, hence positive for
$t\ne0$. Moreover $g>0$ and $|C|=g|\tanh t|<\bar g=g$. This is
\eqref{eq:En} and the feasibility. On the spiral,
$1-\frac1{2G}=(\varrho^2-\varrho+1)/(\varrho^2+1)$ with $\varrho=e^t$, and
$\sh(t-\log q)=\frac12(\varrho/q-q/\varrho)$; equating the two and multiplying
by $2q\varrho(\varrho^2+1)$ gives $F(\varrho,q)=0$.
\end{proof}

The positive root of \eqref{eq:spiralpoly} is unique. Write
$F(\rho,q)=(\rho-q)^2(\rho^2+1)-2q\{q-(1-q)\rho^2\}$. For $\rho>q$ the first term
increases and the subtracted expression does not increase; at $\rho=q$ the
value is negative and at infinity it is positive. For $0<\rho\le q$ the first
term is smaller than $q^2(1+q^2)\le2q^2(1-q+q^2)$, which does not exceed the
subtracted expression. Thus the positive root $\rs$ is unique, exceeds $q$,
and satisfies $F_\rho>0$ and $F_q<0$, so $\rs$ is analytic and strictly
increasing in $q$. At $\rho=1$ the equation reads $2(1-q-q^2)=0$, so
$\rs=1$ exactly at $q=\varphi^{-1}$, that is, at $\kappa=\kappa_c$.

The next lemma controls all large lags at once.

\begin{lemma}[Amplitude identity and tail bound]\label{lem:tailsharp}
For $c=\cos\omega\in[-1,1]$ and $u\in\R$,
$g(c)^2+(2u\sin\omega)^2=(1-c^2+u^2)^2$. Consequently, for every spectrum and
every $m\ge1$,
\begin{equation}\label{eq:tailsharp}
 \ell_m,\ \upsilon_m\ \ge\ (\bar g-|C|)\ch mt-(\bar g+2u^2).
\end{equation}
\end{lemma}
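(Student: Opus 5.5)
The identity is a direct computation, and the tail bound then follows from it by a triangle-type estimate on each lag.

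\emph{The identity.} For Polyak's tuning, \eqref{eq:polyakwave} gives $g=1-c^2-u^2$ and $s=u\sin\omega$, with $\sin^2\omega=1-c^2$. Put $P=1-c^2$. Then
\[
 g^2+4u^2P=(P-u^2)^2+4u^2P=(P+u^2)^2 .
\]
Equivalently $|g-2is|=1-c^2+u^2$, which is the modulus form of \eqref{eq:factor} with $m=\ch(v+i\omega)$.

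\emph{The tail bound.} Write the raw lag-$m$ slacks pointwise in frequency. By \eqref{eq:abm},
\[
 a_m\mp b_m=g\ch mt-\bigl(g\cos m\omega\pm 2u\sin\omega\sin m\omega\bigr).
\]
The bracketed term is the inner product of $(g,2u\sin\omega)$ with a unit vector $(\cos m\omega,\pm\sin m\omega)$. By Cauchy--Schwarz and the identity, its absolute value is at most $1-c^2+u^2=g+2u^2$.

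Averaging over $\nu$ gives
\[
 \bar a_m\mp\bar b_m\ge\bar g\ch mt-(\bar g+2u^2).
\]
The $C$ term satisfies $\pm C\sh mt\ge-|C|\sh mt\ge-|C|\ch mt$, because $\sh mt\le\ch mt$ in absolute value. Adding these two bounds yields
\[
 \ell_m,\ \upsilon_m\ \ge\ (\bar g-|C|)\ch mt-(\bar g+2u^2),
\]
which is \eqref{eq:tailsharp}. Here $u^2$ is constant in $\omega$, so its average is $u^2$ itself.

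There is no real obstacle in this argument. The one point to keep straight is the sign convention, namely that $\ell_m$ carries $-\bar b_m+C\sh mt$ and $\upsilon_m$ carries $+\bar b_m-C\sh mt$; the bound is symmetric in both signs, so it covers both slacks.
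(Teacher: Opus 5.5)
Your proof is correct and follows the paper's argument: expand $(k-u^2)^2+4u^2k=(k+u^2)^2$ with $k=1-c^2$, apply Cauchy--Schwarz to the bracket $g\cos m\omega\pm2u\sin\omega\sin m\omega$, integrate against $\nu$, and use $|C\sh mt|\le|C|\ch mt$. One small slip is that the middle term of $\pm C\sh mt\ge-|C|\sh mt\ge-|C|\ch mt$ should read $-|C|\,|\sh mt|$, since $\sh mt<0$ when $t<0$; the final inequality is unaffected.
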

\begin{proof}
With $k=1-c^2$, $(k-u^2)^2+4u^2k=(k+u^2)^2$. For one frequency,
$a_m\mp b_m=g\ch mt-\{g\cos m\omega\pm2u\sin\omega\sin m\omega\}$, and by the
identity and the Cauchy--Schwarz inequality the braces are at most $1-c^2+u^2$
in absolute value. Integrating gives
$\bar a_m\mp\bar b_m\ge\bar g\ch mt-(\bar g+2u^2)$, and
$|C\sh mt|\le|C|\ch mt$.
\end{proof}

\begin{proof}[Proof of Lemma~\ref{lem:dualfactor}]
Divided differences give
\[
 \alpha_J=1+\sum_mw_m\frac{T_m(G)-T_m(c)}{(G-c)U_{m-1}(G)},
 \qquad
 \beta_J=\sum_mw_m\frac{U_{m-1}(G)-U_{m-1}(c)}{(G-c)U_{m-1}(G)}.
\]
The quotients by $G-c$ are polynomials. Since
$T_m(G)\ge1\ge T_m(c)$ and $U_{m-1}(G)\ge m\ge U_{m-1}(c)$,
they have the required signs on $G\ge1$, $c\in[-1,1]$.
Subtracting the two symbols proves the monotonicity inequality.
\end{proof}

A feasible measure supported on the zeros of $Q\ge0$ is optimal.
Indeed increasing $u$ gives a strictly negative symbol when $G>1$.
At $G=1$ its upper bound is $-(1-c)S(c)$ with $\min S>0$.
Adding equal minimizer weights $\epsilon$ yields
\[
 -(1-c)S+2\epsilon g
 \le-(1-c)(\min S-4\epsilon)-2\epsilon u'^2<0
 \quad(0<4\epsilon<\min S),
\]
which gives a strict certificate there as well.

For the spiral and $J=\{2,3\}$, the weight that makes the zero of the symbol
at $c=-1/(2G)$ double is
\[
 W(G)=\frac{2(1+2G-2G^2)}{4G+1},\quad 1-W(G)=\frac{4G^2-1}{4G+1},
\]
and an exact factorization gives
\begin{equation}\label{eq:spiralcertificate}
 \Phi(c)=-\frac{(G-c)(2Gc+1)^2}{2G^3(4G+1)}\Bigl\{2G\Bigl(c-\frac{2G-1}{4G}\Bigr)^2+\frac{32G^3-36G^2+4G-1}{8G}\Bigr\}.
\end{equation}
Equivalently,
\[
 \Phi(c)=-\frac{(G-c)(2Gc+1)^2}{2G^3(4G+1)}\bigl\{2Gc^2-(2G-1)c+4G(G-1)\bigr\},
\]
with
$2Gc^2-(2G-1)c+4G(G-1)=2G\bigl(c-\frac{2G-1}{4G}\bigr)^2+\frac{32G^3-36G^2+4G-1}{8G}$.
The weight is nonnegative exactly for $G\le G_W=(1+\sqrt3)/2$, and the residual
quadratic is nonnegative on $[-1,1]$ exactly for $G\ge G_1$, the root of
$32G^3-36G^2+4G-1$ in $(1.03329,1.03330)$; its vertex $(2G-1)/(4G)$ lies in
$(0,1)$. Hence the spiral is optimal at fixed $G$ for $G_1\le G\le G_W$. At
$G_1$ the certificate acquires a second double zero, at
$G_W$ the weight of $\mathsf U_2$ vanishes; these are the two transitions
continued below.

\begin{proposition}[The spiral outside its regime]\label{prop:spiralsub}
Let $q\in(0,1)$, let $t_{\rm sp}=\log\rs(\kappa)\ne0$, and let $G=\ch t_{\rm sp}$.
If $G>G_W$ or $1<G<G_1$, then $\rh>\rs$.
\end{proposition}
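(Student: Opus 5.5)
The plan is to show that at the point $(v_{\rm sp},t_{\rm sp})$ with $v_{\rm sp}=t_{\rm sp}-\log q$, the spiral is not the boundary of the feasible region. Concretely, I would show that a slightly larger twist $u>1-1/(2G)$ is feasible at the same $G=\ch t_{\rm sp}$, i.e. $V(t_{\rm sp})>v_{\rm sp}$. Then the line $v=t-\log q$ passes strictly inside the feasible region at $t_{\rm sp}$. Since $t-V(t)$ is continuous and strictly increasing (Theorem~\ref{thm:classification}), its level $\log q$ is attained at some $T(q)>t_{\rm sp}$. Hence $\rh=e^{T(q)}>\rs$ by Theorem~\ref{thm:spectral}. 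Alternatively, one can avoid the classification and use only continuity of $V$ and Theorem~\ref{thm:spectral}: feasibility of the point $(v_{\rm sp}+\epsilon',t_{\rm sp}+\epsilon)$ for small $\epsilon>0$ already gives $T(q)>t_{\rm sp}$.

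To show $V(t_{\rm sp})>\arsinh(1-1/(2G))$, I would argue by complementary slackness. Suppose the spiral measure $\delta_{-1/(2G)}$ were optimal at $(G,u)$. A limiting dual symbol as in the proof of Lemma~\ref{lem:finiteatoms} would then exist: it is nonpositive, vanishes with zero derivative at the interior atom $c=-1/(2G)$, and carries positive weight only on active constraints. By Proposition~\ref{prop:spiral}, the active constraints are exactly $\ell_1,\upsilon_2,\upsilon_3$, since all other slacks are positive multiples of $E_n>0$. After normalization, the symbol is therefore $\Phi=\mathsf L_1+W\mathsf U_2+(1-W)\mathsf U_3$. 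Minimizer weights cannot appear, because $|C|<\bar g$ and the minimizer constraint is inactive.

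The double-zero condition at $c=-1/(2G)$ then forces $W=W(G)$, and \eqref{eq:spiralcertificate} gives the unique candidate. There are two cases. For $G>G_W$ we have $W(G)<0$, so no admissible nonnegative weights exist; this is a contradiction. For $1<G<G_1$ the weights are admissible, but the residual quadratic $2Gc^2-(2G-1)c+4G(G-1)$ is negative near its vertex $(2G-1)/(4G)\in(0,1)$. Then $\Phi>0$ there, so the symbol is not nonpositive; again a contradiction. So in both cases the spiral is not optimal at its $G$, and a feasible $u'>u$ exists.

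The main obstacle is justifying that the optimal dual symbol at a boundary point must be supported only on the spiral's active set. This requires strong duality with attainment, not just existence of strict certificates off the boundary. I would take the limit of strict certificates at $u_n\downarrow u$, which exist if $u$ were maximal, exactly as in Lemma~\ref{lem:finiteatoms}, now in the twist variable at fixed $t\neq0$. I would also need that the limit does not escape to the lag $m=\infty$ (the minimizer) and does not concentrate on lags with positive slack. Both follow by integrating the limit symbol against the spiral measure. That integral is zero, while every inactive lag contributes a term bounded away from zero, using the tail bound \eqref{eq:tailsharp} uniformly in large $m$.
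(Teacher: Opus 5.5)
Your argument is correct, but it runs on the dual side where the paper's proof is primal.

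\textbf{The paper's route.} The paper works directly on the line $v=t-\log q$ and constructs an improving spectrum.
\begin{itemize}
\item It takes the three tight slacks $f=(\ell_1,\upsilon_2,\upsilon_3)$ and observes that $\mathsf J=[\partial_\omega f,\partial_C f]$ has image $y^\perp$, with $y=(1,W/U_1(G),(1-W)/U_2(G))$.
\item For $G>G_W$, $y$ has entries of both signs. So one can solve $\mathsf J\Delta=r-\partial_t f$ with $r>0$ and push all three contacts positive while increasing $t$.
\item For $1<G<G_1$, $y>0$. The missing positive direction comes from adding a small atom at $c_0=(2G-1)/(4G)$, where $y^Td_0=\Phi(c_0)>0$.
\item In both cases the tail bound \eqref{eq:tailsharp} keeps all other lags feasible.
\end{itemize}
This needs neither a limiting certificate nor Theorem~\ref{thm:classification}. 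It also gives a first-order picture of the improvement, namely a second atom appearing when $G<G_1$, consistent with the $P_2$ branch.

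\textbf{Your route.} You extract a limiting symbol as in Lemma~\ref{lem:finiteatoms} and use complementary slackness against the spiral to force $\Phi=\mathsf L_1+W\mathsf U_2+(1-W)\mathsf U_3$. You then read off the contradiction from $W(G)<0$ or from \eqref{eq:spiralcertificate} at its vertex. The two sign facts are the same in both proofs; you get brevity once the compactness machinery is available.

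\textbf{Points to tighten.}
\begin{itemize}
\item ``The double zero forces $W=W(G)$'' needs $\partial_c(\mathsf U_2-\mathsf U_3)\ne0$ at $c=-1/(2G)$, because $\Phi_W'$ is affine in $W$. This holds: the value is $(4G+1)(2G^2+1)/\bigl(2G^3(2G+1)\bigr)$, the same quantity the paper uses for the rank of $\mathsf J$. You should state and check it.
\item Your transfer from $V(t_{\rm sp})>v_{\rm sp}$ to $T(q)>t_{\rm sp}$ relies on Theorem~\ref{thm:classification}. That is non-circular but heavy. Your alternative via continuity of $V$ also needs proof, since compactness gives only upper semicontinuity of $V$.
\item You can remove this step by running your argument directly on the tuning line. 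Since the spiral is feasible there, $T(q)\ge t_{\rm sp}$. If $T(q)=t_{\rm sp}$, the limit of certificates at $t_n\downarrow T(q)$ from Lemma~\ref{lem:finiteatoms} already annihilates the spiral, and the same contradiction follows.
\end{itemize}
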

\begin{proof}
Let $f=(\ell_1,\upsilon_2,\upsilon_3)$ at the spiral and
$\mathsf J=[\partial_\omega f,\partial_Cf]$.
The vector $y=(1,W/U_1(G),(1-W)/U_2(G))$ annihilates both columns.
Their independence follows from
\[
 \partial_\omega(\upsilon_2/U_1-\upsilon_3/U_2)
 =-\sin\omega\frac{(4G+1)(2G^2+1)}{2G^3(2G+1)}\ne0;
\]
thus $\operatorname{im}\mathsf J=y^\perp$.
For $G>G_W$, $W<0$, so the entries of $y$ have both signs. Choose
$r>0$ with $y^Tr=y^T\partial_tf$ and solve
$\mathsf J\Delta=r-\partial_tf$. Increasing $t$ and changing
$(\omega,C)$ in direction $\Delta$ makes all three contacts positive.
For $1<G<G_1$, $y>0$, but \eqref{eq:spiralcertificate} is positive at
$c_0=(2G-1)/(4G)$. Adding mass $\epsilon\tau$ at $c_0$ gives a direction
$d_0$ with $y^Td_0=\Phi(c_0)>0$. Choose $\epsilon$ so that
$y^T(\partial_tf+\epsilon d_0)>0$ and solve the same equation with a
positive $r$ and right side $r-\partial_tf-\epsilon d_0$.
All other fixed-lag and minimizer slacks stay positive. Since $t\ne0$,
\eqref{eq:tailsharp} handles the remaining lags uniformly in a
neighborhood. The perturbed measure is feasible at a larger exponent;
Theorem~\ref{thm:spectral} gives the strict inequality.
\end{proof}

The threshold is located by an inequality that combines two constraints.

\begin{proposition}[Threshold strip]\label{prop:strip}
Every point $(v,t)$ at which \eqref{eq:feasible} is feasible satisfies
\begin{equation}\label{eq:thresholdstrip}
 |v|\le|t|+\log\varphi,
\end{equation}
strictly if $t\ne0$. At $t=0$ the three-cycle $\nu=\delta_{-1/2}$, $C=0$, is
feasible at $v=\log\varphi$.
\end{proposition}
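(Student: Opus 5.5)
By the symmetries of Lemma~\ref{lem:vmono}(i),(ii) it suffices to treat $v>0$ and $t\ge0$. Write $\varrho=e^t$, $u=\sh v>0$, $k=1-c^2$ and $g=k-u^2$. The plan is to combine only the lower lag-one inequality with the upper minimizer inequality. At lag one, \eqref{eq:abg} gives $a_1=g(G-c)$ and $b_1=2uk$. Hence
\[
 0\le\ell_1=\overline{g(G-c)}-2u\bar k+C\sh t\le\overline{g(G-c)}-2u\bar k+\bar g\sh t
 =\int\Phi_u\,d\nu,
\]
where
\[
 \Phi_u(c)=(k-u^2)(\varrho-c)-2uk=(1-c^2)(\varrho-c-2u)-u^2(\varrho-c).
\]
This $\Phi_u$ is the symbol of the certificate with weights $\alpha_1=1$, $\gamma_-=\sh t$. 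The coefficient of $C$ cancels, so it is an admissible dual combination in the sense of \eqref{eq:strictcert}. The claim therefore reduces to a one-variable statement. With $u_0(t)=\sh(t+\log\varphi)=\tfrac12(\varphi\varrho-(\varphi\varrho)^{-1})$, I will show that $\Phi_{u_0}\le0$ on $[-1,1]$, with strict inequality everywhere when $t>0$.

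Monotonicity in $u$ is immediate. We have $\partial_u\Phi_u=-2u(\varrho-c)-2k\le0$, and more precisely $\Phi_{u'}\le\Phi_u-(u'^2-u^2)(\varrho-c)$, which parallels Lemma~\ref{lem:dualfactor}. So once $\Phi_{u_0}\le0$ is known, every $u>u_0$ gives $\int\Phi_u\,d\nu<0$ whenever $\varrho-c>0$ on a set of positive $\nu$-measure. The only exception is $t=0$ with $\nu=\delta_1$. There $\Phi_u(1)=0$, but $g(1)=-u^2<0$ violates $|C|\le\bar g$ directly. Alternatively, one adds equal minimizer weights as in the remark after Lemma~\ref{lem:dualfactor}. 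This shows infeasibility for $u>u_0$, that is, $v\le t+\log\varphi$.

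For strictness at $t>0$, I will show $\max_{[-1,1]}\Phi_{u_0}<0$. Then $\int\Phi_{u_0}\,d\nu<0$ for every probability measure, which excludes $v=t+\log\varphi$. The base case is $t=0$, $u_0=\tfrac12$. There $\varrho-2u_0=0$ and
\[
 \Phi_{1/2}(c)=-c(1-c^2)-\tfrac14(1-c)=-(1-c)\bigl(c+\tfrac12\bigr)^2\le0,
\]
with a double zero exactly at $c=-\tfrac12$. This exhibits the three-cycle contact. Direct substitution confirms that $\nu=\delta_{-1/2}$, $C=0$, $u=\tfrac12$ gives $\ell_1=0$. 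One can also invoke Proposition~\ref{prop:spiral} at $t=0$, which yields $u=\tfrac12=\sh\log\varphi$. So the three-cycle is feasible at $v=\log\varphi$, and the strip is sharp at $t=0$.

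The main obstacle is the sign of the cubic $\Phi_{u_0(t)}$ for $t>0$. I would try the following. Write $\varrho-2u_0=\varrho(1-\varphi)+(\varphi\varrho)^{-1}$, which equals $\varphi^{-1}(\varrho^{-1}-\varrho)$, and hence is negative for $t>0$, because $1-\varphi=-\varphi^{-1}$. Then
\[
 \Phi_{u_0}=-(1-c^2)\bigl(c+\varphi^{-1}(\varrho-\varrho^{-1})\bigr)-u_0^2(\varrho-c).
\]
I would compare this with the $t=0$ factorization. Each term should be controlled separately: $u_0^2$ grows faster than the gain in the first term, since $u_0^2\ge\tfrac14+\tfrac{\varphi}{2}\cdot\ldots(\varrho-1)$ with an explicit linear growth. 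The aim is to show $\Phi_{u_0}(c)\le-(1-c)(c+\tfrac12)^2-\eta(t)$ with $\eta(t)>0$ on $[-1,1]$. If this direct comparison fails, I would fall back on an exact computation. I would find the critical points of the cubic, or equivalently the discriminant in $c$ of $\Phi_{u_0}$ as a polynomial in $\varrho$, and check via a Sturm count that the discriminant has no root in $[-1,1]$ for $\varrho>1$. The endpoints are easy: $\Phi_{u_0}(\pm1)=-u_0^2(\varrho\mp1)<0$ for $t>0$.
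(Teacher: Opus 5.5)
Your reduction is the paper's. The lower lag-one inequality plus $\sh t$ times the minimizer inequality gives the symbol $\Phi_u=h_\varrho$, which is monotone in $u$. The $t=0$ factorization $-(1-c)(c+\tfrac12)^2$ and the exclusion of $\nu=\delta_1$ via $g(1)=-u^2<0$ are also exactly what the paper uses.

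The gap is the one step that carries the content for $t\ne0$: you never prove $\Phi_{u_0(t)}<0$ on $[-1,1]$. You sketch a comparison whose inequality for $u_0^2$ is left unfinished. The discriminant fallback conflates $c$ and $\varrho$, and it would still need a continuity argument anchored at small $t$. Your worry about a ``gain in the first term'' is unfounded: both terms of your rewriting decrease as $t$ grows, so your own decomposition finishes the proof. Since $\varrho\ge1$, $1-c^2\ge0$, $1-c\ge0$ and $u_0\ge\sh\log\varphi=\tfrac12$, you have $(1-c^2)\varphi^{-1}(\varrho-\varrho^{-1})\ge0$ and $u_0^2(\varrho-c)=u_0^2(1-c)+u_0^2(\varrho-1)\ge\tfrac14(1-c)+\tfrac14(\varrho-1)$. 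Hence
\[
 \Phi_{u_0}(c)\le -c(1-c^2)-\tfrac14(1-c)-\tfrac14(\varrho-1)
 =-(1-c)\bigl(c+\tfrac12\bigr)^2-\tfrac14(\varrho-1),
\]
which is at most $-\tfrac14(e^t-1)<0$ for $t>0$. The paper instead substitutes $c=1-2X$, $\varrho=1+Y$, expands $-4\varphi^2\varrho^2h_\varrho$ as a cubic in $X$ and completes the square. That route relies on sign patterns of the coefficient polynomials $A_0$ and $4A_1A_3-A_2^2$ in $Y$, which must be checked by expansion. The termwise comparison above is shorter, checkable by hand, and gives an explicit margin.

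Second, the three-cycle claim needs more than $\ell_1=0$. Feasibility means $\ell_m,\upsilon_m\ge0$ for every $m$ together with $|C|\le\bar g$. Proposition~\ref{prop:spiral} is stated only for $t\ne0$, so you cannot invoke it at $t=0$ directly. You can either pass to the limit $t\to0$ in the spiral data using the closedness in Lemma~\ref{lem:compact}, or compute directly. At $t=0$, $u=\tfrac12$, $\omega=2\pi/3$ one has $g=\tfrac12$, $a_m=\tfrac12(1-\cos\tfrac{2\pi m}{3})$ and $b_m=\tfrac{\sqrt3}{2}\sin\tfrac{2\pi m}{3}$. So every lag slack is $0$ or $\tfrac32$ according to $m\bmod 3$; this is the paper's $gE_{m\pm1}$ with $E_n=1+2\cos(2\pi n/3)\in\{0,3\}$. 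Finally $|C|=0<\bar g$.
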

\begin{proof}
By symmetry take $v\ge0$ and put $\varrho=e^{|t|}$.
Add $|\sh t|$ times the appropriate minimizer inequality to $\ell_1$.
The result is $\int h_\varrho\,d\nu\ge0$, where
\[
 h_\varrho(c)=(1-c^2-u^2)(\varrho-c)-2u(1-c^2).
\]
It decreases with $u\ge0$. At
$u_\varrho=\sh(|t|+\log\varphi)$, substitute
$c=1-2X$, $\varrho=1+Y$. Write
$-4\varphi^2\varrho^2h_\varrho=A_3X^3+A_2X^2+A_1X+A_0$.
Expansion gives $A_3=16(3+\sqrt5)(1+Y)^2>0$;
$A_0$ and $4A_1A_3-A_2^2$ are polynomials in $Y$ with zero constant
term and positive remaining coefficients. Therefore
\[
 A_3X^3+A_2X^2+A_1X+A_0
 =A_3X\left(X+\frac{A_2}{2A_3}\right)^2
  +\frac{4A_1A_3-A_2^2}{4A_3}X+A_0
\]
is positive for $Y>0$, $X\in[0,1]$. This excludes $u\ge u_\varrho$
when $t\ne0$. For $t=0$, $u=1/2$ gives
$h_1(c)=-(1-c)(c+1/2)^2$; increasing $u$ makes it negative on
$[-1,1)$, forcing $\nu=\delta_1$, contrary to $\bar g\ge0$.
Finally $\delta_{-1/2}$, $C=0$ is feasible because its slacks are
$gE_n$, with $g=1/2$ and $E_n=1+2\cos(2\pi n/3)\in\{0,3\}$.
\end{proof}

\begin{proof}[Proof of Theorem~\ref{thm:threshold}]
If $q<\varphi^{-1}$, then $\log q<-\log\varphi$, and every $t\ge0$ has
$t-\log q>t+\log\varphi$, so by Proposition~\ref{prop:strip} the actual point
$(t-\log q,t)$ is infeasible. Hence $T(q)<0$. At $t=T(q)$,
$v=T-\log q\le|T|+\log\varphi=-T+\log\varphi$ gives $T\le\frac12\log(\varphi q)$,
so $\rh=e^{T(q)}\le\sqrt{\varphi q}<1$ by Theorem~\ref{thm:spectral}. If
$q=\varphi^{-1}$, the three-cycle is feasible at the actual point
$(\log\varphi,0)$, and for $t>0$ the actual point has $v=t+\log\varphi$, which
Proposition~\ref{prop:strip} excludes; thus $T(q)=0$ and $\rh=1$. If
$q>\varphi^{-1}$, then $\rs>1$, and the spiral is feasible at the actual point
with $t=\log\rs>0$, so $\rh\ge\rs>1$. The trajectory statements are the case
$\rs\ge1$ of \eqref{eq:spirallower}, proved in Appendix~\ref{app:finite}: at
$\kappa_c$ the spiral is the three-cycle, and the realized trajectory
consists of the cycle and a coordinate $2(-q^2)^{\lceil k/2\rceil}\to0$.
\end{proof}

\subsection{Continuation of the algebraic families}
The remaining three families have algebraic descriptions.
For $P_1$ write $c_1$ for the atom, $\nu=\delta_{c_1}$, and use the coordinates
$z=G^{-1}$ and
\begin{equation}\label{eq:p1blowup}
 u=1-\frac z2+z^2\mathcal A,\quad c_1=z^2\mathcal B .
\end{equation}
For $J=\{3\}$, multiplying $Q$ by its positive parameter denominator gives the
quartic
\begin{equation}\label{eq:p1quartic}
 Q_1(c)=-\bigl\{(1-c^2-u^2)\bigl[2+zc+z^2(c^2-1)\bigr]-2uz(1-c^2)(1+zc)\bigr\},
\end{equation}
and \eqref{eq:p1system} states that $c_1$ is a double root of $Q_1$. After
\eqref{eq:p1blowup}, both $Q_1(c_1)/z^2$ and $Q_1'(c_1)/z^2$ are polynomials in
$(z,\mathcal A,\mathcal B)$. At $z=0$ their solution is
$(\mathcal A,\mathcal B)=(\frac18,-\frac14)$, and the Jacobian in
$(\mathcal A,\mathcal B)$ is nonsingular there. For $P_5$ put $h=G-1$ and
\begin{equation}\label{eq:blowup}
 u=\tfrac12+h\mathcal A,\quad x=-\tfrac12+h\mathcal B,\quad p=hP,\quad d=hD,
\end{equation}
with unknowns $(\mathcal A,\mathcal B,y,P,D,w_2,w_3)$. Dividing each of the four
primal equations of \eqref{eq:p5system} by $h$ before clearing denominators,
and keeping the three dual equations, gives a polynomial system $F(h,Z)=0$,
$Z=(\mathcal A,\mathcal B,y,P,D,w_2,w_3)$, which is regular at $h=0$.

For the two continuations use the parametric Krawczyk operator
\cite[\S8.2]{Moore}. For a polynomial system $F(s,Z)=0$, parameter
interval $I$, centered box $X$, and nonsingular preconditioner $R$, check
\begin{equation}\label{eq:krawczyk}
 Z_0-RF(I,Z_0)+(\mathrm{Id}-RF_Z(I,X))(X-Z_0)
 \subset\operatorname{int}X,
 \qquad \|\mathrm{Id}-RF_Z(I,X)\|<1.
\end{equation}
The norm is a weighted infinity norm. The preconditioned fixed-point map
is a contraction on $X$ for every $s\in I$; its unique root is analytic.
At a shared endpoint, inclusion of the left root enclosure in the right
box identifies the branches. Derivatives follow from
$Z'=(\mathrm{Id}-\mathcal M)^{-1}(-RF_s)$, with
$\mathcal M=\mathrm{Id}-RF_Z$; the Neumann remainder is bounded by
$\|\mathcal M\|\|RF_s\|/(1-\|\mathcal M\|)$.

\begin{table}[ht]
\caption{Domains of the verified continuations. Each row uses $4000$
adjacent parameter slabs; the small extension beyond $G_a$ allows exact
endpoint identification.}
\label{tab:continuation}\centering
\begin{tabular}{llll}
\toprule
Family & Parameter interval & Unknowns & Equations\\
\midrule
$P_1$ & $0\le z\le733/1000$ & $(\mathcal A,\mathcal B)$ & \eqref{eq:p1system}, \eqref{eq:p1blowup}\\
$P_5$ & $0\le h\le0.023582$ & $(\mathcal A,\mathcal B,y,P,D,w_2,w_3)$ & \eqref{eq:p5system}, \eqref{eq:blowup}\\
\bottomrule
\end{tabular}
\end{table}

\begin{proposition}[The family $P_1$]\label{prop:p1}
The continued branch of $P_1$ on $0<z\le z_W=\sqrt3-1=G_W^{-1}$ consists of
feasible and optimal spectra: $\nu=\delta_{c_1}$ with $-z/2\le c_1<0$,
$\eta=g-|C|\ge z^3/4$, $Q_1(c)=(c-c_1)^2(q_2c^2+q_1c+q_0)$ with $q_2=z^2$ and
$q_0-|q_1|>0$, and all lag slacks nonnegative. At $z_W$ the branch equals the
spiral, $\mathcal A=0$, $\mathcal B=-1/(2z_W)$. Moreover $u$ increases in $G$,
$\frac12<u<1$, and $|V'|<1$ along the branch.
\end{proposition}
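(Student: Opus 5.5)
The plan is to combine the local analytic start at $z=0$ with the verified Krawczyk continuation of Table~\ref{tab:continuation}. Exact positivity arguments will then convert the continued root into a feasible primal--dual pair at every $z\in(0,z_W]$.

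\emph{Existence and identification of the branch.} At $z=0$ the blown-up system $Q_1(c_1)/z^2=Q_1'(c_1)/z^2=0$ has the nondegenerate solution $(\mathcal A,\mathcal B)=(\tfrac18,-\tfrac14)$. The analytic implicit function theorem therefore gives a unique analytic branch near $z=0$. I would then invoke the parametric Krawczyk check \eqref{eq:krawczyk} on the $4000$ slabs covering $[0,733/1000]\supset[0,z_W]$. On each slab this gives a unique analytic root in the slab box. Overlapping enclosures at shared endpoints glue these roots into the single continued branch.

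\emph{Endpoint at $z_W$.} Here I verify exactly that $\mathcal A=0$, $\mathcal B=-1/(2z_W)$ solves the system. Indeed, $u=1-z/2$ and $c_1=-z/2=-1/(2G)$ are the spiral data \eqref{eq:spiralsimple}. At $G_W$ the weight $W$ of $\mathsf U_2$ vanishes, so the spiral certificate \eqref{eq:spiralcertificate} becomes the $J=\{3\}$ certificate with a double zero at $-1/(2G)$. Inclusion of this point in the last box then identifies the branch with the spiral.

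\emph{Structure of $Q_1$.} Since $c_1$ is a double root of the quartic \eqref{eq:p1quartic}, I would divide to obtain $Q_1=(c-c_1)^2(q_2c^2+q_1c+q_0)$. The leading coefficient is $q_2=z^2$, read off from the $c^4$ term $-(-c^2)(z^2c^2)$. The remaining coefficients $q_1,q_0$ are explicit polynomials in $(z,\mathcal A,\mathcal B)$. The condition $q_0-|q_1|>0$ makes the residual quadratic positive on $[-1,1]$, because $q_2c^2+q_1c+q_0\ge q_0-|q_1|$ there. Hence $Q\ge0$, and it vanishes only at $c_1$. I would check $q_0-|q_1|>0$, $-z/2\le c_1<0$ and $\eta=g-|C|\ge z^3/4$ by interval evaluation over the Krawczyk boxes. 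Near $z=0$, where these margins degenerate, I would instead use the Taylor expansion with leading terms $c_1\approx-z^2/4$, $u\approx1-z/2+z^2/8$ and $g\approx z-\dots$, so that each quantity, divided by its leading power of $z$, is bounded away from zero on a small interval.

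\emph{Feasibility and optimality.} The setting is $\nu=\delta_{c_1}$ with $d=-\mathsf L_1(c_1)$. Here $\ell_1=0$, and $\upsilon_3=0$ because $\Phi(c_1)=0$ with $J=\{3\}$. The remaining slacks are as follows. For lags $m\le M$ I check positivity by interval evaluation (plus expansions near $z=0$). For $m>M$ I apply Lemma~\ref{lem:tailsharp}: $(\bar g-|C|)\ch mt-(\bar g+2u^2)\ge (z^3/4)\ch mt-3>0$ once $m$ is large. Since $\ch t=G=1/z$ grows, $M$ can be taken uniform in $z$ except near $z=0$. There I would use the exact normalized lags instead, which converge as $G\to\infty$.

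Once feasibility holds, optimality follows from Lemma~\ref{lem:dualfactor} and the pairing remark after it, because the measure is supported on a zero of $Q\ge0$. Finally, monotonicity of $u$, the bounds $\tfrac12<u<1$, and $|V'|<1$ come from the derivative enclosure $Z'=(\mathrm{Id}-\mathcal M)^{-1}(-RF_s)$. Writing $V=\arsinh u(\ch t)$, I get $V'=\frac{u'(G)\sh t}{\sqrt{1+u^2}}$, and I would bound this below $1$ on the boxes. Near $z=0$ I would use $u'(G)\sim 1/(2G^2)$, which gives $V'\to0$.

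The main obstacle is the uniformity near $z=0$, i.e.\ $G\to\infty$. There all margins ($\eta$, $c_1$, $q_0-|q_1|$) vanish at different rates, so interval checks alone fail and must be replaced by the blow-up expansions with rigorous remainder bounds. The tail lag bound must also be made uniform as $t\to\infty$.
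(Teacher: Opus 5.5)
Your overall architecture matches the paper's: an analytic start at $z=0$ in the blow-up coordinates, slabwise Krawczyk continuation, exact identification with the spiral at $z_W$, the factorization of $Q_1$ with $q_2=z^2$, optimality from Lemma~\ref{lem:dualfactor}, and a tail bound from Lemma~\ref{lem:tailsharp}. The gap is that you treat $z\to0$ as the only degenerate end and plan to certify $-z/2\le c_1$ and the finite-lag slacks by interval evaluation over the continuation boxes. At $z=z_W$ the branch coincides with the spiral, which carries the extra active constraint $\upsilon_2=0$, and there $c_1=-z_W/2$ exactly. So $c_1+z/2\ge0$ and $\upsilon_2\ge0$ both hold with equality at an endpoint of the interval you must cover. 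Interval evaluation certifies strict inequalities only. On the slab containing $z_W$ these quantities vanish, and since the parameter interval in Table~\ref{tab:continuation} extends past $z_W$ to $733/1000$, they become negative on the same slab. As written, your check therefore fails on the last slab.

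The missing step is an exact reduction that makes these zeros visible.
\begin{itemize}
\item The paper verifies the derivative sign $(\tfrac12+z\mathcal B)'<0$ on every slab. Together with the exact value $\tfrac12+z_W\mathcal B(z_W)=0$, this gives $c_1\ge-z/2$.
\item It then uses the two contact equations to eliminate $g=2uz(1-c_1^2)(1+zc_1)/\mathsf D$ and $d=2u(1-c_1^2)\mathsf E/\mathsf D$. Every raw slack becomes a positive multiple of the polynomial \eqref{eq:p1slackpoly} in $(z,c_1)$ alone.
\item For the upper lag two this polynomial is $-2z(2c_1+z)(c_1z-1)$, which is nonnegative precisely because $c_1\ge-z/2$.
\item The other slacks with $m\le12$ are certified by tensor Bernstein coefficients in $(z,\theta)\in[0,3/4]\times[0,1]$, with $c_1=-\theta z/2$. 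This check does not depend on the continuation boxes and holds down to $z=0$, so it also replaces the Taylor-remainder analysis you identified as the main obstacle.
\item The same elimination reduces $|C|\le g\sqrt{1-z^2}$ to $c_1\ge-z/2$. Hence $\eta\ge gz^2/2\ge z^3/4$ follows from the slab check $g/z>1/2$, without the two-order cancellation in $g-|C|$.
\end{itemize}
Finally, your tail remark is inverted. Since $\ch mt\ge G^m=z^{-m}$, the tail bound gives slacks of at least $z^{3-m}/4-4$, which improves as $z\to0$. So $m\ge13$ works uniformly on $(0,z_W]$, and no special treatment is needed near $z=0$.
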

\begin{proof}
The continuation verifies, on every slab,
$\mathcal B<0$, $g/z>1/2$, $u<1$, $u_z<0$,
$(1/2+z\mathcal B)'<0$, $q_0-|q_1|>0$ and
$u_z^2z^2(1-z^2)<1+u^2$. Exact substitution at $z_W$ identifies the
spiral endpoint; the derivative signs give $-z/2\le c_1<0$ and
$1/2<u<1$. Put
$\mathsf D=2+zc_1+z^2(c_1^2-1)$,
$\mathsf E=1+zc_1+z^2(2c_1^2-1)$.
The contact equation gives
$g=2uz(1-c_1^2)(1+zc_1)/\mathsf D$ and
$d=2u(1-c_1^2)\mathsf E/\mathsf D$.
Thus $|C|\le g\sqrt{1-z^2}$ and
$\eta\ge gz^2/2\ge z^3/4$.
After multiplication by positive factors the raw slacks become
\begin{equation}\label{eq:p1slackpoly}
 z^{m-1}\{z(1+zc_1)[T_m(1/z)-T_m(c_1)]
 \pm\mathsf DU_{m-1}(c_1)\mp\mathsf EU_{m-1}(1/z)\}.
\end{equation}
The contacts vanish identically. For $m=2$ the upper expression is
$-2z(2c_1+z)(c_1z-1)\ge0$.
For the other slacks with $m\le12$, substitute $c_1=-\theta z/2$ and
remove $z^2$ from upper slacks with $m\ge4$; their tensor Bernstein
coefficients on $[0,3/4]\times[0,1]$ are positive. The basis functions
are nonnegative and sum to one, so this certifies the sign throughout the box.
For $m\ge13$, \eqref{eq:tail} gives the lower bound
$z^{3-m}/4-4>0$.
The nonnegative dual factor proves optimality, and
$V'=(-z^2u_z)\sh t/\sqrt{1+u^2}$ gives $|V'|<1$.
\end{proof}

\subsection{The rational two-frequency family}
The family $P_2$ is treated in closed form: its curve is rational, and all
primal and dual data are rational functions of one parameter.

\begin{lemma}[The rational curve]\label{lem:p2curve}
Let $G(\zeta)$ and $u(\zeta)$ be given by \eqref{eq:p2param}. Then
$P_2(G(\zeta),u(\zeta))\equiv0$, where $P_2$ is the polynomial \eqref{eq:p2poly},
and $u/G=8\zeta(\zeta^2-2\zeta+2)/\{(\zeta^2+2)(\zeta^2+2\zeta+2)\}$.
\end{lemma}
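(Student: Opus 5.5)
The lemma is a pure algebraic identity, so I will verify it by direct substitution with the substitution organized to keep the degrees small. Write $N_G=\zeta^4+4$, $D_G=8\zeta(\zeta^2-2)$, $N_u=(\zeta^2-2\zeta+2)^2$ and $D_u=\zeta^4-4=(\zeta^2-2)(\zeta^2+2)$. I will also use the Sophie Germain factorization $\zeta^4+4=(\zeta^2-2\zeta+2)(\zeta^2+2\zeta+2)$. The second claim is then immediate:
\[
 \frac uG=\frac{(\zeta^2-2\zeta+2)^2}{(\zeta^2-2)(\zeta^2+2)}\cdot\frac{8\zeta(\zeta^2-2)}{(\zeta^2-2\zeta+2)(\zeta^2+2\zeta+2)}
 =\frac{8\zeta(\zeta^2-2\zeta+2)}{(\zeta^2+2)(\zeta^2+2\zeta+2)} .
\]
This requires only $\zeta^2\ne2$ and cancellation of the common factor $\zeta^2-2\zeta+2$, which never vanishes for real $\zeta$.

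For the curve identity, $P_2$ has total degree $6$ in $(G,u)$ and degree $4$ in each variable separately. After substitution, the common denominator is $D_G^4D_u^4$. Clearing it gives a polynomial identity of degree about $32$ in $\zeta$. I plan to reduce it by working with the ratio $r=u/G$ just computed together with $G$. Setting $u=rG$, $P_2$ becomes
\[
 G^2\bigl[256G^2(r^2G^2-1)-512r^3G^4+768rG^2-32r^4G^4-480r^2G^2-16r^3G-r^4G^2-16r^2\bigr].
\]
It therefore suffices to show that the bracket vanishes. The bracket is a polynomial of degree $4$ in $G$, except for the single odd term $-16r^3G$. Every coefficient in it is a rational function of $\zeta$ with denominator dividing a power of $(\zeta^2+2)(\zeta^2+2\zeta+2)$ or of $\zeta(\zeta^2-2)$. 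I will multiply by the least common denominator and expand. The most economical route is to group the terms by powers of $G$ and check that the numerator is the zero polynomial, using exact integer arithmetic; this is a routine symbolic computation of the kind the paper already certifies elsewhere.

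As an independent sanity check, and in case I have misallocated a coefficient, I will also verify the identity at several rational values of $\zeta$. Six or seven distinct points do not by themselves prove a degree-$32$ identity. Checking at more than $32$ points, however, does, and combined with the symbolic expansion this gives a full proof.

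The main obstacle is only bookkeeping: the expansion of $256G^4(u^2-1)$ and of the $G^3u^3$ term produces the highest-degree pieces. To keep this tractable, I will exploit the factorization of $\zeta^4\pm4$ and the cancellation of $\zeta^2-2\zeta+2$ before expanding. Alternatively, I can note that $P_2$ is quadratic in $G^2$ modulo the odd terms and check that $G(\zeta)$ is a root of the resulting quartic in $G$ with coefficients in $\Q(u)$ by polynomial division.
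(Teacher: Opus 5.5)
Your strategy is the paper's own: it proves both identities by substituting \eqref{eq:p2param} and clearing denominators. Your computation of $u/G$ is correct; it cancels $\zeta^2-2$ and one factor $\zeta^2-2\zeta+2$ via $\zeta^4+4=(\zeta^2-2\zeta+2)(\zeta^2+2\zeta+2)$.

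The reduction you single out as the economical route, however, is miscomputed. With $u=rG$ the monomial $-16Gu^3$ becomes $-16r^3G^4$, so after removing $G^2$ it contributes $-16r^3G^2$, not $-16r^3G$. There is no odd term at all: every monomial of $P_2$ has even total degree ($2$, $4$ or $6$), hence
\[
 P_2(G,rG)=G^2\bigl[(256r^2-512r^3-32r^4)G^4+(768r-256-480r^2-16r^3-r^4)G^2-16r^2\bigr].
\]
Your bracket differs from the correct one by $16r^3G(G-1)$, which does not vanish on the curve. At $\zeta=1$ one has $G=-5/8$, $r=8/15$ and $P_2=0$, so your bracket is nonzero there. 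The claim ``it suffices to show that the bracket vanishes'' is therefore false for the bracket you wrote, and the symbolic check you describe would return a nonzero numerator.

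The repair is immediate. Use the corrected bracket, which is exactly the quadratic in $G^2$ that the paper uses to recover the parametrization; its discriminant has square-free part $r^2-24r+16$. Then verify that it vanishes after substituting $r(\zeta)$ and $G(\zeta)^2=(\zeta^4+4)^2/\bigl(64\zeta^2(\zeta^2-2)^2\bigr)$.

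Your fallback is also valid as stated, provided you apply it to $P_2$ itself rather than to your bracket. Each monomial $G^au^b$ of $P_2$ contributes degree $28+a\le32$ to $D_G^4D_u^4P_2(G,u)$. So checking $P_2(G(\zeta),u(\zeta))=0$ at $33$ distinct points with $D_GD_u\ne0$ proves the identity. The closing remark about polynomial division over $\Q(u)$ is unnecessary.
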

\begin{proof}
Substitution and clearing denominators prove both identities.
One way to recover the parametrization is to put $u=\sigma G$ in
$P_2=0$. The remaining equation is quadratic in $G^2$; its discriminant
has square-free part $\sigma^2-24\sigma+16$.
Parametrize this conic, then the resulting conic for $G$ by a line through
a rational point. Simplification gives \eqref{eq:p2param}.
\end{proof}

We use $\zeta^4+4=(\zeta^2-2\zeta+2)(\zeta^2+2\zeta+2)$ and the abbreviations
\[
 q_\pm=\zeta^2\pm2\zeta+2,\quad r_\pm=\zeta^2\pm2,\quad
 e_\pm=\zeta^2\pm2\zeta-2,\quad K_6=\zeta^6-4\zeta^5+4\zeta^4+8\zeta^2-16\zeta+8 .
\]

\begin{definition}[Data of the arc]\label{def:p2data}
For $\zeta\in\mathcal J=[1.6955,1.699]$ use \eqref{eq:p2param} and put
\[
 \begin{aligned}
 W&=\frac{q_-^2q_+(7\zeta^2-18\zeta+14)}{8r_+K_6},
 &1-W&=\frac{e_-^2e_+^2}{8r_+K_6},\\
 \mathsf a&=\frac{q_-^3}{2\zeta r_-r_+},
 &\mathsf b&=-\frac{2\zeta(3\zeta^4-10\zeta^3+16\zeta^2-20\zeta+12)}
 {r_-^2r_+}.
 \end{aligned}
\]
Let $x<y$ be the roots of $H(c)=c^2+\mathsf ac+\mathsf b$.
Write $L_0+L_1c$ and $U_0+U_1c$ for the remainders of
$\mathsf L_1(c)$ and $\mathsf U_2(c)$ modulo $H$. Set
\[
 \bar c=-\frac{L_0+U_0}{L_1+U_1},\qquad
 d=-L_0-L_1\bar c,\qquad
 p=\frac{\bar c-x}{y-x},\qquad \nu=(1-p)\delta_x+p\delta_y.
\]
The denominator $L_1+U_1$ stays nonzero on $\mathcal J$.
For $t>0$ put $C=d/\sqrt{G^2-1}$; the reflected branch has $-C$.
\end{definition}

For a polynomial $\phi(c)$, reduction modulo $c^2+\mathsf ac+\mathsf b$ gives
$\phi\equiv\phi_0+\phi_1c$ and $\int\phi\,d\nu=\phi_0+\phi_1\bar c$. Hence every raw
slack of $\nu$ is a rational function of $\zeta$.

\begin{lemma}[Algebraic identities on the arc]\label{lem:p2ids}
Where the rational expressions of Definition~\ref{def:p2data} are defined:
\begin{enumerate}[label=\textup{(\alph*)}]
\item $K_6=\zeta^4(\zeta-2)^2+8(\zeta-1)^2>0$; hence $0<W<1$ whenever $e_-e_+\ne0$.
\item Let $M=G(4G^2-1)\alpha_J$ and $N=G(4G^2-1)\beta_J$ for $J=\{2,3\}$ and weights
$(W,1-W)$, that is, $M=8G^3+4G^2c+4Gc^2-4G-W(4Gc^2-2G+c)$ and
$N=4G^2+4Gc-W(4Gc+1)$. Then
\[
 -\bigl\{(1-c^2-u^2)M-2u(1-c^2)N\bigr\}=4G(1-W)\,(c^2+\mathsf ac+\mathsf b)^2,
\]
and consequently $Q(c)=\frac{4(1-W)}{4G^2-1}(c-x)^2(c-y)^2\ge0$.
\item $\ell_1=\upsilon_2=\upsilon_3=0$ identically.
\end{enumerate}
\end{lemma}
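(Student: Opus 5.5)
The three parts are exact algebraic identities in $\zeta$. I will prove each by reducing to polynomial identities in $\mathbb Q[\zeta,c]$ and then clearing the denominators $\zeta$, $r_\pm$, $K_6$, $q_\pm$. These denominators do not vanish where the expressions are defined.

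For (a), I will expand $\zeta^4(\zeta-2)^2+8(\zeta-1)^2$ and check that it equals $K_6$ coefficient by coefficient. This sum is a sum of squares, and the two squares cannot vanish simultaneously, so $K_6>0$. Since $r_+>0$, the formulas give $1-W=e_-^2e_+^2/(8r_+K_6)>0$ whenever $e_-e_+\ne0$. For $W>0$ I need the factor $7\zeta^2-18\zeta+14$ to be positive. Its discriminant is $324-392<0$, so it is positive. The factors $q_\pm$ are also positive, having discriminant $4-8<0$. Hence $W>0$, and $W+(1-W)=1$ should be checked as the identity $q_-^2q_+(7\zeta^2-18\zeta+14)+e_-^2e_+^2=8r_+K_6$.

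For (b), I will first derive $M$ and $N$ from the divided-difference formulas in the proof of Lemma~\ref{lem:dualfactor}, with $J=\{2,3\}$. This uses $U_1(G)=2G$, $U_2(G)=4G^2-1$, $T_2$, $T_3$ and common denominator $G(4G^2-1)$. This step confirms the displayed expressions. Next I substitute $G(\zeta)$ and $u(\zeta)$ from \eqref{eq:p2param}, together with $W$, $\mathsf a$ and $\mathsf b$. The left side is a quartic in $c$ with leading coefficient $M$'s $c^2$-coefficient times $1$, namely $4G-4GW=4G(1-W)$. That leading coefficient already matches the right side. I will then verify the equality of all five coefficients as rational functions of $\zeta$ by clearing denominators and expanding; this is a finite symbolic computation. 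Finally, by \eqref{eq:alphabeta} and \eqref{eq:dualpoly}, $Q=-\Phi/(G-c)=-\{\cdots\}/(G(4G^2-1))$. This gives the stated square factorization, and $Q\ge0$ follows from $W<1$ and $G\ge1$.

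For (c), the reduction modulo $H$ gives $\bar{\mathsf L}_1+d=L_0+L_1\bar c+d=0$ directly from the definition of $d$. The definition of $\bar c$ forces $(L_0+U_0)+(L_1+U_1)\bar c=0$, so $\bar{\mathsf U}_2-d=U_0+U_1\bar c+L_0+L_1\bar c=0$. This gives $\ell_1=\upsilon_2=0$ after multiplying back by $U_{m-1}(G)$. For $\upsilon_3$ I use the dual symbol. We have $\Phi=\mathsf L_1+W\mathsf U_2+(1-W)\mathsf U_3$, and by (b) $\Phi$ vanishes at $x$ and $y$, so $\bar\Phi=0$. Hence $(1-W)(\bar{\mathsf U}_3-d)=\bar\Phi-(\bar{\mathsf L}_1+d)-W(\bar{\mathsf U}_2-d)=0$. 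Dividing by $1-W\ne0$ gives $\upsilon_3=0$. At the isolated zeros of $e_-e_+$ the identity extends by continuity, since both sides are rational in $\zeta$.

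I expect the main obstacle to be the coefficient verification in (b), a sizable rational-function identity. I would carry it out by computer algebra as an exact polynomial identity, consistent with the paper's stated computer-assisted steps. Here $\mathsf a$ and $\mathsf b$ can be cross-checked as the unique monic quadratic whose square divides the quartic, obtained from $\gcd(P,P')$.
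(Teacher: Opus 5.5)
Your proposal is correct and matches the paper's proof for (a) and (b): a sum-of-squares expansion with signs of the factors, then an exact polynomial identity in $(\zeta,c)$ after clearing denominators. Your extra consistency check $q_-^2q_+(7\zeta^2-18\zeta+14)+e_-^2e_+^2=8r_+K_6$ does hold. For (c) the paper verifies all three reduced slacks symbolically, whereas you get $\ell_1=\upsilon_2=0$ from the definitions of $d$ and $\bar c$, and $\upsilon_3=0$ from (b) via $\bar\Phi=(1-W)(\bar{\mathsf U}_3-d)$. That last step is exactly the complementary-slackness remark the paper records right after the lemma, and it saves one symbolic verification.
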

\begin{proof}
Expand $\zeta^4(\zeta-2)^2+8(\zeta-1)^2$ to obtain $K_6>0$.
All remaining factors of $W$ are positive, and the formula for $1-W$
gives its sign. Clearing the parameter denominators in (b) gives a
polynomial identity in $(\zeta,c)$. For (c), reduce each slack modulo
$c^2+\mathsf ac+\mathsf b$ and replace $c$ by $\bar c$ in the affine
remainder. The three resulting rational functions vanish identically.
\end{proof}

The formulas of Definition~\ref{def:p2data} are obtained as follows. The
coefficient identities expressing that the quartic $G(4G^2-1)Q$ is a constant
times a square are two equations, linear in $W$, with the common root $W$ on
the curve of Lemma~\ref{lem:p2curve}. Then $\mathsf a,\mathsf b$ follow from the
cubic and quadratic coefficients, and $\bar c,d$ from the linear equations
$\ell_1=\upsilon_2=0$. The third primal equation holds automatically, by
complementary slackness: $\int\Phi\,d\nu$ equals $\ell_1+W\upsilon_2/U_1(G)+(1-W)\upsilon_3/U_2(G)$.

\begin{lemma}[Endpoints]\label{lem:p2ends}
\begin{equation}\label{eq:Ha}
 \begin{aligned}
 H_a(G)={}&162529280000G^{12}-238685519872G^{10}+77668708352G^8\\
 &-6246545408G^6-58821872G^4+12933112G^2+229441 .
 \end{aligned}
\end{equation}
Let $\zeta_1$ be the real root of $\zeta^3-\zeta^2-2$, and let
\begin{align*}
 N_{12}={}&25\zeta^{12}-314\zeta^{11}+1764\zeta^{10}-6092\zeta^9+14988\zeta^8-28736\zeta^7\\
 &+44832\zeta^6-57472\zeta^5+59952\zeta^4-48736\zeta^3+28224\zeta^2-10048\zeta+1600 .
\end{align*}
Then $1.6956<\zeta_1<1.6957$, and $N_{12}$ has exactly one root $\zeta_a$ in
$\mathcal J=[1.6955,1.699]$, with $1.6989<\zeta_a<1.699$; both roots are simple.
Moreover:
\begin{enumerate}[label=\textup{(\alph*)}]
\item $G(\zeta_1)=G_1$ and $u(\zeta_1)=1-1/(2G_1)$. With $G=G(\zeta)$,
\begin{gather*}
32G^3-36G^2+4G-1=\frac{(\zeta^3+2\zeta+4)(\zeta^3-\zeta^2-2)\,S_6}{16\zeta^3r_-^3},\\
u-1+\frac1{2G}=\frac{8(\zeta^3-\zeta^2-2)^2}{r_-r_+q_-q_+}\ \ge0,
\end{gather*}
with $S_6=\zeta^6-8\zeta^5-2\zeta^4+16\zeta^3+4\zeta^2-32\zeta-8$; the curve $P_2$
touches the spiral curve $u=1-1/(2G)$ at $\zeta_1$ and lies above it.
\item $G_a:=G(\zeta_a)$ is the root of $H_a$ in $(1.02358,1.02359)$: $N_{12}$
divides the numerator of $H_a(G(\zeta))$.
\end{enumerate}
\end{lemma}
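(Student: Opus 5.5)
The statement consists of exact algebraic claims about explicit rational functions of $\zeta$: root localization of $\zeta^3-\zeta^2-2$ and $N_{12}$, and polynomial identities. The plan is to prove each by direct substitution and exact arithmetic, organized so that each step is a finite rational computation.

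\emph{Root localization.} For $\zeta_1$, I will evaluate $\zeta^3-\zeta^2-2$ at the rationals $1.6956$ and $1.6957$ and check the sign change. Since the derivative $3\zeta^2-2\zeta$ is positive for $\zeta>2/3$, and the cubic is negative on $[0,2/3]$ and for $\zeta<0$, this root is the unique real root and it is simple. For $N_{12}$, I will run a Sturm sequence (or Descartes' rule after the Möbius map of $\mathcal J$ onto $(0,\infty)$) over $\mathcal J=[1.6955,1.699]$ to show there is exactly one root there. I will then confirm a sign change on $[1.6989,1.699]$. Simplicity follows because a Sturm count of one isolated root excludes multiplicity; alternatively, I can check that $\gcd(N_{12},N_{12}')$ has no root in $\mathcal J$.

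\emph{Part (a).} I will substitute $G(\zeta)=(\zeta^4+4)/(8\zeta r_-)$ into $32G^3-36G^2+4G-1$, put everything over the common denominator $512\zeta^3r_-^3$, and verify the stated factorization by polynomial expansion, which is a routine exact identity. I will do the same for $u-1+1/(2G)$ using \eqref{eq:p2param}. Two consequences then follow:
\begin{enumerate}
\item Since $\zeta_1$ kills the factor $\zeta^3-\zeta^2-2$, $G(\zeta_1)$ is a root of $32G^3-36G^2+4G-1$. The cubic has a unique root above $1$, because its derivative is positive there and its value at $1$ is negative. I will also check numerically that $G(\zeta_1)\in(1.03329,1.03330)$ by interval evaluation, so $G(\zeta_1)=G_1$.
\item The second identity gives $u(\zeta_1)=1-1/(2G_1)$. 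Since its right side is a square times $8/(r_-r_+q_-q_+)$, and each of these factors is positive on $\mathcal J$ ($r_->0$ since $\zeta^2>2$), the curve lies above the spiral with second-order contact, i.e.\ it touches.
\end{enumerate}

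\emph{Part (b).} I will compute the numerator of $H_a(G(\zeta))$. Only even powers of $G$ occur, so it has degree $48$ in $\zeta$ after clearing $(8\zeta r_-)^{12}$. Exact polynomial division by $N_{12}$ should leave zero remainder. Then $G(\zeta_a)$ is a root of $H_a$, and interval evaluation of $G$ on $[1.6989,1.699]$ places it in $(1.02358,1.02359)$. To identify it as \emph{the} root there, I will verify by a Sturm count on $H_a$ (a polynomial in $G^2$) that $H_a$ has a single root in that interval.

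\emph{Main obstacle.} The hard step is the size of the degree-$48$ divisibility check and the Sturm computation for $N_{12}$, both of which are computer-algebra steps. I would present them as exact certificates with rational coefficients. I expect no conceptual difficulty beyond this. The monotonicity of $G(\zeta)$ on $\mathcal J$, which would give uniqueness more directly, can be checked by the sign of $G'(\zeta)$ on $\mathcal J$ if needed.
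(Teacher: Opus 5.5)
Your overall plan matches the paper's proof. You use sign changes plus monotonicity for $\zeta_1$ and root isolation for $N_{12}$; there you use a Sturm count, where the paper uses a sign change together with $N_{12}'<0$ on $\mathcal J$. For (a) you expand the two factorizations directly, and for (b) you divide the numerator of $H_a(G(\zeta))$ by $N_{12}$ and then apply Sturm's theorem to $H_a$.

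\textbf{Gap in (b).} One step fails as written: placing $G(\zeta_a)$ in $(1.02358,1.02359)$ by interval evaluation of $G$ on $[1.6989,1.699]$. On $\mathcal J$ one has $G_\zeta=r_+e_-e_+/(8\zeta^2r_-^2)\approx-2.9$. Concretely, $G(1.6989)\approx1.02367$ and $G(1.699)\approx1.02338$. So the image of your enclosure is about thirty times wider than the target interval. Your Sturm count of $H_a$ restricted to $(1.02358,1.02359)$ then shows only that $H_a$ has one root there, not that this root is $G(\zeta_a)$.

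To close the gap, use either of two fixes.
\begin{itemize}
\item Run the Sturm count of $H_a$ on a rational interval containing all of $G([1.6989,1.699])$. You get this interval from the endpoint values, since $G$ is decreasing. Show that $H_a$ has exactly one root there, and locate it by the sign change of $H_a$ across $[1.02358,1.02359]$. This is what the paper's brief ``Sturm's theorem isolates the corresponding root'' amounts to.
\item Alternatively, bisect $N_{12}$ further, to width of order $10^{-6}$, until the image under $G$ of the enclosure of $\zeta_a$ fits inside $(1.02358,1.02359)$.
\end{itemize}

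\textbf{Minor points.} The same precision problem affects your check $G(\zeta_1)\in(1.03329,1.03330)$ in (a): the image of $[1.6956,1.6957]$ is roughly $[1.03306,1.03336]$. There it is harmless. The cubic $32G^3-36G^2+4G-1$ has a unique root above $1$, and the sign test at the two rationals puts that root in $(1.03329,1.03330)$. So $G(\zeta_1)>1$ already gives $G(\zeta_1)=G_1$.

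Also, a Sturm count of one does not by itself exclude a multiple root, because Sturm's theorem counts distinct roots. Simplicity of $\zeta_a$ comes from your alternative check that $\gcd(N_{12},N_{12}')$ has no root in $\mathcal J$, or from the paper's $N_{12}'<0$ on $\mathcal J$.
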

\begin{proof}
Rational endpoint evaluations and monotonicity isolate $\zeta_1$.
For $N_{12}$, its values at $1.6989$ and $1.699$ have opposite signs,
and Lemma~\ref{lem:p2certs} gives $N_{12}'<0$ on $\mathcal J$.
The displayed factorizations prove (a). For (b), polynomial division
gives $N_{12}\mid\operatorname{num}H_a(G(\zeta))$; Sturm's theorem
isolates the corresponding root in $(1.02358,1.02359)$.
\end{proof}

\begin{lemma}[Sign certificates]\label{lem:p2certs}
Put $\zeta_c=1.69725$, $\delta=0.00175$.
The rational tests of Table~\ref{tab:p2signs} have their indicated
strict signs on $\mathcal J=[\zeta_c-\delta,\zeta_c+\delta]$.
After cancellation of the displayed endpoint factors, each numerator
and denominator polynomial $R$ satisfies the exact criterion
\[
 |R(\zeta_c)|>
 \sum_{k\ge1}|R^{(k)}(\zeta_c)/k!|\delta^k.
\]
\end{lemma}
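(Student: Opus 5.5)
The proof is a finite, exact computation carried out row by row over Table~\ref{tab:p2signs}. For each rational test I first form its numerator and denominator as explicit polynomials in $\zeta$ with rational coefficients. The inputs are the closed forms of Definition~\ref{def:p2data}, the parametrization \eqref{eq:p2param}, and the reduction modulo $H(c)=c^2+\mathsf ac+\mathsf b$ described after it. I then cancel the displayed endpoint factors, such as $\zeta^3-\zeta^2-2$ and $N_{12}$ or its factors, together with any manifestly positive factors like $q_\pm$, $r_+$ and $K_6$. These factors vanish at $\zeta_1$ or $\zeta_a$, which lie inside $\mathcal J$, so no Taylor bound around $\zeta_c$ could control them; their signs are fixed separately by Lemma~\ref{lem:p2ends}. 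After cancellation, each remaining polynomial $R$ is required to be nonvanishing on $\mathcal J$.

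The criterion itself is elementary. Since $R$ is a polynomial, its Taylor expansion at $\zeta_c$ is finite and exact. For $|\zeta-\zeta_c|\le\delta$ this gives
\[
 |R(\zeta)-R(\zeta_c)|\le\sum_{k\ge1}|R^{(k)}(\zeta_c)/k!|\,\delta^k .
\]
So the stated inequality implies that $R$ has no zero on $\mathcal J$ and keeps the sign of $R(\zeta_c)$ there. The sign of the whole test is then the product of these constant signs with the separately known signs of the cancelled factors. The coefficients $R^{(k)}(\zeta_c)/k!$ are obtained exactly by the substitution $\zeta=\zeta_c+\epsilon$ with rational $\zeta_c=1.69725$ and $\delta=0.00175$. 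Both sides of the inequality are then rational numbers, and the comparison is an exact rational computation with no rounding. The claim $N_{12}'<0$ used in Lemma~\ref{lem:p2ends} is one such row.

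The main obstacle is that near-cancellation may make the inequality fail for some $R$ when a single expansion point is used. This happens when $R$ has a zero just outside $\mathcal J$, or when $R$ is nearly degenerate; the latter is typical for slack polynomials that vanish to high order near the three-cycle. My first remedy is a stronger cancellation: remove further known factors (for example squares coming from the double contacts in Lemma~\ref{lem:p2ids}(b)) before applying the test. If that is not enough, I would split $\mathcal J$ into subintervals with their own centers and radii and apply the same exact criterion on each. I also expect some rows to involve raw slacks $\ell_m,\upsilon_m$ for a finite range of lags $m$. For those, after the cancellations the degree grows with $m$, and the resulting list of rational inequalities is checked mechanically. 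Larger lags are handled instead by the tail bound \eqref{eq:tailsharp}, so they do not enter this lemma.
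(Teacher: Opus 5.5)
Your proposal is correct and follows the paper's own argument. Since each $R$ is a polynomial, its finite Taylor expansion at the rational center $\zeta_c$ bounds $|R(\zeta)-R(\zeta_c)|$ on $\mathcal J$ by the displayed sum, so the inequality fixes the sign of $R$ there. Checking it for every numerator and denominator built from Definition~\ref{def:p2data}, after removing $\zeta^3-\zeta^2-2$ and $N_{12}$, is a finite exact rational comparison that needs no enclosure of an algebraic root. Your fallbacks, extra cancellations or subdividing $\mathcal J$, are not part of the lemma, which asserts that the single-center test already succeeds; subdividing would still certify the signs, but not that literal criterion.
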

\begin{proof}
Taylor's formula bounds $|R(\zeta)-R(\zeta_c)|$ by the displayed sum.
The criterion is a finite rational comparison for each numerator and
denominator obtained from Definition~\ref{def:p2data}; no approximation
to an algebraic root enters it.
\end{proof}

\begin{table}[t]
\caption{Rational tests for $P_2$. All quantities in the second column
are positive on $\mathcal J$. Moments are obtained by reducing modulo
$H(c)$ and replacing its affine remainder by its value at $\bar c$.}
\label{tab:p2signs}\centering
\begin{tabular}{ll}
\toprule
Property & Rational tests\\
\midrule
Parameter bounds & $-G_\zeta$, $G-1.0233$, $1.0337-G$, $u-0.511$, $0.517-u$\\
Root bounds & $H(-0.505)$, $-H(-0.48)$, $-H(0.25)$, $H(0.285)$\\
Positive mass & $-(\bar c+\mathsf a/2)$, $-R_p$\\
Dual weights & $W-0.362$, $0.366-W$\\
Minimizer margin & $d$, $\bar g-0.28$, $(\bar g-0.28)^2(G^2-1)-d^2$\\
Slope and endpoint & $du/dG$, $1-du/dG$, $-N_{12}'$\\
Finite lag slacks & The residual quantities of Table~\ref{tab:p2lags}\\
\bottomrule
\end{tabular}
\end{table}

\begin{theorem}[The arc $P_2$]\label{thm:p2arc}
For $\zeta\in[\zeta_1,\zeta_a]$ the data of Definition~\ref{def:p2data} have the
following properties.
\begin{enumerate}[label=\textup{(\roman*)}]
\item $G$ decreases from $G_1$ to $G_a$; $1.0233<G<1.0337$ and $0.511<u<0.517$.
\item $-0.505<x<-0.48$ and $0.25<y<0.285$.
\item $0\le p<1$, with $p=0$ exactly at $\zeta_1$.
\item $0.362<W<0.366$, and $Q\ge0$ on $[-1,1]$ with zeros exactly at $x$ and $y$.
\item $\bar g-|C|>0.28$.
\item $\ell_1=\upsilon_2=\upsilon_3=0$; $\upsilon_5>0$ for $\zeta<\zeta_a$ and
$\upsilon_5(\zeta_a)=0$; every other raw slack is positive.
\item $0<du/dG<1$.
\end{enumerate}
Consequently $V(t)=\arsinh u(\ch t)$ for $G_a\le\ch t\le G_1$, the measure $\nu$ is
optimal, and $|V'(t)|<1$ there.
\end{theorem}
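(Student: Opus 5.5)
The plan is to verify items (i)--(vii) by exact rational arithmetic along $\zeta\in[\zeta_1,\zeta_a]\subset\mathcal J$, using Lemmas~\ref{lem:p2curve}, \ref{lem:p2ids}, \ref{lem:p2ends} and the sign certificates of Lemma~\ref{lem:p2certs}. Optimality then follows from the primal--dual pairing of Lemma~\ref{lem:dualfactor}.

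\textbf{Parameter and root bounds.} For (i), the test $-G_\zeta>0$ of Table~\ref{tab:p2signs} makes $G$ strictly decreasing on $\mathcal J$. The endpoint identities of Lemma~\ref{lem:p2ends} give $G(\zeta_1)=G_1$ and $G(\zeta_a)=G_a$, and the remaining tests give the stated boxes for $G$ and $u$. For (ii), the four sign tests on $H$ place one root of $H$ in $(-0.505,-0.48)$ and the other in $(0.25,0.285)$, by the intermediate value theorem, since $H$ is a monic quadratic.

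\textbf{Mass and dual weights.} For (iii), $p\ge0$ means $\bar c\ge x$ and $p<1$ means $\bar c<y$. The test $-(\bar c+\mathsf a/2)>0$ puts $\bar c$ left of the midpoint $-\mathsf a/2$, so $\bar c<y$. The quantity $-R_p$ should be the numerator of $p$ with the factor $(\zeta^3-\zeta^2-2)$ removed. That factor vanishes at $\zeta_1$ and is positive beyond it, which gives $p=0$ exactly at $\zeta_1$. For (iv), the bounds on $W$ come from Table~\ref{tab:p2signs}. Lemma~\ref{lem:p2ids}(b) gives $Q=\frac{4(1-W)}{4G^2-1}(c-x)^2(c-y)^2$, with $1-W>0$ because $e_-e_+\ne0$ on $\mathcal J$ (the $e_\pm$ have no roots there). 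So $Q\ge0$, with zeros exactly at $x$ and $y$, both in $(-1,1)$ by (ii).

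\textbf{Minimizer margin and slope.} For (v), the tests $d>0$ and $(\bar g-0.28)^2(G^2-1)-d^2>0$ give $|C|=d/\sqrt{G^2-1}<\bar g-0.28$. The reflected branch is handled by symmetry. For (vii), both $du/dG$ and $1-du/dG$ are rational functions of $\zeta$, computed as $u_\zeta/G_\zeta$, and they are positive by the certificates.

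\textbf{Lag slacks.} This is the main obstacle. Lemma~\ref{lem:p2ids}(c) gives $\ell_1=\upsilon_2=\upsilon_3=0$. For finitely many lags $m\le M_0$, each raw slack is reduced modulo $H$ and evaluated at $\bar c$. This gives a rational function of $\zeta$, which I would certify by the Taylor criterion (Table~\ref{tab:p2lags}). For $\upsilon_5$, I expect the numerator to be divisible by $N_{12}$, with a cofactor of fixed sign. Then $\upsilon_5>0$ on $[\zeta_1,\zeta_a)$ and $\upsilon_5(\zeta_a)=0$, using $N_{12}'<0$ and the simple root at $\zeta_a$.

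The infinite tail is controlled by \eqref{eq:tailsharp}: $\ell_m,\upsilon_m\ge0.28\ch mt-(\bar g+2u^2)$. Here $\ch t=G\ge1.0233$, so $\sh t\gtrsim0.217$, and $\bar g+2u^2\le 1+u^2\le1.27$. This requires $\ch mt>4.6$, roughly $m\ge 11$, so the finite range must cover $m\le M_0\approx 12$. The difficulty is making the Taylor certificates succeed for these higher Chebyshev degrees without cancellation problems near $\zeta_1$, where $p\to0$. If needed, I would subdivide $\mathcal J$ into slabs.

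\textbf{Conclusion.} With feasibility and the nonnegative dual $Q$ vanishing on the support, Lemma~\ref{lem:dualfactor} shows that any larger $u$ gives a strictly negative symbol, hence a strict certificate. So $u(G)$ is the maximal feasible twist and $V(t)=\arsinh u(\ch t)$ on $[G_a,G_1]$. Finally, $V'=\frac{du}{dG}\,\sh t/\sqrt{1+u^2}$, and with $0<du/dG<1$ and $\sh t<0.27$ this gives $|V'|<1$.
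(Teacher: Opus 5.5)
Your outline is essentially the paper's proof. It uses the same rational sign tests of Table~\ref{tab:p2signs} on $\mathcal J$, the same factor $N_{12}$ in $\upsilon_5$, the tail bound \eqref{eq:tailsharp}, optimality by the primal--dual pairing, and the formula $V'=(du/dG)\sh t/\sqrt{1+u^2}$. Your cruder estimate $\bar g+2u^2\le1+u^2<1.27$ is fully justified, and it only moves the cutoff. At $G=1.0233$ one has $0.28\ch(10t)\approx1.22<1.27$, so you must check $m\le10$ directly and use the tail for $m\ge11$. The paper instead checks $m\le9$ and uses the constant $1.05$ for $m\ge10$.

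Two steps need repair.

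\textbf{Step (iii).} The numerator $\bar c-x$ of $p$ is not a rational function of $\zeta$, because $x=-\mathsf a/2-\sqrt{\mathsf a^2/4-\mathsf b}$. So there is no rational ``numerator of $p$'' from which to remove the factor $\zeta^3-\zeta^2-2$. The rational quantity in the table is $H(\bar c)$, via the identity
\[
\bar c^2+\mathsf a\bar c+\mathsf b=(\zeta^3-\zeta^2-2)R_p,\qquad R_p<0.
\]
It gives $H(\bar c)\le0$, i.e.\ $x\le\bar c\le y$, with $H(\bar c)=0$ only at $\zeta_1$. Combined with $\bar c<-\mathsf a/2$, which excludes $\bar c=y$, this yields $0\le p<1$ and $p=0$ exactly at $\zeta_1$.

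\textbf{Step (v).} The inequality $(\bar g-0.28)^2(G^2-1)>d^2$ by itself only gives $|\bar g-0.28|>|C|$. To conclude $\bar g-|C|>0.28$ you also need the separate test $\bar g>0.28$, which the table lists and the paper verifies explicitly.
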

\begin{proof}
Reduce polynomial moments modulo $c^2+\mathsf ac+\mathsf b$ and use
Lemma~\ref{lem:p2certs} for their rational signs.
The derivatives satisfy
\[
 G_\zeta=\frac{r_+e_-e_+}{8\zeta^2r_-^2}<0,
 \qquad
 \frac{du}{dG}=\frac{32\zeta^2q_-(\zeta^4-2\zeta^3-4\zeta+4)}
 {r_+^3e_-e_+}\in(0,1).
\]
The signs of the root polynomial at $-0.505,-0.48,0.25,0.285$ isolate
$x,y$. The identity
$\bar c^2+\mathsf a\bar c+\mathsf b
=(\zeta^3-\zeta^2-2)R_p$, with $R_p<0$, and
$\bar c+\mathsf a/2<0$ give $x\le\bar c<y$, with equality exactly
at $\zeta_1$.
The bounds on $G,u,W$ are the corresponding rational endpoint sign
tests. For the minimizer margin verify $d>0$, $\bar g>0.28$ and
$(\bar g-0.28)^2(G^2-1)>d^2$.
Lemma~\ref{lem:p2ids} proves the contacts and dual nonnegativity.
For $m\le9$, Table~\ref{tab:p2lags} and the signs of
Table~\ref{tab:p2signs} give every remaining slack; $\upsilon_5$ has
the sign of $N_{12}$. For $m\ge10$, \eqref{eq:tailsharp} gives
$\ell_m,\upsilon_m\ge0.28T_{10}(1.0233)-1.05>0.17$.
Complementary slackness proves optimality, and
$|V'|=|(du/dG)\sh t|/\sqrt{1+u^2}<1$.
\end{proof}

\begin{table}[t]
\caption{Finite-lag sign reduction on $P_2$. Generate every expression
from \eqref{eq:abg}, reduce modulo $H$, and apply
Lemma~\ref{lem:p2certs} after canceling the indicated factor.}
\label{tab:p2lags}\centering
\begin{tabular}{lll}
\toprule
Quantity & Factor removed & Sign of remainder on $\mathcal J$\\
\midrule
$\bar c^2+\mathsf a\bar c+\mathsf b$ & $\zeta^3-\zeta^2-2$ & $R_p<0$\\
$\ell_1,\upsilon_2,\upsilon_3$ & --- & Identically zero\\
$\upsilon_5$ & $N_{12}$ & Positive\\
Other slacks, $1\le m\le9$ & $1$ & Positive\\
\bottomrule
\end{tabular}
\end{table}

Theorem~\ref{thm:p2arc} covers both copies of $P_2$, one for each sign of $t$.
At $G_1$ the data of the arc reduce to the spiral,
\begin{equation}\label{eq:p2endpoint}
 \begin{gathered}
 u=1-\frac1{2G_1},\quad x=-\frac1{2G_1},\quad y=\frac{2G_1-1}{4G_1},\quad p=0,\\
 d=\frac{(2G_1-1)(G_1^2-1)}{2G_1^3},\quad W=\frac{2(1+2G_1-2G_1^2)}{4G_1+1},
 \end{gathered}
\end{equation}
and the six equations of \eqref{eq:p2system} vanish exactly modulo
$32G_1^3-36G_1^2+4G_1-1$. At $G_a$ the slack $\upsilon_5$ vanishes, and the data
of the arc, with $w_2=W$, $w_3=1-W$, $w_5=0$, solve the central system as well;
after the rescaling \eqref{eq:blowup}, a rigorous enclosure of this point is
contained in the continuation box of $P_5$, which identifies the two
branches.

\subsection{The central two-frequency family}
The central family is solved exactly at the critical point.

\begin{theorem}[Closed form at $\kappa_c$]\label{thm:p5h0}
At $h=0$ the system $F(0,Z)=0$ decouples. The three dual equations involve
only $(y,w_2,w_3)$, are linear in $(w_2,w_3)$, and consist of $-3w_2+22w_3=12$
and two equations whose $3\times3$ consistency determinant is
$900(2y+1)^3q_5(y)$, $q_5(y)=32y^5-40y^4+16y^3-6y^2-12y+3$. The primal block is linear in
$(\mathcal A,\mathcal B,D,P)$: its four equations are the coefficients
of $h$ in the four primal equations of \eqref{eq:p5system}, after
\eqref{eq:blowup}.
Let $y_0$ be the root of $q_5$ in
$(0.23188,0.23189)$. The primal coordinates are the unique solution of the four coefficient
equations just described at $y=y_0$; the dual coordinates are
\begin{align*}
 43243231\,w_5^{(0)}&=7851355-25581300y_0-5903400y_0^2-17737600y_0^3-41558400y_0^4,\\
\end{align*}
and $w_2^{(0)}=2/5-22w_5^{(0)}/25$, $w_3^{(0)}=3/5-3w_5^{(0)}/25$. Numerically $y_0\approx0.2318891$,
$\mathcal A_0\approx0.5146605$, $\mathcal B_0\approx-0.2585419$, $D_0\approx2.1051110$,
$P_0\approx2.5790145$, $w_2^{(0)}\approx0.3743478$, $w_3^{(0)}\approx0.5965020$ and
$w_5^{(0)}\approx0.0291502$. The Jacobian $F_Z(0,Z_0)$ is block triangular, with
the dual block in $(y,w_2,w_3)$ and the primal block in $(\mathcal A,\mathcal B,D,P)$,
and both block determinants are nonzero elements of $\Q(y_0)$.
\end{theorem}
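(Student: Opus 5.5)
The plan is to expand $F(h,Z)$ to first order in $h$ at $h=0$, using the blow-up \eqref{eq:blowup}, and read off the block structure directly. Everything runs in the regime $G=1+h$, $u=\tfrac12+h\mathcal A$, $x=-\tfrac12+h\mathcal B$, $p=hP$, $d=hD$.

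\emph{Dual block.} The three dual equations are $Q(y)=Q'(x)=Q'(y)=0$ for $Q=-\Phi/(G-c)$ with $J=\{2,3,5\}$. At $h=0$ they involve only $G=1$, $u=\tfrac12$, $x=-\tfrac12$, the free point $y$ and the weights; the variables $\mathcal A,\mathcal B,P,D$ enter only at order $h$. By Lemma~\ref{lem:dualfactor}, $\Phi$ is affine in the weights with $w_5=1-w_2-w_3$, so each equation is linear in $(w_2,w_3)$.
\begin{enumerate}
\item Evaluate $Q'(-\tfrac12)$ at $G=1$, $u=\tfrac12$ by Chebyshev algebra. It should not involve $y$, and should reduce to $-3w_2+22w_3=12$.
\item The remaining two equations $Q(y)=Q'(y)=0$ give a $3\times3$ linear system in $(w_2,w_3,1)$. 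Its consistency determinant is a polynomial in $y$, which I expect to factor as $900(2y+1)^3q_5(y)$ by direct expansion. The factor $(2y+1)^3$ is the degenerate coincidence $y=x$, so $y=y_0$ must be a root of $q_5$.
\item Isolate $y_0$ in $(0.23188,0.23189)$ by a sign change of $q_5$ together with a Sturm count.
\item Solve for the weights by Cramer's rule and reduce modulo $q_5$ to obtain the stated $\Q(y_0)$ expressions for $w_5^{(0)}$. The formulas for $w_2^{(0)},w_3^{(0)}$ then follow from the linear relations.
\end{enumerate}

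\emph{Primal block.} Each primal equation $\bar{\mathsf L}_1+d=0$ and $\bar{\mathsf U}_m-d=0$ ($m=2,3,5$) vanishes at $h=0$ on the three-cycle: with $p=0$, $x=-\tfrac12$, $u=\tfrac12$, $G=1$, $d=0$, all slacks are $gE_n$ with the relevant $E_n=0$ (Proposition~\ref{prop:strip}). This is why dividing by $h$ is legitimate.
\begin{enumerate}
\item Differentiate in $h$. The derivative of $\bar\phi=(1-hP)\phi(x)+hP\phi(y)$ is $\phi'(x)\,\mathcal B+P(\phi(y)-\phi(x))$, plus the partial derivatives in $G$ and $u$, the latter contributing $\mathcal A\,\partial_u\phi$. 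The term $d$ contributes $D$.
\item Each $h$-coefficient is therefore affine in $(\mathcal A,\mathcal B,D,P)$, with coefficients in $\Q(y)$ and no dependence on the weights. This gives four linear equations in four unknowns.
\item Solve at $y=y_0$. Compute the determinant in $\Q(y_0)$, i.e. as a polynomial in $y_0$ reduced modulo $q_5$, and certify it nonzero, for instance by showing that its norm is nonzero or that its value has a certified sign on the isolating interval. The numerical values then follow by evaluation.
\end{enumerate}

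\emph{Jacobian.} The dual equations at $h=0$ do not depend on the primal unknowns, so $F_Z(0,Z_0)$ has a zero block in the dual rows and primal columns. Since the primal rows may depend on $y$, but the dual rows do not depend on $(\mathcal A,\mathcal B,D,P)$, the Jacobian is block triangular.
\begin{itemize}
\item The primal block determinant is the one computed above.
\item The dual block in $(y,w_2,w_3)$ requires $\partial_y$ of the dual equations. Its determinant is, up to nonzero factors, $q_5'(y_0)$ times the nonzero $2\times2$ weight minor. Since $q_5$ is irreducible or squarefree, $q_5'(y_0)\neq0$.
\end{itemize}

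\emph{Main obstacle.} The step I expect to be hardest is establishing that factorization and the decoupling. This means confirming that $Q$ at $h=0$ has exactly the claimed structure. Of particular concern is that $Q(x)=0$ need not be imposed, which holds since $p=0$ there, so that the Sturm and exact $\Q(y_0)$ arithmetic is all that remains. I would do these steps by computer algebra, with exact rational certificates.
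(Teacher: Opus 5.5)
Your proposal is correct and follows the paper's proof. Set $h=0$, observe that the dual equations no longer contain $(\mathcal A,\mathcal B,D,P)$ while the primal $h$-coefficients are affine in them, and finish by exact linear algebra over $\Q[y]/(q_5)$ with signs certified on the isolating interval $[0.23188,0.23189]$.

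One small sharpening. Let $M(y)$ be the $3\times3$ consistency matrix and $\hat w=(w_2^{(0)},w_3^{(0)},1)$. The $y$-column of the dual Jacobian block is $M'(y_0)\hat w$, and $M(y_0)\hat w=0$. Hence that block determinant equals $\frac{d}{dy}\bigl[900(2y+1)^3q_5(y)\bigr]_{y=y_0}=900(2y_0+1)^3q_5'(y_0)$, up to the normalization of the equations, so you do not need to check a separate $2\times2$ weight minor.
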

\begin{proof}
Setting $h=0$ in $F$ gives seven explicit polynomials; the dual ones do not
contain $\mathcal A,\mathcal B,D,P$. The rest is linear algebra over
$\Q[y]/(q_5)$. Signs of elements of $\Q(y_0)$ are decided from
$q_5(0.23188)>0>q_5(0.23189)$, $q_5'<0$ near $y_0$, and the Taylor-dominance
criterion of Lemma~\ref{lem:p2certs} on $[0.23188,0.23189]$.
\end{proof}

\begin{proposition}[Dual nonnegativity at $\kappa_c$]\label{prop:p5h0dual}
At $h=0$ the dual polynomial factors as
$Q=(c+\frac12)^2(c-y_0)^2(q_2c^2+q_1c+q_0)$ with $q_i\in\Q(y_0)$,
$q_2\approx0.09328$, $q_1\approx-0.05001$, $q_0\approx0.65052$; in particular
$q_0-|q_1|-|q_2|>0$, so $Q\ge0$ on $[-1,1]$ with zeros exactly at $-\frac12$ and
$y_0$.
\end{proposition}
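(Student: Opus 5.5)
Set $h=0$ in the dual polynomial $Q$ of the system $F(h,Z)=0$. By \eqref{eq:dualpoly}, $Q$ is then determined by $G=1$, $u=\tfrac12$, $J=\{2,3,5\}$ and the weights $(w_2^{(0)},w_3^{(0)},w_5^{(0)})$ of Theorem~\ref{thm:p5h0}. The plan has three steps: write $Q$ explicitly, exhibit the two double zeros, and certify the sign of the quadratic cofactor.

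\textbf{Step 1: explicit form of $Q$.} With $G=1$ we have $U_{m-1}(1)=m$ and $T_m(1)=1$. Lemma~\ref{lem:dualfactor} then gives
\[
\Phi(c)=(1-c)\bigl\{(\tfrac34-c^2)\alpha_J-(1-c^2)\beta_J\bigr\},
\]
where $\alpha_J,\beta_J$ are polynomials in $c$ whose coefficients are affine in the weights. Hence $Q=-\{(\tfrac34-c^2)\alpha_J-(1-c^2)\beta_J\}$. The divided differences of $T_5$ and $U_4$ have degree $4$ and $3$ respectively, so $Q$ has degree six in $c$, with coefficients in $\Q(y_0)$ after substituting the closed-form weights.

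\textbf{Step 2: the double zeros.} The three dual equations in \eqref{eq:p5system} at $h=0$ are the contact conditions. Two of them, $Q(y_0)=Q'(y_0)=0$, give a double zero at $y_0$. The third, $Q'(x)=0$ at $x=-\tfrac12$, gives the double zero at $-\tfrac12$ once combined with $Q(-\tfrac12)=0$. I would check $Q(-\tfrac12)=0$ directly: every $\mathsf U_m$ with $m\equiv0,2\pmod3$ and $\mathsf L_1$ vanishes at the three-cycle (Proposition~\ref{prop:spiral}, Lemma~\ref{lem:spiralids} at $t=0$, where $E_n\in\{0,3\}$). I expect $Q'(-\tfrac12)$ to vanish identically, independently of the weights; this would explain why the dual block contains the weight-free-looking linear relation $-3w_2+22w_3=12$.

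Given the two double zeros, I would divide $Q$ by $(c+\tfrac12)^2$ exactly over $\Q$, and then by $(c-y_0)^2$. The second division is polynomial division in $\Q[y]/(q_5)$: reduce the remainder modulo $q_5$ and confirm that it vanishes identically. This reduction is the exact algebraic counterpart of the consistency determinant $900(2y+1)^3q_5(y)$ of Theorem~\ref{thm:p5h0}. The quotient is $q_2c^2+q_1c+q_0$ with $q_i\in\Q(y_0)$, each written as a polynomial of degree at most four in $y_0$.

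\textbf{Step 3: sign of the cofactor.} Certify the sign of $q_0-|q_1|-|q_2|$ as elements of $\Q(y_0)$, using the isolating interval $[0.23188,0.23189]$ and the Taylor-dominance test of Lemma~\ref{lem:p2certs}. Concretely, establish $q_2>0$, $q_1<0$ and $q_0+q_1-q_2>0$ by this rational criterion. The resulting margin is about $0.507$. For $c\in[-1,1]$ it follows that
\[
q_2c^2+q_1c+q_0\ge q_0-|q_1|-|q_2|>0,
\]
so $Q\ge0$ on $[-1,1]$, and its only zeros are $-\tfrac12$ and $y_0$.

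\textbf{Main obstacle.} The hardest step is the exact division by $(c-y_0)^2$. It requires the closed-form weights to be exactly right modulo $q_5$, and the degree bookkeeping has to be correct. If the remainder does not vanish identically, I would recompute the weights directly from the linear system over $\Q[y]/(q_5)$ rather than from the displayed formula.
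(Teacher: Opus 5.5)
Your proposal is correct and is essentially the paper's proof. The paper divides $Q$ exactly by $(c+\frac12)^2(c-y_0)^2$ in $\Q(y_0)[c]$, then decides the signs of the coefficients from the isolating interval $[0.23188,0.23189]$ and the Taylor-dominance criterion, which is what you do.

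One aside is wrong, though your argument does not use it: $Q'(-\frac12)$ does not vanish identically. At $G=1$, $u=\frac12$, with $w_5=1-w_2-w_3$, one finds $Q'(-\frac12)=(22w_3-3w_2-12)/15$. So the $y$-free relation $-3w_2+22w_3=12$ is exactly the contact equation $Q'(x)=0$ at $x=-\frac12$, which your Step 2 already invokes. Since $Q(y_0)=Q'(y_0)=0$ are also among the equations solved by the weights, divisibility is automatic rather than the main obstacle.
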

\begin{proof}
Division of $Q$ by $(c+\frac12)^2(c-y_0)^2$ in $\Q(y_0)[c]$ leaves no remainder,
and the signs are decided as in the proof of Theorem~\ref{thm:p5h0}.
\end{proof}

At $h=0$ the raw slacks of the three-cycle are $0$ or $\frac32$: for
$\theta_m=2\pi m/3$ the slack of sign $\sigma$ ($\sigma=-1$ lower, $\sigma=+1$
upper) equals $\frac12(1-\cos\theta_m)+\sigma\frac{\sqrt3}2\sin\theta_m$. It
vanishes for $m\equiv0$, for $m\equiv1$ with $\sigma=-1$, and for $m\equiv2$ with
$\sigma=+1$, the congruences being modulo $3$; we call these pairs $(m,\sigma)$
critical.

\begin{proposition}[First-order slacks]\label{prop:p5divslack}
For a critical pair $(m,\sigma)$ let $s_m^\sigma$ be the limit, as $h\downarrow0$,
of the raw slack divided by $h$. With
$Y_m^\sigma(y)=(\frac34-y^2)(1-T_m(y))+\sigma(1-y^2)U_{m-1}(y)$,
\begin{align*}
 m\equiv0:&\quad s_m^\sigma=\tfrac{m^2}2-\sigma m(\mathcal B_0+D_0)+P_0Y_m^\sigma(y_0),\\
 m\equiv1,\ \sigma=-1:&\quad s_m^-=\tfrac{m^2}2+(\mathcal B_0-3\mathcal A_0)-m\mathcal B_0+mD_0+P_0Y_m^-(y_0),\\
 m\equiv2,\ \sigma=+1:&\quad s_m^+=\tfrac{m^2}2+(\mathcal B_0-3\mathcal A_0)+m\mathcal B_0-mD_0+P_0Y_m^+(y_0).
\end{align*}
These vanish exactly for the active pairs $(1,-),(2,+),(3,+),(5,+)$ and are
positive for every other critical pair; the four smallest positive values,
in increasing order and rounded to two decimal places, are
$s_8^+\approx11.22$, $s_6^+\approx11.49$, $s_3^-\approx14.91$ and
$s_4^-\approx18.40$.
\end{proposition}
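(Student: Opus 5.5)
The plan is to expand each raw slack of the $P_5$ spectrum to first order in $h=G-1$ and read off the coefficients. At $h=0$ the spectrum is the three-cycle $\delta_{-1/2}$ with $u=\frac12$, $C=0$. The second atom $y$ carries mass $p=hP$, and the parameters move as in \eqref{eq:blowup}: $u=\frac12+h\mathcal A$, $x=-\frac12+h\mathcal B$, $d=hD$. I also use $G=1+h$, hence $\ch mt=T_m(G)=1+m^2h+O(h^2)$ and $U_{m-1}(G)=m+O(h)$.

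I write a raw slack as
\[
(1-p)\,[a_m\mp b_m](x)+p\,[a_m\mp b_m](y)\pm d\,U_{m-1}(G),
\]
and differentiate in $h$ at $h=0$ term by term.

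\emph{Mass at $y$.} This term contributes $P\,[a_m\mp b_m](y)$ evaluated at $G=1$, $u=\frac12$. Using \eqref{eq:abg} with $g(y)=\frac34-y^2$, this is
\[
P\bigl\{(\tfrac34-y^2)(1-T_m(y))\mp(1-y^2)U_{m-1}(y)\bigr\}.
\]
With the paper's sign convention ($\sigma=+1$ upper, carrying $+b_m$; lower carrying $-b_m$) this is $P_0Y_m^\sigma(y_0)$, after checking the sign bookkeeping.

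\emph{Loss of mass at $-\frac12$.} This term is $-hP$ times the slack at the three-cycle, which vanishes for critical pairs.

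\emph{Atom at $x$.} The remaining contribution is the $h$-derivative of $[a_m\mp b_m](x)$ at the three-cycle, computed through $G$, $u$ and $x$.
\begin{itemize}
\item The $G$-dependence gives $g\,m^2=\frac12 m^2$, using $g(-\frac12)=1-\frac14-\frac14=\frac12$.
\item The $u$-dependence gives $-2u\mathcal A\,(1-T_m(c))\mp2\mathcal A(1-c^2)U_{m-1}(c)$ at $c=-\frac12$.
\item The $x$-dependence gives $\mathcal B\,\partial_c[a_m\mp b_m]$ at $c=-\frac12$, which involves $T_m'=mU_{m-1}$ and the derivative of $(1-c^2)U_{m-1}$.
\end{itemize}

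\emph{Coupling term.} Finally the $d$ term contributes $\pm D\,m$, which yields the $\mp mD_0$ pieces.

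Evaluating on residues mod $3$, the three-cycle values are
\[
T_m(-\tfrac12)\in\{1,-\tfrac12,-\tfrac12\},\qquad U_{m-1}(-\tfrac12)\in\{0,1,-1\}\quad\text{for }m\equiv0,1,2.
\]
For the derivatives, I would differentiate $\cos m\omega$ and $\sin\omega\sin m\omega$ in $\omega$ at $\omega=2\pi/3$ and convert with $dc=-\sin\omega\,d\omega$. This should give the $m\mathcal B_0$ terms and the constant $\mathcal B_0-3\mathcal A_0$ in the classes $m\equiv1,2$. For $m\equiv0$, the $\mathcal A$-terms vanish because $1-T_m=0$ and $U_{m-1}(-\frac12)=0$. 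What remains there is $-\sigma m\mathcal B_0$ from the derivative of $(1-c^2)U_{m-1}$, combined with the $d$ term. This is routine trigonometric bookkeeping; I would verify each class against the stated formulas.

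Vanishing at $(1,-),(2,+),(3,+),(5,+)$ holds by construction, since these are exactly the four primal equations of \eqref{eq:p5system} divided by $h$. Theorem~\ref{thm:p5h0} defines $Z_0$ as their solution, so these cases are immediate.

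The main obstacle is positivity of $s_m^\sigma$ for the infinitely many other critical pairs. Each formula has the shape
\[
\tfrac{m^2}{2}+(\text{linear in }m)+P_0Y_m^\sigma(y_0),
\]
and I would bound $|Y_m^\sigma(y_0)|$ uniformly. With $y_0=\cos\theta_0$, we have $|1-T_m|\le2$ and $(1-y^2)|U_{m-1}(y)|\le\sqrt{1-y^2}$, so $|Y|\le2\cdot\frac34+1$. The quadratic term then dominates the linear terms (coefficients such as $|\mathcal B_0+D_0|\approx1.85$) and the bounded term $P_0|Y|\lesssim6.5$ once $m$ exceeds a small explicit $m_0$, say $m\ge12$.

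For $m<m_0$, I would evaluate the finitely many expressions as elements of $\Q(y_0)$. Their signs are certified by the Taylor-dominance criterion of Lemma~\ref{lem:p2certs} on $[0.23188,0.23189]$, exactly as in Theorem~\ref{thm:p5h0}. The same exact evaluations yield the four smallest values $s_8^+,s_6^+,s_3^-,s_4^-$, with the ordering confirmed by interval bounds.
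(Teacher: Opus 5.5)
Your proposal is correct and follows essentially the paper's route: a first-order expansion at the three-cycle split by residue class, exact sign checks in $\Q(y_0)$ for the small lags, and a uniform quadratic-dominance bound for the tail. Your $G$-, $u$-, $x$-, mass- and $d$-contributions do reproduce the stated formulas; the paper's tail bound uses $|Y_m^\sigma|\le5/2$, $|\mathcal B_0|+D_0<2.37$, $|\mathcal B_0-3\mathcal A_0|<1.81$ and $P_0<2.58$, and gives $s_m^\sigma\ge m^2/2-2.37m-8.26>0$ already for $m\ge8$. Two small corrections: the worst linear coefficient is $D_0-\mathcal B_0\approx2.36$ (classes $m\equiv1,2$), not $\mathcal B_0+D_0$; and to order the four smallest values you should state that your tail bound exceeds $s_4^-\approx18.40$ beyond your cutoff, which it does at $m=12$.
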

\begin{proof}
Differentiate the raw slack at $h=0$, using
$T_m(1+h)=1+m^2h+O(h^2)$,
$\omega_x=2\pi/3-2h\mathcal B_0/\sqrt3+O(h^2)$,
$g(x)=1/2+h(\mathcal B_0-\mathcal A_0)+O(h^2)$ and
$dU_{m-1}(G)=hD_0m+O(h^2)$.
The addition formulas in the three residue classes give the displayed
expressions. Reduction in $\Q[y_0]/(q_5)$ proves the four identities and
all remaining signs for $m\le7$.
For $m\ge8$, $|Y_m^\sigma|\le5/2$,
$|\mathcal B_0|+D_0<2.37$,
$|\mathcal B_0-3\mathcal A_0|<1.81$ and $P_0<2.58$ give
$s_m^\sigma\ge m^2/2-2.37m-8.26>0$.
The same bound for $m\ge10$, together with the exact comparisons for
$m\le9$, orders the four smallest positive slacks.
\end{proof}

Large lags along the central family are controlled by one estimate.

\begin{lemma}[Large lags near the three-cycle]\label{lem:p5tail}
Let $0<h\le0.0236$, $t=\arcosh(1+h)$, and suppose that a two-atom spectrum
with the parameters \eqref{eq:blowup} satisfies
\begin{equation}\label{eq:centralbounds}
 \begin{gathered}
 0<\mathcal A<0.55,\quad -0.27<\mathcal B<0,\quad 0<P<3,\quad 0<D<2.2,\\
 u<\tfrac{11}{20},\quad \bar g>0.49,\quad \bar g-|C|>\tfrac14,\quad
 |g(x)-\tfrac12|<h,\quad |\bar g-\tfrac12|<2h,\\
 |2u\sin\omega_x-\tfrac{\sqrt3}2|<2h,\quad |\arccos x-\tfrac{2\pi}3|<\tfrac h3 .
 \end{gathered}
\end{equation}
Then $\ell_m,\upsilon_m>0$ for all $m\ge24$.
\end{lemma}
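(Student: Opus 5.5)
The plan is to refine the crude tail bound \eqref{eq:tailsharp}; by itself it cannot suffice. With $\bar g-|C|>\tfrac14$ it would require $\ch mt>4(\bar g+2u^2)\approx4.6$. As $h\downarrow0$ we have $t\approx\sqrt{2h}\to0$, so no fixed $m$ satisfies this uniformly. The positivity must instead come from the quadratic gain $T_m(G)-1\approx m^2h$, which is exactly the mechanism behind Proposition~\ref{prop:p5divslack}. I would write each raw slack, with $d=C\sh t$, as
\[
 \ell_m,\upsilon_m=\bar g\,(T_m(G)-1)+\sum_j\nu_j\bigl\{g_j(1-\cos m\omega_j)\mp2u\sin\omega_j\sin m\omega_j\bigr\}\pm d\,U_{m-1}(G).
\]
I would then bound the three pieces separately, using only the hypotheses \eqref{eq:centralbounds}.

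\emph{Oscillatory part.} Split it into the atom $x$, with mass $1-p=1-hP\ge1-3h$, and the atom $y$, with mass $p<3h$. For $y$, the amplitude identity of Lemma~\ref{lem:tailsharp} bounds the brace below by $g_y-(1-y^2+u^2)=-2u^2>-0.61$, so this atom costs at most $1.9h$. For $x$, compare with the three-cycle values $g=\tfrac12$, $2u\sin\omega=\tfrac{\sqrt3}2$, $\omega=2\pi/3$, at which the brace is $\ge0$ for every $m$ and both signs. The hypotheses give $|g(x)-\tfrac12|<h$ and $|2u\sin\omega_x-\tfrac{\sqrt3}2|<2h$, and the phase drifts by $|m(\omega_x-2\pi/3)|<mh/3$. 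Since the brace is Lipschitz in the phase with constant at most $\tfrac12+\tfrac{\sqrt3}2+O(h)<1.4$, the $x$-contribution is at least
\[
 -\bigl(2h+2h+1.4\,mh/3\bigr).
\]
This error is also trivially at most $2u^2<0.61$ in absolute value, which will be used for large $mt$.

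\emph{Gain versus the $C$-term.} Use $T_m(G)-1=2\sh^2(mt/2)$, $U_{m-1}(G)=\sh mt/\sh t$ and $h=2\sh^2(t/2)$, and bound $|d|\le Dh<2.2h$. Then the ratio
\[
 \frac{T_m(G)-1}{h\,U_{m-1}(G)}=\frac{\tanh(mt/2)}{\tanh(t/2)}
\]
is increasing in $m$. It is $\ge m\,(1-O(m^2t^2))$ for small $mt$, and at least about $9$ for $m\ge24$ on the whole range $t\le\arcosh(1.0236)\approx0.217$. Hence $\bar g(T_m-1)-|d|U_{m-1}\ge(0.49-2.2/\rho_m)(T_m-1)$ with $\rho_m\ge9$, so at least a fixed fraction, roughly $0.24$, of $T_m(G)-1$ survives.

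\emph{Combination in two regimes.} If $mt\le\tau_0$ for a suitable constant $\tau_0$ (e.g.\ $\tau_0=2$), then $T_m-1\ge m^2h$. The required inequality becomes $0.24\,m^2h>(5.9+0.47m)h$, which holds for $m\ge24$ by a wide margin after dividing by $h$. If $mt\ge\tau_0$, then $T_m-1\ge\ch\tau_0-1>2.7$, while every loss is bounded by constants: $0.61$ from the oscillation, $1.9h$ from $y$, and the $C$-term already absorbed in the ratio. So positivity follows again. The main obstacle is organizing these constants so that one threshold $m\ge24$ works uniformly over $0<h\le0.0236$, particularly at the regime transition where neither the $m^2h$ expansion nor the constant bound is sharp. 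I would first try exact inequalities for $\tanh$ and $\sh$ on $t\in(0,0.217]$ in place of asymptotic expansions, and choose $\tau_0$ to balance the two cases. If the margin is tight near the largest $h$, I would fall back on a finite rational check of the monotone function of $mt$.
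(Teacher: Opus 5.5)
Your argument is correct, but it controls the $C$-term differently from the paper.

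The paper splits at $mt=3$. For $mt\ge3$ it applies the crude tail bound \eqref{eq:tail} with $\bar g-|C|>\tfrac14$. For $mt\le3$ it compares with the three-cycle, using the gain $\bar g(\ch mt-1)\ge0.49m^2h$ together with $|d|U_{m-1}(G)\le2.2hm\sh3/3<7.7mh$; that bound is available only because $mt\le3$.

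Your identity $(T_m(G)-1)/(hU_{m-1}(G))=\tanh(mt/2)/\tanh(t/2)$ instead absorbs the $C$-term into the gain uniformly in $m$. The ratio increases in $m$ and decreases in $t$, since its logarithmic $t$-derivative is $m/\sh(mt)-1/\sh t\le0$. So for $m\ge24$ it is at least its value $\approx9.16$ at $m=24$, $t=\arcosh(1.0236)$; you should state this monotonicity in $t$ explicitly.

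Note also that $T_m(G)-1=2\sh^2(mt/2)\ge m^2h$ holds for every $m$, not only when $mt\le\tau_0$. Your oscillation losses are likewise valid for every $m$: $4h+1.4mh/3$ for the atom $x$, and $1.9h$ for the atom $y$ (via the amplitude identity, sharper than the paper's $12h$). Hence your first-regime inequality $0.2456\,m^2h>(5.9+0.47m)h$ already proves the lemma for all $m\ge24$ at once. The second regime, and the ``main obstacle'' at the transition, are unnecessary.

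This is just as well. With your rounded constants the case $\tau_0=2$ does not visibly close, since $0.24\cdot2.7<0.61+1.9\cdot0.0236$. It does close with $\rho_m\ge9.15$, $2u^2<0.605$ and $\ch2-1>2.76$.

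Your route never uses the hypothesis $\bar g-|C|>\tfrac14$. In fact it follows from the other hypotheses, because $|C|=hD/\sh t<2.2\tanh(t/2)<0.24$. The paper's route is shorter because it reuses the already-proved tail bound.
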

\begin{proof}
For $mt\ge3$, \eqref{eq:tail} gives
$\ell_m,\upsilon_m\ge\ch3/4-(31/20)^2>0$.
For $mt\le3$, compare with the nonnegative three-cycle slacks.
The gain $\bar g(\ch mt-1)$ is at least $0.49m^2h$.
The main trigonometric term changes by at most $3h+mh/2$,
the second atom by at most $12h$, and the constant term by at most
$2h$. Finally
$|d|U_{m-1}(G)\le2.2hm\sh3/3<7.7mh$.
The resulting lower bound
$h(0.49m^2-9m-17)$ is positive for $m\ge24$.
The numerical inequalities follow from $20<e^3<21$.
\end{proof}

\begin{theorem}[Central family near $\kappa_c$]\label{thm:p5local}
There are $h_*>0$ and a real-analytic branch $Z(h)$ on a neighborhood
of $[-h_*,h_*]$, with
$Z(0)=Z_0$ and $F(h,Z(h))=0$, locally unique near $(0,Z_0)$.
For $0\le h\le h_*$ the corresponding spectrum
is feasible and optimal, so that
$V(t)=\arsinh\bigl(\frac12+(\ch t-1)\mathcal A(\ch t-1)\bigr)$ for $\ch t\le1+h_*$. In
particular $V$ is even and real-analytic at $t=0$, and $V(0)=\log\varphi$.
\end{theorem}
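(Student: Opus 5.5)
The branch itself comes from the analytic implicit function theorem applied to $F(h,Z)=0$ at $(0,Z_0)$. By Theorem~\ref{thm:p5h0}, $F_Z(0,Z_0)$ is block triangular and both block determinants are nonzero elements of $\Q(y_0)$, so $F_Z(0,Z_0)$ is invertible. Each equation of $F$ is polynomial in $(h,Z)$, since the primal equations are divided by $h$ only after the blow-up \eqref{eq:blowup}. The theorem therefore gives a real-analytic $Z(h)$ on a neighborhood of $h=0$, locally unique, with $Z(0)=Z_0$.

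Next I will show that for $0<h\le h_*$, with $h_*$ small, the spectrum $\nu=(1-hP)\delta_x+hP\delta_y$ with $d=hD$ is feasible. The proof splits the lags into three groups.

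\emph{Active lags.} The active lags $\ell_1,\upsilon_2,\upsilon_3,\upsilon_5$ vanish by construction.

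\emph{Critical pairs.} For the other critical pairs $(m,\sigma)$ of the three-cycle with $m<24$, the raw slack equals $h\,s_m^\sigma+O(h^2)$. Proposition~\ref{prop:p5divslack} shows $s_m^\sigma>0$, so these slacks are positive for small $h$. Only finitely many such pairs occur, so one $h_*$ serves for all of them.

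\emph{Noncritical pairs.} For the noncritical pairs with $m<24$, the slack at $h=0$ is $\tfrac32$, so continuity keeps it positive.

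\emph{Large lags.} For $m\ge24$ I will use Lemma~\ref{lem:p5tail}. Its bounds \eqref{eq:centralbounds} hold for small $h$ by continuity from the numerical values of $Z_0$: $\mathcal A_0\approx0.515$, $\mathcal B_0\approx-0.259$, $P_0\approx2.58$, $D_0\approx2.11$, and $u\to\tfrac12$. The remaining bounds come from the first-order expansions of $\omega_x$, $g(x)$ and $\bar g\to\tfrac12$, together with $\bar g-|C|\to\tfrac12$. The last limit holds because $|C|=hD/\sh t=O(\sqrt h)$.

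\emph{Remaining conditions.} Masses $0<hP<1$ and $x<y$ hold near $h=0$, as does $|C|<\bar g$.

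Optimality comes from the dual side. The dual polynomial $Q$ along the branch depends continuously on $h$, and it still has double zeros at $x(h)$ and $y(h)$ because of the equations $Q(y)=Q'(x)=Q'(y)=0$ together with $Q(x)=0$ from the primal equations. Proposition~\ref{prop:p5h0dual} gives at $h=0$ the factorization $Q=(c+\tfrac12)^2(c-y_0)^2\,r(c)$ with $r>0$ on $[-1,1]$. To carry this to $h>0$, I will divide $Q$ by $(c-x)^2(c-y)^2$; the quotient is a quadratic whose coefficients depend analytically on $h$, so it stays positive on $[-1,1]$ for small $h$. The weights $w_2,w_3,w_5$ stay positive by continuity from the values $w^{(0)}$ of Theorem~\ref{thm:p5h0}. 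Lemma~\ref{lem:dualfactor} and the strict-certificate argument after it then show that no $u'>u$ is feasible, so $u=\sh V(t)$ with $G=1+h$. This gives the formula $V(t)=\arsinh(\tfrac12+h\mathcal A(h))$ with $h=\ch t-1$.

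Analyticity of $V$ at $t=0$ follows because $\ch t-1$ is an analytic even function of $t$ and $\mathcal A$ is analytic on a neighborhood of $0$. Evenness also follows from Lemma~\ref{lem:vmono}. Finally $V(0)=\arsinh\tfrac12=\log\varphi$.

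The main obstacle is the uniformity step: making one $h_*$ work simultaneously for the finitely many first-order critical slacks, the continuity bounds needed by Lemma~\ref{lem:p5tail}, and the dual positivity. Because Proposition~\ref{prop:p5divslack} supplies strict positivity with explicit margins, I expect only routine continuity estimates here. For $h<0$ the branch exists by the implicit function theorem, but feasibility is claimed only for $h\ge0$, consistent with $G\ge1$.
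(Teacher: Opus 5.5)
Your proposal is correct and follows essentially the same route as the paper. The steps match: the implicit function theorem at the nonsingular critical Jacobian; noncritical slacks tending to $3/2$; the divided critical slacks of Proposition~\ref{prop:p5divslack} for $m\le 23$; Lemma~\ref{lem:p5tail} for $m\ge24$; and optimality by duality through the persistent positive quadratic factor of $Q$.

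One point should be made explicit. The $h$-proportional bounds in \eqref{eq:centralbounds} do not follow from continuity at $h=0$. They require the divided limits
$\mathcal B_0-\mathcal A_0$, $\mathcal B_0-\mathcal A_0+P_0(\tfrac14-y_0^2)$, $\sqrt3\mathcal A_0+\mathcal B_0/\sqrt3$ and $-2\mathcal B_0/\sqrt3$, approximately $-0.773$, $-0.267$, $0.742$ and $0.299$, to be strictly smaller in absolute value than $1$, $2$, $2$ and $\tfrac13$. The paper verifies this by exact comparison.
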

\begin{proof}
The nonsingular critical Jacobian gives the unique analytic branch by
the implicit function theorem. Positivity of $P_0,D_0$ and the dual
weights persists, as does the positive residual factor of $Q$.
The defining contacts stay zero. For every other $m\le23$, the slack
either tends to $3/2$ or, after division by $h$, tends to the positive
quantity in Proposition~\ref{prop:p5divslack}.
For the uniform tail, the divided limits in
\eqref{eq:centralbounds} are respectively
\[
 \mathcal B_0-\mathcal A_0,\quad
 \mathcal B_0-\mathcal A_0+P_0(1/4-y_0^2),\quad
 \sqrt3\mathcal A_0+\mathcal B_0/\sqrt3,\quad
 -2\mathcal B_0/\sqrt3.
\]
Their absolute values are less than $1,2,2,1/3$, respectively, by exact
comparison in $\Q(y_0,\sqrt3)$. Also $u\to1/2$, $\bar g\to1/2$ and
$C=O(\sqrt h)$. Thus \eqref{eq:centralbounds} holds on one common
interval, and Lemma~\ref{lem:p5tail} covers all $m\ge24$.
Duality proves optimality. Evenness in $t$ and $G=\ch t$ give the
analytic formula for $V$, with $V(0)=\arsinh(1/2)=\log\varphi$.
\end{proof}

\begin{proposition}[The central family]\label{prop:p5arc}
The continued branch of $P_5$ on $0\le h\le G_a-1$ consists of feasible and
optimal spectra, and $|V'|<1$ along it. At
$h=G_a-1$ it coincides with the endpoint of $P_2$.
\end{proposition}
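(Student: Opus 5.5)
The plan is to follow the template already used for $P_1$ in Proposition~\ref{prop:p1}: a verified parametric continuation that produces the branch, and then a fixed list of sign checks that certify feasibility and optimality at every parameter value.

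\emph{Continuation.} Take the regular polynomial system $F(h,Z)=0$ in the rescaled unknowns \eqref{eq:blowup}. Cover $[0,0.023582]$ by the $4000$ slabs of Table~\ref{tab:continuation}. On each slab, verify the Krawczyk inclusion \eqref{eq:krawczyk}; this gives a unique analytic root $Z(h)$ in each box. Adjacent boxes must overlap, so that the roots chain into a single branch. Near $h=0$ the first box must contain the branch $Z(h)$ of Theorem~\ref{thm:p5local}, so the continued branch is the same one that is feasible and optimal near $\kappa_c$. At the far end, the endpoint data of $P_2$ at $\zeta_a$ (with $w_5=0$) solve \eqref{eq:p5system}. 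I would enclose that point rigorously, using Lemma~\ref{lem:p2ends} for $\zeta_a$, and check that the enclosure lies inside the box containing $h=G_a-1$. Uniqueness in that box then identifies the two branches, which proves the coincidence claim.

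\emph{Feasibility and optimality on each slab.} From the box enclosures I would certify the following by interval evaluation:
\begin{itemize}
\item positivity of the masses, $0<p<1$ (equivalently $P>0$), and the ordering $x<y$ inside $(-1,1)$;
\item positivity of the dual weights $w_2,w_3,w_5$;
\item $Q\ge0$ on $[-1,1]$ with zeros only at $x,y$.
\end{itemize}
For the last item, write $Q=(c-x)^2(c-y)^2R(c)$ with $R$ a quadratic, as in Proposition~\ref{prop:p5h0dual}, and verify $R_0-|R_1|-|R_2|>0$ on each box. The contact slacks $\ell_1,\upsilon_2,\upsilon_3,\upsilon_5$ vanish by construction. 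For the remaining slacks with $m\le 23$, I would evaluate them in interval arithmetic and check positivity. Near $h=0$ the critical slacks are only $O(h)$, so there I would check the divided slacks (raw slack$/h$), whose limits are the positive numbers of Proposition~\ref{prop:p5divslack}. Lags $m\ge24$ follow from Lemma~\ref{lem:p5tail}, once the box enclosures confirm the bounds \eqref{eq:centralbounds}; the constant $h\le0.0236$ there was chosen to cover $G_a-1$. Once feasibility holds, Lemma~\ref{lem:dualfactor} together with complementary slackness gives optimality, hence $V(t)=\arsinh u$.

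\emph{Slope bound.} For $|V'|<1$, compute $Z'$ by the Neumann-series enclosure following \eqref{eq:krawczyk}. With $u=\tfrac12+h\mathcal A$ this gives $du/dG=\mathcal A+h\mathcal A'$, and I would verify $|du/dG|\sinh t<\sqrt{1+u^2}$ on every slab.

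I expect the main obstacle to be the critical lags in the first slabs near $h=0$. There the raw slacks degenerate, and interval overestimation may swamp quantities of order $h$. I would handle this by rescaling those slacks by $h$ (as a polynomial division in the blow-up coordinates) before evaluating, and by refining the slabs near $0$ if needed. A second, minor risk is the tightness of the tail bound near $h=G_a-1$; if it fails there, I would extend the finite-lag interval checks beyond $m=23$.
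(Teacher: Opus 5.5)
Your plan follows the paper's proof in every structural step. The paper also starts the Krawczyk continuation at the exact critical solution and identifies it with the branch of Theorem~\ref{thm:p5local} near $h=0$. It verifies \eqref{eq:centralbounds} and $q_0-|q_1|-|q_2|>0$ for the residual quadratic on each slab. It uses divided slacks near $h=0$ for $m<24$, Lemma~\ref{lem:p5tail} for $m\ge24$, duality for optimality, and $V'=u_G\sinh t/\sqrt{1+u^2}$ for the slope. Finally, it identifies the end of the branch by including the enclosed $P_2$ endpoint in the last box. Your exact division of the Chebyshev-form slacks by $h$ is a legitimate substitute for the paper's mean-value and $\operatorname{sinc}$ enclosures. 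At $h=0$ every critical slack reduces to the three-cycle slack, so it vanishes identically in $(\mathcal A,\mathcal B,y,P,D)$ and the division is exact.

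One item on your checklist cannot be verified as you propose: positivity of $w_5$ by interval evaluation on every slab. The branch meets $P_2$ precisely where $w_5=0$, at $h=G_a-1$. The continuation interval $[0,0.023582]$ of Table~\ref{tab:continuation} deliberately runs past that point, and there $w_5<0$. Even if you cut the last slab at $G_a-1$, no enclosure of a quantity that vanishes at an endpoint has a positive lower bound. Overestimation will also defeat the test on the adjacent slabs, where $w_5$ is tiny. Yet $w_5\ge0$ is needed for the dual combination to be a valid certificate and for Lemma~\ref{lem:dualfactor} to apply.

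The paper closes this by certifying $w_5'<0$ on every slab. It uses the derivative enclosure $Z'=(\mathrm{Id}-\mathcal M)^{-1}(-RF_s)$, which you already invoke for the slope bound. Combined with $w_5(G_a-1)=0$ from the endpoint identification, this gives $w_5>0$ on $[0,G_a-1)$.
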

\begin{proof}
The exact critical solution initializes the continuation, and
Theorem~\ref{thm:p5local} identifies it near zero. Each slab verifies
\eqref{eq:centralbounds},
$0.23<y<0.29$, $0.36<w_2<0.38$, $0.59<w_3<0.65$,
$w_5<0.031$, $w_5'<0$, $|u_G|<1$ and
$\sqrt{G^2-1}<\sqrt{1+u^2}$.
The quotient $Q/((c-x)^2(c-y)^2)$ satisfies
$q_0-|q_1|-|q_2|>0$. Inclusion of the exact endpoint of $P_2$ in the
last continuation box gives $w_5(G_a)=0$, hence $w_5>0$ before it.
All nonactive slacks with $m<24$ are positive. Near zero evaluate them
after division by $h$, retaining the exact constant at $h=0$ and
propagating the divided part through sums and products. For the main
phase, the mean value theorem encloses
$(\arccos x-2\pi/3)/h=-\mathcal B/\sqrt{1-\xi^2}$; addition formulas
and the entire function $\operatorname{sinc}$ then enclose its divided
trigonometric differences. Lemma~\ref{lem:p5tail} covers $m\ge24$.
Duality gives optimality, and
$V'=u_G\sh t/\sqrt{1+u^2}$ gives the derivative bound.
\end{proof}

\subsection{Regularity of the optimal rate}\label{app:classification-regularity}
\begin{proof}[Proof of Theorem~\ref{thm:regularity}, regularity]
The inverse of the continuous strictly increasing map $t-V(t)$ is
continuous and strictly increasing. On each open analytic arc its
derivative $1-V'$ is positive, so the analytic inverse function
theorem applies. The critical point belongs to the interior analytic
branch of Theorem~\ref{thm:p5local}; the other junctions give precisely
the five listed switches.
Proposition~\ref{prop:spiralsub} gives strict improvement over the
spiral everywhere else, and $\rs>q$.
The expansions follow in Appendix~\ref{app:finite}.
\end{proof}

\begin{figure}[tbp]
\centering
\includegraphics[width=\linewidth]{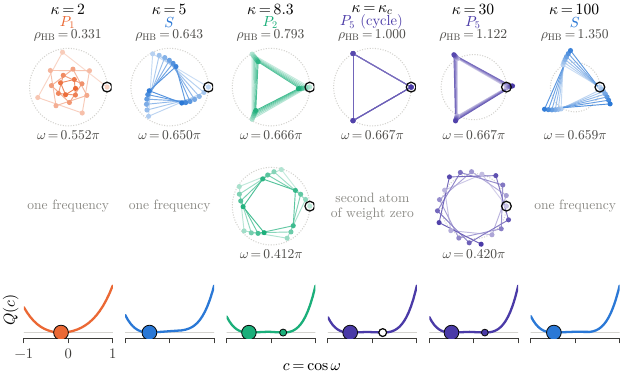}
\caption{Extremal spectra by regime. Top: the first nineteen points
in each frequency plane, with unit initial amplitude and displayed
radius $\rh^{k/12}$. Bottom: the dual polynomial $Q(c)\ge0$ and its
selected double zeros; filled marker size indicates spectral mass. An open marker denotes the zero-mass site at $\kappa_c$.}
\label{fig:spectra}
\end{figure}

\section{Finite-horizon bounds and asymptotic expansions}\label{app:finite}
This appendix completes the proofs of Theorems~\ref{thm:rate}
and~\ref{thm:regularity}. Throughout, $q\in(0,1)$ is fixed, the parameters are
those of Polyak's tuning, $T=T(q)=\log\rh$, and the recurrence is
$e_{k+1}=2q\sigma_k-q^2e_{k-1}$.

\subsection{A uniform lower bound from rest}\label{app:finite-lower}
We first prove the uniform lower bound \eqref{eq:spirallower}. All variables
below refer to the special spiral: $\rho=\rs$, $G=(\rho+\rho^{-1})/2$,
$\cos\omega=-\rho/(1+\rho^2)$, $z=\rho e^{i\omega}$,
$m=(z+q^2/z)/(2q)$, $g=1-|m|^2$, $C=g\tanh(\log\rho)$, and
$\mathsf h=(C+2\Rea m)/4$. In $\C\oplus\R$ prescribe
\begin{equation}\label{eq:explicitdata}
 \begin{gathered}
 x_0=(1,2),\quad\sigma_0=(m+\Delta,0),\quad\psi_0=\mathsf h,\\
 x_k=(z^k,y_k),\quad \sigma_k=(mz^k,0),\quad\psi_k=\mathsf h\rho^{2k}\quad(k\ge1),\\
 y_k=2(-q^2)^{\lceil k/2\rceil},\quad \Delta=\tfrac q2(1-z^{-1}).
 \end{gathered}
\end{equation}
Since $m+\Delta=(z+q^2)/(2q)$, the first step from $x_{-1}=x_0$ gives
$x_1$, and the later steps follow from $2qmz=z^2+q^2$ and
$y_{k+1}=-q^2y_{k-1}$. Once interpolation is proved,
$\norm{x_0}=\sqrt5$ and $\norm{x_k}\ge\rho^k$ give \eqref{eq:spirallower}.

Pairs of positive indices and their minimizer pairs satisfy
\eqref{eq:interpolation} by Proposition~\ref{prop:spiral} and
Lemma~\ref{lem:spectralinterp}, since the last coordinate carries zero
gradient and only adds $\frac14(y_i-y_j)^2$ to $R_{ij}$. Only the pairs with the
index $0$ remain. Write $X=\rho^k$, $Z=z^k$, and
\[
 \begin{gathered}
 \mathsf p_\pm=\tfrac12\bar\Delta(m\pm1),\quad
 \mathsf a_\mp=\frac{g\rho^{\mp1}}{4G},\quad
 \mathsf b_-=gGe^{-i\omega}+\mathsf p_+,\quad \mathsf b_+=gGe^{i\omega}+\mathsf p_-,\\
 \mathsf c_\mp=\frac{g\rho^{\pm1}}{4G}-\Rea\mathsf p_\pm-\frac{|\Delta|^2}4,\quad
 \mathsf d=(1-q^4)^2 .
 \end{gathered}
\]

\begin{lemma}[Slacks of the initial pairs]\label{lem:startupslack}
For $k\ge1$,
\begin{equation}\label{eq:startupslack}
 R_{0k}\mp T_{0k}=\mathsf a_\mp X^2+\Rea(\mathsf b_\mp Z)+\mathsf c_\mp
 +\bigl(1-(-q^2)^{\lceil k/2\rceil}\bigr)^2,
\end{equation}
and $|T_{\star0}|=|\frac12\Rea\Delta-\frac C4|$, $R_{0\star}=\frac14(5-|m+\Delta|^2)$. On
$F(\rho,q)=0$,
\begin{equation}\label{eq:startupidentities}
 \begin{gathered}
 g=\frac{2\rho(\rho^2-\rho+1)}{(\rho^2+1)^2}=\frac12-\frac{(\rho-1)^4}{2(\rho^2+1)^2},\\
 \Rea\Delta=\frac q2\,\frac{\rho^2+2}{\rho^2+1}\le q,\quad
 |\Delta|^2=\frac{q^2}4\,\frac{\rho^4+4\rho^2+1}{\rho^2(\rho^2+1)} .
 \end{gathered}
\end{equation}
In particular $0<g\le\frac12$ and $|C|<\frac12$.
\end{lemma}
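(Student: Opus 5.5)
The proof is a direct computation from the prescribed data \eqref{eq:explicitdata} and the interpolation quantities $T_{ij},R_{ij}$ of Theorem~\ref{thm:interpolation}, followed by simplification on the curve $F(\rho,q)=0$.

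\emph{The pair $(0,k)$.} Split each vector into its $\C$-part and its $\R$-part. The $\R$-coordinate carries zero gradient in both data, so it contributes nothing to $T_{0k}$. Its contribution to $R_{0k}$ is $\frac14(y_0-y_k)^2=\frac14\bigl(2-2(-q^2)^{\lceil k/2\rceil}\bigr)^2$, which is the last term of \eqref{eq:startupslack}.

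For the $\C$-part, write $\sigma_0=m\cdot1+\Delta$. The pair then differs from the stationary wave pair $(0,k)$ of Lemma~\ref{lem:spectralinterp} only through $\Delta$. There are two further adjustments.
\begin{itemize}
\item $\psi_0=\mathsf h$ agrees with the stationary value $\psi_0=\frac14(C+2\Rea m)$, so the function values need no correction.
\item The stationary part contributes $\frac g4|Z-1|^2\mp\bigl(\frac C4(X^2-1)-s\Ima Z\bigr)$.
\end{itemize}
Expanding $|Z-1|^2=X^2-2\Rea Z+1$ and inserting $C=g\tanh t$, $s=u\sin\omega$ with $g=u/G$, $2G=\rho+\rho^{-1}$, produces the $X^2$ coefficient $\frac g4(1\mp\tanh t)=g\rho^{\mp1}/(4G)=\mathsf a_\mp$. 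It also produces the $Z$-coefficient $gGe^{\mp i\omega}$, after combining $-\frac g2\Rea Z\pm s\Ima Z$ via $\sqrt{4G^2-1}$ as in Lemma~\ref{lem:spiralids}.

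The $\Delta$-corrections are as follows.
\begin{itemize}
\item $T$ changes by $-\frac12\ip{\Delta}{Z-1}$.
\item $R$ changes by $-\frac14\bigl(|mZ-m-\Delta|^2-|mZ-m|^2\bigr)=\frac12\Rea\bigl(\bar\Delta(mZ-m)\bigr)-\frac14|\Delta|^2$.
\end{itemize}
Collecting the terms linear in $Z$ gives $\frac12\bar\Delta(m\pm1)=\mathsf p_\pm$. The constant terms give $\mathsf c_\mp$. Matching the sign convention of $\mathsf b_\mp$ with $\mathsf p_\pm$ is a careful but routine bookkeeping step.

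\emph{The minimizer pair.} Theorem~\ref{thm:interpolation} gives $T_{\star0}=\psi_0-\frac12\ip{\sigma_0}{x_0}$. Here $\ip{\sigma_0}{x_0}=\Rea(m+\Delta)$, since the second coordinate has zero gradient. With $\psi_0=\frac14 C+\frac12\Rea m$ this yields $T_{\star0}=\frac C4-\frac12\Rea\Delta$. Similarly,
\[
R_{0\star}=\tfrac14\bigl(\norm{x_0}^2-\norm{\sigma_0}^2\bigr)=\tfrac14\bigl(5-|m+\Delta|^2\bigr).
\]

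\emph{The identities \eqref{eq:startupidentities}.} These use the spiral relations from \eqref{eq:spiralsimple}: $g=(2G-1)/(2G^2)$ with $G=(\rho^2+1)/(2\rho)$. This gives the first formula for $g$ directly, and the second follows by expanding $(\rho^2+1)^2-(\rho-1)^4$.

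For $\Delta=\frac q2(1-\bar z/\rho^2)$ we have $\Rea\Delta=\frac q2(1-\cos\omega/\rho)$ with $\cos\omega=-\rho/(1+\rho^2)$. This gives $\frac q2\cdot\frac{\rho^2+2}{\rho^2+1}$, which is at most $q$ because the fraction is at most $2$.

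Next, $|\Delta|^2=\frac{q^2}4|1-z^{-1}|^2=\frac{q^2}4\bigl(1+\rho^{-2}-2\cos\omega/\rho\bigr)$. Substituting the value of $\cos\omega$ gives the stated rational function after simplification. Note that $F=0$ is not actually needed for these three formulas, only the spiral parametrization; that is worth checking.

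\emph{The bounds.} From the second form of $g$ we get $0<g\le\frac12$. Then $|C|=g|\tanh t|<g\le\frac12$.

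The only real obstacle is getting the signs in the $\Delta$-cross terms consistent with the stated definitions of $\mathsf b_\mp$ and $\mathsf c_\mp$. I would verify this by checking the formula at $k=1$ against a direct evaluation.
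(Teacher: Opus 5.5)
Your proposal is correct and follows the paper's route: substitute \eqref{eq:explicitdata}, collect the coefficients of $X^2$, $Z$ and $1$ (your stationary-plus-$\Delta$ split is an organized version of this), and the sign bookkeeping you deferred does close, since in $R_{0k}-T_{0k}$ the $\Delta$-terms are $\Rea(\mathsf p_+Z)-\Rea\mathsf p_+-\frac14|\Delta|^2$, matching $\mathsf b_-$ and $\mathsf c_-$. Your remark that the $\Delta$-formulas need only $\cos\omega=-\rho/(1+\rho^2)$ is right, but note that $F(\rho,q)=0$ is genuinely used elsewhere: $g=1-|m|^2=1-\cos^2\omega-\sh^2(t-\log q)$ equals $(2G-1)/(2G^2)$, and $g=u/G$ holds in your $Z$-coefficient, only because $\sh(t-\log q)=1-1/(2G)$ on that curve.
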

\begin{proof}
Insert \eqref{eq:explicitdata} into $R_{0k}$ and $T_{0k}$, use
$\ip uv=\Rea(u\bar v)$ and collect the coefficients of
$X^2$, $Z$ and $1$. The last coordinate contributes
$(1-(-q^2)^{\lceil k/2\rceil})^2$.
The remaining identities follow by clearing denominators and reducing
modulo $F(\rho,q)$, which is quadratic in $q$.
\end{proof}

\begin{table}[ht]
\caption{Rational sign certificates for the initialized spiral. Every
inequality holds for both signs on the entire closed interval in $\rho$.}
\label{tab:startup}\centering\small
\begin{tabular}{p{.19\textwidth}p{.73\textwidth}}
\toprule
Interval in $\rho$ & Certified inequalities\\
\midrule
$[33/100,1]$ & $\mathsf c_\mp+\mathsf d>0$ and $(\mathsf c_\mp+\mathsf d)^2>|\mathsf b_\mp|^2$.\\[3pt]
$[1,6/5]$ & $|\mathsf b_\mp|^2<1$, $\mathsf a_\mp-\epsilon>0$, and $P_r^\mp(1)>0$.
For $(-,0),(-,1),(+,2)$, $(P_r^\mp)'(1)>0$; for the other three pairs
$(\sigma,r)$, $4(\mathsf a_\mp-\epsilon)(\mathsf c_\mp+\mathsf d)>(\mathsf b^\mp_r+\epsilon)^2$.\\[3pt]
$[6/5,1713/1000]$ & $4\mathsf a_\mp^2\rho^{20}>|\mathsf b_\mp|^2$,
$\mathsf z_\mp:=\mathsf a_\mp\rho^{20}+\mathsf c_\mp+\mathsf d>0$, and
$\mathsf z_\mp^2>|\mathsf b_\mp|^2\rho^{20}$. The exact slacks
\eqref{eq:startupslack} are positive for $1\le k\le9$.\\
\bottomrule
\end{tabular}
\end{table}

In Table~\ref{tab:startup}, $\mathsf b^\mp_r=\Rea(\mathsf b_\mp e^{ir\omega})$ for
$r=0,1,2$,
\[
 \epsilon(\rho)=\frac3{40}\,\frac{(\rho^2-1)(\rho+1)^2}{\rho(\rho^2+1)},\quad
 P_r^\mp(X)=(\mathsf a_\mp-\epsilon)X^2+(\mathsf b^\mp_r+\epsilon)X+\mathsf c_\mp+\mathsf d .
\]
Each quantity in the table is reduced modulo $F(\rho,q)$, which is quadratic in
$q$, to $(\mathcal P(\rho)+\mathcal R(\rho)q)/\mathcal D(\rho)$. If
$F=f_0+f_1q+f_2q^2$, the zeros of the numerator on either algebraic branch are
zeros of $f_0\mathcal R^2-f_1\mathcal P\mathcal R+f_2\mathcal P^2$. Sturm sequences
isolate these zeros and decide the signs of denominators. A simple resultant
zero that is assigned to the other branch by a sign change on its isolating
interval cannot also be a zero on the physical branch, since its multiplicity
would then be at least two. Signs at rational points reduce to comparisons of
squares in a quadratic number field. There are $44$ such checks.

\begin{proof}[Proof of \eqref{eq:spirallower}]
For $q\le1/5$, $q<\rho<(1+\sqrt2)q<1/2$ gives
$|\Delta|,|\mathsf p_\pm|<3/5$.
Altering the first gradient loses less than $99/100$ in any initial
pair, while the scalar coordinate gains at least
$(1-q^4)^2>99/100$. The minimizer pair has
$R_{0\star}>61/100>|T_{\star0}|$.
For $\kappa\ge2$, $q<\rho<2q$ gives
$R_{0\star}>11/16>|T_{\star0}|$.
For the other pairs use Table~\ref{tab:startup}:
if $\rho\le1$, then $X\le1$ and its first row gives positivity;
if $1\le\rho\le6/5$, write $k=3j+r$ and
$\omega=2\pi/3+\vartheta$.
The addition formulas and
$\log\rho\ge2(\rho-1)/(\rho+1)$ give
$|\vartheta|\le\epsilon\log\rho$, so
$|e^{3ij\vartheta}-1|\le\epsilon\log X\le\epsilon(X-1)$.
The slack is therefore at least $P_r^\mp(X)>0$ by the second row.
For $\rho\ge6/5$, bound the oscillatory term by
$-|\mathsf b_\mp|X$; the third row gives positivity for $k\ge10$
and checks the first nine lags.
The bounds $\rs(2)>33/100$ and $\rho_*<1713/1000$ show that these
intervals cover $\kappa\ge2$, overlapping the small-$q$ case.
Interpolation yields $f\in\cF(\R^3)$, and
$\norm{x_k}\ge\rho^k$, $\norm{x_0}=\sqrt5$ give the assertion.
\end{proof}

\subsection{Certificates and bounded-memory functionals}
We turn to the upper bound in \eqref{eq:bounds}. Its first ingredient is a
certificate with a gap linear in the distance to the optimal exponent. For a
weight vector $w$ and a point $(G,u)$ write $\Phi_{G,u}$ for the symbol
\eqref{eq:dualpoly}, and for $t\in\R$ let $u_q(t)=\sh(t-\log q)$ and
$u_V(t)=\sh V(t)$.

\begin{lemma}[Linear gap]\label{lem:lineargap}
There are $a>0$, $\delta_0>0$, and certificates \eqref{eq:strictcert} with lags at
most five at the exponents $t=T+\delta$, $0<\delta\le\delta_0$, whose coefficients are
bounded uniformly in $\delta$ and whose symbols satisfy
$\Phi_{T+\delta}\le-a\delta$ at every frequency.
\end{lemma}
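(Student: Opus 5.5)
The plan is to reuse, at the exponent $t=T+\delta$, the primal--dual boundary certificate at the optimal point, with the twist $u$ held at its boundary value. Let $G_T=\ch T$ and $u_T=u_V(T)=u_q(T)$. The classification (Theorem~\ref{thm:classification}) supplies weights $w$ supported in $\{2,3,5\}$ such that the symbol \eqref{eq:dualpoly} satisfies $\Phi_{G_T,u_T}(c)=-(G_T-c)Q(c)\le0$ on $[-1,1]$. In this symbol the lower lag one and the normalized upper lags cancel $d$, so its $C$ coefficient cancels. It therefore has the form \eqref{eq:strictcert}, with lags at most five, as soon as it is made uniformly negative.

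At $t=T+\delta$ the actual twist is $u_q(T+\delta)=\sh(T+\delta-\log q)$. Since $t-V(t)$ is strictly increasing with $|V'|<1$ on arcs (Theorem~\ref{thm:classification}), we have
\[
u_q(T+\delta)-u_V(T+\delta)\ge a_1\delta
\]
for small $\delta$. This follows from the one-sided derivative bounds $1-V'_\pm(T)>0$, which come from the finitely many analytic branches with $|V'|<1$ up to the endpoints; at the junctions we use one-sided derivatives.

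The next step is to choose, for each $\delta$, the boundary certificate $w(\delta)$ at $(G_\delta,u_V(T+\delta))$ with $G_\delta=\ch(T+\delta)$. It is nonpositive, and by Lemma~\ref{lem:dualfactor},
\[
\Phi_{G_\delta,u_q}\le\Phi_{G_\delta,u_V}-(G_\delta-c)(u_q^2-u_V^2)\le-(G_\delta-1)\cdot 2u_V a_1\delta .
\]
This bound is useful when $G_\delta>1$ is bounded away from $1$ and $u_V$ is bounded below. We have $u_V\ge1/2$ near all the families, because the spiral and the families give $u\ge1/2$. The weights stay in the compact simplex and depend continuously on $G$ along each branch. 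At a junction we take the branch on the side of $T+\delta$, so the coefficients are bounded uniformly.

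The delicate case is $T=0$, i.e.\ $\kappa=\kappa_c$, where $G_\delta-1\sim\delta^2/2$ and the factor $(G-c)$ vanishes at $c=1$. There I follow the remark after Lemma~\ref{lem:dualfactor}. On the critical branch the upper bound is $-(1-c)S(c)$ with $\min S>0$ uniformly, by Theorem~\ref{thm:p5local} and Proposition~\ref{prop:p5h0dual}. Adding equal minimizer weights $\epsilon$ (which cancel in the $C$ coefficient) gives negativity of order $\epsilon u'^2-\epsilon\,\mathrm{something}$. The linear-in-$\delta$ gain $(G-c)\cdot2u\,a_1\delta\ge(1-c)a_1\delta$ covers $c<1$, and a minimizer weight $\epsilon=\epsilon_0\delta$ covers $c$ near $1$. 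There $g=1-c^2-u^2\le -u^2+O(1-c)<0$ yields $\epsilon_0\delta g\le-\epsilon_0\delta/5$, giving $\Phi\le-a\delta$.

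For $T\ne0$ the same minimizer trick also handles $c$ near $1$ if needed, but $G_T>1$ already makes the bound uniform. The main obstacle is checking uniform continuity of the dual data across the five switching points and at $\kappa_c$, so that a single $a$ and $\delta_0$ work. If a branch endpoint causes trouble, I would instead keep the fixed certificate $w(T)$ and absorb the $O(\delta)$ change in $G$ by adding the linear gain, using $Q>0$ away from its finitely many double zeros and the strict $u$-decrease at those zeros.
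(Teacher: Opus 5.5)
Your proposal is correct and is essentially the paper's proof. You take the boundary weights (lag $1$ and upper lags in $\{2,3,5\}$) at $t=T+\delta$, replace $u_V$ by $u_q$, and apply Lemma~\ref{lem:dualfactor} with $u_q^2-u_V^2\gtrsim\delta$ from $|V'|<1$ on the adjacent arc and $u_V\ge1/2$; you then use $G-c\ge\cosh(T/2)-1$ when $T\ne0$, and at $T=0$ add equal minimizer weights proportional to $\delta$ to cover $c$ near $1$.

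Two small remarks. First, at $T=0$ the minimizer weight must be chosen small relative to the linear gain, because $g>0$ away from $c=1$; the paper splits at $c=19/20$ and takes $b\le(1-\theta)/80$. Second, $q$ and hence $T$ are fixed, so no uniformity across switching points is needed. Your fallback with frozen weights $w(T)$ is therefore unnecessary, and it would not obviously control the $O(\delta)$ change in $G$ at the double zeros of $Q$.
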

\begin{proof}
Use at $t=T+\delta$ the boundary weights of
Theorem~\ref{thm:classification}, supported on $\{2,3,5\}$, and replace
$u_V=\sh V(t)$ by $u_q=\sh(t-\log q)$.
On the adjacent arc choose $\theta<1$ with $|V'|\le\theta$.
Then $u_q^2-u_V^2\ge(1-\theta)\delta$, since $u_V\ge1/2$.
Lemma~\ref{lem:dualfactor} gives
\[
 \Phi_{G,u_q}(c)\le-(G-c)(1-\theta)\delta.
\]
For $T\ne0$, take $\delta\le|T|/2$ so that
$G-c\ge\ch(T/2)-1>0$.
For $T=0$, add equal minimizer weights $b\delta$.
On $c\ge19/20$, $g\le-1/8$, giving the gap $b\delta/4$;
on $c\le19/20$, the gap is at least
$\delta((1-\theta)/20-2b)$.
Any $0<b\le(1-\theta)/80$ works. All multipliers stay bounded.
\end{proof}

Fix $M_0=5$. We record the inner products and function values in a fixed
number of consecutive iterates as a Gram window. This places the
limit $\delta\downarrow0$ in a finite-dimensional space.

Fix an exponent $t$. For positions $e_l$ and values $\psi_l$ put
$f_l=e^{-lt}e_l$ and $h_l=e^{-2lt}\psi_l$. A banded functional of width $D\ge0$
and offset $o\in\Z$ is
\begin{equation}\label{eq:banded}
 \mathcal B_j=\sum_{a,b=0}^Dc_{ab}\ip{f_{j+o+a}}{f_{j+o+b}}+\sum_{a=0}^Dd_ah_{j+o+a},
\end{equation}
with real coefficients $c_{ab}=c_{ba}$ and $d_a$. Its symbol, charge and size are
$\hat{\mathcal B}(\omega)=\sum_{a,b}c_{ab}\cos((a-b)\omega)$,
$\chi(\mathcal B)=\sum_ad_a$ and $|\mathcal B|=\sum_{a,b}|c_{ab}|+\sum_a|d_a|$.
The charge is the response to adding a constant to all scaled function
values; zero charge allows these terms to telescope. The
functional lies inside a segment $e_0,\dots,e_n$ if $1\le j+o$ and
$j+o+D\le n-1$. After substituting the recurrence, the scaled certificate
$e^{-2jt}Z_j$ has this form with offset $-1$, width $M'+2$, zero
charge and symbol $\Phi_t$.

For a probability measure $\nu$ on $[-1,1]$ and $C\in\R$, the realization of
Lemma~\ref{lem:spectralinterp} in $L^2([-1,1],\nu;\C)$, taken for all $l\in\Z$,
is the stationary sequence of $(\nu,C)$ at the exponent $t$: after removing
the factor $e^{lt}$, its pairwise inner products depend only on index differences. It satisfies the
recurrence for every $l$, $f_l(c)=e^{il\arccos c}$, and
$h_l=(C+2\int\Rea m\,d\nu)/4$; hence every banded functional takes on it the
value
\begin{equation}\label{eq:stationaryvalue}
 \mathcal B_j=\int\hat{\mathcal B}(\arccos c)\,d\nu(c)+\frac{\chi(\mathcal B)}4\Bigl(C+2\int\Rea m\,d\nu\Bigr),
\end{equation}
independent of $j$. The pair $(\nu,C)$ is $H$-feasible at $(v,t)$ if $|C|\le\bar g$
and the lag conditions of \eqref{eq:feasible} hold for $1\le m\le H$. By
Lemma~\ref{lem:spectralinterp}, the stationary sequence of an $H$-feasible pair
satisfies every minimizer inequality and every interpolation inequality
between indices at distance at most $H$.

\begin{lemma}[Telescoping with finite real filters]\label{lem:telescoping}
Let $D=M_0+2=7$ and let $\mathcal B$ be a banded functional of width $D$ and
offset $o$ with $\chi(\mathcal B)=0$. Suppose that, for some $\varepsilon\ge0$,
real coefficients $\varpi_{ra}$ give the finite factorization
\begin{equation}\label{eq:filterfactor}
 -\hat{\mathcal B}(\omega)-\varepsilon
 =\sum_{r=1}^R\Bigl|\sum_{a=0}^D\varpi_{ra}e^{ia\omega}\Bigr|^2 .
\end{equation}
For each integer $0\le a_0\le D$ there is a banded functional $\mathcal U$ of
width $D-1$ and offset $o$, with $|\mathcal U|\le c_D(|\mathcal B|+\varepsilon)$
for a constant $c_D$ independent of $R$ and of the chosen factors, such that
for all sequences in every real Hilbert space and every $j$,
\begin{equation}\label{eq:telescoping}
 \mathcal B_j=-\varepsilon\norm{f_{j+o+a_0}}^2-\sum_{r=1}^R\Bigl\|\sum_{a=0}^D\varpi_{ra}f_{j+o+a}\Bigr\|^2+\mathcal U_{j+1}-\mathcal U_j .
\end{equation}
\end{lemma}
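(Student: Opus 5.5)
The plan is to reduce the statement to a purely algebraic fact about Toeplitz quadratic forms: a banded quadratic form with zero symbol is a discrete coboundary. Put
\[
 \mathcal B'_j=\mathcal B_j+\varepsilon\norm{f_{j+o+a_0}}^2+\sum_{r=1}^R\Bigl\|\sum_{a=0}^D\varpi_{ra}f_{j+o+a}\Bigr\|^2 .
\]
This is again a banded functional of width $D$ and offset $o$. Its charge is still $\chi(\mathcal B)=0$, because the added terms contain no function values. Its symbol is $\hat{\mathcal B}+\varepsilon+\sum_r|\sum_a\varpi_{ra}e^{ia\omega}|^2$, which vanishes identically by \eqref{eq:filterfactor}. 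Note that $\|\sum_a\varpi_{ra}f_{j+o+a}\|^2=\sum_{a,b}\varpi_{ra}\varpi_{rb}\ip{f_{j+o+a}}{f_{j+o+b}}$ has symbol $\sum_{a,b}\varpi_{ra}\varpi_{rb}\cos((a-b)\omega)=|\sum_a\varpi_{ra}e^{ia\omega}|^2$, using real coefficients. The task is then to write any zero-charge, zero-symbol banded functional as $\mathcal U_{j+1}-\mathcal U_j$ with $\mathcal U$ of width $D-1$.

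First I would treat the value part. Since $\sum_ad_a=0$, the sum $\sum_ad_ah_{j+o+a}$ equals $\mathcal V_{j+1}-\mathcal V_j$ with $\mathcal V_j=-\sum_{a=0}^{D-1}\bigl(\sum_{b\le a}d_b\bigr)h_{j+o+a}$. This is summation by parts; one checks it by comparing the coefficients of each $h_{j+o+a}$. Next I would treat the quadratic part. Group $c'_{ab}$ by diagonals $\delta=a-b$. Zero symbol means $\sum_{a-b=\pm\delta}c'_{ab}=0$ for every $\delta\ge0$. On a fixed diagonal, $\sum_a c'_{a,a+\delta}\ip{f_{j+o+a}}{f_{j+o+a+\delta}}$ is a sequence indexed by $a$ whose total coefficient is zero. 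The same summation by parts therefore writes it as a difference $\mathcal U^{(\delta)}_{j+1}-\mathcal U^{(\delta)}_j$, where $\mathcal U^{(\delta)}$ involves the pairs $(a,a+\delta)$ with $a\le D-1-\delta$, hence width at most $D-1$. Symmetrizing keeps $c_{ab}=c_{ba}$. Summing over $\delta$ and adding $\mathcal V$ gives $\mathcal U$ of width $D-1$ and offset $o$. The identity \eqref{eq:telescoping} then holds for all sequences in any real Hilbert space, since it is a coefficientwise identity in the inner products and values.

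For the size bound, each coefficient of $\mathcal U$ is a partial sum of at most $D+1$ coefficients of $\mathcal B'$. Hence $|\mathcal U|\le c_D|\mathcal B'|$, with $c_D$ depending only on $D$. It remains to bound $|\mathcal B'|$ by $|\mathcal B|+\varepsilon$ independently of $R$ and of the factors, and this is the only delicate step. The coefficients of $\sum_r\|\cdot\|^2$ are $P_{ab}=\sum_r\varpi_{ra}\varpi_{rb}$, a positive semidefinite matrix. Off-diagonal entries are bounded by $|P_{ab}|\le(P_{aa}+P_{bb})/2$. Each $P_{aa}\le\operatorname{tr}P$, and $\operatorname{tr}P$ is the constant Fourier coefficient $\frac1{2\pi}\int\sum_r|\cdot|^2\,d\omega$. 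By \eqref{eq:filterfactor} this equals the constant coefficient of $-\hat{\mathcal B}-\varepsilon$, which is at most $|\mathcal B|+\varepsilon$. Therefore $|\mathcal B'|\le|\mathcal B|+\varepsilon+(D+1)^2(|\mathcal B|+\varepsilon)$, which gives the claimed bound.
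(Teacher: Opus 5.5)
Your proposal is correct and follows the paper's proof. The paper also adds the filter squares and $\varepsilon\norm{f_{j+o+a_0}}^2$ to obtain a zero-symbol, zero-charge functional, telescopes each diagonal and the value terms by summation by parts, and bounds the filter coefficients through $\operatorname{tr}P=\sum_{r,a}\varpi_{ra}^2$, read off from the constant Fourier coefficient of \eqref{eq:filterfactor}. Your constants are slightly looser than the paper's, which uses $\operatorname{tr}P=-\sum_a c_{aa}-\varepsilon\le|\mathcal B|$ and $\sum_{a,b}|P_{ab}|\le(D+1)\operatorname{tr}P$, but this does not affect the statement.
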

\begin{proof}
Fourier orthogonality gives
$\sum_{r,a}\varpi_{ra}^2=-\sum_ac_{aa}-\varepsilon\le|\mathcal B|$.
Add the filter squares and
$\varepsilon\norm{f_{j+o+a_0}}^2$ to $\mathcal B_j$.
Its quadratic coefficient matrix $\tilde c$ now has zero symbol.
For each diagonal offset $e$, put
$\mu_{0,a}=\tilde c_{aa}$ and $\mu_{e,a}=2\tilde c_{a,a+e}$ for $e>0$.
Zero symbol means $\sum_a\mu_{e,a}=0$.
Each diagonal therefore telescopes by
\[
 \sum_a\mu_{e,a}g_{j+o+a}
 =R_{j+1}-R_j,\qquad
 R_j=\sum_a\mu_{e,a}\sum_{r=0}^{a-1}g_{j+o+r},
 \quad g_l=\ip{f_l}{f_{l+e}}.
\]
The zero-charge value terms telescope by the same identity.
Only indices up to $j+o+D-1$ occur. Finally
$\sum_{a,b}|\sum_r\varpi_{ra}\varpi_{rb}|
\le(D+1)\sum_{r,a}\varpi_{ra}^2$
gives a bound independent of $R$; for $D=7$, $c_D=63$ suffices.
\end{proof}

For a nonnegative trigonometric polynomial of degree at most seven,
the scalar Fej\'er--Riesz theorem gives a polynomial factor of degree at
most seven~\cite[\S1 and Theorem~2.1]{DR}. The symbols here are real and
even. If the factor is $p=u+iv$, with real-coefficient polynomials $u,v$,
averaging $|p(e^{i\omega})|^2$ at $\omega$ and $-\omega$ gives
$|u(e^{i\omega})|^2+|v(e^{i\omega})|^2$.
Thus \eqref{eq:filterfactor} holds with two real filters of width seven
for every certificate of Lemma~\ref{lem:lineargap}, after subtracting
its gap $\varepsilon=a\delta$.
The algebra behind the minimizer correction is also explicit. With
$\Delta=u'^2-u^2$, $g_{u'}=1-c^2-u'^2$ and equal minimizer weights
$\gamma\ge0$, subtraction of \eqref{eq:alphabeta} gives
\begin{equation}\label{eq:sourcefilter}
\begin{aligned}
 -\Phi_{G,u'}-2\gamma g_{u'}-\varepsilon
 ={}&-\Phi_{G,u}+(G-c)\bigl\{\Delta(\alpha_J-1)
       +2(u'-u)(1-c^2)\beta_J\bigr\}\\
 &+2\gamma(1-c)^2+(\Delta-4\gamma)(1-c)\\
 &+(G-1)\Delta+2\gamma u'^2-\varepsilon,
\end{aligned}
\end{equation}
The last terms are nonnegative for
$4\gamma\le\Delta$ and
$\varepsilon\le(G-1)\Delta+2\gamma u'^2$, as in
Lemma~\ref{lem:lineargap}. This retains the same width at the critical
point.

\subsection{The rate of bounded-memory trajectories}
An $H$-segment retains every minimizer inequality and the
interpolation inequalities of lags at most $H$. Its envelope $\Gamma_H$
is finite and submultiplicative, with
$\rho_H=\lim_p\Gamma_H(p)^{1/p}\ge e^T$.
For $H\le k\le n-1$ record the Gram matrix of
$e_{k-H},\ldots,e_{k+1}$ and the values
$\psi_{k-H+1},\ldots,\psi_k$ as $Y_k\in
\mathcal E_H=\mathrm{Sym}_{H+2}(\R)\times\R^H$.
The admissible windows form a closed convex cone $\mathcal K_H$:
positive semidefiniteness and every interpolation inequality are closed
convex conditions on these entries. The minimizer inequalities imply
$|\psi_l|\le\norm{e_l}^2/2$, hence
$|\mathcal V(Y)|\le|\mathcal V|\operatorname{tr}Y$ for a linear
functional, with coefficient norm $|\mathcal V|$.
The envelope and the old positions already in $Y_k$ give
\begin{equation}\label{eq:windowgrowth}
 \operatorname{tr}Y_{k+p}\le(H+2)\max_{\max(0,p-H)\le r\le p+1}\Gamma_H(r)^2\operatorname{tr}Y_k
 \quad(p\ge0,\ k+p\le n-1).
\end{equation}

\begin{lemma}[Plateau for classes of segments]\label{lem:plateauclass}
Let $\mathcal G$ be a class of $H$-segments in Euclidean spaces $\R^d$, over all
finite $d$, closed under restriction to
consecutive indices, and let $\Gamma_{\mathcal G}$ be defined as $\Gamma$, with the
supremum over $\mathcal G$. Assume that $\Gamma_{\mathcal G}(p)>0$ for every $p$ and
that $r=\lim_p\Gamma_{\mathcal G}(p)^{1/p}>0$. Then there is no banded functional
$\mathcal B$ at the exponent $\log r$ with $\chi(\mathcal B)=0$, $\hat{\mathcal B}<0$ on
$[0,\pi]$, and $\mathcal B_j\ge0$ whenever $\mathcal B_j$ lies inside a member of
$\mathcal G$.
\end{lemma}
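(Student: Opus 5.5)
The plan is to follow the proof of Lemma~\ref{lem:plateau} almost verbatim. The differences are that the energy estimate now comes from the banded functional $\mathcal B$ itself, via Lemma~\ref{lem:telescoping}, and that the envelope is $\Gamma_{\mathcal G}$. Suppose such a $\mathcal B$ exists at $t=\log r$, with offset $o$ and width $D$. Since $\hat{\mathcal B}$ is a real even trigonometric polynomial that is negative on $[0,\pi]$, there is $\varepsilon>0$ with $-\hat{\mathcal B}-\varepsilon\ge0$. By Fej\'er--Riesz, as in the remark after Lemma~\ref{lem:telescoping}, this admits a factorization \eqref{eq:filterfactor} with two real filters. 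That lemma then gives the following for every segment in $\mathcal G$ and every $j$ with $\mathcal B_j$ inside it:
\[
 \varepsilon\norm{f_{j+o+a_0}}^2\le-\mathcal B_j+\mathcal U_{j+1}-\mathcal U_j\le\mathcal U_{j+1}-\mathcal U_j .
\]
Summing over $i\le j\le J$ telescopes to $\varepsilon\sum_{k}\mathsf b_k\le\mathcal U_{J+1}-\mathcal U_i$. Here $\mathcal U$ has bounded width and coefficients, and $|h_l|\le\norm{f_l}^2/2$ holds because the minimizer inequalities are retained in $H$-segments. Hence $|\mathcal U_j|\le K'E_j$ for a fixed window sum $E_j$. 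This yields exactly the energy inequality \eqref{eq:energy}, with constants $K,w$ depending only on $\mathcal B$, valid for all segments of $\mathcal G$ and indices at distance at least $w$ from the ends.

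Next I run the two-stage argument of Lemma~\ref{lem:plateau}. I put $\gamma(p)=\Gamma_{\mathcal G}(p)r^{-p}$ and $\bar\gamma(p)=\max_{l\le p}\gamma(l)$. Submultiplicativity of $\Gamma_{\mathcal G}$ holds: restriction to consecutive indices keeps the segment inside $\mathcal G$, so concatenation of ratios works, and $\Gamma_{\mathcal G}(p)>0$ rules out degenerate quotients. Together with $r=\inf_p\Gamma_{\mathcal G}(p)^{1/p}$, this gives $\gamma\ge1$ and $\bar\gamma(p)^{1/p}\to1$. The forward bound \eqref{eq:forward} holds with $\Gamma_{\mathcal G}$ in place of $\Gamma$. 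I then choose $\theta=1+1/(2K)$, $a=2w+1$ and $p_0$ as in \eqref{eq:choosep}. The first stage proves the short-growth bound \eqref{eq:shortgrowth}, because otherwise the windowed sums would grow geometrically, faster than the subexponential forward bound allows. The second stage iterates $\mathsf T_{i+w+1}\le\theta^{-1}\mathsf T_{i-w}$ to get $\norm{S_n}^2\le Cr^{2n}\theta^{-n/a}\norm{S_1}^2$ uniformly over $\mathcal G$. This forces $\lim\Gamma_{\mathcal G}(p)^{1/p}\le r\theta^{-1/(2a)}<r$, a contradiction.

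I expect the main obstacle to be the bookkeeping at the segment ends. The functional $\mathcal B_j$ is only assumed nonnegative when it lies inside the segment, that is, for $1\le j+o$ and $j+o+D\le n-1$. The telescoped $\mathcal U$ likewise involves only indices up to $j+o+D-1$. So I must check that the range $w\le i\le j\le n-w$ used in both stages is covered, by taking $w\ge D+|o|+1$. I must also check that the endpoint window sums $E_i$ and $E_j$ dominate $|\mathcal U|$ at those indices, including the value terms $h_l$ at interior positions $1,\dots,n-1$. A secondary point is the initial bound $\mathsf T_0\le C_w\norm{S_1}^2$: it uses only $\Gamma_{\mathcal G}(l)$ for $l\le2w$, which is finite by the positivity assumption and the a priori matrix bound from the proof of Lemma~\ref{lem:envelope}. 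Finally, $\theta$ depends only on $\mathcal B$ and not on the dimension $d$, which is why the lemma holds for classes spanning all finite dimensions.
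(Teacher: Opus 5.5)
Your proof is correct and has the same architecture as the paper's: an energy inequality of the form \eqref{eq:energy}, uniform over $\mathcal G$, followed by the block-energy argument of Lemma~\ref{lem:plateau} with $\Gamma_{\mathcal G}$ in place of $\Gamma$.

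The real difference is how you obtain the energy inequality. The paper localizes and applies Parseval exactly as in Lemma~\ref{lem:energy}. It uses only the strict bound $\hat{\mathcal B}\le-\varepsilon$ and controls the truncation errors in fixed-width end windows. You instead factor $-\hat{\mathcal B}-\varepsilon$ by Fej\'er--Riesz and use the exact identity of Lemma~\ref{lem:telescoping}. Then $\mathcal B_j\ge0$ gives $\varepsilon\mathsf b_{j+o+a_0}\le\mathcal U_{j+1}-\mathcal U_j$, and summation leaves two boundary terms with no truncation analysis at all. This is the device the paper itself uses in Lemma~\ref{lem:prices}. Your route is algebraically cleaner at the cost of a spectral factorization; the paper's needs nothing beyond the symbol bound.

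Lemma~\ref{lem:telescoping} and the Fej\'er--Riesz remark are stated only for width $D=7$. The present lemma, however, is applied in Lemma~\ref{lem:kernel} to a functional whose width is of order $H$. Their proofs extend verbatim to any width, with $c_D$ depending on $D$, and you should say so explicitly.

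One justification also needs correcting. The hypothesis $\Gamma_{\mathcal G}(p)>0$ does not rule out a vanishing intermediate state $S_{k+p}=0$ with $S_k\ne0$. Submultiplicativity in that case comes from the retained minimizer inequality $\norm{\sigma_k}\le\norm{e_k}$, which keeps a zero state zero. The positivity hypothesis is instead what makes Fekete's lemma applicable to $\log\Gamma_{\mathcal G}$, and hence gives $\Gamma_{\mathcal G}(p)\ge r^p$.
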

\begin{proof}
If $\hat{\mathcal B}\le-\varepsilon<0$, localization and Parseval give
\eqref{eq:energy}, exactly as in Lemma~\ref{lem:energy}.
Zero charge cancels the interior values; fixed bandwidth and
$|h_l|\le\norm{f_l}^2/2$ control all endpoint terms.
Closure under restriction makes $\Gamma_{\mathcal G}$ submultiplicative;
a zero state stays zero by the minimizer inequality.
Thus $\Gamma_{\mathcal G}(p)\ge r^p$, and the block-energy argument of
Lemma~\ref{lem:plateau} applies to this class without any change.
It forces growth strictly below $r$, a contradiction.
\end{proof}

\begin{lemma}[Continuation]\label{lem:continuation}
Every $Y\in\mathcal K_H$ is the window $Y_H$ of an infinite segment, and hence of
an infinite $H$-segment.
\end{lemma}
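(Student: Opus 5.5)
The plan is to realize $Y$ directly as the window of an explicit infinite trajectory and to build that trajectory by interpolation. By definition of $\mathcal K_H$, $Y$ is a positive semidefinite Gram matrix of $H+2$ vectors $e_{k-H},\dots,e_{k+1}$, together with values $\psi_{k-H+1},\dots,\psi_k$, satisfying every minimizer inequality and every lag-$\le H$ interpolation inequality among these data. The gradients at interior indices are not part of the window; the recurrence \eqref{eq:normrec} determines them, $\sigma_l=(e_{l+1}-Ae_l+\beta e_{l-1})/B$ for $k-H+1\le l\le k$. Then I factor $Y$ as a genuine Gram matrix in some $\R^{d}$, so that positions, gradients and values become concrete finite data. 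The window satisfies the interpolation inequalities \eqref{eq:interpolation} pairwise among its indices and with the minimizer datum, since all of these lags are at most $H$.

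Step one: Theorem~\ref{thm:interpolation} applied to this finite data set yields some $\psi\in\cS$ interpolating it. For indices at distance at most $H$ there is no issue; the point is that the data are finite, so Theorem~\ref{thm:interpolation} gives a global interpolant in a Hilbert space containing the positions. I take $\R^d$, or $\R^d\oplus\R$ if room is needed.

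Step two: I run the normalized iteration \eqref{eq:normrec} with this $\psi$ forward from the state $(e_k,e_{k+1})$ and backward as needed. Forward, $e_{l+1}=Ae_l+B\nabla\psi(e_l)-\beta e_{l-1}$ is always defined, and at $l=k$ it reproduces $e_{k+1}$ by construction. The positions before $e_{k-H}$ need not be extended: I reindex so that the window's first index becomes $0$, giving a segment $e_0,\dots,e_n$ with $H+2$ prescribed initial positions. For the stated index convention (window $Y_H$), the earliest prescribed position is $e_0$, which has no gradient, and $e_1,\dots,e_H$ have gradients. Continuing forward for all $l\ge H+1$ produces positions $e_{l}$ with $\sigma_l=\nabla\psi(e_l)$ and values $\psi(e_l)$. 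Since all data come from a single function in $\cS$, every pair of indices, at any lag, satisfies \eqref{eq:interpolation}, as does the minimizer datum. Hence we get an infinite segment, and restricting to lags $\le H$ gives an infinite $H$-segment with window $Y_H=Y$.

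The main obstacle is a mismatch in the meaning of $e_{k-H}$: in the window it carries no gradient or value, while the recurrence at index $k-H+1$ uses it. This is harmless because only $\sigma_{k-H+1},\dots,\sigma_k$ enter, and these are defined from the recurrence. The remaining care is to check that the gradients computed from the recurrence match those of the interpolant at the prescribed sites. This holds because the interpolant is chosen to have exactly those gradients; the interpolation inequalities guarantee consistency even if two sites coincide, by the repeated-site remark in the proof of Theorem~\ref{thm:interpolation}.
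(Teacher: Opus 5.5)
Your proposal is correct and is essentially the paper's argument: realize the Gram matrix by $e_0,\dots,e_{H+1}$, define $\sigma_1,\dots,\sigma_H$ from the recurrence, obtain one $\psi\in\cS$ from Theorem~\ref{thm:interpolation}, and continue the recurrence with $\nabla\psi$, so that every later interpolation inequality holds because all data come from that single function. The backward extension you mention is never needed, since the first position of a segment carries no gradient or value.
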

\begin{proof}
Realize the Gram matrix by $e_0,\ldots,e_{H+1}$ and set
$\sigma_l=(e_{l+1}+q^2e_{l-1})/(2q)$ for $1\le l\le H$.
These data and the minimizer datum satisfy every pairwise interpolation
inequality. Theorem~\ref{thm:interpolation} supplies a single $\psi\in\cS$.
Continue the recurrence with $\sigma_l=\nabla\psi(e_l)$ and
$\psi_l=\psi(e_l)$; all subsequent inequalities hold by necessity.
\end{proof}

\begin{lemma}[Bounded memory preserves the rate]\label{lem:relaxedrate}
There is $H_0\ge M_0+2$ such that $\rho_H=e^T$ for every $H\ge H_0$.
\end{lemma}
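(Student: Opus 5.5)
We prove the two inequalities $\rho_H\ge e^T$ and $\rho_H\le e^T$ separately. The lower bound needs no new work. Every genuine segment is an $H$-segment, so $\Gamma_H\ge\Gamma$, and \eqref{eq:freeidentity} gives $\rho_H\ge\rho_\Gamma=e^T$; this is also the inequality recorded just before the statement. For the upper bound we argue by contradiction: suppose $r:=\rho_H>e^T$. We then apply Lemma~\ref{lem:plateauclass} to the class $\mathcal G$ of $H$-segments. This class is closed under restriction to consecutive indices, and $\Gamma_H(p)\ge e^{pT}>0$, so the lemma's hypotheses hold. It remains to produce, at the exponent $\log r$, a zero-charge banded functional with negative symbol that is nonnegative inside every $H$-segment.

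The functional is the scaled certificate $e^{-2jt}Z_j$ from the proof of Lemma~\ref{lem:energy}. If the strict certificate \eqref{eq:strictcert} at $t=\log r$ has lags at most $M'$, this functional has width $M'+2$, zero charge (by the second line of \eqref{eq:strictcert}) and symbol $\Phi_t\le-\varepsilon$. It is a nonnegative combination of minimizer inequalities and interpolation inequalities of lag at most $M'$. Hence it is nonnegative inside every $H$-segment once $H\ge M'$. Everything therefore reduces to a single claim: there is a finite $H_0$ such that every exponent in the possible range of $\log\rho_H$ above $T$ carries a strict certificate with lags at most $H_0$.

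To prove the claim we first bound the range of $t$. The matrix bound in the proof of Lemma~\ref{lem:envelope} uses only the minimizer inequality, so it holds for $H$-segments as well. It gives $\rho_H\le r_{\max}$, where $r_{\max}$ is the spectral radius of $\bigl(\begin{smallmatrix}0&1\\ \beta&|A|+B\end{smallmatrix}\bigr)$, independent of $H$. So only $t\in(T,\log r_{\max}]$ matters, and we split this range in two.
\begin{itemize}
\item For $T<t\le T+\delta_0$, Lemma~\ref{lem:lineargap} supplies certificates with lags at most five.
\item For $t\in[T+\delta_0,\log r_{\max}]$, the actual point is infeasible, so Theorem~\ref{thm:spectralgeneral}(a) gives a strict certificate at each such $t$, each with finitely many lags.
\end{itemize}

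On the compact second interval we use that strictness is an open condition in $t$. Fix the multipliers $\alpha_m,\beta_m$ of a certificate at $t_1$. The symbol depends continuously on $t$, uniformly in $\omega$. The cancellation condition $\sum(\alpha_m-\beta_m)\sh mt+\gamma_+-\gamma_-=0$ is restored at nearby $t$ by adding a small amount to $\gamma_+$ or $\gamma_-$, whichever is needed. Since $|g|$ is bounded, this changes the symbol by an amount small compared with $\varepsilon$. Each certificate thus remains strict on a neighborhood of its $t_1$, with the same lag set. Finitely many such neighborhoods cover the interval, and $H_0$ is the largest lag among those certificates, together with $5$ and $M_0+2$.

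For $H\ge H_0$ the assumption $\rho_H>e^T$ then yields a functional forbidden by Lemma~\ref{lem:plateauclass}, which is a contradiction. The step needing the most care is this uniform lag bound over the compact range of exponents, in particular keeping the charge-cancellation condition intact under perturbation of $t$. The boundedness of $g$ and of $\sh mt$ on the finite lag set should suffice, since both enter only through the $\gamma_\pm$ correction.
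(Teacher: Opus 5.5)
Your proof is correct, but it obtains the uniform lag bound by a different route from the paper's.

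The paper argues on the primal side. It lets $t_H$ be the largest exponent at which the actual point admits an $H$-feasible pair. Monotonicity in $H$ and a weak limit of maximizers give $t_H\downarrow T$, and it fixes $H_0$ with $t_{H_0}<T+\delta_0$. At the single exponent $t=\log\rho_H$, either Lemma~\ref{lem:lineargap} applies, or $t>t_H$. In the second case, finite-dimensional separation of the $H$-truncated program directly supplies a strict certificate with lags at most $H$.

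You argue on the dual side. You take the full-program certificates of Theorem~\ref{thm:spectralgeneral}(a) and show that strictness with a fixed lag set persists near each exponent, restoring the $C$-cancellation through $\gamma_\pm$ with a change of size $|\Delta(t)|\sup|g|$. You then cover a compact exponent interval by finitely many such neighborhoods. This requires the a priori bound $\rho_H\le r_{\max}$ uniformly in $H$, which you correctly extract from the minimizer-only matrix estimate in Lemma~\ref{lem:envelope}. The paper needs no such bound, because infeasibility above $t_H$ covers all large exponents at once.

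In exchange, your route yields one finite lag set $\{1,\dots,H_0\}$ that certifies every exponent in $(T,\log r_{\max}]$ simultaneously, and it avoids the weak-limit argument. The paper's route additionally records the primal convergence $t_H\downarrow T$ of the truncated programs.
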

\begin{proof}
Let $t_H$ be the largest exponent admitting an $H$-feasible pair at
$(t-\log q,t)$. The minimizer inequality bounds
$|t-\log q|\le\arsinh1$. Compactness gives attainment, and a weak limit
of maximizers shows $t_H\downarrow T$ as $H\to\infty$.
Choose $H_0\ge M_0+2$ with $t_{H_0}<T+\delta_0$.
For $H\ge H_0$, the multipliers of Lemma~\ref{lem:lineargap} exclude
every $t\in(T,T+\delta_0]$ using only lags at most five; larger
exponents exceed $t_H$. If $\rho_H>e^T$, finite-dimensional separation
at $t=\log\rho_H$ supplies a strict certificate using lags at most
$H$. Its local functional has zero charge and is nonnegative on every
$H$-segment, contradicting Lemma~\ref{lem:plateauclass}.
\end{proof}

\subsection{A limiting functional at the exact rate}
\begin{lemma}[Functionals above the rate]\label{lem:prices}
Let $H\ge H_0$. For $0<\delta\le\delta_0$ there are linear functionals $\mathcal V_\delta$
on $\mathcal E_H$ with $\sup_\delta|\mathcal V_\delta|<\infty$ such that, with $t=T+\delta$:
\begin{enumerate}[label=\textup{(\roman*)}]
\item for every $H$-segment and $H\le k\le n-2$,
$\mathcal V_\delta(Y_k)-e^{-2t}\mathcal V_\delta(Y_{k+1})\ge\delta\norm{e_{k-M_0}}^2$;
\item for every infinite $H$-segment and $k\ge H$,
$\mathcal V_\delta(Y_k)\ge\delta\sum_{i\ge0}e^{-2it}\norm{e_{k+i-M_0}}^2$.
\end{enumerate}
In particular, $\mathcal V_\delta\ge0$ on $\mathcal K_H$.
\end{lemma}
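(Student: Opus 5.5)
The plan is to build $\mathcal V_\delta$ by summing the telescoped certificate forward in time, and then to fix it on a finite window so that it becomes a linear functional on $\mathcal E_H$. Fix $0<\delta\le\delta_0$ and $t=T+\delta$. Lemma~\ref{lem:lineargap} supplies a strict certificate with lags at most $M_0=5$, coefficients bounded uniformly in $\delta$, and symbol $\Phi_t\le-a\delta$. We may shrink $\delta_0$ or rescale the multipliers by a fixed factor so that the gap is at least $\delta$; this is harmless because all coefficients stay bounded. After substituting the recurrence, the scaled certificate $e^{-2jt}Z_j$ is a banded functional $\mathcal B$ of width $D=7$ and offset $-1$, with zero charge and symbol $\Phi_t$. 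It is nonnegative whenever it lies inside an $H$-segment, since $H\ge H_0\ge M_0+2$ retains every lag it uses. By the Fej\'er--Riesz remark preceding the statement, $-\hat{\mathcal B}-\delta$ factors as in \eqref{eq:filterfactor} with two real filters. Lemma~\ref{lem:telescoping} then gives a banded functional $\mathcal U=\mathcal U^\delta$ of width $6$ with $|\mathcal U|\le 63(|\mathcal B|+\delta)$, bounded in $\delta$. For the choice $a_0$ with $j+o+a_0=j-M_0$, up to an index shift, it satisfies
\[
 0\le\mathcal B_j\le-\delta\norm{f_{j-M_0}}^2+\mathcal U_{j+1}-\mathcal U_j .
\]
Rewriting in unscaled terms gives a telescoping inequality of the form $\tilde{\mathcal U}_k-e^{-2t}\tilde{\mathcal U}_{k+1}\ge\delta\norm{e_{k-M_0}}^2$, where $\tilde{\mathcal U}_k=-e^{2kt}\mathcal U_k$, with the sign and shift fixed by the offset.

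Next we define $\mathcal V_\delta(Y_k)$ from this. The functional $\mathcal U_k$ involves only positions $e_{k-H},\dots,e_{k+1}$ and values $\psi_{k-H+1},\dots,\psi_k$, after choosing the offset so that its window of width $6$ sits inside the window of $Y_k$; this is possible since $H\ge M_0+2$. It is therefore a linear functional $\mathcal V_\delta$ on $\mathcal E_H$, with norm controlled by $|\mathcal U|$, hence bounded in $\delta$. Because $Y_k$ records unscaled Gram entries, the factor $e^{-2kt}$ appears exactly as the $e^{-2t}$ in (i) after multiplying through by $e^{2kt}$. Property (i) is then the displayed inequality, valid whenever the banded certificate lies inside the segment, that is, for $H\le k\le n-2$.

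Property (ii) follows by iterating (i) along an infinite $H$-segment. For every $p$ this gives
\[
 \mathcal V_\delta(Y_k)\ge\delta\sum_{i<p}e^{-2it}\norm{e_{k+i-M_0}}^2+e^{-2pt}\mathcal V_\delta(Y_{k+p}).
\]
The main obstacle is to show that the remainder term tends to zero, or at least has nonnegative $\liminf$. We use Lemma~\ref{lem:relaxedrate}: $\rho_H=e^T<e^t$, so \eqref{eq:windowgrowth} gives $\operatorname{tr}Y_{k+p}\le C_\eta e^{2p(T+\eta)}\operatorname{tr}Y_k$ for each $\eta>0$. Choosing $\eta<\delta$ and using $|\mathcal V_\delta(Y)|\le|\mathcal V_\delta|\operatorname{tr}Y$, the remainder is $O(e^{-2p(\delta-\eta)})\to0$. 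Letting $p\to\infty$ yields (ii).

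The last assertion then follows. By Lemma~\ref{lem:continuation}, every $Y\in\mathcal K_H$ is the window $Y_H$ of an infinite $H$-segment, so (ii) gives $\mathcal V_\delta(Y)\ge0$.
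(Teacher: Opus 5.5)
Your proposal is correct and follows essentially the paper's route. You write the certificate of Lemma~\ref{lem:lineargap} as a zero-charge banded functional and combine Fej\'er--Riesz with Lemma~\ref{lem:telescoping} to get $\mathcal U^\delta$. You then take an exponentially reweighted $-\mathcal U^\delta$ as a fixed functional on windows, iterate with the tail killed by $\rho_H=e^T<e^t$ and \eqref{eq:windowgrowth}, and use Lemma~\ref{lem:continuation} for nonnegativity on $\mathcal K_H$.

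The only bookkeeping to fix is the indexing and weight. Since $a_0$ must lie in $\{0,\dots,D\}$, take $a_0=1$, which gives $\norm{f_j}^2$. Then set $\mathcal V_\delta(Y_k)=-e^{2(k-M_0)t}\mathcal U^\delta_{k-M_0}$ after your rescaling by $1/a$. Your weight $e^{2kt}$ would leave a stray factor $e^{2M_0t}$ in (i), and that factor is less than one when $t<0$.
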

\begin{proof}
The certificate gives a nonnegative banded functional
$\mathcal B_j=e^{-2jt}Z_j$ of offset $-1$, width $M_0+2$, zero charge,
and symbol at most $-a\delta$. Its coefficients are uniformly bounded.
Apply Lemma~\ref{lem:telescoping} with $\varepsilon=a\delta$, $a_0=1$:
$\mathcal U^\delta_{j+1}-\mathcal U^\delta_j
\ge a\delta\norm{f_j}^2$.
Define
$\mathcal V_\delta(Y_k)=-a^{-1}e^{2(k-M_0)t}
\mathcal U^\delta_{k-M_0}$.
Since $H\ge M_0+2$, all its indices lie in $Y_k$ and its coefficients
are uniformly bounded. Multiplication of the preceding inequality gives
(i). Iteration gives the partial sums in (ii).
By Lemma~\ref{lem:relaxedrate},
$\Gamma_H(p)\le K_\delta e^{p(T+\delta/2)}$; hence
\eqref{eq:windowgrowth} makes the discounted terminal term tend to zero.
Lemma~\ref{lem:continuation} then gives nonnegativity on the entire cone.
\end{proof}

\begin{lemma}[A functional at the exact rate]\label{lem:exactprice}
Let $H\ge H_0$. There is a linear functional $\mathcal V_*$ on $\mathcal E_H$ such that
\begin{enumerate}[label=\textup{(\alph*)}]
\item $\mathcal V_*\ge0$ on $\mathcal K_H$;
\item $\mathcal V_*(Y_{k+1})\le e^{2T}\mathcal V_*(Y_k)$ for every $H$-segment and
$H\le k\le n-2$;
\item if $(\nu,C)$ is $H$-feasible at $(T-\log q,T)$, the windows of its
stationary sequence at the exponent $T$ satisfy
$\mathcal V_*(Y_k)\ge\frac12e^{2(k-M_0)T}>0$ for $k\ge H$.
\end{enumerate}
\end{lemma}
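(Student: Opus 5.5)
The plan is to obtain $\mathcal V_*$ as a limit of the functionals $\mathcal V_\delta$ of Lemma~\ref{lem:prices} as $\delta\downarrow0$. A bare limit would be useless if it vanished, so the functionals are first normalized. Lemma~\ref{lem:prices} gives $\sup_\delta|\mathcal V_\delta|<\infty$, so the $\mathcal V_\delta$ lie in a bounded subset of the finite-dimensional dual of $\mathcal E_H$. We therefore choose a sequence $\delta_n\downarrow0$ along which $\mathcal V_{\delta_n}\to\mathcal V$. The right normalization is not by the norm $|\mathcal V_\delta|$. We instead fix a stationary test window and scale so that the value there is at least $\frac12$. This keeps (c) intact in the limit.

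Properties (a) and (b) follow by closedness. Each $\mathcal V_\delta\ge0$ on the closed cone $\mathcal K_H$, and this inequality passes to the limit. For (b), Lemma~\ref{lem:prices}(i) gives
\[
 \mathcal V_\delta(Y_k)-e^{-2(T+\delta)}\mathcal V_\delta(Y_{k+1})\ge\delta\norm{e_{k-M_0}}^2\ge0 .
\]
The windows $Y_k$ and $Y_{k+1}$ are fixed elements of $\mathcal E_H$, so the limit gives $\mathcal V_*(Y_{k+1})\le e^{2T}\mathcal V_*(Y_k)$. Any positive rescaling preserves (a) and (b).

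Property (c) is the real content and the main obstacle. Take an $H$-feasible pair $(\nu,C)$ at $(T-\log q,T)$ and its stationary sequence at exponent $T$. It has scaled positions $f_l$ with $\norm{f_l}=1$, and its windows satisfy $Y_{k+1}=e^{2T}\cdot(\text{shift of }Y_k)$. By stationarity this gives $\mathcal V(Y_k)=e^{2(k-H)T}\mathcal V(Y_H)$. The difficulty is that this sequence is an $H$-segment at exponent $T$, not at $T+\delta$. Lemma~\ref{lem:prices}(ii) still applies directly, because it concerns arbitrary infinite $H$-segments. Writing $e_l=e^{lT}f_l$, it gives
\[
 \mathcal V_\delta(Y_k)\ge\delta\sum_{i\ge0}e^{-2i(T+\delta)}e^{2(k+i-M_0)T}=\frac{\delta\,e^{2(k-M_0)T}}{1-e^{-2\delta}}.
\]
The right-hand side tends to $\frac12e^{2(k-M_0)T}$ as $\delta\downarrow0$. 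So the bound survives in the limit without any renormalization: the factor $\delta/(1-e^{-2\delta})\ge\frac12$ is exactly what produces the constant $\frac12$.

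This shows $\mathcal V_*=\lim\mathcal V_{\delta_n}$ itself works, with no rescaling needed. The bound holds for every such pair simultaneously because the limit functional is independent of the pair. The remaining points to check are routine:
\begin{itemize}
\item the stationary sequence, taken as an infinite $H$-segment starting at index $0$, satisfies the recurrence and the interpolation constraints (Lemma~\ref{lem:spectralinterp} and the remark on $H$-feasibility);
\item the windows $Y_k$, $k\ge H$, are well defined there;
\item the initialization from rest is irrelevant, since segments allow arbitrary initial states.
\end{itemize}
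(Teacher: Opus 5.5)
Your argument is correct and is essentially the paper's proof. You extract a convergent subsequence $\mathcal V_{\delta_n}\to\mathcal V_*$ using the uniform bound of Lemma~\ref{lem:prices}, get (a) and (b) by passing to the limit in its nonnegativity and in part~(i), and get (c) by applying part~(ii) to the stationary sequence with $\norm{e_l}^2=e^{2lT}$, where $\delta/(1-e^{-2\delta})\ge\frac12$. The normalization you consider at the start is unnecessary, as you conclude yourself, and can be deleted.
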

\begin{proof}
By the uniform bound in Lemma~\ref{lem:prices}, there are $\delta_n\downarrow0$ such
that $\mathcal V_{\delta_n}$ converges to a linear functional $\mathcal V_*$. Properties
(a) and (b) follow from Lemma~\ref{lem:prices} in the limit. For (c), the
stationary sequence restricted to $l\ge0$ is an infinite $H$-segment with
$\norm{e_l}^2=e^{2lT}$, and Lemma~\ref{lem:prices}(ii) gives
$\mathcal V_\delta(Y_k)\ge\delta\sum_{i\ge0}e^{-2i(T+\delta)}e^{2(k+i-M_0)T}
=\frac{\delta}{1-e^{-2\delta}}e^{2(k-M_0)T}\ge\frac12e^{2(k-M_0)T}$; the bound passes to
the limit.
\end{proof}

Fix $H\ge H_0$ and let $\mathcal N=\{Y\in\mathcal K_H:\mathcal V_*(Y)=0\}$. By (a) and (b),
$Y_k\in\mathcal N$ implies $Y_{k+1}\in\mathcal N$. An $\mathcal N$-segment is an $H$-segment
all of whose windows lie in $\mathcal N$. These segments form a class closed
under restriction to consecutive indices; let $\Gamma_{\mathcal N}\le\Gamma_H$ be its
growth function and $\rho_{\mathcal N}=\lim_p\Gamma_{\mathcal N}(p)^{1/p}$, with
$\rho_{\mathcal N}=0$ if $\Gamma_{\mathcal N}(p)=0$ for some $p$.

\begin{lemma}[A smaller rate on the kernel]\label{lem:kernel}
$\rho_{\mathcal N}<e^T$.
\end{lemma}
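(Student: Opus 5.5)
The plan is to argue by contradiction. Since every $\mathcal N$-segment is an $H$-segment, $\Gamma_{\mathcal N}\le\Gamma_H$, and Lemma~\ref{lem:relaxedrate} gives $\rho_{\mathcal N}\le\rho_H=e^T$. If $\Gamma_{\mathcal N}(p)=0$ for some $p$, or if $\rho_{\mathcal N}<e^T$, there is nothing to prove. So assume $\Gamma_{\mathcal N}(p)>0$ for all $p$ and $r:=\rho_{\mathcal N}=e^T>0$. I will then build a banded functional at the exponent $T$ that satisfies the hypotheses of Lemma~\ref{lem:plateauclass} for the class $\mathcal G$ of $\mathcal N$-segments. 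That lemma then gives the contradiction.

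\textbf{Step 1 (stationary spectra avoid the kernel).} Write $\mathcal V_*(Y_k)=e^{2kT}\mathcal W_{k}$, where $\mathcal W$ is a banded functional in the scaled variables $f_l=e^{-lT}e_l$ and $h_l=e^{-2lT}\psi_l$. It has width at most $H+1$ and charge $\chi(\mathcal W)$, and its offset is fixed by the window. On the stationary sequence of a pair $(\nu,C)$, \eqref{eq:stationaryvalue} shows that $\mathcal W_k$ equals
\[
\int\hat{\mathcal W}\,d\nu+\frac{\chi(\mathcal W)}4\Bigl(C+2\int\Rea m\,d\nu\Bigr),
\]
which is affine in $(\nu,C)$ and independent of $k$. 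Consider the system consisting of the $H$-feasibility conditions at $(T-\log q,T)$ together with the extra linear constraint $-\mathcal W\ge0$ on stationary values. By Lemma~\ref{lem:exactprice}(c), every $H$-feasible pair gives $\mathcal W>0$, so this system is infeasible.

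\textbf{Step 2 (separation).} The feasible set lies in a compact family: probability measures in the weak topology and $|C|\le1$ (Lemma~\ref{lem:compact}). The constraint functions are continuous in $\omega$, and there are finitely many of them, namely lags up to $H$, the two minimizer inequalities and $-\mathcal W$. Hence the separation argument of Theorem~\ref{thm:spectralgeneral}(a) applies verbatim. It produces nonnegative multipliers $\alpha_m,\beta_m$ ($m\le H$), $\gamma_\pm$ and $\lambda\ge0$ such that two things hold. First, the total coefficient of $C$ vanishes; this coefficient now includes $-\lambda\chi(\mathcal W)/4$. Second, the combined symbol is at most $-\varepsilon$ on $[0,\pi]$. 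The term $2\int\Rea m\,d\nu$ contributed by the charge of $\mathcal W$ must be absorbed into its symbol. I will check that the frequency function so obtained is exactly the symbol of the time-domain functional once the recurrence is substituted into $h$. This is the same bookkeeping as for the scaled certificate $e^{-2jt}Z_j$ in Lemma~\ref{lem:energy}.

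\textbf{Step 3 (the banded functional and conclusion).} Define $\mathcal B_j=e^{-2jT}Z_j-\lambda\mathcal W_{j+o'}$, with the offset $o'$ chosen so that both parts fit in a common band. Here $Z_j$ is the certificate combination of the lag and minimizer inequalities with the multipliers of Step~2. By construction $\mathcal B$ has zero charge and strictly negative symbol. Now take any $\mathcal N$-segment. The interpolation inequalities give $Z_j\ge0$, and every window lies in $\mathcal N$, so $\mathcal W=0$ there. Hence $\mathcal B_j\ge0$ whenever $\mathcal B_j$ lies inside the segment. Lemma~\ref{lem:plateauclass} applied to $\mathcal G$ at $\log r=T$ then gives the contradiction, so $\rho_{\mathcal N}<e^T$.

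\textbf{Main obstacle.} I expect the main difficulty to be the consistency in Step~2 between the stationary (frequency) value of $\mathcal W$ and its time-domain banded form, in particular how the charge term and the value coordinates $h_l$ enter. I would resolve this with the identity \eqref{eq:stationaryvalue}, which already holds for arbitrary banded functionals. I would also check that $\mathcal B_j$ needs only indices covered by the segment, shrinking the admissible range of $j$ by a fixed width if necessary. That is harmless in Lemma~\ref{lem:plateauclass}, since endpoint terms there are controlled through fixed-width windows.
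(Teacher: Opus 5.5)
Your proposal is correct and follows the paper's proof. Both use Lemma~\ref{lem:exactprice}(c) to exclude the kernel condition from the compact $H$-feasible set, and then separate the finite system. This yields a combined functional $e^{-2jT}Z_j+\zeta\mathcal V_*(Y_{j+H})$ whose zero charge and negative symbol are read off from \eqref{eq:stationaryvalue}, which is nonnegative on every $\mathcal N$-segment, and which contradicts Lemma~\ref{lem:plateauclass}.

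The differences are cosmetic. You impose the one-sided constraint $-\mathcal W\ge0$ with a multiplier $\lambda\ge0$, whereas the paper imposes the equality $\mathcal V_*=0$ with a real multiplier $\zeta$; both terms vanish identically on $\mathcal N$-segments. You also handle the degenerate case $\Gamma_{\mathcal N}(p)=0$ explicitly.
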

\begin{proof}
Suppose $\rho_{\mathcal N}=e^T$.
For stationary windows at $T$, linearity gives
$\mathcal V_*(Y_H^{\nu,C})=\int\vartheta\,d\nu+\vartheta_0C$
with continuous $\vartheta$.
Lemma~\ref{lem:exactprice}(c) excludes the equation
$\mathcal V_*(Y_H^{\nu,C})=0$ from the compact $H$-feasible set.
Separate the finite system of lag and minimizer inequalities together
with this one equality, as in
Theorem~\ref{thm:spectralgeneral}(a). This gives a nonnegative
combination $Z_j$ of interpolation slacks and a real multiplier $\zeta$
for the equality, whose combined frequency symbol is strictly negative
and whose value coefficient cancels.
Thus
$\mathcal B_j=e^{-2jT}(Z_j+\zeta\mathcal V_*(Y_{j+H}))$
has zero charge, negative symbol, and is nonnegative inside every
$\mathcal N$-segment. Its symbol and charge can be read from
\eqref{eq:stationaryvalue} for $\nu=\delta_c$ and arbitrary $C$.
Lemma~\ref{lem:plateauclass} contradicts the assumed rate.
\end{proof}

\begin{lemma}[Contraction near the kernel]\label{lem:nearkernel}
There are an integer $p_*\ge H$ and a number $\eta>0$ such that, if an
$H$-segment and an index $k\ge H$ satisfy $k+p_*\le n-1$ and
$\mathcal V_*(Y_k)\le\eta\operatorname{tr}Y_k$, then
$\operatorname{tr}Y_{k+p_*}\le\frac12e^{2p_*T}\operatorname{tr}Y_k$.
\end{lemma}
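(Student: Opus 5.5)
We argue by contradiction and compactness, combining Lemma~\ref{lem:kernel} with the continuity of the window data. First fix a rate strictly between the kernel rate and the exact one. By Lemma~\ref{lem:kernel} choose $r$ with $\rho_{\mathcal N}<r<e^T$. Then $\Gamma_{\mathcal N}(p)\le K r^p$ for all $p$, so by \eqref{eq:windowgrowth}, adapted to $\mathcal N$-segments, there is $p_*\ge H$ with
\[
 \operatorname{tr}Y_{k+p_*}\le\tfrac14e^{2p_*T}\operatorname{tr}Y_k
\]
for every $\mathcal N$-segment. We fix this $p_*$ and aim to show that some $\eta>0$ gives the factor $\frac12$ for all segments whose initial window is $\eta$-close to $\mathcal N$.

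Suppose no such $\eta$ exists. Then there are $H$-segments $\mathcal S_n$ and indices $k_n$ with the following properties:
\begin{itemize}
 \item $\mathcal V_*(Y_{k_n})\le\operatorname{tr}Y_{k_n}/n$;
 \item $\operatorname{tr}Y_{k_n+p_*}>\frac12e^{2p_*T}\operatorname{tr}Y_{k_n}$.
\end{itemize}
Shift indices so that $k_n=H$, and discard the part before $H$ that is not needed; this is allowed by closure under restriction. Normalize so that $\operatorname{tr}Y_H=1$.

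The relevant data is the block of windows $Y_H,\dots,Y_{H+p_*}$, equivalently the Gram matrix of $e_0,\dots,e_{H+p_*+1}$ and the values $\psi_1,\dots,\psi_{H+p_*}$. Its trace is bounded by \eqref{eq:windowgrowth}, and the values are controlled by $|\psi_l|\le\norm{e_l}^2/2$. So this finite collection of Gram entries and values lies in a compact set, and we pass to a convergent subsequence. The limit satisfies all closed constraints: positive semidefiniteness, the recurrence (which is linear in the Gram entries once gradients are eliminated via $\sigma_l=(e_{l+1}+q^2e_{l-1})/(2q)$), and every interpolation and minimizer inequality with lag at most $H$. These are the same closed convex conditions used to define $\mathcal K_H$.

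By Lemma~\ref{lem:continuation}, or rather its analogue for a longer finite block (realize the Gram matrix, define gradients, interpolate), the limit is an $H$-segment of length $H+p_*+1$ with $\operatorname{tr}Y_H=1$. In the limit $\mathcal V_*(Y_H)=0$. Lemma~\ref{lem:exactprice}(a),(b) then propagate $\mathcal V_*(Y_{H+i})=0$ along the block, so the limit is an $\mathcal N$-segment. By the choice of $p_*$, its trace satisfies $\operatorname{tr}Y_{H+p_*}\le\frac14e^{2p_*T}$. But continuity of the trace gives $\operatorname{tr}Y_{H+p_*}\ge\frac12e^{2p_*T}$, a contradiction.

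The main obstacle is the continuation step. We need the limit of finite Gram data to be a genuine segment, meaning one function realizing all the inequalities among the indices in the block, rather than merely a collection of admissible windows. This holds because $H$-segments only require lags at most $H$ and the minimizer inequalities, which are closed conditions on the finite Gram data. If full interpolation among all pairs of the block is required, the same argument still works provided $H\ge H+p_*$ is not needed. In that case we use that the class of $H$-segments only imposes lags up to $H$, so no further realization is required beyond closedness. We also need $\rho_{\mathcal N}$ defined through this same class, so that the uniform bound $\Gamma_{\mathcal N}(p)\le Kr^p$ applies to the limit.
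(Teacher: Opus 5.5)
Your proposal is correct and follows the paper's proof. You fix $r\in(\rho_{\mathcal N},e^T)$ and a $p_*\ge H$ giving the factor $\frac14$ on segments whose initial window lies in $\mathcal N$ (forward invariance via Lemma~\ref{lem:exactprice}), then normalize, restrict to length $H+p_*+1$ and extract a convergent subsequence whose limit has $Y_H\in\mathcal N$ but $\operatorname{tr}Y_{H+p_*}\ge\frac12e^{2p_*T}$. Your worry about continuation is unnecessary: interpolating the whole block by one function would not even be available, since $H$-segments impose nothing at lags above $H$. As you eventually conclude, closedness of the defining conditions on the finite Gram data already makes the limit an $H$-segment.
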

\begin{proof}
Choose $\rho_{\mathcal N}<r<e^T$ and
$\Gamma_{\mathcal N}(p)\le K_rr^p$.
For an initial window in $\mathcal N$, its forward invariance and
\eqref{eq:windowgrowth} give
\[
 \operatorname{tr}Y_{k+p}\le(H+2)K_r^2\max(1,r)^{2(H+1)}
 r^{2(p-H)}\operatorname{tr}Y_k\quad(p\ge H).
\]
Choose $p_*$ so this coefficient is at most $e^{2p_*T}/4$.
If no $\eta>0$ works, normalize failing segments to
$\operatorname{tr}Y_H=1$ and restrict them to length $H+p_*+1$.
The comparison bound and $|\psi_l|\le\norm{e_l}^2/2$ bound all their
Gram entries and values. A convergent subsequence therefore yields an
admissible segment with $Y_H\in\mathcal N$ but
$\operatorname{tr}Y_{H+p_*}\ge e^{2p_*T}/2$, contradicting the bound
just proved.
\end{proof}

\subsection{Uniform finite-horizon bounds}\label{app:finite-uniform}
\begin{proof}[Proof of Theorem~\ref{thm:rate}]
Fix $H\ge H_0$ and let $p_*,\eta$ be from
Lemma~\ref{lem:nearkernel}. Put $\mathsf A=|\mathcal V_*|/\eta$;
choose $\mathsf B\ge1$ bounding the discounted window growth over at
most $p_*$ steps by \eqref{eq:windowgrowth}.
For $\tilde s_l=e^{-2(l-k)T}\operatorname{tr}Y_l$, a window with
$\mathcal V_*(Y_l)>\eta\operatorname{tr}Y_l$ satisfies
$\tilde s_l\le\mathsf A\operatorname{tr}Y_k$.
At every other window, Lemma~\ref{lem:nearkernel} gives
$\tilde s_{l+p_*}\le\tilde s_l/2$.
Advance by one step in the first case and by $p_*$ steps in the second;
bound the intervening windows by $\mathsf B$. This proves
\begin{equation}\label{eq:constantwindow}
 \operatorname{tr}Y_l\le\mathsf B\max(1,\mathsf A\mathsf B)
 e^{2(l-k)T}\operatorname{tr}Y_k\quad(H\le k\le l\le n-1).
\end{equation}
A trajectory from rest has initial window trace at most
$c_H\norm{e_0}^2$, by the comparison matrix.
Thus \eqref{eq:constantwindow} gives the upper bound uniformly in
$f,d,N$; the finitely many smaller horizons are covered by the same
comparison bound.
For the lower bound choose the at-most-two-atom optimal spectrum,
using the three-cycle if $T=0$, and apply Lemma~\ref{lem:startup}.
Embedding unused coordinates with $\mu\norm w^2/2$ places the example
in $\R^5$. The same function and start work at every horizon.
Taking $N$th roots proves both limits.
The three-dimensional bound \eqref{eq:spirallower} follows from the
explicit construction above.
\end{proof}

The constant $C_\kappa$ is determined by the number $\eta$ of
Lemma~\ref{lem:nearkernel}, which is produced by a compactness argument. The same argument gives
$\Gamma(p)\le C\rh^p$ for segments with arbitrary initial states.

\subsection{Asymptotic expansions}\label{app:finite-expansions}
\begin{proof}[Proof of Theorem~\ref{thm:regularity}, expansions]
At the threshold $V(t)=\log\varphi+O(t^2)$, so
$T=\log(q/q_c)+O(\log^2(q/q_c))$, $q_c=\varphi^{-1}$.
Since $d\log q/d\kappa=1/(\sqrt\kappa(\kappa-1))$ equals
$1/(4\kappa_c)$ there, exponentiation gives \eqref{eq:critical}.
For $q\downarrow0$, expand the regular system
\eqref{eq:p1system}, \eqref{eq:p1blowup} at $z=0$:
$u_1=1-z/2+z^2/8-z^3/4+O(z^4)$.
Substitute $z=2\rho/(1+\rho^2)$ into
$\rho/q=u+\sqrt{1+u^2}$ and compare coefficients.
The spiral uses $u=1-z/2$; subtraction gives the stated cubic gap.
Finally $q\uparrow1$ lies on the spiral, and its unique positive root
converges to the unique positive root of $F(\rho,1)$.
\end{proof}

\section{General tunings and rate certificates}\label{app:general}
Throughout, \eqref{eq:quad} holds and
$A=1+\beta-(x+y)/2$, $B=(y-x)/2>0$.
The roots of $a,b$ lie in the unit disk by the quadratic Schur criterion;
$a-b=(x-y)z$, $a(1)=x$ and $b(1)=y$.
\begin{lemma}[Dominant quadratic roots]\label{lem:dominant}
Suppose $\rhoq>\sqrt\beta$. If $\rho_a\ge\rho_b$, the roots of $a$ are real and
$\rho_a$ is a positive root of $a$. If $\rho_b\ge\rho_a$, the roots of $b$ are
real and distinct, $-\rho_b$ is a root of $b$, and the other root is
$-\beta/\rho_b\in(-\rho_b,0]$.
\end{lemma}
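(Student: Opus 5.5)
The plan is to work directly with the two real quadratics $a(z)=z^2-(1+\beta-x)z+\beta$ and $b(z)=z^2-(1+\beta-y)z+\beta$. Both have constant term $\beta$, so each root product equals $\beta$. Two facts follow. If a polynomial has complex conjugate roots, both have modulus exactly $\sqrt\beta$. Hence the hypothesis $\rhoq>\sqrt\beta$ forces the dominant polynomial, the one attaining $\rhoq=\max(\rho_a,\rho_b)$, to have real roots. If the roots are real, they have the same sign as $\beta\ge0$ (or one vanishes), and their sum is the linear coefficient $1+\beta-x$, respectively $1+\beta-y$.

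Case $\rho_a\ge\rho_b$. Here $\rho_a=\rhoq>\sqrt\beta$, so $a$ has real roots, by the observation above. Since $x=\alpha\mu$ and $y=\alpha L$ with $\kappa>1$, we have $x<y$. I will show the sum $1+\beta-x$ is positive, which makes both real roots nonnegative, so $\rho_a$ itself is a root. Suppose instead $1+\beta-x\le0$. Then $1+\beta-y<1+\beta-x\le0$, and $b$ is evaluated with a more negative linear coefficient. Compare moduli: on the negative real axis $b(z)-a(z)=(y-x)z\le0$, and $b(-1)=2(1+\beta)-y>0$ by \eqref{eq:quad}. Thus $b(-\rho_a)<a(-\rho_a)=0$ while $b(-1)>0$, so $b$ has a root in $(-1,-\rho_a)$. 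That gives $\rho_b>\rho_a$, a contradiction. The equality case $1+\beta-x=0$ is handled the same way, since the inequality $b(-\rho_a)<0$ remains strict because $y>x$. Therefore $\rho_a$ is a positive root of $a$.

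Case $\rho_b\ge\rho_a$. Symmetrically, $b$ has real roots. I will show their sum $1+\beta-y$ is negative, so both roots are nonpositive and $-\rho_b$ is a root. Suppose the sum were $\ge0$. Then the roots lie in $[0,\rho_b]$, with $\rho_b$ itself a root. On the positive axis $a(z)-b(z)=(x-y)z<0$ for $z>0$, so $a(\rho_b)<0$. Also $a(1)=x>0$, so $a$ has a root in $(\rho_b,1)$, giving $\rho_a>\rho_b$, a contradiction. Strictness uses $\rho_b>\sqrt\beta\ge0$, so $\rho_b>0$.

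Distinctness of the roots of $b$: a double real root has modulus $\sqrt\beta$, which is excluded. The other root is $\beta/(-\rho_b)=-\beta/\rho_b$, and $\rho_b>\sqrt\beta$ gives $\beta/\rho_b<\rho_b$, placing it in $(-\rho_b,0]$.

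The only delicate point is the sign comparison of $a$ and $b$ via $a-b=(x-y)z$ together with the boundary values $a(1)=x>0$ and $b(-1)>0$. This is routine, and I expect no real obstacle.
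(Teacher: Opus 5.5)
Your proof is correct, and it reaches the conclusion by a somewhat different mechanism than the paper.

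The paper uses the closed form for the root radius of $z^2-pz+\beta$. This radius is $\sqrt\beta$ for $|p|\le2\sqrt\beta$ and $(|p|+\sqrt{p^2-4\beta})/2$ otherwise, and the latter is strictly increasing in $|p|$. Since $1+\beta-x>1+\beta-y$, the dominant polynomial must have $|p|>2\sqrt\beta$ with the asserted sign, and Vieta's formula gives the other root.

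You replace the formula by two qualitative facts. First, nonreal roots have modulus exactly $\sqrt\beta$. Second, the identity $a-b=(x-y)z$ fixes the sign of the other polynomial at $-\rho_a$ (resp.\ $\rho_b$), and the intermediate value theorem then produces a root of strictly larger modulus. The paper's route is shorter and also yields the explicit value of the dominant root. Yours uses only sign information on the real axis and no root formula.

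Two small remarks. Locating the extra root in $(-1,-\rho_a)$ or $(\rho_b,1)$ tacitly requires $\rho_a,\rho_b<1$. This does hold by the Schur criterion under \eqref{eq:quad}. However, you can drop the boundary values $b(-1)>0$ and $a(1)=x>0$ altogether: a monic quadratic that is negative at $-\rho_a$ (resp.\ $\rho_b$) already has a root of larger modulus on that side. Likewise, the separate treatment of $1+\beta-x=0$ is unnecessary, because then $a(z)=z^2+\beta$ has $\rho_a=\sqrt\beta$, which the hypothesis excludes.
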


\begin{proof}
The root radius of $z^2-pz+\beta$ is $\sqrt\beta$ for
$|p|\le2\sqrt\beta$ and
$(|p|+\sqrt{p^2-4\beta})/2$ otherwise. The latter is strictly
increasing in $|p|$. Since $1+\beta-x>1+\beta-y$, the dominant
endpoint has the asserted sign; Vieta's formula gives its other root.
\end{proof}
\begin{lemma}[The angle at the threshold]\label{lem:angle}
For $\omega\in(0,\pi)$,
\begin{equation}\label{eq:Xi}
 \cot\Theta(\omega)=\frac{\Xi(\cos\omega)}{\sin\omega},\quad
 \Xi(c)=\frac{B^2-(1-\beta)^2-A^2+2A(1+\beta)c-4\beta c^2}{2B(1-\beta)} .
\end{equation}
The polynomial $\Xi$ has degree at most two and is concave, $\Xi(1)<0$ and $\Xi(-1)<0$; consequently
$\Theta(\pi)=\pi$ and $\Theta(\omega)\to\pi$ as $\omega\to0$.
\end{lemma}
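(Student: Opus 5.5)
The plan is to compute the argument of $g-2is$ directly from the factorization \eqref{eq:factor} at $t=0$. With $z=e^{i\omega}$, $|z|=1$, we have $g-2is=-a(z)\overline{b(z)}/B^2$. Hence $\Theta$ is the angle between $b(z)$ and $-a(z)$, and
\[
\cot\Theta=\frac{\Rea\bigl(-a\bar b\bigr)}{\Ima\bigl(a\bar b\bigr)}=\frac{g}{2s}.
\]
Here the sign convention $g-2is=|g-2is|e^{-i\Theta}$ gives $\cos\Theta\propto g$ and $\sin\Theta\propto 2s$. So the proof reduces to computing $g=1-|m|^2$ and $s=\Ima m$ at $t=0$.

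First, I will use $m=(z+\beta\bar z-A)/B$. Its real part is $((1+\beta)c-A)/B$ and its imaginary part is $(1-\beta)\sin\omega/B$. This gives
\[
s=\frac{(1-\beta)\sin\omega}{B},\qquad g=1-\frac{((1+\beta)c-A)^2+(1-\beta)^2(1-c^2)}{B^2}.
\]
Expanding the numerator gives $(1+\beta)^2c^2-(1-\beta)^2c^2+\dots=4\beta c^2-2A(1+\beta)c+A^2+(1-\beta)^2$. Therefore
\[
B^2g=B^2-(1-\beta)^2-A^2+2A(1+\beta)c-4\beta c^2,
\]
and $\cot\Theta=g/(2s)=\Xi(c)/\sin\omega$ with $\Xi$ exactly as displayed. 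Since the $c^2$ coefficient is $-4\beta/(2B(1-\beta))\le0$, $\Xi$ has degree at most two and is concave.

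Second, for the endpoint signs I will factor rather than expand. At $c=\pm1$ we have $z=\pm1$, so $B^2 g(\pm1)=-a(\pm1)b(\pm1)$. At $c=1$, this equals $-a(1)b(1)=-xy<0$. At $c=-1$, it equals $-a(-1)b(-1)=-(2+2\beta-x)(2+2\beta-y)$. This is negative because $x<y<2(1+\beta)$ by \eqref{eq:quad}. Dividing by $2B(1-\beta)>0$ gives $\Xi(\pm1)<0$.

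Finally, the limits follow from these signs. As $\omega\to\pi$ or $\omega\to0$, $\sin\omega\to0^+$ while $\Xi(\cos\omega)$ tends to a negative limit, so $\cot\Theta\to-\infty$ and $\Theta\to\pi$ within $(0,\pi]$. At $\omega=\pi$ itself, $s=0$ and $g<0$, so $g-2is$ is a negative real number and $\Theta(\pi)=\pi$. The only delicate point is the orientation convention relating $\Theta$ to $\arg(g-2is)$. The definition fixes $\Theta\in(0,\pi]$ and gives $\sin\Theta\ge0$ because $s>0$ on $(0,\pi)$, so the cotangent formula determines $\Theta$ uniquely. No real obstacle is expected beyond this routine algebra.
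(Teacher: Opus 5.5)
Your proposal is correct and follows essentially the same route as the paper. You compute $Bm=(1+\beta)\cos\omega-A+i(1-\beta)\sin\omega$ on the unit circle, use $\cot\Theta=g/(2s)$, and read off concavity from the leading coefficient. Your endpoint evaluation $B^2g(\pm1)=-a(\pm1)b(\pm1)$ reproduces the paper's $\Xi(1)=-xy/((y-x)(1-\beta))$ and $\Xi(-1)=-(2+2\beta-x)(2+2\beta-y)/((y-x)(1-\beta))$, and your conclusion via $\cot\Theta\to-\infty$ at both ends is the paper's argument as well.
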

\begin{proof}[Proof of Lemma~\ref{lem:angle}]

The identities $1-m=-a(z)/(Bz)$ and $1+m=b(z)/(Bz)$ prove
\eqref{eq:factor}. On the unit circle
$Bm=(1+\beta)\cos\omega-A+i(1-\beta)\sin\omega$.
Its imaginary part is positive, and $\cot\Theta=g/(2s)$ gives
\eqref{eq:Xi}. The leading coefficient is nonpositive, while
\[
 \Xi(1)=-\frac{xy}{(y-x)(1-\beta)},\qquad
 \Xi(-1)=-\frac{(2+2\beta-x)(2+2\beta-y)}{(y-x)(1-\beta)}.
\]
Both are negative. Thus $\cot\Theta\to-\infty$ at both ends and
the continuous endpoint angles equal $\pi$.

\end{proof}
The comparison with $\omega/2$ is polynomial: with $c=\cos\omega$,
\begin{equation}\label{eq:Hr}
 H_r(c)=\Rea\bigl[(1-z)a(z)\ov{b(z)}\bigr],\qquad z=re^{i\omega},
\end{equation}
has degree at most three and $H_1(c)=(1-c)\cL(c)$, where
\begin{equation}\label{eq:E}
\begin{aligned}
 \cL(c)&=4\beta c^2+2(y-1-2\beta-\beta^2+\beta x)c
             +2-2x+xy+2\beta^2-2\beta y\\
 &=(y-x)(1-\beta)\bigl(1+c-\Xi(c)\bigr).
\end{aligned}
\end{equation}
Thus $\Theta(\omega)>\omega/2$ exactly when $\cL(\cos\omega)>0$.

\begin{lemma}[The lag-one cubic]\label{lem:cubic}
Put $P=1+\beta-x$, $Q=1+\beta-y$ and
$C=Qr^2+P\beta$, $D=Pr^2+Q\beta$,
$E=r^4+PQr^2+\beta^2$. Then
\[
 H_r(c)=E+r^2D-r(C+D+E+\beta r^2)c
          +r^2(C+2\beta)T_2(c)-\beta r^3T_3(c).
\]
At $r=1$, $H_1(c)=(1-c)\cL(c)$ with $\cL$ as in \eqref{eq:E},
$\cL(-1)=(2+2\beta-x)(2+2\beta-y)>0$ and
$\cL(1)=2(1-\beta)(y-x)+xy>0$.
For $\omega\in(0,\pi]$, $\Theta(\omega)>\omega/2$ exactly when
$\cL(\cos\omega)>0$.
The cycle-polynomial identity \eqref{eq:gtd} holds with $\kappa'=x/y$.
\end{lemma}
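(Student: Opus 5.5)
Write $z=re^{i\omega}$ with $c=\cos\omega$. The whole lemma is a direct computation: expand $(1-z)a(z)\ov{b(z)}$ into monomials $r^pe^{ik\omega}$, take real parts, and read off Chebyshev polynomials through $\Rea e^{ik\omega}=T_k(c)$.

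\textbf{The cubic formula.} Since $a(z)=z^2-Pz+\beta$ and $b(z)=z^2-Qz+\beta$ with real coefficients, $\ov{b(z)}=\bar z^2-Q\bar z+\beta$. First expand $a(z)\ov{b(z)}$ as a sum of terms $r^{p}e^{ik\omega}$ with $k\in\{-2,\dots,2\}$:
\begin{itemize}
\item the $k=0$ coefficient is $r^4+PQr^2+\beta^2=E$;
\item the $e^{\pm i\omega}$ coefficients involve $-Pr^3-Q\beta r$ and $-Qr^3-P\beta r$, that is, $-rD'$-type expressions;
\item the $e^{\pm2i\omega}$ coefficients are $\beta r^2$.
\end{itemize}
Next multiply by $(1-z)$. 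This shifts each frequency up by one and multiplies by $r$, producing terms up to $e^{3i\omega}$ with coefficient $-\beta r^3$.

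Take real parts and group by $T_0,\dots,T_3$; matching against $C=Qr^2+P\beta$ and $D=Pr^2+Q\beta$ gives the stated formula. Degree at most three is immediate.

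\textbf{Evaluating at $r=1$.} The coefficients become $C=Q+P\beta$, $D=P+Q\beta$ and $E=1+PQ+\beta^2$. To extract the factor $1-c$, check $H_1(1)=0$, which holds because $1-z$ vanishes at $z=1$. Dividing the cubic by $1-c$ then leaves a quadratic. Substituting $P=1+\beta-x$ and $Q=1+\beta-y$ and comparing coefficients identifies this quadratic with $\cL$ in \eqref{eq:E}. The second expression for $\cL$, namely $(y-x)(1-\beta)(1+c-\Xi(c))$, follows by inserting $A$, $B$ and $\Xi$ from \eqref{eq:Xi}. This is a polynomial identity in $(x,y,\beta,c)$, which I would verify coefficientwise.

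\textbf{Endpoint values.} I would compute $\cL(\pm1)$ from the $\Xi$ form using the values $\Xi(1)$ and $\Xi(-1)$ from Lemma~\ref{lem:angle}:
\[
\cL(1)=(y-x)(1-\beta)\bigl(2-\Xi(1)\bigr)=2(1-\beta)(y-x)+xy,
\]
\[
\cL(-1)=-(y-x)(1-\beta)\,\Xi(-1)=(2+2\beta-x)(2+2\beta-y).
\]
Both are positive under \eqref{eq:quad} together with $x<y$ and $\beta<1$.

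\textbf{The angle criterion.} I would use $\cot\Theta=\Xi(c)/\sin\omega$ from Lemma~\ref{lem:angle}. For $\omega\in(0,\pi)$, both $\Theta$ and $\omega/2$ lie in $(0,\pi)$, where $\cot$ is strictly decreasing. Hence $\Theta>\omega/2$ if and only if $\Xi(c)/\sin\omega<\cot(\omega/2)=(1+c)/\sin\omega$, that is, if and only if $1+c-\Xi(c)>0$, which is $\cL(c)>0$ since $(y-x)(1-\beta)>0$. At $\omega=\pi$ we have $\Theta(\pi)=\pi>\pi/2$, consistent with $\cL(-1)>0$.

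\textbf{Main obstacle.} The identity \eqref{eq:gtd} for the cycle polynomial with $\kappa'=x/y$ is the step I cannot yet handle, because that equation appears later in the paper. My plan is to rewrite the root-of-unity cycle condition of~\cite{GTD} in the variables $x=\alpha\mu$ and $y=\alpha L$, then recognize it as $\cL(\cos(2\pi/K))\le0$ (equivalently as the $H_1$ form), using the same substitution as above. The rest is routine bookkeeping.
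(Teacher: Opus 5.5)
You handle four of the five claims correctly, by essentially the paper's route: expansion with $z\bar z=r^2$ and $\Rea z^k=r^kT_k(c)$, the factor $1-c$ from $H_1(1)=0$, and strict monotonicity of cotangent for the angle criterion. One description is slightly off. Multiplying by $1-z$ does not simply shift frequencies up, because $z\cdot\bar z^{\,k}=r^2\bar z^{\,k-1}$; this is exactly where the $r^2D$ constant and the $\beta r^2$ in the linear coefficient come from. Your stated formula is nonetheless right. Computing $\cL(\pm1)$ through $\Xi(\pm1)$ from Lemma~\ref{lem:angle} is fine and not circular; the paper reads these values directly off \eqref{eq:E}.

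The genuine gap is the last claim. You never state or prove \eqref{eq:gtd}, and the plan you sketch aims at the wrong target. The identity is a polynomial identity in $c$:
\[
 x^2-2\bigl(\beta-c+\kappa'(1-\beta c)\bigr)x+2\kappa'(1-c)(1+\beta^2-2\beta c)=\kappa'\cL(c),\qquad \kappa'=x/y.
\]
Translating the root-of-unity cycle condition of~\cite{GTD} and recognizing it as $\cL(\cos(2\pi/K))\le0$ would only show that two sign conditions agree at the points $c=\cos(2\pi j/k)$. That agreement follows from the identity, using $\kappa'>0$, but does not prove it; no cycle argument is needed.

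The direct proof is one line. Multiply both sides by $y/x>0$. The left side becomes $xy-2(\beta-c)y-2(1-\beta c)x+2(1-c)(1+\beta^2-2\beta c)$, and the right side becomes $\cL(c)$. Now compare coefficients with \eqref{eq:E}:
\begin{itemize}
\item both constant terms are $2-2x+xy+2\beta^2-2\beta y$;
\item both linear terms are $2(y-1-2\beta-\beta^2+\beta x)$;
\item both quadratic terms are $4\beta$.
\end{itemize}
This is the paper's ``collecting the coefficients of $c$''.
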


\begin{proof}
Expand \eqref{eq:Hr} using $z\bar z=r^2$ and
$z+\bar z=2rc$. Substitution $r=1$ gives \eqref{eq:E} and the
endpoint values. Strict monotonicity of cotangent on $(0,\pi)$ gives
the angle comparison. For the cycle polynomial of~\cite{GTD},
\begin{equation}\label{eq:gtd}
 x^2-2\bigl(\beta-c+\kappa'(1-\beta c)\bigr)x
 +2\kappa'(1-c)(1+\beta^2-2\beta c)=\kappa'\cL(c),
 \qquad \kappa'=x/y.
\end{equation}
This follows by collecting the coefficients of $c$.
\end{proof}

\subsection{Weighted integral quadratic constraints}
Fix a strict certificate $(\alpha_n,\beta_n,\gamma_\pm,\varepsilon)$
at $t$, with finitely many lags, and put $\varrho=e^t$.
Extend positions by zero to negative times and write
$\tilde e_k=\varrho^{-k}e_k$,
$\tilde\sigma_k=\varrho^{-k}\nabla\psi(e_k)$.
The weighted sum of interpolation inequalities becomes a quadratic
form in their Fourier transforms.
\begin{lemma}[The certificate as an integral quadratic constraint]\label{lem:iqc}
There is a $2\times2$ Hermitian matrix $\Pi(\omega)$ of trigonometric
polynomials with the following properties.
\begin{enumerate}[label=\textup{(\alph*)}]
\item For every $\psi\in\cS$ and every sequence with $\tilde e\in\ell_2$,
\[
 \mathcal J(\tilde e,\tilde\sigma):=\frac1{2\pi}\int_{-\pi}^{\pi}
 \begin{bmatrix}\hat e(\omega)\\\hat\sigma(\omega)\end{bmatrix}^*\Pi(\omega)
 \begin{bmatrix}\hat e(\omega)\\\hat\sigma(\omega)\end{bmatrix}d\omega\ \ge0,
\]
where $\hat e$ and $\hat\sigma$ are the Fourier transforms of $\tilde e$ and
$\tilde\sigma$, and $\Pi(\omega)$ acts blockwise.
\item $[1;m]^*\Pi(\omega)[1;m]=\Phi_t(\omega)$, where $m=m(t,\omega)$.
\end{enumerate}
\end{lemma}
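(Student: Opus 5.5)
The plan is to build $\Pi$ directly from the weighted sum of interpolation slacks, as in the proof of Lemma~\ref{lem:energy}, but without substituting the recurrence. Then $\mathcal J$ is a nonnegative combination of genuine interpolation inequalities.

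\textbf{Step 1: the pairwise quadratic forms.} For a fixed lag $n$ and index $k$, Theorem~\ref{thm:interpolation} gives $R_{k,k+n}\pm T_{k,k+n}\ge0$ and $R_{k\star}\pm T_{\star k}\ge0$. Multiply each by $2e^{-(2k+n)t}$, respectively $4e^{-2kt}$, with the weights $\alpha_n,\beta_n,\gamma_\pm$ of \eqref{eq:strictcert}. In the scaled variables $\tilde e_k,\tilde\sigma_k$ and $h_k=\varrho^{-2k}\psi_k$, each summand is a quadratic form in $(\tilde e_k,\tilde e_{k+n},\tilde\sigma_k,\tilde\sigma_{k+n})$ plus a linear combination of $h_k,h_{k+n}$. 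The coefficients are constants times powers $\varrho^{\pm n}$, and the form is invariant under shifts of $k$.

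\textbf{Step 2: sum over $k$ and remove the values.} Sum over all $k\in\Z$. This is legitimate because $\tilde e,\tilde\sigma\in\ell_2$, with $\|\tilde\sigma_k\|\le\|\tilde e_k\|$ by the minimizer inequality, and $|h_k|\le\|\tilde e_k\|^2/2$. The value terms contribute $\sum_k h_k$ times $\sum_n(\alpha_n-\beta_n)(\varrho^n-\varrho^{-n})/2+\gamma_+-\gamma_-$, up to the normalization in \eqref{eq:waveslacks}. This coefficient is exactly the second line of \eqref{eq:strictcert}, so it vanishes. Zero-extension at negative times is harmless: the minimizer datum is $(0,0,0)$ and $\psi(0)=0$, so the padded zero iterates are consistent interpolation data.

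\textbf{Step 3: pass to frequency.} By Parseval, a shift-invariant quadratic form $\sum_k\langle P_a\xi_k,\xi_{k+n}\rangle$ in $\xi=(\tilde e,\tilde\sigma)$ becomes $\frac1{2\pi}\int\hat\xi^*\bigl(\frac{P e^{in\omega}+P^{T}e^{-in\omega}}2\bigr)\hat\xi$. Summing these contributions defines a Hermitian trigonometric-polynomial $\Pi(\omega)$ acting blockwise on the Hilbert space, which proves (a).

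\textbf{Step 4: identify the symbol.} For (b), evaluate $[1;m]^*\Pi[1;m]$. This is the per-step value of the same form on the single wave $\tilde e_k=e^{ik\omega}$, $\tilde\sigma_k=me^{ik\omega}$, with the value terms removed. By Lemma~\ref{lem:spectralinterp}, its lag-$n$ slacks equal $a_n\mp b_n$ after scaling by $e^{-(2k+n)t}$, and its minimizer slacks equal $g$. The $C$-terms are precisely the value terms cancelled in Step 2. Summing gives $\Phi_t(\omega)$.

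The main obstacle I expect is bookkeeping in Step 4. I need to check that the scaling exponents used in Step 1 reproduce the factors in \eqref{eq:waveslacks}, so that the cross terms become $\Rea(\bar m e^{in\omega})$-type expressions matching $a_n,b_n$ exactly, including the sign conventions for upper and lower slacks. I would verify this on one wave and one lag first, then use linearity.
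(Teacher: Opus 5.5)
Your proposal is correct and matches the paper's proof step for step. The paper uses the same weights $2\varrho^{-2k-n}$ and $4\varrho^{-2k}$ on the lag and minimizer slacks. It gets absolute convergence from $\norm{\nabla\psi(e)}\le\norm{e}$ and $|\psi(e)|\le\norm{e}^2/2$, and cancels the value coefficient using the second line of \eqref{eq:strictcert}. It then applies Parseval to the shift-invariant remainder and evaluates on the wave $(e^{ik\omega},me^{ik\omega})$ to recover $\Phi_t$.

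The bookkeeping you flag is supplied directly by \eqref{eq:waveslacks}: after these scalings the one-wave slacks are $a_n\mp b_n\pm C\sh nt$ and $g\pm C$, and the $C$-terms are exactly the cancelled value terms.
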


\begin{proof}
Sum $2\varrho^{-2k-n}\alpha_n(R+T)_{k,k+n}$ and
$2\varrho^{-2k-n}\beta_n(R-T)_{k,k+n}$, together with the
minimizer inequalities weighted by $4\varrho^{-2k}\gamma_\pm$.
All sums converge absolutely because $|\psi(e)|\le\|e\|^2/2$ and
$\|\nabla\psi(e)\|\le\|e\|$.
The function-value coefficient is
\[
 4\left[\sum_n(\alpha_n-\beta_n)\sinh(nt)
           +\gamma_+-\gamma_-\right]
       \sum_k\varrho^{-2k}\psi(e_k)=0.
\]
The remaining form is a finite sum of shifts of
$(\tilde e,\tilde\sigma)$. Parseval gives its Hermitian
trigonometric matrix $\Pi$. Evaluation on the wave
$(e^{ik\omega},me^{ik\omega})$ gives exactly $\Phi_t$, since
the same cancellation removes its function values.
\end{proof}
\begin{lemma}[Homotopy]\label{lem:homotopy}
Let $\mathcal G$ be the causal convolution operator on $\ell_2(\Z_{\ge0};\R^d)$ of a
stable, strictly proper rational transfer function $G$, and let $\Delta$ be a
causal map on sequences with $\norm{P_N\Delta\xi}\le\norm{P_N\xi}$ and
$\norm{P_N\Delta\xi-P_N\Delta\xi'}\le\norm{P_N(\xi-\xi')}$ for every $N$, where $P_N$
truncates to the indices $<N$, and with $\Delta0=0$. Let $\Pi$ be a bounded
Hermitian matrix function such that
$\mathcal J(\xi,\tau\Delta\xi)\ge0$ for all $\xi\in\ell_2$ and $\tau\in[0,1]$, with
$\mathcal J$ as in Lemma~\ref{lem:iqc}, and
\[
 [G(e^{i\omega});1]^*\Pi(\omega)[G(e^{i\omega});1]\le-\varepsilon<0\quad(\omega\in[-\pi,\pi]).
\]
Then there is $c_*$ such that for all $\tau\in[0,1]$ and $f\in\ell_2$ the solution
$\xi$ of $\xi=\mathcal G\tau\Delta\xi+f$ satisfies $\norm\xi\le c_*\norm f$.
\end{lemma}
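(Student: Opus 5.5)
This is a standard loop-transformation argument in the IQC framework, carried out uniformly in $\tau$. Write $\Pi=\begin{bmatrix}\Pi_{11}&\Pi_{12}\\\Pi_{12}^*&\Pi_{22}\end{bmatrix}$ and let $\xi$ solve $\xi=\mathcal G\tau\Delta\xi+f$. First I establish well-posedness. Since $G$ is strictly proper, $\mathcal G$ is strictly causal, so $\xi_k$ depends only on $\xi_0,\dots,\xi_{k-1}$ and $f_k$. The solution therefore exists and is unique on every finite horizon. The truncations $P_N\xi$ are finite, and $\|P_N\Delta\xi\|\le\|P_N\xi\|$ together with causality lets me work with truncated signals throughout.

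The key step is the frequency-domain estimate. Put $\zeta=\tau\Delta\xi$, so that $\xi=\mathcal G\zeta+f$. By the hypothesis at the boundary,
\[
 [\hat G\hat\zeta;\hat\zeta]^*\Pi[\hat G\hat\zeta;\hat\zeta]\le-\varepsilon|\hat\zeta|^2 .
\]
Expanding $\mathcal J(\mathcal G\zeta+f,\zeta)$ and using boundedness of $\Pi$ and $G$ on the circle, together with Cauchy--Schwarz (Young's inequality), gives
\[
 0\le\mathcal J(\xi,\zeta)\le-\varepsilon\|\zeta\|^2+c_1\|\zeta\|\|f\|+c_2\|f\|^2 .
\]
Hence $\|\zeta\|\le c_3\|f\|$, and then $\|\xi\|\le\|G\|_\infty\|\zeta\|+\|f\|$. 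This argument needs $\xi\in\ell_2$ a priori, so that the IQC applies; it then yields the bound with constants independent of $\tau$.

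Supplying that a priori membership is the main obstacle: showing $\xi\in\ell_2$ for every $\tau\in[0,1]$. I would use the classical homotopy argument of Megretski--Rantzer. At $\tau=0$ we have $\xi=f\in\ell_2$. Suppose the map $f\mapsto\xi$ is bounded on $\ell_2$ with gain at most $c_*$ for some $\tau_0$. For $|\tau-\tau_0|\le\delta$ with $\delta<1/(c_*\|G\|_\infty)$, write
\[
 \xi=\mathcal G\tau_0\Delta\xi+\bigl(f+\mathcal G(\tau-\tau_0)\Delta\xi\bigr),
\]
and apply the $\tau_0$-bound to the truncations. The Lipschitz and contraction properties of $P_N\Delta$, together with causality (truncation commutes past the causal operators), give
\[
 \|P_N\xi\|\le c_*\|f\|+c_*\delta\|G\|_\infty\|P_N\xi\| .
\]
This yields a bound on $\|P_N\xi\|$ that is uniform in $N$, so $\xi\in\ell_2$. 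The step size $\delta$ depends only on the uniform constant $c_*$ from the frequency-domain estimate, not on $\tau_0$. Finitely many such steps therefore cover $[0,1]$, and the frequency-domain estimate then fixes the final constant $c_*$.

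One subtlety to check is that applying the $\tau_0$-bound to the truncated forcing is legitimate. Here I would use that the solution with forcing $P_N(\cdot)$ agrees with $\xi$ up to time $N$, by causality. The other subtlety is that a $\Delta$ that is merely pointwise $1$-Lipschitz still satisfies the IQC for $\tau\Delta$; this is given as a hypothesis, so no more is needed.
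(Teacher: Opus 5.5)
Your proposal is correct and follows the paper's proof essentially step for step. It first derives the uniform-in-$\tau$ a priori bound $\varepsilon\|\zeta\|^2\le C_1\|f\|\|\zeta\|+C_2\|f\|^2$ for square-summable solutions. It then runs the Megretski--Rantzer continuation in $\tau$, with a step size fixed by that uniform constant, and uses the truncated forcing $P_N\{f+\mathcal G(\tau-\tau_0)\Delta\xi\}$ together with causality to bound $\|P_N\xi\|$ uniformly in $N$. As in the paper, the continuation step only uses the truncated gain bound $\|P_N\Delta\xi\|\le\|P_N\xi\|$, not the Lipschitz property.
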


\begin{proof}
This is the discrete-time IQC continuation argument~\cite{MR}.
Strict properness makes the loop well posed and causal.
For a square-summable solution, put $\zeta=\tau\Delta\xi$.
The strict frequency inequality, Parseval, and
$\xi=\mathcal G\zeta+f$ give
\[
 \varepsilon\|\zeta\|^2
 \le C_1\|f\|\|\zeta\|+C_2\|f\|^2,
 \qquad \|\xi\|\le C_*\|f\|,
\]
with constants uniform in $\tau$.
To establish square summability, continue from $\tau=0$ in steps
$|\tau'-\tau|\le(2C_*\|\mathcal G\|)^{-1}$.
For a solution at $\tau'$, solve the $\tau$-loop with forcing
$P_N\{f+\mathcal G(\tau'-\tau)\Delta\xi'\}$.
Causality identifies the first $N$ entries, giving
$\|P_N\xi'\|\le C_*\|f\|+\frac12\|P_N\xi'\|$.
The resulting bound is uniform in $N$, so the solution lies in $\ell_2$.
Finitely many steps reach $1$.
\end{proof}
\begin{lemma}[Weighted shift]\label{lem:shift}
Let $\varrho>0$, and suppose that every trajectory $(e_k)_{k\ge-1}$ of
\eqref{eq:normrec}, with arbitrary $e_{-1},e_0$ and a fixed $\psi\in\cS$, satisfies
$\sum_{j\ge0}\varrho^{-2j}\norm{e_j}^2\le C_0(\norm{e_0}^2+\norm{e_{-1}}^2)$ for some $C_0>0$. Put
$C'=C_0\max(1,\varrho^{-2})$ and $\theta=C'/(1+C')$. Then
$\norm{e_k}^2\le C_0\theta^{\lfloor k/2\rfloor}\varrho^{2k}(\norm{e_0}^2+\norm{e_{-1}}^2)$
for all $k\ge0$.
\end{lemma}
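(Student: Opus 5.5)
The plan is to turn the summable weighted energy bound into a geometric decay of the tail sums, exploiting that the hypothesis applies to every trajectory, including time shifts of a given one. Fix a trajectory and put
\[
 s_k=\sum_{j\ge k}\varrho^{-2j}\norm{e_j}^2 .
\]
Because \eqref{eq:normrec} is autonomous and the state at time $k$ is the pair $(e_{k-1},e_k)$, the shifted sequence $e'_j=e_{k+j}$, $j\ge-1$, is again a trajectory with the same $\psi$ and initial pair $(e_{k-1},e_k)$. Applying the hypothesis to it and multiplying by $\varrho^{-2k}$ gives
\[
 s_k\le C_0\varrho^{-2k}\bigl(\norm{e_k}^2+\norm{e_{k-1}}^2\bigr)
 \le C_0\max(1,\varrho^{-2})\bigl(\varrho^{-2k}\norm{e_k}^2+\varrho^{-2(k-1)}\norm{e_{k-1}}^2\bigr)
\]
for $k\ge1$. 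The factor $\max(1,\varrho^{-2})$ absorbs the mismatch $\varrho^{-2k}=\varrho^{-2}\varrho^{-2(k-1)}$, and the constant is $C'$.

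The bracket on the right equals $s_{k-1}-s_{k+1}$. Hence $s_{k+1}\le s_{k-1}\le C'(s_{k-1}-s_{k+1})$, which rearranges to $s_{k+1}\le\theta s_{k-1}$ with $\theta=C'/(1+C')$. This is a two-step contraction. Iterating from $s_0$ or $s_1$, both at most $s_0\le C_0(\norm{e_0}^2+\norm{e_{-1}}^2)$, gives $s_k\le\theta^{\lfloor k/2\rfloor}s_0$. Finally, $\varrho^{-2k}\norm{e_k}^2\le s_k$, and multiplying by $\varrho^{2k}$ yields the stated bound.

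The main point to check is the index bookkeeping. For $k=0$ the bound is just the hypothesis with $\theta^0=1$. For $k=1$ one uses $s_1\le s_0$, again with $\lfloor1/2\rfloor=0$. The shifted trajectory is admissible precisely because the hypothesis allows arbitrary initial pairs, and the shift argument needs nothing further.
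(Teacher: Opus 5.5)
Your argument is correct and is exactly the paper's proof: restart at $(e_{k-1},e_k)$ to get $s_k\le C'(s_{k-1}-s_{k+1})$, then use monotonicity of the tails to obtain the two-step contraction $s_{k+1}\le\theta s_{k-1}$. One slip: the displayed chain should read $s_{k+1}\le s_k\le C'(s_{k-1}-s_{k+1})$. The inequality $s_{k-1}\le C'(s_{k-1}-s_{k+1})$ that you wrote does not follow from the restart, since it would be a stronger, unproved contraction.
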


\begin{proof}
For $S_k=\sum_{j\ge k}\varrho^{-2j}\|e_j\|^2$, restarting at
$(e_{k-1},e_k)$ gives
$S_k\le C'(S_{k-1}-S_{k+1})$.
Since $S_{k+1}\le S_k$,
$S_{k+1}\le\theta S_{k-1}$; induction yields
$S_k\le\theta^{\lfloor k/2\rfloor}S_0$ and the claimed bound.
\end{proof}

\subsection{One-sided multipliers and exact quadratic rates}\label{app:general-multipliers}
For nonnegative $\lambda_1,\ldots,\lambda_N$, not all zero, define
\[
 M(z)=\sum_{n=1}^N\lambda_n(1-z^n).
\]
This polynomial encodes the nonnegative weights of the lower-lag
inequalities; in control terminology it is a finite causal
Zames--Falb multiplier. The following criterion transfers its sign
between circles by the harmonic minimum principle.
\begin{theorem}[Multiplier criterion]\label{thm:mult}
Assume \eqref{eq:quad}.
\begin{enumerate}[label=\textup{(\alph*)}]
\item If $\Rea[M(z)a(z)/b(z)]>0$ on $|z|=1$, $z\ne1$, then $\rho<1$.
If $\Rea[M(z)a(z)/b(z)]\ge0$ on $|z|=1$, then $\rho\le1$.
\item If, in addition to the strict hypothesis of \textup{(a)},
$\Rea[M(z)a(z)/b(z)]\ge0$ on $|z|=r$ for some $r\in(\rho_b,1)$, then $\rho\le r$.
\end{enumerate}
The same holds with $a$ and $b$ interchanged and $\rho_a$ in place of
$\rho_b$, for multipliers built from upper lags.
\end{theorem}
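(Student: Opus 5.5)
The plan is to turn the multiplier $M$ into a strict spectral certificate and then invoke Theorem~\ref{thm:oneexp}. For the strict part of (a), work at $t=0$ and take the lower-lag weights $\alpha_n=\lambda_n$, $\beta_n=0$. At $t=0$ we have $\sh(n\cdot0)=0$, so the cancellation condition in \eqref{eq:strictcert} reduces to $\gamma_+=\gamma_-=:\gamma$. At $t=0$, $a_n-b_n=g(1-\cos n\omega)-2s\sin n\omega=\Rea[(g-2is)(1-e^{in\omega})]$. Using \eqref{eq:factor} with $|z|=1$, this gives
\[
 \sum_n\lambda_n(a_n-b_n)=\Rea\bigl[(g-2is)M(e^{i\omega})\bigr]
 =-\frac{|b(e^{i\omega})|^2}{B^2}\,\Rea\Bigl[\frac{M a}{b}\Bigr](e^{i\omega}).
\]
This is strictly negative for $\omega\in(0,\pi]$ and vanishes only at $\omega=0$, where $M(1)=0$. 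To obtain a uniform margin, add $2\gamma g$. By Lemma~\ref{lem:angle}, $\Theta(0)=\pi$, so $g=d\cos\Theta<0$ near $\omega=0$. Choosing $\gamma>0$ small keeps the sum negative away from $0$ and makes it strictly negative near $0$. By compactness this gives $\Phi_0\le-\varepsilon$, a strict certificate at $t=0$. Theorem~\ref{thm:oneexp}, which applies because $r_0<1$, then yields $\rho<1$.

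For (b), I would first use the harmonic minimum principle. The roots of $b$ have modulus at most $\rho_b<r$, so $\Rea[Ma/b]$ is harmonic on the closed annulus $r\le|z|\le1$. The hypotheses give $\Rea[Ma/b]>0$ on the outer circle except at $z=1$, and $\Rea[Ma/b]\ge0$ on the inner circle. Hence $\Rea[Ma/b]>0$ strictly inside the annulus, since it is nonconstant because it vanishes at $z=1$. The second step is an algebraic identity at exponent $t=\log r'$ with $r'\in(r,1)$. I would redo Lemma~\ref{lem:spectralinterp} with lower-lag weights rescaled by $e^{\pm nt}$ and minimizer weights chosen to absorb the $C\sh nt$ terms, that is, $\gamma_+-\gamma_-=-\sum\alpha_n\sh nt$. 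The aim is to show that the resulting symbol equals $-|b(z)|^2\Rea[M(z)a(z)/b(z)]/(B^2|z|^2)$ at $z=r'e^{i\omega}$, plus a controllable multiple of $g$. Positivity on $|z|=r'$ then gives a strict certificate at $t=\log r'$. Because $r'>r>\rho_b$, I still need to check that $r'>r_0$ so that Theorem~\ref{thm:oneexp} applies, or instead use Theorem~\ref{thm:spectralgeneral}(b) with certificates at every $t\ge\log r'$; the strict circle $|z|=1$ together with the annulus covers these exponents. This gives $\rho<r'$ for every $r'>r$, hence $\rho\le r$.

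For the non-strict part of (a), I would perturb the multiplier. Replacing $M$ by $M+\epsilon\,M_1$ for a fixed lag-one multiplier does not obviously give strictness, so instead I would apply the annulus argument with $r'$ slightly below $1$: run the identity on $|z|=r'$ for $r'\uparrow1$ and conclude $\rho\le r'$ in the limit. Alternatively, use weak limits of certificates as in Lemma~\ref{lem:plateau}. The interchanged statement follows by the symmetry $(v,C)\mapsto(-v,-C)$ of Lemma~\ref{lem:vmono}, which exchanges lower and upper constraints and swaps the roles of $a$ and $b$.

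The main obstacle is the exact symbol identity at $t\ne0$. Off the unit circle the lag functions carry $\ch nt$ rather than $1$, and the $C\sh nt$ terms must be cancelled exactly by minimizer weights. I expect the cancellation to work only after the weights are rescaled by $\varrho^{n}$, and to leave a residual multiple of $g$, which must be shown to have a favorable sign or to be dominated using the harmonic positivity margin.
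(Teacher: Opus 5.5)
The gap is in the non-strict half of (a), and it carries over to (b). Running the lower-lag identity on $|z|=r'<1$ with $r'\uparrow1$ cannot work. With nonnegativity assumed only on $|z|=1$, the minimum principle has no inner boundary datum. Moreover, positivity of $\Rea[Ma/b]$ on any circle $|z|=r'<1$ with $r'>r_0$ would give a strict certificate at $\log r'$, hence $\rho<1$. That is false in general: at the phase-locked points of Theorem~\ref{thm:A}(iii), this weak criterion with $M=1-z$ is exactly what yields $\rho\le1$, while $\rho=1$ there.

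What you need are strict certificates at every $t>0$, and there the identity takes a different form. For $t>0$ the $C$ terms must be cancelled by the weight $\gamma_-$ on $\bar g-C\ge0$. Take $\varrho=e^t>1$, $\alpha_n=\lambda_n\varrho^{-n}$ and $\gamma_-=\sum_n\alpha_n\sh nt$. Using $\ch nt+\sh nt=\varrho^n$ gives $\Phi_t=\Rea[(g-2is)M(u)]$, where $M$ is evaluated at $u=e^{i\omega}/\varrho$ inside the disk, while $g-2is$ involves $a,b$ at $z=\varrho e^{i\omega}=1/\bar u$ outside it. With $a^\sharp(u)=u^2a(1/u)$ and $b^\sharp(u)=u^2b(1/u)$, this becomes $\Phi_t=-\frac{\varrho^2}{B^2}|a^\sharp(u)|^2\Rea\frac{M(u)b^\sharp(u)}{a^\sharp(u)}$.

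The reflected function $Mb^\sharp/a^\sharp$ has three properties:
\begin{itemize}
\item it is analytic on the closed unit disk, since $a^\sharp$ has no zeros there;
\item it equals $M(0)=\sum_n\lambda_n>0$ at $u=0$;
\item on $|u|=1$ one has $b^\sharp/a^\sharp=\overline{b/a}=|b/a|^2a/b$, so its real part has the sign of $\Rea[Ma/b]$.
\end{itemize}
The strong minimum principle then makes its real part positive on $|u|=1/\varrho$. This gives a strict certificate at every $t>0$, and Theorem~\ref{thm:oneexp} yields $\rho\le1$. This reflection, the paper's Lemma~\ref{lem:symbols}(a), is the missing idea. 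Weak limits of certificates as in Lemma~\ref{lem:plateau} only produce nonstrict symbols and give no upper bound.

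The rest is sound or easily repaired.
\begin{itemize}
\item Your strict half of (a) is the paper's argument.
\item Your annulus step in (b) is Lemma~\ref{lem:symbols}(b), and the residual multiple of $g$ does not occur. With $\varrho=e^t<1$, $\alpha_n=\lambda_n\varrho^n$, $\gamma_+=-\sum_n\alpha_n\sh nt\ge0$ and $\gamma_-=0$, the relation $\ch nt-\sh nt=\varrho^{-n}$ gives exactly $\Phi_t=-\frac{|b(z)|^2}{B^2\varrho^2}\Rea\frac{M(z)a(z)}{b(z)}$ at $z=\varrho e^{i\omega}$.
\item Your claim that the unit circle and the annulus cover all $t\ge\log r'$ in Theorem~\ref{thm:spectralgeneral}(b) is wrong for $t>0$. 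The paper uses the reflected certificates above for those exponents.
\item Alternatively, apply Theorem~\ref{thm:oneexp} at the single exponent $\log r'$. A root of $a$ in $r<|z|<1$ would be a zero of $Ma/b$ there, contradicting the strict positivity you derived in the open annulus, so $\rho_a\le r$. Also $|A|=\tfrac12|(1+\beta-x)+(1+\beta-y)|\le\max(|1+\beta-x|,|1+\beta-y|)$, and the root radius of $z^2-pz+\beta$ is nondecreasing in $|p|$. Hence $r_0\le\max(\rho_a,\rho_b)\le r<r'$, which settles the check you left open.
\item Lemma~\ref{lem:vmono} concerns Polyak's tuning only. The upper-lag statement instead follows from conjugating \eqref{eq:factor}, which exchanges the roles of $a$ and $b$. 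The annulus must then avoid the roots of $a$, whence $r>\rho_a$.
\end{itemize}
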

The proof follows Lemma~\ref{lem:symbols}.

Put $a^\sharp(u)=u^2a(1/u)$ and $b^\sharp(u)=u^2b(1/u)$.
Both have constant coefficient $1$ and no zero in the closed unit disk.
\begin{lemma}[Symbols of one-sided certificates]\label{lem:symbols}
Let $\varrho=e^t$ and $M(z)=\sum_{n=1}^N\lambda_n(1-z^n)$ with $\lambda_n\ge0$.
\begin{enumerate}[label=\textup{(\alph*)}]
\item If $t\ge0$, the weights $\alpha_n=\lambda_n\varrho^{-n}$ on $\ell_n\ge0$ and
$\gamma_-=\sum_n\alpha_n\sh nt$ on $\bar g-C\ge0$ cancel $C$, and, with $u=e^{i\omega}/\varrho$,
the symbol is
$\Phi_t(\omega)=-\frac{\varrho^2}{B^2}|a^\sharp(u)|^2\Rea\frac{M(u)b^\sharp(u)}{a^\sharp(u)}$.
\item If $t\le0$, the weights $\alpha_n=\lambda_n\varrho^n$ on $\ell_n\ge0$ and
$\gamma_+=-\sum_n\alpha_n\sh nt$ on $\bar g+C\ge0$ cancel $C$, and, with $z=\varrho e^{i\omega}$,
the symbol is
$\Phi_t(\omega)=-\frac1{B^2\varrho^2}|b(z)|^2\Rea\frac{M(z)a(z)}{b(z)}$.
\end{enumerate}
At $t=0$ the two expressions agree, and on $|z|=1$ the functions
$\Rea[Mb^\sharp/a^\sharp]$ and $\Rea[Ma/b]$ have the same sign. For upper lags the
same holds with $a$ and $b$ interchanged.
\end{lemma}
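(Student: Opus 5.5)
The plan is a direct computation: write the weighted symbol as the real part of $(g-2is)M(\cdot)$ at a suitable point, then substitute the factorization \eqref{eq:factor}.

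\emph{Cancellation of $C$.} For part (a), with $t\ge0$ and $\alpha_n=\lambda_n\varrho^{-n}$, the $C$-coefficient of $\sum_n\alpha_n\ell_n+\gamma_-(\bar g-C)$ is $\sum_n\alpha_n\sh nt-\gamma_-$. This vanishes by the choice of $\gamma_-$, and $\gamma_-\ge0$ because $\sh nt\ge0$. For part (b), with $t\le0$, the $C$-coefficient is $\sum_n\alpha_n\sh nt+\gamma_+=0$, and $\gamma_+\ge0$ because $\sh nt\le0$. So both weight systems are admissible in \eqref{eq:strictcert}.

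\emph{Reduction of the symbol in case (a).} Here
\[
\Phi_t=\sum_n\alpha_n\bigl[g(\ch nt-\cos n\omega)-2s\sin n\omega\bigr]+g\sum_n\alpha_n\sh nt .
\]
Since $\ch nt+\sh nt=\varrho^n$, this equals
\[
\sum_n\lambda_n\bigl[g(1-\varrho^{-n}\cos n\omega)-2s\varrho^{-n}\sin n\omega\bigr].
\]
Put $u=e^{i\omega}/\varrho$. Expanding real parts shows that this is exactly $\Rea[(g-2is)M(u)]$.

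Next insert \eqref{eq:factor}, $g-2is=-a(z)\ov{b(z)}/(B^2|z|^2)$ with $z=\varrho e^{i\omega}$. Since $1/z=\ov u$ and the polynomials have real coefficients,
\[
a(z)=z^2a^\sharp(\ov u)=z^2\,\ov{a^\sharp(u)},\qquad \ov{b(z)}=\ov z^{\,2}\,b^\sharp(u).
\]
Hence
\[
g-2is=-\frac{\varrho^2}{B^2}\,\ov{a^\sharp(u)}\,b^\sharp(u).
\]
Multiplying $M(u)b^\sharp(u)\ov{a^\sharp(u)}$ out as $|a^\sharp(u)|^2\,M(u)b^\sharp(u)/a^\sharp(u)$ gives the stated formula. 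The quotient is legitimate because $a^\sharp$ has no zeros in the closed disk, and $|u|\le1$ when $t\ge0$.

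\emph{Reduction of the symbol in case (b).} The identical manipulation with $\ch nt-\sh nt=\varrho^{-n}$, i.e.\ $e^{-nt}=\varrho^{-n}$, gives
\[
\Phi_t=\Rea[(g-2is)M(z)],\qquad z=\varrho e^{i\omega}.
\]
Then
\[
g-2is=-\frac{a(z)\ov{b(z)}}{B^2\varrho^2}=-\frac{|b(z)|^2}{B^2\varrho^2}\,\frac{a(z)}{b(z)},
\]
which is the stated formula. The roots of $b$ lie in the open disk and $|z|=\varrho\le1$, but $b$ has no zero on $|z|=\varrho$ in the relevant range. The expression is an identity wherever $b(z)\ne0$, and in any case $\Rea[(g-2is)M]$ is the primary form.

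\emph{Agreement at $t=0$ and the upper-lag version.} At $t=0$ we have $u=z$ on the unit circle, and the two weight systems coincide. Moreover $a^\sharp(z)=z^2\ov{a(z)}$ and $b^\sharp(z)=z^2\ov{b(z)}$ there. Therefore
\[
\frac{b^\sharp}{a^\sharp}=\ov{\Bigl(\frac ba\Bigr)}=\frac{a\,\ov b}{|a|^2},\qquad \frac ab=\frac{a\,\ov b}{|b|^2}.
\]
So $\Rea[Mb^\sharp/a^\sharp]$ and $\Rea[Ma/b]$ are positive multiples of the same quantity $\Rea[M a\ov b]$, and they have the same sign.

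For upper lags, $\upsilon_n$ replaces $-b_n$ by $+b_n$ and $C$ by $-C$. This amounts to replacing $g-2is$ by its conjugate, $-\ov{a(z)}\,b(z)/(B^2|z|^2)$, which is the same factorization with the roles of $a$ and $b$ swapped. The sign conventions for $\gamma_\pm$ swap accordingly.

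The only delicate point is bookkeeping of conjugations in the $\sharp$-reversal. I would check it on a single lag $n=1$ before writing the general identity.
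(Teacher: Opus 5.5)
Your proposal is correct and is essentially the paper's proof. Both arguments cancel $C$ with the stated minimizer weights and use $\ch nt\pm\sh nt=\varrho^{\pm n}$ to write the symbol as $\Rea[(g-2is)M]$ at $u$ or $z$. Both then insert \eqref{eq:factor}, via $z=1/\ov u$ when $t\ge0$, compare the two quotients on the unit circle through $b^\sharp/a^\sharp=\ov{b/a}$, and handle the upper lags by conjugation.

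One small caveat: for particular $\varrho\le1$, the polynomial $b$ can vanish on $|z|=\varrho$. In (b) the formula should therefore be read as $\Rea[Ma\ov b]$ there, or restricted to $\varrho>\rho_b$ as in Theorem~\ref{thm:mult}, which your closing remark already effectively does.
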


\begin{proof}
The stated minimizer weights cancel $C$.
Use $\cosh(nt)\pm\sinh(nt)=\varrho^{\pm n}$ in the
lag sum and then \eqref{eq:factor}.
For $t\ge0$, substitute $z=1/\bar u$; for $t\le0$, use
$z=\varrho e^{i\omega}$ directly.
On the unit circle
$b^\sharp/a^\sharp=\overline{b/a}=|b/a|^2a/b$.
Conjugating \eqref{eq:factor} interchanges the lag orientations.
\end{proof}
\begin{proof}[Proof of Theorem~\ref{thm:mult}]

The real part of $F=Mb^\sharp/a^\sharp$ is harmonic on the
disk, nonnegative on its boundary and positive at $0$.
The strong minimum principle makes it positive in the interior;
Lemma~\ref{lem:symbols}(a) gives strict certificates at every $t>0$,
hence $\rho\le1$.
For the strict hypothesis, the zero at $z=1$ is removed by a
small positive minimizer weight: $g(0,0)<0$, and the remaining
compact arc already has a strict margin. Theorem~\ref{thm:oneexp}
gives $\rho<1$.
For part~(b), $Ma/b$ is analytic on $r<|z|<1$ and has
nonnegative real part on both boundary circles.
The strong minimum principle gives strict positivity inside;
Lemma~\ref{lem:symbols}(b) supplies certificates for
$\log r<t<0$. Together with the positive exponents they imply
$\rho\le r$. Interchanging $a,b$ proves the upper-lag case.

\end{proof}
\begin{proposition}[Single-harmonic spectra]\label{prop:single}
Let $\omega_0=2\pi j/k$ with $\gcd(j,k)=1$ and $1\le j<k/2$. At $t=0$ the pair
$(\delta_{\omega_0},0)$ is feasible if and only if $\Theta(\omega_0)\le\pi/k$.
\end{proposition}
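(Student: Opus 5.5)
Since the spectrum is a single atom, feasibility at $t=0$ with $C=0$ is just the sign condition on every lag function at $\omega_0$. By Theorem~\ref{thm:exactthr}(a) (or directly from \eqref{eq:feasible} at $t=0$, where $\sh mt=0$ and any admissible $C$ may be replaced by $0$), feasibility of $(\delta_{\omega_0},0)$ is equivalent to
\[
 \cos\Theta\ge0\quad\text{and}\quad\Lambda_n(\omega_0)\ge0\quad(n\in\Z\setminus\{0\}),
\]
evaluated at $\Theta=\Theta(\omega_0)$. The plan is therefore to use the factored form \eqref{eq:Lambda},
\[
 \Lambda_n(\omega_0)=2\sin\frac{n\omega_0}2\,\sin\Bigl(\frac{n\omega_0}2-\Theta\Bigr),
\]
and to reduce everything to the position of the phases $n\omega_0/2 \bmod \pi$. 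For the minimizer constraint I need $\cos\Theta\ge0$, i.e.\ $\Theta\le\pi/2$. This is automatic when $\Theta\le\pi/k$, since $k\ge3$. In the converse direction it is simply part of what has to be shown, and it is subsumed by the lag analysis below.

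Next I would write $x=n\omega_0/2=\pi nj/k$. Since $\gcd(j,k)=1$, as $n$ ranges over nonzero integers the residues $nj\bmod 2k$ range over all residues, up to parity issues, and the phases $x\bmod\pi$ range exactly over $\{\pi r/k: r=0,\dots,k-1\}$. Here $r\equiv nj\pmod k$. The product $\sin x\,\sin(x-\Theta)$ is $\pi$-periodic in $x$, since both factors change sign under $x\mapsto x+\pi$. So only $x\in\{\pi r/k\}$ with $0\le r<k$ matters, and every $r$ is attained by some nonzero $n$; $r=0$ comes from $n=k$.

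For $r=0$ the value is $0$. For $1\le r\le k-1$ we have $\sin(\pi r/k)>0$, so the condition becomes $\sin(\pi r/k-\Theta)\ge0$, i.e.\ $\Theta\le\pi r/k$ or $\Theta\ge\pi r/k+\pi$. The second option is impossible because $\Theta\in(0,\pi]$ and $r\ge1$, except in the degenerate case $\Theta=\pi$ with $\pi r/k$ near $0$. That case is excluded since it would require $\pi r/k\le0$.

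Hence all lag constraints hold iff $\Theta\le\pi r/k$ for every $1\le r\le k-1$, i.e.\ iff $\Theta\le\pi/k$. The binding residue is $r=1$, realized by $n\equiv j^{-1}\pmod k$, consistent with the lag $b$ used later. Combined with the minimizer remark, this gives the equivalence.

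The only delicate point is bookkeeping: checking that the $\pi$-periodicity and the surjectivity of $n\mapsto nj \bmod k$ are applied correctly for both signs of $n$. The sign of $n$ only changes $x$ to $-x$, and $\sin(-x)\sin(-x-\Theta)=\sin x\,\sin(x+\Theta)$, which corresponds to the residue $k-r$, so it is covered. I expect no substantive obstacle beyond this.
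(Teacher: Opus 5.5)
Your proof is correct and is essentially the paper's argument: the paper also reduces the single-atom slacks to positive multiples of $\cos\Theta-\cos(2\pi r/k-\Theta)$ over the residues $r\equiv nj\pmod k$, observes that these are all nonnegative exactly when $\Theta\le\pi/k$, and then uses $\cos\Theta>0$ for the minimizer constraint. Your wording about the minimizer in the converse direction is slightly off: there it is a hypothesis you simply do not need, because the lag constraint at residue $r=1$ alone forces $\Theta\le\pi/k$.
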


\begin{proof}
For one atom, the signed lag slacks are positive multiples of
$\cos\Theta-\cos(2\pi r/k-\Theta)$, $r=0,\ldots,k-1$.
All are nonnegative exactly when $\Theta\le\pi/k$;
then $\cos\Theta>0$ also gives the minimizer constraint.
\end{proof}
\begin{proof}[Proof of Theorem~\ref{thm:A}]

For $M=1-z$, the real part in Theorem~\ref{thm:mult} is
$H_1(c)/|b|^2$, giving (i).
Proposition~\ref{prop:single} and start-up realization give the
single $\R^3$ trajectory in (ii). If the phase inequality is strict,
the nonzero lag residues have a common positive margin.
Writing $g_0=g(0,\omega_0)>0$, at a small positive exponent
multiples of $k$ have slack $g(t)(\cosh(nt)-1)\ge0$. For other
residues the lower slack satisfies
\[
 \ell_n(t)\ge\ell_n(0)+g(t)(\cosh(nt)-1)
        -2|g(t)-g_0|-2|s(t)-s(0)|,
\]
and the upper slack has the same estimate. The last two terms tend
to zero independently of $n$, whereas the original nonzero residues
have a common positive minimum. All lags therefore remain feasible
for some $t>0$, proving $\rho\ge e^t>1$.
On the relative closure of $\Omega_1$, $\cL\ge0$, so the weak
multiplier criterion gives $\rho\le1$.
A phase-locked harmonic gives the reverse inequality and an actual
nonconverging trajectory, while nearby points of $\Omega_1$ converge.

\end{proof}
For the lag-one multiplier write
$\mathsf G(z)=(1-z)a(z)/b(z)$, which is analytic for $|z|>\rho_b$.
Where $b(re^{i\omega})\ne0$,
$H_r(\cos\omega)=|b(re^{i\omega})|^2\Rea\mathsf G(re^{i\omega})$.
\begin{lemma}[Near the unit circle]\label{lem:near1}
Assume $\Theta>\omega/2$ on $(0,\pi]$. There is $\eta>0$ with $\Rea\mathsf G>0$ on
$\{1-\eta\le|z|<1\}$. If $H_r\ge0$ on $[-1,1]$ for some $r\in(\rho_b,1)$, then
$H_{r'}\ge0$ on $[-1,1]$ for all $r'\in[r,1)$.
\end{lemma}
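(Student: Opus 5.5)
We need to prove Lemma near1: assume $\Theta>\omega/2$ on $(0,\pi]$. Then there is $\eta>0$ with $\Rea\mathsf G>0$ on the annulus $\{1-\eta\le|z|<1\}$. In addition, if $H_r\ge0$ for some $r\in(\rho_b,1)$, then $H_{r'}\ge0$ for all $r'\in[r,1)$.

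\emph{Boundary behaviour on the unit circle.} Since $\Theta>\omega/2$ on $(0,\pi]$, Lemma~\ref{lem:cubic} gives $\cL(\cos\omega)>0$ for $\omega\in(0,\pi]$. Together with $\cL(1)>0$, we get $\cL>0$ on all of $[-1,1]$, with a positive minimum $\ell_0$. On $|z|=1$ we have $H_1(c)=(1-c)\cL(c)$, so
\[
 \Rea\mathsf G(e^{i\omega})=(1-\cos\omega)\,\cL(\cos\omega)/|b(e^{i\omega})|^2 .
\]
This is positive except at $z=1$, where it vanishes. The only delicate point is therefore the neighbourhood of $z=1$.

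\emph{Near $z=1$.} Since $b(1)=y>0$ and $a(1)=x>0$, the function $a/b$ is analytic near $1$ with $a(1)/b(1)=x/y>0$. Write $z=1-w$ with $w$ small. Then
\[
 \mathsf G=w\bigl(x/y+O(w)\bigr),\qquad \Rea\mathsf G=(x/y)\Rea w+O(|w|^2).
\]
For $|z|<1$ near $1$ this alone is not enough, because $\Rea w$ can be small compared with $|w|^2$ along tangential approach. I will instead use $H_r$ directly. By Lemma~\ref{lem:cubic}, $H_r(c)$ is a polynomial in $(r,c)$, and $H_1(c)=(1-c)\cL(c)$. The plan is to show
\[
 H_r(c)\ge(1-c)\,\ell_0/2+\kappa_1(1-r)
\]
for $r$ near $1$ and some $\kappa_1>0$. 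For this I compute $\partial_rH_r(1)$ at $r=1$, $c=1$. At $c=1$ we have $z=r$ real, so
\[
 H_r(1)=(1-r)\,a(r)\,b(r),
\]
which is positive for $\rho_b<r<1$ when $a(r),b(r)>0$ near $r=1$; indeed $a(1)=x$ and $b(1)=y$. Hence $H_r(1)\approx(1-r)xy$. Writing
\[
 H_r(c)=H_r(1)+\bigl(H_r(c)-H_r(1)\bigr),
\]
and using that $H_r(c)-H_r(1)=(1-c)\,\tilde H(r,c)$ for a polynomial $\tilde H$ with $\tilde H(1,c)=\cL(c)\ge\ell_0$, continuity gives $\tilde H(r,c)\ge\ell_0/2$ for $1-\eta\le r\le1$ uniformly in $c\in[-1,1]$. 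This yields $H_r(c)>0$ on the closed annulus minus the circle. It also keeps $b\ne0$ there, since $\eta<1-\rho_b$. The first claim follows by dividing by $|b|^2$. This factorisation through $(1-c)$ is the step I would check most carefully.

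\emph{Monotonicity in $r$.} For the second claim, suppose $H_r\ge0$ on $[-1,1]$. Then $\Rea\mathsf G\ge0$ on $|z|=r$. The function $\mathsf G$ is analytic on $\rho_b<|z|\le1$, apart from the fact that it is only continuous up to $|z|=1$. Its real part is $\ge0$ on $|z|=1$, and it is not identically zero. By the minimum principle for the harmonic function $\Rea\mathsf G$ on the annulus $r<|z|<1$, $\Rea\mathsf G>0$ inside, so $H_{r'}=|b|^2\Rea\mathsf G\ge0$ for $r'\in[r,1)$. The first part supplies a strict margin near the outer circle, so the boundary values there cause no difficulty; alternatively, one can apply the minimum principle on $r\le|z|\le1-\eta$, whose boundary values are nonnegative by the hypothesis and by the first part.
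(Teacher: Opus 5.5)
Your proof is correct. The second claim is handled exactly as in the paper, by the minimum principle on $r\le|z|\le1$; for the first claim you take a different route.

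\textbf{How the paper argues the first claim.} The paper works locally. It expands $\Rea\mathsf G$ at $z=(1-\delta)e^{i\theta}$ as $\frac{x}{y}\delta+\frac{\cL(1)}{2y^2}\theta^2+O(\delta^2+\delta\theta^2+|\theta|^3)$. This controls a neighbourhood of the degenerate point $z=1$, with tangential approach absorbed by the $\theta^2$ term. Compactness then supplies a thin positive annulus over the rest of the circle, where $\Rea\mathsf G$ is already strictly positive.

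\textbf{How you argue it.} You use the exact polynomial structure from Lemma~\ref{lem:cubic}. Dividing $H_r(c)-H_r(1)$ by $1-c$ gives $H_r(c)=(1-r)a(r)b(r)+(1-c)\tilde H(r,c)$ with $\tilde H(1,\cdot)=\cL\ge\ell_0>0$. Uniform continuity of $\tilde H$ on $[1-\eta,1]\times[-1,1]$ then yields $H_r(c)\ge(1-r)a(r)b(r)>0$ on the whole annulus in one step.

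\textbf{What each buys.} Your version avoids the two-variable Taylor remainder and the separate compactness step, and it produces an explicit lower bound. The paper's expansion instead records the local shape of $\Rea\mathsf G$ at $z=1$: radial slope $x/y$ and angular curvature $\cL(1)/(2y^2)$. This makes transparent why the zero there is harmless from inside the disk. The two descriptions agree, since $H_r(1)\approx\delta xy$ and $(1-c)\tilde H\approx\frac{\theta^2}{2}\cL(1)$.

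\textbf{A harmless slip.} $\mathsf G=(1-z)a/b$ is a rational function analytic on all of $\{|z|>\rho_b\}$, including across the unit circle. It is not ``only continuous up to $|z|=1$''. This does not affect your minimum-principle step.
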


\begin{proof}
Write $\mathsf G=(1-z)\gamma(z)$, $\gamma=a/b$.
For $z=(1-\delta)e^{i\theta}$,
\[
 \Rea\mathsf G=\frac xy\delta+
       \frac{\cL(1)}{2y^2}\theta^2
       +O(\delta^2+\delta\theta^2+|\theta|^3).
\]
Both displayed coefficients are positive. This proves positivity
near $1$ inside the disk; compactness supplies a thin positive annulus
on the remaining arc. The second assertion is the minimum principle
on $r\le|z|\le1$.
\end{proof}
\begin{proof}[Proof of Theorem~\ref{thm:B}]

Lemma~\ref{lem:near1} makes the admissible radii a nonempty upper
interval with supremum $1$. The multiplier criterion gives
$\rho\le r$ at each such radius, proving (a).
Continuity gives the necessity in (b).
For sufficiency, when $\rho_a>\rho_b$, apply the annulus argument
at $r=\rho_a$.
When $\rho_b\ge\rho_a$, the only possible boundary pole is
$z_b=-\rho_b$, with residue
$(1-z_b)a(z_b)/b'(z_b)<0$.
Let its residue be $v<0$ and write $z=z_b+w$.
The condition $|z|>\rho_b$ implies
$\Rea w<|w|^2/(2\rho_b)$, so
$\Rea(v/w)>v/(2\rho_b)$. The real part is thus bounded below
near the pole. If $d$ is the annulus diameter, the harmonic function
$\Rea\mathsf G-\epsilon\log(|z-z_b|/d)$ tends to $+\infty$
there and has nonnegative lower limits at all other boundary points.
The minimum principle and $\epsilon\downarrow0$ give nonnegativity
in the annulus; the strong minimum principle gives strict positivity.
Thus $r_1=\rhoq$.
For (c), $\beta>0$ and $|A|<2\sqrt\beta$.
Choose $2\sqrt\beta\cos\omega_0=A$; at
$t_0=\log\sqrt\beta$, this mode has $m=0$, $g=1$.
Its signed slacks are at least $\cosh(t_0)-1>0$.
The estimate
$\ell_n,\upsilon_n\ge g(\cosh t-1)-2|s|$
retains positivity at a slightly larger negative exponent,
uniformly in $n$, so realization gives $\rho>\sqrt\beta$.

\end{proof}
\begin{corollary}[Explicit regions]\label{cor:C}
Assume \eqref{eq:quad}.
\begin{enumerate}[label=\textup{(\alph*)}]
\item $\cL(-1)=(2+2\beta-x)(2+2\beta-y)>0$ and $\cL(1)=2(1-\beta)(y-x)+xy>0$.
Hence $\Omega_1$ is the set where $\beta=0$, or the vertex
$c_*=(1+2\beta+\beta^2-\beta x-y)/(4\beta)$ lies outside $(-1,1)$, or the
discriminant $(y-1-2\beta-\beta^2+\beta x)^2-4\beta(2-2x+xy+2\beta^2-2\beta y)$
is negative. On the boundary of $\Omega_1$ the tangency frequency is
$\omega_*=\arccos c_*$, and the phase-locked points of
Theorem~\ref{thm:A}\textup{(iii)} are those with $c_*=\cos(2\pi/K)$.
\item With $c_{jk}=\cos(2\pi j/k)$ and $h_{jk}=\cot(\pi/k)\sin(2\pi j/k)$,
\begin{align*}
 \Theta(2\pi j/k)\le\pi/k&\iff \Xi(c_{jk})\ge h_{jk}\\
 &\iff\cL(c_{jk})\le(y-x)(1-\beta)(1+c_{jk}-h_{jk}).
\end{align*}
For $j=1$ the right side vanishes, and the $K$-tongue is
$\{\cL(\cos(2\pi/K))\le0\}$.
\item If $\rho_a\ge\rho_b$ and $\rhoq>\sqrt\beta$, then with $r=\rho_a$,
$H_r(c)=(1-c)K_\mu(c)$ for a convex polynomial $K_\mu$ of degree at most two with $K_\mu(-1)\ge0$;
within $\Omega_1$ the set $\cQ$ is given there by $K_\mu(1)\ge0$ together with
the vertex-or-discriminant condition for $K_\mu$. If $\rho_b>\rho_a$ and
$\rhoq>\sqrt\beta$, then with $r=\rho_b$, $H_r(c)=(1+c)K_L(c)$ for a concave
polynomial $K_L$ of degree at most two with $K_L(1)\ge0$, and within $\Omega_1$ the set $\cQ$ is
given there by $K_L(-1)\ge0$.
\end{enumerate}
\end{corollary}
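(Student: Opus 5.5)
Parts (a) and (b) reduce to facts about the lag-one polynomial $\cL$; part (c) comes from the residue analysis in the proof of Theorem~\ref{thm:B}.

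\emph{Part (a).} The endpoint values $\cL(\pm1)$ are already in Lemma~\ref{lem:cubic}. By Lemma~\ref{lem:cubic}, $\Omega_1$ is the set where $\cL(c)>0$ for all $c\in[-1,1)$; since $\cL(1)>0$, this is positivity on $[-1,1]$. The polynomial $\cL$ has leading coefficient $4\beta\ge0$, so two cases arise. If $\beta=0$, then $\cL$ is affine with positive endpoint values, hence positive, and the tuning lies in $\Omega_1$. If $\beta>0$, then $\cL$ is a convex quadratic with vertex $c_*$ given by the displayed formula. Positive endpoints give positivity on $[-1,1]$ exactly when the vertex lies outside $(-1,1)$ or the minimum value is positive. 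The latter condition is negativity of the (quarter) discriminant as displayed.

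On $\partial\Omega_1$ the minimum of $\cL$ on $[-1,1]$ is zero. Because the endpoints are positive, this minimum is attained at the interior vertex, so the tangency frequency is $\omega_*=\arccos c_*$. By Theorem~\ref{thm:A}(iii), such a point is phase-locked exactly when $\omega_*=2\pi/K$, that is, when $c_*=\cos(2\pi/K)$.

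\emph{Part (b).} I use $\cot\Theta=\Xi(c)/\sin\omega$ from Lemma~\ref{lem:angle}. Since $\cot$ is decreasing on $(0,\pi)$ and $\sin\omega_{jk}>0$, the condition $\Theta\le\pi/k$ is equivalent to $\Xi(c_{jk})\ge\sin\omega_{jk}\cot(\pi/k)=h_{jk}$. The identity $\cL=(y-x)(1-\beta)(1+c-\Xi)$ from \eqref{eq:E}, with positive prefactor, then rewrites this as the displayed inequality for $\cL$. For $j=1$, the relation $1+\cos(2\pi/k)=2\cos^2(\pi/k)=\sin(2\pi/k)\cot(\pi/k)$ shows that the right side vanishes.

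\emph{Part (c).} This is the only real work. I expand $H_r$ from Lemma~\ref{lem:cubic} as a cubic in $c$ and use the dominant root structure of Lemma~\ref{lem:dominant}.
\begin{itemize}
\item If $\rho_a\ge\rho_b$ with a real positive root $r=\rho_a$ of $a$, then $a(r)=0$ makes $H_r$ vanish at $\omega=0$, i.e.\ at $c=1$. Hence $H_r=(1-c)K_\mu$.
\item If $\rho_b>\rho_a$, then $b(-\rho_b)=0$ kills $H_r$ at $c=-1$, giving $H_r=(1+c)K_L$.
\end{itemize}
Convexity of $K_\mu$ and concavity of $K_L$ come from the sign of the leading coefficient: the $T_3$ coefficient $-\beta r^3$ contributes $-4\beta r^3c^3$, and dividing by $(1-c)$ or $(1+c)$ yields quadratics with leading coefficient of sign $+$ and $-$, respectively.

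The endpoint inequalities $K_\mu(-1)\ge0$ and $K_L(1)\ge0$ are the step I expect to be the obstacle. For them I would reuse the residue-sign argument from the proof of Theorem~\ref{thm:B}. Alternatively, I would evaluate $H_r(\mp1)=\Rea[(1\mp r)a(\mp r)\overline{b(\mp r)}]$ and show that $a(\mp r)$ and $b(\mp r)$ have the same sign using the real-root ordering. This needs care in the degenerate case $\beta=0$.

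With these facts, Theorem~\ref{thm:B}(b) says $\cQ=\{H_{\rhoq}\ge0$ on $[-1,1]\}$ inside $\Omega_1$. For convex $K_\mu$ with $K_\mu(-1)\ge0$, this is $K_\mu(1)\ge0$ plus the vertex-or-discriminant test, exactly as in (a). For concave $K_L$, nonnegativity on the interval reduces to its two endpoint values, and $K_L(1)\ge0$ is already known, leaving only $K_L(-1)\ge0$.
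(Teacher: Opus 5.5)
Your proposal is correct and follows the paper's own route. For (a) you use the endpoint values of $\cL$ and the convex-quadratic positivity test, for (b) cotangent monotonicity with \eqref{eq:E}, and for (c) you divide $H_r$ by $1\mp c$ at the dominant real root of Lemma~\ref{lem:dominant}. Your alternative for the step you flagged, evaluating $H_r(-1)=(1+r)a(-r)b(-r)$, is the paper's formula $K_\mu(-1)=(\beta+r^2)(1+r)b(-r)$ once you use $a(-r)=2(r^2+\beta)$; note the factor is $1+r$, not $1-r$, since $z=-r$. It closes the step immediately: for $r=\rhoq$ the monic quadratics $a,b$ have all roots in the closed disk of radius $r$, so $a(\pm r),b(\pm r)\ge0$.

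One small correction: $\cQ$ is defined by nonnegativity of $H_{\rhoq}$. So in (c) the discriminant test must be the weak one (discriminant $\le0$), not the strict version you used in (a).
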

The proof is given together with Lemma~\ref{lem:Kmu} below.
\begin{lemma}[Quotients at the dominant real roots]\label{lem:Kmu}
Let $r>0$. If $a(r)=0$, equivalently
$x=(1-r)(1-\beta/r)$, then $H_r(c)=(1-c)K_\mu(c)$,
where $K_\mu$ has degree at most two, quadratic coefficient
$4\beta r^3$, and
$K_\mu(-1)=(\beta+r^2)(1+r)b(-r)$.
If $b(-r)=0$, equivalently $y=(1+r)(1+\beta/r)$, then
$H_r(c)=(1+c)K_L(c)$, where $K_L$ has degree at most two,
quadratic coefficient $-4\beta r^3$, and
$K_L(1)=(\beta+r^2)(1-r)a(r)$.
Both quotients are determined by polynomial division of the cubic
in Lemma~\ref{lem:cubic}.
\end{lemma}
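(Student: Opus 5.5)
The proof uses only the evaluation of the cubic $H_r$ from Lemma~\ref{lem:cubic} at its two endpoints $c=\pm1$, where $z=re^{i\omega}$ is real. At $c=1$ we have $z=r$, and at $c=-1$ we have $z=-r$, so
\[
 H_r(1)=(1-r)\,a(r)\,b(r),\qquad H_r(-1)=(1+r)\,a(-r)\,b(-r),
\]
since $a,b$ have real coefficients and the conjugation in \eqref{eq:Hr} is then trivial. The leading term of $H_r$ comes only from $-\beta r^3T_3(c)$ in Lemma~\ref{lem:cubic}, so it is $-4\beta r^3c^3$. Everything else is polynomial division by a linear factor.

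First I check the parameter equivalences directly. From $a(r)=r^2-(1+\beta-x)r+\beta$ we get $a(r)=0\iff x=1+\beta-r-\beta/r=(1-r)(1-\beta/r)$. From $b(-r)=r^2+(1+\beta-y)r+\beta$ we get $b(-r)=0\iff y=(1+r)(1+\beta/r)$.

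\textbf{Case $a(r)=0$.}
\begin{itemize}
\item Then $H_r(1)=0$, so $1-c$ divides the cubic. The quotient $K_\mu$ has degree at most two.
\item Writing $H_r=(1-c)K_\mu$ and matching the $c^3$ coefficient, $-4\beta r^3=-(\text{quadratic coefficient of }K_\mu)$. So that coefficient is $4\beta r^3$.
\item For the endpoint value, use $a(r)=0$ in the form $(1+\beta-x)r=r^2+\beta$. This gives $a(-r)=r^2+(1+\beta-x)r+\beta=2(r^2+\beta)$.
\item Hence $K_\mu(-1)=H_r(-1)/2=(1+r)(\beta+r^2)\,b(-r)$.
\end{itemize}

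\textbf{Case $b(-r)=0$.}
\begin{itemize}
\item Then $H_r(-1)=0$, so $1+c$ divides the cubic, and $H_r=(1+c)K_L$.
\item Matching the $c^3$ coefficient gives quadratic coefficient $-4\beta r^3$ for $K_L$.
\item The relation $(1+\beta-y)r=-(r^2+\beta)$ gives $b(r)=r^2-(1+\beta-y)r+\beta=2(r^2+\beta)$.
\item Hence $K_L(1)=H_r(1)/2=(1-r)(\beta+r^2)\,a(r)$.
\end{itemize}

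In both cases the quotient is the explicit result of dividing the cubic of Lemma~\ref{lem:cubic} by the linear factor. No step is a real obstacle. The only point needing care is that $H_r(\pm1)$ is computed directly from the product definition \eqref{eq:Hr}, not from the Chebyshev expansion, which avoids expanding the coefficients $C,D,E$.
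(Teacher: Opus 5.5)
Your proof is correct and is essentially the paper's argument: $1-c$ (resp.\ $1+c$) divides the cubic because $H_r$ vanishes at the endpoint where $z=r$ (resp.\ $z=-r$) is a root, the quadratic coefficient comes from the $-\beta r^3T_3(c)$ term, and the other endpoint value is $H_r(-1)/2$ (resp.\ $H_r(1)/2$) combined with $a(-r)=2(r^2+\beta)$ (resp.\ $b(r)=2(r^2+\beta)$). The one difference is that you evaluate the product form \eqref{eq:Hr} at the real points $z=\pm r$ to get the endpoint identities, which is a bit cleaner than the paper's substitution and coefficient comparison but is the same polynomial-division argument.
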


\begin{proof}[Proof of Lemma~\ref{lem:Kmu} and Corollary~\ref{cor:C}]
Substitute $x=(1-r)(1-\beta/r)$ or
$y=(1+r)(1+\beta/r)$ in \eqref{eq:Hr} and divide by
$1-c$ or $1+c$.
The displayed quotients and endpoint identities follow by coefficient
comparison. Lemma~\ref{lem:dominant} gives the required endpoint signs.
A convex polynomial of degree at most two is positive on $[-1,1]$
precisely when its endpoints are positive and either its vertex lies
outside $(-1,1)$ or its discriminant is negative.
For nonnegativity replace the strict signs by weak signs; a concave
polynomial is nonnegative exactly when both endpoints are.
These tests prove (a),(c), including the linear case $\beta=0$.
For (b), use the monotonicity of cotangent and \eqref{eq:E}.
\end{proof}
\begin{proof}[Proof of Corollary~\ref{cor:D}]

For Polyak's tuning, put $u_0=(q^{-1}-q)/2$.
Then $2u_0(1+c-\Xi(c))=(c+u_0)^2+2u_0-1$.
Its minimum is positive exactly when $u_0>1/2$; at equality
the double contact is $c=-1/2$, giving the three-tongue threshold.
For $\beta=0$, the linear $\cL$ has positive endpoints.
Normalization gives
$\operatorname{Lip}(I-\alpha\nabla f)\le
|1-(x+y)/2|+(y-x)/2=\max(|1-x|,|1-y|)$.
The endpoint scalar quadratics attain that rate, giving the stated
formula and, by the quotient tests, membership in $\cQ$.

\end{proof}

\section{Threshold geometry and periodic approximation}\label{app:levels}
For signed $n\ne0$, the basic polynomial identity is
\begin{equation}\label{eq:Lambdapoly}
 \Lambda_n(\omega)=\frac{\sin\Theta}{\sin\omega}
 \left[\Xi(c)(1-T_{|n|}(c))-
       \sgn(n)(1-c^2)U_{|n|-1}(c)\right],\quad c=\cos\omega.
\end{equation}
At the endpoints,
$\Lambda_n(0)=0$, $\Lambda_n(\pi)=(-1)^n-1\le0$ and
$\cos\Theta=-1$. Removing endpoint mass therefore preserves
feasibility and leaves a nonzero measure.
\begin{lemma}[Ends]\label{lem:ends}
Let $q$ be a tuning and $s\in\R$. Near $\omega=0$ and $\omega=\pi$,
\begin{equation}\label{eq:A-ends}
 \pi-\Theta_{q,s}(\omega)=\arctan\frac{\sin\omega}{-\Xi_q(\cos\omega)-s\sin\omega},
\end{equation}
with $-\Xi_q(\pm1)>0$. Hence $\Theta_{q,s}(\omega)$ is continuous in $(q,s,\omega)$ on (tunings)$\times\R\times[0,\pi]$
and extends to a real analytic function of $(q,s,\omega)$ on a neighborhood of every
compact subset. With $\tau_q=-1/\Xi_q(1)$, every lag function of $\Theta_{q,s}$ satisfies
$\Lambda_n(0)=\partial_\omega\Lambda_n(0)=0$ and $\partial^2_\omega\Lambda_n(0)=-n(n+2\tau_q)$.
\end{lemma}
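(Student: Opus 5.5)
The plan is to start from the defining relation $\cot\Theta_{q,s}(\omega)=(\Xi_q(\cos\omega)+s\sin\omega)/\sin\omega=h_q(\omega)+s$ on $(0,\pi)$. Since $\Theta_{q,s}\in(0,\pi)$ there and tends to $\pi$ at both ends, I will write $\Theta_{q,s}=\pi-\phi$ with $\phi\in(0,\pi)$. Then $\cot\phi=-\cot\Theta=(-\Xi_q(c)-s\sin\omega)/\sin\omega$. Near $\omega=0$ and $\omega=\pi$, Lemma~\ref{lem:angle} gives $-\Xi_q(\pm1)>0$, so the numerator is positive and $\phi$ is small. Hence $\phi=\arctan(\sin\omega/(-\Xi_q(c)-s\sin\omega))$, which is \eqref{eq:A-ends}.

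For continuity and analyticity, note that the coefficients of $\Xi_q$ in \eqref{eq:Xi} are rational in $(\kappa,y,\beta)$ with denominator $2B(1-\beta)>0$, so they are analytic on the tuning region. On a compact set $K$ of $(q,s)$ and for $\omega$ in a small neighbourhood of $\{0,\pi\}$, the denominator $-\Xi_q(\cos\omega)-s\sin\omega$ stays bounded below by a positive constant. I choose that neighbourhood uniformly by compactness and continuity of $\Xi_q(\pm1)$ in $q$. There the arctan formula is a composition of analytic functions, and it extends to complex $\omega$ near the real interval. On the compact interior part $[\delta,\pi-\delta]$, $\sin\omega>0$, so $\Theta=\operatorname{arccot}(h_q+s)$ is analytic in all variables, with $\operatorname{arccot}$ the $(0,\pi)$-valued analytic branch. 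The two representations agree on the overlaps, which gives one analytic function on a neighbourhood of $K\times[0,\pi]$. Continuity on the full product follows, including the endpoint values $\pi$.

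For the lag functions at $0$, write $\Theta=\pi-\phi$ with $\phi(\omega)=\tau\omega+O(\omega^2)$, where $\tau=\tau_q=-1/\Xi_q(1)$. This follows because $\sin\omega/(-\Xi_q(c)-s\sin\omega)=\omega/(-\Xi_q(1))+O(\omega^2)$; the shift $s$ enters only at order $\omega^2$ in the ratio. Then I use $\Lambda_n=\cos\Theta-\cos(n\omega-\Theta)=-\cos\phi+\cos(n\omega+\phi)$. At $\omega=0$ this equals $-1+1=0$. Expanding, $-\cos\phi=-1+\phi^2/2+O(\omega^3)$ and $\cos(n\omega+\phi)=1-(n\omega+\phi)^2/2+O(\omega^3)$. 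So
\[
\Lambda_n=\tfrac12\bigl(\phi^2-(n\omega+\phi)^2\bigr)+O(\omega^3)=-\tfrac12\,n\omega\,(n\omega+2\phi)+O(\omega^3).
\]
The quadratic term is $-\tfrac12 n(n+2\tau)\omega^2$, which gives $\Lambda_n(0)=\Lambda_n'(0)=0$ and $\Lambda_n''(0)=-n(n+2\tau_q)$. Because $\phi'(0)=\tau$ independently of $s$, the constant is the same for every $s$.

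The main obstacle is the uniform bookkeeping for analyticity near the endpoints: the arctan branch must be chosen consistently with the arccot branch on the interior, and the gluing neighbourhood must be uniform over the compact set. Both reduce to the strict positivity $-\Xi_q(\pm1)>0$ from Lemma~\ref{lem:angle}, which I expect to settle the matter.
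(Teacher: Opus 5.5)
Your proposal is correct and follows the paper's proof. You use positivity of $-\Xi_q(\cos\omega)-s\sin\omega$ near both endpoints to obtain the arctangent form, glue it with the interior arccotangent to get analyticity, and then substitute $\pi-\Theta=\tau_q\omega+O(\omega^2)$ into $\Lambda_n=-\cos(\pi-\Theta)+\cos(n\omega+\pi-\Theta)$ to read off the quadratic coefficient. You also spell out the branch choice and the uniformity over compact sets, which the paper leaves implicit.
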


\begin{proof}
The denominator $-\Xi_q(\cos\omega)-s\sin\omega$ is positive
near the endpoints. Formula~\eqref{eq:A-ends} gives the local
analytic extensions, agreeing with the interior arccotangent.
At $0$, $\pi-\Theta=\tau_q\omega+O(\omega^2)$; substitute in
$\Lambda_n=-\cos(\pi-\Theta)+\cos(n\omega+\pi-\Theta)$.
\end{proof}
\begin{lemma}[Exact alternative]\label{lem:alt}
Let $\Theta$ be a curve and $\mathcal N\subset\Z\setminus\{0\}$. Then $\FP[\mathcal N](\Theta)$ is infeasible if and only if
a strict certificate symbol with lags in $\mathcal N$ exists.
\end{lemma}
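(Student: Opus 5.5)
The easy direction is a direct integration. If $S=\sum_{n\in N}\lambda_n\Lambda_n+\gamma\cos\Theta\le-\varepsilon$ on $[0,\pi]$ and $w$ is a nonzero finite positive measure solving $\FP[\mathcal N](\Theta)$, then $0\le\int S\,dw\le-\varepsilon\,w([0,\pi])<0$, which is impossible. For the converse I will follow the proof of Theorem~\ref{thm:spectralgeneral}(a), with a normalization replacing the constant $C$ and the minimizer constraint.

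First, every $\Lambda_n$ and $\cos\Theta$ is continuous on $[0,\pi]$, because $\Theta$ is a continuous curve. Feasibility is invariant under positive scaling, so I restrict to probability measures, which form a weakly compact set. Each constraint $\int\phi\,dw\ge0$ then defines a weakly closed subset. If $\FP[\mathcal N](\Theta)$ is infeasible, these closed sets, indexed by $n\in\mathcal N$ together with the minimizer constraint, have empty intersection. By compactness (the finite intersection property, as in Lemma~\ref{lem:compact}), some finite subfamily $\phi_1,\dots,\phi_r$ is already infeasible over probability measures, and I include $\cos\Theta$ in this family.

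Next I separate in finite dimensions. Let $K=\operatorname{conv}\{(\phi_i(\omega))_{i=1}^r:\omega\in[0,\pi]\}\subset\R^r$. This set is compact, as the convex hull of a continuous image of a compact set in finite dimensions. Every point of $K$ is the vector of integrals of a finitely supported probability measure, so infeasibility means that $K$ does not meet the closed orthant $\R^r_{\ge0}$. Two disjoint closed convex sets, one of them compact, can be strictly separated. This gives $\lambda\in\R^r$ and constants with $\lambda\cdot k\le-\varepsilon<0\le\lambda\cdot p$ for all $k\in K$ and $p\ge0$. Since $\lambda\cdot p\ge0$ on the whole orthant, $\lambda\ge0$. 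Evaluating at the point masses then gives $\sum_i\lambda_i\phi_i(\omega)\le-\varepsilon$ uniformly in $\omega$. The coefficients $\lambda_i$ attached to lags become the $\lambda_n$, and the coefficient of $\cos\Theta$ becomes $\gamma$, so $S$ is a strict certificate symbol with finitely many lags in $\mathcal N$.

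Unlike Theorem~\ref{thm:spectralgeneral}(a), there is no free $C$ here, so no linear subspace $\R\varsigma$ enters the cone; this makes the separation step simpler. The one point needing care is that $\FP$ asks for a nonzero measure rather than a probability measure. This is the normalization step: any nonzero finite positive measure can be scaled to total mass one without changing the sign of any constraint, so feasibility over nonzero measures and over probability measures coincide. With that settled, I expect no real obstacle beyond checking the compactness reduction to a finite subsystem.
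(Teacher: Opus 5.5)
Your proof is correct and follows essentially the same route as the paper. Both reduce to a finite subsystem using weak compactness of probability measures and the finite intersection property. Both then strictly separate the compact convex hull of the moment curve $\omega\mapsto\bigl((\Lambda_n(\omega))_n,\cos\Theta(\omega)\bigr)$ from the nonnegative orthant, and the cone structure forces the separating coefficients to be nonnegative. The only difference is the order of presentation: the paper treats finite lag sets first and then reduces the general case to them, which does not matter.
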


\begin{proof}
For finitely many lags, take the compact convex hull of the moment curve
\[
 \omega\longmapsto\bigl((\Lambda_n(\omega))_n,\cos\Theta(\omega)\bigr).
\]
Feasibility asks whether this hull meets the nonnegative orthant.
Strict separation~\cite[\S2.5.1]{BoydVandenberghe} gives the
nonnegative certificate coefficients when they are disjoint:
the separating functional has nonnegative coordinates because
the orthant is a cone.
For arbitrary lags, normalize feasible measures to probability.
Compactness and the finite-intersection property imply that
finite-subsystem feasibility gives full feasibility. Integrating
a strict symbol proves the converse.
\end{proof}
\subsection{General spectra and monotone thresholds}\label{app:levels-realization}
\paragraph{Realization of general spectra at exponent zero.}
For the lower bound in Theorem~\ref{thm:exactthr}, let $(\nu,0)$ be
feasible at $t=0$, with $\nu$ normalized to have mass one. Work in
the real Hilbert space $L^2(\nu;\C)\oplus\R$, and put
$z(\omega)=e^{i\omega}$, $m=m(0,\omega)$ and
$r=(z-A+\beta)/B$. As in Lemma~\ref{lem:startup}, choose
$\tau\in(-1,1)$ so that the recurrence
$p_{-1}=p_0=1$, $p_{k+1}=(A+B\tau)p_k-\beta p_{k-1}$ satisfies
$p_k\to0$ and $\delta_0=\inf_{k\ge1}(1-p_k)^2>0$.
For $Y>0$ prescribe
\[
 \begin{aligned}
 e_0&=(1,Y),&\sigma_0&=(r,\tau Y),&
 \psi_0&=\tfrac12\int\Rea r\,d\nu+\tfrac\tau2Y^2,\\
 e_k&=(z^k,p_kY),&\sigma_k&=(mz^k,\tau p_kY),&
 \psi_k&=\tfrac12\int\Rea m\,d\nu+\tfrac\tau2p_k^2Y^2
 \quad(k\ge1).
 \end{aligned}
\]
Include the minimizer datum $(0,0,0)$ and set $e_{-1}=e_0$.
The identities $B r=z-A+\beta$ and
$Bmz=z^2-Az+\beta$ give \eqref{eq:normrec} at every step.

All interpolation inequalities excluding index $0$ follow by
integrating the feasible spectral inequalities. The scalar summand
is the quadratic $\tau u^2/2$: it contributes $T_{ij}=0$ and
$(1-\tau^2)(u_i-u_j)^2/4$ to $R_{ij}$.
The exceptional first-summand deficits
$|T_{0k}|-R_{0k}$, $k\ge1$, and $|T_{0\star}|-R_{0\star}$
are bounded above by a finite $M\ge0$, because $z^k$, $m$ and $r$
are uniformly bounded in $L^2(\nu)$ and their prescribed values are
bounded. Thus
\[
 Y^2\ge\frac{4M}{(1-\tau^2)\min\{1,\delta_0\}}
\]
makes every interpolation inequality valid, with one $Y$ independent
of the horizon.

For each $N$, restrict the data to the finite-dimensional span of
$e_0,\ldots,e_N,\sigma_0,\ldots,\sigma_N$.
Theorem~\ref{thm:interpolation} and Lemma~\ref{lem:normalization}
give an objective in $\cF$ realizing these first $N$ steps. Since
$\|e_N\|^2\ge\int|z|^{2N}\,d\nu=1$ and
$\|e_0\|^2=1+Y^2$, we obtain $W_N\ge(1+Y^2)^{-1/2}$ for all $N$.
The finite-dimensional objective and its dimension may depend on
$N$; the lower-bound constant is common to all horizons.
\begin{proposition}[Monotonicity and phase floors]\label{prop:mono}
\begin{enumerate}[label=\textup{(\alph*)}]
\item Let $\Theta,\Theta'$ be curves with $\Theta'\le\Theta$ on $(0,\pi)$. If $\FP(\Theta)$ is feasible, then
$\FP(\Theta')$ is feasible.
\item For a tuning, a strict certificate at $t=0$ exists if and only if there are finitely
many $\lambda_n\ge0$, $n\ne0$, and $\gamma>0$ such that
$M(z)=\gamma+\sum_n\lambda_n(1-z^n)$ satisfies
$\Theta(\omega)>\arg M(e^{i\omega})+\pi/2$ for all $\omega\in(0,\pi]$.
\item For every such $M$ and every $\omega=2\pi j/k$, $\arg M(e^{i\omega})+\pi/2\ge\pi/k$.
For $M=1-z$, $\arg M(e^{i\omega})+\pi/2=\omega/2$.
\end{enumerate}
\end{proposition}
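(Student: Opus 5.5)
Part (a) follows from the sign structure of the lag functions. The plan is to rewrite each constraint of $\FP(\Theta)$ in cotangent form, using \eqref{eq:Lambdapoly} or the monotone form $\ell_n[h]=h(1-\cos n\omega)-\sin n\omega$. Dividing $\Lambda_n$ by $\sin\Theta\sin\omega>0$ on $(0,\pi)$ gives
\[
 \frac{\Lambda_n}{\sin\Theta}=\cot\Theta\,(1-\cos n\omega)-\sin n\omega,
\]
which is nondecreasing in $\cot\Theta$ because $1-\cos n\omega\ge0$. Lowering $\Theta$ raises $\cot\Theta$ and so raises every normalized lag function. The minimizer function $\cos\Theta/\sin\Theta=\cot\Theta$ increases as well.

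Given a solution $w$ of $\FP(\Theta)$, I first discard any mass at $0$ and $\pi$. This is harmless: $\Lambda_n(0)=0$, $\Lambda_n(\pi)\le0$ and $\cos\Theta(\pi)=-1$, so removing endpoint mass keeps feasibility and, by the remark after \eqref{eq:Lambdapoly}, leaves a nonzero measure. I then form $w'=(\sin\Theta'/\sin\Theta)\,w$ on $(0,\pi)$; the factor is continuous and positive there. For each $n$,
\[
 \int\Lambda'_n\,dw'=\int\sin\Theta'\Bigl(\cot\Theta'(1-\cos n\omega)-\sin n\omega\Bigr)\frac{dw}{\sin\Theta}\ge\int\Lambda_n\,dw\ge0,
\]
and the minimizer constraint is handled the same way. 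One point to check is whether $w'$ has finite mass when $\Theta\to\pi$ inside $(0,\pi)$. That is excluded only for genuine curves, so I restrict to interior points where $\sin\Theta>0$. An atom where $\Theta=\pi$ has $\Lambda_n=1-\cos n\omega\ge0$ and $\cos\Theta=-1$, so it contributes negatively to the minimizer constraint; in that case I would keep the atom with weight $1$ and check directly that $\Lambda_n'\ge\Lambda_n$ there.

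For (b), I start from the identity preceding Theorem~\ref{thm:exactthr}: $\Lambda_n=(g(1-\cos n\omega)-2s\sin n\omega)/d$ and $\cos\Theta=g/d$. Together with \eqref{eq:factor}, these give
\[
 \Lambda_n=\Rea\bigl[(1-e^{in\omega})e^{-i\Theta}\bigr]\quad(n>0),\qquad \cos\Theta=\Rea e^{-i\Theta},
\]
and the sign convention for negative $n$ gives the conjugate. Hence a certificate symbol with $\gamma\ge0$ equals $\Rea[M(e^{i\omega})e^{-i\Theta}]$ up to the conjugation bookkeeping, where $M=\gamma+\sum\lambda_n(1-z^n)$ and the sign of $n$ encodes the orientation. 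Strict negativity of the symbol then means $\Rea[Me^{-i\Theta}]<0$, i.e.\ $\Theta-\arg M\in(\pi/2,3\pi/2)$ modulo $2\pi$.

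Two facts reduce this to the stated form. First, $\Rea M\ge\gamma>0$, so $\arg M\in(-\pi/2,\pi/2)$. Second, $\Theta\in(0,\pi]$. Together they force $\Theta>\arg M+\pi/2$. At the endpoints the condition is automatic: $\Theta(\pi)=\pi$ and the symbol at $0$ is $-\gamma<0$. This is where $\gamma>0$ is needed. Conversely, Lemma~\ref{lem:alt} together with Theorem~\ref{thm:exactthr}(a) shows that a strict symbol with $\gamma=0$ can be perturbed to have $\gamma>0$, since strictness survives adding a small $\gamma\cos\Theta$. The main care in this part is keeping the orientation consistent between negative $n$ and $\bar z^{|n|}$.

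For (c), take $\omega=2\pi j/k$ and the single-atom measure $\delta_\omega$. By Proposition~\ref{prop:single}, $\delta_\omega$ is feasible for the curve that is constant equal to $\pi/k$ near $\omega$, since its lag functions depend only on $\Theta(\omega)$. Suppose $\arg M(e^{i\omega})+\pi/2<\pi/k$. Then a curve with $\Theta(\omega)=\pi/k$ satisfies the phase inequality at $\omega$, so the symbol of $M$ is negative at $\omega$. But the symbol of $M$, integrated against the feasible atom, is nonnegative. This contradiction gives $\arg M(e^{i\omega})+\pi/2\ge\pi/k$. The example $M=1-z$ is direct: $1-e^{i\omega}=2\sin(\omega/2)e^{i(\omega-\pi)/2}$, so $\arg M+\pi/2=\omega/2$.
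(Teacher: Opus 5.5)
Your part (a) contains a step that fails: the reweighting factor is inverted. Passing to the monotone form means $\nu=\sin\Theta\,w$, and returning to the lower curve means $w'=\nu/\sin\Theta'=(\sin\Theta/\sin\Theta')\,w$, not $(\sin\Theta'/\sin\Theta)\,w$. With the correct factor,
\[
 \int\Lambda'_n\,dw'=\int\sin\Theta\,\ell_n[\cot\Theta']\,dw\ \ge\ \int\sin\Theta\,\ell_n[\cot\Theta]\,dw=\int\Lambda_n\,dw\ge0,
\]
and likewise $\int\cos\Theta'\,dw'=\int\sin\Theta\cot\Theta'\,dw\ge\int\cos\Theta\,dw\ge0$.

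With your factor you get instead $\int(\sin^2\Theta'/\sin\Theta)\,\ell_n[\cot\Theta']\,dw$. The integrand in your display, with a single factor $\sin\Theta'$, does not even match your own $w'$. Since $\ell_n$ changes sign in $\omega$, a weight other than $\sin\Theta$ cannot be compared with $\int\Lambda_n\,dw=\int\sin\Theta\,\ell_n[\cot\Theta]\,dw$ by monotonicity in the cotangent, so the inequality does not follow.

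Your factor also causes the finite-mass problem you raise: it blows up where $\Theta\to\pi$ while $\Theta'$ stays below. The phrase ``excluded only for genuine curves'' does not settle this. The correct factor is bounded. Where $\Theta'>\pi/2$ you have $\Theta'\le\Theta\le\pi$, so $\sin\Theta\le\sin\Theta'$. Elsewhere $\sin\Theta'\ge\sin(\min\Theta')>0$, because a curve is continuous into $(0,\pi]$ on the compact interval $[0,\pi]$.

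Finally, you have the sign wrong at points with $\Theta=\pi$. There $\Lambda_n=\cos n\omega-1\le0$ and $\cos\Theta=-1$, so such mass hurts every constraint. The whole set $\{\Theta=\pi\}$, which contains both endpoints, should simply be deleted; the correct factor vanishes there anyway. What remains is nonzero, because a measure carried by $\{\Theta=\pi\}$ violates the minimizer constraint. Keeping such an atom with weight one fails in general, since there $\Lambda'_n-\Lambda_n=4\cos(\Theta'/2)\sin(n\omega/2)\sin\bigl((n\omega-\Theta')/2\bigr)$, which can be negative.

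Parts (b) and (c) are correct. In (b), the identity $\sum_n\lambda_n\Lambda_n+\gamma\cos\Theta=\Rea[e^{-i\Theta}M(e^{i\omega})]$ holds for all signed $n$ directly from \eqref{eq:Lambda}, so no orientation bookkeeping is needed. You should add compactness to obtain the uniform margin. Your step perturbing a strict symbol with $\gamma=0$ is vacuous: the symbol's value $-\gamma$ at $0$ already forces $\gamma>0$, as the paper notes.

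In (c) you argue dually, testing the symbol of $M$ against the single atom at phase $\pi/k$. For that, Theorem~\ref{thm:farey}(b), which holds for every curve, is the right reference, or a direct evaluation of $\Lambda_n$ at that value. The paper instead observes directly that each nonzero $1-e^{2\pi ijn/k}$ has argument in $[-\pi/2+\pi/k,\pi/2-\pi/k]$, so the cone they generate together with $\gamma>0$ stays in that sector. Both routes are valid. The paper's is shorter and gives the two-sided bound $|\arg M|\le\pi/2-\pi/k$; yours gives exactly the one-sided bound that is claimed.
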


\begin{proof}
Delete the phase-$\pi$ set from a feasible measure; its contribution
to every constraint is nonpositive, and its minimizer contribution
is strictly negative. On the remainder, reweight by
$\sin\Theta/\sin\Theta'$.
This ratio is bounded: it is at most $1$ where
$\Theta'>\pi/2$, and at most $1/\sin(\min\Theta')$ elsewhere.
The normalized lag $\Lambda_n/\sin\Theta=\ell_n[\cot\Theta]$
and normalized minimizer are nondecreasing in cotangent, proving (a).
For (b), $\Rea M\ge\gamma>0$ and
$S=\Rea(e^{-i\Theta}M)$.
Its negativity is exactly the phase-floor inequality;
compactness supplies a uniform margin. A strict symbol has
$\gamma>0$ by its value at $0$.
At $2\pi j/k$, all nonzero summands $1-z^n$ lie in the sector
$[-\pi/2+\pi/k,\pi/2-\pi/k]$, as does their positive sum
with $\gamma$. This proves (c), the discrete-time phase limitation
of~\cite{ZCH}; $M=1-z$ gives equality with $\omega/2$.
\end{proof}

\subsection{The dip and coordinates of a gap}\label{sec:convexdip}
\begin{theorem}[Convexity]\label{thm:convex}
Let $q$ be a tuning, $c=\cos\omega$, and write $\Xi(c)=\Xi_0+\Xi_1c+\Xi_2c^2$. Then
\[
 h_q''(\omega)\,\sin^3\omega=\tfrac12(1+c)^2\,\Xi(1)+\tfrac12(1-c)^2\,\Xi(-1)+\Xi_2\,(1-c^2)^2<0\quad(0<\omega<\pi),
\]
so $h_q$ is strictly concave on $(0,\pi)$. Consequently, for every shift $s$,
$\Theta_{q,s}''(\omega)>0$ at every $\omega\in(0,\pi)$ with $\Theta_{q,s}(\omega)\le\pi/2$, and
$\{\Theta_{q,s}\le\pi/2\}$ is a compact interval in $(0,\pi)$ or empty.
\end{theorem}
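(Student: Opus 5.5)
The identity is a direct differentiation of $h_q(\omega)=\Xi(\cos\omega)/\sin\omega$, and I will establish it by computing $h_q''$ in the variable $\omega$ and then reorganizing the result in $c=\cos\omega$.

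\emph{Derivatives.} With $s=\sin\omega$ and $\Xi'=\Xi_1+2\Xi_2c$, first compute
\[
 h'=-\Xi'-\frac{\Xi c}{s^2}.
\]
Differentiating once more, using $d c/d\omega=-s$ and $d(c/s^2)/d\omega=-(1+c^2)/s^3$, gives
\[
 h''=\Xi''s+\frac{\Xi' c}{s}+\frac{\Xi(1+c^2)}{s^3}.
\]
Multiplying by $s^3$ yields
\[
 h''s^3=\Xi''(1-c^2)^2+\Xi' c(1-c^2)+\Xi(1+c^2).
\]

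\emph{Matching the claimed form.} Both sides of the identity are polynomials of degree at most four in $c$ and linear in $(\Xi_0,\Xi_1,\Xi_2)$, so it suffices to compare coefficients of each basis monomial.

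For $\Xi=1$: the left side is $1+c^2$. The right side is $\tfrac12(1+c)^2+\tfrac12(1-c)^2=1+c^2$.

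For $\Xi=c$: the left side is $c(1-c^2)+c(1+c^2)=2c$. The right side is $\tfrac12(1+c)^2-\tfrac12(1-c)^2=2c$.

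For $\Xi=c^2$: the left side is $2(1-c^2)^2+2c^2(1-c^2)+c^2(1+c^2)=2-c^2+c^4$. The right side is $1+c^2+(1-c^2)^2=2-c^2+c^4$.

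This proves the identity. The only genuine computation is the second derivative; once it is written in $c$, the basis check is mechanical.

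\emph{Sign.} By Lemma~\ref{lem:angle}, $\Xi(\pm1)<0$ and $\Xi_2\le0$ (the leading coefficient $-4\beta/(2B(1-\beta))$). On $(0,\pi)$ we have $|c|<1$, so $(1+c)^2>0$ and $(1-c)^2>0$. Hence the first two terms are strictly negative and the third is nonpositive, giving $h''<0$ because $s^3>0$. The shift adds $s$ to $h$ and does not change $h''$.

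\emph{Consequence for $\Theta$.} Write $\Theta_{q,s}=\operatorname{arccot}(h+s)$. Then
\[
 \Theta'=-\frac{h'}{1+h^2},\qquad
 \Theta''=-\frac{h''}{1+h^2}+\frac{2h\,h'^2}{(1+h^2)^2}.
\]
The first term is positive. Where $\Theta\le\pi/2$ we have $h+s\ge0$, so the second term (with $h$ read as $h+s$) is nonnegative. Therefore $\Theta''>0$ there.

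\emph{The set $\{\Theta_{q,s}\le\pi/2\}$.} This set equals $\{h+s\ge0\}$, a superlevel set of a strictly concave function, so it is an interval or empty. Since $h\to-\infty$ at both $0$ and $\pi$ (because $\Xi(\pm1)<0$ and $\sin\omega\to0^+$), the interval is closed and lies strictly inside $(0,\pi)$, hence is compact.
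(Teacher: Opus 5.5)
Your proof is correct and follows the paper's own argument: you differentiate $h_q=\Xi(\cos\omega)/\sin\omega$ twice, match the result to the endpoint form by linearity in $(\Xi_0,\Xi_1,\Xi_2)$, and read off the sign from $\Xi(\pm1)<0$ and $\Xi_2\le0$. You then conclude, as the paper does, from the arccotangent second-derivative formula, strict concavity, and $h_q\to-\infty$ at both ends; the only blemish is that you use $s$ both for $\sin\omega$ and for the shift, so rename one of them.
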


\begin{proof}
Twice differentiating $h=\Xi(c)/\sin\omega$ and expressing
$\Xi_0,\Xi_1$ through $\Xi(1),\Xi(-1)$ gives the displayed
identity. Its sign follows from $\Xi(\pm1)<0$ and $\Xi_2\le0$.
For $u=h+s$, differentiation gives
$\Theta''=-u''/(1+u^2)+2u(u')^2/(1+u^2)^2>0$ where $u\ge0$.
The set $\{u\ge0\}$ is a compact interval because $u$ is
strictly concave and tends to $-\infty$ at both ends.
\end{proof}
\begin{corollary}[Connected phase sublevel sets]\label{cor:dip}
Let $\Theta=\Theta_{q,s}$ be a shifted curve, $\ell(\omega)=n\omega/2-\pi m$ a lattice line
and $J$ an interval on which $\ell\le\pi/2$. Then $\{\omega\in J:\Theta(\omega)<\ell(\omega)\}$
is an interval. In particular the dip $\{\omega:\Theta(\omega)<\omega/2\}$ and the
closed dip
\[
 \bar\Delta=\{\omega\in(0,\pi]:\ \Theta(\omega)\le\omega/2\}
\]
are intervals; $\bar\Delta$ is compact and contained in $(0,\pi)$, and if the dip is
nonempty, $\bar\Delta$ is its closure.
\end{corollary}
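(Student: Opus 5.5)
The plan is to reduce everything to the concavity statement of Theorem~\ref{thm:convex}, working with the cotangent of the shifted curve rather than the angle itself. Write $u=h_q+s$, so that $\Theta=\operatorname{arccot}u$ on $(0,\pi)$. Since $\operatorname{arccot}$ is strictly decreasing, $\Theta(\omega)<\ell(\omega)$ is equivalent to $u(\omega)>\cot\ell(\omega)$ wherever $\ell(\omega)\in(0,\pi)$. The constraint $\ell\le\pi/2$ on $J$ splits $J$ into the parts where $\ell\le0$ and where $0<\ell\le\pi/2$. Because $\ell$ is affine, each of these parts is a subinterval, and the part with $\ell\le 0$ is contained in one end of $J$.

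First, on the part of $J$ where $\ell\le0$, the set $\{\Theta<\ell\}$ is empty, since $\Theta>0$. So we may restrict to the subinterval $J'$ on which $0<\ell\le\pi/2$. On $J'$ the function $\cot\ell$ is convex in $\omega$: for $\ell$ affine with values in $(0,\pi/2]$, $(\cot\ell)''=2\ell'^2\cot\ell\csc^2\ell\ge0$. Theorem~\ref{thm:convex} gives that $u$ is strictly concave on $(0,\pi)$. Hence $u-\cot\ell$ is concave on $J'\cap(0,\pi)$, and its strict superlevel set $\{u-\cot\ell>0\}$ is an interval.

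The remaining issue is the endpoints $0$ and $\pi$. There $\Theta=\pi\ge\ell$ whenever $\ell\le\pi/2$, so these points never lie in the set. Thus the set is an interval inside $J'\cap(0,\pi)$, which proves the first assertion.

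For the dip, take $n=1$, $m=0$ and $J=(0,\pi]$, where $\omega/2\le\pi/2$. This shows that $\{\Theta<\omega/2\}$ is an interval. For the closed dip $\bar\Delta$, use the same argument with the non-strict inequality $u\ge\cot(\omega/2)$: the superlevel set of a concave continuous function is a closed interval relative to $(0,\pi)$, so $\bar\Delta$ is an interval.

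Compactness and containment in $(0,\pi)$ come from the boundary behaviour. As $\omega\to0$, $\Theta\to\pi$ while $\omega/2\to0$, so $\Theta-\omega/2$ is bounded away from zero near $0$. At $\pi$ we have $\Theta(\pi)=\pi>\pi/2$, and by continuity (Lemma~\ref{lem:ends}) the same holds on a neighbourhood. Hence $\bar\Delta$ is a closed subset of some $[\epsilon,\pi-\epsilon]$.

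Finally, if the dip is nonempty, write $\phi=u-\cot(\omega/2)$. This function is strictly concave, since $\cot(\omega/2)$ is convex on $(0,\pi)$, and it takes a positive value somewhere. A strictly concave function cannot have a nondegenerate interval of zeros, and any zero of $\phi$ adjacent to a positive value is a limit of positive values. It follows that $\{\phi\ge0\}=\overline{\{\phi>0\}}$, that is, $\bar\Delta$ is the closure of the dip.

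The main point to check carefully is the convexity of $\cot\ell$ and the correct handling of the region where $\ell\le0$ or $\ell$ reaches the endpoints of the lattice strip. The rest follows directly from Theorem~\ref{thm:convex}.
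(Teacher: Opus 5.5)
Your proof is correct. It differs from the paper's in where the convexity is placed.

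The paper works on the angle side. It uses the second half of Theorem~\ref{thm:convex}: $\Theta_{q,s}''>0$ wherever $\Theta_{q,s}\le\pi/2$, and $\{\Theta_{q,s}\le\pi/2\}$ is a compact interval in $(0,\pi)$. Since $\ell\le\pi/2$ on $J$, the set $\{\Theta<\ell\}\cap J$ lies inside that interval. There $\Theta-\ell$ is strictly convex because $\ell$ is affine. Compact containment of $\bar\Delta$ follows at once from $\bar\Delta\subset\{\Theta\le\pi/2\}$, and the closure statement from strict convexity.

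You instead stay on the cotangent side. You use only the concavity of $h_q$ and turn the line into the convex function $\cot\ell$ on $\{0<\ell\le\pi/2\}$. This avoids the second-derivative formula for $\Theta$ and any restriction to $\{\Theta\le\pi/2\}$. The cost is that you must treat separately the region $\ell\le0$, the endpoints $0$ and $\pi$, and compactness via the boundary limits of Lemma~\ref{lem:ends}.

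The two convexity facts are really the same fact. $\operatorname{arccot}$ is convex and decreasing on $[0,\infty)$ precisely because its inverse $\cot$ is convex and decreasing on $(0,\pi/2]$.

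For your closure step, the chord inequality gives the conclusion directly and does not need strictness. If $\phi(\omega_0)=0<\phi(\omega_1)$, then $\phi((1-t)\omega_0+t\omega_1)\ge t\,\phi(\omega_1)>0$ for $0<t\le1$, so $\omega_0$ is a limit of points of the dip.
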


\begin{proof}
On the interval $\{\Theta\le\pi/2\}$, the function
$\Theta-\ell$ is strictly convex. Its strict and weak
sublevel sets are intervals. Taking $\ell=\omega/2$
gives compact containment and the closure statement.
\end{proof}
\begin{lemma}[Dip lemma]\label{lem:dip}
Let $\Theta$ be a curve and $\bar\Delta=\{\omega\in(0,\pi]:\Theta\le\omega/2\}$. Every solution
of $\FP(\Theta)$ gives positive mass to $\bar\Delta$.
\end{lemma}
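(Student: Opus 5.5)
We argue by contradiction, using the sign structure of the lag functions off the closed dip. Let $w$ solve $\FP(\Theta)$ and suppose $w(\bar\Delta)=0$. Endpoint mass is handled first. At $\omega=0$ every $\Lambda_n$ vanishes and $\cos\Theta(0)=-1$. At $\omega=\pi$ we have $\Lambda_n(\pi)=(-1)^n-1\le0$ and again $\cos\Theta=-1$. Removing mass at $0$ (or at $\pi$ when $\pi\notin\bar\Delta$) therefore leaves every lag integral unchanged or larger, but it increases the minimizer integral. So first we check that the remaining measure $w'$ on $U=(0,\pi)\setminus\bar\Delta$ must still be nonzero. If $w'=0$, the minimizer constraint reads $\int\cos\Theta\,dw=-w(\{0,\pi\})\ge0$, which forces $w=0$, a contradiction. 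Hence $w'\ne0$, and $w'$ satisfies all lag constraints (each only improved) and the minimizer constraint (improved as well).

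On $U$ we have $\Theta(\omega)>\omega/2$. The lag-one function is
\[
 \Lambda_1(\omega)=2\sin\tfrac{\omega}{2}\,\sin\bigl(\tfrac{\omega}{2}-\Theta\bigr).
\]
For $\omega\in(0,\pi)$ the first factor is positive. The second argument lies in $(\omega/2-\pi,0)\subset(-\pi,0)$, so $\sin(\omega/2-\Theta)<0$. Hence $\Lambda_1<0$ pointwise on $U$, and integrating against the nonzero positive measure $w'$ gives $\int\Lambda_1\,dw'<0$, contradicting the lag-one constraint. This completes the argument, and no uniform margin is needed, since strict pointwise negativity plus positivity of $w'$ on the set where it is supported suffices.

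The main point of care is the endpoints, together with the convention that $\bar\Delta\subset(0,\pi]$ excludes $0$ while $\Theta(0)=\pi$. If $\pi\in\bar\Delta$ (only possible if $\Theta(\pi)\le\pi/2$, which is excluded for curves with $\Theta(\pi)=\pi$), any mass there already charges $\bar\Delta$. Otherwise mass at $\pi$ is handled by the deletion step above. Everything else is the one-line sign computation for $\Lambda_1$.
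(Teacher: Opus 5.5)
Your proof is correct and follows the paper's argument: $\Lambda_1<0$ on $(0,\pi]\setminus(\bar\Delta\cup\{0\})$, and the minimizer constraint rules out a nonzero measure carried by $\{0\}$. The only difference is that the paper uses $\Lambda_1(\pi)=-2<0$ directly rather than first deleting the endpoint mass.
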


\begin{proof}
Outside $\bar\Delta\cup\{0\}$ the lag-one function is strictly
negative. A measure giving no mass to $\bar\Delta$ violates
that lag unless it is carried by $0$, where it violates the
minimizer constraint.
\end{proof}
\begin{proof}[Proof of Theorem~\ref{thm:tangency}]

Strict convexity makes $\omega_*$ the unique nonzero zero of
$\Lambda_1\le0$. At that frequency
$\Lambda_{-K}=2\sin(K\omega_*/2)\sin((K+1)\omega_*/2)<0$.
Choose a small positive weight $\epsilon$ on $-K$.
Near $\omega_*$ it removes the zero; away from $0,\omega_*$
the lag-one strict margin is preserved. Near $0$, the expansion
in Lemma~\ref{lem:ends} retains a negative quadratic coefficient
for small $\epsilon$. Adding a still smaller
$\gamma\cos\Theta$, $\gamma>0$, makes the symbol strictly
negative also at $0$. The exact criterion gives convergence.
The remaining boundary contacts are precisely the locked ones
of Theorem~\ref{thm:A}(iii).

\end{proof}
\begin{proof}[Proof of Theorem~\ref{thm:levelzero}]

The compact interval $\bar\Delta$ either is empty, contains one
of the frequencies $2\pi/K'$, or lies in the unique interval
between consecutive such frequencies. The first two cases are
Theorem~\ref{thm:A}. In the last case a singleton is a tangency
handled by Theorem~\ref{thm:tangency}; otherwise its interior
is the dip and the endpoint inequalities define $I_K$.
For a singleton, decreasing $\kappa$ strictly raises the angle
curve by Lemma~\ref{lem:kappa}, placing the point on
$\partial\Omega_1$.

\end{proof}
\begin{proposition}[Coordinates of a gap]\label{prop:chart}
\begin{enumerate}[label=\textup{(\alph*)}]
\item The map $q\mapsto\Xi_q$ is a homeomorphism from the tunings with $\beta>0$ onto
the set of real quadratics $\Xi(c)=\Xi_0+\Xi_1c+\Xi_2c^2$ with $\Xi_2<0$, $\Xi(1)<0$ and
$\Xi(-1)<0$. Tunings with $\beta=0$ have $\Xi_2=0$ and lie in $\Omega_1$.
\item Let $K\ge2$. A tuning lies in $I_K$ if and only if $\beta>0$ and
\[
 1+c-\Xi_q(c)=a\,(c-r_1)(c-r_2)\quad\text{with}\quad c_K<r_1<r_2<c_{K+1}.
\]
Then $a=-\Xi_2>a_0(r_1,r_2):=2/\bigl((1-r_1)(1-r_2)\bigr)$, the closed dip is
$[\arccos r_2,\arccos r_1]$, and $q\mapsto(a,r_1,r_2)$ is a homeomorphism of $I_K$
onto
\[
 \mathcal G_K=\bigl\{(a,r_1,r_2):\ c_K<r_1<r_2<c_{K+1},\ a>a_0(r_1,r_2)\bigr\}.
\]
In particular $I_K$ is connected.
\item Consider the closure of $\mathcal G_K$ within $\{a>a_0(r_1,r_2),\ r_1>-1\}$, where the
quadratic $1+c-a(c-r_1)(c-r_2)$ still corresponds to a tuning by (a). Its face
$\{r_1=r_2=r,\ c_K<r<c_{K+1}\}$ consists exactly of the points of $\partial\Omega_1$ that are not
phase-locked and whose tangency frequency $\arccos r$ lies in the window. Its face $r_2=c_{K+1}$
lies on the edge $\Theta(2\pi/(K+1))=\pi/(K+1)$ of the $(K+1)$-tongue, and $r_1=r_2=c_{K+1}$ is a
phase-locked point. For $K\ge3$ the face $r_1=c_K$ lies on the edge of the $K$-tongue,
$r_1=r_2=c_K$ is a phase-locked point, and $(r_1,r_2)=(c_K,c_{K+1})$ is a junction of the two
tongues. For $K=2$, as $r_1\downarrow-1$ the tunings approach the line $\alpha L=2(1+\beta)$.
\item For every fraction $j/k$ in lowest terms with $1/(K+1)<j/k<1/K$, put
$c_{jk}=\cos(2\pi j/k)$ and $h_{jk}=\cot(\pi/k)\sin(2\pi j/k)$. Then
$h_{jk}>1+c_{jk}$, and the tuning with coordinates $(a,r_1,r_2)\in\mathcal G_K$ satisfies
$\Theta(2\pi j/k)\le\pi/k$ if and only if
\[
 r_1<c_{jk}<r_2\qquad\text{and}\qquad a\ge A_{jk}(r_1,r_2):=\frac{h_{jk}-1-c_{jk}}{(c_{jk}-r_1)(r_2-c_{jk})} .
\]
\item The face $a=a_0(r_1,r_2)$ is $\Xi(1)=0$, and $\Xi_q(1)=-\alpha L/\bigl((\kappa-1)(1-\beta)\bigr)$. Along every
sequence of tunings whose quadratics converge to a quadratic with $\Xi(1)=0$ and
$\Xi(-1)<0$, the condition number tends to infinity.
\end{enumerate}
\end{proposition}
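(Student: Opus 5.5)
Part (a) comes first, directly from the explicit formula \eqref{eq:Xi}. With $x=y/\kappa$, the three coefficients of $\Xi_q$ are
\[
 \Xi_2=-\frac{2\beta}{B(1-\beta)},\qquad
 \Xi_1=\frac{A(1+\beta)}{B(1-\beta)},\qquad
 \Xi_0=\frac{B^2-(1-\beta)^2-A^2}{2B(1-\beta)} .
\]
These are real analytic in $(\kappa,y,\beta)$, so the map $q\mapsto\Xi_q$ is continuous. Lemma~\ref{lem:angle} already gives $\Xi(\pm1)<0$, and $\Xi_2<0$ holds exactly when $\beta>0$.

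For the inverse I will not solve for $(A,B,\beta)$ from the coefficients directly. Instead I will use the endpoint formulas
\[
 \Xi(1)=-\frac{xy}{(y-x)(1-\beta)},\qquad
 \Xi(-1)=-\frac{(2+2\beta-x)(2+2\beta-y)}{(y-x)(1-\beta)},
\]
together with $\Xi_2$. Writing $P=1+\beta-x$ and $Q=1+\beta-y$, the plan is to show that $(\Xi_2,\Xi(1),\Xi(-1))$ determine $\beta$, $A=(P+Q)/2$ and $B=(P-Q)/2$ through a monotone scalar equation in $\beta$, and then to verify surjectivity onto the cone $\{\Xi_2<0,\ \Xi(\pm1)<0\}$. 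For $\beta=0$ we have $\Xi_2=0$, so $\cL$ is linear with positive endpoints and the tuning lies in $\Omega_1$ (Corollary~\ref{cor:C}).

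Part (b) is algebraic. By \eqref{eq:E}, $1+c-\Xi_q(c)$ is a convex quadratic with leading coefficient $a=-\Xi_2$, positive at $c=\pm1$, and its sign is the sign of $\Theta-\omega/2$. The dip is nonempty and open exactly when this quadratic has two real roots $r_1<r_2$ in $(-1,1)$; the closed dip is then $[\arccos r_2,\arccos r_1]$. The dip lies in the window $(2\pi/(K+1),2\pi/K)$ iff $c_K<r_1<r_2<c_{K+1}$. In that window the endpoint inequalities at $2\pi/K$ and $2\pi/(K+1)$ hold automatically, since by Corollary~\ref{cor:C}(b) with $j=1$ they say $\cL(c_K)>0$ and $\cL(c_{K+1})>0$, and these are true because the $c_K,c_{K+1}$ lie outside $[r_1,r_2]$. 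The constraint $\Xi(1)<0$ reads $2-a(1-r_1)(1-r_2)<0$, i.e.\ $a>a_0$; the constraint $\Xi(-1)<0$ holds automatically, because $-a(1+r_1)(1+r_2)<0$. Composing with (a) gives the homeomorphism onto $\mathcal G_K$. The set $\mathcal G_K$ is connected, being the region above the graph of the continuous function $a_0$ over a triangle, so $I_K$ is connected.

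Parts (c)--(e) are boundary bookkeeping in the same coordinates.
\begin{enumerate}
\item[(c)] On the diagonal $r_1=r_2$ the quadratic has a double root: a tangency at $\arccos r$, which is phase-locked only when $r=c_{K'}$. On the faces $r_2=c_{K+1}$ and $r_1=c_K$ we get $\cL(c_{K'})=0$, the tongue edge. At $r_1\to-1$ with $K=2$, $\Xi(-1)\to0$, which corresponds to $y\to2+2\beta$.
\item[(d)] By Corollary~\ref{cor:C}(b), the condition $\Xi(c_{jk})\ge h_{jk}$ is equivalent to $a(c_{jk}-r_1)(r_2-c_{jk})\ge h_{jk}-1-c_{jk}$. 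The inequality $h_{jk}>1+c_{jk}$ is $\Theta$ versus $\omega/2$ at the rational point: $\cot(\pi/k)>\cot(\pi j/k)$ since $j>1$. It forces $r_1<c_{jk}<r_2$.
\item[(e)] From the formula for $\Xi(1)$, we get $\Xi(1)=-\alpha L/((\kappa-1)(1-\beta))$ after dividing by $x$. Continuity of the inverse on compacta shows that $\Xi(1)\to0$ with the other data bounded forces $\kappa\to\infty$.
\end{enumerate}

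The main obstacle is the global invertibility in (a): showing the coefficient map is injective and onto the stated cone with a continuous inverse. I would handle it by explicitly solving the endpoint system for $P$ and $Q$ given $\beta$, showing the $\beta$-equation has a unique root in $(0,1)$ by monotonicity, and then checking the stability constraints $|P|,|Q|<1+\beta$.
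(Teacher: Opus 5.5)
Your parts (b)--(d) follow the paper's argument. The step you flag as the main obstacle, global invertibility in (a), does not work as proposed. The three equations $\Xi_2,\Xi(1),\Xi(-1)$ in the unknowns $(x,y,\beta)$ can have several real solutions with $0<\beta<1$ and $0<x<y$, only one of them stable. So a scalar equation in $\beta$ obtained by elimination will generally have several roots in $(0,1)$, and the order ``unique root first, stability check afterwards'' cannot work.

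Concretely, put $\lambda=-4/\Xi_2$ and $e_{\pm1}=\Xi(\pm1)/\Xi_2$. The endpoint formulas give $y-x=\lambda\beta/(1-\beta)$, $xy=4\beta e_1$ and $(2+2\beta-x)(2+2\beta-y)=4\beta e_{-1}$. Hence
\[
 f(\beta):=4(1+\beta)^2+4\beta(e_1-e_{-1})-2(1+\beta)\sqrt{\frac{\lambda^2\beta^2}{(1-\beta)^2}+16\beta e_1}=0 .
\]
Take $\Xi_2=-400$, $\Xi(1)=-4000$, $\Xi(-1)=-4$, which lies in the cone. Then $f(0)=4$, $f(0.127)<0$, $f(0.5)>0$ and $f(0.9999)<0$, so there are at least three roots in $(0,1)$. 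Every root with $\beta>4-\sqrt{15}$ has $y\ge\sqrt{xy}=\sqrt{40\beta}>2(1+\beta)$. Uniqueness therefore has to come from the stability constraint itself, which your plan does not provide. Continuity of the inverse is also not addressed.

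The missing idea is the factorization of Lemma~\ref{lem:Efactor}, $2B(1-\beta)E_{\Xi_q}(z)=-a(z)\,b(z^{-1})$, which turns inversion into a root-splitting problem.
\begin{itemize}
\item For $\Xi$ in the cone, $E_\Xi(e^{i\omega})=\Xi(\cos\omega)-i\sin\omega$ meets the real axis only at the negative values $\Xi(\pm1)$. So the quartic $p_\Xi=z^2E_\Xi$ has no zeros on the unit circle and, by the argument principle, exactly two inside it.
\item Since $p_\Xi(0)=\Xi_2/4<0$ and $p_\Xi(\pm1)=\Xi(\pm1)<0$, the inner pair is conjugate or real of one sign. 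Its product $\beta$ therefore lies in $(0,1)$.
\item The inner pair defines $a$, and the reciprocals of the outer pair define $b$. Comparing the $z^3$ and $z$ coefficients gives $(y-x)(1-\beta)=-4\beta/\Xi_2>0$, and Schur stability gives $0<x<y<2(1+\beta)$.
\item Uniqueness and continuous dependence of this inner/outer split give injectivity and continuity of the inverse.
\end{itemize}

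Separately, in (e) continuity of the inverse is of no use, because the limit quadratic lies outside the cone. Instead write $\kappa-1=\alpha L/\bigl(|\Xi(1)|(1-\beta)\bigr)$. Then exclude $\alpha L\to0$ using the formula for $\Xi(-1)$: its numerator stays bounded below while $(y-x)(1-\beta)\le y\to0$.
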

The proof follows Lemma~\ref{lem:Efactor}.

For a quadratic $\Xi$, put
\begin{equation}\label{eq:Ez}
 E_\Xi(z)=\Xi\left(\frac{z+z^{-1}}2\right)-\frac{z-z^{-1}}2,
 \qquad p_\Xi(z)=z^2E_\Xi(z).
\end{equation}
\begin{lemma}\label{lem:Efactor}
For every tuning and $z\ne0$,
$2B(1-\beta)E_{\Xi_q}(z)=-a(z)\,b(z^{-1})$.
\end{lemma}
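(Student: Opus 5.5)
The identity is a direct algebraic computation, and I would derive it from the factorization \eqref{eq:factor} and the formula \eqref{eq:Xi} for $\Xi$, first on the unit circle and then for all $z\ne0$ by analytic continuation.

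\textbf{Step 1: the identity on the unit circle.} Take $z=e^{i\omega}$ with $\omega\in(0,\pi)$, so that $z^{-1}=\ov z$ and $b(z^{-1})=\ov{b(z)}$ (the coefficients of $b$ are real). Since $(z+z^{-1})/2=\cos\omega$ and $(z-z^{-1})/2=i\sin\omega$, we have
\[
E_{\Xi}(z)=\Xi(\cos\omega)-i\sin\omega .
\]
By Lemma~\ref{lem:angle}, $\Xi(\cos\omega)=\sin\omega\cot\Theta=g\sin\omega/(2s)$. Hence
\[
E_\Xi(z)=\frac{\sin\omega}{2s}\,(g-2is).
\]
From the proof of Lemma~\ref{lem:angle}, $s=\Ima m=(1-\beta)\sin\omega/B$ at $t=0$, so $\sin\omega/(2s)=B/(2(1-\beta))$. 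Now \eqref{eq:factor} with $|z|=1$ gives $g-2is=-a(z)\ov{b(z)}/B^2$. Combining these,
\[
E_\Xi(z)=-\frac{a(z)b(z^{-1})}{2B(1-\beta)},
\]
which is the claim on the upper half of the circle. Conjugation symmetry extends it to the lower half.

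\textbf{Step 2: extension to all $z\ne0$.} Multiply both sides by $z^2$. The left side becomes $2B(1-\beta)p_\Xi(z)$. The right side becomes $-a(z)\,b^\sharp(z)$, where $b^\sharp(z)=z^2b(z^{-1})$ is a polynomial. Both sides are therefore polynomials of degree at most four. They agree on an arc of the unit circle, which has accumulation points, so they agree identically. Dividing back by $z^2$ gives the identity for every $z\ne0$.

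\textbf{Alternative check.} One can instead verify the identity directly by expanding both sides in powers of $z$, using the explicit coefficients of $\Xi$ from \eqref{eq:Xi}, $A=1+\beta-(x+y)/2$ and $B=(y-x)/2$. This is routine but tedious.

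\textbf{Main obstacle.} The only delicate point is bookkeeping of signs and conventions. One must confirm that $\Xi/\sin\omega=\cot\Theta$ corresponds to $g/(2s)$ with exactly this normalization, and that the sign of $s$ matches the $-2is$ in \eqref{eq:factor}. I would settle this by checking at a single point, for instance $z=1$: there $E_\Xi(1)=\Xi(1)$, while the right side is $-a(1)b(1)/(2B(1-\beta))=-xy/((y-x)(1-\beta))$. This agrees with the value of $\Xi(1)$ computed in the proof of Lemma~\ref{lem:angle}.
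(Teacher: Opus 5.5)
Your argument is correct; it differs from the paper's main proof, which is the route you list as your alternative check. The paper expands both sides in the five Laurent monomials $z^{-2},\dots,z^{2}$ and matches coefficients using the explicit formula \eqref{eq:Xi}.

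Your main route works on the upper unit semicircle instead. There Lemma~\ref{lem:angle} and $s=(1-\beta)\sin\omega/B$ give $E_\Xi(e^{i\omega})=\Xi(\cos\omega)-i\sin\omega=\frac{B}{2(1-\beta)}(g-2is)$. You then apply \eqref{eq:factor}, and extend to all $z\ne0$ by the identity principle for the polynomials $2B(1-\beta)p_\Xi$ and $-a\,b^\sharp$, each of degree at most four.

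This is not circular, since Lemma~\ref{lem:angle} is derived directly from \eqref{eq:factor} and does not use the present lemma. In effect, the coefficient bookkeeping is the computation already done when \eqref{eq:Xi} was obtained from $\cot\Theta=g/(2s)$. What your route buys is the insight that the lemma is simply the Laurent-polynomial continuation of \eqref{eq:factor} off the unit circle. The paper's check buys self-containment: it is a finite algebraic verification independent of the angle interpretation.

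Two minor remarks. The conjugation step is unnecessary, because agreement at five (indeed infinitely many) points of the upper arc already forces equality of two polynomials of degree at most four. Your sign check at $z=1$ is consistent with the endpoint value $\Xi(1)=-xy/((y-x)(1-\beta))$ from Lemma~\ref{lem:angle}.
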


\begin{proof}
Expand in the five Laurent monomials $z^{-2},\ldots,z^2$;
their coefficients agree by \eqref{eq:Xi}.
\end{proof}
\begin{proof}[Proof of Proposition~\ref{prop:chart}]

For (a), $p_\Xi$ has no unit-circle zero: on that circle
$E_\Xi=\Xi(\cos\omega)-i\sin\omega$, whose real-axis
intersections are the two negative values $\Xi(\pm1)$.
Its winding number is zero, so $p_\Xi$ has two inner and two
outer roots. Since $p_\Xi(0)=\Xi_2/4<0$ and
$p_\Xi(\pm1)<0$, the inner roots are conjugate or have the
same real sign. Their product is $\beta\in(0,1)$.
Take them as the roots of $a=z^2-Pz+\beta$; reciprocal
outer roots give $b=z^2-Qz+\beta$, since the quartic's
leading and constant coefficients agree.
Coefficient comparison in Lemma~\ref{lem:Efactor} gives
$(y-x)(1-\beta)>0$, where $x=1+\beta-P$, $y=1+\beta-Q$.
Schur stability gives $0<x<y<2(1+\beta)$.
This is the unique tuning; separation of the inner and outer
root multisets also gives continuous dependence, including
multiple roots. At $\beta=0$, $\cL$ is linear and positive.

For (b), the dip is exactly the negative set of
$1+c-\Xi(c)=a(c-r_1)(c-r_2)$.
The window and strict tongue exclusions give the ordered roots;
$\Xi(1)<0$ is $a>a_0$, while $\Xi(-1)<0$ is automatic.
These identities give both continuous coordinate maps.
The region above the continuous graph $a_0$ over its open
triangle is path connected.
Part (c) follows by allowing the ordered roots to coincide or
reach window endpoints. For $K=2$, the face $r_1=-1$
is $\Xi(-1)=0$, hence the excluded stability boundary.

For (d),
$h_{jk}-1-c_{jk}=
2\cos(\pi j/k)\sin(\pi(j-1)/k)/\sin(\pi/k)>0$.
Substitute $\Xi(c)=1+c+a(c-r_1)(r_2-c)$.
For (e), the endpoint identity is in the proof of
Lemma~\ref{lem:angle}.
If $\Xi_n\to\bar\Xi$ with $\bar\Xi(1)=0$ and
$\bar\Xi(-1)<0$, the negative-endpoint identity rules out
$y_n\to0$. Hence
$\kappa_n-1=y_n/(|\Xi_n(1)|(1-\beta_n))\to\infty$.

\end{proof}

\subsection{Monotonicity, periodicity and the threshold}
\begin{proposition}[Monotone form]\label{prop:monoform}
For a tuning $q$ and $s\in\R$, $\FP(\Theta_{q,s})$ is feasible if and only if there is
a nonzero finite measure $\nu\ge0$ on $(0,\pi)$ with $\int|h_q+s|\,d\nu<\infty$ and
\[
 \int\ell_n[h_q+s]\,d\nu\ge0\quad(n\ne0),\qquad\int(h_q+s)\,d\nu\ge0 .
\]
If $\nu$ satisfies these conditions for a function $h$, and $h'$ is $\nu$-measurable with $h'\ge h$ on $\supp\nu$ and bounded above there, then
$\int\ell_n[h']\,d\nu=\int\ell_n[h]\,d\nu+\int(h'-h)(1-\cos n\omega)\,d\nu$, and $\nu$ satisfies them
for $h'$.
\end{proposition}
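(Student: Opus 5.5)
The plan is to change variables from the angle to its cotangent. On $(0,\pi)$ write $u=\cot\Theta_{q,s}=h_q+s$, so that $\sin\Theta=1/\sqrt{1+u^2}$ and $\cos\Theta=u\sin\Theta$.

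\emph{Step 1 (identity for the lag functions).} Expanding \eqref{eq:Lambda} gives
\[
\Lambda_n=\cos\Theta(1-\cos n\omega)-\sin\Theta\sin n\omega=\sin\Theta\,\ell_n[u],
\]
and the minimizer function is $\cos\Theta=\sin\Theta\,u$.

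\emph{Step 2 (removing the endpoints).} As noted after \eqref{eq:Lambdapoly}, every lag function vanishes at $0$ and is nonpositive at $\pi$, while $\cos\Theta(0)=\cos\Theta(\pi)=-1$. Hence deleting the endpoint atoms of a solution $w$ of $\FP(\Theta_{q,s})$ keeps all constraints valid. The remaining measure is nonzero: if $w$ were carried by $\{0,\pi\}$, its minimizer integral would equal minus its total mass, which is negative.

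\emph{Step 3 (the correspondence).} Use the map $\nu=\sin\Theta\,w$ on $(0,\pi)$, with inverse $w=\nu/\sin\Theta$. In the forward direction, $\nu$ is finite and
\[
\int|u|\,d\nu=\int|\cos\Theta|\,dw<\infty .
\]
Each constraint $\int\Lambda_n\,dw\ge0$ becomes $\int\ell_n[u]\,d\nu\ge0$, and the minimizer constraint becomes $\int u\,d\nu\ge0$.

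In the reverse direction, $w=\nu/\sin\Theta=\sqrt{1+u^2}\,\nu$. Its mass is at most $\int(1+|u|)\,d\nu<\infty$, and the same identities give feasibility of $\FP(\Theta_{q,s})$. This is the only point where the integrability hypothesis is needed: it guarantees that $w$ is a finite measure and that every integral is absolutely convergent. Since $|\ell_n[u]|\le2|u|+1$, each lag integral is finite.

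\emph{Step 4 (monotonicity in $h$).} The identity
\[
\ell_n[h']-\ell_n[h]=(h'-h)(1-\cos n\omega)
\]
holds pointwise, and $1-\cos n\omega\ge0$. Integrating against $\nu$ gives the displayed formula. All the integrals exist because $h'$ is bounded above on $\operatorname{supp}\nu$ and $h'\ge h$ with $h$ integrable, so $h'$ is $\nu$-integrable. Both added terms are nonnegative, which gives all constraints for $h'$. The minimizer condition follows the same way from $\int h'\,d\nu\ge\int h\,d\nu\ge0$.

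The only delicate step is the bookkeeping of integrability. The angle function $\Theta$ tends to $\pi$ at the endpoints, so $h_q$ tends to $-\infty$ there, and the integrability condition on $\nu$ is exactly what makes the correspondence between finite measures hold in both directions.
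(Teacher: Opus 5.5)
Your proposal is correct and follows the paper's own proof. Like the paper, you delete the endpoint atoms, pass to $\nu=\sin\Theta\,w$ via $\Lambda_n=\sin\Theta\,\ell_n[\cot\Theta]$ and $\cos\Theta=\sin\Theta\cot\Theta$, recover finiteness of $w=\nu/\sin\Theta$ from $\sqrt{1+h^2}\le1+|h|$, and obtain the monotone statement from the pointwise identity $\ell_n[h']-\ell_n[h]=(h'-h)(1-\cos n\omega)$ together with $|h'|\le|h|+M^+$. You only spell out a few details that the paper leaves implicit, such as the explicit lag identity and why the measure remains nonzero after the endpoint atoms are removed.
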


\begin{proof}
Delete the endpoint atoms and put $\nu=\sin\Theta\,w$.
Then $\int|h|\,d\nu=\int|\cos\Theta|\,dw<\infty$,
and the constraints transform as stated.
Conversely $w=\nu/\sin\Theta$ is finite because
$1/\sin\Theta=\sqrt{1+h^2}\le1+|h|$.
Measurability and $h\le h'\le M$ give
$|h'|\le|h|+M^+$, hence integrability.
The displayed increment formula follows by linearity.
\end{proof}
\begin{lemma}[The condition number lowers the curve]\label{lem:kappa}
With $y=\alpha L$,
\[
 \frac{\partial\Xi_q(c)}{\partial\kappa}=\frac{N(c)}{y(1-\beta)(\kappa-1)^2},
\]
where $N(c)=\bigl(y-(1+\beta)(1-c)\bigr)^2+(1-\beta)^2(1-c^2)>0$ for $-1\le c\le1$.
Hence $h_{(\kappa',\alpha L,\beta)}\ge h_{(\kappa,\alpha L,\beta)}+\varepsilon$ on $(0,\pi)$ for $\kappa'>\kappa$ and some
$\varepsilon>0$, and the curve $\Theta_q$ decreases strictly as $\kappa$ grows.
\end{lemma}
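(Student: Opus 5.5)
The plan is to differentiate the explicit formula \eqref{eq:Xi} for $\Xi_q$ with respect to $\kappa$, holding $y=\alpha L$ and $\beta$ fixed. In the normalized coordinates, $x=y/\kappa$ is the only quantity that moves, and $\partial x/\partial\kappa=-y/\kappa^2$. It is cleaner to use a factored form of $\Xi$ than the raw coefficients. By Lemma~\ref{lem:Efactor} and the computation in the proof of Lemma~\ref{lem:angle}, on the unit circle we have $\Xi(c)=\Rea[-a(z)\overline{b(z)}]/(B(1-\beta))\cdot\tfrac12$ up to the stated normalization. So I will work with
\[
 2B(1-\beta)\,\Xi(c)=-\Rea\bigl[a(e^{i\omega})\,\overline{b(e^{i\omega})}\bigr]\cdot\frac{1}{|e^{i\omega}|^2},
\]
where $B=(y-x)/2$. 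In this form, $b$ is independent of $\kappa$, and $a$ depends on $x$ linearly through $a=b+(y-x)z$.

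Substituting $a=b+(y-x)z$ gives $-\Rea[a\bar b]=-|b|^2-(y-x)\Rea[z\bar b]$. Therefore
\[
 \Xi(c)=-\frac{|b|^2}{(y-x)(1-\beta)}-\frac{\Rea[z\bar b]}{1-\beta}.
\]
The second term does not depend on $x$, so only the first term varies with $\kappa$. I have
\[
 \partial_x\Xi=-\frac{|b|^2}{(y-x)^2(1-\beta)},
\]
and by the chain rule
\[
 \partial_\kappa\Xi=\frac{y|b|^2}{\kappa^2(y-x)^2(1-\beta)}=\frac{|b(e^{i\omega})|^2}{y(1-\beta)(\kappa-1)^2},
\]
using $y-x=y(\kappa-1)/\kappa$.

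The remaining step is to identify $|b(e^{i\omega})|^2$ with $N(c)$. Writing $b(z)=z\,(z+\beta z^{-1}-(1+\beta-y))$, I get $|b|^2=|(1+\beta)c-(1+\beta-y)+i(1-\beta)\sin\omega|^2$. This equals $(y-(1+\beta)(1-c))^2+(1-\beta)^2(1-c^2)$, which is exactly $N(c)$.

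Positivity of $N$ on $[-1,1]$ follows from the fact that $b$ has no roots on the unit circle, by Schur stability under \eqref{eq:quad}. Since $N$ is continuous on the compact interval, it has a positive minimum there. I should double-check the overall sign convention of $\Xi$ against \eqref{eq:Xi} by comparing one value; for instance $\Xi(1)=-xy/((y-x)(1-\beta))$ should be reproduced by the formula above. The sign check is the main risk in this step, and it is where I would verify the normalization carefully.

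For the consequences, I integrate in $\kappa$ from $\kappa$ to $\kappa'$. This gives $\Xi_{q'}(c)-\Xi_q(c)\ge\min N\cdot\int_\kappa^{\kappa'}\frac{d\kappa''}{y(1-\beta)(\kappa''-1)^2}=:\varepsilon_0>0$, uniformly in $c$. Then $h_{q'}-h_q=(\Xi_{q'}-\Xi_q)/\sin\omega\ge\varepsilon_0$, because $0<\sin\omega\le1$, so $\varepsilon=\varepsilon_0$ works. Finally, $\Theta=\operatorname{arccot}h$ is strictly decreasing in $h$. Hence $\Theta_q$ decreases strictly on $(0,\pi)$ as $\kappa$ grows, while the endpoint values stay fixed at $\pi$.
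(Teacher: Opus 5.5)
Your proof is correct and is essentially the paper's argument: differentiate $\Xi$ through $x=y/\kappa$, then use uniform positivity of $N$ on $[-1,1]$ and $0<\sin\omega\le1$. Writing $(y-x)(1-\beta)\Xi=-\Rea[a\bar b]$ replaces the paper's completing of the square and its interior/endpoint check ($N(1)=y^2$, $N(-1)=(y-2(1+\beta))^2$) with the identity $N(c)=|b(e^{i\omega})|^2$, so positivity becomes Schur stability of $b$. There is one slip: $a-b=(x-y)z$, so $a=b-(y-x)z$ and the $x$-independent term is $+\Rea[z\bar b]/(1-\beta)$. This does not affect $\partial_\kappa\Xi$, and with the corrected sign your check at $c=1$ does return $\Xi(1)=-xy/((y-x)(1-\beta))$, whereas your sign would give $-y(2y-x)/((y-x)(1-\beta))$.
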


\begin{proof}
Differentiate \eqref{eq:Xi} using $x=y/\kappa$.
Completing the square gives $N(c)$.
It is positive for $|c|<1$ and has endpoint values
$N(1)=y^2>0$, $N(-1)=(y-2(1+\beta))^2>0$.
Compactness supplies a uniform positive increment of $\Xi$;
division by $0<\sin\omega\le1$ gives the cotangent increment.
\end{proof}
\begin{proposition}[Threshold]\label{prop:threshold}
For every tuning $q$ there is a number $\sigma(q)$, the threshold, such that
$\FP(\Theta_{q,s})$ is feasible if and only if $s\ge\sigma(q)$. Moreover:
\begin{enumerate}[label=\textup{(\alph*)}]
\item $\mathcal D=\{q:\ \sigma(q)\le0\}$;
\item if $h_{q'}\ge h_q+e$ on $(0,\pi)$ for a constant $e$, then $\sigma(q')\le\sigma(q)-e$;
\item the set $\{(q,s):\ s\ge\sigma(q)\}$ is closed, and $\FP(\Theta_{q,\sigma(q)})$ is feasible.
\end{enumerate}
\end{proposition}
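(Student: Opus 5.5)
The plan is to work in the monotone form of Proposition~\ref{prop:monoform}. Let $F(q)=\{s:\FP(\Theta_{q,s})\text{ feasible}\}$. The approach has four steps: show $F(q)$ is an up-set, show it is nonempty and bounded below, show the feasible pairs form a closed set, and then read off (a) and (b).

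\emph{Up-set.} Suppose $\nu$ is a monotone-form solution for $h=h_q+s$ and $s'\ge s$. Take $h'=h+(s'-s)$, which satisfies $h'\ge h$. The increment formula in Proposition~\ref{prop:monoform} gives
\[
 \int\ell_n[h']\,d\nu=\int\ell_n[h]\,d\nu+(s'-s)\int(1-\cos n\omega)\,d\nu\ge0 .
\]
The minimizer integral also increases. Hence $F(q)$ is an up-set, so it is either empty, a closed or open half-line, or $\R$.

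\emph{Nonempty and bounded below.} For nonemptiness, pick any atom $\omega_0=2\pi/3$. For $s$ large, $\Theta_{q,s}(\omega_0)\le\pi/3$, so Proposition~\ref{prop:single} (single atom feasibility through the phase inequality, which depends only on the curve) makes $\delta_{\omega_0}$ feasible. For boundedness below, take $s$ very negative. Then $h_q+s<0$ everywhere on $(0,\pi)$, since $h_q$ is bounded above by concavity and tends to $-\infty$ at the ends. This violates the minimizer integral $\int(h_q+s)\,d\nu\ge0$ for any nonzero $\nu$. So $\sigma(q)=\inf F(q)$ is finite.

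\emph{Closedness (the main obstacle).} The key step is closedness of $\{(q,s):s\ge\sigma(q)\}$, which also gives attainment. I would take $(q_i,s_i)\to(q,s)$ with $w_i$ feasible for $\Theta_{q_i,s_i}$, normalized to probability measures on $[0,\pi]$. By weak compactness, pass to $w_i\to w$. By Lemma~\ref{lem:ends}, $\Lambda_n[\Theta_{q,s}]$ and $\cos\Theta_{q,s}$ are jointly continuous in $(q,s,\omega)$ and uniformly so on compacts. Each constraint therefore passes to the limit, and $w$ is a probability measure satisfying all constraints of $\FP(\Theta_{q,s})$.

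The danger is that $w$ might be concentrated at $\omega=0$, where all lags vanish but $\cos\Theta=-1$. That contradicts the minimizer constraint, since $\int\cos\Theta\,dw\ge0$ and $\cos\Theta=-1$ there. More carefully, if $w=\delta_0$ then $\int\cos\Theta\,dw=-1<0$, which is impossible. Hence $w$ has mass off $0$, and after removing the endpoint atoms (which only help, as noted after \eqref{eq:Lambdapoly}) it is a nonzero solution. This gives closedness, and taking $q_i=q$, $s_i\downarrow\sigma(q)$ gives feasibility at $\sigma(q)$.

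\emph{Parts (a) and (b).} Part (a) then follows from Theorem~\ref{thm:exactthr}(c), using $\Theta_{q,0}=\Theta_q$. For part (b), if $h_{q'}\ge h_q+e$, then a solution $\nu$ at $(q,s)$ in monotone form satisfies the conditions for $h_{q'}+s-e\ge h_q+s$ by the same increment formula. Here $h'$ is bounded above on the support by concavity, which is the hypothesis needed in Proposition~\ref{prop:monoform}. Applying this at $s=\sigma(q)$ gives $\sigma(q')\le\sigma(q)-e$.
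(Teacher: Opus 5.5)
Your proposal is correct and follows the paper's own proof essentially step for step. The steps match: the monotone form of Proposition~\ref{prop:monoform} gives monotonicity in $s$, the atom at $2\pi/3$ and the failure of the minimizer constraint for $s<-\sup h_q$ give finiteness, and weak compactness with the joint continuity of Lemma~\ref{lem:ends} gives closedness and attainment, after which Theorem~\ref{thm:exactthr} yields (a) and the increment formula yields (b). Two small points: for a shifted curve the appropriate single-atom criterion is Theorem~\ref{thm:farey}(b), which holds for every curve, rather than Proposition~\ref{prop:single}; and your concern about the limit concentrating at $0$ is unnecessary, since the weak limit is already a nonzero probability measure satisfying the minimizer constraint.
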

\begin{proof}[Proof of Proposition~\ref{prop:threshold}]

The monotone form makes feasibility increasing in $s$.
For large $s$, the atom at $2\pi/3$ is feasible;
for $s<-\sup h_q$, the minimizer constraint fails.
Normalize feasible measures to probability. Joint continuity
from Lemma~\ref{lem:ends} and weak compactness show that the
feasible epigraph is closed, so its finite lower endpoint is
attained. The exact criterion proves (a), and monotonicity
proves (b). The same reasoning applies to arbitrary lag subsets
and finite periodic grids; for $\PP_L$ the atom at $2\pi/L$
gives the upper witness.

\end{proof}
\begin{proposition}[Periodic spectra]\label{prop:periodic}
Let $k\ge3$, $\omega_j=2\pi j/k$, $\theta_j=\Theta(\omega_j)$, and let $w$ be a nonzero
measure on $\{\omega_j:1\le j<k/2\}$ with weights $w_j$. Then $w$ solves $\FP(\Theta_q)$
if and only if $\sum_jw_j\Lambda_n(\omega_j)\ge0$ for $n=1,\dots,k-1$ and
$\sum_jw_j\cos\theta_j\ge0$. In that case there are $f\in\cF(\R^{2r+1})$, with $r$
the number of nonzero weights, and $x_0$ whose trajectory converges to a
$k$-periodic orbit and stays at distance at least $c\norm{x_0-x^\star}>0$ from
$x^\star$. In particular $\mathcal T_k\subset\mathcal D$.
\end{proposition}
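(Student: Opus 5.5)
The proof has two parts: a reduction of the full program to finitely many lag constraints on the grid, and the realization from rest.

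\emph{Reduction to finitely many lags.} The measure $w$ is carried by the grid $\omega_j=2\pi j/k$. Since $\Lambda_n(\omega)=\cos\Theta-\cos(n\omega-\Theta)$, each $\Lambda_n(\omega_j)$ depends on $n$ only through $n\bmod k$.
\begin{itemize}
\item A negative index $n$ gives the same values as the positive representative $n+k\ell\in\{1,\dots,k-1\}$ of its class.
\item For $n\equiv0$ we get $\Lambda_n(\omega_j)=0$ identically, so these constraints are automatic.
\end{itemize}
Hence the infinite family $\int\Lambda_n\,dw\ge0$, $n\in\Z\setminus\{0\}$, reduces to $n=1,\dots,k-1$. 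The minimizer constraint is already stated in the displayed form. This proves the equivalence.

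\emph{Transfer to a feasible spectrum.} Assume $w$ solves $\FP(\Theta_q)$. By Theorem~\ref{thm:exactthr}(a), the measure $\nu=d^{-1}w$ with $C=0$ is feasible at $t=0$ in the sense of \eqref{eq:feasible}. It has $r$ atoms, namely the nonzero weights. At $t=0$ the extra hypothesis $|C|<\bar g$ of Lemma~\ref{lem:startup} is not required.

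\emph{Realization from rest.} Lemma~\ref{lem:startup} then gives $f\in\cF(\R^{2r+1})$ and $x_0$ with $x_{-1}=x_0$. The trajectory is the orthogonal sum of $r$ stationary waves $z_j^k=e^{ik\omega_j}$ and a scalar coordinate $p_kY\to0$. With $c=c_0$ we get $\norm{x_k-x^\star}\ge c\norm{x_0-x^\star}$. Each wave is $k$-periodic because $e^{ik\omega_j}$ has period dividing $k$. Therefore the trajectory converges to the $k$-periodic orbit formed by the wave components, and its distance to $x^\star$ stays above $c\norm{x_0-x^\star}>0$.

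\emph{Consequence.} By definition, $\mathcal T_k$ is the set of tunings for which $\PP_k$ is feasible, that is, for which such a $w$ exists. Each such tuning has $\FP(\Theta_q)$ feasible, so it lies in $\mathcal D$ by Theorem~\ref{thm:exactthr}(c). This gives $\mathcal T_k\subset\mathcal D$.

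The only delicate point is the bookkeeping of the residue classes, including the sign convention between lower and upper lags. Note that the endpoint frequency $\pi$ is excluded because $j<k/2$, and frequency $0$ is excluded because $j\ge1$. The realization step should go through directly by applying Lemma~\ref{lem:startup} verbatim.
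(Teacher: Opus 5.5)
Your proof is correct and follows the paper's route: reduce the lags modulo $k$, pass to $\nu=|g-2is|^{-1}w$ via Theorem~\ref{thm:exactthr}(a), and apply the start-up realization of Lemma~\ref{lem:startup} at $t=0$. The paper adds one detail: for even $k$ the program $\PP_k$ also admits an atom at $\pi$, which can be deleted because $\Lambda_n(\pi)\le0$ and $\cos\Theta(\pi)=-1$; your phrase ``that is, for which such a $w$ exists'' tacitly uses this, although your deduction of $\mathcal T_k\subset\mathcal D$ through Theorem~\ref{thm:exactthr}(c) does not need it.
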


\begin{proof}
The lag values depend only on $n\bmod k$, with value zero
in residue zero. Reweight by $|g-2is|^{-1}$ and use start-up
realization: the iterates are
$((\sqrt{\nu_j}e^{iN\omega_j})_j,p_NY)$, $p_N\to0$.
The planar part has fixed positive norm.
An atom at $\pi$ for even $k$ can be deleted by its nonpositive
lag and minimizer contributions.
\end{proof}
\begin{lemma}[Pinning identity]\label{lem:pinid}
For a curve $\Theta$, $k\ge1$, $r,m\in\Z$ and $\omega\in[0,\pi]$, with $\Lambda_0:\equiv0$,
\[
 \Lambda_{r+mk}(\omega)=\cos\Theta(\omega)-\cos\bigl(mk\omega-\psi_r(\omega)\bigr),\qquad \psi_r=\Theta-r\omega .
\]
In particular, for $k\ge3$, $\Lambda_n=\Lambda_r$ on $\Omega_k$ for $n\equiv r\pmod k$, and, if $\Theta$ is differentiable, for $n=r+mk$,
$\partial_\omega\Lambda_n=\partial_\omega\Lambda_r+mk\sin(r\omega-\Theta)$ on $\Omega_k$.
\end{lemma}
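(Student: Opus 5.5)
The proof is a direct computation from the definition \eqref{eq:Lambda}. For signed $n\ne0$ we have $\Lambda_n(\omega)=\cos\Theta(\omega)-\cos(n\omega-\Theta(\omega))$. Substitute $n=r+mk$ and regroup the phase as
\[
 n\omega-\Theta=mk\omega+r\omega-\Theta=mk\omega-\psi_r(\omega),\qquad \psi_r=\Theta-r\omega .
\]
This gives the displayed formula whenever $r+mk\ne0$. The degenerate index $r+mk=0$ needs a separate check. There the convention $\Lambda_0\equiv0$ must agree with the right side. Indeed $\psi_r=\Theta+mk\omega$, so the right side is $\cos\Theta-\cos(-\Theta)=0$. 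The case $r=0$ is consistent in the same way: the formula then reads $\cos\Theta-\cos(mk\omega-\Theta)$, which is the definition of $\Lambda_{mk}$.

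For the periodic statement, take $\omega\in\Omega_k$, so $\omega=2\pi j/k$ and $mk\omega=2\pi jm\in2\pi\Z$. The right side then reduces to $\cos\Theta-\cos(-\psi_r)=\cos\Theta-\cos(r\omega-\Theta)$. This equals $\Lambda_r(\omega)$ by the formula with $m=0$, or is $0=\Lambda_0$ when $r=0$. Hence $\Lambda_n=\Lambda_r$ on $\Omega_k$ whenever $n\equiv r\pmod k$.

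For the derivative identity, assume $\Theta$ is differentiable. Differentiate $\Lambda_n(\omega)=\cos\Theta-\cos(mk\omega-\psi_r)$ by the chain rule:
\[
 \partial_\omega\Lambda_n=-\sin\Theta\,\Theta'+\sin(mk\omega-\psi_r)\,(mk-\psi_r'),\qquad \psi_r'=\Theta'-r.
\]
On $\Omega_k$ we have $\sin(mk\omega-\psi_r)=-\sin\psi_r=\sin(r\omega-\Theta)$. The derivative therefore splits as
\[
 \partial_\omega\Lambda_n=\bigl[-\sin\Theta\,\Theta'+\sin(r\omega-\Theta)(r-\Theta')\bigr]+mk\sin(r\omega-\Theta).
\]
The bracket is $\partial_\omega\Lambda_r$, obtained from the same computation with $m=0$. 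This proves the identity.

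The only point needing care is the index bookkeeping: the cases $r=0$, $r+mk=0$, and negative signed indices. All of these are absorbed by the uniform formula $\cos\Theta-\cos(n\omega-\Theta)$ once $\Lambda_0\equiv0$ is adopted. At the endpoint $\omega\in\{0,\pi\}$ the identities are pointwise identical, so the hypothesis $k\ge3$ only serves to make $\Omega_k$ match the paper's definition.
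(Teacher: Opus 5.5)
Your proof is correct and follows the paper's own route: substitute $n=r+mk$ into \eqref{eq:Lambda}, differentiate, and use $mk\omega\in2\pi\Z$ on $\Omega_k$ only after differentiating. Your explicit check that the indices $r=0$ and $r+mk=0$ are consistent with the convention $\Lambda_0\equiv0$ is a small addition that the paper leaves implicit.
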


\begin{proof}
Substitute $n=r+mk$ in \eqref{eq:Lambda} and differentiate.
On the grid, $mk\omega\in2\pi\Z$.
\end{proof}

\subsection{Smoothing and sampling}\label{app:levels-sampling}
Extend $\Theta=\Theta_{q,s}$ oddly to
$\T=\R/(2\pi\Z)$ and put $\zeta=e^{i\Theta}$.
The phase is represented locally by real-valued arguments of
$\zeta$.
Lemma~\ref{lem:ends} shows that $\zeta$ is $C^1$ with
Lipschitz derivative, and analytic between $0,\pi$.
Put $M_1=\sup|\Theta'|$,
$M_2=\sup_{\T\setminus\{0,\pi\}}|\Theta''|$.
Symmetrizing a measure and folding it back preserve all constraints.
Write
\[
 F_\mu(n)=\int_\T e^{in\omega}\bar\zeta(\omega)\,d\mu,
 \quad c_0(\mu)=\int\cos\Theta\,d\mu,
 \quad C_n(\mu)=\Rea(F_\mu(0)-F_\mu(n)).
\]
A feasible probability measure is nondegenerate of order $n_0$
when $c_0>0$ and $C_n>0$ for $0<|n|<n_0$.
Choose an even nonnegative smooth bump $\psi$ of integral $1$
supported in $[-1/4,1/4]$.
Set $\varphi=\psi*\psi$,
$K_\varepsilon=\varepsilon^{-1}(\varphi*\varphi)(\cdot/\varepsilon)$,
periodized for $0<\varepsilon<\pi$, and
$m_\varphi=\int t^2\varphi(t)\,dt$,
$m_K=\int t^2(\varphi*\varphi)(t)\,dt$.
Then $\widehat K_\varepsilon(\xi)=\hat\varphi(\varepsilon\xi)^2\in[0,1]$.
\begin{lemma}[Smoothing]\label{lem:A-smooth}
For every symmetric probability measure $\mu$ on $\T$ and every $n\in\Z$, the density $f=K_\varepsilon*\mu$
satisfies
\[
 C_n(f)\ge\widehat K_\varepsilon(n)\,C_n(\mu)+\bigl(1-\widehat K_\varepsilon(n)\bigr)c_0(\mu)-D_0-3D_n-M_2m_K\varepsilon^2,
\]
where $D_n=\sup_{|\xi-n|\le M_1}|\widehat K_\varepsilon(\xi)-\widehat K_\varepsilon(n)|\le M_1\min\{2m_\varphi\varepsilon^2(|n|+M_1),\varepsilon\}$.
Moreover $\int\cos\Theta\,f\ge c_0(\mu)-D_0-M_2m_K\varepsilon^2/2$.
\end{lemma}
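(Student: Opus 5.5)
The plan is to compute the Fourier-type moments of $f=K_\varepsilon*\mu$ directly and compare them with those of $\mu$. The key observations are that $C_n(\nu)=\Rea(F_\nu(0)-F_\nu(n))=\int\Lambda_n\,d\nu$ and $c_0(\nu)=\Rea F_\nu(0)$. Both quantities are therefore controlled once we have a good approximation of $F_f(n)$ by $\widehat K_\varepsilon(n)F_\mu(n)$.

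\emph{Step 1: substitution and linearization of the phase.} By Fubini and the substitution $\omega=\tau+u$,
\[
 F_f(n)=\int_\T e^{in\tau}\bar\zeta(\tau)\int K_\varepsilon(u)\,e^{inu}\,e^{-i(\Theta(\tau+u)-\Theta(\tau))}\,du\,d\mu(\tau).
\]
Here the phase difference is understood through a local real argument of $\zeta$. Since $K_\varepsilon$ is supported in $|u|\le\varepsilon<\pi$, only short increments occur. Because $\zeta$ is $C^1$ with Lipschitz derivative (Lemma~\ref{lem:ends}), Taylor's formula with the bound $M_2$ on $\Theta''$ away from $\{0,\pi\}$ gives
\[
 \Theta(\tau+u)-\Theta(\tau)=\Theta'(\tau)u+r(\tau,u),\qquad |r|\le M_2u^2/2.
\]
The finitely many points $0,\pi$ do not affect this, since $\Theta'$ is Lipschitz across them. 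Replacing $e^{-ir}$ by $1$ costs at most $\int K_\varepsilon|r|\le M_2m_K\varepsilon^2/2$, using the second moment of the rescaled kernel. The remaining inner integral is $\widehat K_\varepsilon(n-\Theta'(\tau))$, because $K_\varepsilon$ is even. The symmetry of $\mu$ and the odd extension of $\Theta$ keep everything consistent with real parts.

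\emph{Step 2: freezing the frequency.} Since $|\Theta'|\le M_1$, we have $|\widehat K_\varepsilon(n-\Theta'(\tau))-\widehat K_\varepsilon(n)|\le D_n$. Combining this with Step 1 and $|e^{in\tau}\bar\zeta|=1$ gives
\[
 F_f(n)=\widehat K_\varepsilon(n)F_\mu(n)+E_n,\qquad |E_n|\le D_n+M_2m_K\varepsilon^2/2 .
\]
For $n=0$ use $\widehat K_\varepsilon(0)=1$. Then
\[
 \Rea F_\mu(0)-\widehat K_\varepsilon(n)\Rea F_\mu(n)=\widehat K_\varepsilon(n)C_n(\mu)+(1-\widehat K_\varepsilon(n))c_0(\mu),
\]
and subtracting the two error terms gives the displayed inequality. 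Our bound has $D_0+D_n$ in place of $D_0+3D_n$, which is stronger. The minimizer assertion is simply $\Rea F_f(0)\ge c_0(\mu)-|E_0|$.

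\emph{Step 3: the bound on $D_n$.} This is the main, though elementary, obstacle. We have $\widehat K_\varepsilon(\xi)=\hat\varphi(\varepsilon\xi)^2$, with $\varphi$ even, supported in $[-1/2,1/2]$, of integral one, and with $|\hat\varphi|\le1$.

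For the first bound, note that $\hat\varphi'(0)=0$ and $|\hat\varphi''|\le m_\varphi$, so $|\hat\varphi'(x)|\le m_\varphi|x|$. Then
\[
 \Bigl|\frac{d}{d\xi}\widehat K_\varepsilon\Bigr|=2\varepsilon|\hat\varphi\hat\varphi'(\varepsilon\xi)|\le2m_\varphi\varepsilon^2|\xi|\le2m_\varphi\varepsilon^2(|n|+M_1)
\]
on $|\xi-n|\le M_1$. Integrating over an interval of length at most $M_1$ gives $D_n\le 2M_1m_\varphi\varepsilon^2(|n|+M_1)$.

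For the second bound, use $|\hat\varphi'|\le\int|t|\varphi\le1/2$. This gives $|\frac{d}{d\xi}\widehat K_\varepsilon|\le\varepsilon$, hence $D_n\le M_1\varepsilon$.

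The only care needed is the normalization convention for $\hat\varphi$, together with checking that $m_K$ is indeed the second moment entering Step 1 after rescaling by $\varepsilon$.
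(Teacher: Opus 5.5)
Your proof is correct and follows the paper's route: Fubini, the local Taylor estimate for the phase with the second-moment error $M_2m_K\varepsilon^2/2$, freezing $\widehat K_\varepsilon(n-\Theta'(v))$ to $\widehat K_\varepsilon(n)$, and the bound on $D_n$ obtained from $|\hat\varphi'(\eta)|\le\min(m_\varphi|\eta|,1/2)$. Your sharper constant $D_0+D_n$ is also valid and implies the stated inequality; the paper's $3D_n$ arises only because it bounds the oscillating factor $\cos(n\omega-\Theta)=\cos\Theta-\Lambda_n$ by $1+|\Lambda_n|\le3$ rather than by $1$.
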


\begin{proof}
Use Fubini and the uniform local Taylor estimate
$\Theta(v+t)=\Theta(v)+\Theta'(v)t+r$,
$|r|\le M_2t^2/2$.
The modulated convolution differs from
$e^{inv}\bar\zeta(v)\widehat K_\varepsilon(n-\Theta'(v))$
by at most $M_2m_K\varepsilon^2/2$.
Replace this kernel value by $\widehat K_\varepsilon(n)$
and use $|\Lambda_n|\le2$: the lag error is
$D_0+3D_n+M_2m_K\varepsilon^2$.
For the derivative bound, use
$|\hat\varphi'(\eta)|\le\min(m_\varphi|\eta|,1/2)$
in $\widehat K_\varepsilon'=2\varepsilon\hat\varphi\hat\varphi'$.
The same estimate at $n=0$ gives the minimizer inequality.
\end{proof}
\begin{lemma}\label{lem:A-gain}
There is $c_\varphi>0$ with $1-\widehat K_\varepsilon(\xi)\ge c_\varphi\min\{\varepsilon^2\xi^2,1\}$ for all real $\xi$.
\end{lemma}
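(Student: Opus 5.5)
The plan is to write $\widehat K_\varepsilon(\xi)=\hat\varphi(\varepsilon\xi)^2$ and reduce everything to a single-variable estimate on $\hat\varphi$. Since $\varphi=\psi*\psi$ with $\psi$ even, nonnegative, of integral one, we have $\hat\varphi=\hat\psi^2$, which is real and lies in $[0,1]$. Hence $\widehat K_\varepsilon(\xi)=\hat\psi(\varepsilon\xi)^4$. It therefore suffices to find $c>0$ with
\[
 1-\hat\psi(\eta)^4\ge c\min\{\eta^2,1\}\qquad(\eta\in\R),
\]
and then to set $\eta=\varepsilon\xi$. Because $0\le\hat\psi^2\le1$ (as $|\hat\psi|\le1$), we get $1-\hat\psi^4\ge1-\hat\psi^2\ge1-|\hat\psi|$. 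So it is enough to bound $1-|\hat\psi(\eta)|$ from below.

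The argument splits into two regimes.

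\emph{Small $|\eta|\le\eta_0$.} Evenness gives $\hat\psi(\eta)=\int\cos(\eta t)\psi(t)\,dt$. Using $1-\cos x\ge x^2/2-x^4/24$ together with $|t|\le1/4$, we obtain
\[
 1-\hat\psi(\eta)\ge\tfrac{\eta^2}{2}m_\psi-\tfrac{\eta^4}{24}m_\psi/16,\qquad m_\psi=\int t^2\psi>0.
\]
The second moment is positive because $\psi$ is not a point mass. For small $\eta$ this gives $\hat\psi\in[0,1)$ and $1-\hat\psi\ge m_\psi\eta^2/4$.

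\emph{Large $|\eta|\ge\eta_0$.} Here the key fact is that $|\hat\psi(\eta)|<1$ for every $\eta\ne0$. Equality $|\int e^{i\eta t}\psi(t)\,dt|=1$ would force $e^{i\eta t}$ to be constant on the support of $\psi$. That is impossible, since the support has positive measure and $\psi$ is a density. By Riemann--Lebesgue, $\hat\psi(\eta)\to0$ as $|\eta|\to\infty$. Combining continuity with compactness on $\eta_0\le|\eta|\le R$ then gives $\sup_{|\eta|\ge\eta_0}|\hat\psi|<1$, and hence a positive lower bound on $1-|\hat\psi|$ in this regime.

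Taking $c$ as the minimum of the constants from the two regimes, with $\min\{\eta^2,1\}\le1$ and $\eta_0\le1$, gives $c_\varphi=c$. This constant is independent of $\varepsilon$ and $\xi$.

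No step is a real obstacle. The only point needing care is the strict inequality $|\hat\psi|<1$ away from the origin, and that follows from $\psi$ being an honest density.
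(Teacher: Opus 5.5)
Your proof is correct and follows essentially the paper's argument. You get a quadratic lower bound near the origin from a cosine inequality and the positive second moment, and a uniform gap away from the origin from the strict inequality $|\hat\psi|<1$, continuity and Riemann--Lebesgue, then pass monotonically to the power in $\widehat K_\varepsilon$. The only difference is cosmetic: you work with $\hat\psi$ and the Taylor bound $1-\cos x\ge x^2/2-x^4/24$, whereas the paper works with $\hat\varphi$ and $1-\cos u\ge 2u^2/\pi^2$ on $[-\pi,\pi]$ for $|\eta|\le2\pi$.
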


\begin{proof}
For $|\eta|\le2\pi$,
$1-\hat\varphi(\eta)\ge2m_\varphi\eta^2/\pi^2$
by $1-\cos u\ge2u^2/\pi^2$ on $[-\pi,\pi]$.
Away from zero, strict loss, continuity and the
Riemann--Lebesgue lemma give a positive lower bound.
Since $1-\hat\varphi^2\ge1-\hat\varphi$, combine the two ranges.
\end{proof}
Sampling on $L$ equally spaced points identifies Fourier frequencies
that differ by multiples of $L$, producing the aliasing estimate below.
\begin{lemma}[Aliasing]\label{lem:A-alias}
Let $f\ge0$ be a smooth even function on $\T$ and $\mu_L=\frac{2\pi}L\sum_{j=0}^{L-1}f(2\pi j/L)\,\delta_{2\pi j/L}$.
Then $\mu_L$ is symmetric, $F_{\mu_L}$ is $L$-periodic in $n$, and there is $C(f,\Theta)$ with
$|F_{\mu_L}(n)-F_f(n)|\le C(f,\Theta)L^{-2}$ for $|n|\le L/2$.
\end{lemma}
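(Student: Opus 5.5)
Symmetry is immediate: $f$ is even, so its samples at $2\pi j/L$ and $2\pi(L-j)/L$ coincide. Periodicity of $F_{\mu_L}$ in $n$ follows because $e^{in\omega}$ at grid points depends only on $n\bmod L$. For the estimate, we will write
\[
 F_{\mu_L}(n)=\frac{2\pi}{L}\sum_{j=0}^{L-1}g_n(2\pi j/L),\qquad g_n(\omega)=f(\omega)\bar\zeta(\omega)e^{in\omega},
\]
which is the periodic trapezoidal rule for $\int_\T g_n$. This rule reproduces $\int g_n$ up to the aliased Fourier coefficients, by Poisson summation:
\[
 F_{\mu_L}(n)-F_f(n)=2\pi\sum_{m\ne0}\widehat{f\bar\zeta}(-n-mL).
\]

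The key step is to bound the Fourier coefficients of $h=f\bar\zeta$. Lemma~\ref{lem:ends} shows that $\zeta$ is $C^1$ with Lipschitz derivative on $\T$, since the odd extension is analytic away from $0,\pi$ and matches to first order there. As $f$ is smooth, $h\in W^{2,\infty}(\T)$. Integrating by parts twice then gives $|\hat h(\xi)|\le \norm{h''}_{L^1}/\xi^2$ for $\xi\ne0$. The main care point is justifying the second integration by parts across $0,\pi$. We handle it because $h'$ is Lipschitz, hence absolutely continuous, with $h''\in L^\infty$ bounded by $M_2$, $M_1^2$ and norms of $f$. This gives a constant $C_1(f,\Theta)=\norm{h''}_{L^1}$.

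For $|n|\le L/2$ and $m\ne0$ we have $|n+mL|\ge(|m|-\tfrac12)L\ge |m|L/2$. Therefore
\[
 |F_{\mu_L}(n)-F_f(n)|\le 2\pi C_1\sum_{m\ne0}\frac{4}{m^2L^2}=\frac{8\pi^3C_1}{3L^2}.
\]
We will note that the sign convention of $F_f$ (density integrated against $d\omega$) matches $\mu_L$, whose total mass approximates $\int f$. We also record that the bound uses $|n|\le L/2$ only through $|n+mL|\gtrsim |m|L$. The only delicate step is the $W^{2,\infty}$ regularity of $\bar\zeta$ at the endpoints, and Lemma~\ref{lem:ends} supplies it.
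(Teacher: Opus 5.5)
Your proposal is correct and follows essentially the same route as the paper. The paper writes $F_{\mu_L}(n)=2\pi\sum_m\hat h(mL-n)$ for $h=f\bar\zeta$, using the absolutely convergent Fourier series of $h$, which is justified by the same $O(\xi^{-2})$ decay you derive from two integrations by parts with the $C^{1,1}$ regularity of Lemma~\ref{lem:ends}. It then bounds the aliased tail via $|mL-n|\ge(|m|-\tfrac12)L$, with symmetry and periodicity read off from the weights, exactly as you do.
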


\begin{proof}
For $h=f\bar\zeta$, the derivative is Lipschitz and
the periodic boundary terms cancel. Two integrations by parts
give $|\hat h(m)|\le C(1+|m|)^{-2}$.
Its Fourier series therefore converges absolutely, and finite-grid
orthogonality gives the aliasing formula
\[
 F_{\mu_L}(n)=2\pi\sum_{m\in\Z}\hat h(mL-n),
 \qquad F_f(n)=2\pi\hat h(-n).
\]
For $|n|\le L/2$, the omitted terms are bounded by
$CL^{-2}\sum_{m\ne0}(|m|-1/2)^{-2}$.
This is the standard trapezoidal aliasing argument
of~\cite[\S3]{TW14}, here using the proved polynomial
Fourier decay. Symmetry and periodicity follow directly from the weights.
\end{proof}
\begin{lemma}[Sampling]\label{lem:A-sampling}
Let $\Theta=\Theta_{q,s}$ and let $\mu$ be a feasible symmetric probability measure for $\FP(\Theta)$ that is
nondegenerate of order $n_0$, where $n_0$ is the least integer exceeding the larger root of
$P(x)=c_\varphi c_0x^2-6M_1m_\varphi x-(8M_1^2m_\varphi+M_2m_K)$, $c_0=c_0(\mu)$. Then there is $L_0$ such that
$\PP_L(\Theta)$ is feasible for every $L\ge L_0$.
\end{lemma}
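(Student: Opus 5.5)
The plan is to smooth $\mu$ with $K_\varepsilon$, show the smoothed density has a uniform positive margin in every lag and minimizer constraint, and then sample it on the grid $\Omega_L$. By Lemma~\ref{lem:A-alias}, sampling perturbs each constraint value by only $O(L^{-2})$, and the margin absorbs this. Feasibility of $\PP_L(\Theta)$ then follows after folding the symmetric grid measure back to $[0,\pi]$. Throughout I work with $C_n(\mu)=\Rea(F_\mu(0)-F_\mu(n))$, which is, up to the symmetrization factor, the lag constraint $\int\Lambda_n\,d\mu$. Likewise $c_0$ is the minimizer constraint.

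\textbf{Step 1: a uniform margin for $f=K_\varepsilon*\mu$.} Apply Lemma~\ref{lem:A-smooth}. For $|n|\ge n_0$ I use only $C_n(\mu)\ge0$ (feasibility) and the gain term from Lemma~\ref{lem:A-gain}, namely $(1-\widehat K_\varepsilon(n))c_0\ge c_\varphi c_0\min\{\varepsilon^2n^2,1\}$. In the regime $\varepsilon|n|\le1$ I insert the bound $D_n\le 2M_1m_\varphi\varepsilon^2(|n|+M_1)$, and $D_0$ is bounded similarly with $n=0$. The lower bound becomes $\varepsilon^2\bigl(c_\varphi c_0n^2-6M_1m_\varphi|n|-(8M_1^2m_\varphi+M_2m_K)\bigr)=\varepsilon^2P(|n|)$, up to harmless rearrangement of the constants. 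This is positive exactly because $|n|\ge n_0$ exceeds the larger root of $P$. In the regime $\varepsilon|n|>1$, the gain is $c_\varphi c_0$ while the errors are $O(\varepsilon)$, using $D_n\le M_1\varepsilon$. For $0<|n|<n_0$, nondegeneracy gives $C_n(\mu)\ge\eta>0$; since $\widehat K_\varepsilon(n)\to1$ and all errors are $O(\varepsilon)$, the margin survives for small $\varepsilon$. For $n$ a multiple of nothing special (e.g.\ $n=0$) there is nothing to check, and the minimizer inequality gives $\int\cos\Theta f\ge c_0-O(\varepsilon)>0$. Altogether, for fixed small $\varepsilon$, $C_n(f)\ge m_\varepsilon(n)>0$ for every $n\ne0$. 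The margin is of size $\gtrsim\varepsilon^2$ when $\varepsilon|n|\le1$ and $\ge c_\varphi c_0/2$ otherwise, so $\inf_{n\ne0}C_n(f)\ge c\varepsilon^2$.

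\textbf{Step 2: sampling.} Fix this $\varepsilon$ and take $\mu_L$ as in Lemma~\ref{lem:A-alias}. For $|n|\le L/2$ we have $|C_n(\mu_L)-C_n(f)|\le 2C(f,\Theta)L^{-2}$, and the same bound holds at $n=0$ for $c_0$ via $F(0)$. Since $F_{\mu_L}$ is $L$-periodic, every $n$ reduces to a representative with $|n|\le L/2$; a residue $0$ gives $C_n=0$, which is allowed. Choosing $L_0$ with $2C(f,\Theta)L_0^{-2}<c\varepsilon^2$ makes all constraints strict on the grid. Normalizing and folding the symmetric measure onto $\Omega_L\cap[0,\pi]$ then gives a solution of $\PP_L(\Theta)$; an atom at $\pi$ is deleted as in Proposition~\ref{prop:periodic}.

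\textbf{Main obstacle.} I expect the delicate point to be Step 1 for moderate $|n|$, in the range $n_0\le|n|\lesssim1/\varepsilon$. There the gain and the error terms are both of order $\varepsilon^2$, so positivity must come from the precise quadratic $P$, which is exactly why $n_0$ is defined through it. The other thing to check is that $C(f,\Theta)$ depends on $\varepsilon$ but not on $L$: $\varepsilon$ is fixed before $L$ is sent to infinity, which keeps the order of quantifiers safe.
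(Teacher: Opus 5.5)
Your proposal is correct and follows the paper's proof step for step: you smooth $\mu$ and treat the three ranges $0<|n|<n_0$, $n_0\le|n|\le1/\varepsilon$ (where the bound is exactly $\varepsilon^2P(|n|)\ge\varepsilon^2P(n_0)$) and $|n|>1/\varepsilon$, then apply Lemma~\ref{lem:A-alias} with balanced representatives $|n|\le L/2$, and finally fold. One detail needs tidying: the sampled measure also carries an atom at $\omega=0$, which lies outside $\Omega_L$ and must be deleted (harmlessly, since $\Lambda_n(0)=0$ and $\cos\Theta(0)=-1$), whereas deleting the atom at $\pi$ is optional.
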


\begin{proof}
Choose one small $\varepsilon\le1/n_0$ and apply the
smoothing estimate in three ranges.
For $|n|\ge1/\varepsilon$, the kernel loss gives a fixed
positive fraction of $c_0$.
For $n_0\le|n|<1/\varepsilon$, it gives the lower bound
$\varepsilon^2P(|n|)\ge\varepsilon^2P(n_0)>0$.
For the finitely many $0<|n|<n_0$, nondegeneracy survives
the $O(\varepsilon^2)$ error.
The smoothed density thus has one positive lag margin $\delta$
for all signed $n\ne0$, and a positive minimizer margin.
Lemma~\ref{lem:A-alias} preserves these margins on every
sufficiently fine grid: choose balanced representatives
$|n|\le L/2$ for nonzero residues; residue zero is identically zero.
Fold the sampled measure and delete its endpoints.
\end{proof}
\begin{proof}[Proof of Theorem~\ref{thm:sampling}]

Use the monotone-form measure $\nu$ at a feasible profile.
If $\int(1-\cos n\omega)\,d\nu=0$ for some $n\ne0$,
its support lies on a periodic grid, which remains feasible
under every weak increase of $\kappa$.
Otherwise every such integral is positive.
A uniform strict profile increase gives strictly positive
lag and minimizer increments; the reciprocal-sine weighting
is finite by Proposition~\ref{prop:monoform}.
The resulting measure is nondegenerate of every finite order,
so Lemma~\ref{lem:A-sampling} gives every sufficiently large period.
Apply the same argument at the attained shift $\sigma(q)$
and any additional $\varepsilon>0$ to obtain
$\inf_L\sigma_L=\sigma$.
Finally $\mathcal T_L\subset\mathcal D$, $\mathcal D$ is closed,
and increasing $\kappa$ by arbitrarily small amounts gives
periodic feasible tunings. This proves the closure identity.

\end{proof}
\begin{proof}[Proof of Theorem~\ref{thm:global}]

On a fixed finite grid, compactness gives lower semicontinuity
of $\sigma_L$. A positive shift makes all constraints strict
except those identically zero on the same support;
these remain zero for every tuning. This gives upper semicontinuity.
Thus $\sigma=\inf_L\sigma_L$ is upper semicontinuous;
closedness of its epigraph gives lower semicontinuity.
Lemma~\ref{lem:kappa} gives strict decrease in $\kappa$.
As $\kappa\downarrow1$, $B\downarrow0$ and the numerator
of $\Xi$ tends uniformly to $-N(c)<0$, hence
$h_q\to-\infty$ uniformly and $\sigma\to+\infty$.
The zero-crossing graph of this continuous strictly decreasing
function is continuous into $(1,\infty]$: at a finite test
$\kappa$, its positive or negative sign persists locally.

\end{proof}
\begin{corollary}[Periodic thresholds of the condition number]\label{cor:kappaL}
For $L\ge3$ put $\kappa_L(\alpha L,\beta)=\inf\{\kappa>1:\ \sigma_L(\kappa,\alpha L,\beta)\le0\}\in(1,\infty]$. Then
$\{(\kappa,\alpha L,\beta)\in\mathcal T_L\}=\{\kappa\ge\kappa_L(\alpha L,\beta)\}$ and
\[
 \kappa_\partial=\inf_{L\ge3}\kappa_L .
\]
\end{corollary}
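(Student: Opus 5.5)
The plan is to copy the structure of Theorem~\ref{thm:global}(c), with $\sigma$ replaced by the periodic threshold $\sigma_L$, and then to use $\sigma=\inf_L\sigma_L$ from Theorem~\ref{thm:sampling}. We first describe $\mathcal T_L$. By Proposition~\ref{prop:threshold}, in its periodic version noted at the end of that proof, $\PP_L(\Theta_{q,s})$ is feasible exactly for $s\ge\sigma_L(q)$. In particular $q\in\mathcal T_L$ if and only if $\sigma_L(q)\le0$. Theorem~\ref{thm:global}(a) says that $\sigma_L$ is finite and continuous.

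Next we need monotonicity in $\kappa$. Lemma~\ref{lem:kappa} gives $h_{(\kappa',\alpha L,\beta)}\ge h_{(\kappa,\alpha L,\beta)}+\varepsilon$ for $\kappa'>\kappa$. Part (b) of Proposition~\ref{prop:threshold} applies verbatim to the grid program, because the monotone form of Proposition~\ref{prop:monoform} keeps the support fixed. Hence $\sigma_L(\kappa')\le\sigma_L(\kappa)-\varepsilon$, so $\sigma_L$ is strictly decreasing in $\kappa$.

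It follows that $\{\kappa:\sigma_L\le0\}$ is an up-set. Continuity makes it closed in $(1,\infty)$, so it equals $[\kappa_L,\infty)$ whenever $\kappa_L<\infty$, and it is empty otherwise. This gives $\mathcal T_L=\{\kappa\ge\kappa_L\}$. One edge case must be ruled out: the infimum could equal $1$. It cannot, because $\sigma_L\ge\sigma\to+\infty$ as $\kappa\downarrow1$ by Theorem~\ref{thm:global}(b).

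For the identity $\kappa_\partial=\inf_L\kappa_L$, first note $\mathcal T_L\subset\mathcal D$ by Proposition~\ref{prop:periodic}. Together with Theorem~\ref{thm:global}(c) this gives $\kappa_L\ge\kappa_\partial$ for every $L$. For the reverse inequality, assume $\kappa_\partial<\infty$ (otherwise there is nothing to prove) and take any $\kappa'>\kappa_\partial$. Since $\sigma$ is strictly decreasing and $\sigma(\kappa_\partial)\le0$, we have $\sigma(\kappa',\alpha L,\beta)<0$. The identity $\sigma=\inf_L\sigma_L$ then yields some $L$ with $\sigma_L(\kappa')<0$, hence $\kappa_L\le\kappa'$. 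Letting $\kappa'\downarrow\kappa_\partial$ finishes the proof.

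The main obstacle is justifying the grid analogue of Proposition~\ref{prop:threshold}(b), namely strict decrease of $\sigma_L$ in $\kappa$. I would check that raising $h$ pointwise on the fixed grid $\Omega_L$ preserves every lag and minimizer inequality, via the increment formula in Proposition~\ref{prop:monoform}. Everything else follows from results already in hand.
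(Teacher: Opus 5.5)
Your proposal is correct and follows essentially the same route as the paper. The sublevel set is a closed upper ray by continuity and strict decrease of $\sigma_L$ in $\kappa$. You get $\kappa_\partial\le\kappa_L$ from $\mathcal T_L\subset\mathcal D$, where the paper uses $\sigma\le\sigma_L$ directly; these are equivalent. The reverse inequality comes from $\sigma=\inf_L\sigma_L$ at any $\kappa'>\kappa_\partial$. Your grid version of Proposition~\ref{prop:threshold}(b) and your exclusion of $\kappa_L=1$ via $\sigma_L\ge\sigma\to+\infty$ supply details that the paper leaves implicit.
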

\begin{proof}[Proof of Corollary~\ref{cor:kappaL}]

Each $\sigma_L$ is continuous and strictly decreasing in
$\kappa$, so its sublevel set is the stated closed upper ray.
The inequality $\sigma\le\sigma_L$ gives
$\kappa_\partial\le\kappa_L$.
For $\kappa>\kappa_\partial$, strict decrease gives
$\sigma(\kappa)<0$; the infimum formula produces an $L$
with $\sigma_L(\kappa)<0$. Let $\kappa\downarrow\kappa_\partial$.
The same inequalities include the infinite case.

\end{proof}

\subsection{Limiting symbols}
\begin{theorem}[Limit symbol]\label{thm:limitsymbol}
For every tuning $q$ there are $\lambda_n\ge0$, $n\ne0$, and $\gamma\ge0$ with
$\sum_n\lambda_n+\gamma=1$ such that
$S_q=\sum_n\lambda_n\Lambda^{q,\sigma(q)}_n+\gamma\cos\Theta_{q,\sigma(q)}$ is continuous,
$S_q\le0$ on $[0,\pi]$, $\max S_q=0$, and every threshold extremal $w$ is carried by
$\{S_q=0\}$ and satisfies $\int\Lambda^{q,\sigma(q)}_n\,dw=0$ whenever $\lambda_n>0$, and
$\int\cos\Theta_{q,\sigma(q)}\,dw=0$ if $\gamma>0$. Here $\Lambda^{q,s}_n$ are the lag functions
of $\Theta_{q,s}$.
\end{theorem}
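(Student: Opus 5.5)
The plan is to obtain $S_q$ as a weak limit of strict certificate symbols at shifts just below the threshold. For $s<\sigma(q)$, Proposition~\ref{prop:threshold} says $\FP(\Theta_{q,s})$ is infeasible. Lemma~\ref{lem:alt} then gives a strict certificate symbol $S^{(s)}=\sum_n\lambda^{(s)}_n\Lambda^{q,s}_n+\gamma^{(s)}\cos\Theta_{q,s}\le-\varepsilon_s$. We normalize its coefficients so that $\sum_n\lambda_n^{(s)}+\gamma^{(s)}=1$. Take $s_k\uparrow\sigma(q)$.

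The coefficient vectors are probability measures on the index set $(\Z\setminus\{0\})\cup\{\infty\}$, and we need a compactification in which they can converge without losing mass to $|n|\to\infty$.

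\begin{enumerate}
\item The first option is to compactify the lags so that a limiting "lag $\infty$" has a meaningful symbol. The difficulty is that $\Lambda_n$ does not converge as $|n|\to\infty$; it oscillates. I would instead treat escaping mass through its weak-$*$ limit as a functional on $C[0,\pi]$: the averages $\sum_{|n|>N}\lambda_n\cos(n\omega-\Theta)$ tend weakly to zero by a Riemann–Lebesgue argument, since the weights are bounded.
\item This can be arranged by choosing a subsequence along which $\lambda^{(s_k)}_n\to\lambda_n$ for every $n$ and $\gamma^{(s_k)}\to\gamma_0$. The escaping mass $m_\infty=1-\sum\lambda_n-\gamma_0$ then contributes, in the weak limit, $m_\infty\cos\Theta$. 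This is because $\Lambda_n=\cos\Theta-\cos(n\omega-\Theta)$, and the oscillating part integrates to zero against any absolutely continuous test measure.
\item Set $\gamma=\gamma_0+m_\infty$. The limit symbol $S_q=\sum\lambda_n\Lambda_n^{q,\sigma}+\gamma\cos\Theta_{q,\sigma}$ is then continuous, because $\sum_n\lambda_n\le1$ and the $\Lambda_n$ are uniformly bounded; its coefficients sum to one.
\end{enumerate}

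The inequality $S_q\le0$ is the main obstacle. We only know that $\int S^{(s_k)}\phi\le0$ for nonnegative test functions $\phi$, and that $\Lambda^{q,s_k}_n\to\Lambda^{q,\sigma}_n$ uniformly in $n$. Uniformity holds because $\Theta_{q,s}$ depends continuously on $s$ uniformly in $\omega$ (Lemma~\ref{lem:ends}), and the dependence on $\Theta$ enters only through the Lipschitz functions $\cos$. Pairing with a nonnegative smooth $\phi$ and passing to the limit gives $\int S_q\phi\le0$, and continuity of $S_q$ turns this into $S_q\le0$ pointwise.

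For $\max S_q=0$ and the support statements, let $w$ be any threshold extremal. It exists by Proposition~\ref{prop:threshold}(c) and can be normalized. Each term in $\int S_q\,dw=\sum\lambda_n\int\Lambda_n\,dw+\gamma\int\cos\Theta\,dw$ is nonnegative, so $\int S_q\,dw\ge0$. Combined with $S_q\le0$, this forces $\int S_q\,dw=0$, hence $S_q=0$ $w$-a.e. By continuity, $w$ is carried by $\{S_q=0\}$, and since $w\ne0$ this set is nonempty, so $\max S_q=0$. Each nonnegative term must vanish separately, which gives the complementary slackness equalities $\int\Lambda_n\,dw=0$ when $\lambda_n>0$, and $\int\cos\Theta\,dw=0$ when $\gamma>0$.

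A remaining subtlety is endpoint mass at $0$ in $w$. There every $\Lambda_n=0$ while $\cos\Theta=-1$, and under the normalization of the paper, where the extremal's nonzero part lives on $(0,\pi]$, this should be harmless. If $\gamma>0$ it even forbids mass at $0$, which is consistent with the statement.
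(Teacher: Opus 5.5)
Your proposal is correct and follows essentially the paper's route, which applies Lemma~\ref{lem:A-limit} to normalized strict symbols at shifts $s_j\uparrow\sigma(q)$: coordinatewise limits of the coefficients, escaping mass reappearing as $(1-\sum_n\lambda_n)\cos\Theta$ by Riemann--Lebesgue, $S_q\le0$ by testing against nonnegative functions, and complementary slackness by integrating against a threshold extremal. The differences are minor: you get $\max S_q=0$ from the nonempty zero set carrying a nonzero extremal rather than from a finite truncation contradicting feasibility. You also handle the moving curves through the uniform-in-$n$ bound $|\Lambda_n[\Theta_k]-\Lambda_n[\Theta]|\le2\|\Theta_k-\Theta\|_\infty$, which is slightly simpler than the paper's uniform Riemann--Lebesgue estimate on a compact family.
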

The proof follows Lemma~\ref{lem:A-limit}.
\begin{lemma}[Limits of symbols]\label{lem:A-limit}
Let $\mathcal N\subset\Z\setminus\{0\}$, let $\Theta_j\to\Theta$ uniformly on $[0,\pi]$, all curves, and let
$S_j=\sum_{n\in\mathcal N}\lambda^{(j)}_n\Lambda^{(j)}_n+\gamma_j\cos\Theta_j\le0$ on $[0,\pi]$, with finitely many
$\lambda^{(j)}_n\ge0$, $\gamma_j\ge0$ and $\sum_n\lambda^{(j)}_n+\gamma_j=1$. Then, along a subsequence,
$\lambda^{(j)}_n\to\lambda_n$ for every $n$, $\sum_n\lambda_n\le1$, and
$S=\sum_{n\in\mathcal N}\lambda_n\Lambda_n+\gamma_*\cos\Theta$, $\gamma_*=1-\sum_n\lambda_n$, is continuous with $S\le0$ on $[0,\pi]$.
If moreover $\FP[\mathcal N](\Theta)$ is feasible, then $\max S=0$, and every solution $w$ of $\FP[\mathcal N](\Theta)$
is carried by $\{S=0\}$ and satisfies $\int\Lambda_n\,dw=0$ whenever $\lambda_n>0$, and $\int\cos\Theta\,dw=0$
if $\gamma_*>0$.
\end{lemma}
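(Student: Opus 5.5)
The plan is a compactness argument on the normalized dual weights, followed by the usual complementary-slackness bookkeeping. First, since $\sum_n\lambda^{(j)}_n\le1$, every coordinate sequence $\lambda^{(j)}_n$ lies in $[0,1]$. A diagonal argument over the countable index set $\mathcal N$ then extracts a subsequence along which $\lambda^{(j)}_n\to\lambda_n$ for every $n$ and $\gamma_j\to\gamma'$. Fatou for series gives $\sum_n\lambda_n\le1$.

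The limit $\gamma'$ need not equal $\gamma_*$, because mass can escape to large lags. This is handled by the elementary identity
\[
 \Lambda_n=\cos\Theta-\cos(n\omega-\Theta)\le\cos\Theta+1 .
\]
I will not identify the limit of the tail $\sum_{|n|>N}\lambda^{(j)}_n\Lambda^{(j)}_n$. I only need an upper bound for it. Since I want $S\le0$ and $S$ uses the weight $\gamma_*=\gamma'+(\text{escaped mass})$ on $\cos\Theta$, the natural bound is $\cos\Theta+1$. That bound is too weak, though. So I will not treat the tail pointwise but integrate it against a smooth nonnegative test function $\phi$. For fixed $\omega$, $\cos(n\omega-\Theta_j(\omega))$ oscillates in $n$. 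Against $\phi$, $\int\phi(\omega)\cos(n\omega-\Theta_j(\omega))\,d\omega\to0$ as $|n|\to\infty$, uniformly in $j$: this follows from Riemann--Lebesgue, with one integration by parts using equicontinuity. Equicontinuity holds because the curves converge uniformly; if needed, I first mollify $\Theta_j$, which costs only a uniform error. Hence $\int\phi\,\Lambda^{(j)}_n\to\int\phi\cos\Theta$ uniformly in $j$ as $|n|\to\infty$.

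Integrating $S_j\le0$ against $\phi$ and passing to the limit: the finitely many low lags converge by uniform convergence, the tail contributes $(\text{escaped mass})\int\phi\cos\Theta+o(1)$, and $\gamma_j\int\phi\cos\Theta_j\to\gamma'\int\phi\cos\Theta$. This gives $\int\phi S\le0$ for all $\phi\ge0$, hence $S\le0$ pointwise once $S$ is continuous. Continuity of $S$ follows because the series converges uniformly, since $|\Lambda_n|\le2$ and $\sum\lambda_n<\infty$. This weak-limit step, namely justifying uniform-in-$j$ decay of the oscillatory tail for merely continuous curves, is the main obstacle. I would handle it via mollification plus uniform convergence, approximating $\Theta_j$ uniformly by $\Theta$ and then by a smooth curve.

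For the second part, let $w$ solve $\FP[\mathcal N](\Theta)$. Then $\int S\,dw=\sum\lambda_n\int\Lambda_n\,dw+\gamma_*\int\cos\Theta\,dw\ge0$; exchanging sum and integral is justified by boundedness. Since $S\le0$, we get $\int S\,dw=0$, so $w$ is carried by $\{S=0\}$. Because $w$ is nonzero, this set is nonempty, and therefore $\max S=0$. Finally, each term in the sum is nonnegative and the total is zero, so every term vanishes. This gives $\int\Lambda_n\,dw=0$ whenever $\lambda_n>0$, and $\int\cos\Theta\,dw=0$ when $\gamma_*>0$.
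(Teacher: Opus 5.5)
Your proposal is correct and follows the paper's proof. Both arguments extract a coordinatewise subsequence and apply Fatou, get continuity from uniform convergence of the limit series, test $S_j\le0$ against nonnegative $\phi$ with a Riemann--Lebesgue estimate uniform in $j$, and let the escaped mass land on $\cos\Theta$. The paper gets that uniform estimate from $L^1$-compactness of $\{e^{-i\Theta_j}\phi\}$, which amounts to your approximation bound $\|\phi(e^{-i\Theta_j}-e^{-i\Theta})\|_1\le\|\phi\|_1\sup|\Theta_j-\Theta|$; that bound, not the integration-by-parts remark (which needs smooth curves), is what you actually need. The only other difference is that you obtain $\max S=0$ directly from a feasible $w$ being carried by $\{S=0\}$, whereas the paper notes that $\max S<0$ would make a finite truncation a strict symbol; both work.
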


\begin{proof}
Take a coordinatewise convergent subsequence of the coefficients.
Fatou gives $\sum\lambda_n\le1$, and $|\Lambda_n|\le2$
gives uniform convergence of the limiting series.
For any nonnegative $\phi\in L^1$, the functions
$e^{-i\Theta_j}\phi$ form a compact subset of $L^1$.
The Riemann--Lebesgue estimate is uniform on this compact set
(approximate it by a finite $L^1$ net).
Split the lag sum into finite and high-frequency parts to obtain
$\int S\phi\le0$, hence $S\le0$ by continuity.
The missing coefficient mass contributes exactly
$(1-\sum\lambda_n)\cos\Theta$.
If $\max S<0$, a finite truncation is a strict symbol,
contradicting feasibility. Finally integrate $S$ against any
feasible measure: every weighted slack is nonnegative, so all
positive-coefficient slacks vanish and the measure is carried
by $\{S=0\}$.
\end{proof}
\begin{proof}[Proof of Theorem~\ref{thm:limitsymbol}]

Take normalized strict symbols at shifts
$s_j\uparrow\sigma(q)$, available by Lemma~\ref{lem:alt}.
The curves converge uniformly by Lemma~\ref{lem:ends}.
Apply Lemma~\ref{lem:A-limit} and attainment of the threshold.

\end{proof}
\section{Farey programs and polynomial certificates}\label{app:farey}
Throughout, the exponent is zero and $K\ge2$.
\begin{lemma}[Positions]\label{lem:positions}
Let $\Theta$ be a curve, $k\ge3$, $\omega_i=2\pi i/k$, and let
$\mathsf a_i=k\Theta(\omega_i)/\pi\in(0,k]$ be the phase index of $\omega_i$. For
$\mathsf a\in(0,k]$ and $\varrho\in\{0,\dots,k-1\}$ put
$F_{\mathsf a}(\varrho)=2\sin(\pi\varrho/k)\sin(\pi(\varrho-\mathsf a)/k)$.
\begin{enumerate}[label=\textup{(\alph*)}]
\item $\Lambda_n(\omega_i)=F_{\mathsf a_i}(in\bmod k)$ for all $n\in\Z$.
\item For $1\le\varrho\le k-1$, $F_{\mathsf a}(\varrho)<0$ if and only if $\varrho<\mathsf a$,
and $F_{\mathsf a}(\varrho)=0$ if and only if $\varrho=\mathsf a$; $F_{\mathsf a}(0)=0$.
\item If $\gcd(j,k)=1$, then $n\mapsto jn\bmod k$ is a
bijection of $\Z/k\Z$; position $1$ is the class $b+k\Z$ of the parent lags of
Theorem~\ref{thm:farey}, position $0$ is the class $k\Z$, and position $j$ is the class of the
lag $1$. If $p\equiv jn$ is the position of the lag $n$, then $in\equiv\tau_ip\pmod k$ with
$\tau_i\equiv ib$.
\end{enumerate}
\end{lemma}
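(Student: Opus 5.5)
The plan is direct computation from the product form \eqref{eq:Lambda}, $\Lambda_n(\omega)=2\sin(n\omega/2)\sin(n\omega/2-\Theta)$, followed by elementary residue arithmetic modulo $k$.

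\textbf{Part (a).} At $\omega_i=2\pi i/k$ substitute $\Theta(\omega_i)=\pi\mathsf a_i/k$. This gives
\[
\Lambda_n(\omega_i)=2\sin(\pi in/k)\sin\bigl(\pi(in-\mathsf a_i)/k\bigr).
\]
Write $in=\varrho+\ell k$ with $\varrho=in\bmod k$. Both sine arguments shift by $\pi\ell$, so each factor picks up $(-1)^\ell$, and the signs cancel in the product. Hence $\Lambda_n(\omega_i)=F_{\mathsf a_i}(\varrho)$. For $n$ negative the same identity holds, since the residue convention absorbs the sign. The case $n=0$ is trivial, since $F(0)=0$ and $\Lambda_0\equiv0$.

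\textbf{Part (b).} For $1\le\varrho\le k-1$ we have $\sin(\pi\varrho/k)>0$. The second factor has argument $\pi(\varrho-\mathsf a)/k$. Since $\varrho\in[1,k-1]$ and $\mathsf a\in(0,k]$, this argument lies in $(-\pi,\pi)$. Its sine is therefore negative exactly when $\varrho<\mathsf a$ and zero exactly when $\varrho=\mathsf a$. The value $F_{\mathsf a}(0)=0$ is immediate.

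\textbf{Part (c).} Bijectivity of $n\mapsto jn$ follows from $\gcd(j,k)=1$. The positions are then read off directly:
\begin{itemize}
\item the lag $n=1$ has position $j$;
\item lags in $k\Z$ have position $0$;
\item lags $n\equiv b=j^{-1}$ have position $jb\equiv1$.
\end{itemize}
The class $b+k\Z$ is the parent-lag class of Theorem~\ref{thm:farey}, where $b$ is the lower parent denominator with $bc-ad=1$, so $jb\equiv1$. If $p\equiv jn$, then $n\equiv bp$, and therefore $in\equiv ibp=\tau_ip$.

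The only point needing care is the sign bookkeeping for negative lags and residues in (a). The rest is routine arithmetic, so no step here is a real obstacle.
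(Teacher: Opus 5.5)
Your proposal is correct and follows essentially the same route as the paper: reduce $in$ modulo $k$ in the product form \eqref{eq:Lambda}, read off the sign from the second sine factor (the first is positive for $0<\varrho<k$), and use invertibility of $j$ modulo $k$ to get the position formula. Your explicit checks that the factors $(-1)^{\ell}$ cancel and that $jb\equiv1$ follows from $bc-ad=1$ simply spell out steps the paper leaves implicit.
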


\begin{proof}
Reduce $in$ modulo $k$ in \eqref{eq:Lambda}.
The first sine in $F_{\mathsf a}(\varrho)$ is positive for
$0<\varrho<k$, so its second factor determines the sign.
Multiplication by the unit $j\bmod k$ permutes residues;
its inverse gives the position formula.
\end{proof}
\begin{theorem}[Lattice lines]\label{thm:lattice}
Let $\Theta$ be a curve.
\begin{enumerate}[label=\textup{(\alph*)}]
\item $\Lambda_n<0$ at a point of $\mathcal C$ if and only if $0<n\xi-\ell\eta<1$ for
some integer $\ell$. The sign of $\Lambda_n$ changes only across the lattice
lines $n\xi-\ell\eta=0$ and $n\xi-\ell\eta=1$.
\item The points $(2\pi j/k,\pi/k)$ become the lattice points $(j,k)$. The
condition $\Theta(2\pi j/k)\le\pi/k$ says that $\mathcal C$ meets the ray through
$(j,k)$ at or beyond $(j,k)$, and $\Omega_1$ is the set of tunings whose curve
lies in the half plane $\xi<1$.
\end{enumerate}
\end{theorem}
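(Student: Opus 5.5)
The plan is to reduce both parts to elementary facts about the factorized form of $\Lambda_n$ in \eqref{eq:Lambda},
\[
 \Lambda_n(\omega)=2\sin\frac{n\omega}2\,\sin\Bigl(\frac{n\omega}2-\Theta\Bigr),
\]
and then rewrite its arguments in the coordinates \eqref{eq:lattice}. On $\mathcal C$ we have $\omega=2\pi\xi/\eta$ and $\Theta=\pi/\eta$. Substituting,
\[
 \frac{n\omega}2=\frac{\pi n\xi}{\eta},\qquad \frac{n\omega}2-\Theta=\frac{\pi(n\xi-1)}{\eta}.
\]
So up to the positive factor $2$, $\Lambda_n$ is the product $\sin(\pi X)\sin(\pi(X-Y))$ with $X=n\xi/\eta$ and $Y=1/\eta$. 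These are the quantities to track for part~(a).

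For the sign analysis, I would use that $\sin(\pi X)$ is negative exactly when $X\in(2\ell'-1,2\ell')$ and $\sin(\pi(X-Y))$ is negative exactly when $X-Y$ lies in such an interval. Since $\Theta\in(0,\pi]$, we have $0<Y\le1$, so the two sign patterns are shifted by less than one half-period. The product is then negative exactly when $X\in(\ell,\ell+Y)$ for some integer $\ell$: there the first factor has sign $(-1)^\ell$ and the second has sign $(-1)^{\ell-1}$. Multiplying by $\eta>0$ turns $\ell<n\xi/\eta<\ell+1/\eta$ into $0<n\xi-\ell\eta<1$, which is the stated strip condition. The product vanishes only when $X\in\mathbb Z$ or $X-Y\in\mathbb Z$, that is, on the lines $n\xi-\ell\eta=0$ or $n\xi-\ell\eta=1$. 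Off these lines it is nonzero and continuous, so its sign can change only across them. I will handle the endpoints separately: $\Theta=\pi$ gives $Y=1$, where $\Lambda_n=-2\sin^2(\pi X)$, and this agrees with the formula. The point $\omega=0$ maps to $\xi=0$, where $\Lambda_n=0$, also consistent. Negative $n$ are covered by the same formula, since $n$ enters only linearly.

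Part~(b) is then a direct computation. At $\omega=2\pi j/k$ and $\Theta=\pi/k$ we get $\eta=k$ and $\xi=\omega/(2\Theta)=j$. The inequality $\Theta(2\pi j/k)\le\pi/k$ is equivalent to $\eta\ge k$ along the ray of slope $\xi/\eta=j/k$. Because this ray is the set where $\omega$ is fixed, the curve meets it in exactly one point, and that point lies at or beyond $(j,k)$ precisely under this inequality. Finally, $\Omega_1$ is defined by $\Theta>\omega/2$ on $(0,\pi]$, which reads $\xi<1$; this is the half-plane statement (using Theorem~\ref{thm:A} for the definition of $\Omega_1$).

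The only step needing care is the sign bookkeeping in part~(a), namely checking that the two negativity intervals overlap correctly when $Y\le1$ and that the boundary case $Y=1$ is treated properly. I expect this to be the main obstacle, though it is elementary.
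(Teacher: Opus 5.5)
Your proposal is correct and follows the paper's own proof. Like the paper, you substitute the lattice coordinates into the factorized form of $\Lambda_n$ and use $0<1/\eta\le1$ for the sign comparison, then read off (b) from $(\xi,\eta)=\frac{\pi}{k\Theta}(j,k)$ and $\xi<1\iff\Theta>\omega/2$; your explicit interval bookkeeping and endpoint checks only spell out details the paper leaves implicit.
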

\begin{proof}[Proof of Theorem~\ref{thm:lattice}]

Substitution in \eqref{eq:Lambda} gives
$\Lambda_n=2\sin(\pi n\xi/\eta)
\sin(\pi(n\xi-1)/\eta)$.
Since $0<1/\eta\le1$, the two sines have opposite signs
exactly when $0<n\xi-\ell\eta<1$ for an integer $\ell$.
At a rational frequency,
$(\xi,\eta)=\pi(j,k)/(k\Theta)$, proving (b).

\end{proof}
\begin{theorem}[Farey parents and first violations]\label{thm:farey}
\begin{enumerate}[label=\textup{(\alph*)}]
\item The lattice lines $n\xi-\ell\eta=1$ through $(j,k)$ have $n\equiv j^{-1}\pmod k$.
Among them, those with $|n|<k$ are the lower lag $n=b$, whose line also
passes through $(c,d)$, and the upper lag $n=-d$, whose line also passes
through $(a,b)$.
\item For a single atom at $\omega=2\pi j/k$ with phase $\Theta\in(0,\pi)$, the
lag $n$ is violated if and only if $jn\bmod k\in\{1,\dots,\lceil k\Theta/\pi\rceil-1\}$.
No lag is violated if and only if $\Theta\le\pi/k$; when $\Theta$ exceeds
$\pi/k$, the first violated lags are $n\equiv b\pmod k$, the class of the parent
lags $b$ and $-d$. In particular, for every curve, $\delta_{2\pi j/k}$ solves
$\FP(\Theta)$ if and only if $\Theta(2\pi j/k)\le\pi/k$.
\end{enumerate}
\end{theorem}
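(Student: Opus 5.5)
I will argue entirely in the lattice coordinates of Theorem~\ref{thm:lattice} together with the position formalism of Lemma~\ref{lem:positions}. For part (a), evaluate $n\xi-\ell\eta$ at $(j,k)$. The line $n\xi-\ell\eta=1$ passes through $(j,k)$ exactly when $nj-\ell k=1$, that is, when $n\equiv j^{-1}\equiv b\pmod k$; here $\ell$ is determined by $n$. Among these lags, $|n|<k$ leaves only the two representatives $n=b$ and $n=b-k$. Next I use the Farey relations $j=a+c$, $k=b+d$ and $bc-ad=1$. From them, $bj-(b-1)\cdot$ is not the right normalization, so I instead check directly that $bj\equiv1\pmod k$. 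Indeed, $bj=ba+bc=ba+ad+1=a(b+d)+1=ak+1$. So $\ell=a$, and the line $b\xi-a\eta=1$ passes through $(c,d)$ because $bc-ad=1$. For the second representative, $b-k=-d$, and $-dj=-da-dc=-da-(bc-1)=-c(b+d)+1$. The corresponding line is $-d\xi+c\eta=1$, and it passes through $(a,b)$ because $-da+cb=1$. I also note that $0<b<k$, since $0<b,d$. This gives (a).

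For part (b), I apply Lemma~\ref{lem:positions} with a single atom at $\omega_j=2\pi j/k$ and phase index $\mathsf a=k\Theta/\pi\in(0,k)$. Part (a) of that lemma gives $\Lambda_n(\omega_j)=F_{\mathsf a}(jn\bmod k)$. Part (b) then says that lag $n$ is violated, meaning $\Lambda_n<0$, exactly when the residue $\varrho=jn\bmod k$ satisfies $1\le\varrho<\mathsf a$; residue $0$ gives the value $0$. Since $\varrho$ is an integer, $\varrho<\mathsf a$ is equivalent to $\varrho\le\lceil\mathsf a\rceil-1$, which is the stated set. The set is empty iff $\lceil\mathsf a\rceil\le1$, that is, iff $\mathsf a\le1$, iff $\Theta\le\pi/k$.

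As $\Theta$ increases past $\pi/k$, the set grows from $\{1\}$ upward. The first violated class is therefore $\varrho=1$, that is, $jn\equiv1$, which is $n\equiv b$ by Lemma~\ref{lem:positions}(c). By part (a), this class contains the two parent lags $b$ and $-d$.

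For the final assertion, $\delta_{2\pi j/k}$ solves $\FP(\Theta)$ iff every lag integral is nonnegative and $\cos\Theta\ge0$. When $\Theta\le\pi/k<\pi/2$, the minimizer constraint holds automatically, because $k\ge3$; this is the same argument as Proposition~\ref{prop:single}. The equivalence then follows from the nonviolation criterion. The case $\Theta=\pi$ needs separate treatment only in the statement for arbitrary curves: there $\cos\Theta=-1<0$, so the minimizer constraint fails, and also $\pi>\pi/k$, so both sides are false and the equivalence still holds.

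I expect no real obstacle. The only delicate bookkeeping is matching the sign conventions. Positive $n$ is a lower lag and negative $n$ an upper lag, and the chosen $\ell$ must make the line pass through the parent point rather than through a translate. I would verify this by the two explicit identities $bj=ak+1$ and $-dj=-ck+1$ above.
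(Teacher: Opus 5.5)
Your proof is correct and follows the paper's argument. For (a) you solve the incidence equation $nj-\ell k=1$ by $(n,\ell)=(b,a),(-d,-c)$. For (b) you reduce $\Lambda_n(\omega_j)=F_{\mathsf a}(jn\bmod k)$, which the paper writes out directly as $\cos(2\pi r/k-\Theta)>\cos\Theta\iff 0<r<k\Theta/\pi$, and add the minimizer check, including the case $\Theta=\pi$.

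There is one clerical slip. The step $-da-dc=-da-(bc-1)$ should read $-da-dc=-(bc-1)-dc$, since you are substituting $ad=bc-1$. Your final identity $-dj=-ck+1$ is nevertheless right. You should also delete the stray fragment about ``normalization''.
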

\begin{proof}[Proof of Theorem~\ref{thm:farey}]

The incidence equation is $nj-\ell k=1$.
The Farey determinant identities give the solutions
$(n,\ell)=(b,a),(-d,-c)$; all others differ by $(k,j)$.
The mediant relations and neighbor identities are the standard
unimodular characterization of Farey parents~\cite[Chapter~III]{HW}.
For (b), write $r=jn\bmod k$.
The inequality
$\cos(2\pi r/k-\Theta)>\cos\Theta$
is equivalent to $0<r<k\Theta/\pi$.
The first residue entering this interval is $r=1$,
whose lag class is $b\bmod k$.
When no lag is violated, $\Theta\le\pi/k$ also ensures the
minimizer constraint. At $\Theta=\pi$ that constraint fails.

\end{proof}
\begin{lemma}[The tongue of a fraction]\label{lem:tongue}
For every tuning $q=(\kappa,\alpha L,\beta)$ and $c\in[-1,1]$,
\begin{equation}\label{eq:Xikappa}
 \Xi_q(c)=\Xi_\infty(c)-\frac{N(c)}{\alpha L\,(1-\beta)(\kappa-1)},
\end{equation}
where $N$ is the polynomial of Lemma~\ref{lem:kappa} and
\[
 \Xi_\infty(c)=\frac{(1-c)\bigl(\alpha L\,(1+\beta)-2(1-2\beta c+\beta^2)\bigr)}{\alpha L\,(1-\beta)} .
\]
Consequently $\Theta_q(\omega_0)\le c'$ if and only if $\kappa\ge\kappa_{j/k}(\alpha L,\beta)$, where
\begin{equation}\label{eq:kappajk}
 \kappa_{j/k}(\alpha L,\beta)=1+\frac{\bigl(\alpha L-(1+\beta)(1-c_0)\bigr)^2+(1-\beta)^2s_0^2}
 {(1-c_0)\bigl(\alpha L\,(1+\beta)-2(1-2\beta c_0+\beta^2)\bigr)-h_0\,\alpha L\,(1-\beta)}
\end{equation}
when the denominator is positive, and $\kappa_{j/k}=\infty$ otherwise. In particular
$\kappa_\partial\le\kappa_{j/k}$ for every fraction $j/k$.
\end{lemma}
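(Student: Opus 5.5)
The plan is to split off the explicit $\kappa$-dependence of $\Xi_q$ first, and then reduce the tongue condition to a linear inequality in $1/(\kappa-1)$.

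\textbf{Step 1: the decomposition \eqref{eq:Xikappa}.} I would start from the explicit formula \eqref{eq:Xi},
\[
\Xi=\frac{B^2-(1-\beta)^2-A^2+2A(1+\beta)c-4\beta c^2}{2B(1-\beta)},
\]
with $x=y/\kappa$, $A=1+\beta-(x+y)/2$ and $B=(y-x)/2$. I would write $x=y-2B$, so that $A=1+\beta-y+B$. After expanding, the numerator becomes a polynomial in $B$ of degree at most two.

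The $B^2$ terms cancel, since $B^2-A^2$ contributes $-B^2$ and the cross terms cancel against it. Dividing by $2B$ then leaves a part that is constant in $B$ plus a part proportional to $1/B$. Since $B=y(\kappa-1)/(2\kappa)$, the $1/B$ term splits into a $\kappa$-independent piece and a piece proportional to $1/(\kappa-1)$.

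Collecting the $\kappa$-independent pieces should give $\Xi_\infty$. As a check, the factor $(1-c)$ must appear, because $\Xi_\infty(1)=\lim_{\kappa\to\infty}\Xi(1)=0$ by Proposition~\ref{prop:chart}(e). The coefficient of $1/(\kappa-1)$ must be $-N(c)/(y(1-\beta))$. I would confirm this by differentiating in $\kappa$ and comparing with Lemma~\ref{lem:kappa}: that lemma gives $\partial_\kappa\Xi=N/(y(1-\beta)(\kappa-1)^2)$, which matches $-N/(y(1-\beta)(\kappa-1))$ up to a $\kappa$-independent constant. So the only real work in this step is identifying that constant, i.e.\ the limit $\Xi_\infty$. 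The safest route is to evaluate the numerator at $\kappa\to\infty$ (i.e.\ $x=0$) directly and factor out $(1-c)$. This expansion is the only computational step; I expect it to be routine but error-prone, and I would verify it at $c=\pm1$ against the endpoint values in Lemma~\ref{lem:angle}.

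\textbf{Step 2: reduce the tongue condition to an inequality in $1/(\kappa-1)$.} By Corollary~\ref{cor:C}(b), $\Theta_q(\omega_0)\le c'$ is equivalent to $\Xi_q(c_0)\ge h_0$. Using \eqref{eq:Xikappa}, this reads
\[
\Xi_\infty(c_0)-h_0\ \ge\ \frac{N(c_0)}{y(1-\beta)(\kappa-1)}.
\]
Since $N(c_0)>0$ by Lemma~\ref{lem:kappa}, the argument splits into two cases.

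If $\Xi_\infty(c_0)-h_0\le0$, no $\kappa>1$ satisfies the inequality, so $\kappa_{j/k}=\infty$. Multiplying by $y(1-\beta)>0$ shows this case is exactly the case of a nonpositive denominator in \eqref{eq:kappajk}.

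If $\Xi_\infty(c_0)-h_0>0$, the inequality is equivalent to
\[
\kappa-1\ \ge\ \frac{N(c_0)}{y(1-\beta)(\Xi_\infty(c_0)-h_0)}.
\]
Substituting the formulas for $N(c_0)$ and $\Xi_\infty(c_0)$, and using $1-c_0^2=s_0^2$, gives exactly \eqref{eq:kappajk}.

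\textbf{Step 3: the final inequality.} For any $\kappa\ge\kappa_{j/k}$, Theorem~\ref{thm:farey}(b) shows that $\delta_{\omega_0}$ solves $\FP(\Theta_q)$. Theorem~\ref{thm:exactthr} then gives $q\in\mathcal D$. By Theorem~\ref{thm:global}(c), this yields $\kappa\ge\kappa_\partial$, and hence $\kappa_\partial\le\kappa_{j/k}$. The infinite case is trivial.
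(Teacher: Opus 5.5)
Your proposal is correct and follows essentially the paper's route: the paper likewise observes that the numerator of \eqref{eq:Xi} is affine in $x=y/\kappa$ (your affine-in-$B$ expansion is the same observation, since $x=y-2B$), splits off the constant and $(\kappa-1)^{-1}$ parts, then solves $\Xi_q(c_0)\ge h_0$ using $N>0$, and finally invokes the exact threshold criterion to get $\kappa_\partial\le\kappa_{j/k}$. Using Lemma~\ref{lem:kappa} to identify the $(\kappa-1)^{-1}$ coefficient is valid and not circular. The one expansion you leave open also checks out: the numerator of \eqref{eq:Xi} at $x=0$ equals $(1-c)\bigl(y(1+\beta)-2(1-2\beta c+\beta^2)\bigr)$.
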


\begin{proof}
At fixed $y,\beta$, the numerator of \eqref{eq:Xi} is affine
in $x=y/\kappa$. Separate its constant part and its
$(\kappa-1)^{-1}$ part to obtain \eqref{eq:Xikappa}.
Since $N>0$, solving $\Xi(c_0)\ge h_0$ gives
\eqref{eq:kappajk}, including the empty case when its denominator
is nonpositive. The exact threshold criterion gives
$\kappa_\partial\le\kappa_{j/k}$.
\end{proof}
\begin{proposition}[Farey localization]\label{prop:farey}
Let $q\in I_K$. Then $\bar\Delta\subset(2\pi/(K+1),2\pi/K)$. If $w$ solves $\PP_L(\Theta_q)$, then
$w$ has an atom $2\pi j/k$ with $\gcd(j,k)=1$, $k\mid L$, $1/(K+1)<j/k<1/K$, $k\ge2K+1$ and
$\Theta(2\pi j/k)\le\pi j/k$; moreover $\Theta(2\pi j/k)>\pi/k$ unless $\delta_{2\pi j/k}$ alone
solves $\FP(\Theta_q)$.
\end{proposition}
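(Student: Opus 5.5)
The first assertion is inherited from the definition. By Definition~\ref{def:levels} and Theorem~\ref{thm:levelzero}(III), a tuning in $I_K$ has a closed dip $\bar\Delta$ with interior contained in $(2\pi/(K+1),2\pi/K)$. Since $\Theta(2\pi/K)>\pi/K$ and $\Theta(2\pi/(K+1))>\pi/(K+1)$, neither endpoint of that window lies in $\bar\Delta$. Corollary~\ref{cor:dip} makes $\bar\Delta$ a compact interval. It is therefore contained in the open window.

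For the atom, I will use the Dip lemma (Lemma~\ref{lem:dip}): every solution of $\FP(\Theta_q)$, and hence of $\PP_L(\Theta_q)$, gives positive mass to $\bar\Delta$. A solution of $\PP_L$ is carried by $\Omega_L$, so it has an atom $2\pi i/L\in\bar\Delta$. Reducing $i/L$ gives $j/k$ with $\gcd(j,k)=1$ and $k\mid L$. Membership in $\bar\Delta$ gives $\Theta(2\pi j/k)\le\pi j/k$ and also places $j/k$ strictly between $1/(K+1)$ and $1/K$.

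The bound $k\ge 2K+1$ is the standard Farey fact. From $Kj<k<(K+1)j$ we get $j\ge2$, since $j=1$ would force $K<k<K+1$. Then $k\ge Kj+1\ge 2K+1$. Equivalently, $1/(K+1)$ and $1/K$ are Farey neighbours, so any fraction strictly between them has denominator at least $(K+1)+K$.

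The last clause concerns the case $\Theta(2\pi j/k)\le\pi/k$. In that case Theorem~\ref{thm:farey}(b) shows directly that $\delta_{2\pi j/k}$ alone solves $\FP(\Theta_q)$. Contrapositively, if the single atom does not solve $\FP(\Theta_q)$, then $\Theta(2\pi j/k)>\pi/k$, which is what the statement asserts.

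The only subtle point is which atom to pick. If $w$ has several atoms in $\bar\Delta$, any one of them satisfies all the listed properties, so the choice does not matter. I expect no real obstacle. The one thing to check is the endpoint exclusion: the case $\Theta(2\pi/K)=\pi/K$ cannot occur, because membership in $I_K$ requires the strict inequalities, and this is what keeps $j/k\ne1/K$ and $j/k\ne1/(K+1)$.
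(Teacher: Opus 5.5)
Your proof is correct and follows the paper's own argument step for step. The paper also uses the gap definition with Corollary~\ref{cor:dip} to place $\bar\Delta$ in the open window, the Dip lemma to supply a grid atom in $\bar\Delta$ whose reduced fraction lies strictly between the Farey neighbours $1/(K+1)$ and $1/K$ (so $k\ge 2K+1$), and the single-atom criterion of Theorem~\ref{thm:farey}(b) for the last clause; you simply spell out details the paper leaves implicit, such as the elementary derivation $Kj<k<(K+1)j\Rightarrow j\ge 2$.
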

The proof follows Definition~\ref{def:anchor}.
\begin{definition}[Anchors]\label{def:anchor}
The anchors of a solution $w$ of the full program are its atoms in the
closed dip $\bar\Delta$.
\end{definition}

\begin{proof}[Proof of Proposition~\ref{prop:farey}]
The closed dip lies strictly in the window by the gap definition
and Corollary~\ref{cor:dip}.
The dip lemma supplies a grid atom in it; reduce its fraction.
Any fraction strictly between the Farey neighbors
$1/(K+1),1/K$ has denominator at least $2K+1$.
The last assertion is the single-atom criterion.
\end{proof}
In the positions of an anchor, the complete periodic program is
\begin{equation}\label{eq:posprog}
 \sum_iw_iF_{\mathsf a_i}(\tau_ip)\ge0\quad(p\in\Z/k\Z),
 \qquad\sum_iw_i\cos\Theta(\omega_i)\ge0,
 \qquad\tau_i\equiv ib\pmod k.
\end{equation}
The anchor has $\tau_j=1$ and violates precisely the lag constraints
at positions $1,\ldots,\lceil\mathsf a_j\rceil-1$, where a position
is the residue $jn\bmod k$.
At the lag-one position $j$, its value is nonnegative exactly
when the anchor lies in the closed dip.

\subsection{Fans and two-atom compensation}
The fans of $j/k$ are the integer points
$(a,b)+m(j,k)$ and $(c,d)+m(j,k)$, $m\ge1$,
on $k\xi-j\eta=-1$ and $k\xi-j\eta=1$.
Their consecutive determinants are $\mp1$, so the members are
reduced Farey neighbors and approach $j/k$ from opposite sides.
For the endpoints of $I_K$ their lines are
$U:\eta=K\xi+1$ and $V:(K+1)\xi-\eta=1$.
In angle coordinates these are
$\Theta=\pi-K\omega/2$ and
$\Theta=(K+1)\omega/2-\pi$.
At each fan fraction their height is $\pi/k'$;
crossing at or beyond that point is precisely the single-atom criterion.
On the window both heights are at most $\pi/2$.
Corollary~\ref{cor:dip} thus makes their below-line sets
$\Delta_U,\Delta_V$ intervals.
A level-two gap has a nonempty such interval whose closure
lies strictly between consecutive points of the corresponding fan.
\begin{lemma}[Fan geometry]\label{lem:fan}

Let $K\ge2$ and $k_p=pK+1$. The fractions $(p-1)/k_{p-1}<p/k_p<1/K$ are Farey
neighbors, and the Farey parents of $p/k_p$ are $(p-1)/k_{p-1}$ and $1/K$.
The points $(p,k_p)$ lie on the line $U:\eta=K\xi+1$, which is the lattice line
$-K\xi+\eta=1$ of the lag $-K$. The lattice line of the lag $mK+1$ through $(1,K)$
is $(mK+1)\xi-m\eta=1$, and it passes through $(m+1,k_{m+1})$. Modulo $k_p$,
$-pK\equiv1$ and $(p-1)K+1\equiv-K$.
\end{lemma}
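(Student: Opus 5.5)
The whole lemma reduces to determinant identities for lattice points together with the lattice-line description of lags from Theorem~\ref{thm:lattice} and Theorem~\ref{thm:farey}(a). I will verify each claim by direct integer arithmetic in the order stated.

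\emph{Farey neighbors and parents.} With $k_p=pK+1$, the determinant of the consecutive fractions $(p-1)/k_{p-1}$ and $p/k_p$ is
\[
 p\,k_{p-1}-(p-1)k_p=p((p-1)K+1)-(p-1)(pK+1)=1 .
\]
So the two fractions are Farey neighbors, both in lowest terms, and $(p-1)/k_{p-1}<p/k_p$. Similarly $1\cdot k_p-K\cdot p=1$, so $p/k_p<1/K$ and these two are neighbors as well. The mediant of $(p-1)/k_{p-1}$ and $1/K$ is $p/(k_{p-1}+K)=p/k_p$, and the outer pair has determinant $1\cdot k_{p-1}-K(p-1)=1$. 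By the unimodular characterization of parents~\cite[Chapter~III]{HW} used in Theorem~\ref{thm:farey}, the parents of $p/k_p$ are therefore $(p-1)/k_{p-1}$ and $1/K$. For $p=1$ the left parent is read as $0/1$, and the same formulas hold.

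\emph{Lattice lines.} By Theorem~\ref{thm:lattice}(a), the sign boundaries of $\Lambda_n$ are the lines $n\xi-\ell\eta\in\{0,1\}$. For $n=-K$ and $\ell=-1$ the second line is $-K\xi+\eta=1$, which is $U$, and substituting $(p,k_p)$ gives $-Kp+pK+1=1$. For $n=mK+1$ and $\ell=m$ the line $(mK+1)\xi-m\eta=1$ passes through $(1,K)$, since $mK+1-mK=1$. At $(m+1,k_{m+1})$ it gives $(mK+1)(m+1)-m((m+1)K+1)=m+1-m=1$, so this point also lies on the line.

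\emph{Congruences.} Modulo $k_p$ we have $pK=k_p-1\equiv-1$, so $-pK\equiv1$. Also $(p-1)K+1=pK-K+1\equiv-1-K+1=-K$. As a consistency check, both congruences agree with Theorem~\ref{thm:farey}(a): the parent lags of $p/k_p$ are $b=k_{p-1}$ and $-d=-K$, they must lie in one class modulo $k_p$, and indeed $k_{p-1}=(p-1)K+1\equiv-K$.

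No step is a real obstacle. The only point needing care is orientation: for each neighbor pair the determinant must be $+1$ in the convention $bc-ad=1$, so that the left/right parents and the sign of $\ell$ in the lattice-line equations are assigned consistently.
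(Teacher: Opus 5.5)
Your proposal is correct and is essentially the paper's proof: the paper also identifies the Farey parents through the determinant $pk_{p-1}-(p-1)k_p=1$ and the mediant relation, and obtains the line equations and the congruences by direct substitution. Your write-up spells out each substitution, and the check against Theorem~\ref{thm:farey}(a) that $k_{p-1}\equiv-K\pmod{k_p}$ is a useful consistency test; the only implicit point is that $\ell=m$ is forced for a lag-$(mK+1)$ line through $(1,K)$, which is immediate because $K\nmid 1$ excludes the line $n\xi-\ell\eta=0$.
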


\begin{proof}
The parent determinant is
$pk_{p-1}-(p-1)k_p=1$; the mediant relations identify the
parents. Substitution proves the line equations and congruences.
\end{proof}
For integers $\tau$, write $r_\tau(p)\in\{0,\ldots,k-1\}$
for the residue of $p\tau$ modulo $k$.
\begin{lemma}[Record lows and Farey parents]\label{lem:record}
Let $2\le j<k$, $\gcd(j,k)=1$, $\tau\in\Z$, and let $i=r_\tau(j)$ and $m=(j\tau-i)/k$, so that
$j\tau=mk+i$. Assume $i\ge1$. Then $r_\tau(p)>i$ for all $1\le p\le j-1$ if and only if
$\gcd(m,j)=1$ and $i\,r'<k$, where $r/r'$ is the right Farey parent of $m/j$, that is,
the fraction with $rj-mr'=1$ and $1\le r'\le j-1$. In that case
\[
 \min_{1\le p\le j-1}r_\tau(p)=\frac{k+l'i}{j},\qquad l'=j-r',
\]
and the minimum is attained at $p=l'$ only.
\end{lemma}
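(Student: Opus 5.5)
The plan is to reduce everything to an explicit formula for $r_\tau(p)$ in terms of $m$ and $i$. From $j\tau=mk+i$ we get $jp\tau=pmk+pi$. Writing $p\tau=Qk+r_\tau(p)$ and multiplying by $j$ gives
\[
 j\,r_\tau(p)=k\,s(p)+p\,i,\qquad s(p)=pm-jQ\equiv pm\pmod j .
\]
Since $0\le r_\tau(p)<k$, the integer $s(p)$ is the unique member of the class $pm \bmod j$ with $0\le ks+pi<jk$. Because $0<pi<jk$ for $1\le p\le j-1$, this leaves two possibilities. Either $s(p)=s_0(p):=(pm\bmod j)\in\{0,\dots,j-1\}$, or, when $ks_0+pi\ge jk$, $s(p)=s_0-j<0$. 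In this language $r_\tau(p)>i$ is equivalent to $ks(p)>(j-p)i$, which in particular forces $s(p)\ge1$.

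First I prove necessity of coprimality. If $d=\gcd(m,j)>1$, take $p=j/d$. Then $s_0(p)=0$ and $pi<jk$, so $s(p)=0$ and $r_\tau(p)=pi/j<i$. Hence assume $\gcd(m,j)=1$. Then $p\mapsto s_0(p)$ is a bijection of $\{1,\dots,j-1\}$ onto itself. The Farey relation $rj-mr'=1$ gives $mr'\equiv-1$, so $s_0(r')=j-1$, and $s_0(l')=1$ for $l'=j-r'$. Since $m\cdot j\equiv0$, the values pair up as $s_0(p)+s_0(j-p)=j$.

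Next I use this pairing. If both $p$ and $j-p$ use the nonnegative representative $s=s_0$, then adding the two displayed identities gives $j(r_\tau(p)+r_\tau(j-p))=kj+ji$, so $r_\tau(p)+r_\tau(j-p)=k+i$. Since each residue is $<k$, both then exceed $i$. Consequently $r_\tau(p)>i$ for all $p$ holds iff no $p$ falls into the negative-representative case, that is, iff $ks_0(p)+pi<jk$ for every $p$. A negative representative gives $ks<0<(j-p)i$, so it always violates $r_\tau(p)>i$.

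For $p=r'$ this condition reads $k(j-1)+r'i<jk$, i.e. $ir'<k$, which gives necessity. The main obstacle is sufficiency: showing that $ir'<k$ already forces $ks_0(p)+pi<jk$ for all other $p$. I would try induction along the Euclidean/Farey structure. Note $s_0(p)=j-1-t$ for $p\equiv(1+t)r'\pmod j$, so $pi$ grows by at most $i$ times a multiple of $r'$ while $ks_0$ drops by $k$ per step. The aim is to compare $(p-r')i$ with $k(j-1-s_0(p))$ using $ir'<k$ and the three-distance structure of $\{pm/j\}$. If that fails, I would instead argue that the minimum of $r_\tau$ over $1\le p\le j-1$ is attained where $s$ is smallest positive or negative, and check only the two Farey-parent positions $p=l',r'$.

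Finally, under the condition, every $s(p)=s_0(p)\ge1$. Hence
\[
 r_\tau(p)=\frac{ks_0(p)+pi}{j}\ge\frac{k+pi}{j}\quad\text{when } s_0(p)=1 .
\]
For $s_0\ge2$ I compare directly: $2k>k+(j-1)i$ because $(j-1)i\le$ what the pairing bound permits. This puts the minimum at the unique $p$ with $s_0(p)=1$, namely $p=l'$, with value $(k+l'i)/j$. Uniqueness of the minimizer follows because $s_0$ is injective.
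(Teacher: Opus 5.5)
\textbf{There is a genuine gap: you prove necessity, but neither sufficiency nor the minimum.} Everything up to necessity is correct. That covers the identity $j\,r_\tau(p)=k\,s(p)+pi$, the two-representative dichotomy, and the witness $p=j/d$ when $d=\gcd(m,j)>1$. It also covers the pairing argument: when both $p$ and $j-p$ use the nonnegative representative, $r_\tau(p)+r_\tau(j-p)=k+i$. So the record condition is equivalent to $ks_0(p)+pi<jk$ for all $p$, and $p=r'$ gives $ir'<k$.

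The two remaining steps fail as follows.
\begin{itemize}
\item Sufficiency is only announced (``I would try induction\dots'', ``if that fails\dots'') and never carried out.
\item For the minimum you rely on $2k>k+(j-1)i$, i.e.\ $k>(j-1)i$, which is false under the hypotheses. Take $j=5$, $k=7$, $\tau=6$. Then $i=2$, $m=4$, $r'=1$, $ir'=2<7$, and $r_\tau(1),\dots,r_\tau(4)=6,5,4,3$, so the lemma holds. Yet $(j-1)i=8>7$.
\item No estimate that uses only $s_0(p)\ge2$, without tying $p$ to $s_0(p)$, can work, because the needed inequality is sharp. For $m=2$ and $p=1$ it reduces exactly to $k>r'i$.
\end{itemize}

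\textbf{The missing idea is a least-positive-residue bound.} Your remark that $p\equiv(1+t)r'\pmod j$ almost states it.
\begin{itemize}
\item \emph{Sufficiency.} For $1\le p\le j-1$ put $t=j-s_0(p)\in\{1,\dots,j-1\}$. From $mr'\equiv-1\pmod j$ you get $p\equiv tr'\pmod j$. Since $p$ is the least positive residue of $tr'\ge1$, $p\le tr'$. Hence $ks_0(p)+pi\le k(j-t)+tr'i<jk$ whenever $r'i<k$.
\item \emph{Minimum.} With $\sigma=s_0(p)$ you have $j-p\equiv\sigma r'\pmod j$, so $j-p\le\sigma r'$. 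Therefore
\[
 k\sigma+pi\ \ge\ k\sigma+(j-\sigma r')i\ =\ k+l'i+(\sigma-1)(k-r'i),
\]
which is strict for $\sigma\ge2$. Injectivity of $s_0$, with $s_0(l')=1$, then gives the minimum $(k+l'i)/j$, attained only at $p=l'$.
\end{itemize}

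With these two lines your residue-pairing argument becomes a complete, elementary alternative to the paper's proof. The paper gets both directions and the minimum at once in lattice language. It writes points of $\{(p,y):y\equiv p\tau \pmod k\}$ in the basis $u=(l',(k+l'i)/j)$, $v=(r',(r'i-k)/j)$ and reads off the signs of $au+bv$. Your bound $p\le tr'$ is the coordinate form of that sign analysis.
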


\begin{proof}
Use the determinant-$k$ lattice
$\Gamma=\{(p,y):y\equiv p\tau\pmod k\}$.
If $g=\gcd(m,j)>1$, the point $(j/g,i/g)$ violates the
record condition. Otherwise the Farey identities give a lattice basis
\[
 u=\left(l',\frac{k+l'i}{j}\right),\qquad
 v=\left(r',\frac{r'i-k}{j}\right),
 \quad u+v=(j,i),\quad\det(u,v)=-k.
\]
If $ir'\ge k$, $v$ violates the condition.
If $ir'<k$, then $u_2>i>v_2$ with $v_2<0$.
For $au+bv$ with first coordinate in $[1,j-1]$,
either $a\ge1,b\le0$ and its second coordinate is at least
$u_2$, uniquely at $u$, or $a\le0,b\ge1$ and it is negative.
This proves both the equivalence and the unique minimum.
\end{proof}
The next result identifies the admissible companion frequencies and
mass ratios for two-atom measures on the periodic grid.
\begin{theorem}[Two-atom combs]\label{thm:twoatom}
Let $2\le j<k/2$, $\gcd(j,k)=1$, $b\equiv j^{-1}\pmod k$, $\omega_a=2\pi j/k$ with phase index
$\mathsf a=k\Theta(\omega_a)/\pi$, and $\omega_c=2\pi i/k$, $1\le i<k/2$, $i\ne j$, with phase index
$\mathsf a_c=k\Theta(\omega_c)/\pi$. Assume that for some integer $t$ with $2\le t\le j$
\[
 t-1<\mathsf a\le t\qquad\text{and}\qquad i<\mathsf a_c,
\]
that is, the anchor lies in the closed dip with bad positions $1,\dots,t-1$, and the
companion lies outside the closed dip. Put $\tau\equiv ib\pmod k$ and $m=(j\tau-i)/k$ with
$0\le\tau<k$, so that $r_\tau(j)=i$.
\begin{enumerate}[label=\textup{(\alph*)}]
\item There are $w_0,w_c>0$ such that $w=w_0\delta_{\omega_a}+w_c\delta_{\omega_c}$ satisfies all lag
constraints of $\FP(\Theta)$ if and only if
 \begin{equation}\label{eq:sel}
 r_\tau(p)>\mathsf a_c\quad(1\le p\le t-1)
 \end{equation}
 and $\underline{\mathsf r}\le\overline{\mathsf r}$, where
 \[
 \begin{gathered}
 \underline{\mathsf r}=\max_{1\le p\le t-1}\frac{-F_{\mathsf a}(p)}{F_{\mathsf a_c}(r_\tau(p))},\\
 \overline{\mathsf r}=\min\Bigl\{\frac{F_{\mathsf a}(p)}{-F_{\mathsf a_c}(r_\tau(p))}:\ 1\le p\le k-1,\ 1\le r_\tau(p)<\mathsf a_c\Bigr\};
 \end{gathered}
 \]
 the admissible ratios are then $w_c/w_0\in[\underline{\mathsf r},\overline{\mathsf r}]$. Together with
 $w_0\cos\Theta(\omega_a)+w_c\cos\Theta(\omega_c)\ge0$ this characterizes the two-atom solutions of
 $\PP_k$ on $\{\omega_a,\omega_c\}$.
\item \textup{(Farey selection rule.)} For $t=j$, condition \eqref{eq:sel} holds if and only
if $\gcd(m,j)=1$, $ir'<k$ and $\mathsf a_c<(k+l'i)/j$, where $l/l'<m/j<r/r'$ are the Farey
parents of $m/j$. For $t<j$, \eqref{eq:sel} is the condition
$\min_{1\le p\le t-1}r_\tau(p)>\mathsf a_c$ and is implied by the case $t=j$.
\item \textup{(Two fan sequences.)} For the fan point $p/(pK+1)$ with the companion $i=p-1$,
$\tau\equiv K+1$, and \eqref{eq:sel} holds for every $t\in[2,p]$ if and only if $\mathsf a_c<K+1$;
this forces $p\le K+1$, and then every lag is decided by (a). For
$(\ell+1)/(K+\ell(K+1))$ with the companion $i=\ell$, $\tau\equiv-K$, and \eqref{eq:sel} holds if
and only if $\mathsf a_c<k-(t-1)K$; for $t=\ell+1$ this is $\mathsf a_c<K+\ell$.
\end{enumerate}
\end{theorem}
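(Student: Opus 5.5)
The plan is to work throughout in positions. By Proposition~\ref{prop:periodic} and Lemma~\ref{lem:positions}, a two-atom measure on the grid $\Omega_k$ satisfies all lag constraints exactly when, for every residue $n\bmod k$,
\[
w_0F_{\mathsf a}(p)+w_cF_{\mathsf a_c}(r_\tau(p))\ge0 .
\]
Here $p=jn\bmod k$ is the anchor position (the anchor has $\tau_j=1$), and the companion sits at position $r_\tau(p)$ with $\tau\equiv ib$. Residue zero gives the trivial constraint.

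By Lemma~\ref{lem:positions}(b), $F_{\mathsf a}(p)<0$ exactly for $1\le p\le t-1$, since $t-1<\mathsf a\le t$, and $F_{\mathsf a}(p)\ge0$ otherwise. Likewise $F_{\mathsf a_c}(\varrho)<0$ exactly for $1\le\varrho<\mathsf a_c$. I split the residues into three kinds.

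\begin{itemize}
\item \emph{Anchor bad, companion bad.} If some $p\le t-1$ has $r_\tau(p)\le\mathsf a_c$, the constraint at $p$ fails for all positive weights. When $r_\tau(p)=\mathsf a_c$ the companion term is zero and the constraint is still strict negative. This gives the necessity of \eqref{eq:sel}.
\item \emph{Anchor bad, companion good.} Given \eqref{eq:sel}, these positions force $w_c/w_0\ge-F_{\mathsf a}(p)/F_{\mathsf a_c}(r_\tau(p))$, i.e.\ the lower bound $\underline{\mathsf r}$.
\item \emph{Anchor good, companion bad.} These positions give the upper bounds making up $\overline{\mathsf r}$. If the anchor value is zero there (only $p=\mathsf a$ when $\mathsf a=t$), the bound is $0$, which correctly forces infeasibility via the min.
\end{itemize}

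The remaining positions have both terms nonnegative. Hence the feasible ratios are exactly $[\underline{\mathsf r},\overline{\mathsf r}]$. Adding the minimizer inequality, Proposition~\ref{prop:periodic} gives the $\PP_k$ characterization. This proves (a).

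For (b), apply Lemma~\ref{lem:record} with $r_\tau(j)=i\ge1$. It says that $\min_{1\le p\le j-1}r_\tau(p)>i$ if and only if $\gcd(m,j)=1$ and $ir'<k$, and then the minimum is $(k+l'i)/j$. Since $i<\mathsf a_c$, condition \eqref{eq:sel} with $t=j$ (positions $1,\dots,j-1$) requires the minimum to exceed $\mathsf a_c>i$. So the record hypothesis holds, and \eqref{eq:sel} becomes $\mathsf a_c<(k+l'i)/j$. Conversely, these three conditions give the inequality directly. For $t<j$ the index range shrinks, so the condition is implied.

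For (c), compute $\tau$ from $b$.
\begin{itemize}
\item For $j=p$, $k=pK+1$: Lemma~\ref{lem:fan} gives $-pK\equiv1$, so $b\equiv-K$ and $\tau\equiv(p-1)(-K)\equiv K+1$, using $pK\equiv-1$. The residues $r_\tau(p')=p'(K+1)$ for $p'\le p-1$ satisfy $p'(K+1)\le(p-1)(K+1)<k$ when $p\le K+1$, so their minimum is $K+1$. This gives the condition $\mathsf a_c<K+1$. If $p>K+1$, wraparound yields a small residue, which I will check forces $p\le K+1$ given $\mathsf a_c>i=p-1$.
\item For the second fan, $k=K+\ell(K+1)$ and $j=\ell+1$: verify $\tau\equiv-K$. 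Then $r_\tau(p')=k-p'K$ is decreasing, with minimum $k-(t-1)K$ over $p'\le t-1$.
\end{itemize}

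The main obstacle is the bookkeeping in (c): getting the congruences right and the wraparound bound that forces $p\le K+1$. For that I would use $i=p-1<\mathsf a_c<K+1$ directly.
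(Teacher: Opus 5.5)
Your proposal is correct and follows the paper's proof essentially step for step: the position-wise sign split of $F_{\mathsf a}(p)$ and $F_{\mathsf a_c}(r_\tau(p))$ for (a), Lemma~\ref{lem:record} applied through $\mathsf a_c>i$ for (b), and the explicit residues $r_\tau(p')=(K+1)p'$ and $r_\tau(p')=k-Kp'$ for (c). Your closing observation that $i=p-1<\mathsf a_c<K+1$ forces $p\le K+1$ is the paper's own argument, and it already settles both directions of the equivalence, because $r_\tau(1)=K+1$ for every $p\ge2$; so the separate wraparound analysis you planned is not needed.
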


\begin{proof}
At anchor position $p$, the two lag values are
$F_{\mathsf a}(p)$ and $F_{\mathsf a_c}(r_\tau(p))$.
The first is negative exactly at $1\le p<t$.
Positive compensation forces \eqref{eq:sel}; under that condition
the negative anchor values give the lower ratio bounds and
the negative companion values give the upper bounds.
All other constraints have two nonnegative terms.
The upper set is nonempty because it contains $p=j$;
the lower bound is strictly positive. Adding the minimizer
constraint proves (a), including equality cases.
For $t=j$, Lemma~\ref{lem:record} is exactly (b).
For the first fan sequence,
$\tau=K+1\bmod k$ and $r_\tau(p')=(K+1)p'$
for $1\le p'<p$ once $\mathsf a_c<K+1$;
this also forces $p\le K+1$.
For the second fan sequence, $\tau=-K\bmod k$ and
$r_\tau(p')=k-Kp'$ for $1\le p'\le\ell$.
Taking the minimum gives (c).
\end{proof}
\begin{theorem}[Fan tongues]\label{thm:fan-div}
Let $p\ge2$ and $k=k_p=pK+1$.
\begin{enumerate}[label=\textup{(\alph*)}]
\item If there are $w_j\ge0$, $1\le j<k/2$, not all zero, with
$\sum_jw_j\Lambda_n(\omega_j)\ge0$ for $n=1,\dots,k-1$ and
$\sum_jw_j\cos\theta_j\ge0$, then there are $f\in\cF(\R^{2r+1})$, $r$ the number
of nonzero weights, and $x_0$ whose trajectory converges to a $k$-periodic
orbit; in particular $\rho\ge1$.
\item Let $p\le K+1$, and suppose $\pi/k<\theta_p<\pi p/k$ and
$\pi(p-1)/k<\theta_{p-1}<\pi(K+1)/k$.
Then $\Lambda_1(\omega_p)>0>\Lambda_{-K}(\omega_p)$ and
$\Lambda_{-K}(\omega_{p-1})>0>\Lambda_1(\omega_{p-1})$, and the lags $1$ and $-K$
are satisfied by some measure on $\{\omega_{p-1},\omega_p\}$ if and only if
\begin{multline*}
 \Delta_p:=\sin\tfrac{\pi p}k\,\sin\bigl(\tfrac{\pi p}k-\theta_p\bigr)\,
 \sin\tfrac{\pi(K+1)}k\,\sin\bigl(\tfrac{\pi(K+1)}k-\theta_{p-1}\bigr)\\
 -\sin\tfrac{\pi(p-1)}k\,\sin\bigl(\theta_{p-1}-\tfrac{\pi(p-1)}k\bigr)\,
 \sin\tfrac\pi k\,\sin\bigl(\theta_p-\tfrac\pi k\bigr)\ \ge0 .
\end{multline*}
If moreover $\theta_{p-1}\le\pi/2$, which is automatic for $p\ge3$, and the
lags $n\in\{2,\dots,k-1\}\setminus\{(p-1)K+1\}$ are satisfied for one ratio
$w_{p-1}/w_p$ of the admissible interval, then the conclusion of \textup{(a)}
holds.
\item For $p=2$, $k=2K+1$, and $\pi/k<\theta_1,\theta_2\le2\pi/k$, the remaining
lags are satisfied automatically, and the program of \textup{(a)} with the two
harmonics $\omega_1,\omega_2$ is feasible if and only if
\[
 2\cos\tfrac\pi k\cos\tfrac\pi{2k}\,\sin\bigl(\tfrac\pi k-\vartheta_2\bigr)\cos\bigl(\vartheta_1+\tfrac\pi{2k}\bigr)
 \ge\sin\tfrac\pi k\,\sin\vartheta_1\sin\vartheta_2,\quad
 \vartheta_j=\theta_j-\tfrac\pi k .
\]
\end{enumerate}
\end{theorem}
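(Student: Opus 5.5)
Part~(a) is essentially Proposition~\ref{prop:periodic}. By Lemma~\ref{lem:positions}(a), the lag values on the grid $\Omega_k$ depend only on $n\bmod k$, and the residue zero vanishes identically. The finitely many inequalities $n=1,\dots,k-1$, together with the minimizer constraint, therefore give a solution of $\FP(\Theta_q)$. We then apply the start-up realization of Appendix~\ref{app:levels-realization}. It uses $r$ planar atoms and one decaying scalar coordinate, with the weights $w_j$ divided by $|g-2is|$ to return to $\nu$ as in Theorem~\ref{thm:exactthr}(a). This yields $f\in\cF(\R^{2r+1})$ and a trajectory converging to the $k$-periodic stationary part. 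Hence $\rho\ge1$ by Theorem~\ref{thm:exactthr}(b).

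For (b), I use the position formula $\Lambda_n(\omega_i)=F_{\mathsf a_i}(in\bmod k)$, where $\mathsf a_i=k\theta_i/\pi$.
\begin{itemize}
\item At $\omega_p$ the lag $1$ sits at position $p$ and the lag $-K$ at position $-pK\equiv1$ (Lemma~\ref{lem:fan}). The hypothesis $1<\mathsf a_p<p$ and Lemma~\ref{lem:positions}(b) give $\Lambda_1(\omega_p)>0>\Lambda_{-K}(\omega_p)$.
\item At $\omega_{p-1}$ the lag $1$ sits at position $p-1$, and $-K$ at position $-(p-1)K\equiv K+1$ modulo $k=pK+1$. Then $p-1<\mathsf a_{p-1}<K+1$ gives the opposite signs. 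Here $p\le K+1$ ensures $p-1<K+1$.
\end{itemize}
Two linear inequalities in $(w_{p-1},w_p)\ge0$ with these sign patterns admit a nonzero solution exactly when the $2\times2$ determinant of the coefficient matrix is nonnegative. The reason is that the compensation cone is nonempty iff the ratio lower bound does not exceed the upper bound. Writing $F_{\mathsf a}(\varrho)=2\sin(\pi\varrho/k)\sin(\pi(\varrho-\mathsf a)/k)$ and cancelling factors of $4$ gives exactly $\Delta_p\ge0$. This is the case $t$ such that the anchor's bad positions reduce to position $1$ in Theorem~\ref{thm:twoatom}(a).

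For the "moreover" clause, the lag $(p-1)K+1\equiv-K$ lies in the same residue class as $-K$ (Lemma~\ref{lem:fan}), so it is already covered. All remaining lags hold by hypothesis for the chosen ratio. For the minimizer constraint, $\theta_{p-1}\le\pi/2$ gives $\cos\theta_{p-1}\ge0$, and $\theta_p<\pi p/k\le\pi/2$ (since $p<k/2$) gives $\cos\theta_p>0$. For $p\ge3$ the bound $\theta_{p-1}<\pi(K+1)/k\le\pi/2$ holds because $2(K+1)\le pK+1$. We then invoke (a).

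For (c), with $p=2$ and $k=2K+1$, I list the positions of every lag $n=2,\dots,k-1$ at both atoms. Given $\mathsf a_1,\mathsf a_2\in(1,2]$, the only negative position is $\varrho=1$. At $\omega_2$ the lag with $2n\equiv1$ is $n=K+1\equiv-K$; at $\omega_1$ it is $n=1$. So no other lag is negative at either atom, and those lags hold automatically. Both $\theta_i\le2\pi/k<\pi/2$, so the minimizer constraint holds too. Feasibility thus reduces to $\Delta_2\ge0$. It remains to rewrite $\Delta_2$ in the variables $\vartheta_j=\theta_j-\pi/k$: substituting $\theta_2=\pi/k+\vartheta_2$ and $\theta_1=\pi/k+\vartheta_1$ and using $\pi(K+1)/k=\pi/2+\pi/(2k)$, the sines become cosines, and product-to-sum identities give the displayed inequality. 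I expect this trigonometric bookkeeping, in particular tracking signs and the $\cos(\pi/(2k))$ factor, to be the main obstacle. I would first verify it at a symbolic level via $\sin(\pi/2+x)=\cos x$ and the identity $\sin(2\pi/k)=2\sin(\pi/k)\cos(\pi/k)$.
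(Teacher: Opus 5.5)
Your proposal is correct and follows the paper's proof essentially step for step. Part (a) is Proposition~\ref{prop:periodic} with the start-up realization. Part (b) is the position computation $\Lambda_1(\omega_p)=F_{\mathsf a_p}(p)$, $\Lambda_{-K}(\omega_p)=F_{\mathsf a_p}(1)$, $\Lambda_1(\omega_{p-1})=F_{\mathsf a_{p-1}}(p-1)$, $\Lambda_{-K}(\omega_{p-1})=F_{\mathsf a_{p-1}}(K+1)$, followed by the ratio-compatibility determinant. Part (c) uses that each atom violates only position $1$, together with $\sin(2\pi/k)=2\sin(\pi/k)\cos(\pi/k)$ and $\pi(K+1)/k=\pi/2+\pi/(2k)$; no product-to-sum step is needed, only division by $\sin(\pi/k)$.

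Two cosmetic remarks. First, your aside identifying (b) with the case of Theorem~\ref{thm:twoatom} in which only position $1$ is bad holds only when $\theta_p\le2\pi/k$; this is harmless, since only the lags $1$ and $-K$ enter. Second, in (c) the endpoint $\theta_2=2\pi/k$ lies outside the strict hypothesis of (b). There $\Lambda_1(\omega_2)=0$ forces $w_1=w_2=0$, while $\Delta_2<0$, so the equivalence still holds.
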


\begin{proof}
Part (a) is Proposition~\ref{prop:periodic}.
For (b), the four relevant lag values are
\[
\begin{array}{ll}
 \Lambda_1(\omega_p)=F_{k\theta_p/\pi}(p),&
 \Lambda_{-K}(\omega_p)=F_{k\theta_p/\pi}(1),\\
 \Lambda_1(\omega_{p-1})=F_{k\theta_{p-1}/\pi}(p-1),&
 \Lambda_{-K}(\omega_{p-1})=F_{k\theta_{p-1}/\pi}(K+1).
\end{array}
\]
The phase bounds give their opposite signs.
The two inequalities in the positive weight ratio are compatible
exactly when
$\Lambda_1(\omega_p)\Lambda_{-K}(\omega_{p-1})
 -\Lambda_1(\omega_{p-1})\Lambda_{-K}(\omega_p)\ge0$;
division by $4$ is the displayed $\Delta_p$.
The remaining residue conditions and phases at most $\pi/2$
give all other constraints.
For $p=2$, each atom violates only the residue with position
$1$, so all other residues are automatically nonnegative.
Use $\sin(2u)=2\sin u\cos u$ and
$\pi(K+1)/k=\pi/2+\pi/(2k)$ in $\Delta_2$ to obtain (c).
\end{proof}

\subsection{Dual certificates and pinning}
\begin{theorem}[Fan-lag certificates]\label{thm:fan-conv}
Assume \eqref{eq:quad} and let $K\ge2$. If there are finitely many weights
$\lambda_n\ge0$, $n\in\mathcal N_{\rm fan}(K)$, and $\gamma>0$ such that
\begin{multline}\label{eq:fanlag}
 \Xi(c)\Bigl[\sum_{m\ge0}\lambda_{mK+1}\bigl(1-T_{mK+1}(c)\bigr)+\sum_{m\ge1}\lambda_{-mK}\bigl(1-T_{mK}(c)\bigr)+\gamma\Bigr]\\
 <(1-c^2)\Bigl[\sum_{m\ge0}\lambda_{mK+1}U_{mK}(c)-\sum_{m\ge1}\lambda_{-mK}U_{mK-1}(c)\Bigr]\quad(-1\le c\le1),
\end{multline}
then $\rho<1$. The choice $\lambda_1=1$, $\lambda_{-K}=r\ge0$ gives \eqref{eq:twolag}.
\end{theorem}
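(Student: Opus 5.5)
The plan is to recognize the displayed inequality \eqref{eq:fanlag} as, after clearing a positive factor, the statement that a certificate symbol with lags in $\mathcal N_{\rm fan}(K)$ and a positive minimizer weight is strictly negative. The conclusion $\rho<1$ then follows from the exact threshold criterion, Theorem~\ref{thm:exactthr}(c), or equivalently from Theorem~\ref{thm:oneexp} at $t=0$.

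First, use the polynomial form \eqref{eq:Lambdapoly} of the lag functions. For a positive lag $n=mK+1$ we have $\sgn(n)=+1$ and $|n|=mK+1$, so
\[
\Lambda_{mK+1}=\frac{\sin\Theta}{\sin\omega}\bigl[\Xi(c)(1-T_{mK+1}(c))-(1-c^2)U_{mK}(c)\bigr].
\]
For a negative lag $n=-mK$ the sign flips:
\[
\Lambda_{-mK}=\frac{\sin\Theta}{\sin\omega}\bigl[\Xi(c)(1-T_{mK}(c))+(1-c^2)U_{mK-1}(c)\bigr].
\]
For the minimizer term, $\cos\Theta=\sin\Theta\cot\Theta=\frac{\sin\Theta}{\sin\omega}\Xi(c)$ by Lemma~\ref{lem:angle}. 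Now form
\[
S=\sum\lambda_n\Lambda_n+\gamma\cos\Theta .
\]
On $(0,\pi)$ this equals $\frac{\sin\Theta}{\sin\omega}$ times (left side minus right side) of \eqref{eq:fanlag}. The prefactor is positive, so $S<0$ on the open interval.

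The main care point is strictness with a \emph{uniform} margin up to the endpoints, because the prefactor $\sin\Theta/\sin\omega$ degenerates there. For this I would:
\begin{itemize}
\item Note that the strict polynomial inequality on the compact interval $[-1,1]$ gives a margin $-\epsilon$ for the bracketed polynomial.
\item Near $\omega=0$ and $\omega=\pi$, apply Lemma~\ref{lem:ends}: $\pi-\Theta\sim\tau_q\omega$. Hence $\sin\Theta/\sin\omega\to\tau_q>0$ at $0$, and likewise at $\pi$ with $-\Xi(-1)>0$.
\item Equivalently, bypass the ratio entirely: $S=\cos\Theta\cdot P/\Xi$ in terms of the bracket $P$, and at the endpoints $\Theta=\pi$, so $S(0)=\gamma\cos\pi=-\gamma<0$ directly (all $\Lambda_n(0)=0$). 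At $\pi$, $\Lambda_n(\pi)\le0$ and $\cos\Theta=-1$.
\end{itemize}
Either way, continuity of $S$ on $[0,\pi]$ together with strict negativity at every point gives $S\le-\varepsilon$. This is a strict certificate symbol in the sense of Definition~\ref{def:FP}, with finitely many nonnegative $\lambda_n$ and $\gamma>0$.

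Integrating this symbol against any feasible measure shows $\FP(\Theta_q)$ is infeasible, by Lemma~\ref{lem:alt}. Theorem~\ref{thm:exactthr}(c) then gives $\rho<1$.

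Finally, setting $\lambda_1=1$, $\lambda_{-K}=r$ and all other weights zero, and using $T_1(c)=c$ and $U_0=1$, reduces \eqref{eq:fanlag} to exactly \eqref{eq:twolag}.
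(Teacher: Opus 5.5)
Your proof is correct and follows the paper's argument: you identify the symbol via \eqref{eq:Lambdapoly} as $\sin\Theta/\sin\omega$ times the left-minus-right difference in \eqref{eq:fanlag}, use $S(0)=-\gamma$ and $S(\pi)\le-\gamma$ at the endpoints, obtain a uniform margin by compactness, and conclude $\rho<1$. The paper invokes Theorem~\ref{thm:oneexp} directly, whereas you go through Theorem~\ref{thm:exactthr}(c), whose proof rests on the same theorem.
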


\begin{proof}
Formula~\eqref{eq:Lambdapoly} identifies the symbol with
$\sin\Theta/\sin\omega$ times the left-minus-right difference.
It is negative on the open interval.
At both endpoints the lag terms are nonpositive and the minimizer
term equals $-\gamma<0$. Compactness gives a strict uniform
margin, and Theorem~\ref{thm:oneexp} gives convergence.
\end{proof}
\begin{lemma}[Pinning lags]\label{lem:pin}
Let $k=k_p$ and $\omega_j=2\pi j/k$. Then $-pK\equiv1$ and $(p-1)K+1\equiv-K\pmod k$,
so on every $k$-periodic spectrum the lags $1$ and $-pK$ give the same
constraint, and so do the lags $-K$ and $(p-1)K+1$. The differences
\[
 \mathsf D=\Lambda_{-pK}-\Lambda_1,\quad \mathsf E=\Lambda_{(p-1)K+1}-\Lambda_{-K}
\]
vanish at every frequency $\omega_j$, and there
\[
 \mathsf D'(\omega_j)=-k\sin(\omega_j-\theta_j),\quad \mathsf E'(\omega_j)=-k\sin(K\omega_j+\theta_j).
\]
\end{lemma}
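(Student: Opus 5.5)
The plan is to check the congruences directly and then reduce everything about $\mathsf D$ and $\mathsf E$ to the pinning identity of Lemma~\ref{lem:pinid}. Since $k=pK+1$, we have $-pK=1-k\equiv1$. Also $(p-1)K+1=pK+1-K=k-K\equiv-K$. These congruences hold modulo $k$.

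Lemma~\ref{lem:pinid}(a) says that lags in the same residue class agree on $\Omega_k$. Hence on any measure carried by the grid $\{\omega_j\}$ the two members of each pair impose identical constraints, and $\mathsf D(\omega_j)=\mathsf E(\omega_j)=0$.

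For the derivatives we use the differentiated form of Lemma~\ref{lem:pinid}: with $n=r+mk$,
\[
 \partial_\omega\Lambda_n(\omega_j)=\partial_\omega\Lambda_r(\omega_j)+mk\sin(r\omega_j-\theta_j).
\]

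\emph{The difference $\mathsf D$.} Take $r=1$ and $m=-1$, since $-pK=1-k$. This gives
\[
 \mathsf D'(\omega_j)=-k\sin(\omega_j-\theta_j).
\]

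\emph{The difference $\mathsf E$.} Take $r=-K$ and $m=1$, since $(p-1)K+1=-K+k$. This gives
\[
 \mathsf E'(\omega_j)=k\sin(-K\omega_j-\theta_j)=-k\sin(K\omega_j+\theta_j).
\]

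The only delicate point is that Lemma~\ref{lem:pinid} allows negative $r$, with $\psi_r=\Theta+K\omega$ when $r=-K$. So the formula applies verbatim, and we must use the sign convention $\Lambda_n=\cos\Theta-\cos(n\omega-\Theta)$ for negative $n$. Differentiability of $\Theta$ at interior grid points comes from Lemma~\ref{lem:ends}.

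This is essentially a substitution, so no step is a real obstacle. The care needed is in tracking the signs of $m$ and $r$.
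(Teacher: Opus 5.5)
Your proposal is correct and follows the paper's own proof: verify the two congruences from $k=pK+1$, then apply the grid-agreement and derivative formula of Lemma~\ref{lem:pinid} with $(r,m)=(1,-1)$ for $\mathsf D$ and $(r,m)=(-K,1)$ for $\mathsf E$. Your sign bookkeeping gives exactly the stated slopes. One small addition: Lemma~\ref{lem:ends} also supplies differentiability at the endpoint grid points $0$ and $\pi$, not only at the interior ones.
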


\begin{proof}
Use the congruences of Lemma~\ref{lem:fan} and the derivative
formula of Lemma~\ref{lem:pinid}.
Transferring a weight between congruent lags preserves grid values
and adds the corresponding multiple of $\mathsf D$ or $\mathsf E$
to the off-grid symbol. Nonnegative transfers are exactly those
that keep the two affected weights nonnegative.
\end{proof}
If both contacts of an optimal two-lag symbol lie on $\Omega_L$
and conditions \textup{(E1)--(E4)} of Theorem~\ref{thm:exact}
hold, that symbol is optimal for the full program; the transfers
using $1\pm L,-K\pm L$ yield $L$-regularity.

\subsection{Exact arithmetic for the two rational examples}\label{app:farey-examples}
For an interval $[u,v]$, write
$P(u+(v-u)t)=\sum_{i=0}^d b_i\binom di t^i(1-t)^{d-i}$.
Nonnegative Bernstein basis functions sum to $1$, so
$\min b_i\le P\le\max b_i$~\cite{Farouki2012}.
The following sign checks use only this identity and rational Taylor
bounds for sine and cosine.
\begin{proof}[Proof of Proposition~\ref{prop:13}]

For the stated tuning,
\[
 \cL(c)=\frac{274486940640000c^2-89129341428596c+2155051415249}
 {110124036000000}.
\]
Its vertex lies in $(0,(\sqrt5-1)/4)$, has negative value,
and both endpoint values on that interval are positive.
Hence the tuning lies in $I_4$.
Its angle quadratic is
\[
 \Xi(c)=\frac{88897198775653}{91052250190902}
       +\frac{242832333719}{122711927481}c
       -\frac{26156560000}{8676600933}c^2.
\]
On $[-1,-1/2]$ and $[127/128,1]$, $-\Xi$ has positive
Bernstein coefficients. On each interval with consecutive endpoints
\[
 -\tfrac12,\ 0,\ \tfrac12,\ \tfrac34,\ \tfrac78,\
 \tfrac{15}{16},\ \tfrac{31}{32},\ \tfrac{63}{64},\ \tfrac{127}{128},
\]
the polynomial $3(1-c^2)-\Xi(c)^2$ has positive Bernstein
coefficients. Thus $\Theta>\pi/6$ throughout.
This excludes all denominators $k\ge6$; the four primitive
frequencies with $3\le k\le5$ are excluded by the same
quadratic signs and the $I_4$ window.
An irrational rotation also fails the single-atom constraints,
since its lag phases are dense modulo $2\pi$.

Put $c_j=\cos(2\pi j/13)$ and $s_j=\sin(2\pi j/13)$.
After division by the common positive factor $2(1-\beta)/B$,
the raw lag and minimizer slacks are
\[
 \sum_{j=2,3}v_j\bigl[\Xi(c_j)(1-c_{nj})-s_js_{nj}\bigr],
 \qquad\sum_{j=2,3}v_j\Xi(c_j),
 \quad (v_2,v_3)=(149,648)/797,
\]
with indices reduced modulo $13$.
All thirteen displayed quantities (the minimizer and
$n=1,\ldots,12$) are strictly positive.
For a reproducible rational check, enclose $\pi$ between
$314159265358979323846/10^{20}$ and
$314159265358979323847/10^{20}$, reduce angles to $[-\pi,\pi]$,
and use the Taylor polynomials through degree $32$ with their
Lagrange remainder bounds. Substitution into these finite formulas
gives strict positive interval lower endpoints.
Periodicity gives every signed lag; multiples of $13$ vanish.
Proposition~\ref{prop:periodic} realizes the stated two harmonics
in $\R^5$.

\end{proof}
\begin{proof}[Proof of Proposition~\ref{prop:conv-gap}]

Here
\[
 \Xi(c)=-\frac{620000}{208791}c^2
         +\frac{14967}{7733}c+\frac{203570}{208791},
 \qquad\cL(c)=\frac{620000c^2-195318c+5221}{250000}.
\]
The same vertex and endpoint test gives $I_4$.
Take $r=13/100$, $\gamma=1/1000$ in \eqref{eq:twolag}.
The right-minus-left difference is
\[
\begin{aligned}
 P(c)={}&-\tfrac{644800}{208791}c^6+\tfrac{23608}{7733}c^5
 +\tfrac{389324}{94905}c^4-\tfrac{34149683}{5219775}c^3\\
 &+\tfrac{1007042}{347985}c^2
 -\tfrac{92371789}{208791000}c+\tfrac{45613}{1898100}.
\end{aligned}
\]
Its degree-six Bernstein coefficients are strictly positive on
each interval $[-1+j/8,-1+(j+1)/8]$, $0\le j<16$.
Therefore $P>0$ on $[-1,1]$, and
Theorem~\ref{thm:fan-conv} gives $\rho<1$.

\end{proof}

Figure~\ref{fig:lattice} displays the lattice curve of Proposition~\ref{prop:13} and its endpoint fan.
\begin{figure}[tbp]
\centering
\includegraphics[width=\linewidth]{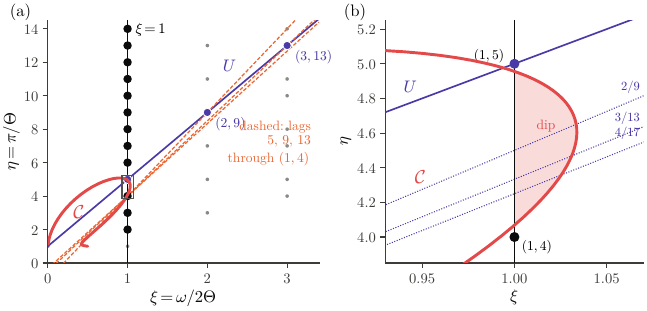}
\caption{The example of Proposition~\ref{prop:13} in lattice coordinates.
The line $U:\eta=4\xi+1$ carries the fan $p/(4p+1)$; dashed lines
correspond to its lower parent lags $5,9,13$. The right panel resolves
the dip and the fan directions $2/9,3/13,4/17$.}
\label{fig:lattice}
\end{figure}

\section{Boundary structure and finite certificates}\label{app:boundary}
For shifted curves, write $\Lambda^{q,s}_n=\Lambda_n[\Theta_{q,s}]$,
$\theta_\omega(q,s)=\Theta_{q,s}(\omega)$ and $\tau_q=-1/\Xi_q(1)>0$.
The lag functions are jointly real analytic through the endpoints, with
$\Lambda_n(0)=\Lambda_n'(0)=0$ and
$\Lambda_n''(0)=-n(n+2\tau_q)$. The calculation used throughout is
\begin{equation}\label{eq:shiftid}
 \frac{\sum_n\lambda_n\Lambda^{q,s}_n+\gamma\cos\Theta_{q,s}}{\sin\Theta_{q,s}}
 =\frac{\sum_n\lambda_n\Lambda^{q,s'}_n+\gamma\cos\Theta_{q,s'}}{\sin\Theta_{q,s'}}
 +(s-s')\Bigl[\sum_n\lambda_n(1-\cos n\omega)+\gamma\Bigr]
\end{equation}
on $(0,\pi)$. It follows directly from $\cot\Theta_{q,s}=h_q+s$.
In particular, if $S=\sum_n\lambda_n\Lambda^{q,s}_n$ and
$P=\sum_n\lambda_n(1-\cos n\omega)$, then
\begin{equation}\label{eq:ds}
 \partial_sS=(\partial_s\log\sin\Theta_{q,s})S+\sin\Theta_{q,s}P.
\end{equation}

\subsection{Regularity and continuation}\label{app:boundary-regularity}
\begin{definition}[Regular periodic point]\label{def:regular}
Let $L\ge3$. A pair $(q_*,s_*)$ is $L$-regular if there are a finite set
$S\subset\Omega_L\cap(0,\pi)$, a finite set $N\subset\Z\setminus\{0\}$ and weights $w^*\in\R^S_{>0}$,
$\lambda^*\in\R^N_{>0}$ with $\sum_\omega w^*_\omega=\sum_n\lambda^*_n=1$, such that, with $N_0=N\cap L\Z$,
$\mathcal A=(N\setminus N_0)\bmod L$ and $S^*=\sum_{n\in N}\lambda^*_n\Lambda^{q_*,s_*}_n$:
\begin{enumerate}[label=\textup{(R\arabic*)}]
\item primal: $w^*$ is feasible for $\PP_L(\Theta_{q_*,s_*})$; the constraints of the residues in
$\mathcal A$ are tight, the constraints of all other residues $\varrho\not\equiv0$ and the
minimizer constraint are strict; $|\mathcal A|=|S|$, and
$M_*=\bigl(\Lambda^{q_*,s_*}_\varrho(\omega)\bigr)_{\varrho\in\mathcal A,\ \omega\in S}$ has rank $|S|-1$;
\item dual: $S^*<0$ on $(0,\pi]\setminus S$, $\partial^2_\omega S^*<0$ on $S$,
$\sum_n\lambda^*_nn(n+2\tau_{q_*})>0$, and $P^*(\omega)=\sum_n\lambda^*_n(1-\cos n\omega)>0$ on $S$;
\item pinning rank: every class $N_\varrho=N\cap(\varrho+L\Z)$, $\varrho\in\mathcal A$, has at least
two elements, and the vectors $v_\varrho=\bigl(\sin(\varrho\omega-\theta_\omega(q_*,s_*))\bigr)_{\omega\in S}$,
$\varrho\in\mathcal A$, form a basis of $\R^S$ (for one fixed representative $\varrho$ of each class).
\end{enumerate}
A tuning $q_*$ is $L$-regular if $(q_*,\sigma(q_*))$ is.
\end{definition}

The values and first derivatives of $S^*$ vanish on $S$:
positivity of $w^*$ and complementarity give the values, and the global
sign gives the derivatives. The moment inequality is precisely
$S^{*\prime\prime}(0)<0$, controlling curvature at the endpoint.
If $|S|\ge2$, the condition $P^*>0$ on $S$ already follows from \textup{(R1)}.

\begin{proof}[Proof of Theorem~\ref{thm:modelock}]
Let $M(q,s)=(\Lambda^{q,s}_\varrho(\omega))_{\varrho\in\mathcal A,\omega\in S}$.
Its entries are real analytic by Lemma~\ref{lem:ends}. At the base point,
$M_*w^*=0$ and $\operatorname{rank}M_*=|S|-1$. Put
$\mu^*_\varrho=\sum_{n\equiv\varrho}\lambda_n^*>0$.
Since the zero-class lag functions vanish on $\Omega_L$,
$\mu^{*\top}M_*=(S^*(\omega))_{\omega\in S}$.
Every entry is nonpositive by (R2), while its product with the positive
vector $w^*$ is zero. Hence $S^*=0$ on $S$ and
$\mu^{*\top}M_*=0$. The adjugate of $M_*$ is therefore
$c\,w^*\mu^{*\top}$ for some $c\ne0$. Jacobi's formula and
\eqref{eq:ds} give
\[
 \partial_s\det M(q_*,s_*)
 =c\sum_{\omega\in S}w^*_\omega\sin\theta_\omega(q_*,s_*)P^*(\omega)\ne0.
\]
The analytic implicit function theorem
\cite[Section~2.3]{KrantzParks2002} gives an analytic $s(q)$ with
$\det M(q,s(q))=0$. A fixed nonzero minor keeps the rank equal to
$|S|-1$ after the neighborhood is reduced. Nonzero adjugate columns
and rows give analytic positive kernel vectors $w(q),\mu(q)$, normalized
to agree with $w^*,\mu^*$ at the base point. The inactive residue
constraints and minimizer constraint remain strict by continuity.
Thus $w(q)$ solves $\FP$ at shift $s(q)$ and
$\sigma(q)\le\sigma_L(q)\le s(q)$.

It remains to continue a dual symbol with fixed contacts. For each
nonzero class write
$\mu_\varrho=\sum_{n\equiv\varrho}\lambda_n$ and
$m_\varrho=\sum_{n\equiv\varrho}\lambda_n(n-\varrho)/L$.
Lemma~\ref{lem:pinid} gives
\begin{equation}\label{eq:slope}
 S'(\omega)=\sum_{\varrho\in\mathcal A}\mu_\varrho\Lambda_\varrho'(\omega)
 +L\sum_{\varrho\in\mathcal A}m_\varrho\sin(\varrho\omega-\Theta(\omega))
 +\sum_{n\in N_0}\lambda_n\Lambda_n'(\omega),\qquad\omega\in S.
\end{equation}
Keep the zero-class weights fixed at their original values and use
the continued $\mu(q)$. By (R3), the equations $S'|_S=0$ uniquely
determine analytic moments $m(q)$ near the base point. Each class has
two distinct lags, so its linear map from lag weights to its mass and
moment is onto. A fixed affine right inverse through the original
weights lifts $(\mu(q),m(q))$ to analytic positive weights $\lambda(q)$.
Normalize their sum. The resulting $S_q$ has zero values and slopes on
$S$.

Choose disjoint fixed neighborhoods of $S$ and $0$. On them the
negative second derivative of $S^*$ has a uniform margin, by (R2)
and the endpoint moment condition. On their compact complement,
$S^*$ has a strictly negative maximum. Uniform $C^2$ continuity
preserves both estimates. Since the continued values and slopes at
$S\cup\{0\}$ are zero, Taylor's theorem gives
$S_q\le0$ with precisely that zero set; also $P_q>0$ near $S$.
For $s<s(q)$ sufficiently close, \eqref{eq:shiftid} gives
\[
 \frac{\sum_n\lambda_n(q)\Lambda^{q,s}_n+\epsilon\cos\Theta_{q,s}}
 {\sin\Theta_{q,s}}
 =\frac{S_q}{\sin\Theta_{q,s(q)}}-(s(q)-s)P_q
       +\epsilon(h_q+s).
\]
Near $0$, $h_q+s<0$. Near $S$ the second term has a strict negative
margin; on the remaining compact set the first term does. The
positive part of $h_q+s$ is bounded there. Choosing $\epsilon>0$
smaller than these margins makes this expression negative throughout
$(0,\pi)$; the unscaled symbol equals $-\epsilon$ at $0$ and is at
most $-\epsilon$ at $\pi$. It is therefore strict. Monotonicity
excludes every smaller shift, proving
$\sigma(q)=\sigma_L(q)=s(q)$.

For any threshold measure $v$, feasibility and the dual sign give
$0\le\int S_q\,dv\le0$. Positivity of the dual weights forces every
lag in $N$ to be tight and $v$ to be carried by $S\cup\{0\}$.
The rank condition gives $v|_S=cw(q)$; the minimizer constraint excludes
$c=0$ and is exactly
$0\le v(\{0\})\le c\int\cos\Theta_{q,s(q)}\,dw(q)$.
Conversely every measure satisfying this description is feasible.
This proves (c).

Differentiate the determinant equation. Because the parameter
$(q,s)$ enters each column only through $h_q(\omega)+s$,
\[
 \nabla s(q)=
 -\frac{\sum_{\omega\in S}w_\omega\sin\theta_\omega P_q(\omega)
                         \nabla_qh_q(\omega)}
        {\sum_{\omega\in S}w_\omega\sin\theta_\omega P_q(\omega)}.
\]
All weights in this quotient are positive. Lemma~\ref{lem:kappa}
therefore gives $\partial_\kappa s<0$. The strict conditions (R1)--(R3)
persist, proving (e). A final application of the implicit function
theorem to $s(\kappa,\alpha L,\beta)=0$ gives the analytic boundary
graph and completes the proof.
\end{proof}

\begin{corollary}[Open plateaus]\label{cor:plateau}
The set of regular tunings is open. Its intersection with $\partial\mathcal D$ is relatively
open in $\partial\mathcal D$; on it $\kappa_\partial$ is real analytic, and on each connected component the
support $S$ of the threshold extremal, its least common denominator $L$ and the tight classes $\mathcal A$ are
constant, and $\kappa_\partial$ solves $\det\bigl(\Lambda_\varrho(\omega)\bigr)_{\varrho\in\mathcal A,\,\omega\in S}=0$ at the shift $0$. A plateau is such a component.
If the support has exactly one atom in the window $(2\pi/(K+1),2\pi/K)$, then on the
part of the plateau in $I_K$ this atom is the unique anchor.
\end{corollary}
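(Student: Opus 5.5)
The plan is to read every assertion off Theorem~\ref{thm:modelock}, which is applied at each regular tuning in turn. For openness, let $q_*$ be $L$-regular with data $S,N,w^*,\lambda^*$. By Theorem~\ref{thm:modelock}(a),(e), every $q$ in a neighborhood $U$ satisfies $\sigma(q)=s(q)$, and $(q,s(q))$ is $L$-regular with the same data. Hence every $q\in U$ is an $L$-regular tuning, and the regular set is open. Intersecting with $\partial\mathcal D$ then gives relative openness in $\partial\mathcal D$. Where $s_*=0$, the final clause of Theorem~\ref{thm:modelock} shows that $\partial\mathcal D$ is locally the analytic graph defined by $s(\kappa,\alpha L,\beta)=0$. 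Its implicit solution is $\kappa_\partial$, because $\partial_\kappa s<0$ by part (d) and $\mathcal D=\{\sigma\le0\}$. So $\kappa_\partial$ is real analytic on the regular part of the boundary.

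For constancy on components, I will show that each of the maps $q\mapsto S$, $q\mapsto L$, $q\mapsto\mathcal A$ is locally constant. By Theorem~\ref{thm:modelock}(c), the support $S$ is intrinsic: the threshold extremal on $(0,\pi]$ is unique up to scaling and is carried by $S$ with positive masses. Since $S$ is fixed on $U$, the support is locally constant. The least common denominator $L$ of $S$ is a function of $S$ alone, so it is locally constant as well. For the tight classes, I will define them intrinsically as the nonzero residue classes $\varrho$ modulo $L$ with $\int\Lambda_\varrho\,dw=0$ for the extremal $w$. By (R1), these are exactly $\mathcal A$ at the base point and on $U$. This choice matters, because the lag set $N$ in the data could a priori be chosen differently at different points. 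Locally constant maps are constant on connected components. On the component, $w(q)$ lies in the kernel of the tight matrix, which has rank $|S|-1$. The determinant therefore vanishes at the shift $\sigma(q)=0$, and this is Theorem~\ref{thm:modelock}(a) at boundary points.

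For the anchor statement, recall the definitions: a point of the plateau in $I_K$ has anchors equal to the atoms of an extremal lying in the closed dip $\bar\Delta$ (Definition~\ref{def:anchor}), and every extremal charges $\bar\Delta$ (Lemma~\ref{lem:dip}). By Proposition~\ref{prop:farey}, $\bar\Delta$ lies inside the window $(2\pi/(K+1),2\pi/K)$. All extremals are multiples of $w(q)$ plus an atom at $0$, and $0\notin\bar\Delta$. So some atom of $S$ lies in $\bar\Delta$, hence in the window. Since $S$ has exactly one atom in the window, that atom is the unique anchor.

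The main obstacle is to make sure that regularity at a tuning $q$ refers to the pair $(q,\sigma(q))$, and that the continued pair $(q,s(q))$ is exactly this pair. Theorem~\ref{thm:modelock}(a) supplies the identification $s(q)=\sigma(q)$. A second subtlety is the intrinsic definition of $\mathcal A$ above, which is what keeps it well defined across different choices of $N$.
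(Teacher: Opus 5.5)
Your route is the paper's: openness and persistence from Theorem~\ref{thm:modelock}(a),(c),(e), analyticity of $\kappa_\partial$ from its final clause together with $\partial_\kappa s<0$, and the anchor from Lemma~\ref{lem:dip} and Proposition~\ref{prop:farey}. There is one step you skip, and the paper covers it by citing Theorem~\ref{thm:tightclass}(a).

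In Definition~\ref{def:regular} the period $L$ is only some grid containing $S$, and $\mathcal A$ is a set of residues modulo that period. A priori this period could be a proper multiple $mL_0$ of the least common denominator $L_0$ of $S/2\pi$. Two base points whose neighborhoods overlap inside one component could also use different periods. Your remark that the least common denominator is a function of $S$ alone is true. But you then read (R1), which concerns residues modulo the regularity period, as a statement modulo $L_0$, when you assert that the zero-slack nonzero classes are ``exactly $\mathcal A$''.

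That identification fails if $m>1$. Take $L=2L_0$. The nonzero residue $L_0 \bmod 2L_0$ is automatically tight, because its lag functions vanish on $\Omega_{L_0}\supset S$, so (R1) places it in $\mathcal A$. Yet it is the zero class modulo $L_0$, which your intrinsic definition discards. Your intrinsic matrix would then have fewer rows than columns, and neither the constancy of $\mathcal A$ nor the determinant equation would be proved.

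The missing argument is short.
\begin{itemize}
\item Translation by $L_0$ preserves the values of all lag constraints on $S$. So by (R1) it carries each class of $\mathcal A$ either into $\mathcal A$ or to $0$, and the automatically tight classes $jL_0$, $1\le j<m$, all lie in $\mathcal A$.
\item The same translation leaves every pinning vector $v_\varrho$ unchanged. Since (R3) requires these vectors to be independent, $m\le 2$.
\item If $m=2$, the same independence forces $\mathcal A=\{L_0\}$. Then every lag in $N$ is a multiple of $L_0$, so $P^*=0$ on $S\subset\Omega_{L_0}$, contradicting (R2).
\end{itemize}
Hence the regularity period equals $L_0$. With this in hand, your intrinsic description of $\mathcal A$, its constancy on components, and the squareness $|\mathcal A|=|S|$ needed for the determinant all go through as you wrote them.
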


\begin{proof}
Persistence gives openness and local constancy of the unique support
and tight classes, hence constancy on each component. The least period
is intrinsic by Theorem~\ref{thm:tightclass}(a). The determinant equation
and analyticity follow from Theorem~\ref{thm:modelock}. Every extremal
charges the dip, so a single support point in the gap window is its
unique anchor.
\end{proof}

\begin{corollary}[Periods on a plateau]\label{cor:periods}
Let $q\in\partial\mathcal D$ be regular with threshold extremal carried by $S$, and let $L$ be the
least common multiple of the denominators of the points of $S/2\pi$.
\begin{enumerate}[label=\textup{(\alph*)}]
\item $q\in \mathcal T_{L'}$ if and only if $L\mid L'$.
\item If $q\in I_K$, then the fan program of Theorem~\ref{thm:fan-div}\textup{(a)} of some period
$pK+1$ is feasible at $q$ if and only if $\gcd(L,K)=1$.
\end{enumerate}
\end{corollary}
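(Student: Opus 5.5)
For part (a), I would read off the structure of the threshold extremals from Theorem~\ref{thm:modelock}(c). At a regular boundary point $\sigma(q)=0$, and the solutions of $\FP(\Theta_q)$ are exactly $c\,w(q)+a\delta_0$, where $w(q)$ has positive mass at every point of $S$. The atom at $0$ contributes nothing to any lag constraint and only helps the minimizer constraint, so it can be ignored.

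Sufficiency is the easy direction. If $L\mid L'$, then $S\subset\Omega_L\subset\Omega_{L'}$, so $w(q)$ is a measure on the grid $\Omega_{L'}$ solving $\FP(\Theta_q)$. Proposition~\ref{prop:periodic} then shows that it solves $\PP_{L'}$, so $q\in\mathcal T_{L'}$.

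For necessity, suppose $q\in\mathcal T_{L'}$. A solution of $\PP_{L'}(\Theta_q)$ is in particular a solution of $\FP(\Theta_q)$, since periodic grid constraints are the full lag constraints restricted by residue. Because $\sigma(q)=0$, it is therefore a threshold extremal. By (c) its restriction to $(0,\pi]$ is a positive multiple of $w(q)$, so it charges every point of $S$. All these points lie in $\Omega_{L'}$, which means every denominator of $S/2\pi$ divides $L'$, hence $L\mid L'$. The one point to check is that the endpoint $\pi$ cannot appear: an atom there is removed as in Proposition~\ref{prop:periodic}, and in any case (c) excludes mass outside $S\cup\{0\}$.

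For part (b), Theorem~\ref{thm:fan-div}(a) is precisely feasibility of $\PP_{pK+1}$, with measures supported on the primitive grid points $1\le j<k/2$. By (a), it suffices to show that some $p\ge1$ gives $L\mid pK+1$ if and only if $\gcd(L,K)=1$. If $g=\gcd(L,K)>1$, then $g\mid pK$ and $g\mid L\mid pK+1$, so $g\mid1$, a contradiction. Conversely, if $\gcd(L,K)=1$, then $K$ is invertible modulo $L$, and we can pick $p\ge1$ with $pK\equiv-1\pmod L$; taking $p$ large enough that $k=pK+1\ge3$ costs nothing. We also need that $S$ avoids $0$ and $\pi$, so that the weights in Theorem~\ref{thm:fan-div}(a) are indexed by $1\le j<k/2$. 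This holds because $S\subset(0,\pi)$ by Definition~\ref{def:regular}.

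The main obstacle is the necessity in (a). The point is that membership in $\mathcal T_{L'}$ produces a \emph{full-program} solution at shift exactly $0=\sigma(q)$, so uniqueness of the threshold extremal from Theorem~\ref{thm:modelock}(c) applies. I would write this step carefully, including the equivalence between $\PP_{L'}$ solutions and grid-supported $\FP$ solutions from Proposition~\ref{prop:periodic}.
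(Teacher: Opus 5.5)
Your proposal is correct and follows the paper's proof: grid inclusion gives sufficiency, uniqueness of the threshold extremal from Theorem~\ref{thm:modelock}(c) gives necessity, and solvability of $pK\equiv-1\pmod L$ gives (b). One small correction: Theorem~\ref{thm:fan-div} is stated for $p\ge2$, so the requirement is $p\ge2$, not $k\ge3$ (for $K\ge2$, $p=1$ already gives $k\ge3$); adding a multiple of $L$ to $p$ fixes this, which is also how the paper handles it.
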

\begin{proof}[Proof of Corollary~\ref{cor:periods}]
A multiple of $L$ contains the support grid. Conversely every feasible
periodic measure at threshold has the same positive support by
Theorem~\ref{thm:modelock}(c), so its period is divisible by $L$.
The congruence $pK\equiv-1\pmod L$ is solvable exactly when
$\gcd(K,L)=1$; a solution can be increased to have $p\ge2$.
\end{proof}

\subsection{Boundary points and heavy atoms}\label{app:boundary-heavy}
\begin{theorem}[Structure of a boundary point]\label{thm:bdry}
Let $q\in\mathcal B_K$.
\begin{enumerate}[label=\textup{(\alph*)}]
\item \textup{(Limit symbol.)} There are $\lambda_n\ge0$, $n\ne0$, and $\gamma\ge0$ with $\sum_n\lambda_n+\gamma=1$
such that $S_q=\sum_n\lambda_n\Lambda_n+\gamma\cos\Theta_q$ is continuous, $S_q\le0$ on $[0,\pi]$, every
extremal is carried by $Z_q=\{S_q=0\}$, $\int\Lambda_n\,dw=0$ for every extremal $w$ whenever
$\lambda_n>0$, and $\int\cos\Theta_q\,dw=0$ for every extremal if $\gamma>0$.
\item \textup{(Dip.)} Every extremal charges the closed dip $\bar\Delta\subset(2\pi/(K+1),2\pi/K)$;
hence $Z_q\cap\bar\Delta\ne\emptyset$.
\item \textup{(Periodic points.)} $q\in\mathcal P$ if and only if some extremal is carried by some
$\Omega_L$; then each such extremal has an anchor $2\pi j/k$ with $1/(K+1)<j/k<1/K$, $k\mid L$
and $k\ge2K+1$.
\item \textup{(Non-periodic points.)} If $q\notin\mathcal P$, then every extremal has an atom at an
irrational multiple of $2\pi$, or infinitely many atoms, or a non-atomic part; and
$\sigma_L(q)\to0$ as $L\to\infty$.
\item \textup{(Closures of plateaus.)} Let $\Pi$ be a plateau in $I_K$ with support $S_\Pi\subset\Omega_L$.
Then $\sigma_L=0$ on $\cl\Pi\cap\mathcal B_K$, and at every $q\in\cl\Pi\cap\mathcal B_K$ some extremal is
carried by $S_\Pi$. Hence the relative boundary of every plateau lies in $\mathcal P\setminus\mathcal R$.
\end{enumerate}
\end{theorem}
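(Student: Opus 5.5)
The five parts follow largely from results already in place, so the proof goes part by part and uses the fact that on $\mathcal B_K$ one has $\sigma(q)=0$ (Theorem~\ref{thm:global}); hence threshold extremals are exactly the solutions of $\FP(\Theta_q)$.

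Part (a) is Theorem~\ref{thm:limitsymbol} at $\sigma(q)=0$: the limit symbol $S_q$ and its complementary-slackness properties apply verbatim to every extremal. For (b), the closed dip lies strictly inside the window because $q\in I_K$ (Definition~\ref{def:levels} together with Corollary~\ref{cor:dip}). Lemma~\ref{lem:dip} shows that every extremal charges $\bar\Delta$, and by (a) such a charged point lies in $Z_q$.

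For (c), suppose first that $q\in\mathcal P$. Then $\sigma_L(q)=0$ for some $L$, and by Proposition~\ref{prop:threshold} (applied to finite grids) $\PP_L(\Theta_q)$ is feasible; a solution of it is an extremal carried by $\Omega_L$. Conversely, an extremal carried by $\Omega_L$ solves $\PP_L$ at shift $0$, so $\sigma_L(q)\le0$. Since $\sigma\le\sigma_L$ and $\sigma(q)=0$, this gives $\sigma_L(q)=0$. The anchor statement is Proposition~\ref{prop:farey}.

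For (d), take an extremal that is a finite sum of atoms at rational multiples of $2\pi$. It is carried by $\Omega_L$ for $L$ equal to the lcm of the denominators (endpoint atoms removed as in the realization argument), so by (c) $q\in\mathcal P$. This gives the trichotomy by contraposition. The limit $\sigma_L(q)\to0$ needs more than Theorem~\ref{thm:sampling}, which only yields $\inf_L\sigma_L=0$. I would apply the sampling construction at shift $\varepsilon>0$. Since no extremal is periodic, every lag integral $\int(1-\cos n\omega)\,d\nu$ of the monotone-form measure is positive. The shifted measure is then nondegenerate, and Lemma~\ref{lem:A-sampling} gives $\PP_L$ feasible at shift $\varepsilon$ for all $L\ge L_0(\varepsilon)$. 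Hence $\sigma_L(q)\le\varepsilon$ eventually, and combined with $\sigma_L\ge\sigma=0$ this proves the limit.

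For (e), on $\Pi$ the plateau has $\sigma_L=0$ with extremal $w(q)$ carried by $S_\Pi$ (Theorem~\ref{thm:modelock}, Corollary~\ref{cor:plateau}). Take $q_n\to q\in\cl\Pi\cap\mathcal B_K$ and normalize the $w(q_n)$ to probability measures on the fixed finite set $S_\Pi$. A subsequence converges. Closedness of the feasible set (Proposition~\ref{prop:threshold}(c), for the grid program) makes the limit feasible for $\PP_L(\Theta_q)$ at shift $0$, and it is still carried by $S_\Pi$. Removing any mass that has moved to $0$ leaves a nonzero measure, because the minimizer constraint forbids total collapse to $0$. Therefore $\sigma_L(q)\le0$, hence $\sigma_L(q)=0$ and $q\in\mathcal P$.

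If such a $q$ were regular, Corollary~\ref{cor:plateau} would give a plateau neighborhood of $q$. Its support and tight classes are constant and must equal those of $\Pi$, because the extremal is unique (Theorem~\ref{thm:modelock}(c)) and carried by $S_\Pi$. So $q$ would lie in the same component $\Pi$, contradicting that $q$ is on its relative boundary. The delicate step is exactly this identification of components. Uniqueness of the regular extremal forces the support to be $S_\Pi$, and by relative openness $q$ is then an interior point of $\Pi$.
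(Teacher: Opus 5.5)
Your proposal is correct and follows the paper's proof essentially part by part. In (d) you rightly rerun the sampling \emph{argument} of Theorem~\ref{thm:sampling} at shift $\varepsilon$, rather than citing only its statement $\inf_L\sigma_L=\sigma$; in (e) you pass to the limit on the fixed finite support $S_\Pi$ and get $\sigma_L(q)=0$ from $\sigma\le\sigma_L$, a harmless variant of the paper's appeal to continuity of $\sigma_L$. One correction in (e): the plateau of a regular limit point coincides with $\Pi$ because its connected regular neighborhood meets $\Pi$ (as $q\in\cl\Pi$), not because the supports match---uniqueness only gives $S_q\subseteq S_\Pi$, since the limit measure may lose mass at points of $S_\Pi$, and equal supports alone would not identify two components.
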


\begin{proof}
The limit symbol and its complementary slackness are
Theorem~\ref{thm:limitsymbol}; dip carriage is Lemma~\ref{lem:dip}.
A grid-carried extremal is equivalent to $\sigma_L=0$, and
Proposition~\ref{prop:farey} gives its anchor. Finitely many frequencies
at rational multiples of $2\pi$ lie on one grid; the absence of such a
support, after removing a possible zero atom, gives the stated measure alternatives. The sampling argument of
Theorem~\ref{thm:sampling}, applied to a measure having no such carrier,
then works for every sufficiently large period.

On a plateau $\sigma=\sigma_L=0$. Continuity preserves both equalities
at its closure, and compactness of probability vectors on its fixed
finite support gives an extremal there. A regular limit point has a
neighborhood with the same support inside its own open plateau; if it
is approached by the given plateau, the components coincide. Hence a
relative endpoint is periodic and nonregular.
\end{proof}

\begin{proof}[Proof of Theorem~\ref{thm:class}]
Strict decrease of $\sigma$ in $\kappa$ gives the three signs. The finite
alternative and exact threshold theorem give (i); sampling a negative
threshold gives (ii), and grid inclusion gives all multiples of its
period. At equality use Theorem~\ref{thm:bdry}. Every feasible grid
measure has the Farey anchor of Proposition~\ref{prop:farey}.
\end{proof}

\begin{proposition}[Heavy atoms move continuously]\label{prop:heavy}
Let $C\subset\mathcal B_K$ be connected and $m_0>1/2$, and suppose that at every $q\in C$ every
extremal $w$ with $w(\{0\})=0$ and $w((0,\pi])=1$ has an atom of mass at least $m_0$. Then:
\begin{enumerate}[label=\textup{(\alph*)}]
\item at each $q\in C$ all such extremals have their heavy atom at the same point $r(q)\in(0,\pi]$;
\item $r:C\to(0,\pi]$ is continuous;
\item if $q\in\mathcal P$, then $r(q)\in2\pi\Q$;
\item if $r$ is not constant on $C$, then $C\setminus\mathcal P$ is uncountable.
\end{enumerate}
\end{proposition}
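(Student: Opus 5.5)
The plan is to use the convexity of the set of extremals at a fixed $q$ for (a), weak compactness together with the closedness of the feasible set for (b), the periodic-grid characterization of $\mathcal P$ for (c), and a countability argument for (d).

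\emph{Part (a).} Fix $q\in C$ and write $\mathcal E_q$ for the set of extremals normalized by $w(\{0\})=0$ and $w((0,\pi])=1$. This set is convex: the feasibility constraints of $\FP(\Theta_q)$ are linear, and removing the atom at $0$ preserves feasibility, since $\Lambda_n(0)=0$ and the zero atom only adds to the minimizer constraint. Suppose $w_1,w_2\in\mathcal E_q$ have heavy atoms at $r_1\ne r_2$. The midpoint $\tfrac12(w_1+w_2)$ lies in $\mathcal E_q$. Its mass at any point $x$ is
\[
\tfrac12\bigl(w_1(\{x\})+w_2(\{x\})\bigr).
\]
At $x=r_1$ we have $w_2(\{r_1\})\le 1-m_0<\tfrac12$, and symmetrically at $r_2$. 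At any other point both masses are at most $1-m_0$. The midpoint's largest atom therefore has mass below
\[
\tfrac12\bigl(1+(1-m_0)\bigr)=1-\tfrac{m_0}2 .
\]
I must check this is $<m_0$, which needs $m_0>2/3$. That is stronger than the hypothesis, so a single midpoint does not suffice.

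The fix is to use the convex combination $\lambda w_1+(1-\lambda)w_2$ for general $\lambda$ and look for one with no atom of mass $\ge m_0$. Its mass at $r_1$ is at most $\lambda+(1-\lambda)(1-m_0)$, its mass at $r_2$ is at most $\lambda(1-m_0)+(1-\lambda)$, and elsewhere it is at most $1-m_0<m_0$. Set $f(\lambda)=\lambda w_1(\{r_1\})+(1-\lambda)w_2(\{r_1\})$, which is continuous and affine. At $\lambda=0$ the mass at $r_1$ is $<1-m_0<m_0$. The heavy atom of the combination, if it sits at $r_1$ or $r_2$, switches location as $\lambda$ runs from $1$ to $0$. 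Since both masses are continuous in $\lambda$ and cannot both be $\ge m_0>1/2$ (the total mass is $1$), continuity forces some $\lambda$ at which neither is $\ge m_0$. That combination contradicts the hypothesis, so $r(q)$ is well defined.

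\emph{Part (b).} Take $q_i\to q$ in $C$ and extremals $w_i\in\mathcal E_{q_i}$. By weak compactness, a subsequence converges to a limit $w$. The closed-epigraph statement of Proposition~\ref{prop:threshold}(c), together with continuity of $\sigma$ and $\sigma=0$ on $\mathcal B_K$, shows that $w$ is feasible for $\FP(\Theta_q)$. Mass near $0$ may escape in the limit; I handle this as follows. The heavy atoms $r(q_i)$ carry mass $\ge m_0$, and all other mass is in the dip or elsewhere, so the dip lemma (Lemma~\ref{lem:dip}) combined with uniform negativity of $\Lambda_1$ near $0$ should bound the mass near $0$. If needed I renormalize instead, which I expect to work, though this step has some slack in the details. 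Passing to a further subsequence, $r(q_i)\to r_*$. Upper semicontinuity of mass at closed sets gives $w(\{r_*\})\ge m_0$ after normalization, so $r_*=r(q)$ by (a). Since every subsequence has this same limit, $r$ is continuous.

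\emph{Part (c).} If $q\in\mathcal P$, Theorem~\ref{thm:bdry}(c) gives an extremal carried by a grid $\Omega_L$. By (a) its heavy atom equals $r(q)$, so $r(q)\in2\pi\Q$.

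\emph{Part (d).} Since $C$ is connected and $r$ is continuous, $r(C)$ is a nondegenerate interval whenever $r$ is nonconstant. That interval contains uncountably many points outside $2\pi\Q$, and by (c) every $q$ with $r(q)$ in that set lies in $C\setminus\mathcal P$. Hence $C\setminus\mathcal P$ is uncountable.

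\emph{Main obstacle.} I expect the hardest step to be the compactness argument in (b), specifically controlling mass escaping to $0$ so that the limit lies in $\mathcal E_q$.
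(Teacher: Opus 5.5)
Your parts (a), (c) and (d) follow the paper's argument. In (a), the path $\lambda w_1+(1-\lambda)w_2$ of normalized extremals must move its heavy atom from $r_2$ to $r_1$. The closed sets $\{\lambda: f_1(\lambda)\ge m_0\}$ and $\{\lambda: f_2(\lambda)\ge m_0\}$ cover $[0,1]$ and are disjoint, because the total mass is one and $m_0>1/2$; connectedness of $[0,1]$ then gives the contradiction. This is what the paper means by ``no such atom or two''.

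One small sign slip: an atom at $0$ contributes $\cos\Theta_q(0)=-1$ per unit mass, so it \emph{lowers} the minimizer slack. That is why deleting it preserves feasibility.

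The gap is in (b), at the step you flagged. Renormalizing after deleting the zero atom of the weak limit $w$ is harmless for the light mass, since it only raises the normalized mass of an atom at some $r_*\ne0$. But you never exclude $r_*=0$, that is, the heavy atoms $r(q_i)$ themselves drifting to $0$. In that case the atom you delete is the heavy one, and (a) cannot be applied at $q$.

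Your proposed control does not rule this out. Since $\Lambda_1(0)=\Lambda_1'(0)=0$ and $\Lambda_1''(0)=-(1+2\tau_q)$, the negativity of $\Lambda_1$ near $0$ is only of order $\omega^2$. It therefore places no bound on mass close to $0$, and the dip lemma only says the dip is charged.

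The missing idea, used explicitly in the paper, is the minimizer constraint. A normalized extremal with an atom of mass $m\ge m_0$ at $r$ satisfies $0\le\int\cos\Theta_q\,dw\le m\cos\Theta_q(r)+(1-m)$. Hence $\cos\Theta_q(r)\ge-(1-m_0)/m_0>-1$. Because $\Theta_q=\pi$ at $0$ and $\pi$, and $\Theta_{q}(\omega)$ is jointly continuous in $(q,\omega)$ by Lemma~\ref{lem:ends}, the heavy atoms stay locally away from the endpoints.

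Equivalently, apply the constraint to the limit itself. An atom of mass at least $m_0>1/2$ at $0$ would give $\int\cos\Theta_q\,dw\le1-2m_0<0$. So $r_*\in(0,\pi)$. Deleting the zero atom and renormalizing then yields an element of $\mathcal E_q$ with a heavy atom at $r_*$, and (a) gives $r_*=r(q)$.

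Relatedly, feasibility of the particular limit measure does not follow from the closed-epigraph statement, which only asserts that some feasible measure exists. It follows from the uniform convergence $\Lambda_n^{q_i}\to\Lambda_n^{q}$ and $\cos\Theta_{q_i}\to\cos\Theta_q$ supplied by Lemma~\ref{lem:ends}.
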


\begin{proof}
Normalize by removing the zero atom and setting the remaining mass to
one. If two extremals had their atoms of mass at least $m_0>1/2$ at
different points, their convex interpolation would either have no such
atom or would have two, of total mass greater than one. Thus the
location is unique. The minimizer constraint bounds
$\cos\Theta_q(r(q))\ge-(1-m_0)/m_0>-1$, keeping it locally away from
$0$. Weak compactness and the closed program constraints pass a heavy
atom to every subsequential limit; removing a possible zero atom
preserves its normalized mass lower bound. Uniqueness identifies the
limit, proving continuity. At a periodic point the common location is
rational. A nonconstant continuous image of a connected set is an
interval, whose irrational points give uncountably many nonperiodic
preimages.
\end{proof}

\subsection{Finite decisions and localization}
\begin{proposition}[Polynomial certificates]\label{prop:polycert}
Let $q$ be a tuning, $N\subset\Z\setminus\{0\}$ finite, $\lambda_n\ge0$ $(n\in N)$ and $\gamma\ge0$. The symbol
$\sum_{n\in N}\lambda_n\Lambda_n+\gamma\cos\Theta_q$ is strict if and only if
\[
 \Xi_q(c)\Bigl[\sum_{n\in N}\lambda_n\bigl(1-T_{|n|}(c)\bigr)+\gamma\Bigr]<(1-c^2)\sum_{n\in N}\sgn(n)\,\lambda_n\,U_{|n|-1}(c)\qquad(-1\le c\le1).
\]
\end{proposition}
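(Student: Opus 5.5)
The plan is to divide the symbol by the positive factor $\sin\Theta_q/\sin\omega$ on the open interval, where \eqref{eq:Lambdapoly} turns it into a polynomial in $c$, and to treat the two endpoints $\omega=0,\pi$ separately. Write
\[
 S=\sum_{n\in N}\lambda_n\Lambda_n+\gamma\cos\Theta_q .
\]
On $(0,\pi)$, \eqref{eq:Lambdapoly} expresses each $\Lambda_n$ as $\sin\Theta/\sin\omega$ times $\Xi(c)(1-T_{|n|}(c))-\sgn(n)(1-c^2)U_{|n|-1}(c)$. The relation $\cot\Theta=\Xi(c)/\sin\omega$ of Lemma~\ref{lem:angle} gives
\[
 \cos\Theta=\frac{\sin\Theta}{\sin\omega}\,\Xi(c).
\]
Hence, with $\Delta(c)$ denoting the right side minus the left side of the displayed inequality,
\[
 S(\omega)=-\frac{\sin\Theta_q(\omega)}{\sin\omega}\,\Delta(c),\qquad 0<\omega<\pi .
\]
The prefactor is positive, because $\Theta_q\in(0,\pi)$ in the interior. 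It is also bounded above and below on compact subintervals, and indeed uniformly: by Lemma~\ref{lem:ends}, $\sin\Theta/\sin\omega\to 1/(-\Xi_q(\pm1))>0$ at the endpoints. So the prefactor is a continuous positive function on $[0,\pi]$, bounded between two positive constants.

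For the forward direction, assume $S\le-\varepsilon$ on $[0,\pi]$. Then $\Delta(c)\ge\varepsilon/\sup(\sin\Theta/\sin\omega)>0$ for $c\in(-1,1)$. At $c=\pm1$, evaluate $\Delta$ directly. At $c=1$ we have $T_{|n|}(1)=1$ and $1-c^2=0$, so $\Delta(1)=-\gamma\,\Xi_q(1)$. At $c=-1$, $\Delta(-1)=-\Xi_q(-1)\bigl[\sum\lambda_n(1-(-1)^{n})+\gamma\bigr]$. These endpoint values can also be obtained as limits of the interior identity, using continuity of both sides and the limit of the prefactor. Either way, $\Delta(\pm1)\ge0$ by continuity, and we still need strict positivity there.

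The endpoints are the main obstacle. At $\omega=0$, every $\Lambda_n(0)=0$ and $\cos\Theta(0)=-1$, so $S(0)=-\gamma$. Strictness of $S$ therefore forces $\gamma>0$, and then $\Delta(1)=-\gamma\,\Xi_q(1)>0$ because $\Xi_q(1)<0$. At $c=-1$, $\gamma>0$ also gives $\Delta(-1)>0$, since $-\Xi_q(-1)>0$ and the bracket is at least $\gamma$. So $\Delta>0$ on $[-1,1]$.

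For the converse, assume $\Delta>0$ on $[-1,1]$. By compactness $\Delta\ge\delta>0$. The interior identity, combined with the lower bound on the positive prefactor, gives $S\le-\varepsilon$ on $(0,\pi)$. At the endpoints, $S(0)=-\gamma$. The positivity $\Delta(1)=-\gamma\,\Xi_q(1)>0$ forces $\gamma>0$, so $S(0)<0$. At $\pi$,
\[
 S(\pi)=\sum\lambda_n\bigl((-1)^n-1\bigr)-\gamma\le-\gamma<0 .
\]
Continuity of $S$ on $[0,\pi]$ (Lemma~\ref{lem:ends}) then yields a uniform negative margin, so the symbol is strict.
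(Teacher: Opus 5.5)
Your proof is correct and follows the paper's argument. The ingredients match: the interior identity $S=-(\sin\Theta_q/\sin\omega)\,\Delta(\cos\omega)$ with a prefactor bounded between positive constants, the endpoint values $S(0)=-\gamma$ and $S(\pi)=\sum_n\lambda_n((-1)^n-1)-\gamma$ together with $\Xi_q(\pm1)<0$, and compactness. You also make explicit a step the paper leaves implicit: each side of the equivalence forces $\gamma>0$, via $\omega=0$ on the symbol side and via $c=1$ on the polynomial side.
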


\begin{proof}
Put $\mathcal P=(1-c^2)\sum_n\operatorname{sgn}(n)\lambda_nU_{|n|-1}
-\Xi_q[\sum_n\lambda_n(1-T_{|n|})+\gamma]$. The Chebyshev identities give
$S=-(\sin\Theta_q/\sin\omega)\mathcal P(\cos\omega)$ in the interior.
At the endpoints $S(0)=-\gamma$, $S(\pi)=-\sum_n\lambda_n(1-(-1)^n)-\gamma$,
and $\Xi_q(\pm1)<0$. These identities give the claimed equivalence
including strictness at both ends; compactness makes the negative
margin uniform.
\end{proof}

\begin{theorem}[Exact relaxations near a regular point]\label{thm:tightclass}
Let $(q_*,s_*)$ be $L$-regular with data $S,N,w^*,\lambda^*$, and let $U$ and $s$ be as in Theorem~\ref{thm:modelock}.
\begin{enumerate}[label=\textup{(\alph*)}]
\item $L$ is the least common denominator of the points of $S/2\pi$.
\item For every $q\in U$ and every set of lags $N'\supset N$, $\sigma_{N'}(q)=\sigma_L(q)=\sigma(q)=s(q)$, and the
solutions of $\FP[N'](\Theta_{q,\sigma(q)})$ are those of $\FP(\Theta_{q,\sigma(q)})$.
\item For $q\in U$ the following are equivalent: $\rho(q)<1$; $\FP[N](\Theta_q)$ is infeasible; $\PP_L(\Theta_q)$ is
infeasible; $s(q)>0$.
\end{enumerate}
\end{theorem}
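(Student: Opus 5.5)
The plan is to read all three parts off the proof of Theorem~\ref{thm:modelock}, which already produces, for every $q\in U$, a primal measure $w(q)$ carried by $S\subset\Omega_L$ and feasible at $s(q)$. It also produces a strict certificate at every shift $s<s(q)$, built from the lags in $N$ together with a small minimizer weight.

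For (a), let $L_0$ be the least common denominator of $S/2\pi$, so $L_0\mid L$. Suppose $L_0<L$. Part~(R3) requires each active class $\varrho\in\mathcal A$ modulo $L$ to contain two distinct lags $n,n'$ with $n-n'\in L\Z$. The vectors $v_\varrho=(\sin(\varrho\omega-\theta_\omega))_{\omega\in S}$ depend on $\varrho$ only modulo $L_0$ on $S$. Hence if two residues of $\mathcal A$ modulo $L$ agree modulo $L_0$, the basis condition fails. The harder case is when the residues of $\mathcal A$ are distinct modulo $L_0$. There I would use (R1): the residues of $\mathcal A$ are tight and all other nonzero residues modulo $L$ are strict. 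But $\Lambda_n$ on $S$ depends only on $n\bmod L_0$. A residue $\varrho+L_0\not\equiv\varrho\pmod L$ therefore has the same constraint value on $S$, namely zero, so it would also be tight. That contradicts strictness unless it lies in $\mathcal A$ or is $\equiv0\pmod L$. If it is $\equiv0\pmod L$, then $\varrho\equiv-L_0$, and $\Lambda_\varrho$ vanishes identically on $S$; this again forces a repeated $v$-vector or a zero row in $M_*$, contradicting the rank conditions. So $L_0=L$. I expect this bookkeeping to be the main obstacle, and I would check the small cases carefully.

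For (b), fix $q\in U$ and $N'\supset N$. The chain $\sigma_{N'}\le\sigma\le\sigma_L\le s(q)$ holds, the last inequality coming from the feasibility of $w(q)$ on $\Omega_L$. For the reverse inequality, take any $s<s(q)$. The continued strict symbol from the proof of Theorem~\ref{thm:modelock} uses only lags in $N\subset N'$ plus the minimizer, so $\FP[N'](\Theta_{q,s})$ is infeasible by Lemma~\ref{lem:alt}. This gives $\sigma_{N'}(q)\ge s(q)$, and hence all four thresholds equal $s(q)$.

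For the solution sets in (b), integrate the nonpositive limit symbol $S_q$, which has positive weights on all of $N$ and zero set $S\cup\{0\}$, against any solution $v$ of $\FP[N'](\Theta_{q,s(q)})$. This gives $\int S_q\,dv=0$, so $v$ is carried by $S\cup\{0\}$ and the lags of $N$ are tight. The rank condition then forces $v|_S=c\,w(q)$ with $c>0$, as in Theorem~\ref{thm:modelock}(c). So the solutions coincide with those of the full program.

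For (c), Proposition~\ref{prop:threshold} and (b) give the following: $\FP[N](\Theta_q)$ is infeasible iff $0<\sigma_N(q)=s(q)$, and the same holds for $\PP_L$ and for $\FP$. Finally, $\FP(\Theta_q)$ is infeasible iff $\rho(q)<1$, by Theorem~\ref{thm:exactthr}(c).
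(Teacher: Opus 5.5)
\emph{Verdict: parts (b) and (c) are correct, but part (a) has a genuine gap in its last case.} Parts (b) and (c) follow the paper's own argument. The strict lower-shift symbols in the mode-locking proof use only lags in $N$, so $\sigma_{N'}(q)\ge s(q)$ for every $N'\supset N$. Complementary slackness against the continued symbol identifies the extremals. Shift monotonicity together with Theorem~\ref{thm:exactthr} then gives (c).

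The gap is at the end of (a). In your ``harder case'' you correctly deduce that each $\varrho\in\mathcal A$ must satisfy $\varrho+L_0\equiv 0\pmod L$. It follows that $\mathcal A=\{-L_0\bmod L\}$ is a single class, and so $|S|=|\mathcal A|=1$. In that situation nothing in (R1) or (R3) is violated:
\begin{itemize}
\item $M_*$ is the $1\times1$ zero matrix, whose rank is $0=|S|-1$, exactly as (R1) demands.
\item The single pinning vector is $v_{-L_0}=(-\sin\theta_\omega)$. It is nonzero, hence a basis of $\R^1$.
\end{itemize}
So the contradiction you assert (``a repeated $v$-vector or a zero row in $M_*$'') does not exist. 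A zero row is never by itself incompatible with rank $|S|-1$, and with only one class there is nothing to repeat. This singleton case is precisely the one you said you would check carefully, and it is where the argument breaks.

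The missing ingredient is the condition $P^*>0$ on $S$ from (R2). This is the part of the definition that (R1) does not supply when $|S|=1$. In the remaining case every lag of $N$ lies in $(-L_0+L\Z)\cup L\Z\subset L_0\Z$. Hence $P^*(\omega)=\sum_n\lambda^*_n(1-\cos n\omega)=0$ at the support point $\omega\in\Omega_{L_0}$, which is the required contradiction. The paper's proof runs the same translation-by-$L_0$ argument, observes that it forces a single tight class (and $L=2L_0$), and concludes through exactly this vanishing of $P^*$ on the support.
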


\begin{proof}
Let $L_0\mid L$ be the least support period. If $L=mL_0$ with $m>1$,
translation by $L_0$ preserves all tight classes and their pinning
vectors. Independence forces $m=2$ and a single tight class $L_0$;
then every certificate lag is a multiple of $L_0$ and $P^*=0$ on the
support, a contradiction. Thus $L=L_0$.
The continued symbol and the strict lower-shift symbols in the proof
of Theorem~\ref{thm:modelock} use only $N$. They give the same threshold
for every $N'\supset N$. Complementarity again gives exactly the same
extremals, and shift monotonicity together with
Theorem~\ref{thm:exactthr} gives (c).
\end{proof}

\begin{theorem}[Quantitative locking]\label{thm:qlock}
Let $(q,s_*)$ be $L$-regular with data $S,N,w^*,\lambda^*$, let $\Theta=\Theta_{q,s_*}$,
$S^*=\sum_n\lambda^*_n\Lambda^{q,s_*}_n$, $P^*=\sum_n\lambda^*_n(1-\cos n\omega)$, and $R^*=-S^*/(\sin\Theta\,P^*)$
on $(0,\pi)$.
\begin{enumerate}[label=\textup{(\alph*)}]
\item $R^*$ is continuous and positive on $(0,\pi)\setminus S$, vanishes on $S$, tends to $+\infty$ at
$0$ and $\pi$, and $R^*(\omega+h)=\frac{|\partial^2_\omega S^*(\omega)|}{2\sin\Theta(\omega)P^*(\omega)}h^2(1+o(1))$ at
$\omega\in S$.
\item If $w$ solves $\FP(\Theta_{q,s})$, then $w$ charges $(0,\pi)$ and
\[
 s-s_*\ \ge\ \frac{\int_{(0,\pi)}\sin\Theta_{q,s}\,P^*R^*\,dw}{\int_{(0,\pi)}\sin\Theta_{q,s}\,P^*\,dw}\ \ge\ \inf_{\supp w\cap(0,\pi)}R^* .
\]
\item For every $L'\ge3$ with $\Omega_{L'}\cap S=\emptyset$, with
$d(\Omega_{L'},S)=\min\{|\omega-\omega'|:\ \omega\in\Omega_{L'},\ \omega'\in S\}$,
\[
 \sigma_{L'}(q)-s_*\ \ge\ c_*\,d(\Omega_{L'},S)^2\bigl(1-o(1)\bigr)\quad\text{as }d(\Omega_{L'},S)\to0,
\]
where $c_*=\min_{\omega\in S}|\partial^2_\omega S^*(\omega)|/(2\sin\Theta(\omega)P^*(\omega))$; and
$d(\Omega_{L'},S)\ge2\pi/\operatorname{lcm}(L,L')$.
\end{enumerate}
\end{theorem}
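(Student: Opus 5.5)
Part (a) is local algebra at the support points. Near $\omega\in S$ I will Taylor expand $S^*$ and $P^*$, and away from $S$ I will use the global sign conditions of Definition~\ref{def:regular}. By (R2) and the remark after that definition, $S^*$ and $\partial_\omega S^*$ vanish on $S$, $\partial_\omega^2S^*<0$ there, and $S^*<0$ on $(0,\pi]\setminus S$. Since $P^*>0$ on $S$ and $\sin\Theta>0$ on $(0,\pi)$, the quotient $R^*=-S^*/(\sin\Theta\,P^*)$ is continuous near $S$ with the stated quadratic expansion. Away from $S$, I still need $P^*>0$. At points where $P^*=0$, every lag with $\lambda^*_n>0$ has $\cos n\omega=1$, so $\Lambda_n=0$ and $S^*=0$; this can occur only on $S$. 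Hence $P^*>0$ on $(0,\pi)\setminus S$, and $R^*>0$ there. The blow-up at the two endpoints follows from $\sin\Theta\to0$ (Lemma~\ref{lem:ends}). At $0$, $S^*\sim\tfrac12 S^{*\prime\prime}(0)\omega^2$ with a negative coefficient by the moment condition, while $\sin\Theta\,P^*=O(\omega^3)$. At $\pi$, $S^*(\pi)<0$ while $\sin\Theta(\pi)=0$.

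For (b) I will use the shift identity \eqref{eq:shiftid} with $\gamma=0$. For $s\ne s_*$ it gives
\[
\frac{S_s}{\sin\Theta_{q,s}}=\frac{S^*}{\sin\Theta}+(s-s_*)P^*=P^*\bigl(s-s_*-R^*\bigr),
\]
where $S_s=\sum_n\lambda_n^*\Lambda_n^{q,s}$. Let $w$ be feasible. Every lag constraint gives $\int S_s\,dw\ge0$. The endpoint atoms contribute $0$ at $\omega=0$ and $S_s(\pi)\le0$ at $\omega=\pi$, so $\int_{(0,\pi)}\sin\Theta_{q,s}\,P^*(s-s_*-R^*)\,dw\ge0$. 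Rearranging gives the first inequality. The second is immediate because the weights $\sin\Theta_{q,s}P^*$ are nonnegative.

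To show that $w$ charges $(0,\pi)$, I argue as follows. A measure carried by $\{0,\pi\}$ has $\cos\Theta=-1$ at both points, so the minimizer constraint forces it to vanish. If $w$ charged $\pi$ but not $(0,\pi)$, the same contradiction applies.

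Finiteness of the integrals needs a check. The quantity $\sin\Theta\,P^*R^*=-S^*\sin\Theta_{q,s}/\sin\Theta$ is bounded near both endpoints, since the ratio of sines is bounded there by \eqref{eq:A-ends}.

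For (c) I apply (b) to a solution at the attained threshold $s=\sigma_{L'}(q)$ (Proposition~\ref{prop:threshold}, grid version). Its support lies in $\Omega_{L'}$, so $\inf_{\mathrm{supp}\,w\cap(0,\pi)}R^*\ge\min_{\Omega_{L'}\cap(0,\pi)}R^*$. As $d=d(\Omega_{L'},S)\to0$, the nearest grid point approaches some $\omega\in S$, and the expansion in (a) gives a value of at least $c_*d^2(1-o(1))$. All grid points farther than a fixed $\eta$ from $S$ have $R^*$ bounded below by a positive constant on a compact set, after discarding neighborhoods of the endpoints where $R^*$ is large. That constant dominates $c_*d^2$ once $d$ is small.

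The lattice bound comes from elementary arithmetic. For $\omega'=2\pi i/L\in S$ and $\omega=2\pi j/L'$ we have $|\omega-\omega'|=2\pi|iL'-jL|/(LL')$. The numerator is a nonzero multiple of $\gcd(L,L')$, because $\Omega_{L'}\cap S=\emptyset$. This gives $|\omega-\omega'|\ge 2\pi/\operatorname{lcm}(L,L')$.

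The main obstacle is the endpoint behaviour in (a) and (b): verifying $R^*\to+\infty$ at $0$ and $\pi$, and making the integrals finite. My first approach is the local arctangent formula \eqref{eq:A-ends} together with the second-derivative formula $\Lambda_n''(0)=-n(n+2\tau_q)$.
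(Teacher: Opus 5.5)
Your proposal is correct and follows essentially the same route as the paper. It proves (a) by showing $P^*>0$ off $S$ (since $P^*=0$ forces $S^*=0$) and Taylor-expanding at $S$ and at the endpoints; (b) by integrating \eqref{eq:shiftid} against a feasible measure, with nonpositive endpoint contributions and the minimizer constraint ruling out mass only at $\{0,\pi\}$; and (c) from the quadratic minima of $R^*$ together with the $\gcd$ arithmetic. The only step worth stating explicitly is that $\int_{(0,\pi)}\sin\Theta_{q,s}P^*\,dw>0$ before you divide, which follows from $w$ charging $(0,\pi)$ and $P^*>0$ there.
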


\begin{proof}
If $P^*$ vanished in $(0,\pi]$, all lag phases there would be multiples
of $2\pi$, hence $S^*=0$, contradicting either strictness off $S$ or
positivity of $P^*$ on $S$. Taylor's formula at $S$, and the endpoint
expansions of the lag functions, prove (a). Integrate
\eqref{eq:shiftid} against a feasible finite measure. Its endpoint
contributions are nonpositive; it must charge $(0,\pi)$, since a
measure carried by the endpoints violates either this symbol or the
minimizer constraint. Dividing gives (b). Apply it to a periodic
threshold optimizer and use the quadratic minima in (a). Distinct
fractions of denominators $L,L'$ have distance at least
$1/\operatorname{lcm}(L,L')$, proving (c).
\end{proof}

\begin{theorem}[Localization]\label{thm:local}
Let $q$ be a tuning, $N\subset\Z\setminus\{0\}$ finite, and $\sigma_N=\sigma_N(q)$.
\begin{enumerate}[label=\textup{(\alph*)}]
\item There are $\lambda_n\ge0$ $(n\in N)$ and $\gamma\ge0$ with $\sum_n\lambda_n+\gamma=1$ such that
$T=\sum_{n\in N}\lambda_n\Lambda^{q,\sigma_N}_n+\gamma\cos\Theta_{q,\sigma_N}\le0$ on $[0,\pi]$.
\item Let $s\ge\sigma_N$ and let $w\ge0$ satisfy the constraints of $\FP[N](\Theta_{q,s})$. Then, with
$P=\sum_n\lambda_n(1-\cos n\omega)$ and $T_s(\pi)=\sum_n\lambda_n((-1)^n-1)-\gamma\le0$,
\begin{multline}\label{eq:local}
 \int_{(0,\pi)}\frac{\sin\Theta_{q,s}}{\sin\Theta_{q,\sigma_N}}\,(-T)\,dw+\gamma\,w(\{0\})+|T_s(\pi)|\,w(\{\pi\})\\
 \le\ (s-\sigma_N)\int_{(0,\pi)}\sin\Theta_{q,s}\,(P+\gamma)\,dw .
\end{multline}
\item If, in addition, $w$ has an atom of mass $w_a$ at $\omega_a\in(0,\pi)$, then
\[
 -T(\omega_a)\ \le\ \frac{2(s-\sigma_N)\,w([0,\pi])}{w_a}\cdot\frac{\sin\Theta_{q,\sigma_N}(\omega_a)}{\sin\Theta_{q,s}(\omega_a)} .
\]
\end{enumerate}
\end{theorem}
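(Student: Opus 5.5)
Part (a) follows from Lemma~\ref{lem:alt} applied to the lag set $N$ at the attained shift $\sigma_N$, combined with Lemma~\ref{lem:A-limit}. By Proposition~\ref{prop:threshold}, which also covers arbitrary lag subsets, $\FP[N](\Theta_{q,s})$ is infeasible for every $s<\sigma_N$. Lemma~\ref{lem:alt} therefore supplies strict symbols at shifts $s_j\uparrow\sigma_N$. Normalize them so that $\sum_n\lambda_n+\gamma=1$. Because $N$ is finite, the coefficient vectors lie in a compact simplex, so a convergent subsequence exists. Lemma~\ref{lem:ends} gives uniform convergence of the curves $\Theta_{q,s_j}$, and the limiting symbol $T$ is then $\le0$ on $[0,\pi]$. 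With finitely many lags no mass is lost in the limit, so the normalization survives exactly.

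For (b), integrate the shift identity \eqref{eq:shiftid} with $s'=\sigma_N$ on $(0,\pi)$. Write $T_s=\sum\lambda_n\Lambda^{q,s}_n+\gamma\cos\Theta_{q,s}$. Then
\[
 T_s=\frac{\sin\Theta_{q,s}}{\sin\Theta_{q,\sigma_N}}\,T+(s-\sigma_N)\sin\Theta_{q,s}\,(P+\gamma).
\]
Since $w$ is feasible and the weights are nonnegative, $\int_{[0,\pi]}T_s\,dw\ge0$. The endpoint values are $T_s(0)=-\gamma$, because $\Lambda_n(0)=0$ and $\cos\Theta(0)=-1$, and $T_s(\pi)=\sum\lambda_n((-1)^n-1)-\gamma$. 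Moving the endpoint contributions $-\gamma w(\{0\})+T_s(\pi)w(\{\pi\})$ to the other side and using $T_s(\pi)=-|T_s(\pi)|$ gives \eqref{eq:local}. One integrability point needs checking: the ratio $\sin\Theta_{q,s}/\sin\Theta_{q,\sigma_N}$ is unbounded only near the endpoints. There both sines vanish to first order with comparable rates by \eqref{eq:A-ends}, so the ratio is bounded. The right side is finite because $P+\gamma$ is bounded.

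For (c), drop every term on the left of \eqref{eq:local} except the contribution of the atom. All the dropped terms are nonnegative, since $-T\ge0$ and the endpoint terms are nonnegative. On the right, use $P+\gamma\le2\sum\lambda_n+\gamma\le2$ and $\sin\Theta\le1$. This leaves
\[
 w_a\,\frac{\sin\Theta_{q,s}(\omega_a)}{\sin\Theta_{q,\sigma_N}(\omega_a)}\,(-T(\omega_a))\le2(s-\sigma_N)\,w([0,\pi]),
\]
and rearranging gives the claim.

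The main obstacle is the first step. The limit in (a) must still be a valid symbol at exactly the shift $\sigma_N$. That relies on the continuity of the lag functions in $s$ and on finiteness of $N$, which prevents any escape of mass. The endpoint bookkeeping in (b) is the only other delicate point.
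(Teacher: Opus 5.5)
Your proposal is correct and follows the paper's own argument. Part (a) is the limit of normalized strict symbols at shifts $s_j\uparrow\sigma_N$ in the finite-dimensional simplex, (b) integrates \eqref{eq:shiftid} against $w$ using the endpoint values $T_s(0)=-\gamma$ and $T_s(\pi)$, and (c) keeps only the atom and bounds $\sin\Theta_{q,s}(P+\gamma)\le2$. Your integrability check is valid but not needed, since $\frac{\sin\Theta_{q,s}}{\sin\Theta_{q,\sigma_N}}\,T$ equals the bounded function $T_s-(s-\sigma_N)\sin\Theta_{q,s}(P+\gamma)$ on $(0,\pi)$.
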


\begin{proof}
Normalized finite-dimensional separating vectors at shifts increasing
to $\sigma_N$ have a convergent subsequence in the simplex. Their
limit gives $T\le0$. Integrating \eqref{eq:shiftid} gives
\eqref{eq:local}, including the two displayed endpoint masses.
Keeping one atomic contribution on its left and using
$\sin\Theta(P+\gamma)\le2$ on the right proves (c).
\end{proof}

\begin{corollary}[Quadratic localization]\label{cor:local}
In the setting of Theorem~\ref{thm:local}, assume that $T(\pi)<0$, that $T$ has finitely many
zeros $z_1,\dots,z_r$ in $(0,\pi)$, all with $T''(z_i)<0$, and that $T''(0)<0$ if $\gamma=0$. Then there
is $c>0$ such that every $w$ as in Theorem~\ref{thm:local}(b) satisfies
\[
 \int_{[0,\pi]}\operatorname{dist}\bigl(\omega,\{0,z_1,\dots,z_r\}\bigr)^2\,dw(\omega)\ \le\ \frac{2(s-\sigma_N)}{c}\,w([0,\pi]).
\]
\end{corollary}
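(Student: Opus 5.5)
The plan is to combine the localization inequality \eqref{eq:local} of Theorem~\ref{thm:local}(b) with a uniform quadratic lower bound on $-T$ away from its zero set. Put $Z=\{0,z_1,\dots,z_r\}$ and $d(\omega)=\operatorname{dist}(\omega,Z)$. The target is the pointwise estimate
\[
 -T(\omega)\ \ge\ c_1\,d(\omega)^2\qquad(0\le\omega\le\pi)
\]
for some $c_1>0$, together with a corresponding lower bound for the endpoint terms.

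\textbf{Step 1: the pointwise bound.} At each interior zero $z_i$, $T\le0$ forces $T(z_i)=T'(z_i)=0$, so the hypothesis $T''(z_i)<0$ and Taylor's formula give $-T\ge\frac14|T''(z_i)|(\omega-z_i)^2$ on a small neighborhood. At $0$ all lag functions satisfy $\Lambda_n(0)=\Lambda_n'(0)=0$ by Lemma~\ref{lem:ends}.
\begin{itemize}
\item If $\gamma>0$, then $T(0)=-\gamma<0$, so $0$ is not a zero and $d(\omega)\le\pi$ is bounded near it; the required bound there follows from $-T$ being bounded away from zero.
\item If $\gamma=0$, the assumption $T''(0)<0$ gives the quadratic bound near $0$.
\end{itemize}
At $\pi$ the hypothesis $T(\pi)<0$ applies. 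On the compact complement of these neighborhoods, $T$ is continuous and has no zeros, so $-T\ge\eta>0$ there, while $d\le\pi$. Hence $c_1$ exists. $T$ is real analytic through the endpoints by Lemma~\ref{lem:ends}, which justifies the Taylor steps. Smoothness at the endpoints, and the case $\gamma=0$ at $0$, are the only delicate points.

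\textbf{Step 2: the ratio of sines.} In \eqref{eq:local} the left integrand carries the factor $\sin\Theta_{q,s}/\sin\Theta_{q,\sigma_N}$, and the right side carries $\sin\Theta_{q,s}(P+\gamma)\le2$. I would not lower-bound this ratio uniformly in $s$, since $s$ may be large. Instead, take the left integral with weight $\sin\Theta_{q,s}\cdot(-T)/\sin\Theta_{q,\sigma_N}$ and compare it with $w$ directly. Since $\sin\Theta_{q,\sigma_N}\le1$, the left integrand is at least $\sin\Theta_{q,s}(-T)$, but a lower bound on $\sin\Theta_{q,s}$ itself is still needed.

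The cleanest remedy is to work in the monotone form, with the measure $\sin\Theta_{q,s}\,w$. However, the statement is for $w$, so I would first treat $s$ bounded, say $s-\sigma_N\le1$. There $\sin\Theta_{q,s}$ is bounded below on $(0,\pi)$ by continuity, except near the endpoints. Near the endpoints, $\pi-\Theta_{q,s}\sim\tau_q\omega$, so the ratio $\sin\Theta_{q,s}/\sin\Theta_{q,\sigma_N}$ stays bounded below by \eqref{eq:A-ends}. For large $s-\sigma_N$ the inequality is trivial once $c\le c_1$ is chosen small: the left side of the target is at most $\pi^2 w([0,\pi])$, and $2(s-\sigma_N)/c$ exceeds $\pi^2$.

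\textbf{Step 3: combine.} The endpoint masses $w(\{0\})$ (when $\gamma>0$) and $w(\{\pi\})$ are controlled by the terms $\gamma w(\{0\})$ and $|T_s(\pi)|w(\{\pi\})$, with $|T_s(\pi)|\ge|T(\pi)|>0$. When $\gamma=0$, the point $0$ lies in $Z$ and its mass contributes nothing. Summing these bounds and dividing by the minimum constant gives the stated inequality with $c=\min(c_1\cdot\text{ratio bound},\gamma,|T(\pi)|)/\pi^2$, adjusted as above.

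I expect the main obstacle to be the uniformity of the sine ratio near the endpoints and in $s$, which is handled by the split between bounded and large $s-\sigma_N$.
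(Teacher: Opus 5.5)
Your proposal is correct and follows the paper's own argument. It has the same four ingredients: a quadratic bound $-T\ge c_1\operatorname{dist}(\omega,Z)^2$ from the nondegenerate zeros and compactness; a uniform positive lower bound on the sine ratio for $s-\sigma_N\le1$ (explicitly $\sqrt{1+(h_q+\sigma_N)^2}/\sqrt{1+(h_q+s)^2}\ge1/\sqrt3$ there); control of the atom at $\pi$ through $|T(\pi)|$; and the trivial bound $\operatorname{dist}\le\pi$ with $c\le2/\pi^2$ for larger shifts.

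One small correction: $0\in Z$ regardless of $\gamma$, so the atom at $0$ contributes nothing to the left side. You should therefore drop $\gamma$ from your constant, which as written would vanish when $\gamma=0$.
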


\begin{proof}
Nondegenerate zeros and compactness give
$-T\ge c'\operatorname{dist}(\omega,\{0,z_1,\ldots,z_r\})^2$.
On the shift interval $[\sigma_N,\sigma_N+1]$, the sine ratio in
\eqref{eq:local} has a positive lower bound, including its endpoint
limits of one. The negative value at $\pi$ controls its endpoint mass.
This proves the estimate there; decreasing $c$ so that
$c\le2/\pi^2$ covers all larger shifts by the trivial distance bound.
\end{proof}

At a fixed shift, separate the frequencies according to which of the
two lag constraints they satisfy: write
$A_+=\{\Lambda_1>0>\Lambda_{-K}\}$,
$A_-=\{\Lambda_{-K}>0>\Lambda_1\}$, both in $(0,\pi]$, and
$R_+=\Lambda_1/(-\Lambda_{-K})$, $R_-=\Lambda_{-K}/(-\Lambda_1)$.

\begin{proposition}[Decoupling]\label{prop:decouple}
Assume that no point of $(0,\pi]$ has $\Lambda_1\ge0$ and $\Lambda_{-K}\ge0$ at once, and that
$\hat R_-:=\sup_{A_-}R_-$ is attained at some $u\in A_-$ with $\cos\Theta(u)>0$. Then $\hat R_+:=\sup_{A_+}R_+$
is attained if $A_+\ne\emptyset$, and $\hat R_+:=0$ otherwise, and:
\begin{enumerate}[label=\textup{(\alph*)}]
\item $\FP[\{1,-K\}]$ is feasible if and only if $\hat R_+\hat R_-\ge1$; for $\omega_1\in A_+$, $\omega_2\in A_-$ the
measure $\delta_{\omega_1}+r\delta_{\omega_2}$ satisfies the two lag constraints exactly for
$-\Lambda_{-K}(\omega_1)/\Lambda_{-K}(\omega_2)\le r\le\Lambda_1(\omega_1)/(-\Lambda_1(\omega_2))$, an interval that is nonempty
if and only if $R_+(\omega_1)R_-(\omega_2)\ge1$;
\item if $\FP[\{1,-K\}]$ is feasible and $T=\lambda_1\Lambda_1+\lambda_{-K}\Lambda_{-K}+\gamma\cos\Theta\le0$ with
$\lambda_1,\lambda_{-K},\gamma\ge0$ not all zero, then $\gamma=0$, $\lambda_1,\lambda_{-K}>0$, $\hat R_+=\lambda_{-K}/\lambda_1$,
$\hat R_-=\lambda_1/\lambda_{-K}$, and the zeros of $T$ in $(0,\pi]$ are exactly the maximizers of $R_+$ on
$A_+$ and of $R_-$ on $A_-$.
\end{enumerate}
\end{proposition}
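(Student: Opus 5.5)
The plan is to treat $\FP[\{1,-K\}]$ as a two-constraint linear program over measures and to reduce it to pairs of atoms. By the first hypothesis, every frequency in $(0,\pi]$ lies in exactly one of three sets: $A_+$, $A_-$, or the set $B$ where both lag functions are nonpositive and at least one is negative. Points where one lag function vanishes and the other is negative are placed in $B$; they can only hurt. I will also note that $\Lambda_n(0)=0$ and $\cos\Theta(0)=1$. So an atom at $0$ is neutral for the lags and helps only the minimizer constraint.

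Attainment of $\hat R_+$ comes first. Off the zero sets, $R_+$ is continuous on $A_+$. At the boundary of $A_+$ inside $(0,\pi]$, either $\Lambda_1\to0$ while $\Lambda_{-K}$ stays negative, so $R_+\to0$, or $\Lambda_{-K}\to0$ while $\Lambda_1$ stays positive. The second case is excluded, because at the limit point $\Lambda_1>0$ and $\Lambda_{-K}=0$, contradicting the first hypothesis. Near $\omega=0$, I will use the expansion of Lemma~\ref{lem:ends}: both lag functions vanish to second order, with coefficients $-n(n+2\tau)$. The lag-one function is then negative near $0$, so $A_+$ stays away from $0$. Hence $R_+$ extends continuously by $0$ to a compact set and attains its supremum when $A_+\ne\emptyset$.

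For part (a), sufficiency is direct. Take maximizers $\omega_1\in A_+$ and $u\in A_-$, and use the explicit interval for $r$; the two linear inequalities $\Lambda_1(\omega_1)+r\Lambda_1(u)\ge0$ and $\Lambda_{-K}(\omega_1)+r\Lambda_{-K}(u)\ge0$ give exactly the stated bounds. The interval is nonempty iff $R_+(\omega_1)R_-(u)\ge1$. For the minimizer constraint, I will add an atom at $0$ of suitable mass; the hypothesis $\cos\Theta(u)>0$ is what I will use to avoid needing any mass at $0$ when $A_+$ is empty. For necessity, given a feasible $w$, I integrate the combination $\lambda\Lambda_1+\Lambda_{-K}$ with $\lambda=\hat R_-^{-1}$ (if $\hat R_+\hat R_-<1$). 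This combination is pointwise negative on $A_+$, nonpositive on $A_-$, and nonpositive on $B$, which forces $w$ to be carried by $\{0\}$, contradicting the minimizer constraint. I expect this bookkeeping to be where the main care is needed, in particular the cases $A_+=\emptyset$ (where $\hat R_+=0$) and whether $\Lambda_1$ can be $\ge0$ on $B$.

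For part (b), integrating $T\le0$ against a feasible measure forces $T=0$ on its support and forces $\gamma\int\cos\Theta\,dw=0$. Evaluating $T$ at $u$ gives $\lambda_1\Lambda_1(u)+\lambda_{-K}\Lambda_{-K}(u)+\gamma\cos\Theta(u)\le0$, while a feasible two-atom solution built from maximizers is carried by points where $T$ vanishes. From this I will deduce $\gamma=0$ (since $\cos\Theta(u)>0$ and $u$ supports a solution), then $\lambda_1,\lambda_{-K}>0$, since otherwise $T$ would be positive somewhere on $A_\pm$. With both weights positive, $T\le0$ on $A_+$ says exactly $R_+\le\lambda_{-K}/\lambda_1$, and on $A_-$ it says $R_-\le\lambda_1/\lambda_{-K}$. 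Feasibility then requires $\hat R_+\hat R_-\ge1$, which forces equality in both bounds. Finally, the zeros of $T$ in $A_\pm$ are exactly the points of equality, i.e.\ the maximizers, and $T<0$ on $B$ except at common zeros, which the first hypothesis excludes.
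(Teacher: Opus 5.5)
Your overall structure matches the paper's: extend $R_+$ by zero to the closure of $A_+$, solve the two linear inequalities for $r$, and compare $T\le0$ with the ratio bounds in (b). But your handling of the minimizer constraint rests on a false fact. Every curve has $\Theta(0)=\pi$, so $\cos\Theta(0)=-1$, not $1$. An atom at $0$ is neutral for the lags and strictly \emph{costs} minimizer slack; this is why the admissible atom at $0$ in Theorem~\ref{thm:modelock}(c) is bounded by the minimizer slack. Two things follow:
\begin{itemize}
\item Your plan to satisfy the minimizer constraint in the sufficiency half of (a) by adding mass at $0$ fails.
\item Your necessity argument contradicts your own claim. If $\cos\Theta(0)$ were $1$, the measure $\delta_0$ would solve $\FP[\{1,-K\}]$, and being carried by $\{0\}$ would contradict nothing.
\end{itemize}
The idea you are missing is that $\cos\Theta>0$ on $A_+$ automatically. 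On $(0,\pi]$, $\Lambda_1=2\sin(\omega/2)\sin(\omega/2-\Theta)>0$ forces $\Theta<\omega/2\le\pi/2$. With the hypothesis $\cos\Theta(u)>0$, this gives $\cos\Theta(\omega_1)+r\cos\Theta(u)>0$ for the two-atom measure, so it is feasible as it stands. In (b), the same fact is what turns $\gamma\int\cos\Theta\,dw=0$ into $\gamma=0$: you need $\int\cos\Theta\,dw>0$, which requires the sign at $\omega_1$ as well as at $u$. Your remark about using $\cos\Theta(u)>0$ when $A_+=\emptyset$ is moot, since then $\hat R_+\hat R_-=0<1$ and there is nothing to construct.

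The separation step also needs repair. The combination $\lambda\Lambda_1+\Lambda_{-K}$ is $\le0$ on $A_-$ exactly when $\lambda\ge\hat R_-$, and $<0$ on $A_+$ exactly when $\lambda\hat R_+<1$. So $\lambda=\hat R_-^{-1}$ is the wrong choice in general. Even the borderline choice $\lambda=\hat R_-$, which does give nonpositivity on $A_-$, only confines $w$ to $\{0\}$ together with the maximizers of $R_-$, so your conclusion does not yet follow.

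Take instead $\lambda\in(\hat R_-,1/\hat R_+)$, which is nonempty precisely when $\hat R_+\hat R_-<1$. Then the combination is strictly negative on three pieces:
\begin{itemize}
\item on $A_+$, because $\lambda\hat R_+<1$;
\item on $A_-$, because $R_-\le\hat R_-<\lambda$;
\item on the rest of $(0,\pi]$, where both lags are nonpositive and not both zero, and $\lambda>0$.
\end{itemize}
Hence a feasible $w$ is carried by $\{0\}$, and it is excluded precisely because $\cos\Theta(0)=-1$.

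The paper argues this direction primally instead. A feasible measure must charge both $A_+$ and $A_-$, and integrating $\Lambda_1\le\hat R_+(-\Lambda_{-K})$ on $A_+$ and $\Lambda_{-K}\le\hat R_-(-\Lambda_1)$ on $A_-$ gives $\hat R_+\hat R_-\ge1$. Your attainment argument for $\hat R_+$ and the rest of (b), apart from the $\gamma=0$ step above, are fine as written.
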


\begin{proof}
On the complement of $A_+\cup A_-$ both lags are nonpositive.
Every feasible measure charges both sign sets. Integrating the two
ratio bounds gives $\hat R_+\hat R_-\ge1$. Conversely the stated
interval for $r$ enforces both lag inequalities; choosing a maximizer
of $R_-$ with positive minimizer value also enforces that constraint,
since $\cos\Theta>0$ on $A_+$. The maximum of $R_+$ exists because it
extends by zero to the boundary of its compact interior sign set.
For (b), equality against this two-atom feasible measure forces
$\gamma=0$, both lag weights positive, and the reciprocal ratio bounds.
Their equality sets are exactly the zero set away from $0$.
\end{proof}

\begin{theorem}[Exact two-lag points]\label{thm:exact}
Let $q$ be a tuning, $K\ge2$, $s_2\in\R$, and write $\Lambda_n=\Lambda^{q,s_2}_n$, $\Theta=\Theta_{q,s_2}$.
Assume:
\begin{enumerate}[label=\textup{(E\arabic*)}]
\item there are $\lambda_1,\lambda_{-K}>0$, $\lambda_1+\lambda_{-K}=1$, such that $T=\lambda_1\Lambda_1+\lambda_{-K}\Lambda_{-K}\le0$
on $[0,\pi]$, $T''(0)<0$, and $T$ vanishes in $(0,\pi)$ exactly at two points $u_1,u_2$ with
$T''(u_1),T''(u_2)<0$, where $\Lambda_1(u_1)>0>\Lambda_{-K}(u_1)$ and $\Lambda_{-K}(u_2)>0>\Lambda_1(u_2)$;
\item $u_1,u_2\in\Omega_L$ for some $L\ge K+2$;
\item with $w_1,w_2>0$ defined up to scaling by $w_1\Lambda_1(u_1)+w_2\Lambda_1(u_2)=0$, one has
$w_1\Lambda_\varrho(u_1)+w_2\Lambda_\varrho(u_2)>0$ for every residue $\varrho\not\equiv0,1,-K\pmod L$, and
$w_1\cos\Theta(u_1)+w_2\cos\Theta(u_2)>0$;
\item $\det\bigl(\sin(\varrho u_i-\Theta(u_i))\bigr)_{i=1,2;\ \varrho=1,-K}\ne0$.
\end{enumerate}
Then $(q,s_2)$ is $L$-regular with $S=\{u_1,u_2\}$ and $N=\{1,1\pm L,-K,-K\pm L\}$.
Consequently $\sigma(q)=\sigma_L(q)=\sigma_{\{1,-K\}}(q)=s_2$, the threshold extremal is carried by
$\{u_1,u_2\}$ and unique up to scaling and an atom at $0$, and the conclusions of
Theorem~\ref{thm:modelock} hold near $q$.
\end{theorem}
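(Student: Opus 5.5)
The plan is to verify the three conditions (R1)--(R3) of Definition~\ref{def:regular} directly for $S=\{u_1,u_2\}$, $N=\{1,1\pm L,-K,-K\pm L\}$. After that, Theorem~\ref{thm:modelock} and Theorem~\ref{thm:tightclass} give the consequences, provided the weights are chosen so that the continued symbol is exactly the two-lag symbol $T$. The residue set is $\mathcal A=\{1,-K\}\bmod L$. These two classes are distinct and nonzero because $L\ge K+2$, so $|\mathcal A|=2=|S|$.

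\textbf{Primal condition (R1).} Take $w_1,w_2>0$ from (E3). Positivity follows because $\Lambda_1(u_1)>0>\Lambda_1(u_2)$ by (E1). Lag $1$ is then tight by construction. For lag $-K$, integrate $T$ against $w$: since $T(u_i)=0$, we get $\lambda_1\cdot0+\lambda_{-K}\int\Lambda_{-K}\,dw=0$, hence tightness, using $\lambda_{-K}>0$. Because $u_i\in\Omega_L$, every lag depends only on its residue mod $L$. (E3) then gives strictness of all other nonzero residues and of the minimizer constraint, so $w$ solves $\PP_L$. The matrix $M_*$ has $w$ in its kernel. Its first row $(\Lambda_1(u_1),\Lambda_1(u_2))$ is nonzero, so its rank is exactly $1=|S|-1$.

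\textbf{Dual condition (R2).} Distribute the weights so that $S^*$ equals $T$ itself. With $L\ge K+2$, the pinning identity of Lemma~\ref{lem:pinid} shows that $\Lambda_{1\pm L}$ and $\Lambda_{-K\pm L}$ agree with $\Lambda_1$ and $\Lambda_{-K}$ on $\Omega_L$ but not elsewhere. I therefore cannot literally reuse $T$ with positive weight on the extra lags unless those weights are small. My choice is to put $(1-2\epsilon)\lambda_1$ on $1$ and $\epsilon\lambda_1$ on each of $1\pm L$, and similarly for $-K$. This keeps the mean lag in each class fixed, so the first-order pinning terms cancel. The perturbation $S^*-T$ is then $\epsilon$ times a combination of $\mathsf D$-type differences. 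These vanish together with their first derivatives on $S$, since the $\pm L$ moments cancel, and they are $O(\epsilon)$ in $C^2$ globally.

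The nondegenerate zeros of $T$ at $u_1,u_2$ and at $0$ (where $T''(0)<0$), together with a strict negative maximum on the compact remainder, survive for small $\epsilon$. This gives $S^*<0$ off $S$, $\partial^2S^*<0$ on $S$, and the endpoint moment condition, which is equivalent to $S^{*\prime\prime}(0)<0$. The condition $P^*>0$ on $S$ is automatic because $|S|=2$, as remarked after the definition. At $\pi$, I note that $T(\pi)\le0$ is already part of (E1) and that a small perturbation keeps it strictly negative, since $T$ has no zero at $\pi$ by (E1).

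\textbf{Pinning condition (R3) and conclusion.} Each class contains three lags. The vectors $v_\varrho$ for $\varrho=1,-K$ form exactly the $2\times2$ matrix in (E4), so they form a basis of $\R^2$. With (R1)--(R3) established, Theorem~\ref{thm:modelock} gives $\sigma=\sigma_L=s_2$, uniqueness of the extremal up to scaling and an atom at $0$, and the local continuation.

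For the equality $\sigma_{\{1,-K\}}=s_2$, I would argue directly. First, the two-lag symbol $T$ at shift $s_2$ together with \eqref{eq:shiftid} and a tiny minimizer weight gives strict symbols at every smaller shift. Second, $w$ is feasible at $s_2$. Together these give $\sigma_{\{1,-K\}}(q)\ge s_2$ and $\sigma_{\{1,-K\}}(q)\le\sigma(q)=s_2$.

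\textbf{Main obstacle.} The delicate step is the weight redistribution in (R2). It must keep $S^*$ sign-definite while making every class have two elements, and it must do so at second order near $S$ and near $0$. If the symmetric $\pm L$ split fails to preserve the curvature margins, the fallback is to take $\epsilon$ small enough relative to the curvature margins. This works because the differences $\Lambda_{r\pm L}-\Lambda_r$ are bounded in $C^2$ on $[0,\pi]$.
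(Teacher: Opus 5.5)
Your proposal is correct and follows essentially the paper's own proof. You verify (R1)--(R3) with the split $\lambda_\varrho-2\varepsilon,\varepsilon,\varepsilon$ on $\varrho,\varrho\pm L$ and $C^2$ persistence, invoke Theorem~\ref{thm:modelock}, and use the unperturbed two-lag symbol with a small minimizer term to get $\sigma_{\{1,-K\}}=s_2$. One small fix: (E1) only controls zeros in the open interval $(0,\pi)$, so the strict sign at $\pi$ should come from $\Lambda_n(\pi)=(-1)^n-1$, which gives $T(\pi)\le-2\lambda_1<0$.
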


\begin{proof}
The contact matrix has rank one: the nonzero dual vector annihilates
it, and the signs in (E1) give a positive right kernel. Condition (E3)
gives the remaining strict primal slacks. Replace each original lag
$\varrho\in\{1,-K\}$ by the three lags $\varrho,\varrho\pm L$, placing
weights $\lambda_\varrho-2\varepsilon,\varepsilon,\varepsilon$ on them.
Their added symbol and first derivative vanish on $\Omega_L$.
For sufficiently small $\varepsilon>0$, $C^2$ persistence preserves
the strict sign and curvatures of (E1); the lag $1$ keeps $P>0$.
Condition (E4) supplies pinning rank. Theorem~\ref{thm:modelock}
applies. The unperturbed two-lag symbol and \eqref{eq:shiftid}, with a
small minimizer term, exclude all lower shifts using only $1,-K$;
hence the relaxation threshold is equal as well.
\end{proof}

\begin{corollary}[Exact points on the boundary]\label{cor:exactdD}
In Theorem~\ref{thm:exact} assume moreover $s_2=0$ and $u_1/2\pi\in(1/(K+1),1/K)$. Then
$q\in\mathcal B_K$, $q$ lies on a plateau, and on this plateau $\kappa_\partial$ is the analytic solution of
\[
 \Lambda_1(u_1)\Lambda_{-K}(u_2)-\Lambda_1(u_2)\Lambda_{-K}(u_1)=0
\]
at the shift $0$, an equation in the values of $\Theta$ at the two fixed frequencies $u_1$
and $u_2$. If also $u_2/2\pi<1/(K+1)$, then $u_1/2\pi$ is the unique anchor at every point of this
plateau that lies in $I_K$.
\end{corollary}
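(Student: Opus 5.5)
The plan is to apply Theorem~\ref{thm:exact} at $s_2=0$ and read off the boundary description from Theorem~\ref{thm:modelock}. The threshold $\sigma(q)=0$ comes directly from Theorem~\ref{thm:exact}. By Proposition~\ref{prop:threshold}(a) this gives $q\in\mathcal D$.

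The first step is to show that $q$ lies in the gap $I_K$.
\begin{itemize}
\item \emph{The dip is nonempty.} The contact $u_1$ satisfies $\Lambda_1(u_1)>0$, i.e.\ $\Theta(u_1)<u_1/2$ with $u_1/2\pi$ in the window.
\item \emph{The dip stays inside the window.} By Lemma~\ref{lem:dip} every extremal charges $\bar\Delta$. The threshold extremal is carried by $\{u_1,u_2\}$ up to an atom at $0$. I then need $\bar\Delta$ not to contain $2\pi/K$ or $2\pi/(K+1)$. If it did, Theorem~\ref{thm:levelzero}(II) would make $\delta_{2\pi/K'}$ a solution of $\FP(\Theta_q)$, hence an extremal at $\sigma=0$. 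That contradicts uniqueness of the extremal, since $2\pi/K'\notin\{u_1,u_2\}$; here I use $u_1\ne 2\pi/K'$, which holds because $u_1/2\pi$ lies in the open window, together with the sign condition at $u_2$.
\item \emph{Conclusion.} Theorem~\ref{thm:levelzero}(III) then places $q$ in $I_K$. The nonsingleton case holds because $\Lambda_1(u_1)>0$ is strict, so the dip has interior.
\end{itemize}
With $q\in I_K$ and $\sigma(q)=0$, we get $q\in\mathcal B_K$.

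Next, regularity is given by Theorem~\ref{thm:exact}. Corollary~\ref{cor:plateau} puts $q$ on a plateau. By Theorem~\ref{thm:modelock}, on that plateau $\kappa_\partial$ is the analytic solution of the determinant equation at shift $0$. The tight classes are $\mathcal A=\{1,-K\}$ and $S=\{u_1,u_2\}$, so the $2\times2$ determinant is exactly the displayed expression. It depends only on $\Theta(u_1)$ and $\Theta(u_2)$ through \eqref{eq:Lambda}. The support and tight classes are constant along the component, so the equation persists across the whole plateau.

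For the anchor statement, assume $u_2/2\pi<1/(K+1)$. Then $u_1$ is the only support point in the window. The last sentence of Corollary~\ref{cor:plateau} gives that, at plateau points in $I_K$, $u_1$ is the unique anchor. Indeed every extremal charges $\bar\Delta$, which lies in the window for such points, and the only support atom there is $u_1$.

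The main obstacle I expect is the gap-membership step: excluding the possibility that $\bar\Delta$ reaches a tongue frequency, which rests on the uniqueness of the extremal from Theorem~\ref{thm:exact}. A small check is also needed that the atom at $0$ cannot serve as a dip atom; this is immediate because $\bar\Delta\subset(0,\pi)$.
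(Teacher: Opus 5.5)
Your proposal is correct and follows the paper's proof: you place $\bar\Delta$ in the window to get $q\in I_K$ and $q\in\mathcal B_K$, use Corollary~\ref{cor:plateau} and Theorem~\ref{thm:modelock} for the plateau and the $2\times2$ determinant equation, and use dip carriage for the anchor. The one variation is that you keep $2\pi/K$ and $2\pi/(K+1)$ out of $\bar\Delta$ through uniqueness of the two-atom threshold extremal, whereas the paper reads this off from $T<0$ at those two frequencies; both rest on the same complementary-slackness fact, and your side check that $2\pi/K'\notin\{u_1,u_2\}$ is unnecessary, since $w(q)$ charges both $u_1$ and $u_2$ and so no single atom can be an extremal.
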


\begin{proof}
The symbol is strictly negative at $2\pi/K$ and $2\pi/(K+1)$, so both
points lie outside the dip, while $u_1$ lies inside it. The dip is an
interval, giving $q\in I_K$ and $\sigma(q)=0$. The determinant equation
holds near $q$; analyticity and connectedness of the plateau extend it
throughout its projection. If $u_2$ lies below the window, only $u_1$
can be an anchor.
\end{proof}

For $K\ge2$ and $r>0$ put
\[
 W_K(c)=\frac{1-T_K(c)}{1-c},\qquad
 \Psi_r^{(K)}(c)=\frac{(1+c)(1-rU_{K-1}(c))}{1+rW_K(c)}.
\]
The polynomial $W_K$ is nonnegative on $[-1,1]$ and $W_K(1)=K^2$.

\begin{proposition}[Two-lag symbols as double tangencies]\label{prop:twolagtangency}
Let $K\ge2$, $0<u_2<u_1<\pi$, $c_i=\cos u_i$, let $q$ be a tuning, and let $T=\lambda_1\Lambda_1+\lambda_{-K}\Lambda_{-K}$ be a symbol of
$\Theta_q$ with $\lambda_1,\lambda_{-K}>0$ and $r=\lambda_{-K}/\lambda_1$.
\begin{enumerate}[label=\textup{(\alph*)}]
\item On $(0,\pi)$, $T=\lambda_1\,\dfrac{\sin\Theta_q\,(1-c)(1+rW_K(c))}{\sin\omega}\,\bigl(\Xi_q(c)-\Psi^{(K)}_r(c)\bigr)$ with $c=\cos\omega$, and $T(\pi)<0$.
\item $T\le0$ on $[0,\pi]$ with zero set $\{0,u_1,u_2\}$, $T''(u_1),T''(u_2)<0$ and $T''(0)<0$ hold if and only if
$\Xi_q<\Psi^{(K)}_r$ on $[-1,1]\setminus\{c_1,c_2\}$, $\Xi_q(c_i)=\Psi^{(K)}_r(c_i)$ and $(\Psi^{(K)}_r-\Xi_q)''(c_i)>0$ $(i=1,2)$, where the value at
$c=1$ is $\Psi^{(K)}_r(1)=2(1-rK)/(1+rK^2)$.
\item In that case $\Xi_q$ is the quadratic
with values $\Xi_q(c_i)=\Psi^{(K)}_r(c_i)$ and slopes $\Xi_q'(c_i)=\Psi^{(K)\prime}_r(c_i)$, and $r$ is a root of
\[
 G_K(r)=\Psi^{(K)\prime}_r(c_1)+\Psi^{(K)\prime}_r(c_2)-2\,\frac{\Psi^{(K)}_r(c_1)-\Psi^{(K)}_r(c_2)}{c_1-c_2} .
\]
\item Let $\Xi_q(c_i)=\Psi^{(K)}_r(c_i)$ and $p_i=\sin\frac{Ku_i}2\sin\frac{(K+1)u_i}2$. Then $\Lambda_1(u_i)>0>\Lambda_{-K}(u_i)$ if and only if $p_i<0$,
and $\Lambda_{-K}(u_i)>0>\Lambda_1(u_i)$ if and only if $p_i>0$. In particular
both sign conditions of \textup{(E1)} hold when $1/(K+1)<u_1/2\pi<1/K$ and $u_2/2\pi<1/(K+1)$.
\end{enumerate}
\end{proposition}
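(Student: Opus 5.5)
The plan is to reduce each lag function to the polynomial form \eqref{eq:Lambdapoly} and factor. For (a), write $T=\lambda_1\Lambda_1+\lambda_{-K}\Lambda_{-K}$ using \eqref{eq:Lambdapoly} with $n=1$ and $n=-K$:
\[
 T=\frac{\sin\Theta_q}{\sin\omega}\Bigl[\Xi_q(c)\bigl(\lambda_1(1-c)+\lambda_{-K}(1-T_K(c))\bigr)-(1-c^2)\bigl(\lambda_1-\lambda_{-K}U_{K-1}(c)\bigr)\Bigr].
\]
Factor out $\lambda_1(1-c)$, noting $1-T_K=(1-c)W_K$ and $1-c^2=(1-c)(1+c)$. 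The bracket becomes $\lambda_1(1-c)(1+rW_K)\bigl(\Xi_q-\Psi_r^{(K)}\bigr)$, which is the displayed formula. Since $W_K\ge0$, the factor $1+rW_K$ is positive. The value at $\pi$ follows from the endpoint formula: $T(\pi)=\lambda_1((-1)-1)+\lambda_{-K}((-1)^K-1)<0$.

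For (b), the prefactor $\sin\Theta_q(1-c)(1+rW_K)/\sin\omega$ is strictly positive on $(0,\pi)$. So on the interior, $T\le0$ with zeros exactly at $u_1,u_2$ is equivalent to $\Xi_q\le\Psi_r$ with equality exactly at $c_1,c_2$. At a zero, the factor $\Xi-\Psi$ and its first derivative vanish. Hence $T''(u_i)$ is a positive multiple of $(\Xi-\Psi)''$ in $c$, times $\sin^2u_i$, and the strict curvatures correspond; the chain rule terms with $\Xi-\Psi$ or its first derivative drop out.

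The endpoint $0$ is handled through Lemma~\ref{lem:ends}: $T''(0)=-\lambda_1(1+2\tau)+\lambda_{-K}K(2\tau-K)$ with $\tau=-1/\Xi_q(1)$. I would show that $T''(0)<0$ is equivalent to $\Xi_q(1)<\Psi_r(1)=2(1-rK)/(1+rK^2)$, which is direct algebra after multiplying by $-\Xi_q(1)>0$. The endpoint $c=-1$ is automatic from $T(\pi)<0$, so strict inequality there is part of the condition for free. I expect this endpoint bookkeeping at $c=\pm1$ to be the main obstacle: one must check that the strict inequality on $[-1,1]\setminus\{c_1,c_2\}$ at $c=1$ matches exactly $T''(0)<0$ rather than mere $T\le0$ near $0$. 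The paper's convention of evaluating by continuous extension is what makes this work.

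For (c), $\Xi_q$ is a quadratic tangent to $\Psi_r$ at two points. A quadratic through $(c_i,\Psi(c_i))$ with prescribed slopes at both points is overdetermined by one condition. For a quadratic, the average of the slopes at $c_1,c_2$ equals the secant slope, and this compatibility is exactly $G_K(r)=0$; the quadratic is then unique.

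For (d), substitute $\Theta$ at a contact. From $\Xi=\Psi_r$ we get $\lambda_1(1-c)+\lambda_{-K}(1-T_K)$ times $\cot\Theta\sin\omega$ equal to $(1-c^2)(\lambda_1-\lambda_{-K}U_{K-1})$. This means $\lambda_1\Lambda_1+\lambda_{-K}\Lambda_{-K}=0$ at $u_i$, so $\Lambda_1$ and $\Lambda_{-K}$ have opposite signs unless both vanish. Using \eqref{eq:Lambda}, $\Lambda_1-\Lambda_{-K}=\cos(K\omega+\Theta)-\cos(\omega-\Theta)=-2\sin\frac{(K+1)\omega}{2}\sin\frac{(K-1)\omega+2\Theta}{2}$. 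Combining this with the zero-sum relation gives $\Lambda_1$ a sign opposite to $p_i$ up to a positive factor. I would verify this by expressing $\Lambda_1=\lambda_{-K}(\Lambda_1-\Lambda_{-K})/(\lambda_1+\lambda_{-K})$ and checking the sign of the second sine via $\Theta$ in the admissible range.

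The window claims then follow from the sign of $p_i$. For $1/(K+1)<u_1/2\pi<1/K$ we have $\sin(Ku_1/2)>0$ and $\sin((K+1)u_1/2)<0$. For $u_2/2\pi<1/(K+1)$ both sines are positive.
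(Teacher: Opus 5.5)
Your parts (a)--(c) are correct and follow the paper's own argument. You use the same factorization via $1-T_K=(1-c)W_K$ and the same Taylor expansion at the contacts, where $\Xi_q-\Psi^{(K)}_r$ and its $c$-derivative vanish. The endpoint computation $T''(0)<0\iff\Xi_q(1)<2(1-rK)/(1+rK^2)$ comes from Lemma~\ref{lem:ends}, as in the paper, and the Hermite compatibility condition is $G_K(r)=0$.

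The gap is in the last step of (d). Your identity
\[
 \Lambda_1(u_i)=-\frac{2\lambda_{-K}}{\lambda_1+\lambda_{-K}}\sin\frac{(K+1)u_i}{2}\,\sin\Bigl(\frac{(K-1)u_i}{2}+\Theta_q(u_i)\Bigr)
\]
is correct. However, to conclude $\sgn\Lambda_1(u_i)=-\sgn p_i$ you need $\sin\bigl(\frac{(K-1)u_i}{2}+\Theta_q(u_i)\bigr)$ to have the sign of $\sin\frac{Ku_i}{2}$, and no a priori range of $\Theta$ gives this.

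For $u_1$ in the window $(2\pi/(K+1),2\pi/K)$, for instance, the required sign amounts to $\Theta_q(u_1)<\pi-(K-1)u_1/2$. This does not follow from $\Theta_q\in(0,\pi)$; it depends on where the contact places $\Theta_q(u_1)$, hence on $r$. The relation $T(u_i)=0$ only says that the two lag values have opposite signs or both vanish. It does not say which one is positive.

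The missing idea is to evaluate $\Lambda_1$ alone and use the contact equation $\Xi_q(c_i)=\Psi^{(K)}_r(c_i)$ in full. By \eqref{eq:Lambdapoly},
\[
 \Lambda_1=\frac{\sin\Theta_q}{\sin\omega}(1-c)\bigl(\Xi_q(c)-1-c\bigr),\qquad
 \Psi^{(K)}_r(c)-1-c=-r(1+c)\,\frac{U_{K-1}(c)+W_K(c)}{1+rW_K(c)},
\]
and
\[
 U_{K-1}(\cos u)+W_K(\cos u)=\frac{\sin(Ku/2)\sin((K+1)u/2)}{\sin^2(u/2)\cos(u/2)} .
\]
Hence $\sgn\Lambda_1(u_i)=-\sgn p_i$ at a contact. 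Then $T(u_i)=0$ with $\lambda_{-K}>0$ gives $\Lambda_{-K}(u_i)$ the opposite sign, and both values vanish when $p_i=0$. This yields both equivalences.

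Your route can also be closed by a case analysis on the position of multiples of $\pi$. It combines $\sgn\Lambda_1=\sgn(u_i/2-\Theta_q)$, the formula $\Lambda_{-K}=2\sin\frac{Ku_i}{2}\sin(\frac{Ku_i}{2}+\Theta_q)$, and the opposite-sign relation. That argument still has to be written out. With (d) repaired, your final check of the two frequency windows is correct.
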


\begin{proof}
Factor the two-lag symbol using
$1-T_K=(1-c)W_K$ and $\sin K\omega=\sin\omega U_{K-1}(c)$.
The factor is positive in the interior. Taylor's formula at the
contacts gives the curvature equivalence; at $0$, its endpoint
condition is $\Xi_q(1)<2(1-rK)/(1+rK^2)$. A quadratic with the prescribed values and first derivatives exists
exactly when the average of its two prescribed slopes equals the
secant slope, namely $G_K(r)=0$.
Finally
\[
 U_{K-1}(\cos u)+W_K(\cos u)
 =\frac{\sin(Ku/2)\sin((K+1)u/2)}{\sin^2(u/2)\cos(u/2)},
\]
which gives the sign assertions and both stated frequency windows.
\end{proof}

\subsection{Verification of the mixed-spectrum examples}\label{app:boundary-examples}
\begin{proof}[Proof of Theorem~\ref{thm:plateaus}]
We give finite polynomial certificates for the two rows. Set
\[
 G_n(c)=\Xi_q(c)(1-T_{|n|}(c))
       -\operatorname{sgn}(n)(1-c^2)U_{|n|-1}(c).
\]
For the row of period $18$ take the lag pairs $(1,19),(-4,14)$;
for the row of period $21$ take $(1,-20),(-12,9),(-4,-25)$.
In each pair denote the first and second entries by $r_i,t_i$, and
put $c_j=\cos(2\pi j/L)$ for the support numerators in
Table~\ref{tab:plateaus}. Form $M(\kappa)=(G_{r_i}(c_j))_{i,j}$.
The determinant has opposite strict signs at the two endpoints of
$[\hat\kappa-10^{-7},\hat\kappa+10^{-7}]$, where
\[
 \hat\kappa_{18}=100.55082118856557,\qquad
 \hat\kappa_{21}=106.80551194382778.
\]
Both intervals are strictly contained in the displayed open intervals.
At any determinant root, the $(0,0)$ cofactor is nonzero. Normalize
its adjugate column and row to obtain right and left kernels
$\rho,\mu$ with sums one. To construct the dual weights, solve
\[
 \sum_i\bigl[G_{t_i}'(c_j)-G_{r_i}'(c_j)\bigr]v_i
       =-\sum_i\mu_iG_{r_i}'(c_j),\qquad
 \lambda_{t_i}=v_i,\quad\lambda_{r_i}=\mu_i-v_i.
\]
Then $\sum_n\lambda_nG_n$ and its first derivative vanish at every
contact. The determinants of this system lie in $(-580,-579)$ and
$(-46714,-46713)$, respectively. Its columns are
nonzero scalar multiples of the pinning columns in (R3), so (R3)
follows as well.

Here are rational lower bounds valid throughout the respective
brackets; $\rho,\mu$ are the polynomial kernels just defined.
\[
\begin{array}{c|ccccc}
 L&\min\rho_j&\min\lambda_n&
 \min_{r\not\equiv0,r_i}\sum_j\rho_jG_r(c_j)&
 \sum_j\rho_j\Xi_q(c_j)&|\operatorname{cof}_{00}M|\\ \hline
 18& 6/25&7/10^6&3/50&6/5&17/50\\
 21& 9/100&5/10^5&1/25&11/10&1/2
\end{array}
\]
The left kernels are positive too. All these inequalities are strict.
Since
$\Lambda_n=(\sin\Theta_q/\sin\omega)G_n$, the masses
$w_j=\rho_j\sin\omega_j/\sin\Theta_q(\omega_j)$ give the required
positive primal measure; its minimizer slack is
$\sum_j\rho_j\Xi_q(c_j)$. Thus the displayed bounds verify all inactive
primal constraints.

For the global dual sign, factor
\[
 \sum_n\lambda_nG_n(c)=(1-c)H(c),\qquad
 H(c)=\sum_n\lambda_n\left[
 \Xi_q(c)\frac{1-T_{|n|}(c)}{1-c}
       -\operatorname{sgn}(n)(1+c)U_{|n|-1}(c)\right].
\]
Enclose each $c_j$ by the rational interval
$[c_j^-,c_j^+]$ obtained by rounding outward to $14$ decimal places
and extending one further unit in the last place. On each contact
interval $[c_j^--1/200,c_j^++1/200]$, the upper Bernstein coefficient
bound for $H''$ is less than $-7/10$ for $L=18$ and $-7/25$ for
$L=21$. On each intervening closed interval, the upper Bernstein
coefficient bound for $H$ is less than $-9\cdot10^{-6}$ and
$-3\cdot10^{-6}$, respectively. The contact values and slopes are
exactly zero, so Taylor's theorem proves the required strict sign
and negative contact curvatures. The bound at $c=1$ is negative and,
as $H(1)=\Xi_q(1)\sum_n\lambda_nn(n+2\tau_q)$,
it also gives the endpoint moment condition.

For clarity, the coefficient test used here is the following standard
one~\cite[Section~5.1]{Farouki2012}. If a polynomial on $[a,b]$ is
written as $P(a+(b-a)x)=\sum_{j=0}^d p_jx^j$, its Bernstein
coefficients are
$b_i=\sum_{j\le i}p_j\binom{i}{j}/\binom{d}{j}$.
Their minimum and maximum bound $P$ on that interval. Applying this
formula to the displayed polynomials, with rational interval
operations, gives the stated strict bounds. The trigonometric
constants are enclosed by argument reduction and alternating Taylor
bounds. The same substitution in $\Xi_q$ gives strict gap endpoint
inequalities and a point with $\Theta_q<\omega/2$, hence membership
in $I_4$.

Consequently (R1)--(R3) hold at every bracketed determinant root.
Theorem~\ref{thm:modelock} gives an analytic plateau, and strict
decrease of $\sigma$ in $\kappa$ makes the boundary value unique.
The fixed supports give the unique anchors. Their least periods are
$18$ and $21$ by Theorem~\ref{thm:tightclass};
Corollary~\ref{cor:periods} excludes periods $4p+1$ in the first row,
and strict positivity at all three support points proves the final
assertion for the second row.
\end{proof}

\begin{theorem}[An exact two-lag point of $3/13$]\label{thm:exact313}
There is a tuning $q^\star=(\kappa^\star,\alpha L^\star,\beta^\star)\in I_4$ with
\[
 |\kappa^\star-83.5343|<5\cdot10^{-5},\quad
 |\alpha L^\star-2.218909|<5\cdot10^{-7},\quad
 |\beta^\star-0.622768|<5\cdot10^{-7},
\]
for which, at shift $0$, $u_1=6\pi/13$ and $u_2=4\pi/13$ satisfy
\[
 T(u_2)=T'(u_1)=T'(u_2)=0,\qquad
 T=-\Lambda_{-4}(u_1)\Lambda_1+\Lambda_1(u_1)\Lambda_{-4}.
\]
The Jacobian of these three equations with respect to
$(\kappa,\alpha L,\beta)$ is nonsingular at $q^\star$, which is their
unique solution in an open neighborhood. Conditions \textup{(E1)--(E4)}
of Theorem~\ref{thm:exact} hold. Consequently
\[
 \sigma(q^\star)=\sigma_{13}(q^\star)=\sigma_{\{1,-4\}}(q^\star)=0,
\]
and $q^\star$ lies on an analytic plateau with unique anchor $3/13$ and
unique threshold spectrum $\{2,3\}\cdot2\pi/13$, up to scaling and an
admissible atom at $0$.
\end{theorem}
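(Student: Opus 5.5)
The plan is to reduce everything to Theorem~\ref{thm:exact} with $K=4$, $L=13$, $u_1=6\pi/13$, $u_2=4\pi/13$. Proposition~\ref{prop:twolagtangency} gives the structure we need. Prescribing a two-lag symbol with contacts $c_i=\cos u_i$ means $\Xi_q$ is the quadratic that is doubly tangent to the rational profile $\Psi^{(4)}_r$ at $c_1,c_2$, with $r$ a root of the scalar equation $G_4(r)=0$.

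\textbf{Step 1 (locating $q^\star$).} The first move is to solve $G_4(r)=0$ on a rational interval using Sturm counts or sign changes of the cleared numerator. The trigonometric constants $\cos(2\pi j/13)$ are enclosed by outward-rounded Taylor bounds, as in the proof of Proposition~\ref{prop:13}. Hermite interpolation then gives $\Xi^\star=\Xi_{q^\star}$ explicitly in terms of $r$. Proposition~\ref{prop:chart}(a) inverts $\Xi^\star$ to a unique tuning $q^\star$, which can be enclosed to the stated accuracy.

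For the Jacobian claim, note that $(\kappa,\alpha L,\beta)\mapsto\Xi_q$ is a local diffeomorphism onto quadratics by Proposition~\ref{prop:chart}(a). The three equations $T(u_2)=T'(u_1)=T'(u_2)=0$ are, via the factorization in Proposition~\ref{prop:twolagtangency}(a), equivalent to three conditions on the three coefficients of $\Xi$: the value at $c_2$ and the slopes at $c_1,c_2$, with $r$ eliminated through the ratio $\Lambda_{-4}(u_1)/\Lambda_1(u_1)$. Their Jacobian therefore factors through a $3\times3$ matrix in $(\Xi_0,\Xi_1,\Xi_2)$. I would check numerically, with interval bounds, that this determinant is nonzero; the implicit function theorem then gives local uniqueness.

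\textbf{Step 2 (conditions (E1)--(E4)).} (E2) is immediate, since both contacts lie on $\Omega_{13}$ and $13\ge K+2=6$. For the sign conditions in (E1), both fractions satisfy the windows of Proposition~\ref{prop:twolagtangency}(d): $u_1/2\pi=3/13\in(1/5,1/4)$ and $u_2/2\pi=2/13<1/5$. The global part of (E1) is $\Psi^{(4)}_r-\Xi^\star>0$ away from $c_1,c_2$, together with positive curvature at the contacts and the endpoint condition $\Xi^\star(1)<2(1-4r)/(1+16r)$. Clearing the positive denominator $1+rW_4$ turns this into positivity of a polynomial of degree about $5$ divided by $(c-c_1)^2(c-c_2)^2$. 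I would certify it by Bernstein coefficients on a subdivision of $[-1,1]$, exactly as in the proofs of Propositions~\ref{prop:conv-gap} and Theorem~\ref{thm:plateaus}, with small contact intervals handled by a negative bound on the second derivative.

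(E3) concerns the thirteen residue slacks $w_1\Lambda_\varrho(u_1)+w_2\Lambda_\varrho(u_2)$ and the minimizer slack. It is a finite list of trigonometric numbers, checked like the thirteen quantities in Proposition~\ref{prop:13}. (E4) is a single $2\times2$ determinant of sines, again checked by interval evaluation.

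\textbf{Step 3 (conclusions).} Theorem~\ref{thm:exact} gives the equalities for $\sigma$, regularity with $S=\{u_1,u_2\}$, uniqueness of the extremal up to scaling and the zero atom, and analyticity. Corollary~\ref{cor:exactdD}, applied with $s_2=0$, $u_1/2\pi\in(1/5,1/4)$ and $u_2/2\pi<1/5$, gives $q^\star\in I_4=\mathcal B_4$-membership, the plateau, and the unique anchor $3/13$.

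I expect the main obstacle to be the global sign certificate in (E1) near the double contacts. There the polynomial $\Psi_r-\Xi$ vanishes to second order at algebraic points known only through interval enclosures, so naive Bernstein bounds fail. I would first factor out $(c-c_i)^2$ symbolically, with interval coefficients, and bound the quotient directly. If that loses too much precision, I would fall back on the curvature-plus-Taylor argument used in Theorem~\ref{thm:plateaus}.
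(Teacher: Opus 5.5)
Your proposal is correct and follows essentially the paper's route. The paper also takes the root $r$ of the scalar compatibility equation, the double-tangency quadratic inverted through the chart, and interval (Krawczyk) isolation of the three coefficient equations. It transfers that isolation to $(\kappa,\alpha L,\beta)$ by a triangular factor and the nonsingular chart Jacobian for $\beta>0$, proves (E1)--(E4) by finite interval checks, and concludes with Theorem~\ref{thm:exact} and Corollary~\ref{cor:exactdD}.

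The obstacle you anticipate disappears. Write $\Xi^\star=ac^2+bc+d$. The cleared difference $(1+rW_4)\Xi^\star-(1+c)(1-rU_3)$ has degree five with double zeros at $c_1,c_2$, so it equals $(c-c_1)^2(c-c_2)^2\cdot 8r[ac+b+a+1+2a(c_1+c_2)]$. The global sign, both contact curvatures and the endpoint condition therefore reduce to the two checks $a<0$ and $b+1+2a(c_1+c_2)<0$, with no Bernstein subdivision.

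Two small points remain. You should also verify $\Xi^\star(\pm1)<0$ before inverting the chart. And Proposition~\ref{prop:chart}(a) by itself gives only a homeomorphism, so the nonsingular Jacobian of $q\mapsto\Xi_q$ needs its own short argument.
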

\begin{proof}[Proof of Theorem~\ref{thm:exact313}]
For $K=4$ write $u=6\pi/13$, $v=4\pi/13$, $c_1=\cos u$, $c_2=\cos v$,
and let $Q_r(c)=d+bc+ac^2$ be the quadratic matching the contact values
and slopes in Proposition~\ref{prop:twolagtangency}. The scalar compatibility equation
has a root
\[
 0.136418311438750<r<0.136418311438754.
\]
Rational interval bounds give $a<0$, $Q_r(\pm1)<0$, and
$b+1+2a(c_1+c_2)<0$, so $Q_r$ is the quadratic of a tuning.
The global inequality requires only the factorization
\[
 (1+rW_4(c))Q_r(c)-(1+c)(1-rU_3(c))
 =(c-c_1)^2(c-c_2)^2\,8r
    [ac+b+a+1+2a(c_1+c_2)].
\]
For $c\ge-1$ the last factor is negative. Thus the only interior
zeros are the two contacts, and both curvatures are strictly negative.
The endpoint condition, ten inactive residue slacks, minimizer slack,
and pinning determinant are strict interval inequalities on the
root enclosure. This proves \textup{(E1)--(E4)} and membership in $I_4$.

For isolation, let $X_C(c)=C_0+C_1c+C_2c^2$ and put
$A_C=1+c_1-X_C(c_1)$,
$B_C=X_C(c_1)W_4(c_1)+(1+c_1)U_3(c_1)$.
Eliminating $r=A_C/B_C$ from the first contact gives the polynomial
profile
\[
 \mathcal H_C(c)=B_C[X_C(c)-1-c]
       +A_C[X_C(c)W_4(c)+(1+c)U_3(c)].
\]
The three-dimensional system is
$(\mathcal H_C(c_2),\mathcal H_C'(c_1),\mathcal H_C'(c_2))=0$.
A preconditioned interval-Jacobian inclusion gives a unique root and
a nonsingular derivative by the Krawczyk criterion
\cite[Theorem~13.3]{Rump2010}. Replacing the polynomial profile by the trigonometric symbol multiplies
the contact equations by a triangular matrix with nonzero diagonal;
the change from tuning parameters to the coefficients of $\Xi_q$ has
nonsingular Jacobian for $\beta>0$.
Consequently the original three equations have nonsingular Jacobian
and the same open local uniqueness. Inverting the chart gives the
three displayed rational parameter enclosures. Theorem~\ref{thm:exact}
and Corollary~\ref{cor:exactdD} finish the proof.
\end{proof}

\section{Single-harmonic certificates and fan asymptotics}\label{app:plateau}
Use the notation of Subsections~\ref{sec:farey} and~\ref{sec:plateau}.

\subsection{The edge criterion}
\begin{lemma}[Edge identities]\label{lem:edgeidjk}
Let $\Theta$ be a curve and $\omega=\omega_0+\delta\in[0,\pi]$. For $n\in b+k\Z$ and $m\in\Z\setminus\{0\}$,
\[
 \Lambda_n(\omega)=2\sin\bigl(c'+\tfrac{n\delta}2\bigr)\sin\bigl(c'+\tfrac{n\delta}2-\Theta(\omega)\bigr),\qquad
 \Lambda_{mk}(\omega)=2\sin\tfrac{mk\delta}2\,\sin\bigl(\tfrac{mk\delta}2-\Theta(\omega)\bigr),
\]
and $n\omega\equiv2c'+n\delta\pmod{2\pi}$ for $n\in b+k\Z$. If $\Theta(\omega_0)=c'$ and $\Theta$ is twice differentiable at
$\omega_0$ with $\bar n=2\Theta'(\omega_0)$, then for $n\in b+k\Z$
\[
 \Lambda_n'(\omega_0)=\sin c'\,(n-\bar n),\qquad \Lambda_n''(\omega_0)=\cos c'\,n(n-\bar n)-2\sin c'\,\Theta''(\omega_0),
\]
and for $n=mk\ne0$
\[
 \Lambda_n'(\omega_0)=-\sin c'\,n,\qquad \Lambda_n''(\omega_0)=\cos c'\,n(n-\bar n).
\]
\end{lemma}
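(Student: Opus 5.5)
The plan is direct substitution into the product form \eqref{eq:Lambda}, $\Lambda_n(\omega)=2\sin\frac{n\omega}2\sin\bigl(\frac{n\omega}2-\Theta(\omega)\bigr)$, followed by two differentiations at $\omega_0$.

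\emph{Residue relation.} For $n\in b+k\Z$ we have $jn\equiv1\pmod k$, so $n\omega_0=2\pi jn/k\equiv2\pi/k=2c'\pmod{2\pi}$. Hence $n\omega\equiv2c'+n\delta$, i.e.\ $\frac{n\omega}2\equiv c'+\frac{n\delta}2\pmod\pi$. Shifting both half-angles in \eqref{eq:Lambda} by a common multiple of $\pi$ multiplies each sine by the same sign, so the product is unchanged. This gives the first displayed identity.

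For $n=mk$ the same argument applies with $n\omega_0\equiv0$, giving the second identity.

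\emph{Derivatives for $n\in b+k\Z$.} Write $X(\delta)=c'+\frac{n\delta}2$ and $Y(\delta)=X(\delta)-\Theta(\omega_0+\delta)$. By hypothesis $Y(0)=0$, $X'=\frac n2$, $Y'(0)=\frac{n-\bar n}2$ and $Y''(0)=-\Theta''(\omega_0)$.

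For $\Lambda=2\sin X\sin Y$ we have
\[
\Lambda'=2\cos X\,X'\sin Y+2\sin X\cos Y\,Y',
\]
and at $\delta=0$ the first term vanishes because $\sin Y=0$. This leaves $\Lambda'=2\sin c'\cdot\frac{n-\bar n}2=\sin c'(n-\bar n)$.

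The second derivative at $0$ (using $\sin Y=0$ and $\cos Y=1$) is
\[
\Lambda''=4\cos X\,X'\,Y'+2\sin X\,Y''=4\cos c'\cdot\frac n2\cdot\frac{n-\bar n}2-2\sin c'\,\Theta''(\omega_0),
\]
which matches the claim.

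\emph{Derivatives for $n=mk$.} Now $X=\frac{n\delta}2$ and $Y=X-\Theta$. At $\delta=0$ we have $\sin X=0$, $\cos X=1$ and $Y(0)=-c'$.

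The first derivative is $\Lambda'=2X'\sin Y=n\sin(-c')=-n\sin c'$.

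For the second derivative, the terms involving $\sin X$ vanish at $0$, and the term $-2\sin X\sin Y\,X'^2$ also vanishes. What remains is
\[
\Lambda''=2X''\sin Y+4X'\cos X\cos Y\,Y'=0+4\cdot\frac n2\cos c'\cdot\frac{n-\bar n}2=\cos c'\,n(n-\bar n).
\]

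The only point needing care is the sign bookkeeping in the modulo-$\pi$ shift; the rest is routine differentiation.
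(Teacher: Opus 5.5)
Your proposal is correct and follows essentially the same route as the paper, which also reduces $n\omega_0$ modulo $2\pi$ in the product form $\Lambda_n=2\sin\frac{n\omega}{2}\sin\bigl(\frac{n\omega}{2}-\Theta\bigr)$ and then differentiates twice at $\omega_0$. Your version simply writes out the sign bookkeeping of the modulo-$\pi$ half-angle shift and the product-rule computations that the paper leaves implicit.
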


\begin{proof}
Reduce $n\omega_0$ modulo $2\pi$ in
$\Lambda_n=2\sin(n\omega/2)\sin(n\omega/2-\Theta)$, then differentiate
twice. Only the stated two derivatives of $\Theta$ are required.
\end{proof}

\begin{theorem}[Edge criterion at a Farey fraction]\label{thm:edgejk}
Let $q$ be a tuning with $\bar n(q)\notin b+k\Z$.
\begin{enumerate}[label=\textup{(\alph*)}]
\item $F_q(0)=F_q'(0)=F_q(\omega_0)=F_q'(\omega_0)=0$,
$F_q''(0)=\tfrac12\bigl[(1-\vartheta)n_1(n_1+2\tau_q)+\vartheta n_2(n_2+2\tau_q)\bigr]$, and
\[
 F_q''(\omega_0)=\sin c'\,\Theta_*''(\omega_0)-\tfrac12\cos c'\,k^2\vartheta(1-\vartheta).
\]
\item If $F_q\ge0$ on $[0,\pi]$, then $\sigma(q)=\sigma_k(q)=s_*(q)$. If $F_q>0$ on $(0,\pi]\setminus\{\omega_0\}$, every
solution of $\FP(\Theta_*)$ is $b'\delta_{\omega_0}+a'\delta_0$ with $b'>0$ and $0\le a'\le b'\cos c'$. If moreover
$F_q''(0)>0$ and $F_q''(\omega_0)>0$, then $(q,s_*(q))$ is $k$-regular with $S=\{\omega_0\}$ and $N=\{n_1,n_2\}$,
and $\sigma=\sigma_k=s_*$ on a neighborhood of $q$.
\item If $F_q''(\omega_0)\le0$, then every symbol $T=\sum_n\lambda_n\Lambda_n\ne0$ of $\Theta_*$ with $\lambda_n\ge0$, lags in
$\mathcal N_{j/k}$ and $T'(\omega_0)=0$ has $T''(\omega_0)\ge0$. Consequently $(q,s_*(q))$ is not
$L$-regular with support $\{\omega_0\}$, for any $L$.
\end{enumerate}
\end{theorem}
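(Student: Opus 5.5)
Part (a) is a direct computation from Lemma~\ref{lem:edgeidjk} and the endpoint expansion of Lemma~\ref{lem:ends}. At the shift $s_*(q)$ the atom is on the edge, so $\Theta_*(\omega_0)=c'$, and $\bar n=2\Theta_*'(\omega_0)$. For $n\in b+k\Z$ the edge identity gives $\Lambda_n(\omega_0)=2\sin c'\sin(c'-c')=0$. Hence $F_q(\omega_0)=0$, and $F_q(0)=F_q'(0)=0$ holds for every lag function. Next, $F_q'(\omega_0)=-\tfrac12\sin c'\,[(1-\vartheta)(n_1-\bar n)+\vartheta(n_2-\bar n)]$, and this vanishes because $\vartheta=(\bar n-n_1)/k$ is exactly the weight making the convex combination of $n_i$ equal $\bar n$. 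For the second derivatives I insert $\Lambda_n''(\omega_0)=\cos c'\,n(n-\bar n)-2\sin c'\,\Theta_*''$. The identity
\[
(1-\vartheta)n_1(n_1-\bar n)+\vartheta n_2(n_2-\bar n)=(1-\vartheta)(n_1-\bar n)^2+\vartheta(n_2-\bar n)^2=k^2\vartheta(1-\vartheta)
\]
then gives the stated value. The value at $0$ follows from $\Lambda_n''(0)=-n(n+2\tau_q)$.

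For (b) I combine weak and strong duality. Since $F_q\ge0$, the symbol $T_q=-2F_q$ is nonpositive with lags $n_1,n_2$. Integrating \eqref{eq:shiftid} against any measure feasible at a shift $s<s_*$ gives a contradiction, provided a small minimizer term $\epsilon\cos\Theta$ is added as in the proof of Theorem~\ref{thm:modelock}. Together with \eqref{eq:sstarjk} this shows $\sigma=\sigma_k=s_*$. The point needing care is that the strict symbol must be negative near $0$ and $\omega_0$ at lower shifts. The term $(s_*-s)P$ with $P(\omega_0)>0$ handles $\omega_0$, and $h_q+s<0$ handles $0$, as before. Under strict positivity off $\omega_0$, complementary slackness (Lemma~\ref{lem:A-limit}) puts every extremal on $\{0,\omega_0\}$, and the minimizer constraint bounds the atom at $0$ by $b'\cos c'$. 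Under the two curvature conditions, I would verify (R1)--(R3) of Definition~\ref{def:regular} with $L=k$, $S=\{\omega_0\}$, $N=\{n_1,n_2\}$:
\begin{itemize}
\item[(R1)] $\mathcal A$ is the single class $b$, so $M_*$ is the $1\times1$ zero matrix of rank $0=|S|-1$; the other residues are strict by Theorem~\ref{thm:farey}(b), since $\Theta_*(\omega_0)=c'$ violates nothing and position $1$ is the only tight one.
\item[(R2)] This is $F_q>0$ together with the two curvature conditions, and $P^*(\omega_0)=1-\cos(2c')>0$.
\item[(R3)] $N_b=\{n_1,n_2\}$ has two elements, and $v_b=\sin(b\omega_0-c')=\sin(c')\neq0$, because $b\omega_0\equiv2c'$.
\end{itemize}
Theorem~\ref{thm:modelock} then gives the neighborhood statement.

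For (c), let $T=\sum\lambda_n\Lambda_n$ with $T'(\omega_0)=0$. Lags $n=mk$ have $\Lambda_n(\omega_0)=0$ but contribute slope $-\sin c'\,n$, so I normalize and write everything in terms of the slope equation $\sum\lambda_n\sin c'(n-\bar n)=0$, where $\bar n$ is replaced by $0$ for the $k\Z$ class. Then
\[
T''(\omega_0)=\cos c'\sum\lambda_n n(n-\bar n)-2\sin c'\,\Theta_*''\sum_{n\in b+k\Z}\lambda_n .
\]
The main obstacle is the quadratic minimization over lag residues. Using the slope equation, $\sum\lambda_n n(n-\bar n)=\sum\lambda_n(n-\bar n)^2$ plus a correction from the $k\Z$ lags. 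Because the points $b+k\Z$ avoid $\bar n$, the mass-normalized minimum over $b+k\Z$ of this variance is attained by the adjacent pair and equals $k^2\vartheta(1-\vartheta)$. For $k\Z$ lags I expect the terms $\cos c'\,n(n-\bar n)$ to cost at least as much per unit slope they cancel. This needs checking against $\bar n\in(n_1,n_2)$; I would try the comparison using the convexity of $n\mapsto n(n-\bar n)$. The conclusion is that $T''(\omega_0)\ge-2\bigl(\sum_{b+k\Z}\lambda_n\bigr)F_q''(\omega_0)\ge0$. A regular certificate needs a strictly negative second derivative at the support, so regularity with support $\{\omega_0\}$ is excluded for every $L$.
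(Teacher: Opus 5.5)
There is a genuine gap in (c). Parts (a) and the (R1)--(R3) check in (b) are correct and follow the paper.

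The gap is at exactly the step you left as an expectation: the contribution of the position-$0$ lags $n=mk$. First, your slope equation, with ``$\bar n$ replaced by $0$ for the $k\Z$ class'', gives these lags slope $+\sin c'\,n$. That contradicts the value $-\sin c'\,n$ from Lemma~\ref{lem:edgeidjk} which you quoted yourself. With the correct sign, $T'(\omega_0)=0$ reads $\sum_{n\in b+k\Z}\lambda_n(n-\bar n)=\sum_{m\ne0}\lambda_{mk}\,mk$, and the cross terms cancel exactly:
\[
 \sum_n\lambda_n n(n-\bar n)=\sum_{n\in b+k\Z}\lambda_n(n-\bar n)^2+\sum_{m\ne0}\lambda_{mk}m^2k^2 .
\]
Even so, the $b+k\Z$ part no longer has mean $\bar n$, so the adjacent-pair variance minimum does not apply as you stated it. The missing step is to expand about $n_1,n_2$ instead:
\[
 \sum_n\lambda_n n(n-\bar n)-\mu k^2\vartheta(1-\vartheta)
 =\sum_{n\in b+k\Z}\lambda_n(n-n_1)(n-n_2)+\sum_{m\ne0}\lambda_{mk}k^2m(m+1-2\vartheta)\ge0 .
\]
This holds because no member of $b+k\Z$ lies strictly between $n_1$ and $n_2$, and $|1-2\vartheta|<1$. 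It gives your target $T''(\omega_0)\ge-2\mu F_q''(\omega_0)$ with $\mu=\sum_{b+k\Z}\lambda_n$, hence (c); the paper later uses this same identity in Lemma~\ref{lem:ray}.

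The paper proves (c) differently. It splits a zero-slope combination into opposite-slope pairs and compares curvature-to-slope ratios. At position $1$ the ratio is $\rho_n=n\cot c'-2\Theta_*''(\omega_0)/(n-\bar n)$, monotone on each side of $\bar n$ since $\Theta_*''(\omega_0)>0$ by Theorem~\ref{thm:convex}. At position $0$ it is $(\bar n-n)\cot c'$. The worst pair is then $(n_1,n_2)$, whose ratio gap has the sign of $-F_q''(\omega_0)$.

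For the final exclusion of regularity you also need to say why the certificate's lags lie in $\mathcal N_{j/k}$. The reason is that $S^*(\omega_0)=0$, while every $\Lambda_n(\omega_0)\ge0$ with equality only at positions $0$ and $1$.

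There is a smaller omission in (b). Under the weak hypothesis $F_q\ge0$, the symbol $T_q$ may vanish at points of $(0,\pi)$ other than $\omega_0$. Handling only $0$ and $\omega_0$ therefore does not make the lower-shift symbol strict. You need
\[
 P=(1-\vartheta)(1-\cos n_1\omega)+\vartheta(1-\cos n_2\omega)>0\quad\text{on all of }(0,\pi],
\]
which holds because $\gcd(n_1,n_2)=\gcd(b,k)=1$. Then \eqref{eq:shiftid} makes the two-lag symbol negative on $(0,\pi]$ at every $s<s_*$. Any feasible measure would then be carried by $\{0\}$ and would violate the minimizer constraint, so no $\epsilon$-construction is needed.

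Finally, when you invoke Theorem~\ref{thm:modelock}, identify its continued shift as the root of the $1\times1$ determinant $\Lambda^{q,s}_b(\omega_0)=0$, that is, $s=s_*(q)$. This is what yields $\sigma=\sigma_k=s_*$ on the neighborhood.
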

The proof is given together with Lemma~\ref{lem:certjk} below.

\begin{lemma}[Single-harmonic certificates]\label{lem:certjk}
Let $q$ be a tuning, let a finite set $N\subset\mathcal N_{j/k}$ contain at least two lags of $b+k\Z$, and let $\lambda_n>0$, $n\in N$,
$\sum_n\lambda_n=1$. Let $T=\sum_{n\in N}\lambda_n\Lambda_n$ be formed with the lag functions of $\Theta_*$, and assume
$T<0$ on $(0,\pi]\setminus\{\omega_0\}$, $T''(\omega_0)<0$ and $\sum_n\lambda_nn(n+2\tau_q)>0$. Then $(q,s_*(q))$ is $k$-regular
with $S=\{\omega_0\}$ and the lag set $N$. There is a neighborhood $U$ of $q$ on which $\sigma=\sigma_k=s_*$ and
every solution of $\FP(\Theta_{q',\sigma(q')})$, $q'\in U$, is $b'\delta_{\omega_0}+a'\delta_0$ with $b'>0$ and $0\le a'\le b'\cos c'$.
\end{lemma}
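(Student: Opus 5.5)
The plan is to verify the three conditions (R1)--(R3) of Definition~\ref{def:regular} at $(q,s_*(q))$ with $L=k$, $S=\{\omega_0\}$, $w^*=\delta_{\omega_0}$ and the lag set $N$. Once they hold, Theorem~\ref{thm:modelock} and Theorem~\ref{thm:tightclass} give the neighborhood statements. The first step is to check that the single atom is feasible at the shift $s_*$. By construction $\Theta_*(\omega_0)=c'=\pi/k$, so Theorem~\ref{thm:farey}(b) makes $\delta_{\omega_0}$ a solution of $\FP(\Theta_*)$. Lemma~\ref{lem:positions} then shows that the only zero lag values on the grid $\Omega_k$ at $\omega_0$ occur at positions $0$ and $1$. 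Position $1$ is the class $b+k\Z$, and position $0$ is the class $k\Z$, whose lag functions vanish identically on $\Omega_k$.

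This gives $\mathcal A=\{b\bmod k\}$, with $|\mathcal A|=1=|S|$. The $1\times1$ contact matrix is zero, so its rank is $0=|S|-1$. The remaining residues have phase index $\mathsf a=1$, so $F_1(\varrho)>0$ for $2\le\varrho\le k-1$ by Lemma~\ref{lem:positions}(b); hence they are strict. The minimizer constraint is also strict, because $\cos c'>0$ for $k\ge3$. This settles (R1).

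For (R2), the hypotheses already give $T<0$ off $\omega_0$, $T''(\omega_0)<0$, and the endpoint moment $\sum\lambda_n n(n+2\tau_q)>0$, which is $-T''(0)$ by Lemma~\ref{lem:ends}. It remains to check $P^*(\omega_0)>0$. The lags in $b+k\Z$ satisfy $n\omega_0\equiv 2c'\not\equiv0$, so $1-\cos n\omega_0=1-\cos(2\pi/k)>0$. At least one such lag carries positive weight, so $P^*(\omega_0)>0$.

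Condition (R3) has two parts. First, the active class $N_b=N\cap(b+k\Z)$ has at least two elements by hypothesis. Second, the pinning vector $v_b=\sin(b\omega_0-\Theta_*(\omega_0))$ must be nonzero, and it equals $\sin(2c'-c')=\sin c'\neq0$ since $b\omega_0\equiv2c'$. One point needs a check, namely that the values and slopes of $T$ vanish at $\omega_0$. Values vanish because every lag in $\mathcal N_{j/k}$ is zero at $\omega_0$. Slopes vanish because $\omega_0$ is an interior maximum of $T\le0$. The definition itself does not require these, since they follow as in the remark after Definition~\ref{def:regular}.

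Finally I would apply Theorem~\ref{thm:modelock}. It gives $\sigma=\sigma_k=s$ near $q$, with $s$ analytic and $s(q)=s_*(q)$. I would still need $s\equiv s_*$ on $U$. I expect to get this by noting that the continued measure remains $\delta_{\omega_0}$, since the support is fixed and has a single atom. The determinant equation in Theorem~\ref{thm:modelock}(a) then reads $\Lambda_b^{q',s}(\omega_0)=0$, that is, $\Theta_{q',s}(\omega_0)=c'$, which is exactly $s=s_*(q')$. I expect this step, identifying the analytic shift $s(q')$ with the explicit $s_*(q')$, to be the main (though mild) obstacle. The description of the solutions as $b'\delta_{\omega_0}+a'\delta_0$ with $0\le a'\le b'\cos c'$ then follows from Theorem~\ref{thm:modelock}(c), since $\int\cos\Theta\,d\delta_{\omega_0}=\cos c'$.
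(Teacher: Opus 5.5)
Your proposal is correct and follows the paper's own proof. You verify (R1)--(R3) using the single tight class $b+k\Z$, the zero $1\times1$ contact matrix and the pinning entry $\sin c'>0$, and then invoke Theorem~\ref{thm:modelock}. For a singleton support its determinant equation is just $\Lambda_b^{q',s}(\omega_0)=2\sin c'\,\sin\bigl(c'-\Theta_{q',s}(\omega_0)\bigr)=0$, which forces $s(q')=s_*(q')$ directly, so the step you flagged as the main obstacle needs no appeal to the continued measure.
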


\begin{proof}[Proofs of Lemma~\ref{lem:certjk} and Theorem~\ref{thm:edgejk}]
At the edge the only tight nonzero residue is $b+k\Z$ (position $1$);
its one-by-one primal matrix is zero, and its pinning entry is $\sin c'>0$.
The assumed sign and curvatures give all remaining regularity
conditions. Theorem~\ref{thm:modelock} then continues the zero
$\Lambda_b(\omega_0)=0$, whose unique shift is $s_*$. This proves the
lemma, including its precise classification of all finite extremals.

The edge identities and
$(1-\vartheta)n_1+\vartheta n_2=\bar n$ give part (a), with variance
$k^2\vartheta(1-\vartheta)$. For (b),
$P=(1-\vartheta)(1-\cos n_1\omega)+\vartheta(1-\cos n_2\omega)>0$
on $(0,\pi]$, since $\gcd(n_1,n_2)=1$.
Equation~\eqref{eq:shiftid} makes the symbol negative there at every
lower shift; a measure carried only by $0$ violates the minimizer
constraint. At the threshold, complementarity restricts the support to
$\{0,\omega_0\}$ and gives the asserted mass bound. The additional curvature conditions
allow application of the lemma.

For (c), pair positive and negative slope contributions in any finite
nonnegative combination with zero slope. For a position-$1$ lag the
ratio of second derivative to first derivative is
\[
 \rho_n=n\cot c'-\frac{2\Theta_*''(\omega_0)}{n-\bar n};
\]
for a nonzero position-$0$ lag it is $(\bar n-n)\cot c'$.
Here $\Theta_*''(\omega_0)>0$ by Theorem~\ref{thm:convex}.
Thus the first ratio is strictly increasing on either side of
$\bar n$: among positive-slope position-$1$ lags its minimum is
$\rho_{n_2}$, and among negative-slope ones its maximum is
$\rho_{n_1}$. Position-$0$ lags with positive slope have $n\le-k$
and ratio at least $(\bar n+k)\cot c'>\rho_{n_2}$; those with
negative slope have $n\ge k$ and ratio at most
$(\bar n-k)\cot c'<\rho_{n_1}$. Therefore the difference between
the least positive-slope ratio and the greatest negative-slope
ratio is
\[
 \rho_{n_2}-\rho_{n_1}
 =k\cot c'-\frac{2\Theta_*''(\omega_0)}{k\vartheta(1-\vartheta)},
\]
which is nonnegative exactly when $F_q''(\omega_0)\le0$.
A zero-slope nonnegative combination is a sum of two-lag combinations
with opposite slopes. Each has second derivative equal to the
product of its positive slope magnitudes and this nonnegative ratio
difference. Its curvature is therefore nonnegative. A regular singleton certificate
would have all its lags tight at $\omega_0$, hence in
$\mathcal N_{j/k}$, and would contradict this conclusion.
\end{proof}

\begin{corollary}[Single-harmonic plateaus]\label{cor:onejk}
Let $\Pi$ be a plateau whose threshold extremal is carried by $\{\omega_0\}$. Then $\Theta_q(\omega_0)=c'$ at
every $q\in\Pi$, and $\kappa_\partial=\kappa_{j/k}$ on the projection of $\Pi$ to the $(\alpha L,\beta)$ plane.
If $j/k\in(1/(K+1),1/K)$, then $j/k$ is the unique anchor at every point of $\Pi\cap I_K$.
\end{corollary}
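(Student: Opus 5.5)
The argument uses the structure of a plateau from Theorem~\ref{thm:modelock} and Corollary~\ref{cor:plateau}, applied with support $S=\{\omega_0\}$.

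\emph{Step 1: the edge equation.} Fix $q\in\Pi$. By Corollary~\ref{cor:plateau}, $q\in\partial\mathcal D$ is regular, so $\sigma(q)=0$. By Theorem~\ref{thm:modelock}(c), the threshold extremal is, up to scaling and an atom at $0$, the single atom $\delta_{\omega_0}$, and it solves $\FP(\Theta_q)$. Theorem~\ref{thm:farey}(b) then gives $\Theta_q(\omega_0)\le c'$. For the reverse inequality we use the determinant equation of Theorem~\ref{thm:modelock}(a) with $|S|=1$. It reads $\Lambda_\varrho(\omega_0)=0$ for the single tight class $\varrho\in\mathcal A$, and this class is nonzero. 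With position notation as in Lemma~\ref{lem:positions}, a zero at a nonzero position forces $\mathsf a=\varrho'$ for some integer position $\varrho'\ge1$. Since $\mathsf a\le1$ already holds, we get $\mathsf a=1$, that is $\Theta_q(\omega_0)=c'$.

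Alternatively, suppose $\Theta_q(\omega_0)<c'$. Then every nonzero residue constraint would be strict, contradicting (R1), which requires $|\mathcal A|=|S|=1$ tight class. This is the cleanest route, and I will use it.

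\emph{Step 2: the boundary height.} Lemma~\ref{lem:tongue} says that $\Theta_q(\omega_0)\le c'$ holds exactly when $\kappa\ge\kappa_{j/k}(\alpha L,\beta)$. By Lemma~\ref{lem:kappa}, $\Theta_q(\omega_0)$ is strictly monotone in $\kappa$, so equality singles out $\kappa=\kappa_{j/k}$. Since $q\in\partial\mathcal D$ gives $\kappa=\kappa_\partial(\alpha L,\beta)$, we conclude $\kappa_\partial=\kappa_{j/k}$ at the projection of every point of $\Pi$.

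\emph{Step 3: the anchor.} Let $q\in\Pi\cap I_K$ with $j/k$ in the window. Every extremal is a multiple of $\delta_{\omega_0}$ plus possibly an atom at $0$. By Lemma~\ref{lem:dip}, every extremal charges the closed dip. The atom at $0$ lies outside the dip, so $\omega_0$ must lie in the dip and is an anchor. No other nonzero atom exists, so $\omega_0$ is the unique anchor; this is also the last clause of Corollary~\ref{cor:plateau}.

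The main obstacle is Step 1. There one must check carefully that the single tight class of a regular singleton support is indeed a nonzero residue, which is condition (R3) applied to the class $N_\varrho$. One must also rule out that the residue-zero class could account for tightness, since those lag functions vanish identically on $\Omega_k$.
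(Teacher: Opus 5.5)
Your outline is the paper's: one tight class, phase index $1$, the tongue formula, then the dip. Steps~2 and~3 are fine. The point you yourself single out as the obstacle, however, is resolved with the wrong condition. Definition~\ref{def:regular} only requires $S\subset\Omega_L$, that is, $k\mid L$. So the single class in $\mathcal A$ is nonzero modulo $L$, but it could a priori be zero modulo $k$, i.e.\ sit at position $0$. For such a class $\Lambda_\varrho(\omega_0)=0$ holds identically. Then neither the determinant equation nor the tightness count in (R1) says anything about $\Theta_q(\omega_0)$, so both of your routes need the class to have a nonzero position.

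Condition (R3) cannot supply this. For a position-zero representative the pinning entry is $\sin(\varrho\omega_0-\Theta_q(\omega_0))=-\sin\Theta_q(\omega_0)\neq0$, which is a basis of $\R^S$. The class can also contain two lags, for instance $k$ and $k+L$ when $L=2k$.

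The missing ingredient is the positivity $P^*(\omega_0)>0$ in (R2). Since $N_0\subset L\Z\subset k\Z$, suppose the unique class of $\mathcal A$ also had position $0$. Then $n\omega_0\in2\pi\Z$ for every $n\in N$, so $P^*(\omega_0)=\sum_n\lambda^*_n(1-\cos n\omega_0)=0$, a contradiction. Equivalently, you may invoke Theorem~\ref{thm:tightclass}(a), whose proof rests on the same positivity, to get $L=k$; then ``nonzero modulo $L$'' already means nonzero position.

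Once the position $\varrho'\ge1$ is established, $F_{\mathsf a}(\varrho')=0$ gives $\mathsf a=\varrho'$, and $\mathsf a\le1$ forces $\mathsf a=1$, exactly as you wrote. With this repair your argument coincides with the paper's.
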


\begin{proof}
A singleton regular primal has exactly one tight nonzero class.
Its position cannot be $0$, since that would give $P^*(\omega_0)=0$.
The one-atom feasibility condition and its tight position force phase
index $1$. Hence $\Theta_q(\omega_0)=c'$, and the tongue formula gives
$\kappa_\partial=\kappa_{j/k}$. The dip condition gives the anchor.
\end{proof}

\subsection{Asymptotics of two-frequency periodic measures}\label{app:plateau-fans}
\begin{lemma}\label{lem:Ppoly}
Let $k\ge3$ and $c'=\pi/k$. The trigonometric polynomial
\[
 \Upsilon_k(\theta)=k\cos c'-(k-1)\cos(\theta-c')-\cos\bigl((k-1)\theta+c'\bigr)
\]
is nonnegative on $\R$, vanishes exactly at $\theta\equiv0$ and $\theta\equiv2c'\pmod{2\pi}$, and
$\Upsilon_k''(0)=\Upsilon_k''(2c')=k(k-1)\cos c'>0$.
\end{lemma}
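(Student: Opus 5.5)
We need to prove that $\Upsilon_k\ge0$ on $\R$, that it vanishes only at $0$ and $2c'$ modulo $2\pi$, and that its curvature there equals $k(k-1)\cos c'$. The plan is to write $\Upsilon_k$ as the real part of a deficit in a convex combination of unit vectors, and then read off the zeros from the equality case of the triangle inequality.

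First, rewrite the two cosines so that they share a common phase shift. Put $\phi=\theta-c'$. Then $(k-1)\theta+c'=(k-1)\phi+kc'=(k-1)\phi+\pi$, so
\[
 \Upsilon_k=k\cos c'-(k-1)\cos\phi+\cos((k-1)\phi).
\]
Rather than working in $\phi$, it is cleaner to use the complex form
\[
 \Upsilon_k(\theta)=\Rea\bigl[e^{-ic'}\bigl(k-(k-1)e^{i\theta}-e^{i(k-1)\theta}e^{2ic'}\bigr)\bigr],
\]
which one checks by expanding: $e^{-ic'}(k-1)e^{i\theta}$ has real part $(k-1)\cos(\theta-c')$, and $e^{-ic'}e^{i((k-1)\theta+2c')}$ has real part $\cos((k-1)\theta+c')$.

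The key step is the observation $e^{2ic'}=e^{2\pi i/k}$, combined with the Fejér-type identity
\[
 \tfrac1k\bigl[(k-1)z+w\bigr],\qquad |z|=|w|=1.
\]
Nonnegativity then amounts to
\[
 \Rea\bigl[e^{-ic'}\bigl((k-1)z+w\bigr)\bigr]\le k\cos c'\quad\text{for } z=e^{i\theta},\ w=e^{i((k-1)\theta+2c')}.
\]
This bound does not hold for arbitrary $z,w$, so the argument must use the relation $w=z^{k-1}e^{2ic'}$.

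Write $z=e^{i\theta}$. The rotated point $e^{-ic'}z$ has real part $\cos(\theta-c')$, and $e^{-ic'}w$ has argument $(k-1)\theta+c'$. Introduce $\alpha=\theta-c'$ and $\beta=(k-1)\theta+c'$. The linear relation
\[
 (k-1)\alpha+\beta\cdot 1=(k-1)\theta-(k-1)c'+(k-1)\theta+c'
\]
is not constant, so a direct Jensen argument fails. The usable relation is instead
\[
 \beta-(k-1)\alpha=kc'=\pi.
\]
Hence $\cos\beta=-\cos((k-1)\alpha)$, and we must show
\[
 (k-1)\cos\alpha-\cos((k-1)\alpha)\le k\cos c'-\ldots
\]
More precisely, the claim becomes
\[
 f(\alpha):=(k-1)\cos\alpha+\cos((k-1)\alpha+\pi)\ \text{ satisfies }\ f\le k\cos c',
\]
with equality exactly at $\alpha=\pm c'$. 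Since $f(\alpha)=(k-1)\cos\alpha-\cos((k-1)\alpha)$, this is a genuine one-variable inequality in $\alpha$.

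I would prove it as follows. Use $\cos((k-1)\alpha)=\cos(k\alpha)\cos\alpha+\sin(k\alpha)\sin\alpha$, or, more cleanly, examine the critical points. From
\[
 f'(\alpha)=-(k-1)\bigl(\sin\alpha-\sin((k-1)\alpha)\bigr)=-2(k-1)\cos\tfrac{k\alpha}{2}\sin\tfrac{(2-k)\alpha}{2},
\]
the critical points are $\alpha=(2m+1)\pi/k$ and $\alpha=2m\pi/(k-2)$.

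At $\alpha=(2m+1)c'$, we have $(k-1)\alpha=(2m+1)\pi-\alpha$, so $\cos((k-1)\alpha)=-\cos\alpha$ and $f=k\cos\alpha$. This value is maximal at $m=0$ or $m=-1$, where it equals $k\cos c'$.

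At $\alpha=2m\pi/(k-2)$, we have $(k-1)\alpha\equiv\alpha$ modulo $2\pi$, so $f=(k-2)\cos\alpha\le k-2$. I expect the main obstacle to be the estimate $k-2<k\cos(\pi/k)$. It is equivalent to $1-\cos(\pi/k)<2/k$, which follows from $1-\cos x\le x^2/2$ and $\pi^2/(2k^2)<2/k$, valid for $k\ge3$.

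The global maximum is therefore attained only at $\alpha\equiv\pm c'$, which gives $\theta\equiv0$ and $\theta\equiv2c'$. The curvature follows by differentiating twice at $\theta=0$:
\[
 \Upsilon''(0)=(k-1)\cos(-c')+(k-1)^2\cos c'=k(k-1)\cos c'.
\]
The reflection $\theta\mapsto2c'-\theta$ maps $\Upsilon_k$ to itself, since $(k-1)(2c'-\theta)+c'=2\pi-((k-1)\theta+c')$. This symmetry gives the same curvature value at $\theta=2c'$.
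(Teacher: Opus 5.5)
Your proof is correct and follows the paper's route: the shift $\alpha=\theta-c'$ with $kc'=\pi$, the factorization of $f'$ into two families of critical points, the comparison $k-2<k\cos(\pi/k)$ that excludes the second family, and direct differentiation (plus the reflection $\theta\mapsto2c'-\theta$) for the curvatures. The complex-form rewriting and the abandoned Jensen/Fej\'er remarks in the middle are never used and should be deleted.
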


\begin{proof}
Put $m=k-1$ and $\varphi=\theta-c'$. Maximizing
$m\cos\varphi-\cos m\varphi$ gives the stationary factors
$2m\cos((m+1)\varphi/2)\sin((m-1)\varphi/2)$.
The first yields maximum $(m+1)\cos c'$ exactly at
$\varphi\equiv\pm c'$; the second yields at most $m-1$, which is
smaller because $(m+1)\cos(\pi/(m+1))>m-1$. Twice differentiating gives
the stated curvatures.
\end{proof}

\begin{proof}[Proof of Theorem~\ref{thm:fanjk}]
Write $s_c=\sin c'$, $c_c=\cos c'$,
$\mathsf u=k(k-1)c_c/4$, $\mathsf v=s_c/2$.
At $s=s_*-e\delta^2$ set $\Theta_a=c'+A$, $\Theta_c=c'+C$.
Joint analyticity gives
\begin{equation}\label{eq:ACjk}
 \begin{aligned}
 A&=\tfrac12\bar n\delta+
       (\tfrac12\Theta_*''+es_c^2)\delta^2+O(|\delta|^3),\\
 C&=-\tfrac12(k-1)\bar n\delta+
       (\tfrac12(k-1)^2\Theta_*''+es_c^2)\delta^2+O(|\delta|^3).
 \end{aligned}
\end{equation}
For masses $w_a=k-1+z\delta$, $w_c=1$, every integer lag constraint
is $\Phi(\theta)\ge0$ on $\theta\in(2\pi/L)\Z$, where
\[
 \Phi(\theta)=w_a[\cos\Theta_a-\cos(\theta-\Theta_a)]
             +\cos\Theta_c-\cos((k-1)\theta+\Theta_c).
\]
Indeed $\gcd(J,L)=1$ and
$(k-1)\omega_a+\omega_c=2\pi j$. The two exact local identities are
\begin{align}
 \tfrac12\Phi(u)&=w_a\sin\tfrac u2\sin(\tfrac u2-\Theta_a)
  +\sin\tfrac{(k-1)u}2\sin(\tfrac{(k-1)u}2+\Theta_c),\label{eq:Phi0jk}\\
 \tfrac12\Phi(2c'+v)&=w_a\sin(c'+\tfrac v2)\sin(\tfrac v2-A)
  -\sin(c'-\tfrac{(k-1)v}2)\sin(\tfrac{(k-1)v}2+C).\label{eq:Phi2cjk}
\end{align}
Near $0$ the grid is $u\in k|\delta|\Z$, and near $2c'$ it is
$v=y\delta$, $y\in b+k\Z$. Taylor expansion gives
$\Phi(u)/2=f_1u+f_2u^2+O(|u|^3)$ with
$f_1=-\delta(\mathsf u\bar n+\mathsf vz)+O(\delta^2)$,
$f_2=\mathsf u+O(|\delta|)$, and
\begin{equation}\label{eq:Qyjk}
 \frac{\Phi(2c'+y\delta)}{2\delta^2}
 =Q(y)+O\bigl(|\delta|(1+|y|^3)\bigr),\qquad
 Q(y)=\mathsf u(y^2-\bar n y)+\mathsf vz(y-\bar n)-\mathsf vE,
 \quad E=k(k-1)\Theta_*''+2ks_c^2e.
\end{equation}
The two nearest lattice values balance at
$\mathsf vz_*=-\mathsf u[\bar n+k(1-2\vartheta)]$. Convexity gives
\begin{equation}\label{eq:latminjk}
 \min_{y\in b+k\Z}Q(y)\big|_{z=z_*}
 =\max_z\min\{Q(n_1),Q(n_2)\}
 =\mathsf uk^2\vartheta(1-\vartheta)-\mathsf vE
 =k\sin^3c'(e_*-e).
\end{equation}
For $e<e_*$ this is positive. Away from fixed neighborhoods of
$0,2c'$, $\Phi$ converges uniformly to $\Upsilon_k>0$.
For the grid points $u\in k|\delta|\Z$ near $0$, put
$\nu=1-|1-2\vartheta|>0$. Since
$|\mathsf u\bar n+\mathsf vz_*|=(1-\nu)\mathsf uk$ and
$|u|\ge k|\delta|$ for $u\ne0$, the Taylor expansion gives
\[
 \tfrac12\Phi(u)\ge\nu\mathsf uu^2
                  -O(|\delta|u^2)-O(|u|^3)>0
\]
after one fixed neighborhood and then $|\delta|$ are made sufficiently
small. The point $u=0$ is an exact zero. Near $2c'$, first choose
$Y_0$ so that $Q(y)\ge\mathsf uy^2/4$ for $|y|>Y_0$.
For $|y|\le Y_0$, the fixed positive minimum in
\eqref{eq:latminjk} absorbs the remainder. For $|y|>Y_0$ and
$v=y\delta$ in a sufficiently small fixed neighborhood, the
unscaled remainder is
$O(|v|^3+|\delta|^3)=O(|v|^3)$ and is smaller than
$\mathsf uv^2/8$. Thus every grid point is controlled, including
lags increasing with $L$. The minimizer slack tends to
$k\cos c'>0$.

Conversely a nonzero feasible measure on the pair has both masses
positive for large $L$: each singleton has reduced denominator at
least $L/(k-1)$, incompatible with its limiting angle $c'>0$.
Normalize $w_c=1$. Let $u=2\pi/L$. The inequalities $\Phi(u),\Phi(-u)\ge0$
imply
\[
 |w_a\sin u\sin\Theta_a-\sin((k-1)u)\sin\Theta_c|
 \le2w_a\sin^2(u/2)\cos\Theta_a
       +2\sin^2((k-1)u/2)\cos\Theta_c.
\]
Since both angles tend to $c'\in(0,\pi/2)$, division by $u$
first bounds $w_a$ uniformly and then gives
$|w_a\sin\Theta_a-(k-1)\sin\Theta_c|=O(u)$.
Consequently $w_a=k-1+z\delta$ with $z$ bounded independently
of $L$. For $e>e_*$, \eqref{eq:latminjk} then forces one of
the two fixed lattice constraints $n_1,n_2$ to be negative for
all sufficiently large $L$. Shift
monotonicity now yields the attained pair threshold and its
$\delta^2$ expansion. If $F_q''<0$, choose $0<e<e_*$ to obtain
$\sigma<s_*$. Under the strict regular conditions, the potential
$2F_q/(\sin\Theta_*P)$ has coefficient
$F_q''/(2\sin^3c')$ at $\omega_0$; every missing grid point is at
distance at least $2\pi/(kL)$, giving the lower periodic bound.
The fan pair itself gives the upper bound.

On any compact tuning set where $n_1<\bar n<n_1+k$ for fixed $n_1$,
$\vartheta$ stays away from $0,1$, all Taylor bounds above are uniform,
and the margin for $e=e_*\mp\eta_0$ is the fixed number
$k\sin^3c'\eta_0$. The same neighborhood sizes, mass bounds and
$L_0$ therefore work for both orientations and all tunings in the
compact set. In particular the $o(\delta^2)$ in the pair threshold
is uniform on such compact sets.
\end{proof}

\subsection{The exact plateau domain}\label{app:plateau-domain}
Put $C_0=h_0-1-c_0>0$.
\begin{lemma}[The edge in the gap]\label{lem:edgedomain}
The tuning $q(a,D)$ is defined and lies in $I_K$ if and only if $a>a_E(D)$, where $a_E(D)$ is the largest of
\[
 \begin{gathered}
 \frac{h_0+(1+D)(1-c_0)}{(1-c_0)^2},\quad \frac{h_0-(1+D)(1+c_0)}{(1+c_0)^2},\\
 \frac{C_0+D(c_K-c_0)}{(c_K-c_0)^2},\quad \frac{C_0+D(c_{K+1}-c_0)}{(c_{K+1}-c_0)^2}.
 \end{gathered}
\]
The function $a_E$ is convex, piecewise affine and positive. The second term never exceeds the larger of the last two.
\end{lemma}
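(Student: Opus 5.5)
The plan is to translate each membership condition for $I_K$, together with the condition that $q(a,D)$ exist, into a linear inequality in $a$ for the quadratic $\Xi^{a,D}$. By Proposition~\ref{prop:chart}(a), $q(a,D)$ is defined exactly when $\Xi^{a,D}(1)<0$ and $\Xi^{a,D}(-1)<0$ (we already have $a>0$, so the leading coefficient is negative). Expanding,
\[
\Xi^{a,D}(1)=h_0+(1+D)(1-c_0)-a(1-c_0)^2,\qquad
\Xi^{a,D}(-1)=h_0-(1+D)(1+c_0)-a(1+c_0)^2 .
\]
These are negative precisely when $a$ exceeds the first and second displayed quantities.

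Membership in $I_K$ comes from Definition~\ref{def:levels}, rewritten through Corollary~\ref{cor:C}(b). We need $\Theta(2\pi/K)>\pi/K$ and $\Theta(2\pi/(K+1))>\pi/(K+1)$, equivalently $1+c_K-\Xi(c_K)>0$ and $1+c_{K+1}-\Xi(c_{K+1})>0$. On the edge we have $\Xi(c_0)=h_0$, so
\[
1+c-\Xi^{a,D}(c)=-C_0-D(c-c_0)+a(c-c_0)^2 .
\]
The two tongue exclusions are therefore exactly $a>(C_0+D(c_K-c_0))/(c_K-c_0)^2$ and the analogous bound at $c_{K+1}$.

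It remains to check that the dip is nonempty and lies in the window. At $c_0$ the value of $1+c-\Xi$ is $-C_0<0$, because $C_0>0$ by Proposition~\ref{prop:chart}(d). This quadratic is convex with positive values at $c_K$ and $c_{K+1}$, and $c_0$ lies strictly between these points. Its two roots therefore satisfy $c_K<r_1<c_0<r_2<c_{K+1}$. This is the chart description of $I_K$ in Proposition~\ref{prop:chart}(b), so no further conditions arise. (For $K=2$, $c_2=-1$, so the $c_K$ condition coincides with the $\Xi(-1)<0$ condition, up to a constant; it is still covered.)

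The properties of $a_E$ follow from its form. It is a maximum of four affine functions of $D$, so it is convex and piecewise affine. It is positive because the last two terms cannot both be nonpositive: $c_K-c_0$ and $c_{K+1}-c_0$ have opposite signs, so $D(c_K-c_0)$ and $D(c_{K+1}-c_0)$ cannot both be $\le -C_0<0$.

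For the final claim, that the second term never exceeds the larger of the last two, I would use concavity. Write $g(c)=\Xi^{a,D}(c)-1-c$, which is concave. Suppose the tongue conditions hold at the threshold, so $g(c_K)\le0$, $g(c_{K+1})\le0$ and $g(c_0)=C_0>0$. Since $-1<c_K<c_0$, concavity forces $g(-1)<0$, which gives $\Xi(-1)<0$. Strictness requires some care, but a comparison of the linear functions at equality suffices.

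The main obstacle is this dominance claim; the rest is direct algebra.
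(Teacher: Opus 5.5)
Your proposal is correct and follows the paper's proof. The first two terms encode $\Xi^{a,D}(\pm1)<0$; the last two encode positivity of the convex quadratic $1+c-\Xi^{a,D}=a(c-c_0)^2-D(c-c_0)-C_0$ at the window endpoints, which by Proposition~\ref{prop:chart} puts its roots in the window; and your concavity argument for the redundancy of the second term is the paper's observation that the lower root lies above $-1$. Two small remarks: at $a=\max$ of the last two terms, that concavity step already gives $\Xi(-1)<0$ strictly, so no extra care about strictness is needed, and for $K=2$ the second and third terms coincide exactly rather than up to a constant.
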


\begin{proof}
The first two affine inequalities are $\Xi^{a,D}(\pm1)<0$.
The last two say that the convex polynomial
$1+c-\Xi^{a,D}(c)=a(c-c_0)^2-D(c-c_0)-C_0$ is positive at both gap
endpoints, hence has its roots inside them. Their maximum is positive,
convex and piecewise affine. The negative-endpoint condition follows
already from the lower gap root lying above $-1$, proving redundancy
of the second term.
\end{proof}

\begin{definition}[Admissible symbols]\label{def:admissible}
Let $D\in\R$. An admissible symbol for $D$ is a finitely supported $\lambda=(\lambda_n)_{n\in\mathcal N_{j/k}}\ge0$ with
$\sum_n\lambda_n=1$, with positive weight on at least two lags of $b+k\Z$, whose support has greatest common divisor $1$, and with
\[
 \sum_{n\in b+k\Z}\lambda_n\bigl(n-\bar n(D)\bigr)=\sum_{n\in k\Z}\lambda_nn .
\]
For such $\lambda$ and $c=\cos\omega$ put
\[
 \Psi_\lambda(c)=\frac{\sin\omega\sum_n\lambda_n\sin n\omega}{\sum_n\lambda_n(1-\cos n\omega)}\quad(0<\omega\le\pi),\qquad
 \Psi_\lambda(1)=\frac{2\sum_n\lambda_nn}{\sum_n\lambda_nn^2},
\]
$R_\lambda(c)=\bigl(h_0+(1+D)(c-c_0)-\Psi_\lambda(c)\bigr)/(c-c_0)^2$ for $c\ne c_0$, and $R_\lambda(c_0)=-\frac12\Psi_\lambda''(c_0)$. Finally
\[
 a_F(D)=\inf_\lambda\ \max_{c\in[-1,1]}R_\lambda(c).
\]
When $\bar n(D)\notin b+k\Z$, the weights $1-\vartheta$, $\vartheta$ on the lags $n_1$, $n_2$ of Definition~\ref{def:edgesymbol} form the
edge symbol $\lambda^{\rm e}$, which is admissible because $\gcd(n_1,n_1+k)=\gcd(b,k)=1$, and we put $a_2(D)=R_{\lambda^{\rm e}}(c_0)$.
\end{definition}

\begin{lemma}[The certificate criterion along the edge]\label{lem:ray}
Let $\lambda$ be admissible for $D$. Then $R_\lambda$ is continuous on $[-1,1]$, and $q(a,D)$ is defined for all large $a$. Let $q(a,D)$ be defined and let
$T_\lambda=\sum_n\lambda_n\Lambda_n$ be formed with the lag functions of $\Theta_{q(a,D)}$. Then $T_\lambda<0$ on $(0,\pi]\setminus\{\omega_0\}$,
$T_\lambda''(\omega_0)<0$ and $\sum_n\lambda_nn(n+2\tau_q)>0$ if and only if $a>\max_{[-1,1]}R_\lambda$; in that case $q(a,D)$ is
$k$-regular with $S=\{\omega_0\}$ and the lag set $\supp\lambda$. If $\bar n(D)\notin b+k\Z$, then $R_\lambda(c_0)\ge a_2(D)$, with equality only for
$\lambda=\lambda^{\rm e}$, and
\[
 F_{q(a,D)}''(\omega_0)=2s_0\sin^3c'\,\bigl(a-a_2(D)\bigr),
\]
\begin{multline*}
 a_2(D)=\frac{\cot c'\,(1+c_0^2)}{2s_0^3}+\frac{(1+D)\,c_0}{2s_0^2}\\
 -\frac{\sin c'\cos c'\,\bigl(1+D+\cot c'\cot\omega_0\bigr)^2}{s_0}+\frac{\cos c'\,k^2\vartheta(1-\vartheta)}{4s_0\sin^3c'} .
\end{multline*}
\end{lemma}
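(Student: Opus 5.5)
The plan is to transfer everything to the cosine variable through the polynomial form of the lag functions, \eqref{eq:Lambdapoly}. At the shift $0$, along the edge, $\Xi_q=\Xi^{a,D}$. For an admissible $\lambda$ I would write
\[
 T_\lambda=\frac{\sin\Theta_q}{\sin\omega}\Bigl[\Xi^{a,D}(c)\sum_n\lambda_n(1-T_{|n|}(c))-(1-c^2)\sum_n\sgn(n)\lambda_nU_{|n|-1}(c)\Bigr].
\]
The bracket equals $\bigl(\sum_n\lambda_n(1-T_{|n|})\bigr)\bigl(\Xi^{a,D}(c)-\Psi_\lambda(c)\bigr)$. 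This uses $\sin n\omega=\sin\omega\,U_{n-1}(c)$ and the fact that, for the signs in $\mathcal N_{j/k}$, the sign factor is absorbed into $\sin n\omega$ with signed $n$.

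Since $P_\lambda=\sum\lambda_n(1-\cos n\omega)>0$ on $(0,\pi]$ by the gcd-one condition, negativity of $T_\lambda$ is equivalent to $\Xi^{a,D}<\Psi_\lambda$. Writing $\Xi^{a,D}=h_0+(1+D)(c-c_0)-a(c-c_0)^2$, this becomes $a>R_\lambda(c)$ for $c\ne c_0$.

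Continuity of $R_\lambda$ needs two checks.
\begin{itemize}
\item At $c_0$, I would show that $\Psi_\lambda(c_0)=h_0$ and $\Psi_\lambda'(c_0)=1+D$. Every lag in $b+k\Z$ has phase $2c'$ and every lag in $k\Z$ has phase $0$ at $\omega_0$, which gives the value identity $\Lambda_n(\omega_0)=0$ when $\Theta=c'$. The slope condition in the admissibility definition is exactly $T_\lambda'(\omega_0)=0$, via Lemma~\ref{lem:edgeidjk} together with $\bar n(D)$ from \eqref{eq:nbaredge}. Hence the difference quotient has a removable double zero, with limit $-\tfrac12\Psi''_\lambda(c_0)$ plus the $a$-free part, consistent with the definition of $R_\lambda(c_0)$.
\item At $c=1$, L'Hôpital gives the stated $\Psi_\lambda(1)$. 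At $c=-1$, I need $P_\lambda(\pi)>0$, which follows because the support has an odd lag.
\end{itemize}

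For the equivalence itself, I would handle the endpoints and the contact separately.
\begin{itemize}
\item At $c=1$, the moment condition $\sum\lambda_nn(n+2\tau_q)>0$ is the strict inequality $\Xi^{a,D}(1)<\Psi_\lambda(1)$, using $\tau_q=-1/\Xi(1)$. This matches the endpoint computation in Proposition~\ref{prop:twolagtangency}.
\item At $\pi$, the strict sign at $c=-1$ matches.
\item The curvature statement $T''_\lambda(\omega_0)<0$ is equivalent to $a>R_\lambda(c_0)$ by the chain rule at a double zero. The factor $(d c/d\omega)^2=s_0^2$ and the positive prefactor do not change the sign.
\end{itemize}
Thus the three conditions together are equivalent to $a>\max R_\lambda$. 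Regularity then follows from Lemma~\ref{lem:certjk}, whose hypotheses include at least two lags in $b+k\Z$. Definedness for large $a$ holds because $a_E$ is finite (Lemma~\ref{lem:edgedomain}).

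For the edge-symbol comparison, $R_\lambda(c_0)$ is an affine function of $T_\lambda''(\omega_0)$ with a $\lambda$-independent offset after normalizing by $P_\lambda(\omega_0)$. More precisely, I would express
\[
 T_\lambda''(\omega_0)=\sin c'\,\sigma'' -\text{(weighted second moment)}.
\]
The argument of Theorem~\ref{thm:edgejk}(c) shows that, among zero-slope combinations, the curvature is minimized at the adjacent-lag pair. The ratio $\rho_n$ is strictly monotone on each side of $\bar n$, so any other admissible pairing gives strictly larger $R_\lambda(c_0)$, with equality only for $\lambda^{\rm e}$.

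The formula $F''_{q}(\omega_0)=2s_0\sin^3c'(a-a_2)$ comes from Theorem~\ref{thm:edgejk}(a): $F''$ is affine in $\Theta_*''(\omega_0)$, which is affine in $a$ through $h_q''$ and contributes $2s_0\sin^2 c'\cdot a\cdot\sin c'$. The explicit $a_2(D)$ then results from solving $F''=0$, computing $\Theta''$ from $\cot\Theta=\Xi(\cos\omega)/\sin\omega$ at the contact with $\Theta=c'$. This is routine differentiation.

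The main obstacle I expect is bookkeeping the normalization between $R_\lambda(c_0)$ and $T''_\lambda(\omega_0)/P_\lambda(\omega_0)$ so that the minimization over $\lambda$ genuinely reduces to the two-lag curvature comparison. I would handle it by computing $\Psi_\lambda''(c_0)$ directly from the edge identities.
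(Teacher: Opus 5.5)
Your treatment of the criterion follows the paper's proof. This covers the factorization $T_\lambda=\frac{\sin\Theta_qP_\lambda}{\sin\omega}(c-c_0)^2(R_\lambda(c)-a)$, positivity of $P_\lambda$ from the gcd condition, the moment condition as $\Xi^{a,D}(1)<\Psi_\lambda(1)$, and the automatic sign at $\pi$. It also covers the contact formula $T_\lambda''(\omega_0)=4\sin^3c'\,s_0\,\mu\,(R_\lambda(c_0)-a)$ with $\mu=\sum_{n\in b+k\Z}\lambda_n$, regularity via Lemma~\ref{lem:certjk}, and $a_2$ by differentiating $\cot\Theta=\Xi/\sin\omega$.

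The step that does not go through as written is the sharpness claim $R_\lambda(c_0)\ge a_2(D)$, with equality only at $\lambda^{\rm e}$; the obstacle you anticipated is real.

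First, your curvature expression has the wrong sign. Lemma~\ref{lem:edgeidjk} and the slope constraint give
\[
 T_\lambda''(\omega_0)=\cos c'\Bigl[\sum_{n\in b+k\Z}\lambda_n(n-\bar n)^2+\sum_{n\in k\Z}\lambda_nn^2\Bigr]-2\mu\sin c'\,\Theta''(\omega_0).
\]
The weighted second moment therefore enters positively; what you wrote has the shape of $-\tfrac12T_\lambda''$.

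Second, the pairing argument of Theorem~\ref{thm:edgejk}(c) writes each two-lag piece's curvature as (transferred slope)$\times(\rho_{n^+}-\rho_{n^-})$. But $R_\lambda(c_0)$ is normalized by $\mu$. Position-$0$ lags carry slope but contribute nothing to $\mu$. Moreover, when $F_q''(\omega_0)>0$ the adjacent difference $\rho_{n_2}-\rho_{n_1}$ is negative. A larger flow times a larger but still negative ratio difference need not give a larger normalized curvature, so strict monotonicity of $\rho_n$ alone does not yield the comparison.

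Either of two repairs closes this.

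The first is the paper's: subtract $\mu$ times the edge value and use the slope constraint to get
\[
 4\sin^3c'\,s_0\,\mu\,[R_\lambda(c_0)-a_2(D)]=\cos c'\Bigl[\sum_{n\in b+k\Z}\lambda_n(n-n_1)(n-n_2)+\sum_{m\ne0}\lambda_{mk}k^2m(m+1-2\vartheta)\Bigr].
\]
Every summand is nonnegative. The first group vanishes only for $n\in\{n_1,n_2\}$, the second is strictly positive on its support, and the slope constraint then forces $\lambda=\lambda^{\rm e}$.

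The second keeps your route but uses that $R_\lambda(c_0)$ does not depend on $a$. Evaluate the contact formula at $a=a_2(D)$; the computation is local at $\omega_0$, so it does not matter whether $q(a_2,D)$ is a tuning. There $\Theta''(\omega_0)=\tfrac12\cot c'\,k^2\vartheta(1-\vartheta)>0$ and $\rho_{n_1}=\rho_{n_2}$. Every pair in the decomposition then has nonnegative curvature, strictly unless it is $\{n_1,n_2\}$. Every support lag has nonzero slope because $\bar n\notin b+k\Z$, so it must appear in a pair with positive flow. Hence $T_\lambda''(\omega_0)\ge0$, with equality only for $\lambda^{\rm e}$, which gives the claim.
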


\begin{proof}
Write $P_\lambda=\sum\lambda_n(1-\cos n\omega)$,
$Q_\lambda=\sum\lambda_n\sin n\omega$ and
$\mu=\sum_{n\in b+k\Z}\lambda_n>0$. The gcd condition gives
$P_\lambda>0$ on $(0,\pi]$. The Chebyshev form and the endpoint
limits make $\Psi_\lambda$ analytic through $[-1,1]$.
At $c_0$ its value is $h_0$; the slope constraint gives derivative
$1+D$. Thus $R_\lambda$ extends continuously. The master identity is
\begin{equation}\label{eq:TPsi}
 T_\lambda=\frac{\sin\Theta_qP_\lambda}{\sin\omega}
             (c-c_0)^2(R_\lambda(c)-a),\qquad c\ne c_0,
\end{equation}
and at the contact it gives
\begin{equation}\label{eq:Tpp}
 T_\lambda''(\omega_0)=4\sin^3c'\,s_0\mu(R_\lambda(c_0)-a).
\end{equation}
At $0$, the moment condition is exactly $\Xi^{a,D}(1)<\Psi_\lambda(1)$;
at $\pi$, both sides reduce to the strict tuning endpoint condition.
This proves the criterion and regularity.

Differentiating $\cot\Theta=\Xi/\sin\omega$ gives
\eqref{eq:nbaredge}; inserting its second derivative in the edge
curvature gives the displayed $a_2$. To prove sharpness, admissibility
and the edge identities give
\[
 \begin{aligned}
 4\sin^3c'\,s_0\mu\,[R_\lambda(c_0)-a_2(D)]
 =\cos c'\biggl[&\sum_{n\in b+k\Z}\lambda_n(n-n_1)(n-n_2)\\
 &+\sum_{m\ne0}\lambda_{mk}k^2m(m+1-2\vartheta)\biggr].
 \end{aligned}
\]
Every summand is nonnegative and the second group is strictly positive
on its support. Equality requires precisely the adjacent-lag weights.
\end{proof}

\begin{theorem}[Single-harmonic plateaus at large curvature]\label{thm:largea}
Let $D_{\max}\ge0$. Put
\[
 \begin{gathered}
 \bar N=2\sin^2c'\bigl(1+D_{\max}+\cot c'\,|\cot\omega_0|\bigr),\quad N_1=\bar N+\tfrac32k,\\
 s_h=\sin\frac\pi{2k},\quad H_A=1+\frac{(h_0+1)^2}{s_h^2},
 \end{gathered}
\]
and let $a_1=a_1(D_{\max})$ be the largest of
\[
 \frac{(h_0+2+2D_{\max})\,k^2N_1^2}{2s_h^2},\quad \frac{12\,k^2N_1^4}{s_h^2},\quad \frac{(1+D_{\max})^2}4,\quad
 \frac{\pi H_A\,(9k^2+3\pi k)}{16\,s_h^2}.
\]
If $|D|\le D_{\max}$, $a>a_1$ and $a>a_E(D)$, then $q(a,D)\in\mathcal B_K$, and the symbol with weights
$(\frac13-t,\frac13,\frac13+t)$ on the lags $n_0-k,n_0,n_0+k$ satisfies the conditions of Lemma~\ref{lem:ray}; here $n_0\in b+k\Z$ is a
member nearest to $\bar n(D)$ and $t=(\bar n(D)-n_0)/(2k)$. In particular, applied with $D_{\max}=|D|$, this gives
$a_F(D)\le\max\{a_1(|D|),a_E(D)\}<\infty$ for every $D$.
\end{theorem}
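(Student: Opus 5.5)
The plan is to verify the hypotheses of Lemma~\ref{lem:ray} for the explicit three-lag symbol. Set $\lambda=(\tfrac13-t,\tfrac13,\tfrac13+t)$ on $n_0-k,n_0,n_0+k$, and then deduce boundary membership from Lemma~\ref{lem:certjk} and Theorem~\ref{thm:modelock}.

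First I check admissibility. Since $|\bar n-n_0|\le k/2$, we have $|t|\le\tfrac14$, so all three weights are positive. The slope condition reads $\sum\lambda_n(n-\bar n)=n_0-\bar n+2kt=0$, which holds by the choice of $t$. The gcd of the support is $1$ because $\gcd(n_0,k)=1$. Moreover $\bar n(D)\le\bar N$ for $|D|\le D_{\max}$ by \eqref{eq:nbaredge}, so every lag satisfies $|n|\le N_1$. This is where $N_1=\bar N+\tfrac32k$ enters.

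The main step is to bound $\max_{[-1,1]}R_\lambda<a$. I split $[-1,1]$ into a neighbourhood $|\omega-\omega_0|\le \pi/(2k)$ of the contact and its complement. On the complement I use $P_\lambda\ge\tfrac13\min(1-\cos n\omega)$ over the support. I expect that for consecutive lags $n_0,n_0\pm k$ and $\omega$ at angular distance at least $\pi/(2k)$ from $\omega_0$ (and from $0$), at least one of the three values $1-\cos n\omega$ is bounded below by a multiple of $s_h^2$. This gives $|\Psi_\lambda|\lesssim |Q_\lambda|/P_\lambda\le H_A$-type bounds. Dividing by $(c-c_0)^2\gtrsim s_h^2/k^2$ then produces the first and fourth terms of $a_1$. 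Near $0$, the extension $\Psi_\lambda(1)$ and the curvature of $P_\lambda\sim\tfrac12\sum\lambda_n n^2\omega^2$ are controlled by $N_1$, which accounts for the $N_1^2,N_1^4$ terms.

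Near $\omega_0$ I Taylor-expand $\Psi_\lambda$ to second order with remainder. Value $h_0$ and slope $1+D$ are exact, so $R_\lambda(c)$ equals $-\tfrac12\Psi''$ at an intermediate point. The derivatives of $\sin n\omega$ and $1-\cos n\omega$ are bounded by powers of $N_1$, and near $\omega_0$ the denominator $P_\lambda$ stays at least of order $\sin^2 c'\ge s_h^2$ because the position-$1$ residue has phase $2c'$. This yields the $12k^2N_1^4/s_h^2$ term. The term $(1+D_{\max})^2/4$ should guarantee that $q(a,D)$ is defined, i.e.\ that the quadratic is concave enough that $\Xi^{a,D}(\pm1)<0$ jointly with $a>a_E$. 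With $a>\max R_\lambda$, Lemma~\ref{lem:ray} gives $k$-regularity with support $\{\omega_0\}$, and Theorem~\ref{thm:modelock} gives $\sigma=s_*=0$ on the edge.

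To conclude, $a>a_E(D)$ puts $q(a,D)$ in $I_K$ by Lemma~\ref{lem:edgedomain}, so $q(a,D)\in\mathcal B_K$. Since $\lambda$ is admissible, $a_F(D)\le\max R_\lambda$, and letting $a$ range over the allowed values gives $a_F(D)\le\max\{a_1(|D|),a_E(D)\}$. I expect the main obstacle to be the uniform lower bound on $P_\lambda$ away from the contact, a lattice/equidistribution estimate for three consecutive lags in one residue class; I would try first to prove it via $\sum_{\pm}\sin^2(n\omega/2)$ with the identity $\sin^2x+\sin^2(x+y)\ge\tfrac12\sin^2 y$ applied with $y=k\omega/2$ and $y=k(\omega-\omega_0)/2$.
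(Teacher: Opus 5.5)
The reduction itself is sound: your admissibility check is correct, and by \eqref{eq:TPsi} the sign of $T_\lambda$ is that of $\Xi^{a,D}-\Psi_\lambda$. So showing $\max_{[-1,1]}R_\lambda<a$ is equivalent to the paper's direct sign analysis of $T$. The gap is in the estimates.

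The lags reach size $N_1$, which grows with $D_{\max}$ and can greatly exceed $k$. The phases $e_i=c'+n_i\delta/2$, with $\delta=\omega-\omega_0$, therefore wrap around $\pi\mathbb Z$ on the scale $|\delta|\sim1/N_1$, not $1/k$.
\begin{itemize}
\item \textbf{Near the contact.} If $|n_0|\ge4k$, the point $\delta=2(\pi-c')/n_0$ lies in your neighbourhood $|\delta|\le\pi/(2k)$. There $e_0=\pi$ and $e_\pm$ lie within $\pi k/|n_0|$ of $\pi$, so $P_\lambda\le2\pi^2k^2/n_0^2$. Your claim that $P_\lambda$ stays of order $\sin^2c'$ near $\omega_0$ is false, and your Taylor bound on $\Psi_\lambda''$ loses its basis.
\item \textbf{Away from the contact.} The same construction near $2\pi m/k$, $m\ne0,j$, shows $P_\lambda$ is not bounded below by a fixed multiple of $s_h^2$ uniformly in $D_{\max}$. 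Your identity only gives $P_\lambda\gtrsim\sin^2(k\omega/2)$, which vanishes at every $2\pi m/k$. The choice $y=k(\omega-\omega_0)/2$ gives the same quantity, since $k\omega_0\in2\pi\mathbb Z$. Near those points you need coprimality, which yields only order $N_1^{-2}$.
\item \textbf{The constants.} The theorem asserts a specific $a_1$, and you assign its four terms to your steps without computation. Quotient-rule bounds on $\Psi_\lambda''$ give high negative powers of $s_h$ (e.g.\ $N_1^2s_h^{-8}$), not $12k^2N_1^4/s_h^2$ or $\pi H_A(9k^2+3\pi k)/(16s_h^2)$.
\item \textbf{The term $(1+D_{\max})^2/4$.} It has nothing to do with $q(a,D)$ being defined; that is exactly $a>a_E(D)$ (Lemma~\ref{lem:edgedomain}).
\end{itemize}

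What is missing is a contact neighbourhood of width of order $1/(kN_1)$, together with the $a$-dependent convexity of the angle curve. The paper works in three zones around $J_B=[\omega_0-\pi/(kN_1),\omega_0+\pi/(kN_1)]$, on which every $e_i$ lies in $(0,\pi/2)$ and $\sin\omega\ge s_h$.
\begin{itemize}
\item \textbf{Outside $J_B$.} Here $|c-c_0|\ge\varepsilon_C\ge2s_h/(kN_1)$ forces $\Xi\le-a\varepsilon_C^2/2$. This dominates $|\Psi_\lambda|\le\max\{24N_1^2,3\pi^2N_1/k^2\}$. That bound comes from $P_\lambda\ge1/(24N_1^2)$ for $\omega\ge\pi/N_1$ (coprimality of consecutive lags), and from $P_\lambda\ge k^2\omega^2/(3\pi^2)$, $|Q_\lambda|\le N_1\omega$ for $\omega\le\pi/N_1$. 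In your language this is $R_\lambda(c)\le(h_0+|1+D|\,|c-c_0|+|\Psi_\lambda|)/(c-c_0)^2<a$, and it accounts for the first two terms of $a_1$.
\item \textbf{The third term.} $(1+D_{\max})^2/4$ gives $\max\Xi\le h_0+1$, hence $1+\cot^2\Theta\le H_A$ on $J_B$.
\item \textbf{On $J_A=\{\Theta\le\pi/2\}\subset J_B$.} Theorem~\ref{thm:convex} gives $\Theta''\ge as_h/H_A$, a curvature growing linearly in $a$. Write $T=\sum_iw_i\,2\sin e_i\sin(u_i\delta-g)$ with $u_i=(n_i-\bar n)/2$, $\sum_iw_iu_i=0$, and $g=\Theta-c'-\bar n\delta/2\ge as_h\delta^2/(2H_A)$. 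The first-order terms cancel and the remaining lag terms are $O(k^2\delta^2)$. Beating them gives the fourth term of $a_1$.
\item \textbf{On $J_B\setminus J_A$.} Each $\Lambda_{n_i}=2\sin e_i\sin(e_i-\Theta)$ is negative because $0<e_i<\pi/2<\Theta$.
\end{itemize}
Your proposal has no counterpart to the $J_A$ step. A crude $a$-independent bound on $\Psi_\lambda''$ does not recover the stated constant.
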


\begin{proof}
For the three stated lags, $|n_i|\le N_1$, each weight lies in
$[1/12,7/12]$, $\sum w_in_i=\bar n$, and their gcd is one.
Let $u_i=(n_i-\bar n)/2$,
$V=\sum w_iu_i^2$, $U_3=\sum w_i|u_i|^3$. Then
$k^2/24\le V\le9k^2/16$, $U_3\le27k^3/64$, and
$\sum w_in_i^2\ge k^2/6$.
Put $\delta_B=\pi/(kN_1)$, $J_B=[\omega_0-\delta_B,\omega_0+\delta_B]$,
and let $\varepsilon_C$ be the smaller cosine distance from its
endpoints to $c_0$. Since $\sin\omega\ge s_h$ on $J_B$,
$\varepsilon_C\ge2s_h/(kN_1)$, and the definition of $a_1$ gives
\begin{equation}\label{eq:aconds}
 a\varepsilon_C^2>2(h_0+2+2D_{\max}),\quad
 a\varepsilon_C^2>48N_1^2,\quad
 a\varepsilon_C^2>6\pi^2N_1/k^2,\quad
 a\varepsilon_C^2>24N_1/k^2.
\end{equation}
Outside $J_B$, $\Xi\le-a\varepsilon_C^2/2$. Write $\Theta=\pi-\varphi$;
then $\tan\varphi\le2\sin\omega/(a\varepsilon_C^2)$ and
$T\le-\cos\varphi(P-\tan\varphi|Q|)$. For
$\omega\le\pi/N_1$, use
$P\ge k^2\omega^2/(3\pi^2)$, $|Q|\le N_1\omega$.
For $\omega\ge\pi/N_1$, the consecutive-lag gcd gives
$P\ge1/(24N_1^2)$. Indeed, either the distance of $k\omega$ from
$2\pi\Z$ is at least $\pi/N_1$, forcing an adjacent phase distance
of at least $\pi/(2N_1)$, or $\omega$ is within $\pi/(kN_1)$ of a
nonzero $k$th root; coprimality then gives a phase distance of at
least $\pi/k$. In both cases use $1-\cos x\ge2x^2/\pi^2$ for
$|x|\le\pi$ and the weight lower bound. Also $|Q|\le1$.
Equation~\eqref{eq:aconds} makes both bounds strict and also gives
the positive endpoint moment at $0$.

On $J_A=\{\Theta\le\pi/2\}\subset J_B$, strict convexity gives
$\Theta''\ge as_h/H_A$. Set $\delta=\omega-\omega_0$,
$\ell=c'+\bar n\delta/2$, $g=\Theta-\ell$.
Then $g\ge as_h\delta^2/(2H_A)$, and Taylor's sine bound with
$\sum w_iu_i=0$ yields
\[
 T\le\delta^2\left[-\frac{2as_h^2}{\pi H_A}
                  +\frac98k^2+\frac{3\pi}8k\right]<0
 \quad(\delta\ne0).
\]
It also gives $T''(\omega_0)<0$. On $J_B\setminus J_A$, every
$e_i=c'+n_i\delta/2$ lies in $(0,\pi/2)$ while $\Theta>\pi/2$, so
$\Lambda_{n_i}=2\sin e_i\sin(e_i-\Theta)<0$.
The three zones prove the conditions of Lemma~\ref{lem:ray}, including
resonant slope indices. Taking $a$ down to the stated sufficient bound
gives the majorant for $a_F$.
\end{proof}

\begin{proof}[Proof of Theorem~\ref{thm:plateaujk}]
The certificate criterion shows every point of $\Pi_{j/k}$ is regular.
Conversely a singleton regular extremal has phase index $1$;
its tight lag set lies in $\mathcal N_{j/k}$ and satisfies the slope
condition. A common divisor larger than one would force a second
symbol zero on $(0,\pi]$, contradicting its strict negativity away
from the prescribed contact.
Thus its symbol is admissible and $a>a_F(D)$.
Its dip cannot contain an endpoint frequency $2\pi/K'$: that would
give a different single-harmonic extremal, contradicting uniqueness.
Hence it lies in $I_K$ and $a>a_E(D)$.
The continuous upper bound of Theorem~\ref{thm:largea} joins any two
vertical rays of the parameter domain, proving connectedness without
requiring continuity of $a_F$. Local constancy of support makes it
exactly one component of the regular boundary. The lower curvature
bound and its strict subcritical alternative follow from
Lemma~\ref{lem:ray} and Theorem~\ref{thm:fanjk}(d).
\end{proof}

\begin{proof}[Proof of Corollary~\ref{cor:everyfarey}]
The large-curvature certificate makes every $\Pi_{j/k}$ nonempty.
Every gap contains $2/(2K+1)$.
\end{proof}

\begin{proof}[Proof of Proposition~\ref{prop:location}]
Put $\epsilon=1-\beta$ and $t=a\epsilon^2$. Matching the quadratic
coefficients in the chart gives
$B=2\beta/(a\epsilon)$,
$A=2\beta(1+D+2ac_0)/(a(1+\beta))$, and
\[
 \frac{4\beta^2}{t}
 -t\left(1-\frac{4\beta c_0^2}{(1+\beta)^2}\right)-4\beta h_0
 +\frac{4\beta(1+D)c_0\epsilon^2}{(1+\beta)^2}
 -\frac{4\beta^2(1+D)^2}{a(1+\beta)^2}=0.
\]
The coefficient of $t$ lies in $[s_0^2,1]$, so this identity bounds
both $t$ and $1/t$ along the ray. Hence $\beta\to1$, and every limit
point solves $4/t-s_0^2t-4h_0=0$. Its unique positive root is
$2\tan(c'/2)/s_0$. Now $B\to0$, $A\to2c_0$, giving the other two
parameter limits and the stated limiting characteristic roots.
\end{proof}

\section{Tips, gap crossings, and junctions}\label{app:edges}
\subsection{Tongue tips and the lag-one boundary}\label{app:edges-tips}
For the fraction $1/K'$, put
$\chi=\Theta_*'(\omega_*)-1/2=(\bar n-1)/2$.
Strict convexity of $\Theta_*-\omega/2$ on its nonpositive sublevel
shows that its dip opens to the right of $\omega_*$ if $\chi<0$ and
to the left if $\chi>0$; at $\chi=0$ it is empty. For
$0<|\chi|<K'/2$ the adjacent lags are $1$ and
$1+\operatorname{sgn}(\chi)K'$, with respective weights
$1-2|\chi|/K'$ and $2|\chi|/K'$. Thus
\begin{equation}\label{eq:Fedge}
 F_q''(\omega_*)=\sin c'\,\Theta_*''(\omega_*)
                 -(K'-2|\chi|)|\chi|\cos c'.
\end{equation}
The fan pairs in rotation numbers are
$\{(m+1)/(K'm+K'-1),m/(K'm+K'-1)\}$ for $\varsigma=-1$, and
$\{m/(K'm+1),(m+1)/(K'm+1)\}$ for $\varsigma=1$, listing the nearer
point first. For large $m$ the upper point lies in the window of
$I_{K'-1}$ and the lower in that of $I_{K'}$. At the feasible shifts
of Theorem~\ref{thm:fanjk}(a), the anchor is the point on the side
where the dip opens.

\begin{proof}[Proof of Theorem~\ref{thm:tips}]
At the tip, $\bar n=1$ and
$\Lambda_1=-2\sin(\omega/2)\sin(\Theta-\omega/2)$ is strictly negative
away from $0,\omega_*$. Its second derivatives at those zeros are
$-(1+2\tau_q)<0$ and $-2\sin c'\,\Theta''(\omega_*)<0$.
The perturbation
\[
 T=(1-2\varepsilon)\Lambda_1+
       \varepsilon\Lambda_{1+K'}+\varepsilon\Lambda_{1-K'}
\]
has the same zero values and slopes. For sufficiently small
$\varepsilon>0$, strict curvature in fixed neighborhoods and compact
negativity elsewhere persist. Lemma~\ref{lem:certjk} gives one open
neighborhood with every claimed threshold, regularity, and extremal
conclusion. On it $\sigma\le0$ is exactly $s_*\le0$, the closed tongue;
both adjacent gaps have $s_*>0$.
\end{proof}

\begin{corollary}[A neighborhood of $\partial\Omega_1$]\label{cor:omega1}
Every point $q_0$ of $\partial\Omega_1$ has a neighborhood $V$ with $\mathcal D\cap V=\emptyset$ if $q_0$ is not
phase-locked, and $\mathcal D\cap V=\{q\in V:\Theta_q(2\pi/K')\le\pi/K'\}$ if $q_0$ is a tip of the
$K'$-tongue. In particular $\cl\mathcal B_K\cap\partial\Omega_1=\emptyset$ for every $K$: near $\partial\Omega_1$ the
phase diagram is that of level zero.
\end{corollary}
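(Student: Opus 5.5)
The two assertions (the non-locked case and the tip case) are handled separately; then the statement about $\cl\mathcal B_K$ follows from them. Throughout we use Theorem~\ref{thm:global}(a) (continuity of $\sigma$) and the exact criterion $\mathcal D=\{\sigma\le0\}$ of Proposition~\ref{prop:threshold}.

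\emph{Non-locked points.} Let $q_0\in\partial\Omega_1$ be non-locked, so $\Theta_{q_0}\ge\omega/2$ with equality only at one $\omega_*\in(2\pi/(K+1),2\pi/K)$. Theorem~\ref{thm:tangency} gives a strict certificate symbol with lags $1$ and $-K$ at $q_0$, hence $\sigma(q_0)>0$ (the shift threshold exceeds zero because, by \eqref{eq:shiftid}, the strict symbol stays strict for small negative shifts up to a margin, or equivalently because infeasibility of $\FP(\Theta_{q_0})$ together with closedness of $\{s\ge\sigma\}$ forces $\sigma(q_0)>0$). Continuity of $\sigma$ then gives a neighborhood $V$ with $\sigma>0$, i.e. $\mathcal D\cap V=\emptyset$. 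An alternative that avoids continuity of $\sigma$: by Proposition~\ref{prop:polycert} strictness is an open polynomial inequality on $[-1,1]$ in the coefficients of $\Xi_q$, which depend continuously on $q$, so the same two-lag symbol remains strict nearby and Theorem~\ref{thm:exactthr}(c) applies directly. I would use this second route, as it is elementary.

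\emph{Tips.} If $q_0$ is a tip of the $K'$-tongue, Theorem~\ref{thm:tips}(c) already states $\mathcal D\cap V=\{\Theta_q(2\pi/K')\le\pi/K'\}$ on a neighborhood $V$; nothing more is needed. It remains to note that every point of $\partial\Omega_1$ is one of the two kinds: by Theorem~\ref{thm:A}(iii) and Corollary~\ref{cor:C}(a), a boundary point has a unique tangency frequency $\omega_*$ (strict convexity, Corollary~\ref{cor:dip}), and it is phase-locked exactly when $\omega_*=2\pi/K'$, which is the definition of a tip.

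\emph{Closure statement.} Suppose $q_0\in\cl\mathcal B_K\cap\partial\Omega_1$. If $q_0$ is non-locked, the neighborhood $V$ misses $\mathcal D\supset\mathcal B_K$, contradiction. If $q_0$ is a tip of the $K'$-tongue, Theorem~\ref{thm:tips}(c) gives $(I_{K'-1}\cup I_{K'})\cap V\subset\{\sigma>0\}$, so $V\cap\mathcal B_{K'-1}=V\cap\mathcal B_{K'}=\emptyset$; for other $K$, $I_K$ stays away from $q_0$ because its dip lies in a window $(2\pi/(K+1),2\pi/K)$ not adjacent to $2\pi/K'$, while near $q_0$ the closed dip shrinks to $\omega_*=2\pi/K'$ by continuity of $\Xi_q$ and strict concavity (Theorem~\ref{thm:convex}). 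The main (minor) obstacle is this last localization step: showing that the closed dip of nearby tunings is contained in a small interval around $\omega_*$, which I would obtain from the uniform strict convexity of $\Theta-\omega/2$ on $\{\Theta\le\pi/2\}$ and uniform convergence of $\Theta_q\to\Theta_{q_0}$ from Lemma~\ref{lem:ends}. The same argument shows that near $\partial\Omega_1$ only the level-zero alternatives of Theorem~\ref{thm:levelzero} occur.
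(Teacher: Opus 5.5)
Your proof is correct and follows the paper's route: at non-locked points the strict two-lag symbol from Theorem~\ref{thm:tangency} persists under perturbation of the tuning, and at tips Theorem~\ref{thm:tips}(c) gives the required neighborhood. You can drop the dip-localization step you flag as the main obstacle: a tuning in $I_K$ never lies in a closed integer tongue, because $\Theta_q(2\pi/K')\le\pi/K'$ would put $2\pi/K'$ in its closed dip, which lies in the open window $(2\pi/(K+1),2\pi/K)$. Hence near a tip $\mathcal B_K\cap V\subset I_K\cap\mathcal D\cap V=\emptyset$ for every $K$ at once, which is how the paper concludes.
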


\begin{proof}
At a nonlocked tangency, Theorem~\ref{thm:tangency} gives a strict
finite symbol, whose sign persists uniformly in the tuning. The
locked points are the tips and have the neighborhoods above. A gap
contains neither the locked endpoint in its closed dip nor any
point of the local closed tongue; its boundary consequently cannot
accumulate on $\partial\Omega_1$.
\end{proof}

\subsection{Junctions}
For $K\ge3$ put $t_1=c_{K+1}$, $t_2=c_K$ and let
$q_\times(a)$ have quadratic
$\Xi(c)=1+c-a(c-t_1)(c-t_2)$, where $a>a_0(t_2,t_1)$.
Its closed dip is $[2\pi/(K+1),2\pi/K]$, so it lies on both tongue
edges and is approached from $I_K$.

\begin{proposition}[Junctions]\label{prop:corner}
Let $K\ge3$ and $K'=K+1$. At $q_\times(a)$, with respect to $\omega_*=2\pi/K'$, $\chi=-a(t_1-t_2)\sin^2c'$, and for
$|\chi|<K'/2$
\[
 F_q''(\omega_*)=|\chi|\cos c'\Bigl[4|\chi|-(K+3)+\frac{2(1-t_1)}{t_1-t_2}+\frac{t_1}{1+t_1}\Bigr].
\]
The interval
\[
 \mathcal J_K=\Bigl\{a:\ \frac{2}{(1-t_1)(1-t_2)}<a<\frac{K+3-2(1-t_1)/(t_1-t_2)-t_1/(1+t_1)}{4(t_1-t_2)\sin^2c'}\Bigr\}
\]
is nonempty and contained in $\{|\chi|<K'/2\}$, and for $a>a_0(t_2,t_1)$ with $\chi\notin(K'/2)\Z$ one has $F_q''(\omega_*)<0$
if and only if $a\in\mathcal J_K$.
Consequently, for $a\in\mathcal J_K$, $\sigma(q_\times(a))<0$, and all tunings of $I_K$ near $q_\times(a)$ lie in the interior of $\mathcal D$.
\end{proposition}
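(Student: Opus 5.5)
We will work with the edge curvature formula \eqref{eq:Fedge} at the tip frequency $\omega_*=2\pi/K'$, $K'=K+1$, and compute $\Theta_*''(\omega_*)$ and $\chi$ explicitly from the quadratic $\Xi(c)=1+c-a(c-t_1)(c-t_2)$. At $q_\times(a)$ the value $\Xi(t_1)=1+t_1$ equals $h_0=\sin\omega_*\cot c'$, because $\cot(\pi/K')\sin(2\pi/K')=1+\cos(2\pi/K')$. Hence $s_*=0$, $\Theta_*=\Theta_q$ and $\Theta_q(\omega_*)=c'$, so the point lies on the edge of the $K'$-tongue, and similarly on the edge of the $K$-tongue.

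\textbf{The slope index.} Differentiating $\cot\Theta=\Xi(\cos\omega)/\sin\omega$ at $\omega_*$ and using $\sin^2\Theta=\sin^2 c'$ gives
\[
\Theta'=\sin^2c'\bigl(\Xi'(t_1)\sin\omega_*\,\sin^{-1}\omega_*+\Xi(t_1)\cos\omega_*/\sin^2\omega_*\bigr)
\]
up to signs. Here $\Xi'(t_1)=1-a(t_1-t_2)$. The non-$a$ terms combine, via $1+t_1=2\cos^2c'$ and $\sin\omega_*=2\sin c'\cos c'$, into exactly $1/2$. This yields $\chi=\Theta'-\tfrac12=-a(t_1-t_2)\sin^2c'<0$, as stated. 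Thus the dip opens to the right, toward $I_K$, and the adjacent lags are $1$ and $1-K'$ with weights $1-2|\chi|/K'$ and $2|\chi|/K'$.

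\textbf{The second derivative.} Write $u=h+s=\cot\Theta$, so that $\Theta''=-u''/(1+u^2)+2uu'^2/(1+u^2)^2$. We have $1+u^2=1/\sin^2c'$, $u=\cot c'$, and $u'=-(\Theta')/\sin^2 c'$, where $\Theta'=\chi+\tfrac12$. For $u''$ we use the identity of Theorem~\ref{thm:convex}, or differentiate $\Xi(c)/\sin\omega$ directly; it contains $\Xi''=-2a$ multiplied by $\sin^2\omega_*$, together with terms in $\Xi'(t_1)$ and $\Xi(t_1)$. Substituting $a(t_1-t_2)=|\chi|/\sin^2c'$ expresses $a$ through $|\chi|$.

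We expect the result to collapse to
\[
\sin c'\,\Theta''=|\chi|\cos c'\,\bigl[\,2|\chi|+\dots\bigr]
\]
plus a constant. Subtracting $(K'-2|\chi|)|\chi|\cos c'$ from \eqref{eq:Fedge} should then give the bracket $4|\chi|-(K+3)+2(1-t_1)/(t_1-t_2)+t_1/(1+t_1)$. The main obstacle is this trigonometric bookkeeping. Specifically, we must show that every term independent of $a$ is proportional to $|\chi|$, so that no stray constant survives. We plan to verify this by rewriting everything in terms of $t_1=\cos\omega_*$ and $\sqrt{1-t_1^2}$, and writing $a=|\chi|/((t_1-t_2)\sin^2c')$ in the $\Xi''$ term.

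\textbf{Sign and nonemptiness.} Since $|\chi|$ is proportional to $a$, $F''<0$ holds precisely when
\[
4|\chi|<K+3-2(1-t_1)/(t_1-t_2)-t_1/(1+t_1),
\]
which is the upper limit defining $\mathcal J_K$. The lower limit is $a>a_0$. This characterization is valid only while $|\chi|<K'/2$, which keeps the adjacent-lag description in force. The upper bound gives $4|\chi|<K+3<2K'$, so $|\chi|<K'/2$ holds automatically.

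Nonemptiness reduces to the inequality
\[
\frac{8(t_1-t_2)\sin^2c'}{(1-t_1)(1-t_2)}<K+3-\frac{2(1-t_1)}{t_1-t_2}-\frac{t_1}{1+t_1}.
\]
We intend to check it by elementary bounds on cosines of $2\pi/K$ and $2\pi/(K+1)$ for $K\ge3$. The left side and the term $(1-t_1)/(t_1-t_2)$ are $O(1)$, roughly $K/(2K+1)$-sized, while the right side grows like $K$. The small values $K=3,4,5$ we will check numerically with rational enclosures.

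\textbf{Conclusion.} For $a\in\mathcal J_K$ we have $\bar n\notin b+k\Z$, because $\chi\ne0$ and $|\chi|<K'/2$. Theorem~\ref{thm:fanjk}(d) then gives $\sigma(q_\times(a))<s_*=0$. By continuity of $\sigma$ (Theorem~\ref{thm:global}), $\sigma<0$ holds on a neighborhood, so nearby tunings in $I_K$ lie in the interior of $\mathcal D$.
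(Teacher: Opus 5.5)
There are two genuine gaps. The remaining steps are fine: the slope formula for $\chi$, the tongue-edge identification, $\bar n\notin b+k\Z$, and the conclusion via Theorem~\ref{thm:fanjk}(d) and continuity of $\sigma$. The computation you defer also closes. Put $g=|\chi|$, so $\Theta'(\omega_*)=\tfrac12-g$. The constant $-\tfrac12\cot c'$ in $\Theta''(\omega_*)$ cancels against the $a$-free part $\tfrac12\cot c'$ of $2\Theta'(\omega_*)^2\cot c'$; these terms vanish rather than being proportional to $|\chi|$. Using $2-t_1^2-t_1t_2=2(1-t_1^2)+t_1(t_1-t_2)$ and $\cos^2c'=(1+t_1)/2$ then gives $\sin c'\,\Theta''(\omega_*)=g\cos c'\,[2g-2+X]$, where $X=2(1-t_1)/(t_1-t_2)+t_1/(1+t_1)>0$. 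This holds for every $a$.

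\textbf{Nonemptiness.} Your size estimate is wrong. Since $1-t_1=2\sin^2c'$, put $u=(t_1-t_2)/(1-t_2)=1-\sin^2(\pi/K')/\sin^2(\pi/K)$. Then your left side is $4u$ and $2(1-t_1)/(t_1-t_2)=2/u-2$, so the inequality reads $4u+2/u+t_1/(1+t_1)<K+5$. Now $u=(2K+1)/(K+1)^2+O(K^{-3})$, hence $2(1-t_1)/(t_1-t_2)=K-\tfrac12+O(K^{-1})$. This term is not $O(1)$: it cancels the $K+3$ to leading order. The right side of your reduced inequality therefore tends to $3$ instead of growing like $K$. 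A bounded left side, a supposedly growing right side and a few numerical cases prove nothing here. You need an explicit second-order lower bound on $u$ valid for all large $K$, together with exact checks of the small cases. This is Lemma~\ref{lem:cornerineq}: it uses $(2m^2-m-10.57)/m^3\le u\le 2/m$ with $m=K'\ge6$, from alternating sine bounds, which puts the left side minus $m$ below $3.63<4$, and checks $m=4,5$ exactly.

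\textbf{The equivalence beyond the first slope strip.} The statement asserts $F_q''(\omega_*)<0\iff a\in\mathcal J_K$ for every $a>a_0(t_2,t_1)$ with $\chi\notin(K'/2)\Z$. You only treat $|\chi|<K'/2$. For $g>K'/2$ the adjacent lags are no longer $1$ and $1-K'$, so \eqref{eq:Fedge} does not apply, and the ``only if'' direction is unproved there. You must show $F_q''(\omega_*)\ge0$ in that range. Combine the general formula $F_q''=\sin c'\,\Theta_*''-\tfrac12\cos c'\,K'^2\vartheta(1-\vartheta)$ of Theorem~\ref{thm:edgejk}(a) with $\sin c'\,\Theta''\ge 2g(g-1)\cos c'$ (from $X>0$) and $\vartheta(1-\vartheta)\le\tfrac14$. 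This gives $F_q''(\omega_*)\ge\cos c'\,[2g(g-1)-K'^2/8]$, which is positive for $g>K'/2$ and $K'\ge4$.
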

The proof follows Lemmas~\ref{lem:junction} and~\ref{lem:cornerineq}.

\begin{lemma}[The curve at a junction]\label{lem:junction}
Let $m\ge4$, $\omega_*=2\pi/m$, $c'=\pi/m$, $t_1=\cos\omega_*$, $t_2<t_1$, $a>0$, and let
$\Theta=\operatorname{arccot}\bigl(\Xi(\cos\omega)/\sin\omega\bigr)$ on $(0,\pi)$, where $\Xi(c)=1+c-a(c-t_1)(c-t_2)$. Then
$\Theta(\omega_*)=c'$, $\Theta'(\omega_*)-\tfrac12=-a(t_1-t_2)\sin^2c'$, and
\[
 \Theta''(\omega_*)=-\tfrac12\cot c'+\tfrac a2\tan c'\,(2-t_1^2-t_1t_2)+2\Theta'(\omega_*)^2\cot c' .
\]
\end{lemma}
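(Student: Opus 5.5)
The lemma is a direct computation with $h(\omega)=\Xi(\cos\omega)/\sin\omega$ and $\Theta=\operatorname{arccot}h$. I would first evaluate $h$ and its first two derivatives at $\omega_*$, then convert them to derivatives of $\Theta$ using $\Theta'=-h'/(1+h^2)$ and $\Theta''=-h''/(1+h^2)+2h(h')^2/(1+h^2)^2$. The latter is the same formula used in the proof of Theorem~\ref{thm:convex}.

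\emph{Value.} At $c=t_1=\cos\omega_*$ the quadratic term vanishes, so $\Xi(t_1)=1+t_1=2\cos^2c'$. With $\sin\omega_*=2\sin c'\cos c'$ this gives $h(\omega_*)=\cot c'$, hence $\Theta(\omega_*)=c'$ and $1+h^2=1/\sin^2c'$.

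\emph{First derivative.} Differentiate $h\sin\omega=\Xi(\cos\omega)$ to get
\[
 h'\sin\omega+h\cos\omega=-\sin\omega\,\Xi'(c),\qquad \Xi'(t_1)=1-a(t_1-t_2).
\]
Solving for $h'$ and using $\cot c'\cos\omega_*/\sin\omega_*=\cos 2c'/(2\sin^2c')$ should give
\[
 h'(\omega_*)=-\frac{1}{2\sin^2c'}+a(t_1-t_2).
\]
Multiplying by $-\sin^2c'$ then yields $\Theta'(\omega_*)=\tfrac12-a(t_1-t_2)\sin^2c'$, which is the second claim.

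\emph{Second derivative.} Differentiate the identity once more:
\[
 h''\sin\omega+2h'\cos\omega-h\sin\omega=\sin^2\omega\,\Xi''(c)-\cos\omega\,\Xi'(c),\qquad \Xi''=-2a.
\]
I would solve for $h''(\omega_*)$ and insert it together with $h'=-\Theta'/\sin^2c'$ into the $\Theta''$ formula. The term $2h(h')^2/(1+h^2)^2$ becomes $2\cot c'\,(\Theta')^2$, which is exactly the last summand of the claim. It then remains to check that $-h''\sin^2c'$ equals $-\tfrac12\cot c'+\tfrac a2\tan c'(2-t_1^2-t_1t_2)$ once $\Theta'$ is substituted.

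This last simplification is the only place needing care. I would express $t_1=\cos2c'$ and $\sin\omega_*=2\sin c'\cos c'$ and expand. The $a$-linear terms collect into $\tfrac a2\tan c'(2-t_1^2-t_1t_2)$, using $1-t_1=2\sin^2c'$ and $1+t_1=2\cos^2c'$. The $a$-free terms reduce to $-\tfrac12\cot c'$. Since this is a finite trigonometric identity, a symbolic check at two generic values of $c'$ and $a$ would also confirm it.
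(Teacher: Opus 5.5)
Your computation is correct and is essentially the paper's: both evaluate $\mathsf f=\cot\Theta$ and its first two derivatives at $\omega_*$ and convert with $\Theta'=-\mathsf f'\sin^2\Theta$ and $\Theta''=-\mathsf f''\sin^2\Theta+2\Theta'^2\cot\Theta$. The only difference is bookkeeping: the paper writes $(1+\cos\omega)/\sin\omega=\cot(\omega/2)$, so the $a$-free part contributes $\tfrac12$ and $-\tfrac12\cot c'$ immediately and only the perturbation $-a(\cos\omega-t_1)(\cos\omega-t_2)/\sin\omega$, which vanishes at $\omega_*$, has to be differentiated; your own collections do check out (the $a$-free one via $\cot 2c'\cot^2c'+\cot c'=\cot c'/(2\sin^2c')$), so the suggested numerical spot check is unnecessary and would not by itself be a proof.
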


\begin{proof}
Differentiate
$\cot\Theta=\cot(\omega/2)-a(\cos\omega-t_1)(\cos\omega-t_2)/\sin\omega$
at $\omega_*$. The perturbation has value zero, first derivative
$a(t_1-t_2)$ and second derivative
$a(t_1^2+t_1t_2-2)/\sin\omega_*$. The identities
$\Theta'=-\mathsf f'\sin^2\Theta$ and
$\Theta''=-\mathsf f''\sin^2\Theta+2\Theta'^2\cot\Theta$ for
$\mathsf f=\cot\Theta$ give both formulas.
\end{proof}

\begin{lemma}\label{lem:cornerineq}
For $m\ge4$ put $u_m=1-\sin^2(\pi/m)/\sin^2(\pi/(m-1))$ and $t=\cos(2\pi/m)$. Then
$4u_m+2/u_m+t/(1+t)<m+4$.
\end{lemma}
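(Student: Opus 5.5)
The inequality has a lot of slack asymptotically, so I will bound each of the three terms separately by elementary estimates. Write $x=\pi/m$, $y=\pi/(m-1)$ and $r=\sin x/\sin y$, so that $u_m=1-r^2$. Since $\sin z/z$ decreases on $(0,\pi)$ and $x<y$, we get $r\ge x/y=(m-1)/m$. Hence
\[
 u_m\le 1-\Bigl(\frac{m-1}{m}\Bigr)^2=\frac{2m-1}{m^2},
\]
and in particular $4u_m\le 4\cdot7/16=7/4$ for $m\ge4$. Moreover $0\le t<1$ for $m\ge4$, so $t/(1+t)<1/2$. It then suffices to prove $2/u_m<m+7/4$, that is,
\[
 u_m>\frac{2}{m+7/4}.
\]
This also shows $u_m>0$, which the statement implicitly requires.

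For this lower bound I will use $\sin x\le x$ and $\sin y\ge y(1-y^2/6)$. These give
\[
 r\le\frac{m-1}{m}\cdot\frac{1}{1-y^2/6},
 \qquad
 u_m\ge 1-\Bigl(\frac{m-1}{m}\Bigr)^2\Bigl(1-\frac{\pi^2}{6(m-1)^2}\Bigr)^{-2}.
\]
Asymptotically the right-hand side is $2/m-1/m^2-O(m^{-3})$, while $2/(m+7/4)=2/m-7/(2m^2)+O(m^{-3})$. The margin is therefore of order $5/(2m^2)$, which dominates the $O(\pi^2/m^3)$ correction for large $m$. To make this rigorous I will clear denominators. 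Multiplying the target inequality by $m^2(m+7/4)\bigl(1-\pi^2/(6(m-1)^2)\bigr)^2>0$, and replacing $\pi^2$ by the rational upper bound $10$ (the correction term only hurts as it grows), turns the condition into a polynomial inequality in $m$. I will verify that inequality for all $m\ge m_0$ by checking the sign of its leading coefficients and bounding the lower-order terms, with $m_0$ around $8$ or $10$.

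For the finitely many remaining values $4\le m<m_0$, I will evaluate $4u_m+2/u_m+t/(1+t)$ directly, using the exact sines wherever possible. For example, at $m=4$ we have $u_4=1-\tfrac12/\tfrac34=\tfrac13$ and $t=0$, so the left side is $4/3+6=22/3<8$. The other cases need rational enclosures of $\sin(\pi/m)$ and $\cos(2\pi/m)$. Because the margin there is at least about $2.5$, crude enclosures suffice.

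The main obstacle is the threshold step: choosing $m_0$ and checking that the cleared polynomial inequality holds uniformly for $m\ge m_0$ with explicit constants. If the polynomial bookkeeping turns out awkward, I will instead bound $r$ through $\log r=\log(x/y)+\log\frac{\sin x/x}{\sin y/y}$. The second logarithm lies in $[0,(y^2-x^2)/6\cdot c]$ for an explicit constant $c$, which gives $u_m\ge 2/m-1/m^2-C/m^3$ directly with a concrete $C$.
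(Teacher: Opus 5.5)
Your reduction is correct. The monotonicity of $\sin z/z$ gives $u_m\le(2m-1)/m^2$, hence $4u_m\le 7/4$, and $t/(1+t)<\tfrac12$. The gap is in the lower bound you then use for $u_m$: the step you flag as the main obstacle cannot be completed. Your asymptotic claim that the right-hand side is $2/m-1/m^2-O(m^{-3})$ is wrong. Using $\sin x\le x$ discards the $-x^3/6$ term that would cancel the $-y^3/6$ you keep in the denominator. The relative error in $r$ is therefore $y^2/6\sim\pi^2/(6m^2)$, not $(y^2-x^2)/6=O(m^{-3})$. In fact
\[
 1-\Bigl(\frac{m-1}{m}\Bigr)^2\Bigl(1-\frac{\pi^2}{6(m-1)^2}\Bigr)^{-2}=\frac2m-\frac{1+\pi^2/3}{m^2}+O(m^{-4}),
\]
whereas $8/(4m+7)=2/m-7/(2m^2)+O(m^{-3})$ and $1+\pi^2/3\approx4.29>7/2$. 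After clearing denominators your polynomial has leading term $(10-\tfrac43\pi^2)m^5<0$, or $-\tfrac{10}{3}m^5$ with $\pi^2$ replaced by $10$, so no threshold $m_0$ exists. Numerically the bound already fails at $m=6$ ($0.204$ against $8/31\approx0.258$) and at $m=10$ ($0.156$ against $8/47\approx0.170$).

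It helps to see this at the level of the full sum, which equals $m+1+o(1)$ and so has asymptotic margin $3$. Replacing $4u_m\to0$ by the constant $7/4$ gives away $7/4$. The crude sine bounds shift $2/u_m$ from $m+\tfrac12+o(1)$ to $m+\tfrac12+\tfrac{\pi^2}{6}+o(1)$, giving away a further $\pi^2/6\approx1.64$. Together these exceed $3$.

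Either loss alone is affordable. Your fallback via $\log\frac{\sin x/x}{\sin y/y}$ keeps the cancellation and is the right idea, but it must become the main argument, with its constant and threshold made explicit. The paper does the equivalent: it uses a degree-five upper bound for $\sin(\pi/m)$ and a degree-three lower bound for $\sin(\pi/(m-1))$, obtaining $u_m\ge(2m^2-m-10.57)/m^3$ for $m\ge6$. It also keeps $4u_m\le 8/m$ rather than $7/4$. With that lower bound your reduced inequality $u_m>8/(4m+7)$ becomes $10m^2-49.28m-73.99>0$, which holds for $m\ge7$, so $m=4,5,6$ must be checked directly.

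The reduced inequality is actually false at $m=4$, since $u_4=1/3<8/23$, so your direct evaluation $22/3<8$ there is indispensable. Alternatively, keeping $4u_m\le 4(2m-1)/m^2$ in the reduction lets even your crude bound work for all large $m$, at the cost of more small cases. Finally, the small-case margins are $2/3$ at $m=4$ and $10-4\sqrt5\approx1.06$ at $m=5$, not about $2.5$. Rational enclosures still suffice, but they cannot be arbitrarily crude.
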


\begin{proof}
For $m=4,5$ the left sides are $22/3$ and $4\sqrt5-1$.
For $m\ge6$, the alternating sine bounds through degrees five and
three, together with monotonicity of $\sin x/x$, give
\[
 \frac{2m^2-m-10.57}{m^3}\le u_m\le\frac2m.
\]
Therefore the required left side minus $m$ is bounded above by
\[
 \frac8m+\frac{m^2+10.57m}{2m^2-m-10.57}+\frac12.
\]
This expression decreases for $m\ge6$ and is less than $3.63$ at
$m=6$, proving the strict inequality.
\end{proof}

\begin{proof}[Proof of Proposition~\ref{prop:corner}]
The parameterization of Proposition~\ref{prop:chart} gives a tuning
with the stated closed dip, and
$(a,t_2+\epsilon,t_1-\epsilon)$ approaches it from the gap.
Lemma~\ref{lem:junction}, substituted in \eqref{eq:Fedge}, gives the
displayed curvature. The lower endpoint of $\mathcal J_K$ is exactly
$a_0(t_2,t_1)$. Its being smaller than the upper endpoint is
Lemma~\ref{lem:cornerineq}, after using
$1-t_1=2\sin^2c'$ and
$u_m=1-\sin^2c'/\sin^2(\pi/(m-1))$, $m=K+1$.
On this interval $0<|\chi|<(m+2)/4\le m/2$.
Beyond the first slope strip, put $g=|\chi|>m/2$; the adjacent-lag
curvature formula and $\vartheta(1-\vartheta)\le1/4$ give
\[
 F_q''(\omega_*)\ge\cos c'[2g(g-1)-m^2/8]>0.
\]
Thus, at every nonresonant slope, negative curvature holds precisely
on $\mathcal J_K$. Theorem~\ref{thm:fanjk}(d) gives $\sigma<0$ there;
continuity gives an open divergent neighborhood reaching into $I_K$.
\end{proof}

\end{document}